\documentclass[11pt, a4paper, reqno]{amsart}
\usepackage{upgreek}
\usepackage{amssymb}

\usepackage[euler-digits]{eulervm}

\usepackage{fouridx}
\usepackage{mathpazo,mathabx}
\usepackage{amsfonts, amsbsy}
\usepackage{dsfont}
\usepackage{mathabx}
\usepackage{xcolor}
\usepackage{enumitem}

\setlist[enumerate]{label=(\roman*)}
\usepackage{shadethm}
\usepackage{cite} 

\usepackage{colortbl}
\usepackage[colorlinks,citecolor=blue,urlcolor=blue]{hyperref}

\usepackage{amsmath}
\usepackage{framed}
\usepackage{mathrsfs,mathtools}
\usepackage{esint}
\usepackage{todonotes}

\newcommand\newone[1]{{{#1}_{1}}}
\newcommand\newonep[1]{{{#1}^{\prime}_{1}}}
\newcommand\newzero[1]{{{#1}_{1}}}

\newcommand{\old}[1]{\todo[inline,color=red!40]{Old: #1}}
\newcommand{\new}[1]{\todo[inline,color=teal!40]{New: #1}}

\newcommand{\newdela}[1]{}
\newcommand{\olddela}[1]{}
\renewcommand{\P}{\boldsymbol{\Pi}}

\newcommand\comadel[1]{{\color{red}}}
\newcommand\addadel[1]{{\color{blue}}}

\newcommand{\mfI}{{\mathfrak{I}\/}}
\newcommand\linspan{\mathrm{linspan}}
\newcommand\Leb{\mathrm{Leb}}
\newcommand\loc{\mathrm{loc}}
\newcommand\rc{\mathrm{c}}
\newcommand\sol{\mathrm{sol}}
\newcommand\mc{\mathrm{m}}
\newcommand\rK{\mathrm{K}}
\newcommand{\rx}{x}                         
\newcommand\rU{U}
\newcommand{\Vtwo}{D(A_1)}
\newcommand{\embed}{\hookrightarrow}
\newcommand{\supp}{\mathrm{supp}\,}

\newcommand{\domO}{\mathcal{O}}
\newcommand{\mO}{E}
\newcommand{\nU}{{U}}
\newcommand{\bis}{{\prime\prime}}
\newcommand{\nHB}{{\mathbb{H}}}

\newcommand{\1}{$\mathds{1}$}

\newcommand{\abs[1]}{{\vert #1 \vert}}

\newcommand{\bSi}{\bm{\Sigma}}

\newcommand{\newG}{{\mathfrak{G}}}

\newcommand{\tildeW}{\widetilde{W}}

\newcommand{\newK}{\kappa}

\newcommand{\Hsol}[4]{\fourIdx{#1}{#2}{#3}{#4}{\mathbb{H}}}

\newcommand{\ilsc}[3]{\fourIdx{}{}{}{#3}{\big( #1, #2 \big)}}

\newcommand{\duality}[4]{\fourIdx{}{#4}{}{#3}{\big\langle #1, #2 \big\rangle}}

\newcommand{\dualitybig}[5]{\fourIdx{}{#4}{#5}{#3}{\big\langle #1, #2 \big\rangle}}

\newcommand{\ipsmall}[3]{\fourIdx{}{}{}{#3}{\langle #1, #2 \rangle}}

\newcommand{\normm}[1]{\|#1\|}
\newcommand{\ip}[2]{\langle #1,\, #2 \rangle}

\newcommand{\HS}{{\mathscr{T}_2}}

\hypersetup{urlcolor=midnightblue, citecolor=red}

\theoremstyle{remark}

\newcommand{\bm}{\boldsymbol}
\newcommand{\toup}{\nearrow}

\newcommand{\eps}{\varepsilon}

\newcommand{\be}{\mbox{\boldmath{$e$}}}
\newcommand{\bn}{\mbox{\boldmath{$n$}}}
\newcommand{\bv}{\mbox{\boldmath{$v$}}}
\newcommand{\bu}{\mbox{\boldmath{$u$}}}

\newcommand{\bg}{\mbox{\boldmath{$g$}}}

\newcommand{\fb}{\mbox{\boldmath{$f$}}}
\newcommand{\bb}{\mbox{\boldmath{$b$}}}

\newcommand{\bX}{\ensuremath{\mbox{\(\scriptstyle\boldsymbol{X}\)}}} 

\newcommand{\bZ}{\mbox{\boldmath{$Z$}}}

\newcommand{\Aone}{A_1}
\newcommand{\Athree}{A_3}
\newcommand{\Atwo}{A_2}

\newcommand{\Stokes}{\mbox{\boldmath{$A_0$}}}

\newcommand{\StokesH}{{H_0}}

\newcommand{\StokesV}{{V_0}}
\newcommand{\StokesVp}{{V^{\prime}_0}}

\newcommand{\bB}{\mbox{\boldmath{$B$}}}

\newcommand{\bF}{\mbox{\boldmath{$F$}}}
\newcommand{\bG}{\mbox{\boldmath{$G$}}}

\newcommand{\bR}{\mbox{\boldmath{$R$}}}

\newcommand{\bw}{\mbox{\boldmath{$\mathrm{w}$}}}

\newcommand\zero[2]{\fourIdx{}{0}{#2}{}{#1}}

\newcommand{\cp}{\underbar{$\mu$}}
\newcommand{\avg}[1]{\overline{#1}}

\def\d{\mathrm{d}}

\DeclareMathOperator{\diver}{div}

\DeclareMathOperator*{\esssup}{ess\,sup}

\newcommand\dela[1]{}

\selectfont

\theoremstyle{plain}
\newtheorem{theorem}{\textbf{Theorem}}[section]
\newtheorem{lemma}[theorem]{\textbf{Lemma}}
\newtheorem{properties}[theorem]{\textbf{Properties}}
\newtheorem{proposition}[theorem]{\textbf{Proposition}}

\newtheorem{corollary}[theorem]{\textbf{Corollary}}
\theoremstyle{definition}
\newtheorem{notation}[theorem]{\textbf{Notation}}
\newtheorem{remark}[theorem]{\textbf{Remark}}

\newtheorem{definition}[theorem]{\textbf{Definition}}
\newtheorem{assumption}[theorem]{\textbf{Assumption}}
\newtheorem{example}[theorem]{\textbf{Example}}

\newshadetheorem{Theorem}[theorem]{Theorem}

\newshadetheorem{Proposition}[theorem]{Proposition}

\newtheorem{claim}{Claim}[section]

\numberwithin{equation}{section}
\numberwithin{figure}{section}
\usepackage{xcolor}
\makeatletter
\protected\def\old#1{\ifmmode{\color{red}#1}\else\textcolor{red}{#1}\fi}
\protected\def\new#1{\ifmmode{\color{blue}#1}\else\textcolor{blue}{[#1]}\fi}
\makeatother
\providecommand{\dela}[1]{}
\renewcommand{\dela}[1]{}
\AtBeginDocument{\renewcommand{\1}{\mathds{1}}}
\begin{document}
\title[Weak solutions to the 2D or 3D stochastic NSCHEs]{Weak martingale solutions to stochastic Navier-Stokes-Cahn-Hilliard system with transport noise}

\author{ Z. Brze\'zniak and A. Ndongmo Ngana}
\dedicatory{\vspace{-10pt}\normalsize{Department of Mathematics, University of York
Heslington, York, YO105DD, UK}}

\keywords{{Navier-Stokes equations}, {Cahn-Hilliard equation}, {Diffuse interface model}
}


\begin{abstract}
We consider a diffuse interface model for the mixture of two incompressible fluids driven by transport noise.
Under suitable abstract assumptions, we prove the existence of a global weak martingale solution as well as the pathwise uniqueness of a global strong solution in the two dimensional case. This is the first result addressing a unified framework derived from the works by Brze{\'z}niak \& Motyl and Mikulevicius \& Rozovskii, on the study of stochastic partial differential equations.

\end{abstract}

\maketitle


\section{Introduction}
This paper is concerned with the study of the Navier-Stokes-Cahn-Hilliard equations (NSCHEs) driven by transport noise. 
The NSCHEs are a phase field model where the interface separating two incompressible fluids is replaced by a diffuse one, i.e. it has a small but non-zero thickness, by introducing the so-called phase function $\phi$ (or order parameter), whose dynamics interact with the fluid velocity.
This Diffuse Interface model, also called Model H after the seminal work on dynamic critical phenomena \cite{Hohenberg+Halperin_1977}, describes incompressible binary mixtures with constant total density.
The model, which is basically a coupling of the Navier-Stokes equations (NSEs) and the convective Cahn-Hilliard equation (CHE), was derived in \cite{Gurtin+Polignone+Vinals_1996} through the framework of continuum mechanics and in \cite{Liu+Shen_2003} via an energetic variational framework.

\noindent
Consider the flow of an isothermal, homogeneous, incompressible, viscous binary fluid mixture occupying a domain $\domO$, under the action of a capillary force proportional to 
\[
\nabla \left(\frac{\eps}{2} \lvert \nabla \phi \rvert^2 + \alpha \psi(\phi) \right) - \nabla \cdot (\nabla \phi \otimes \nabla \phi),
\]
which arises from the surface tension due to the mixing of the two fluids. Near the interface, there is a thin transition layer where the two components coexist; it is in this sense that the fluid is a "mixture": the term refers to this thin interfacial region, not to a global homogeneous mixing of the two fluids. The positive constants $\alpha$ and $\eps$ are related to the model and are fixed throughout the paper.
For example, the parameter $\eps$ is related to the thickness of the interface separating the two fluids. The function $\psi$ is the homogeneous free energy density for the mixture, also called the potential.
We assume that $\domO$ is a bounded domain in  $\mathbb{R}^d$, $d=2,3$ with  $\bn$ denoting the outward, i.e. external, normal vector field  to $\partial \domO$. We further assume that 
$(\Omega, \mathscr{F}, \mathbb{P}, \mathbb{F})$ is a filtered probability space and $\widetilde{W}=(\tilde{w}_k)_{k \in \mathbb{N} \setminus\{0\}}$, where $ (\tilde{w}_k(t))_{t \in [0,\infty)}$, is a  sequence of i.i.d. standard Brownian motions on that probability space.

\noindent
We are interested in global solutions but for the simplicity we will work in finite time horizon and we choose and fix $T>0$. 
We consider the following stochastic governing equations for this flow:
\begin{equation}\label{eqn-stochastic-CHNSEs}
\left\{\begin{aligned}
&\d \bu + [\diver(\bu \otimes \bu) - \nu \Delta \bu + \nabla p + \newK \diver(\nabla \phi \otimes \nabla \phi)]\,\d t \\
&\hspace{0.5 truecm} = \sum_{k=1}^{\infty}  \left[(\sigma_k(t,x) \cdot \nabla) \bu -\nabla \bar{p}_k + \bg_k(t,x,u) \right]\d \tilde{w}_k &\mbox{ in } (0,T) \times \domO,
\\
&\diver \bu= 0 &\mbox{ in } (0,T) \times \domO,
  \\
&\d \phi + \diver(\phi \bu) \,\d t= \varrho_0 \,\Delta \left(- \eps \Delta \phi(t) + \alpha \psi^\prime(\phi(t)) \right)\d t &\mbox{ in } (0,T) \times \domO, 
\end{aligned}
\right.
\end{equation}
where the unknown functions are the velocity field
   \begin{equation*}
     \bu: [0,T] \times \domO \times   \Omega \to \mathbb{R}^d,
  \end{equation*}
the difference of the  fluid concentrations, called the phase field or the order parameter, 
     \begin{equation*}
       \phi: [0,T] \times \domO \times   \Omega \to \mathbb{R}
     \end{equation*}
and, the mean and turbulent pressures
    \begin{equation*}
      p,\, \bar{p}_k: [0,T] \times \domO \times   \Omega \to \mathbb{R}, \;\; k \in \mathbb{N} \setminus\{0\}.
   \end{equation*}
Given a solution $(\bu,\phi)$ of the system \eqref{eqn-stochastic-CHNSEs}, one often introduces a function  $\mu$ defined by 
   \begin{equation}\label{eqn-mu}
     \mu: [0,T] \ni  t  \mapsto  - \eps \Delta \phi(t) + \alpha \psi^\prime(\phi(t)),
   \end{equation}
which is called the chemical potential. Note that this chemical potential $\mu$ is obtained under an appropriate choice of boundary conditions as a variational derivative of the free energy functional
   \begin{equation}
     \mathscr{F}(\phi)= \int_{\domO} \left(\frac{\eps}{2}\lvert \nabla \phi \rvert^2 + \alpha \psi(\phi) \right)\d x.
   \end{equation}
In this way, the third equation in \eqref{eqn-stochastic-CHNSEs} takes the following compact form 
\begin{equation}\label{stochastic-CHNSEs-2}
\d \phi + \diver(\phi \bu)\,\d t= \varrho_0 \,\Delta\mu(t)\,\d t \mbox{ for } t \in (0,T).
\end{equation}
In fact, the last equation contains no Wiener process and can be regarded, for almost every fixed $\omega$, as a deterministic PDE with random coefficients, i.e.
   \begin{equation}\label{stochastic-CHNSEs-3}
     \frac{\partial  \phi}{\partial t} + \diver(\phi \bu) = \varrho_0 \,\Delta   \mu (t)  \mbox{ for } t \in (0,T).
   \end{equation}
The function $\psi^\prime$ in \eqref{eqn-mu} is the derivative of the potential $\psi$ which may be a polynomial or contain a logarithmic term.
In certain cases, the logarithmic or singular potential can be approximated by a polynomial of even degree with a strictly positive dominant coefficient.
A classical example is the Landau potential \cite{Landau+Lifshitz_1968}
              \begin{equation}\label{eqn-regular-potential-psi_0}
                  \psi=\psi_0: \mathbb{R} \ni s \mapsto  \frac{1}{4}(1-s^2)^2\in \mathbb{R},
              \end{equation}
which is an approximation of the Flory-Huggins potential
\begin{equation}\label{eqn-logarithmic-potential-psi_00}
\psi=\psi_{00}:      \mathbb{R} \ni s \mapsto             \frac{\theta}{2} [(1+s) \ln(1+s) + (1-s)\ln(1-s)] - \frac{\theta_0}{2} s^2 \in \mathbb{R}, 
\end{equation}
up to an additive constant in the limit $\theta/\theta_0 \to 1$ with $0<\theta<\theta_0$.
\newline
The positive constants $\newK$ and $\varrho_0$ are related to the model and are fixed throughout the paper. More precisely, $\newK$ corresponds to the surface tension \cite{Liu+Walkington_2001}, and 
$\varrho_0$ is the elastic relaxation time of the system. Without loss of generality, we can simplify our notation by putting:
    \begin{equation}\label{eqn-special parameters}
      \eps= \varrho_0= \alpha= 1.
    \end{equation}
The system \eqref{eqn-stochastic-CHNSEs} contains also the following  functions, 
\begin{align}\label{eqn-sigma_k}
\sigma_k: [0,T] \times \domO \ni (t,x) \mapsto \sigma_k(t,x) \in \mathbb{R}^d, \; k \in \mathbb{N} \setminus\{0\}, 
\\ \label{eqn-g_k}
 \bg_k: [0,T] \times \domO \times \mathbb{R}^d \ni (t,x,z) \mapsto \bg_k(t,x,z) \in \mathbb{R}^d, \;k \in \mathbb{N} \setminus\{0\}.
\end{align}
We also define the following first order differential operators $\sigma_k \cdot \nabla$ acting on vector  fields
   \begin{equation*}
     [(\sigma_k(t) \cdot \nabla)\bu](x) \coloneqq \left(\sum_{j=1}^d \sigma_k^j(t,x) \partial_j \bu^i(x) \right)_{i=1}^d, \;\; x \in \domO,
   \end{equation*}
and the tensor product of two vector fields  $\bu,\, \bv: \domO \to \mathbb{R}^d$ as 
\begin{equation*}
    [\bu \otimes \bv](x) \coloneqq (\bu(x) \otimes \bv(x))= \left[\bu^i(x) \bv^j(x)\right]_{i,j=1}^d, \;\; x \in \domO. 
 \end{equation*}
In particular, 
  \begin{equation*}
    [\nabla \phi \otimes \nabla \phi](x) \coloneqq (\nabla \phi(x) \otimes \nabla \phi(x))= \left[\partial_i \phi(x) \partial_j \phi(x)\right]_{i,j=1}^d, \;\; x \in \domO. 
 \end{equation*}
\medskip 

We supplement  the system \eqref{eqn-stochastic-CHNSEs} with the Dirichlet boundary conditions for the velocity $\bu$ and  the natural no-flux conditions for the phase field $\phi$, cf. \cite{Ciprian+Grasselli_2010}, i.e.
\begin{equation}\label{eqn-boundary-conditions-NSCHs}
\begin{aligned}
\bu&= 0,  
\\
\frac{\partial \phi}{\partial \bn} &= \frac{\partial  \Delta \phi}{\partial \bn}  = 0 \quad \text{on} \quad (0,T) \times \partial \domO,
\end{aligned}
\end{equation}
Finally, we consider  the following initial conditions
    \begin{equation}\label{eqn-initial conditions-NSCHs}
         \bu(0,\cdot)= \bu_0, \quad \phi(0,\cdot)= \phi_0   \mbox{ in } \domO.
    \end{equation}
The second part of the boundary condition \eqref{eqn-boundary-conditions-NSCHs} is similar to condition (4.3) in \cite{Brz+Masl+S_2005} and condition (1.5) in \cite{Da Prato+Debussche_1996}, which are often encountered in the study of the beam equation and the Cahn-Hilliard equation, respectively. Let us point out that this second part of the boundary condition \eqref{eqn-boundary-conditions-NSCHs}
implies the mass conservation of the phase field $\phi$, i.e. 
  \begin{equation}\label{eq-mass-conservation}
    \langle \phi(t) \rangle= \langle \phi(0) \rangle=: M_0, \quad \forall t\in (0,T).
  \end{equation}
In fact, \eqref{eqn-boundary-conditions-NSCHs} implies
    \begin{equation}
        \frac{\partial \mu}{\partial \bn} =0 \quad \text{on} \quad (0,T) \times \partial \domO,
    \end{equation}
which in turn implies the conservation of the following quantity:
   \begin{equation}
     \langle \phi(t) \rangle\coloneqq \fint_{\domO} \phi(t,x) \, \d x, \;\; t\in (0,T).
   \end{equation}
Let us present now a very rough version of Theorem \ref{First-main-result} which is our main result.
\begin{theorem}\label{thm-main-result}
Assume that $\domO$ is a bounded and sufficiently smooth domain in $\mathbb{R}^d$ with $d \in \{2,3\}$. 
Assume that $\nu$ and $\kappa$ are positive constants.  Assume that the functions $\sigma_k$ and $\bg_k$ are measurable and sufficiently regular w.r.t. the 3rd variable. Assume that $\psi: \mathbb{R}    \to \mathbb{R}$ is the Landau potential. Assume that  $(\Omega,\mathscr{F},\mathbb{P})$ is a probability space with a right-continuous filtration $\mathbb{F}=(\mathscr{F}_t)_{t \in [0,T]}$ 
 such that on this probability space two i.i.d. copies of  $\ell^2$-cylindrical $\mathbb{F}$-adapted  Wiener processes are defined.
Then, there exist an $\ell^2$-cylindrical $\mathbb{F}$-adapted Wiener process $W=(w_k)_{k\in\mathbb{N} \setminus\{0\}}$, progressively measurable processes $\bu$, $\phi$, $p$, and a sequence $(p_k)_{k\in\mathbb{N} \setminus\{0\}}$, whose trajectories belong to suitable functional spaces, such that the tuple $(\bu,p,\phi,(p_k)_{k\in\mathbb{N} \setminus\{0\}})$ is a weak solution of the following problem:
\begin{equation}\label{eqn-stochastic-CHNSEs-intro}
\left\{\begin{aligned}
&\d \bu + [\diver(\bu \otimes \bu) - \nu \Delta \bu + \nabla p + \newK \diver(\nabla \phi \otimes \nabla \phi)]\,\d t \\
&\hspace{0.5 truecm} = \sum_{k=1}^{\infty}  \left[(\sigma_k(t,x) \cdot \nabla) \bu -\nabla p_k + \bg_k(t,x,u) \right]\d w_k &\mbox{ in } (0,T) \times \domO,
\\
&\diver \bu= 0 &\mbox{ in } (0,T) \times \domO,
  \\
&\d \phi + \diver(\phi \bu) \,\d t= 
\,\Delta \left(-  \Delta \phi(t) + \,\psi^\prime(\phi(t)) \right)\d t &\mbox{ in } (0,T) \times \domO.
\end{aligned}
\right.
\end{equation}
\end{theorem}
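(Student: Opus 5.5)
The plan is the classical Faedo--Galerkin/compactness/Skorokhod route, in the spirit of Brze\'zniak \& Motyl and Mikulevicius \& Rozovskii. First I would rewrite \eqref{eqn-stochastic-CHNSEs-intro} in abstract form. Apply the Leray--Helmholtz projection $P$ to the velocity equation; this removes $p$ and the turbulent pressures $p_k$, which will be recovered at the end by de Rham's theorem. For the phase field, use the Neumann Laplacian $A_1$ with the chemical potential $\mu=A_1\phi+\psi'(\phi)$. Take a common orthonormal basis of eigenfunctions of the Stokes operator for $\bu$ and of $A_1$ for $\phi$, project onto the first $n$ modes, and obtain a finite-dimensional SDE system driven by the given cylindrical Wiener process $\widetilde W$. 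Its coefficients are locally Lipschitz, so it has a unique local solution $(\bu_n,\phi_n)$; a global existence follows from the energy estimate below together with a stopping-time argument.

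The core step is a uniform energy estimate. I would apply It\^o's formula to the total energy
\[
\mathcal E(\bu,\phi)=\tfrac12|\bu|_{L^2}^2+\kappa\,\mathscr F(\phi),
\]
and check three cancellations:
\begin{itemize}
\item the capillary term $\kappa\,\diver(\nabla\phi\otimes\nabla\phi)$ tested against $\bu$ cancels the convective term $\diver(\phi\bu)$ tested against $\kappa\mu$, after using $\diver(\nabla\phi\otimes\nabla\phi)=\mu\nabla\phi+\nabla(\cdots)$;
\item the trilinear term vanishes;
\item $\mu$ enters with a sign, through $-|\nabla\mu|^2$.
\end{itemize}
The transport noise produces an It\^o correction $\sum_k|P(\sigma_k\cdot\nabla)\bu|^2$, which must be absorbed into $\nu|\nabla\bu|^2$. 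I would impose the stochastic parabolicity condition $\sum_k|\sigma_k\cdot\xi|^2\le (2-\delta)\nu|\xi|^2$ (the ``sufficiently regular'' hypothesis of the rough statement), in the spirit of Mikulevicius--Rozovskii. The Galerkin projection commutes only approximately with $\sigma_k\cdot\nabla$, so I would use the bound $|P_n v|\le|v|$ in $L^2$. The $\bg_k$ term is controlled by linear growth. The Landau potential keeps $\mathscr F$ bounded below, which gives $H^1$ control of $\phi$; mass conservation \eqref{eq-mass-conservation} handles the mean. Combined with the Burkholder--Davis--Gundy inequality, this yields uniform bounds for
\[
\mathbb E\sup_t\bigl(|\bu_n|^{2p}+\|\phi_n\|_{H^1}^{2p}\bigr)
+\mathbb E\Bigl(\int_0^T\|\bu_n\|_{H^1}^2\,dt+\int_0^T\|\nabla\mu_n\|^2\,dt\Bigr)^p.
\]
From these I would deduce $L^2(0,T;H^3)$ bounds for $\phi_n$ via elliptic regularity of $\mu$, and fractional-in-time (Aldous-type) bounds in dual spaces.

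Next, tightness. The laws of $(\bu_n,\phi_n,\widetilde W)$ are tight on a Jakubowski-type space $\mathcal Z$ built from
\[
L^2_w(0,T;V)\cap L^2(0,T;H)\cap C([0,T];U')\cap C([0,T];H_w),
\]
with the analogous spaces for $\phi$, using the Aldous condition as in Brze\'zniak--Motyl. This space is not Polish, so I would invoke the Jakubowski--Skorokhod theorem to obtain, on a new filtered probability space, a.s.\ converging copies $(\bar\bu_n,\bar\phi_n,\bar W_n)\to(\bu,\phi,W)$. Since that space can be taken as the original one up to enlargement, the hypothesis of two independent cylindrical Wiener processes enters here, in realising the new Wiener process $W$.

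Finally, passage to the limit. The nonlinear terms converge by the strong $L^2$ convergence: for $\nabla\phi\otimes\nabla\phi$ this comes from strong $L^2(0,T;H^2)$ convergence of $\phi$, obtained by interpolating the $H^3$ bound with compactness, and $\psi'(\phi)$ is cubic, which is fine for $d\le3$. I expect the \textbf{main obstacle} to be identifying the stochastic integral with the transport-noise term. The map $\bu\mapsto(\sigma_k\cdot\nabla)\bu$ is unbounded, so the strong convergence of $\bu_n$ is only in $L^2(0,T;H)$. I would handle it by testing against smooth divergence-free $\varphi$ and integrating by parts, so that one needs $((\sigma_k\cdot\nabla)\bu_n,\varphi)=-(\bu_n,\diver(\sigma_k\otimes\varphi))$, and then use convergence of stochastic integrals with respect to converging Wiener processes (Debussche--Glatt-Holtz--Temam lemma). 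A martingale-identification argument is the alternative. Recovering the pressures by de Rham yields the tuple claimed in Theorem \ref{thm-main-result}.
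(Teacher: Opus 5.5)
Your route differs from the paper's, and it can be made to work, but two remarks in it need correcting (below).

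\textbf{Comparison of the two routes.} You carry $\widetilde W$ along with $(\bu_n,\phi_n)$ and apply the Jakubowski--Skorokhod theorem. You then pass to the limit in the stochastic integrals with a Debussche--Glatt-Holtz--Temam type lemma, after testing with $\varphi\in\mathcal V$. Your integration by parts for the transport term is exactly the one behind the paper's bound \eqref{eq-3.20aa}. The paper never changes the probability space during the compactness step:
\begin{itemize}
\item it extracts a limit law $\mathbb P_T$ on $\bZ_T$ from a Prohorov-type result (Proposition~\ref{prop-Prohorov-general});
\item it shows, with exponential test functions and Stroock--Varadhan stopping times, that the canonical process solves a martingale problem (Theorem~\ref{thm-5.3});
\item it upgrades this to the processes $L^z$ being local martingales with the right quadratic variation;
\item only at the end does it produce $W$, via the martingale representation theorem, Lemma~\ref{Lem-B.4}.
\end{itemize}
The paper's approach works only with laws and bounded functionals on $\bZ_T$, and never has to re-derive the Galerkin system or pass to the limit in stochastic integrals on a new space. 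Yours builds $W$ directly, so it needs neither the martingale-problem machinery nor a representation theorem.

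\textbf{(a) The probability space and the second Wiener process.} The Skorokhod space cannot be ``taken as the original one up to enlargement''. Jakubowski--Skorokhod gives a new probability space and new processes whose filtration is unrelated to the prescribed $\mathbb F$. What you obtain is a martingale solution on a new stochastic basis, with $W$ the limit of the copies of $\widetilde W$. The paper likewise obtains its solution as the canonical process on $(\bZ_T,\mathbb D,\mathbb P_T)$. In your argument the second cylindrical Wiener process is never used. In the paper it is needed for Lemma~\ref{Lem-B.4}: there $W$ must be reconstructed from the limit martingale $M$, whose covariance may be degenerate, and that requires an independent auxiliary Wiener process.

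\textbf{(b) The moment bounds for general $p$.} These are exactly what the paper says it cannot obtain; it gives this as its reason for not following Brze\'zniak--Motyl. Parabolicity alone does not give them. In It\^o's formula for $\mathcal E^p$, the term $\tfrac{p(p-1)}{2}\mathcal E^{p-2}\sum_k(\bu,\bSi_k\bu+\bG_k)^2$, estimated by Cauchy--Schwarz, can be absorbed into $\nu|\nabla\bu|^2$ only for $p$ close to $1$. For the concrete noise you can do better. Use $(\bu,(\sigma_k\cdot\nabla)\bu)=-\tfrac12\int_{\domO}(\diver\sigma_k)|\bu|^2\,\d x$ together with the bound on $\sum_k|\diver\sigma_k|^2$; this gives $\sum_k(\bu,\bSi_k\bu)^2\le C|\bu|^4$.

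Your route does not actually need these moments. Tightness only needs bounds in probability. For example, for $d=2,3$ the following pathwise estimate holds:
\[
\int_0^T\Vert\phi_n\Vert_{H^3}^2\,\d t\le C\Bigl(\int_0^T|\nabla\mu_n|_{L^2}^2\,\d t+T\sup_t\Vert\phi_n\Vert_{H^1}^2+T\sup_t\Vert\phi_n\Vert_{H^1}^{10}\Bigr).
\]
It follows from Agmon's inequality and $\Vert\phi\Vert_{H^2}\le C\Vert\phi\Vert_{H^1}^{1/2}\Vert\phi\Vert_{H^3}^{1/2}$. The limit in the stochastic integrals only needs convergence in probability. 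So the second-moment energy estimate, which is all the paper proves, is enough for your argument.
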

A rough version of Theorem \ref{Thm-uniqueness-solution} about the uniqueness of solutions is as follows. 
\begin{theorem}\label{thm-main-result-second}
Assume that $d=2$ and the coefficients  $\bg_k$ are Lipschitz w.r.t. the 3rd variable. Assume that the progressively measurable processes 
$(\bu^1,p^1,\phi^1), (p^1_k)_{k\in \mathbb{N} \setminus\{0\}}$ and $(\bu^2,p^2,\phi^2),(p^2_k)_{k\in \mathbb{N} \setminus\{0\}}$, defined on the same filtered probability space 
$(\Omega, \mathscr{F}, \mathbb{P}, \mathbb{F})$ are weak solutions of the  problem
\eqref{eqn-stochastic-CHNSEs-intro} with the same i.i.d. standard Brownian motions  $W=({w}_k)_{k \in \mathbb{N} \setminus\{0\}}$. Then, in a suitable sense, almost surely, 
$\bu^1=\bu^2$, $\phi^1=\phi^2$, $\nabla(p^1-p^2)=0$ and $\nabla(p^1_k-p^2_k)=0$.
\end{theorem}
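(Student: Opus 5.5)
The plan is the classical energy estimate for the difference of two solutions, combined with a stochastic Gronwall argument using a stopping time.

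\textbf{Setting up the difference.} Put $\bw=\bu^1-\bu^2$ and $\varphi=\phi^1-\phi^2$. Mass conservation \eqref{eq-mass-conservation} with the same initial datum gives $\langle\varphi(t)\rangle=0$. So I will test the Cahn--Hilliard equation for $\varphi$ with $A_1^{-1}\varphi$, where $A_1$ is the Neumann Laplacian restricted to mean-zero functions. This gives the natural $V'$-type norm $\|\varphi\|_{-1}^2$. In that norm the fourth-order term produces the dissipation $\|\nabla\varphi\|^2$. The velocity equation, projected by the Leray projector so that $p$ and $p_k$ disappear, will be handled with It\^o's formula for $\|\bw\|_{L^2}^2$. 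The first step is therefore to justify It\^o's formula in the Gelfand triple $V\subset H\subset V'$. This is where the regularity of the strong solutions in the statement (the ``suitable sense'') is needed, namely $\bu^i\in L^2(0,T;V)\cap L^\infty(0,T;H)$ and $\phi^i\in L^2(0,T;D(A_1))\cap L^\infty(0,T;H^1)$. With the derivative of the $-1$ norm this gives $\d\|\varphi\|_{-1}^2 +2\|\nabla\varphi\|^2\,\d t=\dots$ and a matching identity for $\|\bw\|^2$.

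\textbf{Estimating the terms.}
\begin{itemize}
\item Noise in the velocity equation. Since $\diver\sigma_k$ is assumed to vanish (or be controlled) and there is a parabolicity condition $\nu-\frac12\sum_k|\sigma_k|^2>0$ (stochastic parabolicity, Mikulevicius--Rozovskii), the It\^o correction $\sum_k\|P(\sigma_k\cdot\nabla)\bw\|^2$ is absorbed by $2\nu\|\nabla\bw\|^2$. The martingale term $2\sum_k( (\sigma_k\cdot\nabla)\bw,\bw)$ vanishes by antisymmetry, or is at worst a local martingale. The $\bg_k$ differences are bounded by $L\|\bw\|^2$ using the Lipschitz assumption.
\item Navier--Stokes nonlinearity in 2D. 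Writing $b(\bw,\bu^2,\bw)$ and applying Ladyzhenskaya's inequality gives $\le\frac\nu4\|\nabla\bw\|^2+C\|\nabla\bu^2\|^2\|\bw\|^2$.
\item Coupling terms. These are $\kappa(\diver(\nabla\phi^1\otimes\nabla\phi^1-\nabla\phi^2\otimes\nabla\phi^2),\bw)$ together with $(\diver(\varphi\bu^1+\phi^2\bw),A_1^{-1}\varphi)$. Rewriting the capillary term as $-\mu\nabla\phi$ up to a gradient, the difference splits into $\varphi\nabla\mu^1$-type pieces and $\phi^2\nabla(\Delta\varphi)$-type pieces. I will bound these in 2D with Agmon and Gagliardo--Nirenberg inequalities using the $H^2$ regularity of $\phi^i$, so that each term is $\le\frac{\nu}{4}\|\nabla\bw\|^2+\frac12\|\nabla\varphi\|^2+ h(t)(\|\bw\|^2+\|\varphi\|_{-1}^2)$ with $h\in L^1(0,T)$ a.s.
\item Potential term. $\psi_0'(\phi^1)-\psi_0'(\phi^2)=\varphi\,(\,(\phi^1)^2+\phi^1\phi^2+(\phi^2)^2-1)$ is controlled by the $H^1$ bounds in 2D and interpolation between $\|\varphi\|_{-1}$ and $\|\nabla\varphi\|$.
\end{itemize}

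\textbf{Closing the argument.} Set $h(t)=C(1+\|\nabla\bu^2\|^2+\|\phi^1\|_{H^2}^2+\|\phi^2\|_{H^2}^2+\dots)$, which is in $L^1$ a.s. Apply It\^o's product rule to $e^{-\int_0^t h}(\|\bw\|^2+\kappa\|\varphi\|_{-1}^2)$, and localize with the stopping times $\tau_N=\inf\{t:\int_0^t h\ge N\}$. Taking expectations gives zero, since the initial data coincide, and letting $N\to\infty$ yields $\bw=0$ and $\varphi=0$ a.s. The pressures are then recovered from the de Rham theorem, which gives $\nabla(p^1-p^2)=0$. Equating the martingale parts gives $\nabla(p^1_k-p^2_k)=0$, because $\nabla p_k$ is determined as $(I-P)[(\sigma_k\cdot\nabla)\bu+\bg_k]$.

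\textbf{Main obstacle.} I expect the hardest step to be the capillary coupling term combined with the choice of norms. The $-1$ norm for $\varphi$ must be paired correctly with the $L^2$ norm for $\bw$, so that the cross terms $\kappa(\phi^2\nabla\Delta\varphi,\bw)$ and $(\phi^2\bw,\nabla A_1^{-1}\varphi)$ either cancel or stay subcritical. If the $-1$ norm turns out to be too weak, my fallback is to estimate $\varphi$ in $H^1$ instead, testing with $A_1\varphi$. I would also check that the uniqueness class genuinely includes $\phi\in L^2(0,T;H^3)$, since that fallback relies on it.
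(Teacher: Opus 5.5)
Your primary route, with energy $|\bw|_{L^2}^2+\kappa\|\varphi\|_{-1}^2$, does not close, and the obstruction is structural rather than technical.

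\textbf{Why the $\|\varphi\|_{-1}$ route fails.} The only dissipation this energy produces is $\|\nabla\bw\|^2+\|\nabla\varphi\|^2$, so $\varphi$ is never controlled beyond $L^2(0,T;H^1)$. The capillary difference tested with $\bw$ is $\kappa\, r_0(\varphi,\phi^1,\bw)+\kappa\, r_0(\phi^2,\varphi,\bw)$. It contains $\int_{\mathcal{O}}\partial_i\varphi\,\partial_j\phi^1\,\partial_j\bw^i\,\mathrm{d}x$, which puts one derivative on $\varphi$ and one on $\bw$, both at their top dissipative level.

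\begin{itemize}
\item The best bound is $\|\nabla\phi^1\|_{L^\infty}\|\nabla\varphi\|\,\|\nabla\bw\|$, and Young then leaves $C\|\nabla\phi^1(t)\|_{L^\infty}^2\|\nabla\varphi\|^2$.
\item That coefficient is only integrable in time, not bounded. It also multiplies the dissipation $\|\nabla\varphi\|^2$ rather than the energy $\|\varphi\|_{-1}^2$. So neither absorption nor Gronwall works, not even after rescaling the two energies against each other.
\item The $\mu$-form does not help. After integrating by parts back onto $\bw$ and $\phi^2$, the $\phi^2\nabla\Delta\varphi$ piece reproduces the same product $\nabla\varphi\cdot\nabla\phi^2\cdot\nabla\bw$.
\item Nothing can cancel it. The transport term of the $\varphi$-equation tested with $A_1^{-1}\varphi$ is $(\bw\cdot\nabla\phi^1,A_1^{-1}\varphi)$, which is of much lower order: it involves $A_1^{-1}\varphi$ and no derivative of $\bw$.
\item Interpolating $\varphi$ between $H^{-1}$ and $H^1$ and $\bw$ between $L^2$ and $H^1$, this trilinear term is at best critical.
\end{itemize}

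A consistent low-order pairing would lower $\bw$ to $V_0'$ as well, by testing with $A_0^{-1}\bw$. With transport noise, however, the It\^o correction at that level is controlled only by the crude bound \eqref{eq-3.20aa}, not by the coercivity condition \eqref{Eqn-coercivity-2}. So that is not an easy way out here.

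\textbf{The fallback works and is the paper's proof.} Testing with $A_1\varphi$ is precisely the paper's proof of Theorem \ref{Thm-uniqueness-solution}.
\begin{itemize}
\item The functional is $U=\frac12|\bw|_{H_0}^2+\frac{\kappa}{2}\|\nabla\varphi\|^2$. Its $\varphi$-part comes from the Lions--Magenes energy identity in the triple $V_1\subset H_1\subset V_1'$, with $V_1=D(A_1^{3/2})$.
\item This is why the uniqueness class requires $\int_0^T\|(\bu^i,\phi^i)\|_{V_0\times V_1}^2\,\mathrm{d}s<\infty$, i.e.\ $\phi^i\in L^2(0,T;H^3)$. In 2D the existence result supplies this ($\beta=2$), so your worry is answered affirmatively.
\item The term $|\nabla\Delta\varphi|^2$ is rewritten through the zero-mean chemical potential difference $\underline{\mu}=A_2\varphi+\psi'(\phi^1)-\psi'(\phi^2)-\langle\psi'(\phi^1)-\psi'(\phi^2)\rangle$. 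In this step the potential contributions of the forcing cancel.
\item Combining this with $\kappa_0(A_2\varphi,\underline{\mu})$ for a small $\kappa_0>0$ produces the dissipation $\kappa(|\nabla\underline{\mu}|^2+\kappa_0|\Delta\varphi|^2)$.
\end{itemize}

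At this level no cancellation between coupling terms is needed. The terms $r_0(\varphi,\phi^1,\bw)$, $r_0(\phi^2,\varphi,\bw)$, $b_1(\bw,\phi^1,\varphi)$, $b_1(\bu^2,\varphi,\varphi)$ and the two potential terms are each subcritical. Via Ladyzhenskaya and Agmon they are bounded by small multiples of $\|\nabla\bw\|^2$, $|\Delta\varphi|^2$ and $|\nabla\underline{\mu}|^2$, plus $\mathcal{Y}_2(t)U(t)$ with $\mathcal{Y}_2\in L^1(0,T)$ almost surely.

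The noise is handled by \eqref{Eqn-coercivity-2}--\eqref{Eqn-coercivity-3} and the Lipschitz bound \eqref{g-Lipschitz-condition}. No condition on $\operatorname{div}\sigma_k$ is needed, because the stochastic integral only has to be a local martingale. The closing step (weight $e^{-\int_0^t\mathcal{Y}_2}$, stopping times, Gronwall, Fatou) is as in your plan.
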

The mathematical analysis of NSCHEs has been addressed in several papers, covering deterministic and stochastic cases, constant and non-constant densities, regular and singular potentials. Some of the main achievements in this topic are summarized below.

\noindent
\textit{State of the art for deterministic NSCHEs.}
Analogously to the classical theory of NSEs, definitive results are available in the deterministic setting; see, e.g., \cite{Abels_2009,Abels+Depner+Garcke_2013,Ciprian+Grasselli_2010,Gal+Grasselli+Miranville_2016,Giorgini+Miranville+Temam_2019,Giorgini+Temam_2020,Zhao_2019}.
In \cite{Abels_2009}, the author proved the existence of global weak solutions for a class of physical relevant and singular free energy densities. In particular, unique strong solutions exist globally in time in $2D$ and locally in time in $3D$ for "nice" initial data. 
Furthermore, in the absence of external forces, weak solutions converge to stationary states as $t \to \infty$.
In \cite{Giorgini+Miranville+Temam_2019}, the existence, uniqueness, and regularity of weak and strong solutions were proved for the singular potential \eqref{eqn-logarithmic-potential-psi_00}. Analogous results for the Landau and Flory-Huggins potentials were established in \cite{Giorgini+Temam_2020}, assuming the density is bounded from above and bounded away from zero, and that the viscosity depends on the fluid concentration $\phi$. The uniqueness of the solution in $2D$ was subsequently proved in \cite{Giorgini+Ngana+Tachim+Temam_2023}.
We also mention \cite{Gal+Grasselli+Miranville_2016}, where the existence of global weak solutions is proved for boundary conditions accounting for moving contact lines, and \cite{Liu+Shen_2003}, where a Fourier-spectral method for numerical approximation for the homogeneous NSCHEs was proposed. 
Finally, \cite{Tachim+Tone+Tone_2021} investigated the Pontryagin maximum principle for optimal control problems associated with the NSCH model. 

\noindent
\textit{State of the art for stochastic NSCHEs.}
To the best of our knowledge, the first stochastic analysis of the NSCHEs on a bounded $2D$ domain was presented in \cite{Tachim_2017}, proving the existence and uniqueness of probabilistic strong solutions. This was followed by several works \cite{Deugoue+Ngana+Medjo_2021,Deugoue+Ngana+Medjo_2023,Deugoue+Medjo_2018,Deugoue+TT_2018}.
In \cite{Deugoue+Ngana+Medjo_2021}, a local maximal solution in the PDE sense was established and this maximal solutions turn to be a global one in the $2D$ case. In \cite{Deugoue+Ngana+Medjo_2023}, convergence results were established. 
Furthermore, the existence of unique strong solutions in the probability and PDEs senses, along with the stability results, were achieved in \cite{Deugoue+Medjo_2018} and \cite{Deugoue+TT_2018} for 3D globally modified NSCHEs.
However, we are not aware of any results dealing with the analysis of the NSCHEs with transport-type noise, as presented in system \eqref{eqn-stochastic-CHNSEs}. Finally, let us mention a paper \cite{Di Primio+Scarpa+Zanella_2025} about stochastic Allen-Cahn-Navier-Stokes Equations. This and the present  papers are similar in that in both we consider a coupled system consisting the NSEs and another one. But the second part of this system is different.

\medskip

\noindent
The purpose of this article is to provide a deep analysis of the Stochastic  NSCHEs perturbed by gradient, or transport, type noise, see the right-hand side of the first equation in \eqref{eqn-stochastic-CHNSEs},which constitutes a more general class of diffuse-interface models that is widely regarded as more physically relevant.
The main contribution of this work is the development of a unified framework that combines two complementary approaches, 
building on the results \cite{Mik+Roz_2005} and \cite{Brz+Motyl_2013}. This combination allows us to establish the existence of solutions for NSCHEs perturbed by gradient-type noise, a setting that cannot be treated by either method alone.

\medskip
\noindent
Let us make  two comments about our approach.  Firstly, as in those two papers, we consider a sequence of  approximating problems and show the tightness of the laws of the corresponding solutions on a certain non-metric topological space $\bZ_T$. Unlike \cite{Brz+Motyl_2013}, we do not use the Jakubowski-Skorokhod Theorem, see \cite[Theorem 2]{Jakubowski_1998}, to construct a new probability space and a sequence of random elements on this new probability space that converge point-wise to a another random element, which at the end is proved to be a solution. Instead, we use a generalization of the Prohorov Theorem, see
Proposition \ref{prop-Prohorov-general}, which is based on a discussion on \cite[p. 174]{Jakubowski_1998}, 
to prove that the laws above have a sequence that are weakly convergent, in a strong sense described in Proposition \ref{prop-Prohorov-general},  to a probability measure  $\mathbb{P}_T$ 
on the above mentioned  space $\bZ_T$. This part of the argument, to the best of our understanding, was not made explicit in \cite{Mik+Roz_2005}. 
Following the steps of \cite{Mik+Roz_2005}, we prove that the canonical process 
on the space $\bZ_T$ is a martingale solution of our problem with respect to the measure $\mathbb{P}_T$.  
Secondly, the approach of \cite{Brz+Motyl_2013} by the first named author and Motyl  does not seem to apply here: owing to the coupled nature of our system, we are unable to derive a priori estimates for moments of the velocity field higher than the second order.

\medskip
\noindent

The structure of the paper is as follows. Section \ref{Sect-Functional analysis} introduces the notation and preliminaries on frequently used function spaces, operators, and potentials. In Section \ref{sec-stochastic preliminaries} we present some useful preliminary results on stochastic processes, define the Hilbert-Schmidt operators, and state the assumptions on the coefficients related to system \eqref{eqn-stochastic-CHNSEs}. Section \ref{sec-Reformulation of the problem} introduces the notion of weak solutions for the problem under consideration, while the main results are stated in Theorems \ref{First-main-result} and \ref{First-main-result-uniqueness} under the abstract assumptions given in Section \ref{Ass-Abstract formulation}. Sections \ref{subsec-Galerkin} and \ref{Sec-Tightness} are devoted to the existence and properties of global strong solutions of the Galerkin approximation problem. Further properties of some stochastic processes defined on the canonical sample space $\bZ_T$ are collected in Section \ref{Limit-meaures}. In this same section, we also state and prove Theorem \ref{thm-5.3}, which forms the central core of the proof of Theorem \ref{First-main-result}. Section \ref{sec-proof-main-result} is devoted to the proof of our main result, namely Theorem \ref{First-main-result}. In Section \ref{Sect-approximation of g} we investigate the application of the abstract results from Section \ref{Ass-Abstract formulation} to the stochastic Navier-Stokes-Cahn-Hilliard equations, while in Section \ref{eqn-uniqueness-weak-solution-2-dim} we prove the pathwise uniqueness of weak solutions in dimension 2. Finally, the appendices gather some auxiliary results and technical lemmas, propositions, and corollaries used throughout the paper.

 \section{Mathematical setting}\label{Sect-Functional analysis}
\subsection{Notation and Preliminaries}
Here we collect the  necessary notation and information for the mathematical analysis of the problem \eqref{eqn-stochastic-CHNSEs}-\eqref{eqn-initial conditions-NSCHs}. 
Normed vector spaces will be denoted by capital letters, e.g. $X$ and  $Y$. 
The norm in a vector space $X$ will be usually denoted  by $\Vert \cdot \Vert_X$.
Hilbert spaces will be denoted also by capital letters, e.g. $H$ and $K$. The inner product in a Hilbert space $H$ will be usually denoted  by $(\cdot, \cdot)_H$, or simply by $(\cdot,\cdot)$. 
\newline
For two normed spaces $X$ and $Y$, $\mathscr{L}(X,Y)$ denotes the spaces of bounded linear operators from $X$ to $Y$ and its operator norm is denoted by $\Vert \cdot \Vert_{\mathscr{L}(X,Y)}$, whereas by $X^\prime$, we denote the dual space of $X$, i.e. $X^\prime=\mathscr{L}(X,\mathbb{R})$, and by $\duality{\cdot}{\cdot}{X}{X^\prime}$ (or simply by $\langle \cdot,\cdot \rangle$ if there is not confusion) we will denote the duality pairing between $X^\prime$ and $X$. Similar notation is used when $X$ is a topological normed vector space. 
If $X$ is a topological space then by $\mathscr{B}(X)$ we denote the Borel $\sigma$-field on $X$. 
\newline
If $K$ and $H$ are two separable Hilbert spaces, we use the notation $\mathscr{T}_2(K,H)$ for the space of Hilbert-Schmidt operators from $K$ to $H$. By $\mathcal{C}_b(\domO,\mathbb{R}^d)$, we denote the space of continuous and bounded $\mathbb{R}^d$-valued functions on $\domO$.
If $p \in [1,\infty]$, then by $L^p(\domO)=L^p(\domO,\mathbb{R})$, resp. $\mathbb{L}^p(\domO) \coloneqq L^p(\domO,\mathbb{R}^d)$, we denote the Lebesgue space of all (equivalence classes) of Lebesgue-measurable $\mathbb{R}$, resp. $\mathbb{R}^d$-valued functions defined on $\domO$  endowed with the classical norm. When $p=2$, these Banach spaces are Hilbert spaces. If additionally $k \in \mathbb{R}_+$, then we will denote by
$W^{k,p}(\domO)$, resp. $\mathbb{W}^{k,p}(\domO)\coloneqq W^{k,p}(\domO,\mathbb{R}^d)$, the Sobolev space (of order $k$), of $\mathbb{R}$, resp. $\mathbb{R}^d$-valued  functions on $\domO$. 
By $W^{k,p}_0(\domO)$, resp. $\mathbb{W}^{k,p}_0(\domO)$ we denote the closure of the space $\mathcal{C}_0^\infty(\domO)$, resp. $\mathcal{C}_0^\infty(\domO,\mathbb{R}^d)$, in the space 
 $W^{k,p}(\domO)$, resp. $\mathbb{W}^{k,p}(\domO)$, with the norm inherited from the larger space. 
 When $p=2$, we use notation  $H^k(\domO)\coloneqq W^{k,2}(\domO)$,
  $\mathbb{H}^k(\domO)\coloneqq \mathbb{W}^{k,2}(\domO)$, 
 $H^k_0(\domO)\coloneqq W^{k,2}_0(\domO)$, and  $\mathbb{H}^k_0(\domO)\coloneqq \mathbb{W}^{k,2}_0(\domO)$. By $|\cdot|_{L^2}$ and $(\cdot,\cdot)$ we will denoted the norm and the inner product in $L^2(\domO)$.
\begin{notation}\label{not-D_i}
The weak partial derivative in the direction $i$ will be denoted by $\frac{\partial }{\partial x_i}$, $\partial_i$ or simply by $D_i$. By $\nabla $ we will denote the weak gradient. 
\\
In this paper, we use the convention that the set $\mathbb{N}$ of natural numbers is the one whose  smallest element is $1$. 
\\
If $Y$ is a Hilbert space, then by  $\ell^2(Y)=\ell^2(\mathbb{N},Y)$, we denote the Hilbert  space of all $Y$-valued  sequences $y=(y_n)_{n\in\mathbb{N}}$ such that $\sum\limits_{n=1}^\infty \Vert y_n \Vert_Y^2<\infty$,  i.e. 
\begin{equation}\label{eqn-l^2Y}
\ell^2(Y) \coloneqq \left\{ y=(y_n)_{n\in\mathbb{N}}: \sum\limits_{n=1}^\infty \Vert y_n \Vert_Y^2<\infty  \right\},
\end{equation}
endowed with the natural  inner product 
\[
(y,y^\prime)_{\ell^2}=\sum\limits_{n=1}^{\infty} (y_n, y_n^\prime)_Y, \;\; y=(y_n),\, y^\prime=(y_n^\prime)\in \ell^2(\mathbb{N},Y).
\]
In the special case $Y=\mathbb{R}$, the space $\ell^2(\mathbb{N},\mathbb{R})$ is denoted by  $\ell^2$. 
\end{notation}

\subsection{Some useful inequalities}

Let us recall the classical Sobolev or Gagliardo-Nirenberg interpolation inequality, which will be useful in the sequel; see, e.g., \cite{Brezis_2011} and references therein. There exists a constant $C=C(\domO,p)>0$ such that 
\begin{equation}\label{Gagliardo-Nirenberg-inequality}
\begin{aligned}
\Vert \phi \Vert_{L^p} \leq C \lvert \phi \rvert_{L^2}^{\frac2p} \Vert \phi \Vert_{H^1}^{1 - \frac2p},\;\;\; \forall \phi \in H^1(\domO), &\mbox{ if } p \geq 2 \; \mbox{ and } \;  d=2,
\\
\Vert \phi \Vert_{L^p} \leq C \lvert \phi \rvert_{L^2}^\frac{6-p}{2p} \Vert \phi \Vert_{H^1}^\frac{3p - 6}{2p},\;\;\; \forall \phi \in H^1(\domO), &\mbox{ if } 2 \leq p \leq 6 \; \mbox{ and } \;  d=3.
\end{aligned}
\end{equation}
We also recall the following Agmon inequalities, see, e.g., \cite[Chapter II, (1.40)]{Temam_1997},
\begin{equation}\label{eq-Agmon's-inequalities}
\begin{aligned}
\Vert \phi \Vert_{L^\infty}\leq C \vert \phi \rvert_{L^2}^{1/2} \Vert \phi \Vert_{H^2}^{1/2},\;\; \forall \phi \in H^2(\domO), \mbox{ if } d=2,
\\
\Vert \phi \Vert_{L^\infty}\leq C \Vert \phi \Vert_{H^1}^{1/2} \Vert \phi \Vert_{H^2}^{1/2},\;\; \forall \phi \in H^2(\domO), \mbox{ if } d=3.
\end{aligned}
\end{equation}

\subsection{Notation related to the Navier-Stokes part}\label{subsec-notation related to the Navier-Stokes part}

Next, we introduce the function spaces that will be used throughout the paper, beginning with the notation for the Navier-Stokes component of the system \eqref{eqn-stochastic-CHNSEs}.
\begin{equation}\label{eqn-spaces-NSEs}
\begin{aligned}
\mathcal{V} &\coloneqq \{\bv \in \mathcal{C}_0^\infty(\domO,\mathbb{R}^d): ~ \diver \bv= 0\},
\\
\StokesH&\coloneqq \text{the closure of} ~ \mathcal{V} ~ \mbox{ in } ~ \mathbb{L}^2(\domO),
   \\
\StokesV&\coloneqq \text{the closure of} ~ \mathcal{V} ~ \mbox{ in } ~ \mathbb{H}^1(\domO).
\end{aligned}
\end{equation}
By $\lvert \cdot \rvert_{\StokesH}$ and $(\cdot,\cdot)_{\StokesH}\coloneqq (\cdot,\cdot)$ we will denote the norm and the inner product in $\StokesH$. 
 We endow the space $\StokesV$ with the norm and inner product inherited from $\mathbb{H}^1(\domO)$, i.e.
\begin{equation*}
(\bu,\bv)_{\StokesV}\coloneqq (\bu,\bv) + (\nabla \bu,\nabla \bv), \quad \bu,\,\bv \in \StokesV,
\end{equation*}
and the norm induced by the inner product $(\cdot,\cdot)_{\StokesV}$ in $\StokesV$ is then given by
\[
\Vert \bv \Vert_{\StokesV}^2= \lvert \bv \rvert_{\StokesH}^2 + \lvert \nabla \bv \rvert_{\mathbb{L}^2}^2, \quad \bv \in \StokesV.
\]
Notice that the spaces $\StokesH$ and $\StokesV$ can also be characterized in a different way, see \cite[Theorems 1.1/1.4 and 1.1/1.6]{Temam_2001}:
   \begin{align*}
      \StokesH&= \{\bv \in \mathbb{L}^2(\domO): \diver \bv=0, \; \bu \cdot \bn=0 \mbox{ on } \partial  \domO \}, \\
      \StokesV&= \{\bv \in \mathbb{H}_0^1(\domO): \diver \bv=0\}=\mathbb{H}_0^1(\domO) \cap \StokesH.
    \end{align*}
For every $k \in[0,\infty)$, we introduce the following standard scale of Hilbert spaces
  \begin{equation}\label{eqn-Vk0-space}
    \Hsol{}{}{k}{\sol}= \mbox{ the closure of } \mathcal{V} \mbox{ in }  \mathbb{H}^{k}(\domO).
  \end{equation}
Arguing as in the proof of \cite[Theorem 1.1]{Temam_2001} one can show that
\[
\Hsol{}{}{k}{\sol}= \mathbb{H}^{k}(\domO) \cap \StokesH, \quad k \in[0,\infty).
\]
In what follows, we will denote the dual of the space $\Hsol{}{}{k}{\sol}$ by $\Hsol{}{}{-k}{\sol}$ and we choose and fix a real number $k_0>0$ such that
\begin{equation}\label{eqn-k_0}
    k_0> \frac d2 + 1
\end{equation}
and put 
\begin{equation}\label{eqn-U_0}
\rU_0 \coloneqq \Hsol{}{}{k_0}{\sol}.
\end{equation}
Note that $\rU_0 \embed \StokesH$ and by identifying $\StokesH$ with its dual, we have the following Gelfand triple:
\begin{equation}\label{eqn-U_0-H_0}
\rU_0  \embed \StokesH \cong \StokesH^\prime \embed  \rU_0^\prime.
\end{equation}
Moreover, we have the following  generalized Gelfand triple: 
  \begin{equation}\label{eqn-Gelfand triple}
    \rU_0 \embed  \StokesV \embed  \StokesH \cong \StokesH^\prime \embed  \StokesVp \embed \rU_0^\prime
  \end{equation}
and the following identity holds: for all $\bv \in \rU_0$ and $\bu \in \StokesVp$,
     \begin{equation}\label{eqn-duality-identity}
         \duality{\bu}{\bv}{\StokesV}{\StokesVp}=\duality{\bu}{\bv}{\rU_0}{\rU_0^\prime}.
     \end{equation}
Next, in view of \eqref{eqn-k_0}, we infer that  the following Sobolev embeddings also hold
   \begin{equation}\label{eq-H^s-1-to-C_b}
      \mathbb{H}^{k_0 - 1}(\domO) \embed  \mathcal{C}_b(\domO,\mathbb{R}^d) \embed  \mathbb{L}^\infty(\domO).
   \end{equation}
Let us denote by 
\begin{equation}\label{eqn-LHP}
\P: \mathbb{L}^2(\domO) \to \StokesH,
\end{equation}
the orthogonal projection usually called the Helmholtz-Leray projection.
It is known, see \cite[Remark I.1.6]{Temam_2001}, that $\P$ maps continuously the Hilbert space $H^1(\domO)$ into itself.\\
From now on, we denote by $\Stokes$ the Stokes operator defined by 
\begin{equation}\label{eqn-Stokes operator}
\left\{
\begin{aligned}
D(\Stokes) &\coloneqq \mathbb{H}^2(\domO) \cap \StokesV,
\\
\Stokes u &\coloneqq -\P \Delta u, \;\; u \in  D(\Stokes).
\end{aligned}
\right.
\end{equation}
It is well known, see \cite[Chapter I, Section 2.6]{Temam_1979}, that $\Stokes$ is a nonnegative self-adjoint operator in the Hilbert space $\StokesH$. 

\subsection{Notation related to the Cahn-Hilliard part}\label{subsec-notation related to the Cahn-Hilliard part}

Now, we discuss  the notation related to the Cahn-Hilliard part of the system \eqref{eqn-stochastic-CHNSEs}.  
According to the  generalized Poincar\'{e} inequality, see \cite[Chapter II, Section 1.4]{Temam_1997}, the function 
  \begin{equation}\label{eqn-H^1-norm-special}
  H^1(\domO) \ni    \phi  \mapsto \left(\lvert \nabla \phi \rvert_{\mathbb{L}^2}^2 + \left \lvert  \langle \phi \rangle\right \rvert^2 \right)^{1/2} \in [0,\infty)
  \end{equation}
is a norm on $H^1(\domO)$, equivalent to the standard $H^1(\domO)$ norm. \\
Furthermore, the function 
 \begin{equation}\label{eqn-H^2-norm-special}
     H^2(\domO) \ni \phi  \mapsto \left( \lvert -\Delta \phi \rvert_{L^2}^2 + \left \lvert  \langle \phi \rangle \right \rvert^2\right)^{1/2} \in [0,\infty)
\end{equation}
is a norm on $H^2(\domO)$, equivalent to the standard graph norm. Here, $-\Delta$ is the minus Laplacian with the Neumann boundary conditions. \\
Now let us recall the mass conservation of the phase field $\phi$, see, \eqref{eq-mass-conservation}, i.e.
  \begin{equation*}
    \langle \phi(t) \rangle= \langle \phi(0) \rangle, \quad \forall t>0.
  \end{equation*}
For the remainder of the paper, we will assume that
  \begin{equation*}
   \langle \phi(0) \rangle=0,
  \end{equation*}
since, up to a constant shift of $\phi$, the initial mean can be taken to be zero, and then by the mass conservation \eqref{eq-mass-conservation} it remains zero for all positive times.\\
Next, we also introduce the function spaces that will be used throughout the paper.
\begin{equation}
\begin{aligned}
\zero{L}{2} (\domO)&\coloneqq \{\phi \in L^2(\domO): \langle \phi \rangle=0\},
 \\
\zero{H}{s}(\domO)&\coloneqq \{\phi \in H^s(\domO): \langle \phi \rangle=0\},\;\; s\in\{1,2,3\}.
\end{aligned}
\end{equation}
We denote by $\Aone$ the minus Neumann Laplacian on mean-zero functions defined by 
\begin{equation}\label{eqn-Aone}
\begin{cases}
D(\Aone)\coloneqq \{\phi \in H^2(\domO) \cap \zero{L}{2}(\domO): \; \partial_{\bn} \phi= 0 \mbox{ on }  \partial \domO\},
\\
\Aone \phi\coloneqq - \Delta \phi, \;\; \phi \in D(\Aone).
\end{cases}
\end{equation}
Observe that 
     \begin{equation}\label{eqn-\Aone-integral=0}
       \fint_{\domO} \Aone \phi(x)\,\d x =0, \;\; \phi \in D(\Aone)
     \end{equation}
and  that $\Aone$ is a definite positive, unbounded, and self-adjoint operator on $\zero{L}{2}(\domO)$ with the inverse $\Aone^{-1}$ being compact.
In particular, there exists an orthonormal basis (ONB) $\{\hat{e}_j\}_{j=1}^\infty \subset \Vtwo$ of $\zero{L}{2}(\domO)$  and an increasing sequence $\{\alpha_j\}_{j=1}^\infty$ of the eigenvalues of $\Aone$, with $\alpha_1>0$ and $\alpha_j \nearrow \infty$ as $j \to \infty$ such that $\Aone \hat{e}_j= \alpha_j \hat{e}_j$, $j=1,2,\ldots$ \\
Moreover, the operator $\Aone^2$ defined by
\begin{equation}\label{eqn-Aone^2}
\begin{cases}
D(\Aone^2 + I)= D(\Aone^2)\coloneqq \{\phi \in \Vtwo: \; \Aone \phi \in \Vtwo\},
\\
\Aone^2 \phi\coloneqq \Delta^2 \phi, \;\; \phi \in D(\Aone^2),
\end{cases}
\end{equation}
is also self-adjoint on $\zero{L}{2}(\domO)$, see \cite[Theorem 13.13]{Rudin_1991}. In particular, we see that
\begin{equation}
\begin{aligned}
D(\Aone^2)
&=\{\phi \in \Vtwo: \; \Aone \phi \in \Vtwo\}
\\
&= \{\phi \in H^2(\domO) \cap \zero{L}{2}(\domO):\; \Delta \phi \in H^2(\domO) \cap \zero{L}{2}(\domO),\; \partial_{\bn} \phi= \partial_{\bn} \Delta \phi= 0 \mbox{ on } \partial \domO \}
\\
&= \{\phi \in H^4(\domO) \cap \zero{L}{2}(\domO): \partial_{\bn} \phi= \partial_{\bn} \Delta \phi= 0 \mbox{ on } \partial \domO \}.
\end{aligned}
\end{equation}
Following   \cite[Remark 4.6 in Chapter 4, p.253]{Lions+Magenes_1972_vol-1}, we introduce the following sets:
\begin{align}\label{eqn-H-LM-R.4.6}
\newone{H} &\coloneqq \zero{H}{1}(\domO)= H^1(\domO) \cap \zero{L}{2}(\domO),
\\
\label{eqn-V-LM-R.4.6}
\newone{V} &\coloneqq \{\phi \in \newone{H}:  \Delta \phi \in H^1(\domO), \; \partial_{\bn} \phi=  0 \mbox{ on } \partial \domO\}.
\end{align}
We endow the space $\newone{H}$ with the inner product $(\cdot,\cdot)_{\newone{H}}$ and norm $\lvert \cdot \rvert_{\newone{H}}$ defined, resp., by 
\begin{align*}
(\phi,\psi)_{\newone{H}}\coloneqq (\nabla \phi,\nabla \psi),\;\;\quad \lvert \phi \rvert_{\newone{H}}\coloneqq \lvert \nabla \phi \rvert_{\mathbb{L}^2},\;\; \forall \phi,\,\psi \in \newone{H}.
\end{align*}
We endow the space $\newone{V}$ with the following inner product
\begin{equation*}
(\phi,\psi)_{\newone{V}}\coloneqq (\phi,\psi)_{\newone{H}} + (\Delta \phi,\Delta \psi)_{\newone{H}},\;\; \phi,\,\psi \in \newone{V}.
\end{equation*}
We will denote the norm induced by the above inner product by $\Vert \cdot \Vert_{\newone{V}}$, i.e.
\[
\Vert \phi \Vert_{\newone{V}}\coloneqq (\lvert \phi \rvert_{\newone{H}}^2 + \lvert \Delta \phi \rvert_{\newone{H}}^2)^{1/2},\;\;\phi \in \newone{V}.
\]
Recall, see \cite{Temam_2001}, that $D(\Aone^{1/2})=\newone{H}=H^1(\domO) \cap \zero{L}{2}(\domO)$. For future reference, let us recall, see \cite{Lions+Magenes_1972_vol-1} that the dual of the space $H^1(\domO)$ is not a subset of the space of distributions on  $\domO$. The reason for this fact is that the space $C_0^\infty(\domO)$ is not dense in $H^1(\domO)$. Furthermore, notice that
$\newone{V}= D(\Aone^{3/2})$.\\
Identifying the space $\newone{H}$ with its dual, as above, we construct the following Gelfand triple 
\begin{equation}\label{eqn-Gelfand triple-abstract-2}
\newone{V} \embed \newone{H} \cong \newonep{H} \embed \newonep{V},    
\end{equation}
so that 
\[
\duality{\phi}{\psi}{\newone{V}}{\newonep{V}}= (\phi, \psi)_{\newone{H}}\;\; \mbox{ if } \phi \in \newone{H} \mbox{ and } \psi \in \newone{V}.
\]
Define next a bilinear form 
\[
\tilde{a}_1: \newone{V} \times \newone{V} \ni (\phi, \psi) \mapsto (\phi,\psi)_{\newone{H}} + (\Delta \phi,\Delta  \psi)_{\newone{H}} \in \mathbb{R}.
\]
Since this form  is continuous and $\newone{V}$-coercive, by the Lax-Milgram Theorem, the 
linear map 
\[
\newone{\tilde{\mathscr{A}}}: \newone{V} \to \newonep{V}
\]
defined, for all $\phi \in \newone{V}$ and $\zeta \in  \newonep{V}$,  by 
\begin{equation}\label{eqn-A_1-script}
\newone{\tilde{\mathscr{A}}}\phi= \zeta \;\; \iff 
\tilde{a}_1(\phi,\psi)= \duality{\zeta}{\psi}{\newone{V}}{\newonep{V}}\;\; \mbox{ for every } \psi\in \newone{V},
\end{equation}
is an isomorphism. Moreover, following \cite{Lions+Magenes_1972_vol-1}, one can prove that 
\[
\newone{\tilde{\mathscr{A}}}\phi= \Delta^2\phi + \phi, \;\; \phi \in \newone{V}.
\]
We finally put 
\[
\newone{\mathscr{A}}: \newone{V} \ni  \phi \mapsto \newone{\tilde{\mathscr{A}}}\phi -\phi= \Delta^2 \phi \in \newonep{V}
\]
and observe also that  $\newone{\mathscr{A}}$  can be decomposed as
\[
\newone{\mathscr{A}}= \Athree \circ \Atwo, 
\]
where
\begin{align*}
    \Atwo: \newone{V}  \ni \phi \mapsto -\Delta  \phi \in \newone{H} \mbox{ and }
\Athree: \newone{H}  \ni \phi \mapsto -\Delta  \phi \in \newonep{V}.   
\end{align*}
We conclude this subsection by also introducing the following Cartesian product spaces, which will be used throughout this work:
\begin{align}\label{eqn-spaces full}
\nU= \rU_0 \times \newone{V}, \;\;\mathbb{H}
= \StokesH \times \newone{H}, \mbox{ and }
\mathbb{V}= \StokesV \times \newone{V}.
\end{align}
These spaces also form the following Gelfand triple:
\begin{equation}
\label{eqn-Gelfand-full}  \nU \embed \nHB\cong \mathbb{H}^\prime \embed \nU^\prime.
\end{equation}
\subsection{The trilinear forms \(b_0\), \(b_1\) and \(r_0\)}
Let us introduce some trilinear maps and their associated bilinear operators, which play a crucial role in the analysis of system \eqref{eqn-stochastic-CHNSEs}. 
\begin{definition}
We define maps  $\bb_0$ and $r_0$ by
    \begin{equation}\label{eqn-b form}
      \bb_0: \StokesV \times \StokesV \times \StokesV\ni(\bu,\bv,\bw)\mapsto \sum_{i,j= 1}^{d} \int_{\domO} \bu^i(x)\, \partial_i \bv^j(x)\,\bw^j(x) \in \mathbb{R}
    \end{equation}
and
   \begin{align}\label{r-trilinear-form}
       r_0: W^{1,4}(\domO) \times W^{1,4}(\domO) \times \StokesV \ni  (\phi,\psi, \bu)&\mapsto  \sum_{i,j=1}^d \int_{\domO} \partial_i \phi(x)\, \partial_j \psi(x)\, \partial_j \bu^i(x)\,\d x \in \mathbb{R}.
      \end{align}
By the H\"older inequality, there exists a generic constant $C>0$ such that 
   \begin{equation}\label{eqn-b_0-trilinear-estimate-weak}
      \lvert \bb_0(\bu,\bv,\bw) \rvert \leq C \Vert \bu \Vert_{\mathbb{L}^4} \lvert \nabla\bw \rvert_{\mathbb{L}^2} \Vert \bv \Vert_{\mathbb{L}^4}, \;\;\forall \bu,\,\bv,\,\bw \in \StokesV. 
   \end{equation}
   and
   \begin{equation}\label{eqn-r_0-trilinear-estimate}
      \lvert r_0(\phi,\psi,\bu) \rvert \leq C \Vert \bu \Vert_{\StokesV} \Vert \nabla \phi \Vert_{\mathbb{L}^4} \Vert \nabla \psi  \Vert_{\mathbb{L}^4} , \;\;\forall \bu \in \StokesV,\, \phi,\, \psi \in W^{1,4}(\domO).
   \end{equation}
In view of the Gagliardo-Nirenberg inequality \eqref{Gagliardo-Nirenberg-inequality}, employing a standard density argument, we deduce the following inequality and Lemma
   \begin{equation}\label{eqn-b_0-trilinear-estimate}
      \lvert \bb_0(\bu,\bv,\bw) \rvert  \leq  C \lvert \bu \rvert_{\StokesH}^{1 -\frac d4} \lvert \nabla \bu \rvert_{\mathbb{L}^2}^{\frac d4} \lvert \bv \rvert_{\StokesH}^{1 - \frac d4} \lvert \nabla \bv \rvert_{\mathbb{L}^2}^{\frac d4} \lvert \nabla \bw \rvert_{\mathbb{L}^2}, \;\;\;\forall \bu,\,\bv,\,\bw \in \StokesV,
   \end{equation}
\end{definition}
\begin{lemma}\label{lem-r form}
There exists $C=C(\domO)>0$ such that 
for all  $\phi,\, \psi \in \zero{H}{2}(\domO)$, and $\bu \in \StokesV$, 
   \begin{equation}\label{r-estimate}
      \lvert r_0(\phi,\psi,\bu) \rvert \leq C\lvert  \phi \rvert_{\newone{H}}^{1 - \frac d4} \Vert \phi \Vert_{\zero{H}{2}}^{\frac d4} \lvert  \psi \rvert_{\newone{H}}^{1 - \frac d4} \Vert \psi \Vert_{\zero{H}{2}}^{\frac d4} \lvert \nabla \bu \rvert_{\mathbb{L}^2}.
  \end{equation}
\end{lemma}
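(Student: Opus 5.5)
The plan is to start from the Hölder bound \eqref{eqn-r_0-trilinear-estimate},
\[
\lvert r_0(\phi,\psi,\bu)\rvert \le C \lvert \nabla \bu\rvert_{\mathbb{L}^2}\,\Vert \nabla\phi\Vert_{\mathbb{L}^4}\Vert\nabla\psi\Vert_{\mathbb{L}^4},
\]
where $\Vert\bu\Vert_{\StokesV}$ has been replaced by $\lvert\nabla\bu\rvert_{\mathbb{L}^2}$. This replacement is legitimate because the integrand of $r_0$ involves only $\partial_j\bu^i$, so Hölder's inequality directly gives $\lvert \nabla\bu\rvert_{\mathbb{L}^2}$; alternatively one may invoke Poincaré's inequality on $\StokesV\subset\mathbb{H}^1_0$. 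The remaining task is to interpolate the $\mathbb{L}^4$ norm of a gradient between $\lvert\phi\rvert_{\newone{H}}=\lvert\nabla\phi\rvert_{\mathbb{L}^2}$ and $\Vert\phi\Vert_{\zero{H}{2}}$.

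For $\phi\in\zero{H}{2}(\domO)$, each component $\partial_i\phi$ lies in $H^1(\domO)$. We apply the Gagliardo--Nirenberg inequality \eqref{Gagliardo-Nirenberg-inequality} with $p=4$ to $\partial_i\phi$. For $d=2$ the exponents are $\tfrac12,\tfrac12$, and for $d=3$ they are $\tfrac{6-4}{8}=\tfrac14$ and $\tfrac{12-6}{8}=\tfrac34$. In both cases this is exactly $1-\tfrac d4$ and $\tfrac d4$. Hence
\[
\Vert\partial_i\phi\Vert_{L^4}\le C\lvert\partial_i\phi\rvert_{L^2}^{1-\frac d4}\Vert\partial_i\phi\Vert_{H^1}^{\frac d4}\le C\lvert\nabla\phi\rvert_{\mathbb{L}^2}^{1-\frac d4}\Vert\phi\Vert_{H^2}^{\frac d4}.
\]
Summing over $i$ gives $\Vert\nabla\phi\Vert_{\mathbb{L}^4}\le C\lvert\phi\rvert_{\newone{H}}^{1-\frac d4}\Vert\phi\Vert_{\zero{H}{2}}^{\frac d4}$. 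Here we use that on mean-zero functions the $H^2$ norm is equivalent to the norm $\Vert\cdot\Vert_{\zero{H}{2}}$ by \eqref{eqn-H^2-norm-special}, and that $\Vert\partial_i\phi\Vert_{H^1}\le \Vert\phi\Vert_{H^2}$. The same estimate holds for $\psi$, and inserting both into the Hölder bound yields \eqref{r-estimate}.

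The only point needing care is that $\nabla\phi$ does not have zero mean, so we must use the full $H^1$-norm version of Gagliardo--Nirenberg on a bounded smooth domain, which is exactly how \eqref{Gagliardo-Nirenberg-inequality} is stated, rather than a homogeneous form. A further subtlety is that $r_0$ was defined on $W^{1,4}$. This is consistent, since $H^2\embed W^{1,4}$ for $d\le 3$ and the estimate just obtained shows it. If needed, one first proves the estimate for smooth functions and extends it by density, as the paper does for \eqref{eqn-b_0-trilinear-estimate}. No step appears to be a genuine obstacle.
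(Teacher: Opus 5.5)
Your proposal is correct and follows the route the paper indicates: the Hölder bound \eqref{eqn-r_0-trilinear-estimate}, with $\lvert\nabla\bu\rvert_{\mathbb{L}^2}$ in place of $\Vert\bu\Vert_{\StokesV}$, combined with the Gagliardo--Nirenberg inequality \eqref{Gagliardo-Nirenberg-inequality} at $p=4$ applied to $\nabla\phi$ and $\nabla\psi$, which gives exactly the exponents $1-\frac d4$ and $\frac d4$. One small remark: you do not need \eqref{eqn-H^2-norm-special}, because $\Vert\cdot\Vert_{\zero{H}{2}}$ is just the $H^2$-norm restricted to mean-zero functions; moreover, that norm equivalence holds only under the Neumann condition $\partial_{\bn}\phi=0$, which general elements of $\zero{H}{2}(\domO)$ need not satisfy.
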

%
Recall that $\bb_0$ is the well-known trilinear form used in the mathematical analysis of the NSEs, see e.g. \cite{Temam_1979}.\\
From the above inequalities  \eqref{eqn-b_0-trilinear-estimate-weak} and \eqref{r-estimate}, we infer the following results, see e.g. \cite[Section II.1.2]{Temam_1979} for the first part.
\begin{corollary}\label{cor-B_0 and R} 
There exist  bounded bilinear maps
   \begin{equation}\label{eqn-B_0}
      \bB_0: \StokesV \times \StokesV \to \StokesVp
    \end{equation}
\begin{equation}\label{eqn-R_0}
 \bR_0:   \zero{H}{2}(\domO) \times \zero{H}{2}(\domO) \to \StokesVp,
\end{equation}
such that, respectively, 
     \begin{align}\label{properties-B_0}
         \duality{\bB_0(\bu,\bv)}{\bw}{\StokesV}{\StokesVp}&= \bb_0(\bu, \bv, \bw),\;\;\forall \bu,\,\bv,\,\bw \in \StokesV, 
     \end{align}
and  
    \begin{equation}\label{eq-2.12}
      \duality{\bR_0(\phi,\psi)}{\bu}{\StokesV}{\StokesVp}= r_0(\phi,\psi,\bu), \;\;\;\forall \phi,\,\psi \in \zero{H}{2}(\domO),\,\bu \in \StokesV.
    \end{equation}
Moreover, the operator $\bR_0$ can be uniquely extended to a bounded bilinear operator (denoted by the same letter),
   \begin{equation*}
     \bR_0: \newone{H} \times  \newone{H} \to \rU_0^\prime.
   \end{equation*}
In particular, there exists a constant $C$ depending only on $\domO$ such that
     \begin{equation}\label{eqn-R_0-k_0}
         \Vert \bR_0(\phi,\psi) \Vert_{\rU_0^\prime} \leq C \lvert \phi \rvert_{\newone{H}} \lvert \psi \rvert_{\newone{H}}, \; \phi,\,\psi \in \newone{H}.
     \end{equation}
\end{corollary}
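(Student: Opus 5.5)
We prove Corollary \ref{cor-B_0 and R} by the Riesz/duality construction for each part, using the trilinear estimates already established.

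\textbf{The operator $\bB_0$.} Fix $\bu,\bv\in\StokesV$. By \eqref{eqn-b_0-trilinear-estimate-weak} and the embedding $\StokesV\embed\mathbb{L}^4(\domO)$ (valid for $d\le 3$), the map $\bw\mapsto \bb_0(\bu,\bv,\bw)$ is a bounded linear functional on $\StokesV$, with norm at most $C\Vert\bu\Vert_{\StokesV}\Vert\bv\Vert_{\StokesV}$. We define $\bB_0(\bu,\bv)\in\StokesVp$ to be this functional. Bilinearity of $\bB_0$ follows from the trilinearity of $\bb_0$, and boundedness from the estimate. This gives \eqref{properties-B_0}, exactly as in \cite[Section II.1.2]{Temam_1979}.

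\textbf{The operator $\bR_0$ on $\zero{H}{2}(\domO)$.} For $\phi,\psi\in\zero{H}{2}(\domO)$, \eqref{r-estimate} gives
\[
\lvert r_0(\phi,\psi,\bu)\rvert\le C\Vert\phi\Vert_{\zero{H}{2}}\Vert\psi\Vert_{\zero{H}{2}}\lvert\nabla\bu\rvert_{\mathbb{L}^2}.
\]
Alternatively, one can use \eqref{eqn-r_0-trilinear-estimate} together with $H^2\embed W^{1,4}$. The same Riesz-type definition then yields the bounded bilinear map \eqref{eqn-R_0} with property \eqref{eq-2.12}.

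\textbf{Extension to $\newone{H}\times\newone{H}$.} The key point is to estimate $r_0$ with the derivative moved onto $\bu$ and measured in $L^\infty$. For $\bu\in\rU_0$, Hölder's inequality gives
\[
\lvert r_0(\phi,\psi,\bu)\rvert\le \Vert\nabla\bu\Vert_{\mathbb{L}^\infty}\lvert\nabla\phi\rvert_{\mathbb{L}^2}\lvert\nabla\psi\rvert_{\mathbb{L}^2}.
\]
By \eqref{eq-H^s-1-to-C_b}, which uses the choice \eqref{eqn-k_0}, we have
\[
\Vert\nabla\bu\Vert_{\mathbb{L}^\infty}\le C\Vert\bu\Vert_{\mathbb{H}^{k_0}}=C\Vert\bu\Vert_{\rU_0}.
\]
Combining the two,
\[
\lvert\duality{\bR_0(\phi,\psi)}{\bu}{\rU_0}{\rU_0^\prime}\rvert\le C\lvert\phi\rvert_{\newone{H}}\lvert\psi\rvert_{\newone{H}}\Vert\bu\Vert_{\rU_0}
\quad\text{for all }\phi,\psi\in\zero{H}{2}(\domO),
\]
where the left-hand side is understood through the identity \eqref{eqn-duality-identity}. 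We note that the right-hand side requires only $\phi,\psi\in\newone{H}$.

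It remains to pass from $\zero{H}{2}(\domO)$ to $\newone{H}$. The space $\zero{H}{2}(\domO)$, or even $D(\Aone)$ (finite combinations of the eigenfunctions $\hat e_j$), is dense in $\newone{H}=D(\Aone^{1/2})$. A bounded bilinear map on a dense subspace extends uniquely by continuity to the completion, with the same bound. Concretely, given $\phi_n\to\phi$ and $\psi_n\to\psi$ in $\newone{H}$, the bilinear estimate shows that $\bR_0(\phi_n,\psi_n)$ is Cauchy in $\rU_0^\prime$, and its limit is independent of the chosen sequences. This produces \eqref{eqn-R_0-k_0}. On $\zero{H}{2}$ the extension agrees with the original operator, since the embedding $\StokesVp\embed\rU_0^\prime$ is consistent with \eqref{eqn-duality-identity}.

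The only potentially delicate steps are the density of $\zero{H}{2}(\domO)$ in $\newone{H}$ and the $L^\infty$ control of $\nabla\bu$. The first is a standard spectral fact. The second is precisely why $k_0>\frac d2+1$ was imposed.
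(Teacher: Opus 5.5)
Your proof is correct and follows essentially the paper's argument. The key step in both is the bound $\lvert r_0(\phi,\psi,\bv)\rvert\le \lvert\phi\rvert_{\newone{H}}\lvert\psi\rvert_{\newone{H}}\Vert\nabla\bv\Vert_{\mathbb{L}^\infty}\le C\lvert\phi\rvert_{\newone{H}}\lvert\psi\rvert_{\newone{H}}\Vert\bv\Vert_{\rU_0}$, which is where $k_0>\frac d2+1$ is used. The only difference is that the paper notes the integral defining $r_0$ already makes sense for $\phi,\psi\in\newone{H}$ and $\bv\in\rU_0$, so it defines $\bR_0$ on $\newone{H}\times\newone{H}$ directly without a density step; your density extension from $\zero{H}{2}(\domO)$ has the small advantage of actually justifying the word ``uniquely'' in the statement.
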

\begin{proof}[Proof of Corollary \ref{cor-B_0 and R}]
Assertion \eqref{eqn-R_0} is a direct consequence of the fact that for a fixed $\bv \in \StokesV$, the mapping $r_0(\cdot,\cdot,\bv)$ defined on $ \zero{H}{2}(\domO) \times  \zero{H}{2}(\domO)$ with values in $\mathbb{R}$ is continuous.
\newline
By the definition \eqref{r-trilinear-form} and Sobolev embeddings \eqref{eq-H^s-1-to-C_b}, we infer that there exists $C>0$ such that for all $\phi,\,\psi \in \newone{H}$ and $\bv \in \rU_0$,
    \begin{equation*}
       \lvert r_0(\phi,\psi,\bv) \rvert
       \leq \lvert \phi \rvert_{\newone{H}} \lvert \psi \rvert_{\newone{H}} \Vert \nabla \bv \Vert_{\mathbb{L}^\infty}
       \leq C \lvert \phi \rvert_{\newone{H}} \lvert \psi \rvert_{\newone{H}} \Vert \bv \Vert_{\rU_0}.
    \end{equation*}
Therefore, for an arbitrary $\bv \in \rU_0$, the mapping 
\[
r_0(\cdot,\cdot,\bv): \newone{H} \times  \newone{H} \to \mathbb{R}
\]
 is bilinear and continuous what  implies the existence of a continuous bilinear map $\bR_0: \newone{H} \times  \newone{H} \to \rU_0^\prime$ such that $\duality{\bR_0(\phi,\psi)}{\bv}{\rU_0}{\rU_0^\prime}= r_0(\phi,\psi,\bv)$ for all $\phi,\,\psi \in \newone{H}$, satisfying property \eqref{eqn-R_0-k_0}. This completes the proof of Corollary \ref{cor-B_0 and R}.
\end{proof}
It is well known, see, e.g. \cite{Temam_2001},  that 
\begin{align}\label{properties-B_0-2}
\begin{aligned}
\bb_0(\bu, \bv, \bw)&= - \bb_0(\bu, \bw,\bv),\;\;\forall \bu,\,\bv,\,\bw \in \StokesV,
 \\
 \duality{\bB_0(\bu,\bv)}{\bv}{\StokesV}{\StokesVp}&= 0,\;\;\forall \bu,\,\bv\in \StokesV.
\end{aligned}
\end{align}
Moreover, see  \cite{Brz+Motyl_2013}, in view of assumption \eqref{eqn-k_0},  $\bB_0$ can be uniquely extended to a bounded bilinear operator (still) denoted by $\bB_0$,
\[
\bB_0: \StokesH \times \StokesH \to \rU_0^\prime \mbox{ and } \bB_0: \StokesH \times \rU_0 \to \StokesH, 
\] 
which hence  satisfy for some constant $C=C(\domO)$,
\begin{equation}\label{eq-B_0-k_0}
\begin{aligned}
\Vert \bB_0(\bu,\bv) \Vert_{\rU_0^\prime} 
&\leq C \lvert \bu \rvert_{\StokesH} \lvert \bv \rvert_{\StokesH},\;\;\forall \bu,\,\bv \in \StokesH,
\\
\lvert \bB_0(\bu,\bv) \rvert_{\StokesH} 
&\leq C \lvert \bu \rvert_{\StokesH} \Vert \bv \Vert_{\rU_0},\;\;\forall \bu \in \StokesH,\; \bv \in \rU_0.
\end{aligned}
\end{equation}

\begin{remark}\label{rem-R_0}
If $\phi,\, \psi \in \zero{H}{2}(\domO)$, then clearly $\diver(\nabla \phi \otimes \nabla \psi) \in \mathbb{L}^2(\domO)$.
Thus, the restriction of the map $\bR_0$ to the space $\zero{H}{2}(\domO) \times \zero{H}{2}(\domO)$  has the following representation:
    \begin{equation}\label{eqn-R_0-1}
      \bR_0(\phi,\psi)= \P[\diver(\nabla \phi \otimes \nabla \psi)], \;\;\forall \phi,\, \psi \in \zero{H}{2}(\domO).
    \end{equation}
\end{remark}
However, we have another equivalent form for the operator $\bR_0$.
\begin{proposition}\label{prop-R_0} 
If $\phi \in \zero{H}{2}(\domO)$ and 
   \begin{equation}\label{eqn-mu-2}
       \cp(\phi) \coloneqq - \varepsilon \Delta \phi + \alpha(\psi^\prime(\phi) - \avg{\psi^\prime(\phi)}), \;\; \; \phi \in \zero{H}{2}(\domO),
   \end{equation}
then $\cp(\phi)\nabla \phi \in \mathbb{L}^2(\domO)$ and 
  \begin{equation}\label{eqn-R_0-2}
     \bR_0(\phi,\phi)=  -\frac{1}{\eps} \P (\cp(\phi)\nabla \phi).
   \end{equation} 
\end{proposition}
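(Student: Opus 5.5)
The plan is to reduce everything to the pointwise identity
\[
\diver(\nabla \phi \otimes \nabla \phi) = \Delta \phi\, \nabla \phi + \nabla\Big(\tfrac12 \lvert \nabla \phi \rvert^2\Big),
\]
which follows from $\partial_j(\partial_i \phi\, \partial_j \phi) = \partial_i\phi\,\Delta\phi + \partial_j\phi\,\partial_i\partial_j\phi$ and $\partial_j\phi\,\partial_i\partial_j\phi = \partial_i(\tfrac12\lvert\nabla\phi\rvert^2)$. I would then use that the Helmholtz--Leray projection $\P$ annihilates gradients. Starting from the representation \eqref{eqn-R_0-1} of Remark \ref{rem-R_0}, this gives $\bR_0(\phi,\phi) = \P(\Delta\phi\,\nabla\phi)$.

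Next, I would rewrite $\cp(\phi)\nabla\phi$ from \eqref{eqn-mu-2}. The potential part is a gradient: $\psi'(\phi)\nabla\phi = \nabla(\psi(\phi))$. The constant $\avg{\psi'(\phi)}$ multiplies $\nabla\phi$, so $\avg{\psi'(\phi)}\nabla\phi = \nabla(\avg{\psi'(\phi)}\phi)$ is also a gradient. Hence
\[
\cp(\phi)\nabla\phi = -\eps\,\Delta\phi\,\nabla\phi + \alpha\,\nabla\big(\psi(\phi) - \avg{\psi'(\phi)}\,\phi\big).
\]
Applying $\P$ yields $\P(\cp(\phi)\nabla\phi) = -\eps\,\P(\Delta\phi\,\nabla\phi)$, which combined with the first step is exactly \eqref{eqn-R_0-2}.

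\textbf{Integrability.} For the claim $\cp(\phi)\nabla\phi \in \mathbb{L}^2$ I would use $\phi\in H^2\embed L^\infty$ (for $d\le 3$). Since $\psi=\psi_0$ is the Landau potential, $\psi'(\phi)=\phi^3-\phi$ is bounded and $\nabla\psi(\phi) = \psi'(\phi)\nabla\phi \in \mathbb{L}^2$, so the potential part is harmless. The term $\Delta\phi\,\nabla\phi$ needs $\nabla\phi\in\mathbb{L}^\infty$, or some higher regularity, in $d=3$. I would handle this the same way the paper treats Remark \ref{rem-R_0}. Alternatively, I would state the identity in $\StokesVp$, where $\Delta\phi\,\nabla\phi\in\mathbb{L}^{3/2}\embed \StokesVp$ suffices.

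\textbf{Main obstacle.} The delicate point is making ``$\P$ kills gradients'' rigorous under the low regularity of $\lvert\nabla\phi\rvert^2$ and $\psi(\phi)$, which are only $W^{1,3/2}$- or $W^{1,2}$-type functions. To do this I would argue by duality. For $\bu\in\mathcal{V}$, every $\nabla q$ with $q\in W^{1,1}$ satisfies $\int_\domO \nabla q\cdot \bu\,\d x = -\int_\domO q\,\diver\bu\,\d x = 0$, because $\bu$ has compact support and is divergence-free. I would first verify
\[
r_0(\phi,\phi,\bu)\ \text{equals the pairing of } \Delta\phi\,\nabla\phi \text{ with } \bu
\]
(with the sign convention of \eqref{eqn-R_0-1}) for smooth $\phi$ satisfying $\partial_{\bn}\phi=0$, using integration by parts. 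Then I would pass to general $\phi\in\zero{H}{2}(\domO)$ by density, which is justified by the continuity estimate \eqref{r-estimate} of Lemma \ref{lem-r form}. Finally, I would extend from $\bu\in\mathcal{V}$ to $\bu\in\StokesV$ by density of $\mathcal{V}$ in $\StokesV$.
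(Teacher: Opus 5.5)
Your core computation is the paper's proof: the identity $\diver(\nabla\phi\otimes\nabla\phi)=\Delta\phi\,\nabla\phi+\nabla(\frac12\lvert\nabla\phi\rvert^2)$, the rewriting $(\psi'(\phi)-\avg{\psi'(\phi)})\nabla\phi=\nabla(\psi(\phi)-\avg{\psi'(\phi)}\phi)$, and $\P$ annihilating gradients, starting from \eqref{eqn-R_0-1}. The two places where you go beyond the paper both need correcting.

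\textbf{Integrability.} The claim $\cp(\phi)\nabla\phi\in\mathbb{L}^2(\domO)$ cannot be proved for general $\phi\in\zero{H}{2}(\domO)$, and the problem is not confined to $d=3$. In general $\Delta\phi\,\nabla\phi$ lies only in $\mathbb{L}^{3/2}$ when $d=3$, and only in $\mathbb{L}^p$ for $p<2$ when $d=2$. For a counterexample in $d=2$, take $\phi=\chi(x)\,x_1(\log\frac{1}{\lvert x\rvert})^{a}$ with $\frac14\le a<\frac12$ and $\chi$ a cutoff supported in a small ball around an interior point, then subtract the mean. This $\phi$ lies in $\zero{H}{2}(\domO)$, yet $\Delta\phi\,\nabla\phi\notin\mathbb{L}^2$. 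Deferring to Remark \ref{rem-R_0} is therefore empty, because its ``clearly'' has the same defect. Your fallback is the correct repair, and the paper's proof needs it too: read \eqref{eqn-R_0-2} as an identity in $\StokesVp$, with $\P$ meaning restriction to $\StokesV$. Your density argument then works, since $\nabla(\frac12\lvert\nabla\phi\rvert^2)\in\mathbb{L}^{3/2}$ and $\nabla\psi(\phi)\in\mathbb{L}^2$.

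\textbf{Sign.} The duality step would not confirm \eqref{eqn-R_0-1}; it contradicts it. For $\bu\in\mathcal{V}$, integrating by parts in \eqref{r-trilinear-form} gives
\[
r_0(\phi,\phi,\bu)=-\sum_{i,j=1}^d\int_\domO\partial_j(\partial_i\phi\,\partial_j\phi)\,\bu^i\,\d x=-\int_\domO\Delta\phi\,\nabla\phi\cdot\bu\,\d x .
\]
So the functional attached to $r_0$ by \eqref{eq-2.12} is $-\P(\Delta\phi\,\nabla\phi)=+\frac1\eps\P(\cp(\phi)\nabla\phi)$, the opposite sign to \eqref{eqn-R_0-2}. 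Proposition \ref{prop-b_1-r_0} encodes the same sign, since $b_1(\bu,\phi,\phi)=\int_\domO(\bu\cdot\nabla\phi)(-\Delta\phi)\,\d x$. Hence \eqref{eq-2.12} and \eqref{eqn-R_0-1} are mutually inconsistent in the paper. The identity \eqref{eqn-R_0-2} holds only when $\bR_0(\phi,\phi)$ is understood as $\P\diver(\nabla\phi\otimes\nabla\phi)$. That is the sign which matches \eqref{eqn-stochastic-CHNSEs} and makes the capillary term cancel the transport term in the energy balance. You should take \eqref{eqn-R_0-1} as the definition of $\bR_0(\phi,\phi)$, as the paper's proof implicitly does, rather than claim to derive it from $r_0$.
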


\begin{proof}[Proof of Proposition \ref{prop-R_0}]
Identity \eqref{eqn-R_0-2}  follows from the  equality \eqref{eqn-R_0-1}, the fact that
\begin{align*}
&-\diver(\nabla \phi \otimes \nabla \phi)
= - \Delta \phi \nabla \phi - \nabla \left(\frac{1}{2} \lvert \nabla \phi \rvert^2 \right)
\\
&= \frac{1}{\eps} [\cp(\phi)\nabla \phi -  \alpha \nabla (\psi(\phi) - \avg{\psi^\prime(\phi)} \phi)] - \nabla \left(\frac{1}{2} \lvert \nabla \phi \rvert^2 \right)
\\
&= \frac{1}{\eps} (\cp(\phi)\nabla \phi)  - \nabla \left(\frac{\alpha }{\eps} [\psi(\phi) - \avg{\psi^\prime(\phi)} \phi] + \frac{1}{2} \lvert \nabla \phi \rvert^2 \right),
\end{align*}
by applying  the Leray-Helmholtz decomposition, i.e. Theorem I.1.4 from \cite{Temam_2001}.
\end{proof}  
Note that the above identities are not self-contradictory, as the chemical potential $\cp$ in \eqref{eqn-mu-2} depends on the parameter $\eps$.
\begin{definition}
Consider the trilinear form $b_1$ defined by
   \begin{equation}\label{eqn-b_1-trilinear-form-1}
b_1: \StokesV \times \zero{H}{2}(\domO) \times W^{1,4}(\domO) \ni 
   (\bu,\phi,\psi)\mapsto  \sum_{i=1}^d \int_{\domO} \nabla (\bu_i(x) \partial_i \phi(x)) \cdot \nabla \psi(x)\,\d x \in \mathbb{R}.
  \end{equation}
Since $H^1 \embed L^4$ by the Sobolev embedding Theorem, the Lebesgue integral on the RHS of 
\eqref{eqn-b_1-trilinear-form-1} exists. Then, by the H\"older inequality, we deduce that there exists a  positive constant $C$ such that the following estimates hold for all $\bu \in \StokesV,\,\phi \in H^2(\domO)$, and $\psi \in W^{1,4}(\domO)$:
\begin{equation}
\begin{aligned}
&\lvert b_1(\bu,\phi,\psi) \rvert
\leq \lvert \nabla \bu \rvert_{\mathbb{L}^2} \Vert \nabla \phi \Vert_{\mathbb{L}^4} \Vert \nabla \psi \Vert_{\mathbb{L}^4} + \Vert \bu \Vert_{\mathbb{L}^4} \left(\sum_{i,j=1}^d \lvert \partial_i(\partial_j \phi) \rvert_{L^2}^2 \right)^{1/2} \Vert \nabla \psi \Vert_{\mathbb{L}^4}
\\
&\leq \lvert \nabla \bu \rvert_{\mathbb{L}^2} \Vert \nabla \phi \Vert_{\mathbb{L}^4} \Vert \nabla \psi \Vert_{\mathbb{L}^4} + \Vert \bu \Vert_{\mathbb{L}^4} \Vert \phi \Vert_{H^2} \Vert \nabla \psi \Vert_{\mathbb{L}^4}
\leq C \Vert \bu \Vert_{\StokesV} \Vert \phi \Vert_{H^2} \Vert \psi \Vert_{W^{1,4}}. 
\end{aligned}
\end{equation}
\end{definition}
We also observe that by \eqref{eqn-b_1-trilinear-form-1}, if $\bu \in \StokesV$ and  $\phi,\, \psi \in \zero{H}{2}(\domO)$, then 
 \begin{equation}\label{eqn-b_1-trilinear-form-2}
b_1(\bu,\phi,\psi)= \sum_{i= 1}^{d} \int_{\domO} \bu^i(x) \, \partial_i \phi(x) \, (-\Delta \psi)(x) \,\d x,
\end{equation}
and the integral on the RHS of \eqref{eqn-b_1-trilinear-form-2} is still well defined, since $\zero{H}{2}(\domO) \subset W^{1,4}(\domO)$.\\
Moreover, there exists a constant $C>0$ such that for all $\bu \in \StokesV,\; \phi \in \zero{H}{2}(\domO),\,\psi \in \zero{H}{3}(\domO)$, 
\begin{equation}\label{eqn-b_1-trilinear-form-3}
\lvert b_1(\bu,\phi,\psi) \rvert
\leq \Vert \bu \Vert_{\mathbb{L}^4} \lvert \phi \rvert_{\newone{H}} \Vert -\Delta \psi \Vert_{L^4}
\leq C \Vert \bu \Vert_{\mathbb{L}^4} \lvert \phi \rvert_{\newone{H}} \Vert \psi \Vert_{\zero{H}{3}}.
\end{equation}
Thus, the form $b_1$ can be extended in a unique way to a bounded trilinear form (denoted by the same letter)
\[ 
b_1: \StokesV \times \newone{H} \times \newone{V} \to \mathbb{R}, 
\] 
satisfying \eqref{eqn-b_1-trilinear-form-3}.
In particular, with the same constant the new form $b_1$ satisfies 
\begin{align}\label{eqn-b_1-trilinear-form-ineq-full}
\vert b_{1}(\bu,\phi, \psi) \vert & \leq C  \Vert \bu \Vert_{\StokesV}  \Vert  \phi \Vert_{\newone{H}} \Vert  \psi \Vert_{\newone{V}},\;\; \forall \bu \in \StokesV,\,\phi \in \newone{H},\mbox{ and } \psi \in \newone{V}. 
\end{align}
Hence, we infer that there exists a unique bilinear map
   \begin{equation}\label{eqn-B_1}
     \tilde{B}_1: \StokesV \times \newone{H} \to \newonep{V},
    \end{equation}
such that 
\begin{align*}
      \duality{\tilde{B}_1(u,\phi)}{\psi}{\newone{V}}{\newonep{V}}&= b_1(\bu, \phi, \psi),\;\; \forall \bu \in \StokesV,\, \phi \in \newone{H},\mbox{ and } \psi \in \newone{V}. 
\end{align*}
We can also prove that the maps $b_1$ and $\tilde{B}_1$ satisfy the following stronger inequalities:
\begin{align}\label{eq-B1}
\begin{aligned}
\vert b_1(\bu,\phi,\psi)\vert &\leq C \Vert \bu \Vert_{\mathbb{L}^4} \Vert \phi \Vert_{L^4}  \Vert \psi \Vert_{\newone{V}},\;\; \forall \bu \in \StokesV,\,\phi \in \newone{H}, \mbox{ and } \psi \in  \newone{V}, 
\\
\Vert \tilde{B}_1(\bu, \phi) \Vert_{\newonep{V}} &\leq C \Vert \bu \Vert_{\mathbb{L}^4} \Vert \phi \Vert_{L^4},\;\;\;\;\;\;\;\;\;\; \forall \bu \in \StokesV,\,\phi \in \newone{H}, \mbox{ and } \psi \in  \newone{V}. 
\end{aligned}
\end{align}
Moreover, by the definition \eqref{eqn-b_1-trilinear-form-1} of $b_1$ and the Sobolev embedding $H^2(\domO) \embed L^\infty(\domO)$, we infer that there exists a constant $C>0$ such that for all $\bu \in \StokesV,\, \phi \in \zero{H}{3}(\domO)$, and $\psi \in W^{1,4}(\domO)$,
\begin{equation}\label{eqn-b_1-trilinear-form-4}
\begin{aligned}
&\lvert b_1(\bu,\phi,\psi) \rvert
\leq \lvert \nabla \bu \rvert_{\mathbb{L}^2} \Vert \nabla \phi \Vert_{\mathbb{L}^\infty} \lvert \psi \rvert_{\newone{H}} + \sum_{i,j=1}^d \lvert \bu_i \rvert_{L^4} \lvert \partial_i(\partial_j \phi) \rvert_{L^4} \lvert D_j \psi \rvert_{L^2}
 \\
&\leq \lvert \nabla \bu \rvert_{\mathbb{L}^2} \Vert \nabla \phi \Vert_{\mathbb{L}^\infty} \lvert \psi \rvert_{\newone{H}} +  \Vert \bu \Vert_{\mathbb{L}^4} \left(\sum_{i,j=1}^d \lvert \partial_i(\partial_j \phi) \rvert_{L^4}^2 \right)^{\frac12} \lvert \psi \rvert_{\newone{H}}
   \\
&\leq \lvert \nabla \bu \rvert_{\mathbb{L}^2} \Vert \nabla \phi \Vert_{\mathbb{L}^\infty} \lvert \psi \rvert_{\newone{H}} +  C \Vert \bu \Vert_{\mathbb{L}^4} \Vert \phi \Vert_{H^3} \lvert \psi \rvert_{\newone{H}}
\leq C \Vert \bu \Vert_{\StokesV} \Vert \phi \Vert_{H^3} \lvert \psi \rvert_{\newone{H}}.
\end{aligned}
\end{equation}
 Hence, the trilinear form $b_1$ in \eqref{eqn-b_1-trilinear-form-1} can also be uniquely extended to a trilinear form, denoted by the same letter,
    \begin{equation}
      b_1: \StokesV \times \newone{V} \times \newone{H} \to \mathbb{R}, 
   \end{equation}
satisfying \eqref{eqn-b_1-trilinear-form-4}; and  there exists a unique bounded bilinear operator
   \begin{equation}\label{eqn-titlde-B_1}
     B_1: \StokesV \times \newone{V} \to \newone{H},
    \end{equation}
i.e., satisfying the following estimate;
      \begin{equation}
         \lvert B_1(\bu,\phi) \rvert_{\newone{H}} \leq C \Vert \bu \Vert_{\StokesV} \Vert \phi \Vert_{ \newone{V}},\;\;\forall \bu \in \StokesV,\, \phi \in  \newone{V}.  
       \end{equation}
The following result is fundamental, as it links the two trilinear forms $r_0$ and $\bb_0$.
\begin{proposition}\label{prop-b_1-r_0}
Assume that $\bu \in \StokesV$ and $\phi \in \newone{V} $. Then 
        \begin{equation}\label{eqn-b_1-r_0}
           b_1(\bu,\phi,\phi)=r_0(\phi,\phi,\bu).
         \end{equation}
\end{proposition}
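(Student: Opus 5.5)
We first prove the identity for smooth $\phi$ and then pass to general $\phi$ by density, using the bounds already established. For the smooth case, take $\phi \in \newone{V}\cap \zero{H}{3}(\domO)$ (or smoother, e.g.\ finite combinations of the eigenfunctions $\hat e_j$, which are smooth up to the boundary and satisfy $\partial_{\bn}\phi=0$). Then \eqref{eqn-b_1-trilinear-form-2} applies, and by density of $\mathcal{V}$ in $\StokesV$ we may also take $\bu\in\mathcal{V}$. The identity to verify is
\[
-\sum_{i}\int_\domO \bu^i\,\partial_i\phi\,\Delta\phi\,\d x=\sum_{i,j}\int_\domO \partial_i\phi\,\partial_j\phi\,\partial_j\bu^i\,\d x .
\]
We integrate the right-hand side by parts in $x_j$; the boundary term vanishes because $\bu=0$ on $\partial\domO$. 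This gives
\[
-\sum_{i,j}\int \bu^i\,\partial_j(\partial_i\phi\,\partial_j\phi)\,\d x=-\int \bu\cdot\nabla\phi\,\Delta\phi\,\d x-\sum_{i,j}\int \bu^i\,\partial_j\phi\,\partial_i\partial_j\phi\,\d x .
\]
The last term equals $-\int \bu\cdot\nabla\big(\tfrac12|\nabla\phi|^2\big)\,\d x$. This vanishes after one more integration by parts, since $\diver\bu=0$ and $\bu|_{\partial\domO}=0$. What remains is exactly $b_1(\bu,\phi,\phi)$ in the form \eqref{eqn-b_1-trilinear-form-2}. Equivalently, this is the pointwise identity $\diver(\nabla\phi\otimes\nabla\phi)=\Delta\phi\nabla\phi+\nabla\frac12|\nabla\phi|^2$ already used in the proof of Proposition~\ref{prop-R_0}.

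For general $\bu\in\StokesV$ and $\phi\in\newone{V}$, we approximate. Take $\bu_n\in\mathcal{V}$ with $\bu_n\to\bu$ in $\StokesV$. Take $\phi_n\to\phi$ in $\newone{V}$ with $\phi_n$ smooth, for instance the spectral projections $\phi_n=\sum_{j\le n}(\phi,\hat e_j)\hat e_j$; these converge in $D(\Aone^{3/2})=\newone{V}$. Since $\newone{V}\embed \zero{H}{2}(\domO)\embed W^{1,4}(\domO)$, the estimate \eqref{eqn-r_0-trilinear-estimate} gives $r_0(\phi_n,\phi_n,\bu_n)\to r_0(\phi,\phi,\bu)$. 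For the left-hand side we use the bound $|b_1(\bu,\phi,\psi)|\le C\|\bu\|_{\StokesV}\|\phi\|_{H^2}\|\psi\|_{W^{1,4}}$ on the original domain of $b_1$, together with the fact that the extensions of $b_1$ agree with the original form on $\StokesV\times\zero{H}{2}\times W^{1,4}$. Multilinearity and this bound then give $b_1(\bu_n,\phi_n,\phi_n)\to b_1(\bu,\phi,\phi)$.

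The main point requiring care is consistency of the extended definitions of $b_1$. For $\phi\in\newone{V}$ we have $\phi\in\zero{H}{2}\cap W^{1,4}$, so $b_1(\bu,\phi,\phi)$ is given by the original integral \eqref{eqn-b_1-trilinear-form-1}, and both extensions coincide with it by uniqueness of continuous extension. We must also check that the Neumann condition is not needed in the integrations by parts above. It is not: every boundary term carries a factor of $\bu$, which vanishes on $\partial\domO$. Hence only the regularity of $\phi$ matters, and smooth approximants in $H^3$ suffice.
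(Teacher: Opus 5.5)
Your argument is correct. The paper states Proposition~\ref{prop-b_1-r_0} without proof. The identity you use, $\diver(\nabla\phi\otimes\nabla\phi)=\Delta\phi\,\nabla\phi+\nabla\big(\tfrac12|\nabla\phi|^2\big)$ tested against a divergence-free field vanishing on $\partial\domO$, is exactly the one the paper invokes in the proof of Proposition~\ref{prop-R_0}, so this is the intended route.

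One simplification: approximating $\phi$ is unnecessary. For $\phi\in\zero{H}{2}(\domO)$ the functions $\partial_i\phi\,\partial_j\phi$ and $|\nabla\phi|^2$ already lie in $W^{1,1}(\domO)$. Hence both integrations by parts against $\bu\in\mathcal{V}$ are valid directly, and you only need the density of $\mathcal{V}$ in $\StokesV$ together with the continuity bounds for $r_0$ and $b_1$. This also shows that the identity holds for every $\phi\in\zero{H}{2}(\domO)$, with no Neumann condition required.
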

\begin{corollary}\label{cor-b_1-r_0}
Assume that $\bu \in \StokesV$ and $\phi \in \newone{V} $. Then  
        \begin{equation}\label{eq2.7}
          (B_1(\bu,\phi),\phi)_{\newone{H}}= \duality{\bR_0(\phi,\phi)}{\bu}{\StokesV}{\StokesVp}.
        \end{equation}
\end{corollary}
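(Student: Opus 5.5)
Corollary \ref{cor-b_1-r_0} is a direct consequence of Proposition \ref{prop-b_1-r_0}, once both sides of \eqref{eq2.7} are identified with the trilinear forms. The plan is to show that the left-hand side equals $b_1(\bu,\phi,\phi)$ and the right-hand side equals $r_0(\phi,\phi,\bu)$, and then to invoke \eqref{eqn-b_1-r_0}.

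\emph{Right-hand side.} Since $\phi\in\newone{V}\subset \zero{H}{2}(\domO)$, the pair $(\phi,\phi)$ lies in the domain of the first version of $\bR_0$ in Corollary \ref{cor-B_0 and R}, which maps into $\StokesVp$. By \eqref{eq-2.12},
\[
\duality{\bR_0(\phi,\phi)}{\bu}{\StokesV}{\StokesVp}=r_0(\phi,\phi,\bu).
\]
This is consistent with the extension of $\bR_0$ to $\newone{H}\times\newone{H}\to\rU_0^\prime$. Indeed, for $\bu\in\rU_0$ the two pairings agree by \eqref{eqn-duality-identity}. For general $\bu\in\StokesV$ we only use the $\StokesVp$-valued version, so no density argument is needed.

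\emph{Left-hand side.} Here $B_1(\bu,\phi)\in\newone{H}$ is the operator associated with the extension $b_1:\StokesV\times\newone{V}\times\newone{H}\to\mathbb{R}$ from \eqref{eqn-b_1-trilinear-form-4}. That is, $B_1$ is defined through the Riesz identification in $\newone{H}$ by
\[
(B_1(\bu,\phi),\psi)_{\newone{H}}=b_1(\bu,\phi,\psi)\quad\text{for all } \psi\in\newone{H}.
\]
Taking $\psi=\phi\in\newone{V}\subset\newone{H}$ gives $(B_1(\bu,\phi),\phi)_{\newone{H}}=b_1(\bu,\phi,\phi)$.

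The only subtle point is consistency of the extensions. The form $b_1$ was extended in two ways, to $\StokesV\times\newone{H}\times\newone{V}$ and to $\StokesV\times\newone{V}\times\newone{H}$. On the triple $(\bu,\phi,\phi)$ with $\phi\in\newone{V}$ both extensions are defined, and we must check that they agree with each other and with the value used in Proposition \ref{prop-b_1-r_0}.

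To check this, approximate $\phi$ in $\newone{V}$ by $\phi_n\in\zero{H}{3}(\domO)$, for instance by truncated eigenfunction expansions in the basis $\{\hat e_j\}$, which converge in $D(\Aone^{3/2})=\newone{V}$. For each $n$, $b_1(\bu,\phi_n,\phi_n)$ is given by the original integral \eqref{eqn-b_1-trilinear-form-1}. Both extensions are continuous in the relevant norms, and $\phi_n\to\phi$ in both $\newone{V}$ and $\newone{H}$. Hence both extensions of $b_1$ evaluated at $(\bu,\phi,\phi)$ equal $\lim_n b_1(\bu,\phi_n,\phi_n)$. Moreover, since $\newone{V}\subset\zero{H}{2}(\domO)\subset W^{1,4}(\domO)$, the original integral is already defined at $(\bu,\phi,\phi)$, and by the bound preceding \eqref{eqn-b_1-trilinear-form-2} it is continuous in $H^2\times W^{1,4}$. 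So it also coincides with this limit.

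Combining the two identifications with Proposition \ref{prop-b_1-r_0} gives \eqref{eq2.7}. I expect no genuine obstacle. The only step requiring care is the bookkeeping of which extension of $b_1$ and of $\bR_0$ is in use, handled by the density argument above.
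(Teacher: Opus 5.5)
Your proposal is correct and is the intended argument; the paper states this corollary without proof as an immediate consequence of Proposition~\ref{prop-b_1-r_0}. The left side equals $b_1(\bu,\phi,\phi)$ by the definition of $B_1$ (equivalently Proposition~\ref{prop-B_1-b_1} with $\psi=\phi\in\zero{H}{2}(\domO)$), the right side equals $r_0(\phi,\phi,\bu)$ by \eqref{eq-2.12}, and your density argument is harmless but unnecessary, since $\newone{V}=D(\Aone^{3/2})\subset\zero{H}{3}(\domO)$ already puts $(\bu,\phi,\phi)$ in the original domain of every version of $b_1$, so they all agree there by construction.
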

The following fundamental result connects the forms \eqref{eqn-b_1-trilinear-form-1}, \eqref{eqn-B_1}, and \eqref{eqn-titlde-B_1}.
\begin{proposition}\label{prop-B_1-b_1}
Assume that $\bu \in \StokesV$ and $\phi,\,\psi \in \newone{V}$. Then, the following equalities hold:  
   \begin{equation}
      (B_1(\bu,\phi),\psi)_{\newone{H}}= \duality{\tilde{B}_1(u,\phi)}{\psi}{\newone{V}}{\newonep{V}}.
    \end{equation}
Moreover, if $\bu \in \StokesV$, $\phi \in \newone{V}$, and $\psi \in \zero{H}{2}(\domO)$, then
      \begin{equation}
         (B_1(\bu,\phi),\psi)_{\newone{H}}= b_1(\bu,\phi,\psi).
      \end{equation}
\end{proposition}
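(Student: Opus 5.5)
The plan is to prove both identities first on a dense class of smooth functions, where everything reduces to integration by parts, and then pass to the limit using the continuity estimates established above for $b_1$, $\tilde B_1$ and $B_1$.

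\emph{Smooth case.} Take $\bu\in\mathcal{V}$ and $\phi,\psi$ in the finite span of the Neumann eigenfunctions $\hat e_j$. These lie in $D(\Aone^2)\subset \zero{H}{3}(\domO)$. For such functions, $B_1(\bu,\phi)$ is the $\newone{H}$-valued function $\bu\cdot\nabla\phi$, up to subtraction of its mean. This identification comes from \eqref{eqn-b_1-trilinear-form-1} and \eqref{eqn-b_1-trilinear-form-4}, and the mean is in fact zero because $\diver\bu=0$ and $\bu=0$ on $\partial\domO$. Hence
\[
(B_1(\bu,\phi),\psi)_{\newone{H}}=\int_\domO \nabla(\bu\cdot\nabla\phi)\cdot\nabla\psi\,\d x=b_1(\bu,\phi,\psi).
\]
This already gives the second identity in the smooth case. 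For the first identity, note that $\duality{\tilde B_1(\bu,\phi)}{\psi}{\newone{V}}{\newonep{V}}=b_1(\bu,\phi,\psi)$ by the definition \eqref{eqn-B_1}, with $b_1$ given there by the original integral formula. So both sides coincide on the dense class.

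\emph{Density.} Next fix $\bu\in\StokesV$ and $\phi,\psi\in\newone{V}$. Choose $\bu_n\in\mathcal V$ with $\bu_n\to\bu$ in $\StokesV$, and choose $\phi_n,\psi_n$ as spectral truncations with $\phi_n\to\phi$ and $\psi_n\to\psi$ in $\newone{V}=D(\Aone^{3/2})$. Both sides of each identity are continuous under this convergence:
\begin{itemize}
\item $(B_1(\cdot,\cdot),\cdot)_{\newone{H}}$ is continuous on $\StokesV\times\newone{V}\times\newone{H}$, by the bound on $|B_1(\bu,\phi)|_{\newone{H}}$.
\item $\duality{\tilde B_1(\cdot,\cdot)}{\cdot}{}{}$ is continuous on $\StokesV\times\newone{H}\times\newone{V}$, by \eqref{eqn-b_1-trilinear-form-ineq-full}.
\end{itemize}
Passing to the limit in the smooth identity then yields the first claim.

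\emph{Second claim.} Take $\psi\in\zero{H}{2}(\domO)\subset W^{1,4}(\domO)$. The form $b_1(\bu,\phi,\psi)$ is given by \eqref{eqn-b_1-trilinear-form-1}, and it is continuous in $\psi$ with respect to the $\newone{H}$-norm when $\phi\in\newone{V}$, by \eqref{eqn-b_1-trilinear-form-4}. Approximate $\psi$ in $H^2$ (hence in $\newone{H}$ and in $W^{1,4}$) by smooth mean-zero functions, and approximate $\bu,\phi$ as before; the smooth identity then passes to the limit. The one point to watch is approximating $\psi\in\zero{H}{2}$ without Neumann boundary conditions. Since only the $H^1$-norm of $\psi$ enters \eqref{eqn-b_1-trilinear-form-4}, smooth mean-zero approximants in $H^2$ suffice, and no boundary condition on $\psi$ is needed.

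\emph{Main obstacle.} The delicate step is making sure the several extensions of $b_1$ are mutually consistent: the extension to $\StokesV\times\newone{H}\times\newone{V}$, the one to $\StokesV\times\newone{V}\times\newone{H}$, and the original formula all agree on the overlap. The plan handles this by checking that the chosen dense class $\mathcal V\times\mathrm{span}\{\hat e_j\}^2$ lies in the domain of the original formula, and that on this class the defining estimates hold simultaneously. Uniqueness of continuous extensions then identifies all the forms on $\StokesV\times\newone{V}\times\newone{V}$.
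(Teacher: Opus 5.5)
Your proof is correct. The paper states this proposition without proof, treating it as immediate from the way $b_1$, $\tilde B_1$ and $B_1$ were constructed. Measured against that, your approximation step is valid but not needed.

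The key point is the inclusion $V_1\subset \zero{H}{3}(\domO)\subset \zero{H}{2}(\domO)\cap W^{1,4}(\domO)$. It means every triple in the statement already lies in the domain of the original integral \eqref{eqn-b_1-trilinear-form-1}. The two extensions were built as follows:
\begin{itemize}
\item the one to $V_0\times H_1\times V_1$, by continuity in $\phi$ starting from $\phi\in\zero{H}{2}(\domO)$;
\item the one to $V_0\times V_1\times H_1$, by continuity in $\psi$ starting from $\psi\in W^{1,4}(\domO)\cap H_1$.
\end{itemize}
Each extension therefore equals the integral wherever its argument already lies in that starting set.

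Consequently, for $\bu\in V_0$, $\phi\in V_1$ and $\psi\in V_1$ (or $\psi\in\zero{H}{2}(\domO)$), both $(B_1(\bu,\phi),\psi)_{H_1}$ and $\langle\tilde B_1(\bu,\phi),\psi\rangle$ equal $\int_{\domO}\nabla(\bu\cdot\nabla\phi)\cdot\nabla\psi\,dx$ directly. Put differently, your smooth-case identification $B_1(\bu,\phi)=\bu\cdot\nabla\phi$ already holds for every $\bu\in V_0$ and $\phi\in V_1$. Indeed, the bounds behind \eqref{eqn-b_1-trilinear-form-4} give $\bu\cdot\nabla\phi\in H^1(\domO)$, and its mean vanishes because $\bu\in V_0$.

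So the ``main obstacle'' you flag is settled by this inclusion rather than by density. What your density route buys is robustness: it would still work if one of the extended forms were only known to agree with the integral on a smaller dense class.

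One small imprecision: in the second claim you pass to the limit from smooth mean-zero $\psi$, but your smooth case was stated only for $\psi$ in the span of the $\hat e_j$. This is harmless, since that computation used only $B_1(\bu,\phi)=\bu\cdot\nabla\phi$ and works for any $\psi\in H_1$.
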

\begin{corollary}
Assume that $\phi \in \zero{H}{2}(\domO)$ and $\bu \in \StokesV$. Then
\begin{equation}
 \left(B_1(\bu, \phi), \Aone^{-1} f(\phi) \right)_{\newone{H}}
 =0,
\end{equation}
where $f(\phi)\coloneqq \psi^\prime(\phi) - \avg{\psi^\prime(\phi)}\coloneqq \psi^\prime(\phi) - \fint_{\domO} \psi^\prime(\phi(x))\,\d x$.
\end{corollary}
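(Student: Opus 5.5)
The idea is to compute the $\newone{H}$-inner product explicitly and recognise a pure transport term that vanishes because $\diver \bu=0$ and $\bu=0$ on $\partial\domO$.

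\textbf{Step 1: the function $\Aone^{-1} f(\phi)$.} For $\phi\in\zero{H}{2}(\domO)$, in $d\le 3$ we have $H^2\embed L^\infty$, so $\psi^\prime(\phi)=\phi^3-\phi$ lies in $H^2(\domO)$. Hence $f(\phi)\in\zero{H}{2}(\domO)\subset\zero{L}{2}(\domO)$, and $\zeta\coloneqq\Aone^{-1}f(\phi)$ is well defined. By elliptic regularity for the Neumann Laplacian, $\zeta\in D(\Aone)$ with $-\Delta\zeta=f(\phi)\in H^2$, so $\zeta$ is in fact in $H^4$. In particular $\zeta\in\zero{H}{2}(\domO)$, and $\Delta\zeta\in H^1$ with $\partial_{\bn}\zeta=0$, so $\zeta\in\newone{V}$.

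\textbf{Step 2: reduction to an integral (the main obstacle).} The second part of Proposition \ref{prop-B_1-b_1} needs $\phi\in\newone{V}$, whereas here we only know $\phi\in\zero{H}{2}(\domO)$. I would handle this by approximation. Take $\phi_n\in\newone{V}$ (for instance Galerkin truncations in the eigenbasis $\{\hat e_j\}$) with $\phi_n\to\phi$ in $H^2$. For these, Proposition \ref{prop-B_1-b_1} and \eqref{eqn-b_1-trilinear-form-2} give
\[
(B_1(\bu,\phi_n),\zeta)_{\newone{H}}=b_1(\bu,\phi_n,\zeta)=\int_\domO (\bu\cdot\nabla\phi_n)\,(-\Delta\zeta)\,\d x=\int_\domO (\bu\cdot\nabla\phi_n)\, f(\phi)\,\d x .
\]
Both sides are continuous in $\phi_n$ in the $H^2$ norm: the left because $B_1(\bu,\cdot)$ extends continuously, which I interpret via $b_1$ on $\StokesV\times\zero{H}{2}\times\zero{H}{2}$; the right by H\"older, using $\bu\in\mathbb{L}^4$, $\nabla\phi_n\in\mathbb{L}^4$ and $f(\phi)\in L^2$. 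Passing to the limit gives the same identity with $\phi$ in place of $\phi_n$.

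If instead the corollary implicitly intends $B_1(\bu,\phi)$ to be defined through the $\tilde B_1$ extension, I would use the first part of Proposition \ref{prop-B_1-b_1} with $\psi=\zeta\in\newone{V}$. In that case the pairing $\duality{\tilde B_1(\bu,\phi)}{\zeta}{\newone{V}}{\newonep{V}}=b_1(\bu,\phi,\zeta)$ is directly meaningful, and the same computation applies.

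\textbf{Step 3: the transport term vanishes.} Since $\int_\domO \bu\cdot\nabla\phi\,\d x=-\int_\domO \phi\,\diver\bu\,\d x=0$, the constant $\avg{\psi^\prime(\phi)}$ in $f(\phi)$ contributes nothing, and
\[
\int_\domO (\bu\cdot\nabla\phi)\,f(\phi)\,\d x=\int_\domO \bu\cdot\nabla\phi\;\psi^\prime(\phi)\,\d x=\int_\domO \bu\cdot\nabla\big(\psi(\phi)\big)\,\d x .
\]
Here $\psi(\phi)\in W^{1,1}$; in fact $\psi(\phi)\in H^1$, since $\phi$ is bounded. Integrating by parts, the last integral equals $-\int_\domO \psi(\phi)\,\diver\bu\,\d x=0$, because $\diver\bu=0$ and the boundary term vanishes as $\bu=0$ on $\partial\domO$. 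To be rigorous, I would first prove this for $\bu\in\mathcal{V}$ and then extend to all of $\StokesV$ by density, using continuity of the integral in $\bu$ in the $\mathbb{L}^2$ norm.

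The only delicate point in the whole argument is Step 2: making sure the pairing $(B_1(\bu,\phi),\cdot)_{\newone{H}}$ is interpreted consistently for $\phi$ of merely $H^2$ regularity.
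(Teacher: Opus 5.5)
Your computation is the paper's own. The paper likewise identifies the pairing with $b_1(\bu,\phi,\zeta)$, rewrites it via \eqref{eqn-b_1-trilinear-form-2} and $-\Delta\zeta=f(\phi)$ as $\int_{\domO}(\bu\cdot\nabla\phi)\,f(\phi)\,\d x$, and kills both the $\psi'(\phi)$ part and the constant part using $\diver\bu=0$ and $\bu=0$ on $\partial\domO$. You are right that Proposition \ref{prop-B_1-b_1} is only stated for $\phi\in\newone{V}$, which the paper passes over.

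However, the repair in your Step 2 fails as written. For general $\phi\in\zero{H}{2}(\domO)$ there is no sequence $\phi_n\in\newone{V}$ with $\phi_n\to\phi$ in $H^2$. Every element of $\newone{V}$ lies in $D(\Aone)$, so $\partial_{\bn}\phi_n=0$. Since the normal trace $H^2(\domO)\ni\varphi\mapsto\partial_{\bn}\varphi\in H^{1/2}(\partial\domO)$ is continuous, the $H^2$-closure of $\newone{V}$ is contained in $D(\Aone)$. But $\zero{H}{2}(\domO)$ carries no boundary condition (e.g.\ $x\mapsto x_1-\fint_{\domO}y_1\,\d y$ belongs to it). For such $\phi$ the truncations $\pi_{1,n}\phi$ converge in $H^1$ but not in $H^2$.

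The repair is immediate, and there are two ways to do it.

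\textbf{Approximate only in $H^1$.} Take $\phi_n=\pi_{1,n}\phi\in\newone{V}$, which converges to $\phi$ in $\newone{H}$. Because $\zeta\in\newone{V}$, the map $\varphi\mapsto b_1(\bu,\varphi,\zeta)$ is continuous on $\newone{H}$ by \eqref{eqn-b_1-trilinear-form-ineq-full}. The map $\varphi\mapsto\int_{\domO}(\bu\cdot\nabla\varphi)\,f(\phi)\,\d x$ is continuous in $H^1$ because $f(\phi)\in H^2(\domO)\subset L^\infty(\domO)$. So the identity passes to the limit.

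\textbf{Skip the approximation, as the paper does.} Take $b_1(\bu,\phi,\zeta)$ as the meaning of the pairing. It coincides with $\duality{\tilde{B}_1(\bu,\phi)}{\zeta}{\newone{V}}{\newonep{V}}$ by the definition of $\tilde{B}_1$. It does not follow from the first part of Proposition \ref{prop-B_1-b_1}, which also assumes $\phi\in\newone{V}$, so your alternative remark should cite the definition instead. Then \eqref{eqn-b_1-trilinear-form-2} applies directly to $\phi,\zeta\in\zero{H}{2}(\domO)$: the boundary term from the integration by parts carries the factor $\bu\cdot\nabla\phi$, which vanishes on $\partial\domO$.

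With either repair, your Steps 1 and 3 finish the proof exactly as in the paper. The claim $\zeta\in H^4$ in Step 1 is not needed.
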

\begin{proof}
Assume that $\phi \in \zero{H}{2}(\domO)$ and set $\zeta(\phi)= \Aone^{-1} f(\phi)$.
In other words, $\zeta(\phi)$ solves the following Neumann boundary value problem:
\begin{equation*}
\begin{cases}
-\Delta \zeta(\phi)= f(\phi),
\\
 \frac{\partial \zeta(\phi)}{\partial \bn}=0,
 \\
 \fint_{\domO} \zeta(\phi(x))\,\d x=0.
\end{cases}
\end{equation*}
Since $\phi \in \zero{H}{2}(\domO)$, we infer that
$ -\Delta \zeta(\phi)= f(\phi) \in H^2(\domO)$.
By the smoothness of $\domO$ the standard elliptic theory for second order operators, see \cite[Theorem 8.8]{Gilbarg+Trudinger_1983}, there exists  a constant $C_1>0$ such that
\begin{align*}
&\Vert \zeta(\phi) \Vert_{H^2}
\leq C_1 (\lvert -\Delta \zeta(\phi) \rvert_{L^2} + \lvert \zeta(\phi) \rvert_{L^2})
= C_1 (\lvert f(\phi) \rvert_{L^2} + \lvert \zeta(\phi) \rvert_{L^2}). 
\end{align*}
Moreover, using the Poincar\'e Wirtinger and Cauchy Schwarz inequalities, we infer that there exists a generic constant $C_2=C_2(\domO)>0$ such that
\begin{align*}
&\lvert \zeta(\phi) \rvert_{L^2}
\leq C_2 \lvert \zeta(\phi) \rvert_{\newone{H}}
= C_2 [(\nabla \zeta(\phi), \nabla \zeta(\phi))_{L^2}]^{1/2}
\\
&= C_2 [(\zeta(\phi), -\Delta \zeta(\phi))_{L^2}]^{1/2}
= C_2 [(\zeta(\phi), f(\phi))_{L^2}]^{1/2}
\leq C_2 \lvert \zeta(\phi) \rvert_{L^2}^{1/2}  \lvert f(\phi) \rvert_{L^2}^{1/2}.
\end{align*}
This implies $\lvert \zeta(\phi) \rvert_{L^2}^{1/2}\leq C_2 \lvert f(\phi) \rvert_{L^2}^{1/2}$. Consequently,
\[
\Vert \zeta(\phi) \Vert_{H^2}
\leq C_1(1 + C_2^2) \lvert f(\phi) \rvert_{L^2}.
\]
Now, since $\zeta(\phi) \in \zero{H}{2}(\domO)$ and $\diver\bu=0$, then by integration by parts, we deduce that  
\begin{align*}
&\left(B_1(\bu, \phi), \zeta(\phi)\right)_{\newone{H}}
=b_1(\bu,\phi,\zeta(\phi))
=\sum_{i= 1}^{d} \int_{\domO} \bu^i \, \partial_i \phi \, (-\Delta \zeta(\phi)) \,\d x
\\
&
= \sum_{i= 1}^{d} \int_{\domO} \bu^i \, \partial_i \phi \, f(\phi) \,\d x
= \sum_{i= 1}^{d} \int_{\domO} \bu^i \, \partial_i \phi \, \psi^\prime(\phi) \,\d x
- \avg{\psi^\prime(\phi)} \sum_{i= 1}^{d} \int_{\domO} \bu^i \, \partial_i \phi\,\d x
\\
&= \sum_{i= 1}^{d} \int_{\domO} \bu^i\, \partial_i\psi(\phi) \,\d x
- \avg{\psi^\prime(\phi)} \sum_{i= 1}^{d} \int_{\domO} \bu^i \, \partial_i \phi\,\d x
=0.
\end{align*}
This completes the proof of the corollary. 
\end{proof}

\subsection{Regular potential}\label{subsec-regularpotential}

In our paper we concentrate on the Landau potential, i.e.
              \begin{equation}\label{eqn-regular-potential}
                  \psi(s)= \frac{1}{4}(1-s^2)^2, \;  s \in \mathbb{R}.
              \end{equation}
However, see Remark \ref{rem-potential general}, our results are valid for a much larger class of potentials. 
\begin{notation}\label{not-Psi}
The Nemytskii map associated with $\psi$ will be denoted by $\Psi$. In other words, if $X$ is a functional space of real valued functions, and for every 
$u \in X$, the composition map $\psi \circ u$ belongs to $X$ as well, then the map $X \ni u \mapsto \psi \circ u \in X$ is denoted by $\Psi$.
\end{notation}
In the whole paper, we will use the above form of the potential. We establish its properties in the following basic but important lemma. 
\begin{lemma}\label{eqn-Lemma-Psi'} The function $\psi$ defined in \eqref{eqn-regular-potential}  is of $C^2$ class and 
\begin{equation}\label{eqn-Psi'}
\psi^\prime(s)= s(s^2-1) \mbox{ and } \psi^{\prime\prime}(s)= 3s^2-1, \;  s \in \mathbb{R}.
\end{equation}
Moreover, $\Psi^\prime$ maps $L^6(\domO)$ into $L^2(\domO)$, and 
     \begin{equation}\label{eqn-Psi'-L^6}
       \vert \psi^\prime(\phi)\vert_{L^2} \leq \Vert  \phi \Vert_{L^6}^3 + \vert \phi \vert_{L^2}, \;\; \phi \in L^6(\domO),
     \end{equation}
and,
    \begin{equation}\label{eqn-Psi''}
      -\int_{\domO} \psi^{\prime \prime}(\phi(x))\lvert \nabla \phi(x) \rvert^2\,\d x 
       \leq  \lvert \nabla \phi \rvert_{\mathbb{L}^2}^2, \;\; \phi \in H^1(\domO) \cap L^\infty(\domO).
    \end{equation}
\end{lemma}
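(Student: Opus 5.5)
The plan is to verify the three claims of Lemma \ref{eqn-Lemma-Psi'} by direct computation on the polynomial $\psi(s)=\frac14(1-s^2)^2$, followed by elementary $L^p$ bounds.

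\textbf{Derivatives.} Since $\psi$ is a polynomial, it is $C^\infty$, and in particular of class $C^2$. Expanding $\psi(s)=\frac14(1-2s^2+s^4)$ gives
\[
\psi^\prime(s)=-s+s^3=s(s^2-1), \qquad \psi^{\prime\prime}(s)=3s^2-1,
\]
which is \eqref{eqn-Psi'}.

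\textbf{The bound \eqref{eqn-Psi'-L^6}.} For $\phi\in L^6(\domO)$ we have the pointwise identity $\psi^\prime(\phi)=\phi^3-\phi$. Since $\domO$ is bounded, $L^6(\domO)\subset L^2(\domO)$. Moreover $\lvert \phi^3\rvert_{L^2}=\big(\int_\domO \lvert\phi\rvert^6\,\d x\big)^{1/2}=\Vert\phi\Vert_{L^6}^3$. Minkowski's inequality then gives
\[
\lvert\psi^\prime(\phi)\rvert_{L^2}\le \Vert\phi\Vert_{L^6}^3+\lvert\phi\rvert_{L^2}.
\]
This also shows that $\Psi^\prime$ maps $L^6(\domO)$ into $L^2(\domO)$. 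Measurability of $\psi^\prime\circ\phi$ is automatic because $\psi^\prime$ is continuous.

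\textbf{The bound \eqref{eqn-Psi''}.} For $\phi\in H^1(\domO)\cap L^\infty(\domO)$, the function $\psi^{\prime\prime}(\phi)=3\phi^2-1$ is bounded, so the integrand $\psi^{\prime\prime}(\phi)\lvert\nabla\phi\rvert^2$ is integrable. Pointwise,
\[
-\psi^{\prime\prime}(\phi)\lvert\nabla\phi\rvert^2=(1-3\phi^2)\lvert\nabla\phi\rvert^2\le\lvert\nabla\phi\rvert^2,
\]
because $3\phi^2\ge0$. Integrating over $\domO$ yields the claim.

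There is no genuine obstacle. The only points needing care are the integrability justifications: the $L^\infty$ assumption in the last item guarantees the integral is finite, and boundedness of $\domO$ guarantees $\phi\in L^2(\domO)$ in the second item.
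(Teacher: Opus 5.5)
Your proof is correct and agrees with the paper in substance. The paper treats the formulas for $\psi'$, $\psi''$ and the bound \eqref{eqn-Psi'-L^6} as elementary, exactly as you do.

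For \eqref{eqn-Psi''}, the paper (Remark \ref{rem-eqn-Psi''}) first proves the exact identity
\[
-\int_{\domO}\psi''(\phi)\lvert\nabla\phi\rvert^2\,\d x=-\frac34\lvert\nabla(\phi^2)\rvert_{\mathbb{L}^2}^2+\lvert\nabla\phi\rvert_{\mathbb{L}^2}^2,
\]
using the weak product rule $D_i(\phi^2)=2\phi D_i\phi$, and then drops the nonpositive term. You drop the same quantity $3\phi^2\lvert\nabla\phi\rvert^2\ge 0$ pointwise. Your route avoids needing $\phi^2\in H^1(\domO)$, so the $L^\infty$ hypothesis is used only for integrability. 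The paper's identity additionally records the size of what is discarded.
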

\begin{remark}\label{rem-eqn-Psi''}
In our case, inequality can be given a more precise form as follows 
    \begin{equation}\label{eqn-Psi''-precise}
      -\int_{\domO} \psi^{\prime \prime}(\phi(x))\lvert \nabla \phi(x) \rvert^2\,\d x 
      = -\frac34 \lvert \nabla (\phi^2) \rvert_{\mathbb{L}^2}^2+ \lvert \nabla \phi \rvert_{\mathbb{L}^2}^2, \;\; \phi \in H^1(\domO) \cap L^\infty(\domO).
    \end{equation}
\end{remark}
\begin{proof}[Proof of equality \eqref{eqn-Psi''-precise}]
Let us choose and fix $\phi \in H^1(\mathcal{O}) \cap L^\infty(\domO)$.\\
Then $\phi^2 \in H^1(\mathcal{O}) \cap L^\infty(\domO)$ and, by the  weak product rule, see e.g. \cite[Proposition 9.4]{Brezis_2011}, we have
\[
D_i(\phi^2)= 2 \phi D_i \phi , \;\; i=1,\ldots, d.  \]
This, together with \eqref{eqn-Psi'}, yields that
\begin{align*}
&\int_{\domO} \frac34 \sum_{i=1}^d \left \lvert D_i(\phi^2(x))\right \rvert^2\d x - \int_{\domO} \lvert \nabla \phi(x) \rvert^2\,\d x
= \int_{\domO} 3 \phi^2(x) \sum_{i=1}^d \left \lvert D_i\phi(x) \right \rvert^2\d x - \int_{\domO} \lvert \nabla \phi(x) \rvert^2\,\d x \\
&= \int_{\domO} (3 \phi^2(x) - 1) \lvert \nabla \phi(x) \rvert^2\,\d x
=\int_{\domO} \psi^{\prime \prime}(\phi(x)) \lvert \nabla \phi(x) \rvert^2\,\d x.
\end{align*}
\end{proof}
\begin{remark}\label{rem-potential general} Let us notice that Lemma \ref{eqn-Lemma-Psi'} holds when 
a  $C^2$-class potential $\psi:\mathbb{R} \to \mathbb{R}$ satisfies the following growth conditions.  There exist constants $\kappa_1>0,\,\kappa_2>0$, and $\kappa_3>0$ such that for every $s \in \mathbb{R}$,
\begin{equation}\label{General-smooth-potential}
\begin{aligned}
 \psi(s) &\geq  - \kappa_1,  & \\
- \psi^{\prime\prime}(s)  &\leq  \kappa_2,  \\
\vert \psi^{\prime}(s) \vert &\leq  \kappa_3(  \vert s \vert  + \vert s\vert ^3).
\end{aligned}
\end{equation}
\end{remark}

\section{Stochastic preliminaries}\label{sec-stochastic preliminaries}
\subsection{Cylindrical Wiener process}\label{subsec-cylindrical Wiener process}
Due to the presence of the stochastic integral, all quantities in \eqref{eqn-stochastic-CHNSEs} must be interpreted as random variables defined on a probability space satisfying the assumption below. 
\begin{assumption}\label{ass-usual}
We assume that     
\begin{equation}\label{eqn-probability space}
(\Omega, \mathscr{F}, \mathbb{F},\mathbb{P})
\end{equation}
is a filtered complete probability space 
 with filtration $\mathbb{F}= \{\mathscr{F}_t\}_{t\in[0,T]}$,  satisfying the so called usual assumptions, i.e.     
\begin{trivlist}
\item[(i)] $\mathbb{F}$ is right-continuous, that is, $\mathscr{F}_t= \bigcap_{s>t} \mathscr{F}_s$, for all $t \in [0,T]$; 
\item[(ii)] $\mathscr{F}_0$ contains all null sets, that is, $\mathbb{P}$-negligible subset of $\mathscr{F}$ belongs to $\mathscr{F}_0$.
\end{trivlist}
\end{assumption}
\begin{definition}\label{def-cylindrical Wiener process}
Assume that $\rK$ is a separable infinite dimensional Hilbert space. A $\rK$-cylindrical Wiener process $W=(W(t): t \in [0,T])$ is a family of formal series
   \begin{equation}\label{eqn-cylindrical Wiener process-abstract}
      W(t)\coloneqq  \sum_{k=1}^\infty w_k(t) e_k, \;\; t \in [0,T], 
    \end{equation} 
    where  $\left(e_k: \; k \in \mathbb{N} \right)$ is an ONB of $\rK$ and $\left(w_k: \; k \in \mathbb{N} \right)$ is a sequence of iid standard Brownian motions on some fixed probability space. \\
    The space $\rK$ is often called the Reproducing kernel Hilbert space, or Cameron-Martin space, of the 
    cylindrical Wiener process $W$.
\end{definition}
It is well known that the above definition is equivalent to the following one, see Definition 4.1 in \cite{Brz+Peszat_2001}.
\begin{definition}\label{rem-def-cylindrical Wiener process}
Let $(\Omega ,\mathscr{F}, \mathbb{F}, \mathbb{P})$ be a filtered probability space with the filtration 
$\mathbb{F}=(\mathscr{F}_t)_{t\in [0,T]}$
and $\rK$ be a real separable Hilbert space. By an  $\mathbb{F}$-adapted cylindrical Wiener process on $\rK$, we understand a family $W(t)$, $t\ge 0$, of bounded linear
operators from ${\rK}$ into  $L^2(\Omega,{\mathcal F},\mathbb{P})$ such that:
\begin{trivlist}
\item[(i)] for all $t \in [0,T]$, and $\psi,\varphi \in {\rK}$,
\[
\mathbb{E} \left[W(t)\psi W(t)\varphi \right]=t \langle\psi,\varphi \rangle _{\rK}\;;
\]
\item[(ii)] 
for each $\psi\in \rK$, $W(t)\psi$, $t \in [0,T]$ is a real valued
$\mathbb{F}$-adapted Wiener process.
\end{trivlist}
\end{definition}
\begin{remark}\label{rem-cylindrical Wiener process}
It follows from this definition that if the family of formal series' \eqref{eqn-cylindrical Wiener process-abstract} is a $\rK$-cylindrical Wiener process and $(\tilde{e}_k: \; k \in \mathbb{N})$ is an ONB  of $\ell^2$, then 
the following family of formal series'
   \begin{equation}\label{eqn-cylindrical Wiener process-l^2}
      \tildeW(t)\coloneqq  \sum_{k=1}^\infty \tilde{w}_k(t) \tilde{e}_k, \;\; t \in [0,T], 
   \end{equation}
is an $\ell^2$-cylindrical Wiener process.
\end{remark}
As in \cite{Brz+Peszat_2001}, one can define the stochastic It\^o  integral $\int_0^t \upxi(s) \,\d W(s)$, as an $X$-valued continuous martingale,  provided 
$\upxi$ is a progressively measurable process taking values in the space $\gamma(\rK,X)$ of $\gamma$-radonifying linear operators from $\rK$ to a martingale type $2$ Banach space $X$
and satisfying the following natural condition:
     \begin{equation}\label{eqn-xi-assumption}
          \mathbb{E} \int_0^t \Vert \upxi (s)\Vert^2_{\gamma(\rK,X)}\, ds <\infty, \;\; \mbox{ for every } t \in [0,T].     
    \end{equation}
Let us point out that if  two processes $\upxi_1$ and $\upxi_2$ satisfying the conditions listed earlier, are modifications of each other, 
then the $X$-valued  continuous martingales $\int_0^t \upxi_1(s) \,\d W(s)$  and $\int_0^t \upxi_2(s) \,\d W(s)$ are indistinguishable. 
If $\phi \in X^\prime$  and $\upxi$ is a  process satisfying the conditions listed earlier, 
then  $\mathbb{R}$-valued process $\phi \circ \upxi$ defined by 
\[
\phi \circ \upxi: [0,\infty) \times \Omega \ni (t,\omega) \mapsto \phi(\upxi(t,\omega)) \in \gamma(\rK,\mathbb{R})
\]
also satisfies the conditions listed earlier and the two  $\mathbb{R}$-valued 
continuous martingales $\int_0^t \phi \circ\upxi(s) \,\d W(s)$ and  $ \phi\left(\int_0^t \upxi(s) \,\d W(s)\right)$ are indistinguishable. \\
If $\{\phi_j\}$ is a countable linearly dense sequence of elements of  $X^\prime$ and $\upxi_1$, $\upxi_2$ are  two $\gamma(\rK,X)$-valued processes satisfying the conditions listed earlier such that
for every $j$, the two  $\mathbb{R}$-valued 
continuous martingales $\int_0^t \phi_j \circ\upxi_1(s) \,\d W(s)$ and $\int_0^t \phi_j \circ\upxi_2(s) \,\d W(s)$, are indistinguishable, then 
the two  $X$-valued 
continuous martingales $\int_0^t  \upxi_1(s) \,\d W(s)$ and $\int_0^t \upxi_2(s) \,\d W(s)$, are indistinguishable as well. \\
Notice that if $X$ is a Hilbert space, then 
$\gamma(\rK,X)$ is equal to the space $\mathscr{T}_2(\rK,X)$ of Hilbert Schmidt operators 
from $\rK$ to  $X$, and if $X=\mathbb{R}$, we infer that  the space $\mathscr{T}_2(\rK,\mathbb{R})$
is isometrically isomorphic, via the Riesz Lemma,  to $\rK$. 

\noindent
Therefore, if  $X=\mathbb{R}$ and  $\upxi$ is  a $\gamma(\rK,\mathbb{R})$-valued process satisfying the conditions listed earlier, 
then the   It\^o integral $\int_0^t \upxi(s)\,\d W(s)$, which is an $\mathbb{R}$-valued continuous martingale,  can be, and will be, written in the following form: 
\begin{equation}\label{eqn-Ito integral-inner product}
\int_0^t \upxi(s)\,\d W(s)=\int_0^t \ipsmall{\upxi(s)}{\d W(s)}{\rK}, 
\end{equation}
where in the 2nd and 3rd integrals, by $\upxi$ we understand the corresponding $\rK$-valued process.

\noindent
In particular, if $\phi \in X^\prime$ and $\upxi$ is a $\gamma(\rK,X)$-valued   process satisfying the conditions listed earlier, then 
\[
\int_0^t \phi \circ\upxi(s) \,\d W(s)=\int_0^t \langle \phi \circ \upxi(s),\d W(s) \rangle_{\rK}, 
\]
where again in the integral on the RHS, by $\phi \circ \upxi$ we understand the corresponding $\rK$-valued process.

\noindent
For convenience, we fix the standard ONB $(\tilde{e}_k)_{k \in \mathbb{N}}$ of $\ell^2$. Let us state the following assumption on the process $\tildeW= (\tildeW(t): \; t \in [0,T])$.
\begin{assumption}\label{ass-cylindrical Wiener process}
We assume that  
           \begin{equation}\label{wiener-process-representation}
             \tildeW(t)\coloneqq  \sum_{k=1}^\infty \tilde{w}_k(t) \tilde{e}_k, \;\; t \in [0,T], 
           \end{equation}
is a $\ell^2$-cylindrical Wiener process on the filtered complete probability space $(\Omega, \mathscr{F}, \mathbb{F},\mathbb{P})$. 
\end{assumption}

\subsection{A special case}\label{subsec-Ito integral}

In this subsection, we consider a special case. It is important from the point of view of understanding Lemma 3.2 from \cite{Mik+Roz_1998}.\\
Assume that $(\mO, \mathcal{E}, \tilde{\kappa})$ is  a $\sigma$-finite measure space 
such that the Hilbert  space 
\begin{equation}\label{eqn-rK}
\rK\coloneqq L^2(\mO, \tilde{\kappa})=L^2(\mO, \tilde{\kappa};\mathbb{R})
\end{equation}
is separable. Assume also that 
$H$ is a separable real Hilbert space with inner product
$\ip{\cdot}{\cdot}_H$, and let $A : \rK \to H$ be a bounded
linear operator. Let us recall the following folk theorem. 

\begin{theorem}\label{thm:HS}
Suppose that $A :\rK \to H$ is a bounded linear operator. Then, $A$ is Hilbert--Schmidt if and only if there exists a (strongly) $\tilde{\kappa}$-measurable function
  $\Phi : \mO \to H$ with
\begin{equation}\label{eq:L2H}
\int_{\mO} \normm{\Phi(x)}_H^2\,\tilde{\kappa}(\d x)< \infty,
\end{equation}
such that
\begin{equation}\label{eq:Af}
Af = \int_{\mO} f(x)\, \Phi(x)\, \tilde{\kappa}(\d x),
\;\; \forall\, f \in \rK,
\end{equation}
where the integral is a Bochner integral in $H$. In this case,
\begin{equation}\label{eqn-HS-norm of A}
\normm{A}_{\HS(\rK, H)}^2
= \int_{\mO} \normm{\Phi(x)}_H^2\, \tilde{\kappa}(\d x),
\end{equation}
where $\normm{A}_{\HS(\rK,H)}^2$ is the Hilbert-Schmidt norm defined by  
  \begin{equation}\label{eqn-def-HS-norm}
    \normm{A}_{\HS(\rK,H)}^2
    = \sum_{n=1}^\infty \normm{A \tilde{e}_n}_H^2
  \end{equation}
  for any complete orthonormal system $(\tilde{e}_n)_{n=1}^\infty$ in $\rK=L^2(\mO, \tilde{\kappa})$.
\end{theorem}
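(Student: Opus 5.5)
We prove the two implications separately, working throughout with a fixed orthonormal basis $(\tilde{e}_n)_{n\ge1}$ of $\rK=L^2(\mO,\tilde{\kappa})$ and an orthonormal basis $(h_m)_{m\ge1}$ of $H$. Both spaces are separable, so such bases exist.

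\emph{Sufficiency.} Suppose $\Phi$ is strongly measurable and satisfies \eqref{eq:L2H}. By Cauchy--Schwarz,
\[
\int_{\mO}\|f(x)\Phi(x)\|_H\,\tilde{\kappa}(\d x)\le |f|_{\rK}\Big(\int_{\mO}\|\Phi\|_H^2\,\d\tilde{\kappa}\Big)^{1/2},
\]
so the Bochner integral \eqref{eq:Af} exists. The same bound shows it defines a bounded operator. Commuting the inner product with the Bochner integral gives
\[
\ip{A\tilde{e}_n}{h_m}_H=\int_{\mO}\tilde{e}_n(x)\,\ip{\Phi(x)}{h_m}_H\,\tilde{\kappa}(\d x)=\big(\tilde{e}_n,\varphi_m\big)_{\rK},
\]
where $\varphi_m:=\ip{\Phi(\cdot)}{h_m}_H\in\rK$. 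Summing the squares and using Parseval twice yields
\[
\sum_n\|A\tilde{e}_n\|_H^2=\sum_m\sum_n(\tilde{e}_n,\varphi_m)_{\rK}^2=\sum_m|\varphi_m|_{\rK}^2=\int_{\mO}\sum_m\ip{\Phi(x)}{h_m}_H^2\,\d\tilde{\kappa}=\int_{\mO}\|\Phi(x)\|_H^2\,\d\tilde{\kappa}.
\]
The interchanges of sums and integral are justified by Tonelli, since every term is nonnegative. This gives \eqref{eqn-HS-norm of A} and shows $A$ is Hilbert--Schmidt. Because the right-hand side does not depend on the basis, the Hilbert--Schmidt norm \eqref{eqn-def-HS-norm} is basis-independent, as claimed.

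\emph{Necessity.} Now let $A$ be Hilbert--Schmidt, and put $\varphi_m:=A^*h_m\in\rK$. Then
\[
\sum_m|\varphi_m|_{\rK}^2=\|A^*\|_{\HS}^2=\|A\|_{\HS}^2<\infty.
\]
The identity $\|A^*\|_{\HS}=\|A\|_{\HS}$ follows from the double Parseval computation above. Define
\[
\Phi(x):=\sum_m\varphi_m(x)h_m.
\]
By Tonelli, $\int_{\mO}\sum_m|\varphi_m(x)|^2\,\d\tilde{\kappa}<\infty$. Hence the series $\sum_m|\varphi_m(x)|^2$ is finite for $\tilde{\kappa}$-a.e. $x$, so $\Phi(x)\in H$ a.e., and we set $\Phi=0$ on the exceptional null set. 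Each partial sum is a finite sum of measurable scalars times fixed vectors, so it is strongly measurable. The partial sums converge pointwise a.e. in $H$, so $\Phi$ is strongly measurable as an a.e. limit. Its square integral is $\sum_m|\varphi_m|_{\rK}^2$, which proves \eqref{eq:L2H}.

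It remains to show that $\Phi$ represents $A$. Let $\tilde{A}$ denote the operator defined from $\Phi$ through \eqref{eq:Af}; it is bounded by the sufficiency part. For every $f\in\rK$ and every $m$,
\[
\ip{\tilde{A}f}{h_m}_H=(f,\varphi_m)_{\rK}=(f,A^*h_m)_{\rK}=\ip{Af}{h_m}_H.
\]
Since $(h_m)$ is a basis, $\tilde{A}f=Af$ for all $f$, i.e. $\tilde{A}=A$.

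\emph{Main obstacle.} The step needing the most care is the measurability and a.e. convergence of the series defining $\Phi$ in the necessity part. The problem is that each $\varphi_m$ is only an equivalence class in $\rK$, so the pointwise series has to be handled with the Tonelli argument and a null-set modification, as done above. Everything else reduces to Parseval identities.
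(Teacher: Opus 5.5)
Your proof is correct, and its necessity half is the same construction the paper uses in the sketch of Corollary~\ref{cor:HS}: $\Phi=\sum_m (A^\ast h_m)(\cdot)\,h_m$, identified with $A$ by testing against an orthonormal basis of $H$ and applying Parseval. The paper gets strong measurability from convergence of this series in $L^2(\mO,\tilde{\kappa};H)$, whereas you use a.e.\ pointwise convergence after a null-set modification, which is equally valid; and since the paper states the theorem itself as a folk result without proof, your double-Parseval argument for sufficiency and the norm identity supplies the part it takes for granted.
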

We have the following immediate corollary. 
\begin{corollary}
\label{cor:HS}    
The space $ \HS(L^2(\mO, \tilde{\kappa});H)$ of all Hilbert-Schmidt operators from $L^2(\mO, \tilde{\kappa})$ to $ H$, endowed with the standard norm \eqref{eqn-def-HS-norm},  is isometrically isomorphic to the Hilbert space $ L^2(\mO,\tilde{\kappa}; H)$ of all (equivalence classes) of all 
(strongly) $\tilde{\kappa}$-measurable functions  $\Phi : \mO \to H$ such that \eqref{eq:L2H} equipped with the classical norm 
  \begin{equation}\label{eqn-norm-L2H}
    \normm{\Phi}_{L^2(\mO,\tilde{\kappa};H)}^2 \coloneqq \int_{\mO} \normm{\Phi(x)}_H^2\, \tilde{\kappa}(\d x). 
  \end{equation}
\end{corollary}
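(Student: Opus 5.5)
The plan is to build the isometric isomorphism from Theorem \ref{thm:HS}. Define the map
\[
\mathcal{J}: L^2(\mO,\tilde{\kappa};H) \ni \Phi \mapsto A_\Phi \in \HS(L^2(\mO,\tilde{\kappa});H), \qquad A_\Phi f\coloneqq \int_{\mO} f(x)\,\Phi(x)\,\tilde{\kappa}(\d x).
\]

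\emph{Step 1: $A_\Phi$ is well defined.} For $f\in \rK$ and $\Phi$ satisfying \eqref{eq:L2H}, the Cauchy--Schwarz inequality gives
\[
\int_{\mO} \lvert f(x)\rvert\, \normm{\Phi(x)}_H\,\tilde{\kappa}(\d x)\le \normm{f}_{\rK}\, \normm{\Phi}_{L^2(\mO,\tilde{\kappa};H)}<\infty .
\]
The function $f\Phi$ is strongly measurable, so the Bochner integral exists. The map $f\mapsto A_\Phi f$ is linear and bounded, with norm at most $\normm{\Phi}_{L^2(\mO,\tilde{\kappa};H)}$.

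\emph{Step 2: $\mathcal{J}$ is well defined, linear and isometric.} By the ``if'' direction of Theorem \ref{thm:HS}, $A_\Phi$ is Hilbert--Schmidt, and \eqref{eqn-HS-norm of A} gives $\normm{A_\Phi}_{\HS}=\normm{\Phi}_{L^2(\mO,\tilde{\kappa};H)}$. If two representatives agree $\tilde{\kappa}$-a.e., they yield the same Bochner integrals, so $\mathcal{J}$ is well defined on equivalence classes. Linearity is clear from linearity of the Bochner integral. Being a linear isometry, $\mathcal{J}$ is also injective.

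\emph{Step 3: $\mathcal{J}$ is surjective.} Take any $A\in\HS(\rK;H)$. The ``only if'' direction of Theorem \ref{thm:HS} provides a strongly measurable $\Phi$ satisfying \eqref{eq:L2H} and \eqref{eq:Af}, and then $A=\mathcal{J}\Phi$.

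\emph{Step 4: inner products.} A surjective linear isometry between Hilbert spaces preserves inner products, by the polarization identity. Hence $\mathcal{J}$ is an isometric isomorphism of Hilbert spaces. The norm \eqref{eqn-def-HS-norm} is independent of the choice of orthonormal basis, so the standard Hilbert--Schmidt structure is the one being used.

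The only point needing care is the well-definedness on equivalence classes together with the measurability in Step 1; everything else follows directly from Theorem \ref{thm:HS}.
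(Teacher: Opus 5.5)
Your argument is correct and has the same skeleton as the paper's: the map $\Phi\mapsto A_\Phi$ defined by \eqref{eq:Af}, then well-definedness and the isometry from the ``if'' half of Theorem \ref{thm:HS} together with \eqref{eqn-HS-norm of A}, then surjectivity. Your Step 1 (checking that $A_\Phi$ is bounded, so that Theorem \ref{thm:HS} applies at all) and your Step 4 (inner products via polarization) are details the paper leaves implicit.

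The genuine difference is Step 3. You get surjectivity by citing the ``only if'' half of Theorem \ref{thm:HS}. The paper does not use that half; it constructs the kernel directly:
\begin{itemize}
\item for $A\in\HS(\rK,H)$ and an ONB $(h_k)$ of $H$, it sets $\Phi(x)=\sum_k (A^\ast h_k)(x)\,h_k$;
\item the series converges in $L^2(\mO,\tilde{\kappa};H)$ because $\int_{\mO}\sum_k \lvert A^\ast h_k(x)\rvert^2\,\tilde{\kappa}(\d x)=\normm{A^\ast}_{\HS}^2=\normm{A}_{\HS}^2$;
\item \eqref{eq:Af} is verified by pairing $\int_{\mO} f\Phi\,\d\tilde{\kappa}$ with $h\in H$ and using Parseval.
\end{itemize}

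Your route is shorter, but it turns the corollary into a restatement of Theorem \ref{thm:HS}, which the paper states as a folk theorem without proof. In particular, the one non-trivial point is entirely outsourced: every Hilbert--Schmidt operator on $L^2(\mO,\tilde{\kappa})$ is given by an $L^2(\mO,\tilde{\kappa};H)$ kernel. So your closing remark, that the only delicate point is measurability in Step 1, misplaces where the content lies. The paper's route is self-contained for surjectivity. From the theorem it needs only the ``if'' direction and the norm identity, and both follow from Parseval in $\rK$ and in $H$ plus Tonelli. In effect, the paper's proof of the corollary supplies the half of Theorem \ref{thm:HS} that you took for granted.
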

\begin{example}\label{example:HS}    
Assume that $\mO=\mathbb{N}$, $\mathscr{E}=2^\mO$, and $\tilde{\kappa}$ is the counting measure. Then, obviously the space  $L^2(\mO, \tilde{\kappa})$ is equal to the space 
$\ell^2=\ell^2(\mathbb{N},\mathbb{R})$; and $L^2(\mO,\tilde{\kappa};H)$ is equal to the space $\ell^2(H)$. Thus, according to the Corollary \ref{cor:HS}, the spaces 
\[
\HS(\ell^2;H)\mbox{ and } \ell^2(H) \] are isometrically isomorphic.
\end{example}

\begin{proof}[Sketch of the proof of Corollary \ref{cor:HS}]
Define a map 
\begin{equation}\label{eqn-Lambda}
\Lambda: L^2(\mO,\tilde{\kappa}; H) \ni \Phi \mapsto A \in  \HS(\rK, H), 
\end{equation}
where $A$ is defined by \eqref{eq:Af}. By Theorem \ref{thm:HS}, the map $\Lambda$ is a well defined isometric map. Obviously, it is linear; hence, it suffices to prove that it is surjective. To show this, we choose and fix 
$ A \in  \HS(\rK,H)$. Then the adjoint $A^\ast : H \to \rK$ is also a Hilbert--Schmidt operator, see \cite{Kato_1995}. 
Let $(h_k)_{k \geq 1}$ be an ONB 
  in $H$.  Define
  \begin{equation}\label{eq:Phi-series}
    \Phi: \mO \ni \rx \mapsto \sum_{k=1}^\infty A^\ast h_k(\rx)\, h_k \in H,  
  \end{equation}
where the sum is taken in $H$. This series converges in $L^2(\mO,\tilde{\kappa};H)$ since
\begin{equation}\label{eq:Af-3}
\int_\mO \normm{\Phi(x)}_H^2\,\tilde{\kappa}(\d x)
= \int_\mO \sum_{k=1}^\infty \lvert A^\ast h_k(x) \rvert^2\, \tilde{\kappa}(\d x)
= \sum_{k=1}^\infty \normm{A^\ast h_k}_{\rK}^2 
= \normm{A^\ast}_{\HS(H,\rK)}^2 < \infty,
\end{equation}
where we used the orthonormality of $(h_k)$ in $H$ and the Parseval identity. \\
The function $\Phi : \mO \to H$ defined by \eqref{eq:Phi-series} is strongly $\tilde{\kappa}$-measurable as an $L^2(\mO,\tilde{\kappa};H)$-limit of strongly measurable functions.
Moreover, if $f \in \rK$ and $h \in H$, then  
\begin{equation}\label{eq:Af-2}
\begin{aligned}
&\langle  \int_{\mO} f(x)\, \Phi(x)\, \tilde{\kappa}(\d x), h \rangle_H 
= \int_{\mO} f(x)\,  \left(  \sum_{k=1}^\infty A^\ast h_k(x)\, \langle h_k,h \rangle_H \right)\, \tilde{\kappa}(\d x)
\\
& = \sum_{k=1}^\infty \langle h_k,h \rangle_H  \int_{\mO} f(x)\,   A^\ast h_k(x)\, \, \tilde{\kappa}(\d x) 
  = \sum_{k=1}^\infty \langle h_k,h \rangle_H\, \langle f, A^\ast h_k \rangle_{\rK}  
\\
&= \sum_{k=1}^\infty \langle h_k,h \rangle_H \langle A f, h_k \rangle_{H}= \langle A f, h \rangle_{H},
\end{aligned}
\end{equation}
where the last equality is a consequence of the Parseval identity. \\
Thus, \eqref{eq:Af} follows, completing the proof.
\end{proof}
\begin{assumption}\label{ass-ONB of rK}
Now, we assume that  $\left( e_k\right)_{k=1}^\infty$ is an ONB of $\rK=L^2(\mO, \tilde{\kappa})$ and 
           \begin{equation}\label{rk-wiener-process-representation}
             W(t)\coloneqq  \sum_{k=1}^\infty w_k(t) e_k, \;\; t \in [0,T], 
           \end{equation}
is a $\rK$-cylindrical Wiener process on the filtered complete probability space $(\Omega, \mathscr{F}, \mathbb{F},\mathbb{P})$. 
\end{assumption}
We also have 
\begin{assumption}\label{ass-function sigma}
Suppose that the function 
  \begin{equation}\label{eqn-first-def-sigma}
  \sigma : [0,\infty) \times \Omega \times \mO \to U^\prime
  \end{equation}
is a $\mathcal{P}(\mathbb{F}) \otimes \mathcal{E}$-measurable function, so that for each $(s, \omega) \in [0,\infty) \times \Omega $, the map $\rx \mapsto \sigma_s(\rx)$ is an $\mathcal{E}$-measurable $U^\prime$-valued function on $\mO$. We assume that   
    \begin{equation}\label{eq:integ-cond-sigma}
    \mathbb{P}\!\left(\int_0^t \int_{\mO} \normm{\sigma_s(\rx)}_{U^\prime}^2\,
    \tilde{\kappa}(\d\rx)\, \d s < \infty\right) = 1 \quad \forall\, t > 0,
  \end{equation}
or, respectively,
\begin{equation}\label{eq:integ-cond-ito-sigma}
  \mathbb{E} \int_0^t \!\left[\int_{\mO} \normm{\sigma_s(\rx)}_{U^\prime}^2\,
    \tilde{\kappa}(\d\rx)\right]\d s< \infty \quad \forall\, t > 0.
\end{equation}
\end{assumption}
Note that the above implies that  
for almost every  $(s, \omega) \in [0,\infty) \times \Omega$, the map 
\[
\sigma_s(\rx) \coloneqq  \left\{ \mO \ni \rx \mapsto \sigma_s(\rx) \in U^\prime\right\}
\]
belongs to  $L^2(\mO,\tilde{\kappa};U^\prime)$. Hence, in view of Corollary \ref{cor:HS}, we can define the following  process 
\begin{equation}\label{eqn-xi}
  \xi : [0,\infty) \times \Omega \ni (s,\omega)   \mapsto \xi_s(\omega) \coloneqq  \Lambda \left( \sigma(s,\omega, \cdot)\right) \in  \HS(\rK,U^\prime).
  \end{equation}
This new process satisfies one of the following two conditions:   
\begin{equation}\label{eq:integ-cond}
\mathbb{P}\!\left(\int_0^t  \normm{\xi_s}^2_{\HS(\rK,U^\prime)}\,
     \d s < \infty\right) = 1 \qquad \forall\, t > 0,
\end{equation}
or, respectively,
\begin{equation}\label{eq:integ-cond-ito}
  \mathbb{E} \left[ \int_0^t \, \normm{\xi_s}^2_{\HS(\rK,U^\prime)}
    \, \d s \right] < \infty \qquad \forall\, t > 0.
\end{equation}
Hence, the following $U^\prime$-valued continuous local martingale, respectively, martingale, is defined 
\begin{equation}\label{Eqn-M_t=int_0^t xi_s  d W_s}
    M_t \coloneqq \int_0^t \xi_s \, \d W_s, \;\; t \in [0,T].
\end{equation}
The stochastic integral on the RHS above can  be, maybe not  naturally,   denoted by 
\begin{equation}\label{eqn-M_t=int_0^t xi_s  d W_s}
    \int_0^t  \sigma_s \, \d W_s \coloneqq \int_0^t \xi_s \, \d W_s, \;\; t \in [0,T].
\end{equation}
A more natural notation, in view of definitions \eqref{eqn-xi} and \eqref{eqn-Lambda} could be  the following one
\[
\int_0^t \int_{\mO} \sigma_s(\rx) \, \d W_s(\rx) \; \tilde{\kappa}(\d\rx), \;\; t \in [0,T].
\]
The notation introduced above in \eqref{eqn-M_t=int_0^t xi_s  d W_s} was used in \cite{Mik+Roz_1998} and it will thus be used by us. 

\subsection{Assumptions on the coefficients}
\label{subsec-HS}
Given the functions $\sigma_k$ and $\bg_k$ as defined in \eqref{eqn-sigma_k} and \eqref{eqn-g_k}, we propose the following definition. 
\begin{definition}\label{def-HS} Assume that  $k \in \mathbb{N}$. 
We define  maps $\Sigma_k$ and $\bG_k$  by 
\begin{equation}\label{eqn-Sigma_k}
\bSi_k: [0,T] \times \StokesV \ni (t,\bu) \mapsto \bSi_k(t)\bu \coloneqq   \P \left[(\sigma_k(t,\cdot )  \cdot\nabla) \bu(\cdot)\right] \in \StokesH,
\end{equation}
and 
\begin{equation}\label{eqn-G_k}
\bG_k: [0,T] \times \StokesH \ni (t,\bu) \mapsto   \P \left[
\bg_k(t,\cdot,\bu(\cdot)) \right] \in \StokesH.
\end{equation}
Next, we define  maps $\bG$ and $\bSi$ by
\begin{equation}\label{eqn-G}
\bG: [0,T] \times \StokesH \ni (t,\bu) \mapsto \left(  \bG_k(t,\bu)\right)_{k=1}^\infty
 \in \ell^2(\StokesH)
\end{equation}
and 
   \begin{equation}\label{Eqn-def-Sigma}
      \bSi: [0,T] \times \StokesV \ni (t,\bu) \mapsto  \bSi(t)\bu \coloneqq \left( \bSi_k(t)\bu \right)_{k=1}^\infty
     \in  \ell^2(\StokesH),
   \end{equation}
where the Hilbert space  $ \ell^2(\StokesH)$ has been defined in \eqref{eqn-l^2Y}.
\end{definition}

\begin{remark}
\begin{trivlist}
\item[(i)] 
Notice that  for all $(t,\bu)$ and $k$,
\[
[\bG(t,\bu)](\tilde{e}_k)=\bG_k(t,\bu) \mbox{ and } [\bSi(t)\bu](\tilde{e}_k)=\bSi_k(t)\bu.
\]
\item[(ii)] By the above definitions, the It\^o terms in  \eqref{eqn-stochastic-CHNSEs} can be written in the following forms:
\begin{align*}
\bG(t,\bu)\, \d \tilde{W}(t)= \sum_{k=1}^\infty \bG_k(t,\bu)\, \d \tilde{w}_k(t) \mbox{ and }
[\bSi(t)\bu]\, \d \tilde{W}(t)= \sum_{k=1}^\infty [\bSi_k(t)\bu]\, \d \tilde{w}_k(t).
\end{align*}
\item[(iii)] 
By definition, the  norm in the space $\ell^2(\StokesH)$ is the following 
    \begin{align*}
      \Vert \bG \Vert_{\ell^2(\StokesH)}^2 \coloneqq \sum_{k=1}^\infty \lvert \bG_k \rvert_{\StokesH}^2,\;\; \bG=(\bG_k)_{k=1}^\infty.
    \end{align*}
\end{trivlist}
\end{remark}
\begin{assumption}\label{ass-G+Sigma}
We assume that \\
for all $(t,u) \in [0,T] \times \StokesV$,   $\bSi(t)\bu \in \ell^2(\StokesH)$, and \\
for all $(t,u) \in [0,T] \times \StokesH$,  $\bG(t,\bu) \in \ell^2(\StokesH)$.
\end{assumption}
\begin{remark}
By our assumptions, for every $t \in [0,T]$, the map 
\[
\bSi(t) \colon \StokesV \ni \bu \mapsto  \bSi(t)\bu  \in \ell^2(\StokesH) \mbox{ is linear.}
\]
\end{remark}
Let us now list the assumptions about the coefficients. We follow here the paper \cite{Mik+Roz_2005}. We begin with introducing a useful notation:
      \begin{equation*}
            \mathrm{Y}\coloneqq \ell^2(\mathbb{R}^d).    
      \end{equation*}
\begin{assumption}[Boundedness]\label{ass-bg+sigma}
The following functions
\begin{align}
\sigma: [0,T]\times \mathbb{R}^d \ni (t,x) &\mapsto \left(\sigma_k(t,x)\right)_{k=1}^\infty \in \mathrm{Y},
\\
 \diver \sigma: [0,T]\times \mathbb{R}^d \ni (t,x) &\mapsto \left(\diver \sigma_k(t,x)\right)_{k=1}^\infty \in \ell^2(\mathbb{R}),
\end{align}
are measurable and bounded, i.e. there exists $C_1>0$ and $C_2>0$  such that 
\begin{align}
\label{eqn-A10}
\sum_{k=1}^\infty \vert \sigma_k(t,x) \vert^2 \leq C_1 \mbox{ and }  \sum_{k=1}^\infty \vert \diver \sigma_k(t,x) \vert^2 \leq C_2, \;\; (t,x) \in  [0,T]\times \domO.
\end{align}
\end{assumption}

\begin{assumption}[Coercivity]\label{ass-coercivity}
There exists $\delta_0 >0$ such that for all  $(t,x)\in [0,T] \times \domO$ and $\upxi \in \mathbb{R}^d$,
    \begin{equation}\label{eqn-coercivity}
      \sum_{i,j=1}^d \left[\nu \delta_{ij} - \frac 12
      \sum_{k=1}^\infty \sigma_k^i (t, x) \sigma_k^j (t, x)\right] \upxi_i\upxi_j \geq \delta_0 \lvert \upxi \rvert^2.
    \end{equation}
\end{assumption}

\begin{assumption}\label{ass-A3}
The function
         \begin{equation}
            \bg: [0,T] \times \domO \times \mathbb{R}^d \ni (t,x,z) \mapsto (\bg_k(t,x,z))_{k=1}^\infty \in \mathrm{Y} \mbox{ is measurable.}
        \end{equation}
Moreover, for each $(t,x) \in [0,T] \times \domO$, 
the function 
\[ \mathbb{R}^d \ni z \mapsto \bg(t,x,z) \in \mathrm{Y} \] 
is continuous  and, there exists a constant $C_g>0$ and a   measurable function
   \begin{equation}
     \tilde{H}: [0,T] \times \mathcal {O}\to  [0,\infty),
   \end{equation}
such that 
    \begin{equation}\label{eqn-H-2}
      \Vert \tilde{H} \Vert_{L^2([0,T] \times \domO)} <\infty,
    \end{equation}
and 
    \begin{equation}\label{eqn-H-1}
      \Vert \bg(t,x,z) \Vert_{\mathrm{Y}} \leq C_g \lvert z \rvert + \tilde{H}(t,x), \; \; (t,x,z) \in [0,T] \times \domO \times \mathbb{R}^d.
    \end{equation}
\end{assumption}
\begin{remark}
The coercivity assumption \eqref{eqn-coercivity} implies \eqref{eqn-A10}.
This can be checked by taking $\upxi= \upxi_i \bar{e}_i \in \mathbb{R}^d \setminus\{0\}$, where $\bar{e}_i$ denotes the $i$-th canonical basis vector of $\mathbb{R}^d$. In contrast, the condition $\left(\sigma_k(t,x)\right)_{k=1}^\infty \in \mathrm{Y}$ for $(t,x) \in  [0,T]\times \domO$ implies that for each $\upxi \in \mathbb{R}^d$, the series 
$\sum_{k=1}^\infty \left( \sum_{i,j=1}^d \sigma_k^i (t, x) \sigma_k^j (t, x) \upxi_i\upxi_j \right),\;\; (t,x) \in [0,T]\times \domO$,
is convergent, since
\begin{align*}
\sum_{k=1}^\infty \left \lvert \sum_{i,j=1}^d \sigma_k^i (t, x) \sigma_k^j (t, x) \upxi_i\upxi_j \right \rvert
\leq \sum_{k=1}^\infty \vert \sigma_k(t,x) \vert^2 \lvert \upxi \rvert^2<\infty.
\end{align*}
\end{remark}

\section{Reformulation of the problem}\label{sec-Reformulation of the problem}
Since we seek a divergence-free solution $\bu$, to have a divergence free  stochastic part, we require that 
\begin{equation*}
\nabla \bar{p}_k(t)=  (I - \P) \left[(\sigma_k(t) \cdot \nabla) \bu(t)  + \bg_k(t,\bu(t)) \right], \quad k \in \mathbb{N},
\end{equation*}
where $I$ is the identity operator. Thus, we consider the following equivalent problem, i.e. the problem with $\P$ projection:
\begin{equation}\label{eqn-Modified-stochastic-CHNSEs-2}
\begin{cases}
\d \bu= - [ \nu \Stokes \bu + \bB_0(\bu,\bu) + \newK \bR_0(\phi,\phi)] \,\d t + [\bSi(t)\bu + \bG(t,\bu)]\,\d \tildeW,
\\
\d \phi=- B_1(\bu,\phi) \, \d t - \Athree \cp\, \d t,
   \\
\cp(t)= \Atwo \phi(t) + \psi^\prime(\phi(t)) - \avg{\psi^\prime(\phi(t))}, \;\; t \in [0,T],
      \\
(\bu(0),\phi(0))= (\bu_0,\phi_0),
\end{cases}
\end{equation}     
where we assume that the initial data $(\bu_0,\phi_0)$ satisfies
   \begin{equation}\label{eq-hypo-initial-data}
     (\bu_0,\phi_0) \in \nHB.
    \end{equation}
Furthermore, we define the following maps
\begin{equation}\label{eqn-F and G}
\begin{aligned}
\bF_1 &: \mathbb{V} \ni
\bX=(\bu,\phi) \mapsto
\Bigl(- \nu \Stokes \bu, - B_1(\bu,\phi)  - \Athree \cp\Bigr) \in  \mathbb{V}^\prime,
\\
\bF_2&: \mathbb{V} \ni \bX=(\bu,\phi) \mapsto
\Bigl(- \bB_{0}(\bu,\bu) - \newK \bR_{0}(\phi,\phi),0\Bigr) \in  \mathbb{V}^\prime,
\\
\newG_0&: [0,T] \times \mathbb{V} \ni (t,\bX)=(t,(\bu,\phi)) \mapsto \left(\bSi(t)\bu + \bG(t,\bu), 0\right) \in 
\ell^2(\mathbb{H}),
\\
\bF&\coloneqq \bF_1 + \bF_2,
\end{aligned}
\end{equation}
where, 
     \begin{equation}\label{eqn-mu from pgi}
       \cp = \Atwo \phi + \psi^\prime(\phi) - \avg{\psi^\prime(\phi)}, \mbox{ for }  \phi \in \newone{V}.
    \end{equation}
Then, we reformulate problem \eqref{eqn-Modified-stochastic-CHNSEs-2} in abstract form
\begin{equation}\label{eqn-compact-modified-stochastic-CHNSEs-3}
\begin{cases}
\d \bX= \bF(\bX)\,\d t + [ \newG_0(t,\bX)]\,\d \tildeW(t),
\\
\cp(t)= \Atwo \phi(t) + \psi^\prime(\phi(t)) - \avg{\psi^\prime(\phi(t))}, \;\; t \in[0,T], \\
\bX(0)= (\bu_{0},\phi_{0}).
\end{cases}
\end{equation}
\begin{definition}\label{def-compact-modified-stochastic-CHNSEs-3}
Assume that $(\Omega, \mathscr{F}, \mathbb{F},\mathbb{P})$
is a filtered complete probability space with filtration $\mathbb{F}= \{\mathscr{F}_t\}_{t\in[0,T]}$ satisfying the usual assumptions \ref{ass-usual}.     
Assume that $\tildeW$ is an $\ell^2$-cylindrical Wiener process on that probability space.
A $\mathbb{V}$-valued progressively measurable process $\bX=(\bu,\phi)$ is a weak solution, in the PDE sense, to Problem \eqref{eqn-compact-modified-stochastic-CHNSEs-3} if and only if 
\begin{trivlist}
\item[(i)] for $\mathbb{P}$-almost all $\omega \in \Omega$, the trajectory $\cp(\cdot,\omega)$ of the process  $\cp=(\cp(t): t\in [0,T])$ defined by
              \begin{equation}\label{Eq-identition-mu-0}
                 \cp(t)= \Atwo \phi(t) + \psi^\prime(\phi(t)) - \avg{\psi^\prime(\phi(t))}, \;\; t\in [0,T]    
              \end{equation}
belongs to the space $L^{2}(0,T;\newone{H})$ and  
\begin{equation}\label{eqn-condition on mu(s)}
\mathbb{E} \int_0^T \lvert \cp(t)\rvert_{\newone{H}}^2\,\d t <\infty, 
\end{equation}
and 
\item[(ii)] 
the process $\left(\bF(\bX(t)): t \in [0,T]\right)$ is a $\mathbb{V}^\prime$-valued progressively measurable process such that 
\begin{equation}\label{eqn-condition on F(X(t))}
    \mathbb{E} \int_0^T \Vert \bF(\bX(s)) \Vert_{\mathbb{V}^\prime}\,\d s <\infty,
\end{equation}
the process $\left(\newG_0(t,\bX(t)): t \in [0,T]\right)$ is an $\ell^2(\mathbb{H})$-valued progressively measurable process such that 
\begin{equation}\label{eqn-condition on G(s,X(s))}
    \mathbb{E} \int_0^T \Vert \newG_0(s,\bX(s)) \Vert^2_{\ell^2(\mathbb{H})}\,\d s <\infty,
\end{equation}
and, $\mathbb{P}$-almost surely, 
\begin{equation}\label{eqn-main}
\bX(t)=\bX_0+ \int_0^t  \bF(\bX(s))\,\d s + \int_0^t [ \newG_0(s,\bX(s))]\,\d \tildeW(s) \mbox{ in }  \nU^\prime, \mbox{ for every } t \in [0,T],
\end{equation}
where by $\int_0^t [ \newG_0(s,\bX(s))]\,\d \tildeW(s)$, we mean the It\^o integral understood as an $\mathbb{H}$-valued continuous martingale defined earlier,
and the maps $\bF$ and $\newG_0$ are defined in \eqref{eqn-F and G}.
\end{trivlist}
\end{definition}
\begin{remark}\label{rem-def-compact-modified-stochastic-CHNSEs-3} Assume that a $\mathbb{V}$-valued  sequence  $\left( \tilde{\be}_k: k \in \mathbb{N}\right)$  is an ONB of $\mathbb{H}$. If a process satisfies conditions \eqref{eqn-condition on F(X(t))} and \eqref{eqn-condition on G(s,X(s))}, then condition \eqref{eqn-main} holds $\mathbb{P}$-almost surely if and only if the following one holds:
\begin{trivlist}
\item[(ii')]For every $k \in \mathbb{N}$, 
$\mathbb{P}$-almost surely, 
\begin{equation}\label{eqn-main-1}
\begin{aligned}
    \duality{\bX(t)}{\tilde{\be}_k}{\mathbb{V}}{\mathbb{V}^\prime}=& \duality{\bX_0}{\tilde{\be}_k}{\mathbb{V}}{\mathbb{V}^\prime} + \int_0^t  \duality{\bF(\bX(s))}{\tilde{\be}_k}{\mathbb{V}}{\mathbb{V}^\prime}\,\d s 
    \\
    &+
\int_0^t \langle (\newG_0(s,\bX(s)),\tilde{\be}_k)_{\mathbb{H}},\,\d\tildeW(s)  \rangle \mbox{ in } \mathbb{R}, \;\; \mbox{for every $t \in [0,T]$}.  
\end{aligned}
\end{equation}
\end{trivlist}
\end{remark}
\begin{remark}\label{rem-def-compact-change of Wiener process} 
Suppose  that $(\Omega, \mathscr{F}, \mathbb{F},\mathbb{P})$
is a filtered complete probability space with filtration $\mathbb{F}= \{\mathscr{F}_t\}_{t\in[0,T]}$ satisfying the usual assumptions \ref{ass-usual}.     
Assume that $\tildeW$ is an $\ell^2$-cylindrical Wiener process on that probability space as in Assumption \ref{ass-cylindrical Wiener process}, i.e. it has representation \eqref{wiener-process-representation}. 
Assume that the coefficients $\bF$ and $\newG_0$ are as in \eqref{eqn-F and G}. 
Assume that  a $\mathbb{V}$-valued progressively measurable process $\bX=(\bu,\phi)$ is a weak solution, in the PDEs sense, to Problem \eqref{eqn-compact-modified-stochastic-CHNSEs-3} in the sense of the above Definition.  Suppose next  that $\rK$ is a separable infinite dimensional Hilbert space 
and that $\mfI: \rK \to \ell^2$ is an isometric isomorphism. Suppose further that $W$ is an $\rK$-cylindrical Wiener process on the same probability space as in Definition 
\ref{def-cylindrical Wiener process}, i.e. 
it has representation \eqref{eqn-cylindrical Wiener process-abstract} with the same sequence $\left( w_k\right)_{k=1}^\infty$ of iid standard Brownian motions and 
$e_k \coloneqq \mfI^{-1} \tilde{e}_k$, $k \in \mathbb{N}$. Define a map 
\[
\widetilde{{\newG_0}}
\colon [0,T] \times \mathbb{V} \ni (t,\bX)=(t,(\bu,\phi)) \mapsto  \newG_0(t,\bX) \circ \mfI  \in 
\mathscr{T}_2(\rK,\mathbb{H}).
\]
Then the ($\mathbb{V}$-valued progressively measurable) process $\bX=(\bu,\phi)$ is a weak solution, in the PDEs sense, to the following Problem
\begin{equation}\label{eqn-compact-modified-stochastic-CHNSEs-3--change of Wiener process}
\begin{cases}
\d \bX= \bF(\bX)\,\d t + [ \widetilde{\newG_0}(t,\bX)]\,\d W(t),
\\
\cp(t)= \Atwo \phi(t) + \psi^\prime(\phi(t)) - \avg{\psi^\prime(\phi(t))}, \;\; t \in[0,T], \\
\bX(0)= (\bu_{0},\phi_{0}).
\end{cases}
\end{equation}
The converse  assertion is obviously also true.
\end{remark}
\section{Abstract formulation}\label{Ass-Abstract formulation} 
In what follows, we will study the existence of solutions to problem \eqref{eqn-compact-modified-stochastic-CHNSEs-3}. 
For the sake of clarity, we first formulate our main assumptions divided into a few pieces.
\begin{assumption}\label{assumption-abstract-1}
\begin{trivlist}
\item[(i)] $(\StokesH, (\cdot, \cdot)_{\StokesH})$ and
$(\StokesV,  \langle \cdot, \cdot \rangle_{\StokesV})$  are  separable Hilbert spaces such that $\StokesV$  is a dense subspace of  $\StokesH$ and the natural embedding $i_0 \colon \StokesV \embed \StokesH $ is continuous.  
\item[(ii)] Let also $\rU_0$ be another Hilbert space such that the embedding  $\rU_0 \embed \StokesV$ is dense and compact.
\end{trivlist}
\end{assumption}
In view of the Riesz Lemma, the space $\StokesH$ can be  identified with its dual $\StokesH^\prime$; consequently, 
\begin{equation}\label{eqn-Gelfand triple-abstract}
\StokesV \embed \StokesH \cong \StokesH^\prime \embed \StokesVp,    
\end{equation}
where the embedding $i_0^\prime$ is the adjoint of $i_0$. In other words, \eqref{eqn-Gelfand triple-abstract} forms a Gelfand
triple. In particular, the following identity holds 
\[
\duality{\bu}{\bv}{\StokesV}{\StokesVp}= (\bu, \bv)_{\StokesH} \;\; \mbox{ if } \bu \in \StokesH \mbox{ and } \bv \in \StokesV.
\]
Let $\Stokes: \StokesV \to \StokesVp$ be the linear isomorphism  defined via the Lax-Milgram Theorem such that 
\[
\duality{\Stokes \bu}{\bv}{\StokesV}{\StokesVp}=  \langle \bu, \bv \rangle_{\StokesV} \;\; \mbox{ if } \bu \in \StokesV \mbox{ and } \bv \in \StokesV.
\]
 By Remark 3.3 in \cite{Mik+Roz_1998} we can find an ONB $(\be_j: j \in \mathbb{N})$ of $\StokesH$, which is orthogonal in 
$\rU_0$. Let us denote $\lambda_j^{k_0} \coloneqq \Vert \be_j \Vert_{\rU_0}^{2}$.\\ 
Note that 
\[
(\bu,\bv)_{\rU_0}= \sum_{j=1}^\infty\, \lambda_j^{k_0}\, (\bu,\be_j)_{\StokesH}\, (\bv,\be_j)_{\StokesH},\; \bu,\,\bv \in \rU_0.
\]
\begin{definition}\label{def-pi_0n}
If $n \in \mathbb{N}$, then we define  $H_{0,n}\coloneqq \linspan \{\be_1,\ldots,\be_n\}$ and  \[\pi_{0,n}: \StokesH \to H_{0,n} \] to be the orthogonal projection.
\end{definition}
Let us note that $H_{0,n} \subset D(\Stokes)$ and all norms on $H_{0,n}$ are equivalent.
Let us fix $n \in \mathbb{N}$ and let $\pi_{0,n}: \rU_0^\prime \to H_{0,n}$ be the operator defined by
   \begin{equation}\label{eqn-projection-pi_{0,n} bu-ast}
     \pi_{0,n}\bu^\ast= \sum_{j=1}^n \duality{\bu^\ast}{\be_j}{\rU_0}{\rU_0^\prime}\be_j,\; \bu^\ast \in \rU_0^\prime.
  \end{equation}
Note that since $\rU_0\embed \StokesH \cong \StokesH^\prime \embed \rU_0^\prime$ is a  Gelfand triple, the restriction $\pi_{0,n}$ to $\StokesH$ satisfies 
   \begin{equation}\label{eqn-projection-pi_{0,n} bu}
       \pi_{0,n}\bu = \sum_{j=1}^n (\bu,\be_j)_{\StokesH}\be_j,\; \bu \in \StokesH.
   \end{equation}
Therefore, the restriction $\pi_{0,n}$ to $\StokesH$  is  the $\StokesH$-orthogonal projection from $\StokesH$ onto $H_{0,n}$.\newline
Let us now state and prove some of the properties of the operator $\pi_{0,n}$.
\begin{properties}\label{eqn-properties-pi_{0,n}} 
Let $n \in \mathbb{N}$ and let $\tilde{\pi}_{0,n}:= \pi_{0,n}\lvert_{\rU_0}: \rU_0 \to H_{0,n}$.
\begin{trivlist}
\item[(o)] For all $\bu^\ast \in \rU_0^\prime$ and $\bv \in \rU_0$,
\[
\duality{\pi_{0,n} \bu^\ast}{\bv}{\rU_0}{\rU_0^\prime}= \duality{\bu^\ast}{\tilde{\pi}_{0,n} \bv}{\rU_0}{\rU_0^\prime}.
\]
\item[(i)] $\tilde{\pi}_{0,n}$ is the $\rU_0$-orthogonal projection onto  $H_{0,n}$.
\item[(ii)] $\Vert \pi_{0,n} \Vert_{\mathcal{L}(\rU_0^\prime)} \leq 1$.
\item[(iii)] For all $\bu \in \rU_0$, we have
\[
\lim_{n \to \infty} \Vert \tilde{\pi}_{0,n} \bu - \bu \Vert_{\rU_0}=0.
\]
\end{trivlist}
\end{properties}

\begin{proof}Let us choose and 
fix $\bu^\ast \in \rU_0^\prime$ and $\bv \in \rU_0$. By the definition \eqref{eqn-projection-pi_{0,n} bu-ast} and since $\rU_0\embed \StokesH \cong \StokesH^\prime \embed \rU_0^\prime$, we have
\begin{align*}
&(\pi_{0,n}\bu^\ast,\bv)_{\StokesH}
= \sum_{j=1}^n \duality{\bu^\ast}{\be_j}{\rU_0}{\rU_0^\prime} (\be_j,\bv)_{\StokesH}
= \sum_{j=1}^n \duality{\bu^\ast}{\be_j}{\rU_0}{\rU_0^\prime}\,\duality{\be_j}{\bv}{\rU_0}{\rU_0^\prime}\\
&= \duality{\sum_{j=1}^n \duality{\bu^\ast}{\be_j}{\rU_0}{\rU_0^\prime}\be_j}{\bv}{\rU_0}{\rU_0^\prime}
=\duality{\pi_{0,n}\bu^\ast}{\bv}{\rU_0}{\rU_0^\prime}
\end{align*}
and
\begin{align*}
(\pi_{0,n}\bu^\ast,\bv)_{\StokesH}
&= \sum_{j=1}^n \duality{\bu^\ast}{\be_j}{\rU_0}{\rU_0^\prime}\,\duality{\be_j}{\bv}{\rU_0}{\rU_0^\prime}
=  \duality{\bu^\ast}{\sum_{j=1}^n \duality{\be_j}{\bv}{\rU_0}{\rU_0^\prime} \be_j}{\rU_0}{\rU_0^\prime}\\
&= \duality{\bu^\ast}{\sum_{j=1}^n \duality{\bv}{\be_j}{\rU_0}{\rU_0^\prime} \be_j}{\rU_0}{\rU_0^\prime}
=\duality{\bu^\ast}{\tilde{\pi}_{0,n} \bv}{\rU_0}{\rU_0^\prime}.
\end{align*}
Therefore, the proof of Part $(o)$ follows.

\noindent
Let us consider the proof of Part $(i)$. Put $\tilde{\be_i}\coloneqq \frac{\be_i}{\Vert \be_i \Vert_{\rU_0}},\, i \in \mathbb{N}$.
We have
\begin{equation}\label{eqn-(be_i,be_j)_{rU_0}}
(\be_i,\be_j)_{\rU_0}
= \sum_{k=1}^\infty \lambda_k^{k_0} (\be_i,\be_k)_{\StokesH} (\be_j,\be_k)_{\StokesH}
=\lambda_i^{k_0} \delta_{ij},\;\; i,\,j \in \mathbb{N},
\end{equation}
from which we infer that
     \begin{equation}\label{eqn-lambda_i-equal-norm-e_i}
        \lambda_i^{k_0}= \Vert \be_i \Vert_{\rU_0}^2,\; i \in \mathbb{N}
     \end{equation}
and    
\begin{equation}
(\tilde{\be}_i,\tilde{\be}_j)_{\rU_0}
= \left(\frac{\be_i}{\Vert \be_i \Vert_{\rU_0}}, \frac{\be_j}{\Vert \be_j \Vert_{\rU_0}} \right)_{\rU_0}
= \frac{1}{\Vert \be_i \Vert_{\rU_0} \Vert \be_j \Vert_{\rU_0}} (\be_i,\be_j)_{\rU_0}
= \frac{\Vert \be_i \Vert_{\rU_0}^2}{\Vert \be_i \Vert_{\rU_0} \Vert \be_j \Vert_{\rU_0}}\delta_{ij},\;\; i,\,j \in \mathbb{N}.
\end{equation}
Thus, the system $(\tilde{\be_i})_{i \in \mathbb{N}}$ is orthonormal in $\rU_0$.
Let us check that this system forms a basis in $\rU_0$. For this purpose, let $\bv \in \rU_0$ be an arbitrary vector $\rU_0$-orthogonal to each $\be_i,\,i\in \mathbb{N}$.\newline
We have
\begin{align*}
0= (\bv,\be_i)_{\rU_0}
= \sum_{j=1}^\infty \lambda_j^{k_0} (\bv,\be_j)_{\StokesH} (\be_i,\be_j)_{\StokesH}
= \sum_{j=1}^\infty \lambda_j^{k_0} (\bv,\be_j)_{\StokesH} \delta_{ij}
= \lambda_i^{k_0} (\bv,\be_i)_{\StokesH},\; i \in \mathbb{N}.
\end{align*}
Now, since $\lambda_i^{k_0}>0$, we infer that $(\bv,\be_i)_{\StokesH}=0,\; i \in \mathbb{N}$ and because $(\be_i)_{i\in \mathbb{N}}$ is the $\StokesH$-ONB, we then deduce that $\bv=0$. Therefore, the system $(\tilde{\be_i})_{i \in \mathbb{N}}$ is an ONB in $\rU_0$.

\noindent
To complete the proof of Part $(i)$, we observe that if $\bu \in \rU_0$, then
\begin{align*}
(\bu,\tilde{\be}_i)_{\rU_0}
&= \sum_{j=1}^\infty \lambda_j^{k_0} (\bu,\be_j)_{\StokesH} (\tilde{\be}_i,\be_j)_{\StokesH}
= \sum_{j=1}^\infty \lambda_j^{k_0} \times \frac{1}{\Vert \be_i \Vert_{\rU_0}} (\bu,\be_j)_{\StokesH} (\be_i,\be_j)_{\StokesH}\\
&= \lambda_i^{k_0} \times \frac{1}{\Vert \be_i \Vert_{\rU_0}} (\bu,\be_i)_{\StokesH}
= \left(\bu, \frac{\lambda_i^{k_0} \be_i}{\Vert \be_i \Vert_{\rU_0}} \right)_{\StokesH},
\end{align*}
which implies that
\begin{align*}
(\bu,\be_i)_{\StokesH}\be_i
= \left(\bu, \frac{\lambda_i^{k_0} \be_i} {\Vert \be_i \Vert_{\rU_0}} \right)_{\StokesH} \frac{\be_i} {\Vert \be_i \Vert_{\rU_0}}
= (\bu,\tilde{\be}_i)_{\rU_0} \frac{\be_i}{\Vert \be_i \Vert_{\rU_0}}
= (\bu,\tilde{\be}_i)_{\rU_0} \tilde{\be}_i.
\end{align*}
This, jointly with \eqref{eqn-projection-pi_{0,n} bu} yields that
   \begin{equation}\label{eqn-tilde-pi_{0,n}-bu}
       \tilde{\pi}_{0,n}\bu = \sum_{j=1}^n (\bu,\tilde{\be}_j)_{\rU_0} \tilde{\be}_j,\; \bu \in \rU_0.
   \end{equation}
The latter equality implies that $\tilde{\pi}_{0,n}$ is an $\rU_0$-orthogonal projection onto $H_{0,n}$ and, in particular $\Vert \tilde{\pi}_{0,n} \Vert_{\mathcal{L}(\rU_0)}=1$. This completes the proof of Part $(i)$. \newline
The proof of Part $(ii)$ is a \textit{verbatim} reproduction of similar result in \cite[Lemma 2.6]{BKMR_2025}, while the proof of Part $(iii)$ is a direct consequence of \eqref{eqn-tilde-pi_{0,n}-bu}.
\end{proof}

\medskip
\addtocounter{theorem}{-2}

\begin{assumption}\label{assumption-abstract-2}
\begin{trivlist}
\item[(iii)] Assume that $\newone{V}$ and $\newone{H}$ are Hilbert spaces defined earlier in \eqref{eqn-V-LM-R.4.6} and \eqref{eqn-H-LM-R.4.6}.
\end{trivlist}
\end{assumption}

\addtocounter{theorem}{+1}

Let us recall that $\{\hat{e}_k\}_{k \in \mathbb{N}}$ is  an ONB in  $\zero{L}{2}(\domO)$ which is orthogonal in $\newone{V}$. Put
\[
\bar{e}_k\coloneqq \frac{ \hat{e}_k}{\lvert \hat{e}_k \rvert_{\newone{H}}},\; k \in \mathbb{N}.
\]
Then $\{\bar{e}_k\}_{k \in \mathbb{N}}$ is an ONB in $\newone{H}$ which is orthogonal in $\newone{V}$. \newline
Define $H_{1,n}\coloneqq \linspan \{\bar{e}_1,\ldots,\bar{e}_n \}$ and  note that $H_{1,n} \subset \newone{V}$ and all norms on $H_{1,n}$ are equivalent.
\begin{definition}\label{def-pi_1n}
If $n \in \mathbb{N}$, then  $\pi_{1,n}: \newonep{V} \to H_{1,n}$ is  the operator defined by
   \begin{equation}\label{eqn-projection-pi_{1,n} bu-ast}
     \pi_{1,n}\phi^\ast= \sum_{j=1}^n \duality{\phi^\ast}{\bar{e}_j}{\newone{V}}{\newonep{V}}\bar{e}_j,\; \;\; \phi^\ast \in \newonep{V}.
  \end{equation}
  \end{definition}
  
Let us also define  a map  \[\tilde{\pi}_{1,n}:= \pi_{1,n}\lvert_{\newone{V}}: \newone{V} \to H_{1,n}.\]
  These maps have the following properties similar to the previous case. 

\begin{properties}\label{eqn-properties-pi_{1,n}} 
\begin{trivlist}
\item[(o)] The restriction of the map  $\pi_{1,n}$ to the space $\newone{H}$  is  the $\newone{H}$-orthogonal projection from $\newone{H}$ onto $H_{1,n}$ and 
   \begin{equation}\label{eqn-projection-pi_{1,n} phi}
       \pi_{1,n}\phi = \sum_{j=1}^n (\phi,\bar{e}_j)_{\newone{H}}\bar{e}_j,\; \phi \in \newone{H}.
   \end{equation}
\item[(i)] For all $\phi^\ast \in \newonep{V}$ and $\phi \in \newone{V}$,
\[
\duality{\pi_{1,n} \phi^\ast}{\phi}{\newone{V}}{\newonep{V}}= \duality{\phi^\ast}{\tilde{\pi}_{1,n} \phi}{\newone{V}}{\newonep{V}}.
\]
\item[(ii)] $\tilde{\pi}_{1,n}$ is the $\newone{V}$-orthogonal projection onto  $H_{1,n}$.
\item[(iii)] $\Vert \pi_{1,n} \Vert_{\mathcal{L}(\newonep{V})} \leq 1$.
\item[(iv)] For all $\phi \in \newone{V}$, we have
\[
\lim_{n \to \infty} \Vert \tilde{\pi}_{1,n} \phi - \phi \Vert_{\newone{V}}=0.
\]
\end{trivlist}
\end{properties}

\medskip
\addtocounter{theorem}{-3}
We continue with our assumptions. 

\begin{assumption}\label{assumption-abstract-3}
\begin{trivlist}
\item[(iv)] Assume that 
\[
\bG_0:[0,T] \times \StokesH \ni (s,\bu) \mapsto ( \bG_{0}^k(s,\bu))_{k=1}^\infty \in \ell^2(\StokesH),
\]
is a Carath\'eodory function  satisfying the following linear growth condition. 
There exists a function $\tilde{h} \in L^2(0,T)$ and constants $C_3,\,\tilde{C}_3>0$ 
such that for all $(t, \bu) \in [0,T] \times \StokesH$,
       \begin{equation}\label{eqn-linear growth}
         \Vert \bG_0(t,\bu)\Vert_{\ell^2(\StokesH)}^2 \leq \tilde{C}_3  \vert \tilde{h}(t)\vert^2 + C_3 \vert \bu \vert_{\StokesH}^2.
       \end{equation}
\item[(v)] 
            \begin{itemize}
                  \item[(v)-a)]  There exists a sequence of Carath\'eodory functions 
                  \[\bG_{0,n}:[0,T] \times H_{0,n} \to \ell^2(H_{0,n}) \subset \ell^2(\StokesH) \mbox{ satisfying } \eqref{eqn-linear growth} \mbox{ uniformly in } n.\] 
                  Furthermore, each $\bG_{0,n}$ is bounded and Lipschitz from $H_{0,n}$ to $\ell^2(H_{0,n})$, uniformly with respect to $t \in [0,T]$. 
                  \item[(v)-b)] There exists $\Delta_T \subset [0,T]$ of full measure such that for fixed $s \in \Delta_T$, 
                               \begin{equation*}
                                  \bG_{0,n}(s,\bu_n) \to \bG_0(s,\bu) \mbox{ in } \ell^2(\StokesH) \mbox{ whenever } \bu_n \to \bu  \mbox{ in } \StokesH.
                               \end{equation*}
            \end{itemize}
\medskip
\item[(vi)] A map 
\[
\bSi_0:[0,T] \times \StokesV 
\ni (s,\bu) \mapsto ( \bSi_{0}^k(s,\bu))_{k=1}^\infty \in  \ell^2(\StokesH) \] is a measurable function that is linear in its second variable, and it satisfies one of the following equivalent versions of the coercivity  condition:
there exist $\delta_0 \in (0,\nu)$ and $\tilde{C}_1>0$ such that
    \begin{equation}\label{Eqn-coercivity-2}
      \nu \,\duality{\Stokes\bu}{\bu}{\StokesV}{\StokesVp}  - \frac12 \Vert \bSi_0(t)\bu \Vert_{\ell^2(\StokesH)}^2 
       \geq  \delta_0 \Vert \bu \Vert_{\StokesV}^2, \;\; \bu \in \StokesV,
    \end{equation}
and
    \begin{equation}\label{Eqn-coercivity-3}
      \Vert \bSi_0(t)\bu \Vert_{\ell^2(\StokesH)}^2
      \leq \tilde{C}_1 \Vert \bu \Vert_{\StokesV}^2, \;\; \bu \in \StokesV.
    \end{equation}

\item[(vii)] We define a sequence $(\bSi_0^n)_{n \in \mathbb{N}}$ of maps from $[0,T] \times H_{0,n}$ to $\mathscr{L}(H_{0,n},\ell^2(H_{0,n}))$ by

\begin{equation}\label{eqn-Sigma^n}
\bSi_0^n: [0,T] \times H_{0,n} \ni (t,\bu) \mapsto 
 [\pi_{0,n} \circ  (\bSi_0(t)\bu)] \in \ell^2(H_{0,n}).
\end{equation}

\end{trivlist}
\end{assumption}

\addtocounter{theorem}{+2}

\begin{remark}
\begin{trivlist}
\item[(o)] A specific example of $\bG_0$ is the map $\bG$ defined in \eqref{eqn-G}.
\item[(i)] In the case of the map $\bG$ from \eqref{eqn-G}, we show that the above assumption (v) is satisfied in Section \ref{Sect-approximation of g}.
\item[(ii)] As an example of the map $\bSi_0$, we take the map $\bSi$ defined in formula \eqref{Eqn-def-Sigma}.
\end{trivlist}
\end{remark}
In the following lemma, we show that the coercivity condition \eqref{Eqn-coercivity-2} and inequality \eqref{Eqn-coercivity-3} 
hold when $\StokesH$ and $\Stokes$ are the Hilbert space and Stokes operator defined in \eqref{eqn-spaces-NSEs} and \eqref{eqn-Stokes operator}, resp.
Moreover, we note that the constant $\delta_0$ is from inequality \eqref{eqn-coercivity} from Assumption \ref{ass-coercivity}.
\begin{lemma}\label{Lem1}
Let Assumptions \ref{ass-G+Sigma}-\ref{ass-bg+sigma} be satisfied. Then, for every $t \in[0,T]$, we have
    \begin{equation}\label{Eqn-Sigma^n-estimate}
        \nu \,\duality{\Stokes\bu}{\bu}{\StokesV}{\StokesVp} - \frac12 \Vert \bSi(t)\bu \Vert_{\ell^2(\StokesH)}^2 
       \geq  \delta_0 \Vert \bu \Vert_{\StokesV}^2, \;\; \bu \in \StokesV,
    \end{equation}
and
        \begin{equation}\label{Eqn-Sigma^n-estimate-1}
            \Vert \bSi(t)\bu \Vert_{\ell^2(\StokesH)}^2
             \leq %
             2(\nu-\delta_0) \Vert \bu \Vert_{\StokesV}^2, \;\; \bu \in \StokesV.
        \end{equation}
Furthermore, there exists a constant $C_4>0$ such that for every $t \in[0,T]$,
   \begin{equation}\label{eq-3.20aa}
       \Vert \bSi(t)\bu \Vert_{\ell^2(\StokesVp)}^2
      \leq C_4 \lvert \bu \rvert_{\StokesH}^2, \;\; \bu \in \StokesH.
   \end{equation}
\end{lemma}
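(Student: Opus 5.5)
To prove \eqref{Eqn-Sigma^n-estimate}, I would first pass from the projected operators $\bSi_k(t)\bu=\P[(\sigma_k\cdot\nabla)\bu]$ to the unprojected ones. Since $\P$ is an orthogonal projection in $\mathbb{L}^2(\domO)$, we have
\[
\Vert \bSi(t)\bu\Vert_{\ell^2(\StokesH)}^2=\sum_{k}\lvert \P[(\sigma_k\cdot\nabla)\bu]\rvert_{\StokesH}^2\le \sum_k \int_\domO \lvert(\sigma_k(t,x)\cdot\nabla)\bu(x)\rvert^2\,\d x .
\]
Writing the integrand componentwise gives
\[
\sum_k\lvert(\sigma_k\cdot\nabla)\bu\rvert^2=\sum_{l=1}^d\sum_{i,j}\Big(\sum_k\sigma_k^i\sigma_k^j\Big)\partial_i\bu^l\,\partial_j\bu^l .
\]
Interchanging the sum over $k$ with the finite sums and the integral is legitimate by Tonelli/dominated convergence, because $\sum_k\lvert\sigma_k\rvert^2\le C_1$ by Assumption~\ref{ass-bg+sigma}.

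Next I would apply the coercivity Assumption~\ref{ass-coercivity} pointwise with $\upxi=\nabla\bu^l(x)$ for each $l$ and sum over $l$. This yields
\[
\nu\lvert\nabla\bu\rvert_{\mathbb{L}^2}^2-\tfrac12\Vert\bSi(t)\bu\Vert^2_{\ell^2(\StokesH)}\ge \delta_0\lvert\nabla\bu\rvert^2_{\mathbb{L}^2}.
\]
It remains to compare this with $\duality{\Stokes\bu}{\bu}{\StokesV}{\StokesVp}$ and $\Vert\bu\Vert_{\StokesV}^2$. This is where care is needed with the normalization. In the abstract setting $\duality{\Stokes\bu}{\bu}{}{}=\Vert\bu\Vert_{\StokesV}^2=\lvert\bu\rvert^2+\lvert\nabla\bu\rvert^2$, so the extra term $\nu\lvert\bu\rvert_{\StokesH}^2\ge\delta_0\lvert\bu\rvert_{\StokesH}^2$ (using $\delta_0\le\nu$, which follows from \eqref{eqn-coercivity}) can simply be added to both sides. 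If instead $\Stokes=-\P\Delta$, so that $\duality{\Stokes\bu}{\bu}{}{}=\lvert\nabla\bu\rvert^2$, then the identification of $\Vert\cdot\Vert_{\StokesV}$ with $\lvert\nabla\cdot\rvert$ (equivalent by Poincar\'e on $\mathbb{H}^1_0$) has to be used. I will treat the $\StokesV$-norm as the one matching $\Stokes$, as fixed in Section~\ref{Ass-Abstract formulation}. This bookkeeping is the only delicate point in the first two inequalities.

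Inequality \eqref{Eqn-Sigma^n-estimate-1} then follows directly from the same pointwise estimate, read as
\[
\tfrac12\sum_k\lvert(\sigma_k\cdot\nabla)\bu\rvert^2\le(\nu-\delta_0)\lvert\nabla\bu\rvert^2,
\]
integrated over $\domO$ and combined with $\lvert\nabla\bu\rvert^2\le\Vert\bu\Vert^2_{\StokesV}$.

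For \eqref{eq-3.20aa}, the main obstacle is that $\bu$ is only in $\StokesH$, so one derivative must be moved onto the test function. I would first take $\bu\in\StokesV$ and $\bv\in\StokesV$, and use $\P\bv=\bv$ together with integration by parts, with no boundary term since $\bv\in\mathbb{H}^1_0$. This gives
\[
(\bSi_k(t)\bu,\bv)=\int_\domO(\sigma_k\cdot\nabla)\bu\cdot\bv\,\d x=-\int_\domO(\diver\sigma_k)\,\bu\cdot\bv\,\d x-\int_\domO \bu\cdot(\sigma_k\cdot\nabla)\bv\,\d x .
\]
The first term is bounded by $\Vert\diver\sigma_k\Vert_{L^\infty}\lvert\bu\rvert\lvert\bv\rvert$, which I would refine to $\lvert\bv\rvert_{L^2}\big(\int\lvert\diver\sigma_k\rvert^2\lvert\bu\rvert^2\big)^{1/2}$. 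The second term is bounded by $\lvert\bu\rvert\big(\int\lvert\sigma_k\cdot\nabla\bv\rvert^2\big)^{1/2}$. Taking the supremum over $\Vert\bv\Vert_{\StokesV}\le1$ is not directly summable in $k$, so I would not bound each dual norm crudely. Instead I would view the vector $(\bSi_k(t)\bu)_k$ as an element of $\ell^2(\StokesVp)$ and estimate its norm through the pairing with $(\bv_k)\in\ell^2(\StokesV)$. For the second term,
\[
\sum_k\int\lvert\bu\rvert\,\lvert\sigma_k\rvert\,\lvert\nabla\bv_k\rvert\le\int\lvert\bu\rvert\Big(\sum_k\lvert\sigma_k\rvert^2\Big)^{1/2}\Big(\sum_k\lvert\nabla\bv_k\rvert^2\Big)^{1/2}\le C_1^{1/2}\lvert\bu\rvert\,\Vert(\bv_k)\Vert_{\ell^2(\StokesV)},
\]
by Cauchy--Schwarz in $k$ and then in $x$. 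The first term is handled the same way using $C_2$. This gives \eqref{eq-3.20aa} with $C_4=2(C_1+C_2)$ for $\bu\in\StokesV$, and the estimate extends to $\bu\in\StokesH$ by density, which also defines $\bSi(t)$ on $\StokesH$.
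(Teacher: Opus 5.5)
Your proof is correct. For \eqref{Eqn-Sigma^n-estimate} and \eqref{Eqn-Sigma^n-estimate-1} it follows the paper's route: drop $\P$, expand $\sum_k\lvert(\sigma_k\cdot\nabla)\bu\rvert^2$, apply \eqref{eqn-coercivity} with $\upxi=\nabla\bu^l(x)$, and integrate. The paper identifies $\lvert\nabla\bu\rvert_{\mathbb{L}^2}^2=\langle\Stokes\bu,\bu\rangle=\Vert\bu\Vert_{\StokesV}^2$ without comment. Your explicit bookkeeping is a harmless refinement: when $\Stokes$ is the Lax--Milgram operator of the full $\StokesV$ inner product, you add $\nu\lvert\bu\rvert_{\StokesH}^2\ge\delta_0\lvert\bu\rvert_{\StokesH}^2$, using $\delta_0\le\nu$.

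For \eqref{eq-3.20aa} your route genuinely differs from the paper's. The paper integrates by parts for each fixed $k$ and obtains $\Vert(\sigma_k\cdot\nabla)\bu\Vert_{\StokesVp}\le(\Vert\diver\sigma_k\Vert_{L^\infty}+\Vert\sigma_k\Vert_{L^\infty})\lvert\bu\rvert_{\StokesH}$. It then squares, sums over $k$, and invokes Assumption~\ref{ass-bg+sigma}. That last step really needs $\sum_k\bigl(\Vert\sigma_k\Vert_{L^\infty}^2+\Vert\diver\sigma_k\Vert_{L^\infty}^2\bigr)<\infty$. This is strictly stronger than the pointwise bound $\sup_{t,x}\sum_k\lvert\sigma_k(t,x)\rvert^2\le C_1$ that is actually assumed. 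For example, divergence-free $\sigma_k$ with pairwise disjoint supports and $\sup_x\lvert\sigma_k\rvert=1$ satisfy the assumption but have $\sum_k\Vert\sigma_k\Vert_{L^\infty}^2=\infty$.

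You instead dualize against $(\bv_k)\in\ell^2(\StokesV)$ and apply Cauchy--Schwarz in $k$ pointwise in $x$ before integrating. This uses only the pointwise $\ell^2$ bounds and yields $(C_1^{1/2}+C_2^{1/2})^2\le 2(C_1+C_2)$, the same constant. So the paper's version is shorter but implicitly needs summable sup-norms, whereas yours proves the estimate under Assumption~\ref{ass-bg+sigma} as stated. The final extension from $\StokesV$ to $\StokesH$ by density is the same in both.
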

\begin{proof}
Let us fix $\bu \in \StokesV$. From Assumptions \ref{ass-G+Sigma} and \ref{ass-coercivity}, we infer that for every $t \in [0,T]$,
\begin{equation}\label{eq-3.20}
\begin{aligned}
&\frac12 \Vert \bSi(t)\bu \Vert_{\ell^2(\StokesH)}^2
= \frac12 \sum_{k=1}^{\infty} \lvert \Pi (\sigma_k(t) \cdot \nabla) \bu \rvert_{\StokesH}^2
\\
&\leq \frac12 \sum_{k=1}^{\infty} \lvert (\sigma_k(t) \cdot \nabla) \bu \rvert_{\mathbb{L}^2}^2
= \frac12 \sum_{i,j,\ell=1}^d \int_{\domO} \sum_{k=1}^\infty \sigma_k^i (t, x) \sigma_k^j (t, x) \partial_i \bu^\ell(x) \partial_j \bu^\ell(x)\, \d x 
 \\
&\leq \nu \sum_{i,j,\ell=1}^d \delta_{ij} \int_{\domO} \partial_i \bu^\ell(x) \partial_j \bu^\ell(x)\, \d x - \delta_0 \sum_{i,\ell=1}^d \int_{\domO} \lvert \partial_i \bu^\ell(x) \rvert^2\, \d x 
\\
&= (\nu - \delta_0) \sum_{i,\ell=1}^d \int_{\domO} \lvert \partial_i \bu^\ell(x) \rvert^2\, \d x
= (\nu - \delta_0) \lvert \nabla \bu \rvert_{\mathbb{L}^2}^2
= (\nu  - \delta_0)  \,\duality{\Stokes\bu}{\bu}{\StokesV}{\StokesVp} 
 \\
&= \nu \,\duality{\Stokes\bu}{\bu}{\StokesV}{\StokesVp}   - \delta_0 \,\duality{\Stokes\bu}{\bu}{\StokesV}{\StokesVp} 
= \nu \,\duality{\Stokes\bu}{\bu}{\StokesV}{\StokesVp}   - \delta_0 \Vert \bu \Vert_{\StokesV}^2,
\end{aligned}
\end{equation}
from which we deduce \eqref{Eqn-Sigma^n-estimate}. Consequently, it also follows that
\begin{equation}\label{eq-3.200}
\begin{aligned}
&\frac12 \Vert \bSi(t)\bu \Vert_{\ell^2(\StokesH)}^2
\leq  (\nu  - \delta_0)  \,\duality{\Stokes\bu}{\bu}{\StokesV}{\StokesVp} 
= (\nu  - \delta_0)  \Vert \bu \Vert_{\StokesV}^2,
\end{aligned}
\end{equation}
which implies \eqref{Eqn-Sigma^n-estimate-1}. \newline
It remains to prove \eqref{eq-3.20aa}. To this end, we fix \(t \in  [0,T]\), $k\in \mathbb{N}$, and define the bilinear form
   \begin{equation*}
     \hat{b}_t^{k}: \mathcal{V} \times \mathcal{V} \subset \StokesH \times \StokesV \ni (\bu,\bv) \mapsto \int_{\domO} (\sigma_k(t,x) \cdot \nabla) \bu(x) \cdot \bv(x)\,\d x \in \mathbb{R}.
   \end{equation*}
By integration by parts and the H\"older inequality, we have for all $\bu,\, \bv \in \mathcal{V}$:
\begin{align*}
\hat{b}_t^{k}(\bu,\bv)
& = - \int_{\domO} [\diver \sigma_k (t,x) \bu(x) \cdot \bv(x) + (\sigma_k(t,x) \cdot \nabla) \bv(x) \cdot \bu(x)] \, \d x
        \\
& \leq \Vert \diver \sigma_k \Vert_{L^\infty([0,T] \times \domO)} \lvert \bu \rvert_{\StokesH} \Vert \bv \Vert_{\StokesV} + \Vert \sigma \Vert_{L^\infty([0,T] \times \domO)} \Vert \bv \Vert_{\StokesV} \lvert \bu \rvert_{\StokesH}.
\end{align*}
Note that the above inequality extends to all $\bu,\,\bv \in \StokesV$ by the density of $\mathcal{V}$ in $\StokesV$.
Hence, we can define a linear map 
\[
\hat{B}_t^{k}: \StokesH \ni \bu  \mapsto \hat{b}_t^{k}(\bu,\cdot) \in \StokesVp,
\]
 and  deduce that 
\begin{equation*}
	\Vert \hat{B}_t^{k}(\bu) \Vert_{\StokesVp} \leq (\Vert \diver \sigma_k \Vert_{L^\infty([0,T] \times \domO)} +\Vert \sigma_k \Vert_{L^\infty([0,T] \times \domO)}) \lvert \bu \rvert_{\StokesH}, \; \; \bu \in \StokesH,
\end{equation*}
or, equivalently,
   \begin{equation*}
	 \Vert (\sigma_k(t)\cdot \nabla) \bu \Vert_{\StokesVp} \leq (\Vert \diver \sigma_k \Vert_{L^\infty([0,T] \times \domO)} + \Vert \sigma_k \Vert_{L^\infty([0,T] \times 
       \domO)}) \lvert \bu \rvert_{\StokesH}, \;\; \bu \in \StokesH.
  \end{equation*}
From the latter inequality, we infer that
\begin{align*}
&\Vert \bSi(t)\bu \Vert_{\ell^2(\StokesVp)}^2
= \sum_{k=1}^\infty \Vert \bSi_{k}(t)\bu \Vert_{\StokesVp}^2 
= \sum_{k=1}^\infty \Vert \P [(\sigma_k(t) \cdot \nabla) \bu] \Vert_{\StokesVp}^2 
 \\
&\leq \sum_{k=1}^\infty \Vert (\sigma_k(t) \cdot \nabla) \bu \Vert_{\StokesVp}^2 
\leq 2 \sum_{k=1}^\infty (\Vert \diver \sigma_k\Vert_{L^\infty([0,T] \times \domO)}^2 + \Vert \sigma_k \Vert_{L^\infty([0,T] \times \domO)}^2) \lvert \bu \rvert_{\StokesH}^2.
\end{align*}
This, combined with Assumption \ref{ass-bg+sigma}, yields that for every $t \in [0,T]$,
   \begin{equation*}
      \Vert \bSi(t)\bu \Vert_{\ell^2(\StokesVp)}^2
     \leq 2(C_1 + C_2) \lvert \bu \rvert_{\StokesH}^2, \; \bu \in \StokesH.
   \end{equation*}
This completes the proof of Lemma \ref{Lem1}.  
\end{proof}
The maps $\bSi_0^n$ defined in \eqref{eqn-Sigma^n} satisfy the following "uniform" coercivity condition.
\begin{lemma}
\label{lem-coercivity on H_{0,n}}
Under the assumptions stated in Section \ref{Ass-Abstract formulation}, for all $\bu \in H_{0,n}$ and $t \in [0,T]$,
         \begin{equation}\label{eqn-coercivity on H_{0,n}}
            \nu\,\duality{\Stokes_{,n}\bu}{\bu}{\StokesV}{\StokesVp} - \frac12 \Vert \bSi^n_0(t) \bu\Vert^2_{\ell^2(H_{0,n})} \geq \delta_0 \Vert \bu \Vert^2_{\StokesV},     
         \end{equation}
where $\Stokes_{,n}$ is the restriction and co-restriction of $\Stokes$ to $H_{0,n}$, defined by
\[
\Stokes_{,n} \bu = \pi_{0,n} (\Stokes \bu)= \Stokes \bu \in H_{0,n}, \;\; \bu \in H_{0,n}.
\]
\end{lemma}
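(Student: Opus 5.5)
The plan is to reduce the claimed inequality to the full-space coercivity condition \eqref{Eqn-coercivity-2} from Assumption (vi) (for the concrete $\bSi$ this is \eqref{Eqn-Sigma^n-estimate} in Lemma \ref{Lem1}). Two ingredients are needed: the identity of the Stokes term on $H_{0,n}$, and the contractivity of $\pi_{0,n}$ in $\StokesH$ applied componentwise to the sequence $\bSi_0(t)\bu$.

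\textbf{Step 1 (Stokes term).} Fix $n$, $t\in[0,T]$ and $\bu\in H_{0,n}$. Since $H_{0,n}\subset \rU_0 \subset \StokesV$, and $\Stokes_{,n}\bu=\pi_{0,n}(\Stokes\bu)$, Property \ref{eqn-properties-pi_{0,n}}(o) gives
\[
\duality{\Stokes_{,n}\bu}{\bu}{\StokesV}{\StokesVp}=\duality{\pi_{0,n}\Stokes\bu}{\bu}{\rU_0}{\rU_0^\prime}=\duality{\Stokes\bu}{\tilde{\pi}_{0,n}\bu}{\rU_0}{\rU_0^\prime}.
\]
We have $\tilde{\pi}_{0,n}\bu=\bu$, because $\bu\in H_{0,n}$. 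The identity \eqref{eqn-duality-identity} then lets us replace the $\rU_0$-duality by the $\StokesV$-duality, since $\Stokes\bu\in\StokesVp$. Hence
\[
\duality{\Stokes_{,n}\bu}{\bu}{\StokesV}{\StokesVp}=\duality{\Stokes\bu}{\bu}{\StokesV}{\StokesVp}.
\]
The paper's own definition even asserts $\pi_{0,n}\Stokes\bu=\Stokes\bu$ on $H_{0,n}$, so this step is essentially immediate.

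\textbf{Step 2 (noise term).} By \eqref{eqn-Sigma^n}, $\bSi_0^n(t)\bu=(\pi_{0,n}\bSi_0^k(t)\bu)_{k}$, where each $\bSi_0^k(t)\bu\in\StokesH$. By \eqref{eqn-projection-pi_{0,n} bu}, the restriction of $\pi_{0,n}$ to $\StokesH$ is the $\StokesH$-orthogonal projection, so $|\pi_{0,n}\bv|_{\StokesH}\le|\bv|_{\StokesH}$ for all $\bv\in\StokesH$. Summing over $k$ gives
\[
\Vert\bSi_0^n(t)\bu\Vert_{\ell^2(H_{0,n})}^2\le\Vert\bSi_0(t)\bu\Vert_{\ell^2(\StokesH)}^2,
\]
where the norm on $H_{0,n}$ is the one inherited from $\StokesH$.

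\textbf{Step 3 (conclusion).} Combining Steps 1 and 2 with \eqref{Eqn-coercivity-2}, applicable because $\bu\in\StokesV$, yields
\[
\nu\duality{\Stokes_{,n}\bu}{\bu}{\StokesV}{\StokesVp}-\tfrac12\Vert\bSi_0^n(t)\bu\Vert^2_{\ell^2(H_{0,n})}\ge \nu\duality{\Stokes\bu}{\bu}{\StokesV}{\StokesVp}-\tfrac12\Vert\bSi_0(t)\bu\Vert^2_{\ell^2(\StokesH)}\ge\delta_0\Vert\bu\Vert_{\StokesV}^2,
\]
which is \eqref{eqn-coercivity on H_{0,n}}.

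The only point requiring care is Step 1. One must check that the projection $\pi_{0,n}$ defined on $\rU_0^\prime$ is compatible with the $\StokesV$-duality, which is exactly what Property (o) together with \eqref{eqn-duality-identity} provides. No other obstacle is expected.
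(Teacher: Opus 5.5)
Your proof is correct and follows the paper's argument. You use equality of the Stokes terms on $H_{0,n}$, contractivity of $\pi_{0,n}$ in $\StokesH$ applied componentwise to $\bSi_0(t)\bu$, and then the coercivity condition \eqref{Eqn-coercivity-2}. The one difference is that you obtain $\langle \Stokes_{,n}\bu,\bu\rangle=\langle \Stokes\bu,\bu\rangle$ from Property (o) of $\pi_{0,n}$ and \eqref{eqn-duality-identity}, whereas the paper invokes the operator identity $\pi_{0,n}\Stokes\bu=\Stokes\bu$; your route needs only equality of the quadratic forms and so does not depend on that identity.
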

\begin{proof}[Proof of Lemma \ref{lem-coercivity on H_{0,n}}]
Since $\Vert \pi_{0,n} \Vert_{\mathscr{L}(\mathbb{L}^2(\domO))}\leq 1$ and $\Stokes_{,n} \bu = \Stokes \bu$ for $\bu \in H_{0,n}$, it follows from the assumption \eqref{Eqn-coercivity-2} that for all $\bu \in H_{0,n}$ and $t \in [0,T]$,
\begin{align*}
&\duality{\nu \Stokes_{,n}\bu}{\bu}{\StokesV}{\StokesVp} - \frac12 \Vert \bSi^n_0(t) \bu\Vert^2_{\ell^2(H_{0,n})}
=\duality{\nu \Stokes_{,n}\bu}{\bu}{\StokesV}{\StokesVp} - \frac12 \sum_{k=1}^\infty \Vert \bSi^n_0(t) \bu[\tilde{e}_k] \Vert^2_{H_{0,n}}
\\
&= \duality{\nu \Stokes \bu}{\bu}{\StokesV}{\StokesVp} - \frac12 \sum_{k=1}^\infty \Vert \bSi^n_0(t) \bu[\tilde{e}_k] \Vert^2_{H_{0,n}}
\geq \duality{\nu \Stokes\bu}{\bu}{\StokesV}{\StokesVp}  - \frac12 \Vert \bSi_0(t)\bu \Vert_{\ell^2(\StokesH)}^2 \geq  \delta_0 \Vert \bu \Vert_{\StokesV}^2,
\end{align*}
where $(\tilde{e}_k)_{k=1}^\infty$ is the canonical ONB of $\ell^2$.
From the above inequality, we deduce \eqref{eqn-coercivity on H_{0,n}}.
\end{proof}
Regarding the maps $\bG_{0,n}$, we have the following auxiliary result.
\begin{lemma}\label{Lem-approximation-c}
Assume the assumptions stated in Section \ref{Ass-Abstract formulation}. If $\bu_n \to \bu$ in $L^2(0,T;\StokesH)$, then
     \begin{equation}
        \lim_{n \to \infty} \int_0^T \Vert \bG_{0,n}(s,\bu_n(s)) - \bG_0(s,\bu(s)) \Vert_{\ell^2(\StokesH)}^2\,\d s=0.
     \end{equation}
\end{lemma}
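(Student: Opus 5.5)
The plan is a subsequence argument combined with the Vitali convergence theorem, using only Assumption (iv) and Assumption (v)-a),b). Since the limit is a deterministic number, it suffices to show that every subsequence of $(\bu_n)$ has a further subsequence along which the integral tends to zero. So fix an arbitrary subsequence, still denoted $\bu_n$.

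\textbf{Step 1: a.e. convergence and domination in $\StokesH$.} Because $\bu_n \to \bu$ in $L^2(0,T;\StokesH)$, the Riesz--Fischer argument gives a further subsequence with $\bu_n(s) \to \bu(s)$ in $\StokesH$ for almost every $s$. Passing to yet another subsequence if needed, we may assume
\[
\sum_n \Vert \bu_n - \bu \Vert_{L^2(0,T;\StokesH)} < \infty .
\]
Then the function
\[
g(s) := \lvert \bu(s) \rvert_{\StokesH} + \sum_n \lvert \bu_n(s) - \bu(s) \rvert_{\StokesH}
\]
belongs to $L^2(0,T)$ and satisfies $\sup_n \lvert \bu_n(s) \rvert_{\StokesH} \le g(s)$ for a.e. $s$.

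\textbf{Step 2: pointwise convergence of the integrand.} Take $s$ in the intersection of the full-measure set $\Delta_T$ from (v)-b) with the set where $\bu_n(s) \to \bu(s)$. This intersection still has full measure. Assumption (v)-b) then gives $\bG_{0,n}(s,\bu_n(s)) \to \bG_0(s,\bu(s))$ in $\ell^2(\StokesH)$. Hence the integrand
\[
f_n(s) := \Vert \bG_{0,n}(s,\bu_n(s)) - \bG_0(s,\bu(s)) \Vert_{\ell^2(\StokesH)}^2
\]
tends to $0$ for a.e. $s$.

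\textbf{Step 3: an integrable majorant.} Assumption (v)-a) says the linear growth bound \eqref{eqn-linear growth} holds for $\bG_{0,n}$ uniformly in $n$, and (iv) gives the same bound for $\bG_0$. Combining these with $(a+b)^2 \le 2a^2 + 2b^2$ yields
\[
f_n(s) \le 2\bigl(\tilde C_3 |\tilde h(s)|^2 + C_3 |\bu_n(s)|_{\StokesH}^2\bigr) + 2\bigl(\tilde C_3 |\tilde h(s)|^2 + C_3 |\bu(s)|_{\StokesH}^2\bigr) \le 4\tilde C_3 |\tilde h(s)|^2 + 4 C_3\, g(s)^2 .
\]
The right-hand side lies in $L^1(0,T)$, since $\tilde h \in L^2(0,T)$ and $g \in L^2(0,T)$. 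The dominated convergence theorem then gives $\int_0^T f_n\,\d s \to 0$ along this subsequence, which proves the lemma.

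An alternative to the summable-subsequence majorant is to argue via uniform integrability. Convergence of $\bu_n$ in $L^2(0,T;\StokesH)$ makes $\{|\bu_n|_{\StokesH}^2\}$ uniformly integrable, so by the growth bound $\{f_n\}$ is uniformly integrable, and Vitali's theorem finishes.

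\textbf{Measurability.} This is the only step I expect to need care. Each $\bG_{0,n}$ and $\bG_0$ is Carath\'eodory, and $\bu_n$, $\bu$ are strongly measurable, so the compositions $s \mapsto \bG_{0,n}(s,\bu_n(s))$ and $s \mapsto \bG_0(s,\bu(s))$ are measurable. For $\bG_{0,n}$, which is defined only on $H_{0,n}$, this presumes $\bu_n(s) \in H_{0,n}$, which is implicit in the statement since $\bG_{0,n}(s,\bu_n(s))$ must make sense. If instead one only knows the pointwise statement (v)-b) off a null set, the argument above is unchanged.
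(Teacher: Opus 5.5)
Your proof is correct and follows essentially the paper's route: a.e.\ convergence along a subsequence, Assumption (v)-b) for the pointwise limit of the integrand, and the uniform linear growth from (iv) and (v)-a) to control it. The paper closes with Vitali's theorem via uniform integrability, which is your stated alternative, whereas your main line uses dominated convergence with the summable-subsequence $L^2$ majorant. Your explicit subsequence-of-a-subsequence step is a welcome addition: the paper only obtains the limit ``up to a subsequence'' and leaves the passage to the full sequence implicit.
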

\begin{proof}[Proof of Lemma \ref{Lem-approximation-c}]
Note that if $\bu_n \to \bu$ in $L^2(0,T;\StokesH)$, then, up to a subsequence, $\bu_n(s) \to \bu(s)$ in $\StokesH$ for Leb-a.a. $s \in [0,T]$.
Hence, by property $(v)\text{-b})$ of $\bG_{0,n}$, we have
    \begin{equation*}
       \bG_{0,n}(s,\bu_n(s)) \to \bG_0(s,\bu(s)) \mbox{ in } \ell^2(\StokesH) \mbox{ a.e. }
    \end{equation*}
Next, by property $(v)\text{-a})$ of $\bG_{0,n}$, we infer that 
\begin{align*}
\int_0^T \Vert \bG_{0,n}(s,\bu_n(s))\Vert_{\ell^2(\StokesH)}^2\,\d s
\leq \tilde{C}_3  \int_0^T \vert \tilde{h}(s) \vert^2\,\d s + C_3 \int_0^T \vert \bu_n(s) \vert_{\StokesH}^2\,\d s.
\end{align*}
Since the RHS of the above inequality is uniformly integrable, so is the LHS by comparison. 
Therefore, by applying the Vitali Convergence Theorem, see \cite[Theorem C.4]{Oksendal_2003}, we complete the proof of Lemma \ref{Lem-approximation-c}.
\end{proof}
\subsection{Weak martingale solutions}\label{subsec-weak solutions}
Let
\begin{equation}
\beta=
\begin{cases}
3/2, &\mbox{ if } \;\; d=3,
\\
2, &\mbox{ if } d=2.
\end{cases}
\end{equation}
The main result of this paper, which establishes the existence of weak martingale solutions to problem \eqref{eqn-compact-modified-stochastic-CHNSEs-3} with Landau potential \eqref{eqn-regular-potential}, is stated in the following theorem.
\begin{Theorem}\label{First-main-result}
Let $\domO \subset \mathbb{R}^d$, $d=2$ or $d=3$, be a bounded domain of  $\mathcal{C}^3$-class  and let $T$ be a fixed positive time.
Assume that the assumptions of Section \ref{Ass-Abstract formulation} hold and that $\bX_0= (\bu_0,\phi_0) \in \mathbb{H}$.   We assume that 
 $(\Omega,\mathscr{F},\mathbb{P})$ is a probability space with a right-continuous filtration $\mathbb{F}=(\mathscr{F}_t)_{t \in [0,T]}$ such that on this probability space two i.i.d. copies of  $\ell^2$-cylindrical $\mathbb{F}$-adapted  Wiener processes are defined.  Then, there exists 
\begin{trivlist}
\item[(i)] 
an $\ell^2$-cylindrical $\mathbb{F}$-adapted  Wiener process $W=(W(t): t \in [0,T])$, an  $\mathbb{F}$-progressively measurable process $(\bu,\phi)=\left(
(\bu(t),\phi(t)): \; t \in [0,T]\right)$, and a process $\cp=(\cp(t): \; t \in [0,T])$ satisfying the following conditions. 
       \begin{itemize}
         \item[(i-i)] The process $\bX \coloneqq (\bu,\phi)$ is $\mathbb{H}$-valued weakly continuous and 
                \begin{equation*}
                  \mathbb{E} \sup_{s \in [0,T]} \Vert \bX(s) \Vert_{\mathbb{H}}^2 + \mathbb{E} \int_0^T \Vert \bX(s) \Vert_{\StokesV \times \zero{H}{2}}^2\, \d s< \infty,
                \end{equation*}
       \item[(i-ii)] 
                \[\mathbb{E} \int_0^T \Vert  \phi(s) \Vert_{\newone{V}}^{\beta}\,\d s< \infty,\]
\[\cp(t)= \Atwo \phi(t) + \psi^\prime(\phi(t)) - \avg{\psi^\prime(\phi(t))},\;\; \mathbb{P}\mbox{-a.s.,}\]
\end{itemize}
\item[(ii)] the tuple $(W,\bu,\phi,\cp)$ is a solution to Problem \eqref{eqn-compact-modified-stochastic-CHNSEs-3} in the sense of Definition \ref{def-compact-modified-stochastic-CHNSEs-3}; that is, \eqref{eqn-compact-modified-stochastic-CHNSEs-3} holds $\mathbb{P}$-a.s.\ for every $t \in [0,T]$.
\end{trivlist}
\end{Theorem}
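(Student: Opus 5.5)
The proof follows the compactness/martingale route already announced in the introduction, in four steps: Galerkin approximation, uniform estimates, tightness with a generalized Prohorov theorem, and identification of the limit as a martingale solution.

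\textbf{Step 1: Galerkin scheme.} On $H_{0,n}\times H_{1,n}$ we consider the finite dimensional system
\begin{align*}
\d \bu_n &= -\pi_{0,n}\big[\nu\Stokes\bu_n+\bB_0(\bu_n,\bu_n)+\newK\bR_0(\phi_n,\phi_n)\big]\d t+\big[\bSi_0^n(t)\bu_n+\pi_{0,n}\bG_{0,n}(t,\bu_n)\big]\d\tildeW,\\
\d\phi_n &= -\pi_{1,n}\big[B_1(\bu_n,\phi_n)+\Athree\cp_n\big]\d t ,
\end{align*}
with initial data $(\pi_{0,n}\bu_0,\pi_{1,n}\phi_0)$. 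Its coefficients are locally Lipschitz by (v)-a) and by the linearity of $\bSi_0$. A local strong solution therefore exists, and it is global once the energy bound of Step 2 holds.

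\textbf{Step 2: uniform a priori estimates.} Apply It\^o's formula to the total energy
\[
\mathcal E(\bu,\phi)=\tfrac12|\bu|_{\StokesH}^2+\newK\Big(\tfrac12|\phi|_{\newone H}^2+\textstyle\int_\domO\psi(\phi)\,\d x\Big).
\]
The key cancellation is Corollary \ref{cor-b_1-r_0}: the coupling terms $\newK\langle\bR_0(\phi,\phi),\bu\rangle$ and $\newK(B_1(\bu,\phi),\phi)_{\newone H}$ cancel. The same must be checked against the $\psi$ part, which is done through the corollary giving $(B_1(\bu,\phi),\Aone^{-1}f(\phi))_{\newone H}=0$. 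On the Galerkin level one needs the projections to commute appropriately; we would test the $\phi$-equation with $\cp_n$ projected, and this is a point to be verified. The It\^o correction $\tfrac12\|\bSi_0^n\bu_n\|^2$ is absorbed by the uniform coercivity of Lemma \ref{lem-coercivity on H_{0,n}}. The $\bG_{0,n}$ part is controlled by \eqref{eqn-linear growth} together with Gronwall.

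This yields, uniformly in $n$, bounds on $\mathbb E\sup_t\mathcal E$, on $\mathbb E\int_0^T\|\bu_n\|_{\StokesV}^2$ and on $\mathbb E\int_0^T|\cp_n|_{\newone H}^2$. Only second moments are available, and as noted in the introduction we cannot go higher. From the bound on $\cp_n$ and elliptic regularity, using \eqref{eqn-Psi''} and Gagliardo--Nirenberg, we get $\mathbb E\int\|\phi_n\|^2_{H^2}$ and $\mathbb E\int\|\phi_n\|_{\newone V}^\beta$. Here $\beta=3/2$ when $d=3$, because $\nabla\psi'(\phi)$ is only controlled in $L^{3/2}$ in time.

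\textbf{Step 3: tightness and limit measure.} Tightness needs time-regularity. Estimate $\bF(\bX_n)$ in $L^{r}(0,T;\nU')$ via \eqref{eq-B_0-k_0} and \eqref{eqn-R_0-k_0}, and use \eqref{eq-3.20aa} to control the stochastic integral in $\StokesVp$. This gives Aldous-type conditions. We then obtain tightness of the laws on
\[
\bZ_T=\mathcal C([0,T];\nU')\cap L^2_w(0,T;\mathbb V)\cap L^2(0,T;\mathbb H)\cap \mathcal C([0,T];\mathbb H_w),
\]
with the $\phi$-component also in $L^2(0,T;H^2)$ weak and in $L^2(0,T;\newone H)$ strong. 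The Wiener processes are appended to this space; the two i.i.d. copies are used to build the Wiener process $W$ on the canonical space. Proposition \ref{prop-Prohorov-general} then gives a subsequence whose laws converge to some $\mathbb P_T$ on $\bZ_T$.

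\textbf{Step 4: identification (main obstacle).} Following \cite{Mik+Roz_2005}, we show that under $\mathbb P_T$ the canonical process $\bX$ satisfies the martingale problem: for each test vector $\tilde\be_k$, the process
\[
M_k(t)=\langle\bX(t)-\bX_0-\textstyle\int_0^t\bF(\bX)\,\d s,\tilde\be_k\rangle
\]
is a martingale with the correct quadratic variation and cross-variation with $W$. This is obtained by passing to the limit in expectations of bounded continuous functionals, using uniform integrability from the second-moment bounds. The delicate terms are the following.
\begin{itemize}
\item The nonlinearities $\bB_0$, $\bR_0$, $B_1$ and $\psi'(\phi)$ pass to the limit by strong $L^2(0,T;\mathbb H)$ convergence, using the $\nU'$ extensions.
\item The transport-noise quadratic variation $\sum_k|(\bSi_0\bu,\tilde\be_j)|^2$ is the hardest part, since $\bu_n$ converges only weakly in $L^2(\StokesV)$. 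Because the test function is smooth, we move the derivative onto it by \eqref{eq-3.20aa} (integration by parts), so that only strong $\StokesH$ convergence is needed.
\item The $\bG_{0,n}$ term passes to the limit by Lemma \ref{Lem-approximation-c}.
\end{itemize}
Finally, a martingale representation theorem on the product with the auxiliary Wiener copy produces $W$ and the integral identity \eqref{eqn-main-1}. Remark \ref{rem-def-compact-modified-stochastic-CHNSEs-3} then gives \eqref{eqn-main}, and the stated regularity follows from lower semicontinuity of the uniform bounds.
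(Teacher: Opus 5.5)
Your overall architecture matches the paper's (Galerkin scheme, energy estimate, Aldous tightness on a non-metric path space, generalized Prohorov, Mikulevicius--Rozovskii identification, representation lemma), but three steps fail as written.

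\textbf{Identification.} Having said that only second moments are available, you pass to the limit ``using uniform integrability from the second-moment bounds''. That covers the martingale property of $M_k$, since $M_k^n(t)$ is bounded in $L^2$. It does not cover the quadratic variation. Both $|M_k^n(t)|^2$ and $\int_0^t\sum_j\langle\newG^j_{0,n}(s,\bX_n(s)),\tilde{\be}_k\rangle^2\,\d s$ are only bounded in $L^1$, so their expectations against a bounded continuous functional need not converge. Without the quadratic variation, Lemma \ref{Lem-B.4} cannot be applied.

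Circumventing exactly this is the core of the paper's proof. It works with the bounded exponential processes $M^z_t=e^{i\langle\bX(t),z\rangle}-\int_0^t e^{i\langle\bX(r),z\rangle}\varphi^z(r,\bX(r))\,\d r$, $z\in\mathscr{V}$. It shows deterministically that $\bX_n\to\bX$ in $\bZ_T$ implies $M^{n,z}(\bX_n)\to M^z(\bX)$ uniformly (Proposition \ref{Prop-equicontinuity}, Corollaries \ref{cor-eq-5.37}--\ref{cor-MR-below 2.31}). It then stops at $\tau_R=\inf\{t:|M^z_t|\ge R\}$. By Lemma \ref{Lemma 11.1.2-SV2006} and Corollary \ref{cor-tau_R is continuous}, this stopping time is $\mathbb{P}_T$-a.s.\ sequentially continuous for all but countably many $R$. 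The stopped processes are therefore bounded, and Proposition \ref{prop-Prohorov-general} applies with no integrability at all. Only afterwards does the paper recover $L^z$ and $|L^z|^2-G^z$ as local martingales, passing from exponentials to $C_0^\infty$ functions (Stone--Weierstrass) and then to $x$, $x^2$ via cut-offs (Parts 1--2 of Section \ref{sec-proof-main-result}). You need such a localisation device, or else $(2+\varepsilon)$-moment bounds, which you say you do not have.

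\textbf{Galerkin cancellation.} The cancellation you leave ``to be verified'' actually fails for your scheme. Since $\partial_t\phi_n\in H_{1,n}$, the time derivative of $\frac12|\phi_n|^2_{\newone{H}}+\int_{\domO}\psi(\phi_n)$ is $(\partial_t\phi_n,\cp_n)$ with the projected potential $\cp_n=\Atwo\phi_n+\pi_{1,n}f(\phi_n)$, where $f(\phi)=\psi'(\phi)-\avg{\psi'(\phi)}$. The transport term therefore produces $(\bu_n\cdot\nabla\phi_n,\cp_n)$. On the other side, $\langle\bR_0(\phi_n,\phi_n),\bu_n\rangle=r_0(\phi_n,\phi_n,\bu_n)=(\bu_n\cdot\nabla\phi_n,\Atwo\phi_n)$. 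The mismatch $\newK(\bu_n\cdot\nabla\phi_n,(\pi_{1,n}-I)f(\phi_n))$ is nonzero, has no sign, and does not close in the energy estimate.

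The paper instead builds the capillary term from the projected potential, $\bar R_{0,n}(\phi_n,\phi_n)=\pi_{0,n}(\cp_n\nabla\phi_n)$. It tests the $\phi_n$-equation with $g_n=\phi_n+\pi_{1,n}\Aone^{-1}f(\phi_n)$, for which $\Aone g_n=\cp_n$, and this gives exact cancellation. The price is Lemma \ref{R_on-Convergence}, which uses $\P(f(\phi)\nabla\phi)=0$ in the limit.

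\textbf{The bound on $\phi_n$ in $L^\beta(0,T;\newone{V})$.} The bound $\mathbb{E}\int_0^T\|\phi_n\|^\beta_{\newone{V}}\,\d s$ does not follow from the energy bounds. By \eqref{eq-3.65} you need $\int_0^T|\psi''(\phi_n)\nabla\phi_n|^\beta_{\mathbb{L}^2}\,\d s$. The integrand is of order $|\phi_n|_{L^2}\|\phi_n\|^2_{H^2}$ (or $\|\phi_n\|^2_{H^1}\|\phi_n\|_{H^2}$). This is a product of norms whose expectation is not controlled by $\mathbb{E}\sup_t\mathcal{E}$ and $\mathbb{E}\int_0^T|\cp_n|^2_{\newone{H}}$.

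The missing ingredient is the pathwise estimate of Proposition \ref{prop-phi-n-estimates}. Because the Cahn--Hilliard equation carries no noise, you can test with $\Aone^{-1}(|\phi_n|^{p-2}_{L^2}\phi_n)$; the convective term drops out since $(\bu_n\cdot\nabla\phi_n,\phi_n)=0$. This gives the deterministic bound $\sup_t|\phi_n(t)|_{L^2}\le C_T|\phi_0|_{\newone{H}}$. Combined with $|\Atwo\phi_n|^3_{L^2}\le C(|\cp_n|^2_{\newone{H}}|\phi_n|_{L^2}+|\phi_n|^3_{L^2})$, it yields $\mathbb{E}\int_0^T\|\phi_n\|^3_{H^2}\,\d s<\infty$. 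Lemma \ref{lem-phi_n to mu_n} then gives the $L^\beta(0,T;\newone{V})$ bound (Proposition \ref{prop-2nd proposition}), which is needed for assertion (i-ii).
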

\begin{Theorem}\label{First-main-result-uniqueness} 
Under the assumptions of Theorem \ref{First-main-result}, if $d=2$, then the $\nHB$-valued process $\bX$ is (strongly) continuous w.r.t. $t$. Moreover, the pathwise uniqueness holds. 
\end{Theorem}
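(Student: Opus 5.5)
The statement has two parts. First, in $d=2$ the solution $\bX=(\bu,\phi)$ is strongly continuous in $\mathbb{H}$. Second, pathwise uniqueness holds. I would prove continuity first and then use it to justify an Itô formula for the difference of two solutions.

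\emph{Continuity.} For continuity I use a Lions--Magenes type Itô formula in two Gelfand triples. For the velocity the triple is $\StokesV\embed\StokesH\embed\StokesVp$. The point is that when $d=2$ the drift lies in $L^2(0,T;\StokesVp)$ almost surely:
\begin{itemize}
\item $\bB_0(\bu,\bu)\in L^2(0,T;\StokesVp)$ follows from \eqref{eqn-b_0-trilinear-estimate} with $d=2$, which gives $\|\bB_0(\bu,\bu)\|_{\StokesVp}\le C|\bu|_{\StokesH}|\nabla\bu|_{\mathbb{L}^2}$.
\item $\bR_0(\phi,\phi)\in L^2(0,T;\StokesVp)$ follows from Lemma \ref{lem-r form}, which gives $\|\bR_0(\phi,\phi)\|_{\StokesVp}\le C|\phi|_{\newone{H}}\|\phi\|_{\zero{H}{2}}$.
\item The noise $\bSi\bu+\bG(\cdot,\bu)$ lies in $L^2(0,T;\ell^2(\StokesH))$ by \eqref{Eqn-Sigma^n-estimate-1} and the linear growth assumption.
\end{itemize}
The Krylov--Rozovskii Itô formula for $|\bu|^2_{\StokesH}$ then yields a continuous $\StokesH$-valued version of $\bu$.

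For the phase I use the triple $\newone{V}\embed\newone{H}\embed\newonep{V}$. The equation for $\phi$ has no noise and is deterministic pathwise, so $\phi$ is a Lions--Magenes solution. I need $\phi\in L^2(0,T;\newone{V})$; in $d=2$ this is provided by item (i-ii), since $\beta=2$. I also need $\partial_t\phi=-B_1(\bu,\phi)-\Athree\cp\in L^2(0,T;\newonep{V})$:
\begin{itemize}
\item $\|\tilde B_1(\bu,\phi)\|_{\newonep{V}}\le C\|\bu\|_{\mathbb{L}^4}\|\phi\|_{L^4}$ by \eqref{eq-B1}, and this lies in $L^2$ in time because $\bu\in L^\infty(\StokesH)\cap L^2(\StokesV)$ and the Ladyzhenskaya inequality holds.
\item $\Athree\cp\in L^2(0,T;\newonep{V})$ because $\cp\in L^2(0,T;\newone{H})$.
\end{itemize}
Hence $\phi\in C([0,T];\newone{H})$.

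\emph{Uniqueness.} Let $\bX^1,\bX^2$ be two solutions with the same $W$. Set $\bu=\bu^1-\bu^2$ and $\phi=\phi^1-\phi^2$. Because of the lack of higher moments, I localize with the stopping times
\[
\tau_N=\inf\Big\{t:\ \int_0^t\big(\|\bu^1\|_{\StokesV}^2+\|\bu^2\|^2_{\StokesV}+\|\phi^1\|^2_{\newone{V}}+\|\phi^2\|_{\newone{V}}^2\big)\,\d s+\sup_{s\le t}\|\bX^i(s)\|^2_{\mathbb{H}}\ge N\Big\}.
\]
I apply the Itô formula to $|\bu|^2_{\StokesH}+\kappa|\phi|^2_{\newone{H}}$, multiplied by the weight $\exp(-\int_0^t\rho(s)\,\d s)$. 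Here $\rho$ is built from the norms that appear in the estimates below.

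\emph{The estimates.} Take the velocity noise first. The linear transport part gives the Itô correction $\|\bSi\bu\|^2_{\ell^2(\StokesH)}$, which is absorbed by coercivity \eqref{Eqn-sigma^n-estimate} and leaves a dissipation $\delta_0\|\bu\|^2_{\StokesV}$. Since $\bG$ is Lipschitz, the cross and correction terms are bounded by $C|\bu|^2_{\StokesH}+\frac{\delta_0}{4}\|\bu\|^2_{\StokesV}$.

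For the convection term, $\bb_0(\bu,\bu^2,\bu)$ is bounded via \eqref{eqn-b_0-trilinear-estimate} by $\frac{\delta_0}{4}\|\bu\|^2_{\StokesV}+C\|\bu^2\|^2_{\StokesV}|\bu|^2_{\StokesH}$.

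The coupling terms are the crucial step. I split
\[
\bR_0(\phi^1,\phi^1)-\bR_0(\phi^2,\phi^2)=\bR_0(\phi,\phi^1)+\bR_0(\phi^2,\phi),
\]
and similarly
\[
B_1(\bu^1,\phi^1)-B_1(\bu^2,\phi^2)=B_1(\bu,\phi^1)+B_1(\bu^2,\phi).
\]
Using the cancellation from Corollary \ref{cor-b_1-r_0} (symmetrized), the worst terms $\kappa\langle\bR_0(\phi,\phi^1),\bu\rangle$ and $\kappa(B_1(\bu,\phi^1),\phi)_{\newone{H}}$ partly cancel. The remainders are controlled by Lemma \ref{lem-r form}, the bound \eqref{eq-B1} and the Gagliardo--Nirenberg inequality in $d=2$ by
\[
\epsilon\big(\|\bu\|^2_{\StokesV}+\|\phi\|^2_{\newone{V}}\big)+C\big(1+\|\phi^i\|^2_{\newone{V}}+\|\bu^i\|^2_{\StokesV}\big)\big(|\bu|^2_{\StokesH}+|\phi|^2_{\newone{H}}\big).
\]

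For the chemical potential, the term $(\Athree(\cp^1-\cp^2),\phi)$ produces the dissipation $|\Delta\phi|^2_{\newone{H}}$. The remaining nonlinear piece is $\big(\nabla(\psi'(\phi^1)-\psi'(\phi^2)),\nabla\Delta\phi\big)$. I bound it using $\psi'(a)-\psi'(b)=(a-b)(a^2+ab+b^2-1)$, the Agmon inequality \eqref{eq-Agmon's-inequalities} and the embedding $\newone{V}\embed H^3$. This gives
\[
\epsilon\|\phi\|^2_{\newone{V}}+C\big(1+\|\phi^1\|^4_{H^2}+\|\phi^2\|^4_{H^2}\big)|\phi|^2_{\newone{H}},
\]
whose coefficient I need to be time-integrable.

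\emph{Conclusion.} Choose $\rho$ to be the sum of all these coefficients; it is integrable up to $\tau_N$. Taking expectations of the weighted, stopped identity, the stochastic integral vanishes, and Gronwall gives $\bX^1=\bX^2$ on $[0,\tau_N]$. Since $\tau_N\uparrow T$ almost surely, the two solutions agree on $[0,T]$. The pressures then coincide up to gradients through the projected form of the equations.

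\emph{Main obstacle.} I expect the hardest step to be the coupling terms together with the time-integrability of $\|\phi^i\|^4_{H^2}$. I would first interpolate $\|\phi\|_{H^2}^2\le|\phi|_{\newone{H}}\|\phi\|_{\newone{V}}$ and use $\phi\in L^\infty(\newone{H})\cap L^2(\newone{V})$. This puts $\|\phi^i\|^4_{H^2}$ in $L^1$ pathwise, which is enough for the weighted argument.
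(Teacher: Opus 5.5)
Your proposal is correct and follows essentially the paper's argument. Continuity comes from $\bF(\bX)\in L^2(0,T;\mathbb{V}^\prime)$ in $d=2$ via the Krylov--Rozovskii/Lions--Magenes It\^o formula; uniqueness comes from the It\^o formula for $\frac12|\bu|^2_{\StokesH}+\frac{\newK}{2}|\phi|^2_{\newone{H}}$ applied to the difference, localized by stopping times and closed with the exponential (Schmalfuss) weight and Gronwall.

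One remark: the ``partial cancellation'' of coupling terms you invoke is not needed, and with the paper's sign conventions for $\bR_0$ it is not clear that it occurs. In $d=2$ each of $r_0(\phi,\phi^1,\bu)$, $r_0(\phi^2,\phi,\bu)$, $b_1(\bu,\phi^1,\phi)$ and $b_1(\bu^2,\phi,\phi)$ can be bounded separately by the right-hand side you state, which is exactly what the paper does.
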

The proof of Theorem \ref{First-main-result} is given in Section \ref{sec-proof-main-result}. The proof of the paths continuity part of  Theorem \ref{First-main-result-uniqueness} is 
presented together with the proof of the third part of Theorem \ref{thm-5.3}. The uniqueness part  of  Theorem \ref{First-main-result-uniqueness} is stated as the last part of Theorem \ref{Thm-uniqueness-solution} and so proven therein. 
\begin{remark}\label{rem-energy initial data}
Our assumption on the initial data, i.e. that  $\bX_0= (\bu_0,\phi_0) \in \mathbb{H}$, and the assumption on the potential $\phi$, imply that the energy of  $\bX_0$ is finite, i.e. 
\begin{equation}\label{eqn-E-energy}
\mathscr{E}(\bX_0) \coloneqq \frac12 \lvert \bu_0 \rvert_{\StokesH}^2  + \newK \left(  \frac 12\lvert \phi_0 \rvert_{\newone{H}}^2  +\int_{\domO} \psi (\phi_0(x))\, \d x\right) < \infty.    
\end{equation}
\end{remark}
\begin{remark}\label{rem-existence of solutions} Of course there exist  probability spaces that satisfy  Assumption (i) of our main existence Theorem.
\end{remark}


\section{Solving the Galerkin Approximation  Problem and a priori estimates}
\label{subsec-Galerkin}
We consider the sequence of non-random variables $(\bu_{0,n}, \phi_{0,n})$ defined by 
\[ 
\bu_{0,n}=\pi_{0,n} \bu_0 \mbox{ and }\phi_{0,n}=\pi_{1,n} \phi_0, \;\; n \in \mathbb{N}.
\]
where $\pi_{0,n}$ has been defined in \eqref{def-pi_0n} and $\pi_{1,n}$ has been defined in \eqref{def-pi_1n}.
It follows that, as $n \to \infty$,
     \begin{equation}\label{convergence-for-initial-data}
        \bu_{0,n} \to \bu_0 \mbox{ in } \StokesH \mbox{ and } \phi_{0,n} \to \phi_0 \mbox{ in }  \newone{H}.
      \end{equation}
We also define the following  Lipschitz on balls maps 
\begin{equation}
\begin{aligned}
  \bB_{0,n} &: H_{0,n} \times H_{0,n} \ni (\bu,\bv)\mapsto   \pi_{0,n} \bB_0(\bu,\bv) \in H_{0,n}, 
\\
              B_{1,n}&: H_{0,n} \times H_{1,n} \ni (\bu,\phi)\mapsto \pi_{1,n} B_1(\bu,\phi)\in H_{1,n}.
\end{aligned}
\end{equation}
Next, we define a nonlinear map that is Lipschitz on bounded balls as follows:
\begin{equation}
\bar{R}_{0,n}: H_{1,n} \times H_{1,n} \ni (\phi,\psi) \mapsto  \pi_{0,n} ([\Atwo \psi + \pi_{1,n} (\psi^\prime(\phi) - \avg{\psi^\prime(\phi)})] \nabla \phi) \in H_{0,n}.
\end{equation}
We now consider the following problem, named Problem $(\mathscr{P}^n)$, 
\begin{equation}\label{eqn-Galerkin-Modified-stochastic-CHNSEs-n}
\begin{cases}
\d \bu_n= - [ \nu \Stokes_{,n} \bu_n + \bB_{0,n}(\bu_n,\bu_n) - \newK \bar{R}_{0,n}(\phi_n,\phi_n)]\,\d t +   
[\bG_{0,n}(t,\bu_n) + \bSi_0^n(t)\bu_n]\,\d \tildeW,
\\
\d \phi_n=- B_{1,n}(\bu_n,\phi_n)\,\d t -  \Athree \cp_n\,\d t,
   \\
\cp_n(t)=  \Atwo \phi_n(t) + \pi_{1,n} (\psi^\prime(\phi_n(t)) - \avg{\psi^\prime(\phi_n(t))}),\;\; t \in[0,T], \\
(\bu_n(0),\phi_n (0))= (\bu_{0,n},\phi_{0,n}).
\end{cases}
\end{equation}
Put
  \begin{equation*}
    \mathbb{H}_{n} \coloneqq H_{0,n} \times H_{1,n}.
   \end{equation*}
Before we continue with our proof, let us reflect a bit about the above equations.
First of all, the space $\mathbb{H}_{n}$ is finite dimensional and therefore all norms on it are equivalent. In calculations below we will use the norm which is the norm inherited from the space
$\nHB$. Thus, we will use the It\^o Lemma for the Lyapunov functional
\[
\Phi_n: \mathbb{H}_{n} \ni \bX=(\bu,\phi) \mapsto \frac12 \lvert \bu \rvert_{\StokesH}^2  + \newK \left( \frac12 \lvert \phi \rvert_{\newone{H}}^2 
 +\int_{\domO} \psi (\phi(x))\, \d x \right)  \in [0,\infty).
\]
Our unknown is a process
\[
\bX_n\coloneqq (\bu_n, \phi_n): [0,T] \times \Omega \to \mathbb{H}_{n} 
.\]
The drift in that system is a function
    \begin{equation}
       \begin{aligned}
          \bF_n: \mathbb{H}_{n} \ni
           \bX_n \mapsto [\bF_{0,n}(\bX_n) + \bF_{2,n}(\bX_n)] \in \mathbb{H}_{n},
        \end{aligned}
     \end{equation}
where
\begin{equation}
\begin{aligned}
\bF_{0,n}&:  \mathbb{H}_{n} \ni \bX_n \mapsto
\Bigl(- \nu  \Stokes_{,n}\bu_n, - B_{1,n}(\bu_n,\phi_n) - \Athree \left(\Atwo \phi_n + \pi_{1,n} (\psi^\prime(\phi_n) - \avg{\psi^\prime(\phi_n)})\right) \Bigr) \in  \mathbb{H}_{n},
\\
\bF_{2,n}&: \mathbb{H}_{n} \ni
\bX_n \mapsto \Bigl(- \bB_{0,n}(\bu_n,\bu_n) + \newK \bar{R}_{0,n}(\phi_n,\phi_n), 0\Bigr) \in  \mathbb{H}_{n}.
\end{aligned}
\end{equation}     
The diffusion coefficient is
   \begin{equation}\label{Eqn-bG_n}
      \newG_{0,n}: [0,T] \times \mathbb{H}_{n} \ni
     (t,\bX_n) \mapsto \left(\bG_{0,n}(t,\bu_n) + \bSi_0^n(t)\bu_n,0\right) \in \ell^2(\mathbb{H}_{n}).
  \end{equation}
We may now reformulate the system \eqref{eqn-Galerkin-Modified-stochastic-CHNSEs-n} in abstract form
   \begin{equation}\label{eqn-Compact-Galerkin-Modified-stochastic-CHNSEs-n}
     \d \bX_n= \bF_n(\bX_n)\, \d t + \newG_{0,n}(t,\bX_n)\,\d W.
   \end{equation}
The equation \eqref{eqn-Compact-Galerkin-Modified-stochastic-CHNSEs-n} satisfies the conditions of existence from \cite[Theorem 38, p.303]{Protter_2004}, which do require the Lipschitz and 
locally Lipschitz conditions on the coefficients. Hence, there exists a local maximal solution, i.e. an adapted process $\bX_n= (\bu_n,\phi_n)$ in $\mathbb{H}_{n}$, an $[0,T]$-valued stopping time $\tau_n$, and an increasing sequence of $[0,T]$-valued stopping times $(T_{n,R})_{R \in \mathbb{N}}$ such that
\begin{trivlist}
\item[(i)] $T_{n,R} \nearrow \tau_n$ as $R\to \infty$  a.s.,
\item[(ii)] $\bX_n(\cdot \wedge T_{n,R}) \in  C([0,T];\mathbb{H}_{n})$,
\item[(iii)]  with probability $1$, the stopped  process
$\bX_n(t\wedge T_{n,R})$, $t \in[0,T]$, solves \eqref{eqn-Compact-Galerkin-Modified-stochastic-CHNSEs-n}.
\end{trivlist}
Because the coefficients are Lipschitz on balls on the finite dimensional state space $\mathbb{H}_{n}$, it is known that the following holds $\mathbb{P}$-almost surely, 
   \begin{equation}\label{eqn-tau_n-01}
     \mbox{ if } \tau_n(\omega)<T \mbox{ then } \lim_{t \toup \tau_n(\omega)} \Vert \bX_n(t,\omega)\Vert_{\mathbb{H}_{n}}=\infty.    
   \end{equation}
The following Proposition implies that the solution is, in fact, global.
\begin{Proposition}\label{prop-First-propo} 
Suppose that the assumptions in Section \ref{Ass-Abstract formulation} hold. Then $\tau_n=T $ a.s., and  there exists a positive constant $C$ such that for every $n \in \mathbb{N}$, 
\begin{equation}\label{eq-3.23}
\begin{aligned}
&\mathbb{E} \sup_{s \in [0,T]} \vert \bu_n(s) \vert_{\StokesH}^2 +  \delta_0\, \mathbb{E}\Vert \bu_n \Vert_{L^2(0,T;\StokesV)}^2
\\
& \hspace{0.2 truecm} + \newK \mathbb{E} \left[ \sup_{s \in [0,T]} \lvert \phi_n(s) \rvert_{\newone{H}}^2 + \sup_{s \in [0,T]} \Vert \psi(\phi_n(s)) \Vert_{L^1}  + \Vert \cp_n \Vert_{L^2(0,T;\newone{H})}^2 \right]
\leq C.
\end{aligned}
\end{equation}
\end{Proposition}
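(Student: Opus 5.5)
We apply the It\^o formula to the Lyapunov functional $\Phi_n$ evaluated along the stopped process $\bX_n(t\wedge T_{n,R})$. This is legitimate because $\Phi_n$ is $C^2$ on the finite-dimensional space $\mathbb{H}_n$. The noise acts only on the $\bu$-component, so only $\frac12|\bu_n|_{\StokesH}^2$ carries a martingale part and an It\^o correction. The $\phi$-part of $\Phi_n$ is differentiated along an equation without noise.

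The derivative of $\newK(\frac12|\phi|^2_{\newone{H}}+\int\psi(\phi))$ in the direction $\d\phi_n$ is $\newK(\cp_n, \d\phi_n)$, paired in $\zero{L}{2}$. To see this, note that $(\phi,\cdot)_{\newone H}=(\Atwo\phi,\cdot)$. The mean-value correction and the projection $\pi_{1,n}$ drop out because $\d\phi_n\in H_{1,n}$ has zero mean. The $H_{1,n}$-basis diagonalises $\Aone$, so $\pi_{1,n}$ is also the $L^2$-projection there. Since $\Athree$ acts on $\cp_n\in H_{1,n}$ as $-\Delta$, the term $-(\cp_n,\Athree\cp_n)$ gives $-|\cp_n|^2_{\newone H}$, which is the dissipation. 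The transport term gives $-\newK(B_{1,n}(\bu_n,\phi_n),\cp_n)$. In the velocity equation, $(\bB_{0,n}(\bu_n,\bu_n),\bu_n)=0$ by \eqref{properties-B_0-2}. The capillary term gives $\newK(\bar R_{0,n}(\phi_n,\phi_n),\bu_n)=\newK\int\cp_n\nabla\phi_n\cdot\bu_n$. I would verify the exact cancellation of this term against the $B_{1,n}$ term in the spirit of Corollary \ref{cor-b_1-r_0} and the corollary on $\Aone^{-1}f(\phi)$. Both equal $\int \bu_n\cdot\nabla\phi_n\,\cp_n$ with appropriate signs, once one uses the $L^2$-self-adjointness of $\pi_{1,n}$ on zero-mean functions and the fact that the definition of $\bar R_{0,n}$ was chosen with exactly this projection. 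This bookkeeping is the main obstacle. The sign conventions and the Galerkin projections must match, and the representation $(B_1(\bu,\phi),\psi)_{\newone H}=\int \bu\cdot\nabla\phi\,(-\Delta\psi)$ from \eqref{eqn-b_1-trilinear-form-2} and Proposition \ref{prop-B_1-b_1} is what makes them match.

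After the cancellations we are left with
\begin{align*}
\Phi_n(\bX_n(t\wedge T_{n,R}))&+\int_0^{t\wedge T_{n,R}}\Bigl(\nu\langle\Stokes\bu_n,\bu_n\rangle-\tfrac12\Vert\bG_{0,n}+\bSi_0^n\bu_n\Vert^2_{\ell^2(\StokesH)}+\newK|\cp_n|^2_{\newone H}\Bigr)\d s\\
&=\Phi_n(\bX_n(0))+\int_0^{t\wedge T_{n,R}}\bigl(\bu_n,(\bG_{0,n}+\bSi^n_0\bu_n)\,\d\tildeW\bigr).
\end{align*}
We expand the It\^o correction as $(1+\eta)\frac12\Vert\bSi_0^n\bu_n\Vert^2+C_\eta\Vert\bG_{0,n}\Vert^2$. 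Choosing $\eta$ small, we use Lemma \ref{lem-coercivity on H_{0,n}} together with \eqref{Eqn-coercivity-3} to bound the velocity terms below by $\frac{\delta_0}{2}\Vert\bu_n\Vert^2_{\StokesV}$. The remainder $C_\eta(\tilde C_3|\tilde h|^2+C_3|\bu_n|^2)$ comes from \eqref{eqn-linear growth}, uniformly in $n$. The potential satisfies $\psi\ge0$ for the Landau potential, so $\Phi_n\ge0$. The initial value $\Phi_n(\bX_n(0))$ is bounded uniformly in $n$ by \eqref{convergence-for-initial-data} and the embedding $\newone H\embed L^4$.

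We then take $\sup_{s\le t}$ and expectations. The martingale term is handled by the Burkholder--Davis--Gundy inequality:
\[
\mathbb{E}\sup\Bigl|\int(\ldots)\Bigr|\le C\,\mathbb{E}\Bigl(\int|\bu_n|^2\Vert\bG_{0,n}+\bSi^n_0\bu_n\Vert^2\Bigr)^{1/2}\le\tfrac14\mathbb{E}\sup|\bu_n|^2+C\,\mathbb{E}\int\bigl(|\tilde h|^2+|\bu_n|^2\bigr)+\tfrac{\delta_0}{4}\,\mathbb{E}\int\Vert\bu_n\Vert_{\StokesV}^2 .
\]
Here a small multiple of $\Vert\bSi\bu_n\Vert^2\le\tilde C_1\Vert\bu_n\Vert^2_{\StokesV}$ is absorbed, which is possible after rescaling the Young constant. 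Gronwall's inequality then gives a bound independent of $n$ and $R$. Finally, since the bound holds for $\mathbb{E}\sup_{s\le T_{n,R}}\Vert\bX_n(s)\Vert^2_{\mathbb{H}}$ uniformly in $R$, the blow-up alternative \eqref{eqn-tau_n-01} combined with Fatou's lemma forces $\tau_n=T$ a.s. Letting $R\to\infty$ by monotone convergence yields \eqref{eq-3.23}.
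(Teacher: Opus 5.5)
\paragraph{The gap is the one-pass BDG absorption.} Your route is essentially the paper's, and most of it is sound. The capillary and transport contributions do cancel: both equal $\pm\newK\int\bu_n\cdot\nabla\phi_n\,\cp_n$. The paper obtains the transport side by testing the $\phi_n$-equation in $\newone{H}$ with the $g_n\in H_{1,n}$ satisfying $\Aone g_n=\cp_n$, so this is not the real obstacle. The $\eta$-splitting of the It\^o correction combined with Lemma~\ref{lem-coercivity on H_{0,n}} is also fine. So is your blow-up/Fatou argument for $\tau_n=T$, which replaces the paper's Chebyshev bound on $\mathbb{P}(\tau_{n,m}<t)$.

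What fails is your BDG bound with coefficient $\tfrac14$ on the supremum and $\tfrac{\delta_0}{4}$ on the $\StokesV$-term. For the transport part, Young's inequality gives
\[
C_{BDG}\,\mathbb{E}\Bigl[\sup_s|\bu_n(s)|_{\StokesH}\Bigl(\int\Vert\bSi_0^n\bu_n\Vert^2_{\ell^2(\StokesH)}\,\d s\Bigr)^{1/2}\Bigr]\le\epsilon\,\mathbb{E}\sup_s|\bu_n(s)|^2_{\StokesH}+\frac{C_{BDG}^2\tilde C_1}{4\epsilon}\,\mathbb{E}\int\Vert\bu_n\Vert^2_{\StokesV}\,\d s .
\]
The product of the two coefficients is the fixed number $C_{BDG}^2\tilde C_1/4$. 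Rescaling $\epsilon$ therefore cannot make both coefficients small at once unless $C_{BDG}^2\tilde C_1$ is small compared with $\delta_0$. Nothing in the assumptions guarantees this: near the coercivity threshold $\tilde C_1$ is of order $2(\nu-\delta_0)$, while $\delta_0$ may be tiny. So the supremum term and the $\StokesV$-term cannot both be absorbed into the left-hand side in one pass.

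\paragraph{The fix is the paper's two-stage estimate.} First take expectations in the stopped energy identity at $t\wedge T_{n,R}$, with no supremum. If $T_{n,R}$ is an exit time the integrand is bounded up to $T_{n,R}$, so the stopped stochastic integral has zero mean. Gronwall then gives
\[
\mathbb{E}|\bu_n(t\wedge T_{n,R})|^2_{\StokesH}+\delta_0\,\mathbb{E}\int_0^{t\wedge T_{n,R}}\Vert\bu_n\Vert^2_{\StokesV}\,\d s+\newK\,\mathbb{E}\int_0^{t\wedge T_{n,R}}|\cp_n|^2_{\newone{H}}\,\d s\le C_0
\]
uniformly in $n$ and $R$, as in \eqref{eq3.25}--\eqref{eq3.25-1}. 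Only after this apply BDG with $\epsilon=\tfrac14$. The resulting term $C\,\mathbb{E}\int\Vert\bu_n\Vert^2_{\StokesV}\,\d s$ may now carry any constant, since it is already bounded by $C_0$. Gronwall then closes the bound on $\mathbb{E}\sup_s|\bu_n(s)|^2_{\StokesH}$, as in \eqref{Galerkin-energy-equality-a}--\eqref{eq3.35}. With this inserted, your Fatou argument combined with the blow-up alternative and the limit $R\to\infty$ go through unchanged.
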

\begin{remark}\label{rem-prop-First-propo}
Let us emphasize also that the equation for $\phi_n$ does not contain any Wiener process, we are only able to find estimates for $ \lvert \phi_n \rvert_{\newone{H}}^2$ with the expectation, as in inequality \eqref{eq-3.23}.
But below in Proposition \ref{prop-phi-n-estimates} we have found some pathwise estimates for $\phi_n$.
\end{remark}
Let us now present the proof of Proposition \ref{prop-First-propo}.
\begin{proof}[Part 1]
Let us choose and fix a natural number $n$. We begin with a proof that 
        \begin{equation}\label{eqn-tau_n=infty}
          \tau_n=  T \mbox{  a.s. }
         \end{equation}
Suppose by contradiction that \eqref{eqn-tau_n=infty} is not satisfied, i.e. $\tau_n<  T $ with positive probability. Then, without loss of generality, as it is done in many papers, we can assume that $\tau_n<  T $ almost surely.  Let $(\tau_{n,m})_{m\geq 1}$ be an increasing sequence of stopping times defined by
  \begin{equation}\label{eqn-def-tau_{n,m}}
    \tau_{n,m}= \inf \left\{t \in [0,\tau_n): \lvert \bu_n(t) \rvert_{\StokesH}^2 + \lvert \phi_n(t) \rvert_{\newone{H}}^2 + \Vert \bu_n \Vert_{L^2(0,t;\StokesV)}^2 \geq m^2 \right\}.
   \end{equation}
It follows from \eqref{eqn-tau_n-01} that  almost surely,  $\tau_{n,m}< \tau_n$ and $\tau_{n,m} \toup  \tau_n$ as $m \to \infty$.
\newline   
Next, we fix $t \in [0,T]$. In the sequel, the duality pairing $\duality{\cdot}{\cdot}{\StokesV}{\StokesVp}$ will be denoted by $\duality{\cdot}{\cdot}{}{}$. 
Applying the It\^o formula in the classical version of \cite[Theorem 4.32]{Prato+Zabczyk_2014} to the functional $\Phi_{0,n}:H_{0,n} \ni \bu \mapsto \frac12 \lvert \bu \rvert_{\StokesH}^2$, using the fact that
\begin{equation*}
\duality{\bar{R}_{0,n}(\phi_n,\phi_n)}{\bu_n}{}{} 
= (\pi_{0,n} (\cp_n \nabla \phi_n), \bu_n) 
= (\bu_n \cdot \nabla \phi_n, \cp_n)
\end{equation*}
and \(\duality{\bB_{0,n}(\bu_n,\bu_n)}{\bu_n}{}{}= 0\), we obtain 
\begin{align}\label{Ito-for-u_n}
&\frac12 \lvert \bu_n(t \wedge \tau_{n,m}) \rvert_{\StokesH}^2 - \newK \int_0^{t \wedge \tau_{n,m}} (\bu_n(s) \cdot \nabla \phi_n(s), \cp_n(s))\,\d s \notag
\\
& + \nu \int_0^{t \wedge \tau_{n,m}} \duality{\Stokes_{,n}\bu_n(s)}{\bu_n(s)}{}{}\, \d s 
- \frac12 \int_0^{t \wedge \tau_{n,m}} \Vert \bSi_0^n(s)\bu_n(s) + \bG_{0,n}(s,\bu_n(s))\Vert_{\ell^2(H_{0,n})}^2\,\d s   
 \\
 \nonumber
&\leq \frac{1}{2} \lvert \bu_{0,n} \rvert_{\StokesH}^2 +  \int_0^{t \wedge \tau_{n,m}} ([\bG_{0,n}(s,\bu_n(s)) + \bSi_0^n(s)\bu_n(s)]\d W(s), \bu_n(s)). 
\end{align}
Owing to the Cauchy-Schwarz and Young inequalities, we have for every $\eta>0$,
\begin{equation}\label{Eqn-Hilbert-norm-Sigma^n+G^n}
\begin{aligned}
\frac12 \Vert \bSi_0^n(t)\bu_n + \bG_{0,n}(t,\bu_n)\Vert_{\ell^2(H_{0,n})}^2
\leq \left(\frac12 + \eta \right) \Vert \bSi_0^n(t)\bu_n \Vert_{\ell^2(H_{0,n})}^2 + \left(\frac12 + \frac{1}{4 \eta} \right) \Vert \bG_{0,n}(t,\bu_n)\Vert_{\ell^2(H_{0,n})}^2.
\end{aligned}
\end{equation}
By Assumption $(vi)$, the definition of $\bSi_0^n$, and inequality \eqref{Eqn-coercivity-3} from Section \ref{Ass-Abstract formulation}, we infer that
\begin{equation}
\begin{aligned}
\Vert \bSi_0^n(t)\bu_n \Vert_{\ell^2(H_{0,n})}^2
\leq \Vert \bSi_0(t)\bu_n \Vert_{\ell^2(\StokesH)}^2
\leq \tilde{C}_1 \Vert \bu_n \Vert_{\StokesV}^2.
\end{aligned}
\end{equation}
Hence, the linear growth assumption \eqref{eqn-linear growth} ensures that 
\begin{equation}\label{Eqn-3001}
\begin{aligned}
&\frac12 \Vert \bSi_0^n(t)\bu_n + \bG_{0,n}(t,\bu_n)\Vert_{\ell^2(H_{0,n})}^2
\\
& \leq  \frac12 \Vert \bSi_0^n(t)\bu_n \Vert_{\ell^2(H_{0,n})}^2 + \tilde{C}_1 \eta \Vert \bu_n \Vert_{\StokesV}^2
+ \left(\frac12 + \frac{1}{4 \eta} \right) \left( \tilde{C}_3 \vert \tilde{h}(t)\vert^2 + C_3 \vert \bu_n \vert_{\StokesH}^2\right). 
\end{aligned}
\end{equation}
Now, at a crucial moment, we choose $\eta$ (and fix for the remainder of the paper) small enough so that 
\[
\tilde{C}_1 \eta \leq \frac{\delta_0}{2}.
\]
We also introduce other constants $C_5$ and $\tilde{C}_5$ by 
\[
\frac12 C_5=C_3 \left(\frac12 + \frac{1}{4 \eta} \right) \mbox{ and }
\frac12 \tilde{C}_5=\tilde{C}_3 \left(\frac12 + \frac{1}{4 \eta} \right).
\]
Therefore, by the coercivity assumption \eqref{eqn-coercivity on H_{0,n}} in Lemma \ref{lem-coercivity on H_{0,n}},  we find that 
\begin{align*}
&\nu  \duality{\Stokes_{,n}\bu_n(t)}{\bu_n(t)}{}{} -\frac12 \Vert \bSi_0^n(t)\bu_n(t) + \bG_{0,n}(t,\bu_n(t))\Vert_{\ell^2(H_{0,n})}^2 
\\
&\geq   \nu  \duality{\Stokes_{,n}\bu_n(t)}{\bu_n(t)}{}{} -
\frac12 \Vert \bSi_0^n(t)\bu_n(t) \Vert_{\ell^2(H_{0,n})}^2 - \tilde{C}_1 \eta \Vert \bu_n(t) \Vert_{\StokesV}^2
- \frac12 \left( \tilde{C}_5 \vert \tilde{h}(t)\vert^2 + C_5 \vert \bu_n(t) \vert_{\StokesH}^2\right)  
\\
&\geq \frac{\delta_0}{2} \Vert \bu_n(t) \Vert_{\StokesV}^2 - \frac12 \left( \tilde{C}_5 \vert \tilde{h}(t) \vert^2 + C_5 \vert \bu_n(t) \vert_{\StokesH}^2\right). 
\end{align*}
Combining the above with \eqref{Ito-for-u_n}, we deduce the following inequality:
\begin{equation}\label{Ito-for-u_n-2}
\begin{aligned}
& \lvert \bu_n(t \wedge \tau_{n,m}) \rvert_{\StokesH}^2 + \delta_0\, \Vert \bu_n \Vert^2_{L^2(0,t \wedge \tau_{n,m};\StokesV)} 
- 2\newK\, \int_0^{t \wedge \tau_{n,m}} (\bu_n(s) \cdot \nabla \phi_n(s), \cp_n(s))\,\d s 
\\
&\leq \lvert \bu_{0,n} \rvert_{\StokesH}^2 +  2 \int_0^{t \wedge \tau_{n,m}} ([\bG_{0,n}(s,\bu_n(s)) + \bSi_0^n(s)\bu_n(s)]\d W(s), \bu_n(s))
  \\
&\qquad + C_5\, \Vert \bu_n \Vert_{L^2(0,t \wedge \tau_{n,m};\StokesH)}^2+  \tilde{C}_5\, \Vert \tilde{h} \Vert_{L^2(0,t \wedge \tau_{n,m};\mathbb{R})}^2.  
\end{aligned}
\end{equation}
Next, from equation \eqref{eqn-Compact-Galerkin-Modified-stochastic-CHNSEs-n}, we infer that $\phi_n$ solves the following equation:
   \begin{equation}\label{eqn-Galerkin-Modified-stochastic-CHNSEs-n-1}
     \d \phi_n + \newone{\mathscr{A}} \phi_n\,\d t= - B_{1,n}(\bu_n,\phi_n)\,\d t  - \Athree (\pi_{1,n}(\psi^\prime(\phi_n) - \avg{\psi^\prime(\phi_n)}))\,\d t. 
   \end{equation}
Consider the following maps, which will be useful for the subsequent analysis. 
\[
\newzero{a}: \newone{V} \times \newone{V} \ni (\phi,\psi) \mapsto ((-\Delta \phi),(-\Delta \psi))_{\newone{H}}
\]
and
\[
\newzero{b}: H^1(\domO) \times H^1(\domO) \ni (\phi,\psi) \mapsto (\nabla \phi,\nabla \psi)_{L^2}.
\]
Put, for the sake of brevity,
\begin{equation}\label{eqn-def-g_n}
f(\phi_n)\coloneqq \psi^\prime(\phi_n) -  \avg{\psi^\prime(\phi_n)} \mbox{ and }
g_n\coloneqq \phi_n +  \pi_{1,n}(\Aone^{-1}f(\phi_n)). 
\end{equation}
Observe also that
\begin{align}\label{eqn-Delta g_n}
\Aone g_n
= \Aone \phi_n + \pi_{1,n}f(\phi_n)
= \Atwo \phi_n + \pi_{1,n} f(\phi_n)
= \cp_n. 
\end{align}
From equation \eqref{eqn-Galerkin-Modified-stochastic-CHNSEs-n-1} and Proposition 4.5 in \cite{Lions+Magenes_1972_vol-1}, 
we infer that a.e. in $[0,T]$,
\begin{equation}\label{eqn-Galerkin-Modified-stochastic-CHNSEs-n-3-a}
\begin{aligned}
&\duality{\d \phi_n}{g_n}{\newone{V}}{\newonep{V}} + \newzero{a}(\phi_n,g_n)\,\d t
= -\duality{B_{1,n}(\bu_n,\phi_n)}{g_n}{\newone{V}}{\newonep{V}}\,\d t - \duality{\Athree (\pi_{1,n}f(\phi_n))}{g_n}{\newone{V}}{\newonep{V}}\,\d t 
  \\
&= - (B_{1,n}(\bu_n,\phi_n),g_n)_{\newone{H}}\,\d t - (\Athree (\pi_{1,n}f(\phi_n)),g_n)_{\newone{H}}\,\d t,
\end{aligned}
\end{equation}
where we omitted $(t)$. By Proposition \ref{prop-B_1-b_1} and \eqref{eqn-Delta g_n},  we deduce that
\begin{align*}
&(B_{1,n}(\bu_n,\phi_n),g_n)_{\newone{H}}
=(B_{1}(\bu_n,\phi_n), \pi_{1,n} g_n)_{\newone{H}}
=(B_{1}(\bu_n,\phi_n), g_n)_{\newone{H}}
\\
&=b_1(\bu_n,\phi_n,g_n)
= \int_{\domO} \bu_n(x)\cdot \nabla \phi_n(x)\, \Aone g_n(x)\,\d x
\\
&= \int_{\domO} \bu_n(x)\cdot \nabla \phi_n(x)\,\cp_n(x)\,\d x 
=(\bu_n\cdot \nabla \phi_n,\cp_n)_{L^2} \coloneqq (\bu_n\cdot \nabla \phi_n,\cp_n).
\end{align*}
Using integration by parts, we obtain the following auxiliary result.
\begin{lemma}\label{lem-IP} If $\phi_{1,n},\, \psi_{1,n} \in H_{1,n}$ then 
\begin{equation}
(\Athree \phi_{1,n}, \psi_{1,n}) 
=(\nabla\phi_{1,n},\nabla \psi_{1,n}).
\end{equation}
\end{lemma}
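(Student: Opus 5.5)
The identity will come from two integrations by parts, one for each derivative order, using the boundary conditions built into the basis. Every element of $H_{1,n}$ is a finite linear combination of eigenfunctions $\hat{e}_j$ of $\Aone$. Each $\hat e_j$ lies in $D(\Aone^2)$, since $\Aone \hat e_j=\alpha_j\hat e_j\in D(\Aone)$. Consequently $\phi_{1,n},\psi_{1,n}$ are smooth enough (in $H^4(\domO)$ for a $\mathcal{C}^3$ domain, or directly by the description of $D(\Aone^2)$) and satisfy $\partial_{\bn}\phi_{1,n}=\partial_{\bn}\Delta\phi_{1,n}=0$ on $\partial\domO$, and likewise for $\psi_{1,n}$. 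By bilinearity it suffices to take $\phi_{1,n}=\bar e_i$ and $\psi_{1,n}=\bar e_j$, although the argument works equally well for general elements of $H_{1,n}$.

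The first step is to identify the pairing. Here $(\Athree\phi_{1,n},\psi_{1,n})$ means the duality $\duality{\Athree\phi_{1,n}}{\psi_{1,n}}{\newone{V}}{\newonep{V}}$. When $\Athree\phi_{1,n}=-\Delta\phi_{1,n}$ belongs to $\newone{H}$, the Gelfand triple \eqref{eqn-Gelfand triple-abstract-2} turns this duality into the $\newone{H}$-inner product:
\[
(-\Delta\phi_{1,n},\psi_{1,n})_{\newone{H}}=\int_{\domO}\nabla(-\Delta\phi_{1,n})\cdot\nabla\psi_{1,n}\,\d x .
\]
This requires $-\Delta\phi_{1,n}\in H^1(\domO)$ with mean zero. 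Both hold by \eqref{eqn-\Aone-integral=0} and the regularity above.

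The second step is to integrate by parts in the last integral, moving the gradient off $\Delta\phi_{1,n}$. This gives
\[
\int_{\domO}\Delta\phi_{1,n}\,\Delta\psi_{1,n}\,\d x-\int_{\partial\domO}\Delta\phi_{1,n}\,\partial_{\bn}\psi_{1,n}\,\d S ,
\]
and the boundary term vanishes because $\partial_{\bn}\psi_{1,n}=0$. Integrating by parts once more, moving one derivative of $\Delta\psi_{1,n}$ onto $\Delta\phi_{1,n}$ and using $\partial_{\bn}\phi_{1,n}=0$, yields
\[
-\int_{\domO}\nabla\Delta\phi_{1,n}\cdot\nabla\psi_{1,n}\,\d x ,
\]
which is the same expression, so this route is circular.

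The useful version reads the $L^2$ pairing directly. By the Green formula with $\partial_{\bn}\psi_{1,n}=0$,
\[
\int_{\domO}(-\Delta\phi_{1,n})\,\psi_{1,n}\,\d x=\int_{\domO}\nabla\phi_{1,n}\cdot\nabla\psi_{1,n}\,\d x ,
\]
and this is exactly the claimed identity if $(\cdot,\cdot)$ in the lemma denotes the $L^2$ inner product, which is the paper's stated convention.

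So the plan is to interpret $(\cdot,\cdot)$ as the $L^2(\domO)$ product, which is legitimate because $\Athree\phi_{1,n}=-\Delta\phi_{1,n}\in L^2$. I would then apply Green's formula once, with the boundary integral $\int_{\partial\domO}\psi_{1,n}\,\partial_{\bn}\phi_{1,n}\,\d S$ vanishing by the Neumann condition encoded in $D(\Aone)$.

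The only real obstacle is justifying the regularity needed for Green's formula and the trace $\partial_{\bn}\phi_{1,n}=0$. This follows from $\hat e_j\in D(\Aone)\subset H^2(\domO)$ and the definition \eqref{eqn-Aone}. Green's formula holds for $H^2\times H^1$ functions on a $\mathcal{C}^3$ (hence Lipschitz) domain, so no further regularity is required.
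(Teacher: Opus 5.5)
Your proof is correct and is essentially the paper's argument. Read $(\cdot,\cdot)$ as the $L^2(\domO)$ inner product, which is the paper's convention and the one used when the lemma is applied in the energy estimate; then a single application of Green's formula gives the identity, because every element of $H_{1,n}\subset D(\Aone)$ satisfies the Neumann condition.

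One small correction: the boundary term is $\int_{\partial\domO}\psi_{1,n}\,\partial_{\bn}\phi_{1,n}\,\d S$, so what is needed is $\partial_{\bn}\phi_{1,n}=0$, not $\partial_{\bn}\psi_{1,n}=0$. You state this correctly at the end, and the detour through the $\newone{H}$ pairing can simply be dropped.
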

This follows from the Stokes Theorem, because  every element $\phi$ of $H_{1,n}$ satisfies the following  boundary conditions: 
$\partial_{\bn} \phi=0$.
\newline
Now, by identity \eqref{eqn-Delta g_n} together with Lemma \ref{lem-IP}, because e.g. $g_n \in H_{1,n}$,  we infer that,
\begin{align*}
&(\Athree (\pi_{1,n}f(\phi_n)),g_n)_{\newone{H}}
=(\Athree (\pi_{1,n}f(\phi_n)),\Aone g_n)
\\
&=(\Athree (\pi_{1,n}f(\phi_n)),\cp_n)
= (\nabla(\pi_{1,n}f(\phi_n)),\nabla \cp_n).
\end{align*}
Following the steps of Remark 4.6 from Section 4.7.5 in \cite{Lions+Magenes_1972_vol-1}, using \eqref{eqn-Delta g_n} and since the mean of the process $\cp_n$ is zero, i.e. $\langle\cp_n \rangle=0$, we find that
\begin{align*}
&\newzero{a}(\phi_n,g_n)
= \newzero{b}(-\Delta \phi_n,\cp_n)
=\int_{\domO} \nabla (-\Delta \phi_n) \cdot \nabla \cp_n\,\d x
\\
&= \lvert \nabla \cp_n \rvert_{\mathbb{L}^2}^2 - (\nabla(\pi_{1,n} f(\phi_n)),\nabla \cp_n)
= \lvert \cp_n \rvert_{\newone{H}}^2 - (\nabla(\pi_{1,n} f(\phi_n)),\nabla \cp_n).
\end{align*}
Regarding the first term on the LHS of equation \eqref{eqn-Galerkin-Modified-stochastic-CHNSEs-n-3-a}, we have 
\begin{align*}
&\duality{\d \phi_n}{g_n}{\newone{V}}{\newonep{V}}
=(\d \phi_n,g_n)_{\newone{H}}
=(\d \phi_n,\Aone g_n)
=(\d \phi_n,\cp_n)
\\
&
=(\d \phi_n,\Atwo \phi_n) + (\d  \phi_n, \pi_{1,n}f(\phi_n)) 
= (\d \phi_n,\Atwo \phi_n) + (\d  \phi_n, \psi^\prime(\phi_n) -  \avg{\psi^\prime(\phi_n)}) 
\\
&= \frac12 \d \lvert \phi_n \rvert_{\newone{H}}^2 + \d \int_{\domO} \psi(\phi_n(x))\,\d x - \int_{\domO}\psi^\prime(\phi_n(x))\,\d x\cdot \d \langle \phi_n \rangle
\\
&= \frac12 \d \lvert \phi_n \rvert_{\newone{H}}^2 + \d \int_{\domO} \psi(\phi_n(x))\,\d x
= \frac12 \d \lvert \phi_n \rvert_{\newone{H}}^2 + \d \Vert  \psi(\phi_n) \Vert_{L^1},
\end{align*}
where we used the fact that the mean of $\phi_n$ is also zero.
Henceforth, we can rewrite equation \eqref{eqn-Galerkin-Modified-stochastic-CHNSEs-n-3-a} as follows:
\begin{equation}\label{eqn-Galerkin-Modified-stochastic-CHNSEs-n-4}
\frac12\d \lvert \phi_n \rvert_{\newone{H}}^2 +  \d \Vert  \psi(\phi_n) \Vert_{L^1} + \lvert \cp_n \rvert_{\newone{H}}^2\,\d t 
= - (\bu_n\cdot \nabla \phi_n,\cp_n)\,\d t,
\end{equation}
which, in turn, yields that
\begin{equation}\label{Ito-for-phi_n-new}
\begin{aligned}
& \lvert \phi_n(t \wedge \tau_{n,m}) \rvert_{\newone{H}}^2 + 2 \Vert \psi(\phi_n(t \wedge \tau_{n,m})) \Vert_{L^1} + 2 \Vert \cp_n \Vert_{L^2(0,t \wedge \tau_{n,m};\newone{H})}^2 
\\
&= \lvert \phi_{0,n} \rvert_{\newone{H}}^2 + 2 \Vert \psi(\phi_{0,n}) \Vert_{L^1}  - 2 \int_0^{t \wedge \tau_{n,m}} (\bu_n(s)\cdot \nabla \phi_n(s),\cp_n(s))\,\d s.
\end{aligned}
\end{equation}
From the definitions of \(\bu_{0,n}\) and $\phi_{0,n}$, see \eqref{convergence-for-initial-data}, and since \(\bu_0 \in \StokesH\) and \(\phi_0 \in \newone{H}\), we get
  \begin{equation*}
    \lvert \bu_{0,n} \rvert_{\StokesH} \leq \lvert \bu_0 \rvert_{\StokesH}, \quad \lvert \phi_{0,n} \rvert_{\newone{H}} \leq \lvert \phi_0 \rvert_{\newone{H}}.
  \end{equation*}
Moreover, since \(\newone{H} \embed L^4(\domO)\), we infer that for every $n \in \mathbb{N}$,
\begin{align*}
\Vert \psi(\phi_{0,n}) \Vert_{L^1}
\leq \frac{1}{4} ( 1 + \Vert \phi_{0,n} \Vert_{L^4}^4)
\leq C(\domO) (1 + \lvert \phi_{0,n} \rvert_{\newone{H}}^4) 
\leq  C(\domO) (1 + \lvert \phi_0 \rvert_{\newone{H}}^4).
\end{align*}
Now, by summing \eqref{Ito-for-u_n-2} and  \eqref{Ito-for-phi_n-new} (the latter multiplied by $\newK$), we deduce that $\forall m,\,n \in \mathbb{N}$,
\begin{equation}\label{Galerkin-energy-equality}
\begin{aligned}
& \lvert \bu_n(t \wedge \tau_{n,m}) \rvert_{\StokesH}^2 + \newK \lvert \phi_n(t \wedge \tau_{n,m}) \rvert_{\newone{H}}^2 + 2 \newK \Vert \psi(\phi_n(t \wedge \tau_{n,m})) \Vert_{L^1} 
\\
& + \delta_0 \Vert \bu_n \Vert^2_{L^2(0,t \wedge \tau_{n,m};\StokesV)} + 2 \newK \Vert \cp_n \Vert_{L^2(0,t \wedge \tau_{n,m};\newone{H})}^2 
  \\
&\leq C(1 + \lvert \bu_0 \rvert_{\StokesH}^2 + \lvert \phi_0 \rvert_{\newone{H}}^4) +  \tilde{C}_5\, \Vert \tilde{h}\Vert_{L^2(0,t \wedge \tau_{n,m};\mathbb{R})}^2 + C_5\, \Vert \bu_n \Vert_{L^2(0,t \wedge \tau_{n,m};\StokesH)}^2
    \\
&\qquad +  2 \int_0^{t \wedge \tau_{n,m}} (\bu_n(s),[\bG_{0,n}(s,\bu_n(s)) + \bSi_0^n(s)\bu_n(s)]\d W(s)) .  
\end{aligned}
\end{equation}
Hence,
\begin{align*}
& \mathbb{E} \lvert \bu_n(t \wedge \tau_{n,m}) \rvert_{\StokesH}^2 + \newK \mathbb{E} \lvert \phi_n(t \wedge \tau_{n,m}) \rvert_{\newone{H}}^2 + 2 \newK \mathbb{E} \Vert \psi(\phi_n(t \wedge \tau_{n,m})) \Vert_{L^1} 
\\
& +  \delta_0\, \mathbb{E} \Vert \bu_n \Vert^2_{L^2(0,t \wedge \tau_{n,m};\StokesV)} + 2 \newK\, \mathbb{E} \Vert \cp_n \Vert_{L^2(0,t \wedge \tau_{n,m};\newone{H})}^2
  \\
&\leq C(1 + \lvert \bu_0 \rvert_{\StokesH}^2 + \lvert \phi_0 \rvert_{\newone{H}}^4) + \tilde{C}_5\, \mathbb{E} \Vert \tilde{h}\Vert_{L^2(0,t \wedge \tau_{n,m};\mathbb{R})}^2 + C_5\, \mathbb{E} \Vert \bu_n \Vert_{L^2(0,t \wedge \tau_{n,m};\StokesH)}^2.  
\end{align*}
Next, by the Gronwall inequality applied to the function  $t \mapsto \mathbb{E} \lvert \bu_n(t \wedge \tau_{n,m}) \rvert_{\StokesH}^2$,
\begin{equation}\label{eq3.25}
\begin{aligned}
& \mathbb{E} [\lvert \bu_n(t \wedge \tau_{n,m}) \rvert_{\StokesH}^2 + \lvert \phi_n(t \wedge \tau_{n,m}) \rvert_{\newone{H}}^2 +  \Vert \psi(\phi_n(t \wedge \tau_{n,m})) \Vert_{L^1} \\ 
& + \Vert \bu_n \Vert^2_{L^2(0,t \wedge \tau_{n,m};\StokesV)} + \Vert \cp_n \Vert_{L^2(0,t \wedge \tau_{n,m};\newone{H})}^2] 
\leq C_0,
\end{aligned}
\end{equation}
with the constant $C_0$ being independent of $n$ and $\tau_{n,m}$. \newline
Hereafter, let 
   \begin{equation*}
     X_{n}(t)= \lvert \bu_n(t) \rvert_{\StokesH}^2 + \lvert \phi_n(t) \rvert_{\newone{H}}^2 + \Vert \bu_n \Vert_{L^2(0,t;\StokesV)}^2\; \mbox{ for } 0 \leq t < \tau_n.
   \end{equation*}
Note that by definition of $\tau_{n,m}$, see \eqref{eqn-def-tau_{n,m}}, it follows that $\mathbb{E} X_{n}(\tau_{n,m}) \geq m^2$ and 
 $\mathbb{E}1_{\{\tau_{n,m}< t\}}= \mathbb{P}(\tau_{n,m}< t)$ for every $t \in [0,T]$. 
Next, let us temporarily choose and fix $t \in [0,T]$. So,
\begin{align*}
&\mathbb{E} X_{n}(t \wedge \tau_{n,m})
=\mathbb{E} [X_{n}(t \wedge \tau_{n,m})\, 1_{\{\tau_{n,m}< t\}}] + \mathbb{E} [X_{n}(t \wedge \tau_{n,m})\,1_{\{\tau_{n,m}\geq t\}}]
\\
&=\mathbb{E} [X_{n}(\tau_{n,m})\, 1_{\{\tau_{n,m}< t\}}] + \mathbb{E} [X_{n}(t)\,1_{\{\tau_{n,m}\geq t\}}] 
\\
&> \mathbb{E}[X_{n}(\tau_{n,m})\, 1_{\{\tau_{n,m}< t\}}] 
> m^2 \mathbb{E} 1_{\{\tau_{n,m}<t\}} 
= m^2 \mathbb{P}(\tau_{n,m}< t),
\end{align*}
which, in turn, using \eqref{eq3.25}, leads us to
  \begin{equation*}
    \mathbb{P}(\tau_{n,m}< t)< m^{-2} \mathbb{E} X_{n}(t \wedge \tau_{n,m}) \leq m^{-2} C_0.
   \end{equation*}
Consequently,
\begin{equation}
\lim_{m \to \infty} \mathbb{P}(\tau_{n,m}< t)=0.
\end{equation}
This means $\tau_{n,m} \to T$ in probability as $m \to \infty$. 
Since, as we have already proven, $\tau_{n,m} \to \tau_n$ almost surely as $m \to \infty$, we deduce that  
$\tau_n=T$ almost surely. This contradiction concludes the proof  of \eqref{eqn-tau_n=infty}.
\end{proof}
\begin{proof}[Part 2]
It follows that inequality \eqref{eq3.25} can be written without the stopping times as follows:
\begin{equation}\label{eq3.25-1}
\begin{aligned}
\mathbb{E} [\lvert \bu_n(t) \rvert_{\StokesH}^2 + \lvert \phi_n(t) \rvert_{\newone{H}}^2 +  \Vert \psi(\phi_n(t )) \Vert_{L^1} 
+  \Vert \bu_n \Vert^2_{L^2(0,t;\StokesV)} + \Vert \cp_n \Vert_{L^2(0,t;\newone{H})}^2] \leq C_0, \;\; t \in [0,T].
\end{aligned}
\end{equation}
Now, we take $\mathbb{E} \left[\sup_{s \in[0,t ]} |\cdot| \right]$ in \eqref{Galerkin-energy-equality}. 
By the Burkholder-Davis-Gundy inequality and Example \ref{example:HS}, we deduce that for every $n \in \mathbb{N}$,
\begin{align*}
&\mathbb{E} \sup_{s \in[0,t]} \left\lvert \int_0^{s} (\bSi_0^n(r)\bu_n(r))\d W(r), \bu_n(r)) \right\rvert
\leq C \mathbb{E} \left( \int_0^t \Vert \bSi_0^n(s)\bu_n(s) \Vert_{\ell^2(H_{0,n})}^2 \lvert \bu_n(s) \rvert_{\StokesH}^2\,\d s \right)^{\frac12}
\\
&\leq C \mathbb{E}\left[ \left(\sup_{s \in[0,t]} \lvert \bu_n(s) \rvert_{\StokesH}^2 \right)^{\frac12} \left(\int_0^t \Vert \bSi_0^n(s)\bu_n(s) \Vert_{\ell^2(H_{0,n})}^2\,\d s \right)^{\frac12} \right]
  \\
&\leq \frac14 \mathbb{E} \sup_{s \in[0,t]} \lvert \bu_n(s) \rvert_{\StokesH}^2 + C \mathbb{E} \int_0^t \Vert \bSi_0^n(s)\bu_n(s) \Vert_{\ell^2(H_{0,n})}^2\,\d s 
\leq \frac14 \mathbb{E} \sup_{s \in[0,t]} \lvert \bu_n(s) \rvert_{\StokesH}^2 + C \mathbb{E} \Vert \bu_n \Vert_{L^2(0,t;\StokesV)}^2.
\end{align*}
Analogously, by Assumption $\mbox{(v)-a)}$ from Section \ref{Ass-Abstract formulation}, we have for every $n \in \mathbb{N}$,
\begin{align*}
&\mathbb{E} \sup_{s \in[0,t]} \left\lvert \int_0^{s} (\bG_{0,n}(r,\bu_n(r))\d W(r), \bu_n(r)) \right\rvert
\leq C \mathbb{E} \left( \int_0^t \Vert \bG_{0,n}(s,\bu_n(s)) \Vert_{\ell^2(H_{0,n})}^2 \lvert \bu_n(s) \rvert_{\StokesH}^2\,\d s \right)^{\frac12}
  \\
&\leq \frac14 \mathbb{E} \sup_{s \in[0,t]} \lvert \bu_n(s) \rvert_{\StokesH}^2 + C \mathbb{E} \int_0^t \Vert \bG_{0,n}(s,\bu_n(s)) \Vert_{\ell^2(H_{0,n})}^2\,\d s 
    \\
&\leq \frac14 \mathbb{E} \sup_{s \in[0,t]} \lvert \bu_n(s) \rvert_{\StokesH}^2 + C \Vert \tilde{h} \Vert_{L^2(0,t;\mathbb{R})}^2 + C \mathbb{E} \Vert \bu_n \Vert_{L^2(0,t;\StokesH)}^2.
\end{align*}
Taking $\mathbb{E} \left[\sup_{s \in[0,t]} \lvert \cdot \rvert \right]$ in \eqref{Galerkin-energy-equality}, using the above estimates, 
we deduce that for every $n \in \mathbb{N}$,
\begin{align}\label{Galerkin-energy-equality-a}
&\frac12 \mathbb{E} \sup_{s \in [0,t]} \lvert \bu_n(s) \rvert_{\StokesH}^2  + \newK\, \mathbb{E} \sup_{s \in [0,t]} \lvert \phi_n(s) \rvert_{\newone{H}}^2 + 2 \newK\, \mathbb{E} \sup_{s \in [0,t]} \Vert \psi(\phi_n(s)) \Vert_{L^1} \notag
\\
& + 2 \newK\, \mathbb{E} \Vert \cp_n \Vert_{L^2(0,t;\newone{H})}^2 + \delta_0 \mathbb{E} \Vert \bu_n \Vert_{L^2(0,t;\StokesV)}^2
\\
&\leq C(1 + \lvert \bu_{0} \rvert_{\StokesH}^2 + \lvert \phi_{0} \rvert_{\newone{H}}^4) + C [\Vert \tilde{h} \Vert_{L^2(0,t;\mathbb{R})}^2
+  \mathbb{E} \Vert \bu_n \Vert_{L^2(0,t;\StokesV)}^2 + \mathbb{E} \vert \bu_n \vert_{L^2(0,t;\StokesH)}^2], \notag
\end{align}
where the constant $C$ may depend on $\domO,\,C_3,\, \delta_0,\,\tilde{C}_5,\, C_5$, and $\newK$.
From the estimate \eqref{eq3.25-1}, it follows that for every $n \in \mathbb{N}$,
\begin{align*}
&\frac12 \mathbb{E} \sup_{s \in [0,t]} \lvert \bu_n(s) \rvert_{\StokesH}^2  + \newK\, \mathbb{E} \sup_{s \in [0,t]} \lvert \phi_n(s) \rvert_{\newone{H}}^2 + 2 \newK \mathbb{E} \sup_{s \in [0,t]} \Vert \psi(\phi_n(s)) \Vert_{L^1} 
\\
&\leq C [1 + \lvert \bu_{0} \rvert_{\StokesH}^2 + \lvert \phi_0 \rvert_{\newone{H}}^4] + C \Vert \tilde{h} \Vert_{L^2(0,t;\mathbb{R})}^2 + C \int_0^t  \mathbb{E} \sup_{0 \leq s \leq r} \lvert \bu_n(s) \rvert_{\StokesH}^2\, \d r.
\end{align*}
By the Gronwall inequality, we infer that there exists a positive constant 
$C_{1,0}= C_{1,0}(\domO,C_3,\delta_0,\tilde{C}_5,C_5,\newK,t)$, independent of $n$ such that 
\begin{equation}\label{eq3.35}
\mathbb{E} \left[\sup_{s \in [0,t]} \lvert \bu_n(s) \lvert_{\StokesH}^2 + \sup_{s \in [0,t]} \newK \lvert \phi_n(s) \rvert_{\newone{H}}^2 + \newK \sup_{s \in [0,t]} \Vert \psi(\phi_n(s)) \Vert_{L^1} \right]
\leq C_{1,0}.
\end{equation}
This completes the proof of Proposition \ref{prop-First-propo}.
\end{proof}
Now, we state and prove the following crucial estimate.
\begin{Proposition}\label{prop-phi-n-estimates}
There exists a positive constant $C= C(\domO)>0$ such that  for every $n \in \mathbb{N}$ and every  $p \geq 2$,  the 
 solution $\bX_n$  to \eqref{eqn-Compact-Galerkin-Modified-stochastic-CHNSEs-n}, satisfies, 
 $\mathbb{P}$-a.s. $\omega \in \Omega$, the following inequalities
\begin{equation}\label{eq-estimate-for-phi-c}
\lvert \phi_n(t) \rvert_{L^2}^p  
+ \frac p2 \int_0^t \lvert \phi_n(s) \rvert_{L^2}^{p-2} \lvert \Atwo \phi_n(s) \rvert_{L^2}^2\,\d s 
\leq C\, e^{\frac{p}{2} t} \lvert \phi_0 \rvert_{\newone{H}}^p, \;\; t \in [0,T],
\end{equation}
and
   \begin{equation}\label{eq-estimate-for-phi-d-a}
      \int_0^t \lvert \phi_n(s)\rvert_{\newone{H}}^4 \, \d s
      \leq C\, e^{2 t} \lvert \phi_0 \rvert_{\newone{H}}^4, \;\; t \in [0,T]. 
   \end{equation}
\end{Proposition}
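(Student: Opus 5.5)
The plan is to test the $\phi_n$-equation in \eqref{eqn-Galerkin-Modified-stochastic-CHNSEs-n} with $\phi_n$ in $L^2(\domO)$, not in $\newone{H}$. Since $\phi_n$ solves a random ODE on $H_{1,n}$ with no noise (Remark \ref{rem-prop-First-propo}), this is a pathwise chain-rule computation on the finite-dimensional space $H_{1,n}$, valid for $\mathbb{P}$-a.e.\ $\omega$ on $[0,T]$ since $\tau_n=T$.

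\textbf{Step 1: the convective term drops out.} The $L^2$ pairing with $\phi_n$ is the $\newone{H}$ pairing with $\Aone^{-1}\phi_n$. Since $\bar e_k$ are eigenfunctions of $\Aone$, $\Aone^{-1}\phi_n\in H_{1,n}$, and $\pi_{1,n}$ is also $L^2$-orthogonal on $H_{1,n}$. Hence Proposition \ref{prop-B_1-b_1} and \eqref{eqn-b_1-trilinear-form-2} give
\[
(B_{1,n}(\bu_n,\phi_n),\phi_n)_{L^2}=b_1(\bu_n,\phi_n,\Aone^{-1}\phi_n)=\int_{\domO}\bu_n\cdot\nabla\phi_n\,\phi_n\,\d x=\tfrac12\int_{\domO}\bu_n\cdot\nabla(\phi_n^2)\,\d x=0,
\]
using $\diver\bu_n=0$ and $\bu_n=0$ on $\partial\domO$. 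This is the step I expect to need the most care, because of the two inner products in play and the correct identification of $B_{1,n}$. It is also the one that makes the estimate pathwise and independent of $\bu_n$.

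\textbf{Step 2: the dissipative term.} By Lemma \ref{lem-IP},
\[
(\Athree\cp_n,\phi_n)=(\nabla\cp_n,\nabla\phi_n)=(\cp_n,\Atwo\phi_n)=\lvert\Atwo\phi_n\rvert_{L^2}^2+(\pi_{1,n}f(\phi_n),\Atwo\phi_n).
\]
Because $\Atwo\phi_n\in H_{1,n}$ and $\pi_{1,n}$ is $L^2$-orthogonal there, the projection can be removed. The mean $\avg{\psi'(\phi_n)}$ also disappears, since $\Atwo\phi_n$ has zero mean. Thus the last term equals $\int_{\domO}\psi''(\phi_n)\lvert\nabla\phi_n\rvert^2\,\d x\ge-\lvert\nabla\phi_n\rvert_{\mathbb{L}^2}^2$ by \eqref{eqn-Psi''}. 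Writing $\lvert\nabla\phi_n\rvert^2=(\phi_n,\Atwo\phi_n)\le\tfrac12\lvert\phi_n\rvert^2_{L^2}+\tfrac12\lvert\Atwo\phi_n\rvert_{L^2}^2$, I obtain
\[
\tfrac{\d}{\d t}\lvert\phi_n\rvert_{L^2}^2+\lvert\Atwo\phi_n\rvert_{L^2}^2\le\lvert\phi_n\rvert_{L^2}^2 .
\]

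\textbf{Step 3: powers and Gronwall.} For $p\ge2$ the chain rule gives
\[
\tfrac{\d}{\d t}\lvert\phi_n\rvert^p=\tfrac p2\lvert\phi_n\rvert^{p-2}\tfrac{\d}{\d t}\lvert\phi_n\rvert^2\le\tfrac p2\lvert\phi_n\rvert^p-\tfrac p2\lvert\phi_n\rvert^{p-2}\lvert\Atwo\phi_n\rvert^2 .
\]
Multiplying by $e^{-ps/2}$, integrating, and using $e^{p(t-s)/2}\ge1$ yields
\[
\lvert\phi_n(t)\rvert^p+\tfrac p2\int_0^t\lvert\phi_n\rvert^{p-2}\lvert\Atwo\phi_n\rvert^2\,\d s\le e^{pt/2}\lvert\phi_{0,n}\rvert_{L^2}^p .
\]
The Poincar\'e inequality on mean-zero functions and $\lvert\phi_{0,n}\rvert_{\newone{H}}\le\lvert\phi_0\rvert_{\newone{H}}$ then give \eqref{eq-estimate-for-phi-c}.

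\textbf{Step 4: the $L^4(0,t;\newone{H})$ bound.} Use $\lvert\phi_n\rvert_{\newone{H}}^2=(\phi_n,\Atwo\phi_n)\le\lvert\phi_n\rvert_{L^2}\lvert\Atwo\phi_n\rvert_{L^2}$, so $\lvert\phi_n\rvert_{\newone{H}}^4\le\lvert\phi_n\rvert^2_{L^2}\lvert\Atwo\phi_n\rvert_{L^2}^2$. Taking $p=4$ in \eqref{eq-estimate-for-phi-c} gives $2\int_0^t\lvert\phi_n\rvert^2\lvert\Atwo\phi_n\rvert^2\,\d s\le Ce^{2t}\lvert\phi_0\rvert^4_{\newone{H}}$, which is \eqref{eq-estimate-for-phi-d-a}.
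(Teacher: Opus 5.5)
Your proof is correct and is essentially the paper's argument. The paper tests the $\phi_n$-equation in $\newone{H}$ with $g_n=\Aone^{-1}\bigl(\lvert\phi_n\rvert_{L^2}^{p-2}\phi_n\bigr)$, which amounts to your $L^2$-pairing with $\phi_n$ followed by the chain rule for $\lvert\phi_n\rvert_{L^2}^p$; it then removes the transport term via $\int_{\domO}\bu_n\cdot\nabla(\phi_n^2)\,\d x=0$, controls the $\psi''$-term by \eqref{eqn-Psi''} and Young, and closes with Gronwall, and your integrating factor gives the identical bound. As in the paper, the constant produced by the Poincar\'e step is really $C_P^{\,p}$, so it is independent of $p$ only if you keep $\lvert\phi_{0,n}\rvert_{L^2}\le\lvert\phi_0\rvert_{L^2}$ on the right-hand side rather than passing to $\lvert\phi_0\rvert_{\newone{H}}$.
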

\begin{proof}[Proof of Proposition \ref{prop-phi-n-estimates}]
Let $p \geq 2$ be fixed. Arguing as in \eqref{eqn-Galerkin-Modified-stochastic-CHNSEs-n-3-a}, we infer that
    \begin{align*}
     (\d \phi_n,\Aone g_n) + \newzero{b}(-\Delta \phi_n,\Aone  g_n)\,\d t
     =  - (B_{1,n}(\bu_n,\phi_n),g_n)_{\newone{H}}\,\d t - (\Athree (\pi_{1,n}f(\phi_n)),g_n)_{\newone{H}}\,\d t,
   \end{align*}
where $g_n= \Aone^{-1} [\lvert \phi_n \rvert_{L^2}^{p-2} \phi_n]$, $n\in \mathbb{N}$. Now, since $\Aone g_n= \lvert \phi_n \rvert_{L^2}^{p-2} \phi_n$, we infer that
\begin{align*}
&\newzero{b}(-\Delta \phi_n,\lvert \phi_n \rvert_{L^2}^{p-2} \phi_n)
= \lvert \phi_n \rvert_{L^2}^{p-2} \newzero{b}(-\Delta \phi_n,\phi_n)
=  \lvert \phi_n \rvert_{L^2}^{p-2} \int_{\domO} \nabla (-\Delta \phi_n(x))\cdot \nabla \phi_n(x)\,\d x
\\
&= \lvert \phi_n \rvert_{L^2}^{p-2} \int_{\domO} (-\Delta \phi_n(x))\cdot (-\Delta \phi_n(x))\,\d x
= \lvert \phi_n \rvert_{L^2}^{p-2} \lvert \Atwo  \phi_n \rvert_{L^2}^2,
\end{align*}

\begin{align*}
&(\Athree (\pi_{1,n}f(\phi_n)),g_n)_{\newone{H}}
=(\Athree (\pi_{1,n}f(\phi_n)), \Aone g_n)
\\
&= \lvert \phi_n \rvert_{L^2}^{p-2} (\Athree (\pi_{1,n}f(\phi_n)),\phi_n)
=\lvert \phi_n \rvert_{L^2}^{p-2} (\Athree f(\phi_n),\phi_n)
\\
&= \lvert \phi_n \rvert_{L^2}^{p-2} ((-\Delta) \psi^\prime(\phi_n),\phi_n)
=\lvert \phi_n \rvert_{L^2}^{p-2} \int_{\domO} \psi^{\prime \prime}(\phi_n(x)) \lvert \nabla \phi_n(x) \rvert^2\,\d x,
\end{align*}
and
\begin{align*}
&(B_{1,n}(\bu_n,\phi_n),g_n)_{\newone{H}}
=b_1(\bu_n,\phi_n,g_n)
= \int_{\domO} \bu_n(x)\cdot \nabla \phi_n(x) (\Aone g_n)(x)\,\d x
\\
&= \lvert \phi_n \rvert_{L^2}^{p-2} \int_{\domO} \bu_n(x)\cdot \nabla \phi_n(x)\,\phi_n(x)\,\d x
=\frac12 \lvert \phi_n \rvert_{L^2}^{p-2} \int_{\domO} \bu_n(x)\cdot \nabla (\phi_n^2)(x)\,\d x
=0.
\end{align*}
Combining the above equalities together with the equality \eqref{eqn-Psi''}, we deduce that
\begin{equation}\label{eq-estimate-for-phi-a}
\begin{aligned}
&\frac1p \d \lvert \phi_n \rvert_{L^2}^p + \lvert \phi_n \rvert_{L^2}^{p-2} \lvert \Atwo \phi_n \rvert_{L^2}^2\,\d t
= - \lvert \phi_n \rvert_{L^2}^{p-2} \int_{\domO} \psi^{\prime \prime}(\phi_n(x)) \lvert \nabla \phi_n(x) \rvert^2\,\d x\,\d t
 \\
&\leq  \lvert \phi_n \rvert_{L^2}^{p-2} \lvert \phi_n \rvert_{\newone{H}}^2\,\d t
= \lvert \phi_n \rvert_{L^2}^{p-2}(\Atwo  \phi_n,\phi_n)\,\d t
\leq \lvert \phi_n \rvert_{L^2}^{p-2} \lvert \Atwo  \phi_n\rvert_{L^2} \lvert \phi_n \rvert_{L^2}\,\d t
   \\
&\leq \frac12 \lvert \phi_n \rvert_{L^2}^{p-2} \lvert \Atwo \phi_n \rvert_{L^2}^2\,\d t + \frac12 \lvert \phi_n \rvert_{L^2}^{p}\,\d t.
\end{aligned}
\end{equation}
This implies that
\begin{equation}\label{eq-estimate-for-phi-aa}
\frac{\d}{\d t} \lvert \phi_n(t) \rvert_{L^2}^p \leq \frac p2 \lvert \phi_n(t) \rvert_{L^2}^p,\;\; t \in [0,T].
\end{equation}
An application of the Gronwall Lemma yields that for every $n \in \mathbb{N}$,
 \begin{equation}\label{eq-estimate-for-phi-b}
   \lvert \phi_n(t) \rvert_{L^2}^p \leq e^{\frac{p}{2}t} \lvert \phi_{0,n} \rvert_{L^2}^p 
   \leq C\,e^{\frac{p}{2}t} \lvert \phi_0 \rvert_{\newone{H}}^p,\;\; t \in [0,T],
 \end{equation}
 where $C=C(\domO)$.
Substituting \eqref{eq-estimate-for-phi-b} into the integrated equation \eqref{eq-estimate-for-phi-a}, we deduce \eqref{eq-estimate-for-phi-c}.
\newline
Let us consider the proof of \eqref{eq-estimate-for-phi-d-a}.
By \eqref{eq-estimate-for-phi-c}, we infer that for every $n \in \mathbb{N}$,
\begin{equation}
\int_0^t \lvert \phi_n(s) \rvert_{\newone{H}}^4\, \d s
\leq \int_0^t \lvert \phi_n(s) \rvert_{L^2}^2 \lvert \Atwo \phi_n(s) \rvert_{L^2}^2\, \d s
\leq C\, e^{2 t} \lvert \phi_0 \rvert_{\newone{H}}^4,\;\; t \in [0,T].
\end{equation}
From the latter inequality, we deduce \eqref{eq-estimate-for-phi-d-a}, completing the proof.
\end{proof}
\begin{remark}
Suppose that $\phi \in L^{5/3}(0,T;H^2(\domO)) \cap L^{\infty}(0,T;H^1(\domO))$. Then, there exists a generic constant $C=C(T,\domO)>0$ such that 
  \begin{equation}\label{Eq-Psi-second-gradient-phi-b}
     \Vert \psi^{\bis}(\phi) \nabla \phi \Vert_{L^{4/3}(0,T;\mathbb{L}^2(\domO))} 
      \leq C (\Vert \phi \Vert_{L^{4/3}(0,T;H^1(\domO))} + \Vert \phi \Vert^{7/4}_{L^{\infty}(0,T;H^1(\domO))} \Vert \phi \Vert^{5/4}_{L^{5/3}(0,T;H^2(\domO))}).
    \end{equation}
Indeed, by the Agmon inequality \eqref{eq-Agmon's-inequalities},
\begin{align*}
&\vert \psi^{\bis}(\phi) \nabla \phi \rvert_{\mathbb{L}^2}
\leq \vert \nabla \phi\rvert_{\mathbb{L}^2} + 3 \vert \phi^2 \nabla \phi \rvert_{\mathbb{L}^2} 
\leq \vert \nabla \phi \rvert_{\mathbb{L}^2} + \frac{3}{2} \Vert \phi\Vert_{L^\infty} \vert \nabla (\phi^2) \rvert_{\mathbb{L}^2} 
\\
&\leq \Vert \phi\Vert_{H^1} + C \Vert \phi \Vert_{H^1}^{1/2} \Vert \phi \Vert_{H^2}^{1/2} \vert \nabla (\phi^2)\rvert_{\mathbb{L}^2} 
\leq \Vert \phi \Vert_{H^1} + C \Vert \phi \Vert_{H^1}^{7/4} \Vert \phi \Vert_{H^2}^{5/4},\;\; \phi \in H^2(\domO),
\end{align*}
for some constant $C=C(\domO)$. From the above inequality, we deduce the estimate \eqref{Eq-Psi-second-gradient-phi-b}.
\end{remark}
Using the previous results, we can deduce that
\begin{corollary}\label{cor-Propositions 5.1 and 5.3}
Let the assumptions of Proposition \ref{prop-First-propo} be satisfied. 
There exists a positive constant $C= C(\domO)>0$ such that  for every $n \in \mathbb{N}$ and every  $p \geq 2$,  the 
 solution $\bX_n$  to \eqref{eqn-Compact-Galerkin-Modified-stochastic-CHNSEs-n}, satisfies, 
 $\mathbb{P}$-a.s. $\omega \in \Omega$, the following inequalities: 
\begin{align}\label{eq-estimate-for-phi}
\Vert \phi_n \Vert_{C([0,T];L^2(\domO))}^p &\leq C\, e^{\frac{p}{2} T} \lvert \phi_0 \rvert_{\newone{H}}^p,
\\
\label{eq-A1-estimate-for-phi}
\int_0^T \lvert \phi_n(s) \rvert_{L^2}^{p-2} \lvert \Atwo \phi_n(s) \rvert_{L^2}^2\,\d s 
&\leq C\, e^{\frac{p}{2} T} \lvert \phi_0 \rvert_{\newone{H}}^p,
\\
\label{eq-estimate-for-phi-e}
\int_0^T \lvert \phi_n \rvert_{L^2}^{p-2} \Vert \phi_n \Vert_{H^2}^2 \, \d s 
&\leq C\, e^{p T} \lvert \phi_0 \rvert_{\newone{H}}^p.
\end{align}
\end{corollary}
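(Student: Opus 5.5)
The corollary follows from Proposition \ref{prop-phi-n-estimates} by taking suprema over $t\in[0,T]$ and then invoking elliptic regularity for the Neumann Laplacian on mean-zero functions.

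For \eqref{eq-estimate-for-phi}, I drop the nonnegative integral on the left of \eqref{eq-estimate-for-phi-c} and obtain, for every $t\in[0,T]$,
\[
\lvert\phi_n(t)\rvert_{L^2}^p\le C e^{\frac p2 t}\lvert\phi_0\rvert_{\newone H}^p\le Ce^{\frac p2T}\lvert\phi_0\rvert_{\newone H}^p .
\]
Since $\phi_n$ takes values in the finite-dimensional space $H_{1,n}$ and has continuous paths (Part 1 of Proposition \ref{prop-First-propo} gives $\tau_n=T$), $\phi_n\in C([0,T];L^2(\domO))$. Taking the supremum over $t$ then yields the bound on $\Vert\phi_n\Vert_{C([0,T];L^2)}^p$.

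For \eqref{eq-A1-estimate-for-phi}, I keep instead the integral term in \eqref{eq-estimate-for-phi-c} at $t=T$ and drop $\lvert\phi_n(T)\rvert_{L^2}^p\ge0$. This gives
\[
\tfrac p2\int_0^T\lvert\phi_n\rvert_{L^2}^{p-2}\lvert\Atwo\phi_n\rvert_{L^2}^2\,\d s\le Ce^{\frac p2T}\lvert\phi_0\rvert_{\newone H}^p .
\]
Because $p\ge2$, we have $2/p\le1$, and the factor can be discarded.

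For \eqref{eq-estimate-for-phi-e}, I use that $\phi_n(s)\in H_{1,n}\subset D(\Aone)$ has zero mean. By the equivalent norm \eqref{eqn-H^2-norm-special} (equivalently, $H^2$ elliptic regularity for the Neumann problem on a $\mathcal C^3$ domain), there is $C(\domO)$ with $\Vert\phi_n\Vert_{H^2}\le C\lvert\Atwo\phi_n\rvert_{L^2}$. Hence
\[
\int_0^T\lvert\phi_n\rvert_{L^2}^{p-2}\Vert\phi_n\Vert_{H^2}^2\,\d s\le C\int_0^T\lvert\phi_n\rvert_{L^2}^{p-2}\lvert\Atwo\phi_n\rvert_{L^2}^2\,\d s\le Ce^{\frac p2T}\lvert\phi_0\rvert_{\newone H}^p\le Ce^{pT}\lvert\phi_0\rvert_{\newone H}^p .
\]
Here $e^{pT}$ is a crude upper bound for $e^{\frac p2 T}$.

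There is no real obstacle; the only point to check is that the constants do not depend on $n$ or $p$. This holds because the elliptic constant depends only on $\domO$. Likewise, the inequality $\lvert\phi_{0,n}\rvert_{L^2}\le C\lvert\phi_0\rvert_{\newone H}$ used in Proposition \ref{prop-phi-n-estimates} comes from Poincaré together with $\lvert\pi_{1,n}\phi_0\rvert_{\newone H}\le\lvert\phi_0\rvert_{\newone H}$. Since it enters raised to the power $p$, it must be read as $(C')^p$, and I would note that $C^p$ is absorbed into the stated constant by allowing the constant to grow geometrically in $p$, exactly as in Proposition \ref{prop-phi-n-estimates}.
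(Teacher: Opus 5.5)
Your proof is correct and follows the paper's argument: the first two bounds are read off directly from Proposition \ref{prop-phi-n-estimates}, and the third follows from an $H^2$ elliptic estimate for the Neumann Laplacian. The only difference is in that last step: you use the mean-zero norm \eqref{eqn-H^2-norm-special} to get $\Vert\phi_n\Vert_{H^2}\le C\lvert\Atwo\phi_n\rvert_{L^2}$ directly, whereas the paper keeps the lower-order term $\lvert\phi_n\rvert_{L^2}^2$ in the elliptic estimate and controls $\int_0^T\lvert\phi_n\rvert_{L^2}^p\,\d s$ by the pointwise bound \eqref{eq-estimate-for-phi-b}; your remark that the initial-data constant really enters as $(C')^p$ is a fair observation about Proposition \ref{prop-phi-n-estimates} itself, not a defect in your deduction.
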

\begin{proof}
Estimates \eqref{eq-estimate-for-phi} and \eqref{eq-A1-estimate-for-phi} are direct consequences of \eqref{eq-estimate-for-phi-c} and \eqref{eq-estimate-for-phi-d-a}.
\newline
By the smoothness of $\domO$ the standard elliptic theory for second order operators, see \cite[Theorem 8.8]{Gilbarg+Trudinger_1983}, there exists $M_1>0$ such that for every $n \in \mathbb{N}$,
   \begin{equation}\label{eqn-phi_n-H-2-norm}
      2 \Vert \phi_n \Vert_{H^2}^2 \leq M_1 \left(\lvert \Atwo \phi_n \rvert_{L^2}^2 + \lvert \phi_n \rvert_{L^2}^2 \right).
    \end{equation}
Therefore, from \eqref{eq-estimate-for-phi-b} and \eqref{eq-estimate-for-phi-c}, we obtain for all $t \in [0,T]$,
\begin{equation}\label{eq-3.63}
\int_0^t \lvert \phi_n \rvert_{L^2}^{p-2} \Vert \phi_n \Vert_{H^2}^2 \, \d s 
\leq \frac{M_1}{2} \int_0^t \left(\lvert \phi_n \rvert_{L^2}^{p-2} \lvert \Atwo \phi_n \rvert_{L^2}^2 + \lvert \phi_n \rvert_{L^2}^p \right)\d s
\leq \frac{C\,M_1 e^{\frac{p}{2} t}}{p} \lvert \phi_0 \rvert_{\newone{H}}^p.
\end{equation}
Thus, \eqref{eq-estimate-for-phi-e} follows, completing the proof of Corollary \ref{cor-Propositions 5.1 and 5.3}.
\end{proof}
Before we proceed further, we state and prove the following auxiliary result.
\begin{lemma}\label{lem-phi_n to mu_n}
Define a nonlinear map
\begin{equation}
\label{eqn-mu_n}   
H_{1,n} \ni \phi_n \mapsto \cp_n\coloneqq \pi_{1,n}( \Atwo \phi_n + \psi^\prime(\phi_n) - \avg{\psi^\prime(\phi_n)})  \in H_{1,n}.
\end{equation}
Then, there exists a constant $C>0$ such that, for every $n$ and all $\phi_n \in H_{1,n}$, 
\begin{align}
\label{eq-A1-estimate-for-phi-a}
\lvert \Atwo \phi_n \rvert_{L^2}^3 &\leq  C (\lvert \cp_n \rvert_{\newone{H}}^{2} \lvert \phi_n \rvert_{L^2} + \lvert \phi_n \rvert_{L^2}^3),
\\
\label{eq-A1-3-over-2-estimate-for-phi}
\lvert \Atwo^{3/2} \phi_n \rvert_{\mathbb{L}^2}^{3/2} &\leq C (\lvert \cp_n \rvert_{\newone{H}}^{3/2} + \lvert \phi_n \rvert_{L^2}^{3/4} \lvert \Atwo \phi_n \rvert_{L^2}^{3/4} +  \lvert \phi_n \rvert_{L^2}^{3/2} \Vert \phi_n \Vert_{H^2}^{3}),\; \mbox{ for d=2 or d=3},
\\
\label{eq-A1-3-over-2-estimate-for-phi-1}
\lvert \Atwo^{3/2} \phi_n \rvert_{\mathbb{L}^2}^2
&\leq C(\lvert \cp_n \rvert_{\newone{H}}^2 + \lvert \phi_n \rvert_{\newone{H}}^2 +  \lvert \phi_n \rvert_{L^2}^2 \lvert \phi_n \rvert_{\newone{H}}^2 \Vert \phi_n \Vert_{H^2}^2), \mbox{ for d=2}.
\end{align}
\end{lemma}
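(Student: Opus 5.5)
The plan is to test the defining identity $\cp_n = \Atwo\phi_n + \pi_{1,n} f(\phi_n)$, with $f(\phi)=\psi^\prime(\phi)-\avg{\psi^\prime(\phi)}$, against suitable elements of $H_{1,n}$. We then move the nonlinear term to the right-hand side, using the sign information on $\psi^{\prime\prime}$ from Lemma \ref{eqn-Lemma-Psi'} wherever possible. All test functions will lie in $H_{1,n}$, so $\pi_{1,n}$ can be dropped inside the pairings (it is self-adjoint in $\newone{H}$, and also in $\zero{L}{2}$ since the $\bar e_j$ are eigenfunctions of $\Aone$). Integration by parts is justified by Lemma \ref{lem-IP}, since elements of $H_{1,n}$ satisfy the Neumann condition.

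For \eqref{eq-A1-estimate-for-phi-a} I would pair $\cp_n$ with $\Atwo\phi_n$ in $L^2$. This gives
\[
|\Atwo\phi_n|_{L^2}^2 = (\cp_n,\Atwo\phi_n) - (f(\phi_n),\Atwo\phi_n).
\]
The mean term vanishes because $\langle \Atwo\phi_n\rangle=0$. For the remaining nonlinear term,
\[
(\psi^\prime(\phi_n),-\Delta\phi_n)=\int_\domO \psi^{\prime\prime}(\phi_n)|\nabla\phi_n|^2\,\d x \ge -|\nabla\phi_n|^2_{\mathbb{L}^2},
\]
by \eqref{eqn-Psi''}. In addition, $(\cp_n,\Atwo\phi_n)=(\nabla\cp_n,\nabla\phi_n)\le |\cp_n|_{\newone{H}}|\phi_n|_{\newone{H}}$. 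Together with $|\phi_n|_{\newone{H}}^2=(\Atwo\phi_n,\phi_n)\le |\Atwo\phi_n|_{L^2}|\phi_n|_{L^2}$, this yields
\[
|\Atwo\phi_n|^2\le |\cp_n|_{\newone{H}}|\Atwo\phi_n|^{1/2}|\phi_n|^{1/2}+|\Atwo\phi_n||\phi_n|.
\]
Young's inequality, with exponents chosen to absorb $|\Atwo\phi_n|^{2}$, then gives $|\Atwo\phi_n|^{3/2}\lesssim |\cp_n|_{\newone{H}}|\phi_n|^{1/2}+|\phi_n|^{3/2}$, and squaring produces \eqref{eq-A1-estimate-for-phi-a}.

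For the $\Atwo^{3/2}$ estimates I would use $|\Atwo^{3/2}\phi_n|=|\nabla\Delta\phi_n|_{\mathbb{L}^2}$ and write $\nabla\Atwo\phi_n=\nabla\cp_n-\nabla\pi_{1,n}f(\phi_n)$. This gives
\[
|\Atwo^{3/2}\phi_n|\le|\cp_n|_{\newone{H}}+|\nabla\pi_{1,n}\psi^\prime(\phi_n)|_{\mathbb{L}^2}\le |\cp_n|_{\newone{H}}+|\psi^{\prime\prime}(\phi_n)\nabla\phi_n|_{\mathbb{L}^2},
\]
where the last step uses that $\pi_{1,n}$ is a contraction in $\newone{H}$. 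It then remains to bound
\[
|\psi^{\prime\prime}(\phi)\nabla\phi|_{\mathbb{L}^2}\le|\nabla\phi|+3\|\phi\|_{L^\infty}^2|\nabla\phi|.
\]
For $d=2$ I would apply Agmon \eqref{eq-Agmon's-inequalities}, $\|\phi\|_{L^\infty}^2\le C|\phi|_{L^2}\|\phi\|_{H^2}$, which gives directly
\[
|\Atwo^{3/2}\phi_n|^2\lesssim |\cp_n|^2+|\phi_n|_{\newone{H}}^2+|\phi_n|^2_{L^2}|\phi_n|_{\newone{H}}^2\|\phi_n\|_{H^2}^2,
\]
which is \eqref{eq-A1-3-over-2-estimate-for-phi-1}. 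For \eqref{eq-A1-3-over-2-estimate-for-phi}, valid in $d\le3$, the plan is to raise the same inequality to the power $3/2$. The linear term is handled via $|\nabla\phi_n|^{3/2}\le|\phi_n|^{3/4}|\Atwo\phi_n|^{3/4}$. For the cubic term, $\|\phi\|_{L^\infty}\lesssim\|\phi\|_{H^2}$ in $d=3$ is too crude. Instead I would use $\|\phi\|_{L^\infty}^2|\nabla\phi|\lesssim \|\phi\|_{L^6}^2\|\nabla\phi\|_{L^6}$, or interpolate $\|\phi\|_{L^\infty}$ and $\|\nabla \phi\|_{L^2}$ between $L^2$ and $H^2$, aiming for $\|\phi\|^2_{L^\infty}|\nabla\phi|\lesssim |\phi|_{L^2}\|\phi\|_{H^2}^2$. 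Its $3/2$ power is $|\phi|^{3/2}\|\phi\|_{H^2}^3$, as stated.

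The main obstacle is this last interpolation in $d=3$. Agmon in $3$D costs $\|\phi\|_{H^1}^{1/2}\|\phi\|_{H^2}^{1/2}$, and the exponents have to close to exactly $|\phi|_{L^2}^{3/2}\|\phi\|_{H^2}^3$. I expect Gagliardo–Nirenberg to give $\|\phi\|_{L^\infty}\lesssim|\phi|^{1/4}\|\phi\|_{H^2}^{3/4}$ and $|\nabla\phi|\lesssim|\phi|^{1/2}\|\phi\|^{1/2}_{H^2}$. The product is then $|\phi|^{1}\|\phi\|_{H^2}^{2}$, which works.
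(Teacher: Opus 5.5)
Your proposal is correct and follows the paper's proof essentially step by step. For \eqref{eq-A1-estimate-for-phi-a} you pair $\cp_n$ with $\Atwo\phi_n$ and control the nonlinear term with \eqref{eqn-Psi''}; for the $\Atwo^{3/2}$ bounds you use $\lvert \Atwo^{3/2}\phi_n\rvert_{\mathbb{L}^2}\le \lvert\cp_n\rvert_{\newone{H}}+\lvert\psi^{\prime\prime}(\phi_n)\nabla\phi_n\rvert_{\mathbb{L}^2}$ together with Agmon's inequality in $d=2$. The only differences are cosmetic: in $d=3$ you split $\phi_n^2\nabla\phi_n$ as $L^\infty\cdot L^\infty\cdot L^2$ with $\Vert\phi\Vert_{L^\infty}\lesssim\lvert\phi\rvert_{L^2}^{1/4}\Vert\phi\Vert_{H^2}^{3/4}$, whereas the paper uses $\Vert\phi_n\Vert_{L^8}^2\Vert\nabla\phi_n\Vert_{\mathbb{L}^4}$ (both give $\lvert\phi_n\rvert_{L^2}\Vert\phi_n\Vert_{H^2}^2$), and you are slightly more explicit than the paper about $\pi_{1,n}$ being a contraction in $\newone{H}$.
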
  
\begin{proof}[Proof of Lemma \ref{lem-phi_n to mu_n}]
In this proof, the generic constant $C>0$ is independent of $n$. \\
Notice that if $\phi_n \in H_{1,n}$, then $\cp_n \in H_{1,n}$. 
Now, we multiply \eqref{eqn-mu_n}  by $\Atwo \phi_n$ and integrate the resulting equality over $\domO$. Applying integration by parts and using the boundary condition for $\phi_n$  in conjunction with the inequality \eqref{eqn-Psi''} in Lemma \ref{eqn-Lemma-Psi'}, a series of calculations yields the following inequality:
\begin{align*}
&\lvert \Atwo \phi_n \rvert_{L^2}^2
\leq (\nabla \cp_n, \nabla \phi_n) + \lvert \phi_n \rvert_{L^2} \lvert \Atwo \phi_n \rvert_{L^2}
\\ 
&\leq \lvert \cp_n \rvert_{\newone{H}} \lvert \phi_n \rvert_{\newone{H}}  + \lvert \phi_n \rvert_{L^2} \lvert \Atwo \phi_n \rvert_{L^2} 
\leq \lvert \cp_n \rvert_{\newone{H}} \lvert \phi_n \rvert_{L^2}^{1/2} \lvert \Atwo \phi_n \rvert_{L^2}^{1/2} + \lvert \phi_n \rvert_{L^2} \lvert \Atwo \phi_n \rvert_{L^2}
\end{align*}
The latter implies \eqref{eq-A1-estimate-for-phi-a}.
\newline
It remains to prove \eqref{eq-A1-3-over-2-estimate-for-phi}.
We use \eqref{Gagliardo-Nirenberg-inequality}, which gives for every $\phi_n \in H_{1,n}$,
\begin{equation}\label{Ladyzhenskaya inequality-a}
\begin{aligned}
\Vert \nabla \phi_n \Vert_{\mathbb{L}^4} &\leq C \lvert \phi_n \rvert_{\newone{H}} ^{1/2} \Vert \phi_n \Vert_{H^2}^{1/2},\; \; \mbox{ if } \; \; d=2, 
\\
\Vert \nabla \phi_n \Vert_{\mathbb{L}^4} &\leq C \lvert \phi_n \rvert_{\newone{H}} ^{1/4} \Vert \phi_n \Vert_{H^2}^{3/4},\; \; \mbox{ if } \; \; d=3,
\end{aligned}
\end{equation}
and from \eqref{eqn-Psi'}  and \eqref{Ladyzhenskaya inequality-a}, we obtain in the case $d=3$ that
\begin{equation}\label{eq-Psi-second-gradient-phi-n}
\begin{aligned}
&\lvert \psi^{\prime\prime}(\phi_n) \nabla \phi_n \rvert_{\mathbb{L}^2}
\leq \lvert \phi_n \rvert_{\newone{H}}  + 3 \lvert \phi_n^2 \nabla \phi_n \rvert_{\mathbb{L}^2}
\leq \lvert \phi_n \rvert_{\newone{H}}  + \frac{3}{2} \Vert \phi_n \Vert_{L^8}^2 \Vert \nabla \phi_n \Vert_{\mathbb{L}^4} 
\\
&\leq \lvert \phi_n \rvert_{\newone{H}}  +  C \lvert \phi_n \rvert_{L^2}^{7/8} \Vert \phi_n \Vert_{H^1}^{1/4} \Vert \phi_n \Vert_{H^2}^{15/8} 
\leq \lvert \phi_n \rvert_{\newone{H}}  +  C \lvert \phi_n \rvert_{L^2} \Vert \phi_n \Vert_{H^2}^{2}.
\end{aligned}
\end{equation}
An analogous argument shows that the same inequality holds for $d=2$. However, notice that if $d=2$, we can also use the Agmon inequality \eqref{eq-Agmon's-inequalities}  and estimate \eqref{Ladyzhenskaya inequality-a} to deduce that 
\begin{equation}\label{eq-Psi-second-gradient-phi-n-1}
\begin{aligned}
&\lvert \psi^{\prime\prime}(\phi_n) \nabla \phi_n \rvert_{\mathbb{L}^2}
\leq \lvert \phi_n \rvert_{\newone{H}}  + \frac{3}{2} \Vert \phi_n \Vert_{L^\infty}^2 \lvert \phi_n \rvert_{\newone{H}}  
\leq \lvert \phi_n \rvert_{\newone{H}}  +  C \lvert \phi_n \rvert_{L^2} \lvert \phi_n \rvert_{\newone{H}}  \Vert \phi_n \Vert_{H^2}.
\end{aligned}
\end{equation}
Next, by definition of $\cp_n$, see \eqref{eqn-mu_n}, we infer after some calculations that
\begin{equation}\label{eq-3.65}
\begin{aligned}
\lvert \Atwo^{3/2} \phi_n \rvert_{\mathbb{L}^2}
&\leq  \lvert \Atwo^{1/2} \cp_n \rvert_{\mathbb{L}^2} + \lvert \Aone^{1/2} \psi^\prime(\phi_n) \rvert_{\mathbb{L}^2}
= \lvert \nabla \cp_n \rvert_{\mathbb{L}^2} + \lvert \nabla \psi^\prime(\phi_n) \rvert_{\mathbb{L}^2}
\\
&=\lvert \cp_n \rvert_{\newone{H}} +  \lvert \psi^{\prime \prime}(\phi_n) \nabla \phi_n \rvert_{\mathbb{L}^2}.
\end{aligned}
\end{equation}
From the latter inequality and \eqref{eq-Psi-second-gradient-phi-n}, we obtain 
\begin{align*}
&\lvert \Atwo^{3/2} \phi_n \rvert_{\mathbb{L}^2}^{\frac32} 
\leq C (\lvert \cp_n \rvert_{\newone{H}}^{\frac32} +  \lvert \psi^{\bis}(\phi_n) \nabla \phi_n \rvert_{\mathbb{L}^2}^{\frac32}) 
\leq C (\lvert \cp_n \rvert_{\newone{H}}^{\frac32} +  \lvert \phi_n \rvert_{\newone{H}} ^{\frac32} +  \lvert \phi_n \rvert_{L^2}^{\frac32} \Vert \phi_n \Vert_{H^2}^{3})  \\
&\leq C \lvert \cp_n \rvert_{\newone{H}}^{\frac32}  + \lvert \phi_n \rvert_{L^2}^{\frac34} \lvert \Atwo \phi_n \rvert_{L^2}^{\frac34} +  \lvert \phi_n \rvert_{L^2}^{\frac32} \Vert \phi_n \Vert_{H^2}^{3},
\end{align*}
which holds for $d=2$ or $d=3$, whereas, by \eqref{eq-3.65} and  \eqref{eq-Psi-second-gradient-phi-n-1}, we find that for $d=2$,
\begin{align*}
\lvert \Atwo^{3/2} \phi_n \rvert_{\mathbb{L}^2}^2
\leq C(\lvert \cp_n \rvert_{\newone{H}}^2 + \lvert \phi_n \rvert_{\newone{H}} ^2 +  \lvert \phi_n \rvert_{L^2}^2 \lvert \phi_n \rvert_{\newone{H}} ^2 \Vert \phi_n \Vert_{H^2}^2).
\end{align*}

which, in turn, yields the estimates \eqref{eq-A1-3-over-2-estimate-for-phi} and \eqref{eq-A1-3-over-2-estimate-for-phi-1}.
\end{proof}
We now assume that $\bX_n=(\bu_n,\phi_n)$ is a solution to the problem \eqref{eqn-Compact-Galerkin-Modified-stochastic-CHNSEs-n}, and recall the definition of the process $\cp_n$ in \eqref{eqn-mu_n}. 
By applying Lemma \ref{lem-phi_n to mu_n}, we can derive uniform estimates for the sequence $(\cp_n)_{n \in \mathbb{N}}$ in 
$H^1(\domO)$ and for $(\phi_n)_{n \in \mathbb{N}}$ in
$H^3(\domO)$. 
\begin{proposition}\label{prop-2nd proposition}
Let the assumptions of Proposition \ref{prop-First-propo} be satisfied. Then, for every $T>0$, there exists a constant $C_T$ such that for every $n \in \mathbb{N}$, 
\begin{align}
\label{eqn-improved-estimates} 
\mathbb{E} \int_0^{T} \Vert \cp_n(s) \Vert_{H^1}^{2}\, \d s &\leq C_T,
\\
\label{eqn-improved-H2-estimates}
\mathbb{E} \int_0^{T} \Vert \phi_n(s) \Vert_{H^2}^{3}\, \d s &\leq C_T,
\\
\label{H3-phi-n-estimate}
\mathbb{E} \int_0^T \Vert  \phi_n(s) \Vert_{H^3}^{3/2}\, \d s &\leq  C_T,\; \mbox{ for d=3},
\\
\label{H3-phi-n-estimate-1}
\mathbb{E} \int_0^T \Vert \phi_n(s) \Vert_{H^3}^2\, \d s &\leq  C_T,\; \mbox{ for d=2}.
\end{align} 
\end{proposition}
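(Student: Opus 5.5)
The plan is to combine the expectation bound $\mathbb{E}\Vert \cp_n\Vert_{L^2(0,T;\newone{H})}^2\le C$ from Proposition \ref{prop-First-propo} with the pathwise bounds of Proposition \ref{prop-phi-n-estimates} and Corollary \ref{cor-Propositions 5.1 and 5.3}, which hold uniformly in $\omega$ with deterministic right-hand sides. These are inserted into the algebraic inequalities of Lemma \ref{lem-phi_n to mu_n}.

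\textbf{Estimate \eqref{eqn-improved-estimates}.} Since $\cp_n$ has zero mean, the Poincar\'e--Wirtinger inequality gives $\Vert \cp_n\Vert_{H^1}\le C\lvert \cp_n\rvert_{\newone{H}}$, so this bound is immediate from \eqref{eq-3.23}.

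\textbf{Estimate \eqref{eqn-improved-H2-estimates}.} Integrate \eqref{eq-A1-estimate-for-phi-a} in time. This gives
\[
\int_0^T\lvert\Atwo\phi_n\rvert_{L^2}^3\,\d s\le C\sup_s\lvert\phi_n\rvert_{L^2}\int_0^T\lvert\cp_n\rvert_{\newone{H}}^2\,\d s+CT\sup_s\lvert\phi_n\rvert_{L^2}^3 .
\]
The supremum is bounded deterministically by \eqref{eq-estimate-for-phi}, namely $\sup_s\lvert\phi_n\rvert_{L^2}\le Ce^{T/2}\lvert\phi_0\rvert_{\newone{H}}$. Taking expectations and using \eqref{eq-3.23} then bounds $\mathbb{E}\int_0^T\lvert\Atwo\phi_n\rvert^3_{L^2}\,\d s$. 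The full $H^2$ norm follows from the elliptic estimate \eqref{eqn-phi_n-H-2-norm}.

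\textbf{Estimate \eqref{H3-phi-n-estimate} ($d=3$).} Integrate \eqref{eq-A1-3-over-2-estimate-for-phi} and treat its three terms separately.
\begin{itemize}
\item The term $\mathbb{E}\int\lvert\cp_n\rvert_{\newone{H}}^{3/2}\,\d s$ is bounded by H\"older's inequality in $(s,\omega)$ together with \eqref{eq-3.23}.
\item The term $\lvert\phi_n\rvert^{3/4}\lvert\Atwo\phi_n\rvert^{3/4}$ is handled by Young's inequality and \eqref{eq-A1-estimate-for-phi} with $p=2$.
\item The last term, $\lvert\phi_n\rvert_{L^2}^{3/2}\Vert\phi_n\Vert_{H^2}^3$, needs more care. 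Bound the $L^2$ factor by its deterministic supremum, then use \eqref{eqn-improved-H2-estimates}.
\end{itemize}
To pass from $\lvert\Atwo^{3/2}\phi_n\rvert$ (i.e. $\lvert\nabla\Delta\phi_n\rvert_{L^2}$) to $\Vert\phi_n\Vert_{H^3}$, use elliptic regularity for the Neumann Laplacian on a $C^3$ domain: $\Vert\phi\Vert_{H^3}\le C(\lvert\nabla\Delta\phi\rvert_{L^2}+\lvert\phi\rvert_{L^2})$ for mean-zero $\phi$ with $\partial_{\bn}\phi=0$. This is valid because $\Delta\phi\in H^1$. The remaining $\lvert\phi_n\rvert_{L^2}$ term is controlled pathwise.

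\textbf{Estimate \eqref{H3-phi-n-estimate-1} ($d=2$).} Integrate \eqref{eq-A1-3-over-2-estimate-for-phi-1}. The term $\lvert\phi_n\rvert_{L^2}^2\lvert\phi_n\rvert_{\newone{H}}^2\Vert\phi_n\Vert_{H^2}^2$ is the delicate one: only $\lvert\phi_n\rvert_{L^2}$ is bounded pathwise, while the $\newone{H}$ supremum is controlled only in expectation. I would first try interpolation, $\lvert\phi_n\rvert_{\newone{H}}^2\le\lvert\phi_n\rvert_{L^2}\lvert\Atwo\phi_n\rvert_{L^2}$. This reduces the term to $\lvert\phi_n\rvert_{L^2}^3\Vert\phi_n\Vert_{H^2}^3$, which is integrable by \eqref{eqn-improved-H2-estimates} and the deterministic $L^2$ bound. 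The other terms are handled as in the $d=3$ case, followed by the same elliptic $H^3$ step.

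I expect the main obstacle to be this last nonlinear term, together with justifying the $H^3$ elliptic regularity from a bound on $\nabla\Delta\phi_n$. Everything else is bookkeeping with Young's and H\"older's inequalities.
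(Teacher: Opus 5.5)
Your proposal is correct and follows the paper's proof essentially step by step. The steps match: Poincar\'e--Wirtinger for $\cp_n$, then integrating \eqref{eq-A1-estimate-for-phi-a} and \eqref{eq-A1-3-over-2-estimate-for-phi} in time against the pathwise $L^2$ bound and the expectation bound \eqref{eq-3.23}. You also make explicit the Neumann elliptic step $\Vert\phi\Vert_{H^3}\le C(\lvert\nabla\Delta\phi\rvert_{L^2}+\lvert\phi\rvert_{L^2})$, which the paper leaves implicit.

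The only deviation is the last $d=2$ term, which is less delicate than you expected. The paper bounds $\int_0^T\lvert\phi_n\rvert_{\newone{H}}^2\Vert\phi_n\Vert_{H^2}^2\,\d s\le\sup_{s}\lvert\phi_n(s)\rvert_{\newone{H}}^2\int_0^T\Vert\phi_n\Vert_{H^2}^2\,\d s$, then combines the deterministic bound \eqref{eq-estimate-for-phi-e} (with $p=2$) with $\mathbb{E}\sup_s\lvert\phi_n(s)\rvert_{\newone{H}}^2\le C$ from \eqref{eq-3.23}. Your interpolation $\lvert\phi_n\rvert_{\newone{H}}^2\le\lvert\phi_n\rvert_{L^2}\lvert\Atwo\phi_n\rvert_{L^2}$, reducing that term to \eqref{eqn-improved-H2-estimates}, is an equally valid alternative.
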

\begin{proof}
 In what follows, all the constants $C$ and $C_T$ are independent of $n$. Since $\langle \cp_n \rangle=0$, then estimate \eqref{eqn-improved-estimates} follows by using the  generalized Poincar\'{e} inequality \eqref{eqn-H^1-norm-special} and  the estimate \eqref{eq-3.23}. \newline
Recall that $2 \Vert \phi_n \Vert_{H^2}^2 \leq M_1 \left(\lvert \Atwo \phi_n \rvert_{L^2}^2 + \lvert \phi_n \rvert_{L^2}^2 \right)$. 
Therefore, by \eqref{eq-estimate-for-phi-c} and \eqref{eq-A1-estimate-for-phi-a}, we infer that
\begin{align*}
&\int_0^T \Vert \phi_n(s) \Vert_{H^2}^3\, \d s
\leq C \int_0^T (\lvert \bar  \mu_n(s) \rvert_{\newone{H}}^{2} \lvert \phi_n(s) \rvert_{L^2} + \lvert \phi_n(s) \rvert_{L^2}^3)\, \d s
\leq C_T (\lvert \phi_0 \rvert_{L^2}^3 + \lvert \phi_0 \rvert_{L^2} \Vert \cp_n \Vert_{L^2(0,T;\newone{H})}^2).
\end{align*}
From the latter inequality and \eqref{eq-3.23}, we complete the proof of \eqref{eqn-improved-H2-estimates}.
\newline
By the estimates \eqref{eq-A1-3-over-2-estimate-for-phi} and \eqref{eq-estimate-for-phi-c}, we deduce that
\begin{align*}
&\int_0^T \lvert \Atwo^{3/2} \phi_n(s) \rvert_{\mathbb{L}^2}^{\frac32}\, \d s
\leq C \int_0^T (\lvert \cp_n(s) \rvert_{\newone{H}}^{\frac32} + \lvert \phi_n(s) \rvert_{L^2}^{\frac34} \lvert \Atwo \phi_n(s) \rvert_{L^2}^{\frac34} + \lvert \phi_n(s) \rvert_{L^2}^{\frac32} \Vert \phi_n(s) \Vert_{H^2}^{3})\, \d s \\
&\leq C_T \mathbb{E}  (\Vert \cp_n \Vert_{L^{3/2}(0,T;\newone{H})}^{3/2} 
 + \lvert \phi_0 \rvert_{L^2}^{3/2} + \lvert \phi_0 \rvert_{L^2}^{3/2} \Vert \phi_n \Vert_{L^3(0,T;H^2(\domO))}^3).
\end{align*}
This, combined with estimates \eqref{eq-3.23} and \eqref{eqn-improved-H2-estimates}, leads us to \eqref{H3-phi-n-estimate}. \\
Finally, \eqref{eq-A1-3-over-2-estimate-for-phi-1} follows from \eqref{eq-3.23} and the first two estimates in Corollary \ref{cor-Propositions 5.1 and 5.3}.
\end{proof}
\begin{remark}
Let us fix $T>0$ and $n \in \mathbb{N}$. Assume $\phi \in L^{4/3}(0,T;H^3(\domO)) \cap L^\infty(0,T;H^1(\domO))$ and put
\[
\cp= \pi_{1,n} \Atwo \phi + \pi_{1,n} (\psi^\prime(\phi) - \avg{\psi^\prime(\phi)})\in H_{1,n}.
\]
Then, there exists a positive constant $C=C(T,\domO)$ independent of $n$ such that
\begin{equation}\label{Eq-nabla-mu}
\Vert \cp\Vert_{ L^{4/3}(0,T;\newone{H})}
\leq C (\Vert \phi \Vert_{ L^{4/3}(0,T, H^3(\domO))} + \Vert \phi \Vert_{L^{4/3}(0,T;H^1(\domO))} + \Vert \phi \Vert^{7/4}_{L^{\infty}(0,T;H^1(\domO))}
\Vert \phi \Vert^{5/4}_{L^{5/3}(0,T;H^2(\domO))}).
\end{equation}
Indeed, if $\phi \in H^3(\domO)$, then for every $n \in \mathbb{N}$,
\begin{align*}
&\vert \cp \vert_{\newone{H}} 
\leq \lvert \pi_{1,n} \Atwo \phi \rvert_{\newone{H}} + \lvert \pi_{1,n} (\psi^\prime(\phi) - \avg{\psi^\prime(\phi)}) \rvert_{\newone{H}}
\leq \Vert \pi_{1,n} \Vert_{\mathcal{L}(\newone{H})} (\lvert \Atwo \phi \rvert_{\newone{H}} + \lvert \psi^\prime(\phi) - \avg{\psi^\prime(\phi)} \rvert_{\newone{H}})
\\
&\leq \lvert \Atwo \phi \rvert_{\newone{H}} + \lvert \psi^\prime(\phi) - \avg{\psi^\prime(\phi)} \rvert_{\newone{H}}
= \lvert \nabla \Delta \phi \rvert_{\mathbb{L}^2} + \lvert \nabla \psi^\prime(\phi) \rvert_{\mathbb{L}^2}
\leq \Vert\phi\Vert_{H^3} + \lvert \psi^{\bis}(\phi) \nabla \phi \rvert_{\mathbb{L}^2}.
\end{align*}
Moreover, since $[H^1(\domO),H^3(\domO)]_{1/2}= H^2(\domO)$, then if $\phi \in L^{4/3}(0,T;H^3(\domO)) \cap L^\infty(0,T;H^1(\domO))$, we infer $\phi \in L^2(0,T;H^2(\domO))$. From this latter observation and \eqref{Eq-Psi-second-gradient-phi-b}, we deduce \eqref{Eq-nabla-mu}.
\end{remark}
Henceforth, we denote by $\StokesH_{,w}$ the Hilbert space $\StokesH$ endowed with its weak topology.
Similarly, $\newone{H}_{,w}$ denotes the Sobolev space $\newone{H}$ equipped with the weak topology. 
Now we introduce the spaces in which the solution will live.
\begin{definition}\label{def-notation}
\begin{trivlist}
\item[(o)] $C([0,T];\StokesH_{,w}) \coloneqq$ the space of continuous functions $\bv: [0,T] \to \StokesH_{,w}$ endowed with the weakest topology $\mathscr{Z}_0$ such that for all $\bv \in \StokesH$, the mappings
     \begin{equation*}
        C([0,T];\StokesH_{,w}) \ni \bu \mapsto (\bu(\cdot),\bv) \in C([0,T];\mathbb{R}) \; \; \text{are continuous}.
     \end{equation*}
In particular, $\bv_n \to \bv$ in $C([0,T];\StokesH_{,w})$ if and only if 
   \begin{equation*}
      \lim_{n \to \infty} \sup_{t \in[0,T]} \lvert (\bv_n(t) - \bv(t),\bw) \rvert=0, \;\; \mbox{for all $\bw \in \StokesH$.}
    \end{equation*}
    Since the space $\StokesH_{,w}$ is not metrizable and the space $C([0,T];\StokesH_{,w})$ contains $\StokesH_{,w}$ as a closed subspace via the constant maps, 
we infer that the  topology   $\mathscr{Z}_0$ is not metrizable. 
\item[(i)] $C([0,T];\rU_0^\prime)\coloneqq$ the space of continuous functions
$\bv: [0,T] \to \rU_0^\prime$ 
endowed with  the topology $\mathscr{Z}_1$ induced by the $\sup$ norm, i.e. the topology of  the uniform convergence on $[0,T]$, 

\item[(ii)] $L_w^2(0,T;\StokesV)\coloneqq$ the Hilbert space $L^2(0,T;\StokesV)$ endowed with the weak topology $\mathscr{Z}_2$. In particular,  $\bv_n \to \bv$ in $L_w^2(0,T;\StokesV)$ if and only if  for all $\bw \in L^2(0,T;\StokesVp)$,
     \begin{equation*}
       \lim_{n \to \infty} \int_0^T \duality{\bw(s)}{\bv_n(s) - \bv(s)}{\StokesV}{\StokesVp} \, \d s=0.
    \end{equation*}    
We can verify as above that the topology $\mathscr{Z}_2$ is not metrizable either.

\item[(iii)] $C([0,T];\newone{H}_{,w})\coloneqq$ the space of  continuous functions $\bv: [0,T] \to \newone{H}_{,w}$ endowed with the topology $\mathscr{Z}_3$ defined as in item (o).

\item[(iv)] $C([0,T];\newonep{V})\coloneqq$  the space of $\newonep{V}$-valued trajectories with the topology $\mathscr{Z}_4$ of the uniform convergence on $[0,T]$. $C([0,T];\newonep{V})$  is a separable Banach space.

\item[(v)] $L_w^{\beta}(0,T;\newone{V})\coloneqq$ the space $L^\beta(0,T;\newone{V})$ with the weak topology $\mathscr{Z}_5$.

\item[(vi)] The separable Hilbert space $L^2(0,T;\StokesH)$ with the topology $\mathscr{Z}_6$  induced by the norm. 

\item[(vii)] The separable Hilbert space $L^\beta(0,T;\newone{H})$ with the topology $\mathscr{Z}_7$  induced by the norm.
\end{trivlist}
\end{definition}
We now introduce the following topological space 
   \begin{equation}\label{eqn-Z_T}
      \bZ_T \coloneqq \bZ_{T,1} \times \bZ_{T,2},
    \end{equation}
where
\begin{align}
\label{eqn-Z_T1}
&\bZ_{T,1}= C([0,T];\rU_0^\prime) \cap C([0,T];\StokesH_{,w}) \cap L_w^2(0,T;\StokesV) \cap L^2(0,T;\StokesH),
\\
\label{eqn-Z_T2}
&\bZ_{T,2}= 
C([0,T];\newonep{V}) \cap C([0,T];\newone{H}_{,w}) \cap L_w^{\beta}(0,T;\newone{V}) \cap L^{\beta}(0,T;\newone{H}). 
\end{align}
Let $\mathscr{Z}^1$ be the supremum of the following four topologies: $\mathscr{Z}_1, \, \mathscr{Z}_0,\, \mathscr{Z}_2,\,\mathscr{Z}_6$. 
We denote by $\mathscr{Z}^2$ the supremum of  the following topologies: $\mathscr{Z}_{4},\,\mathscr{Z}_3,\,\mathscr{Z}_5,\,\mathscr{Z}_7$. 
\newline
From now on let us summarize the main result of this section. 
\begin{theorem}\label{thm-main-Galerkin}
Assume that $(\bu_0,\phi_0) \in\mathbb{H}$.
Then, for every $n \in \mathbb{N}$, there exists a unique strong global solution $\bX_n=(\bu_n,\phi_n)$ of the problem \eqref{eqn-Galerkin-Modified-stochastic-CHNSEs-n}. Moreover, for every $T>0$, 
\begin{equation}\label{eq-3.23-global}
\begin{aligned}
&\sup_{n \in \mathbb{N}} \mathbb{E}  \Bigl[\sup_{s \in [0,T ]} (\lvert \bu_n(s) \rvert_{\StokesH}^2 + \newK \vert \phi_n(s) \rvert_{\newone{H}}^2 +  \newK \Vert \psi(\phi_n(s)) \Vert_{L^1})
\\
&\hspace{1.5cm} + \delta_0 \Vert \bu_n \Vert_{L^2(0,T;\StokesV)}^2 +  \newK \Vert \cp_n \Vert_{L^2(0,T;\newone{H})}^{2}
+ \Vert \phi_n \Vert_{L^\beta(0,T;\newone{V})}^{\beta}  
+  \Vert \phi_n \Vert_{L^4(0,T;\newone{H})}^{4}\Bigr]
< \infty. 
\end{aligned}
\end{equation}
\end{theorem}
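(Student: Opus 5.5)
The theorem mostly collects what has already been shown in this section, so the plan is to assemble it term by term. Existence and uniqueness of a local maximal strong solution of \eqref{eqn-Galerkin-Modified-stochastic-CHNSEs-n} follow from \cite[Theorem 38, p.303]{Protter_2004}, because the coefficients of \eqref{eqn-Compact-Galerkin-Modified-stochastic-CHNSEs-n} are locally Lipschitz on the finite-dimensional space $\mathbb{H}_n$. Here $\bG_{0,n}$ is Lipschitz by Assumption (v)-a), $\bSi_0^n$ is linear, and the remaining terms are polynomial in finitely many coordinates. Globality, $\tau_n=T$ a.s., is Part 1 of Proposition \ref{prop-First-propo}.

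For \eqref{eq-3.23-global}, the supremum terms and the terms $\delta_0\Vert \bu_n\Vert^2_{L^2(0,T;\StokesV)}$ and $\newK\Vert\cp_n\Vert^2_{L^2(0,T;\newone{H})}$ are exactly \eqref{eq-3.23}, whose constant $C$ does not depend on $n$. The term $\Vert\phi_n\Vert^4_{L^4(0,T;\newone{H})}$ is controlled pathwise by \eqref{eq-estimate-for-phi-d-a}, with the deterministic bound $C e^{2T}\lvert\phi_0\rvert^4_{\newone{H}}$. Taking expectations then gives a bound uniform in $n$.

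The main step is the $L^\beta(0,T;\newone{V})$ term. I would use the equivalent norm $\Vert\phi\Vert_{\newone{V}}\simeq \lvert \Atwo^{3/2}\phi\rvert_{\mathbb{L}^2}+\lvert\phi\rvert_{\newone{H}}$, which holds since $\newone{V}=D(\Aone^{3/2})$ and $\lvert\Delta\phi\rvert_{\newone{H}}=\lvert\nabla\Delta\phi\rvert_{\mathbb{L}^2}$.

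When $d=2$, so $\beta=2$, I integrate \eqref{eq-A1-3-over-2-estimate-for-phi-1} in time. The $\cp_n$ and $\lvert\phi_n\rvert_{\newone{H}}$ contributions are handled by \eqref{eq-3.23}. For the cubic term I bound $\lvert\phi_n\rvert_{L^2}^2\le C\lvert\phi_0\rvert^2_{\newone{H}}e^{T}$ pathwise by \eqref{eq-estimate-for-phi}. What remains is $\mathbb{E}\sup_t\lvert\phi_n\rvert^2_{\newone{H}}\int_0^T\Vert\phi_n\Vert^2_{H^2}\,\d s$. Here the pathwise bound \eqref{eq-estimate-for-phi-e} with $p=2$ controls $\int_0^T\Vert\phi_n\Vert_{H^2}^2\,\d s$ deterministically, and \eqref{eq-3.23} controls the supremum.

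When $d=3$, so $\beta=3/2$, I integrate \eqref{eq-A1-3-over-2-estimate-for-phi}. The $\cp_n$ term is bounded via Hölder and \eqref{eq-3.23}, and the middle term via \eqref{eq-estimate-for-phi-c}. For the last term, $\lvert\phi_n\rvert_{L^2}^{3/2}$ is pathwise bounded by \eqref{eq-estimate-for-phi}, and $\mathbb{E}\int_0^T\Vert\phi_n\Vert^3_{H^2}\,\d s$ is bounded by \eqref{eqn-improved-H2-estimates}. This is just Proposition \ref{prop-2nd proposition}, estimates \eqref{H3-phi-n-estimate}--\eqref{H3-phi-n-estimate-1}, combined with the lower-order $\newone{H}$ part.

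The main obstacle is the cubic term in the $H^3$ bound. It has to be split so that one factor is controlled pathwise, via the deterministic bounds of Corollary \ref{cor-Propositions 5.1 and 5.3}, and only second moments from the energy estimate are needed for the other factor. This matters because no higher moments of $\bu_n$ are available, and the splitting above is arranged to respect that.
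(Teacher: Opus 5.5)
Your proposal is correct and follows essentially the paper's route. The paper gives no separate argument for this theorem: it collects Proposition~\ref{prop-First-propo}, the pathwise bounds \eqref{eq-estimate-for-phi-c}--\eqref{eq-estimate-for-phi-e}, and the $H^3$-type estimates \eqref{H3-phi-n-estimate}--\eqref{H3-phi-n-estimate-1}. You assemble these the same way, including the key point for the $L^\beta(0,T;\newone{V})$ term: one factor of the cubic term is bounded pathwise, so only the second moments from \eqref{eq-3.23} are needed.
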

It follows from the above result the following assertion (and a definition). 
\begin{corollary}\label{cor-P_n}
For every $T>0$, almost surely, the trajectories of the process $\bX_n$ belong to the space $\bZ_T$ and the map 
\[
\Omega \ni \omega \mapsto \bX_n(\cdot,\omega) \in \bZ_T \quad \text{is} \quad \mathscr{F}/\mathscr{Z} \text{-measurable}.
\]
Let us denote by $\mathbb{P}_n$ the law of the process $\bX_n$ on the measure space 
$(\bZ_T,\mathscr{Z})$. 
\end{corollary}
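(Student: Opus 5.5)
The plan has two parts: first show that almost every trajectory of $\bX_n$ lies in $\bZ_T$, then prove that the map $\omega\mapsto\bX_n(\cdot,\omega)$ is measurable for the Borel $\sigma$-field $\mathscr{Z}$ of the (non-metrizable) topology $\mathscr{Z}^1\times\mathscr{Z}^2$. Throughout we use that $\bX_n$ takes values in the finite dimensional space $\mathbb{H}_n=H_{0,n}\times H_{1,n}$, that $\tau_n=T$ a.s. by Proposition \ref{prop-First-propo}, and that $\bX_n$ has continuous paths in $\mathbb{H}_n$.

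For the first part we fix $\omega$ outside a null set. Then $t\mapsto\bX_n(t,\omega)\in\mathbb{H}_n$ is continuous. All norms on $\mathbb{H}_n$ are equivalent, and $H_{0,n}\subset \rU_0$, $H_{1,n}\subset\newone{V}$. Hence $\bu_n(\cdot,\omega)\in C([0,T];\rU_0)$ and $\phi_n(\cdot,\omega)\in C([0,T];\newone{V})$. Composing with the continuous embeddings $\rU_0\embed\StokesV\embed\StokesH\embed\rU_0'$ and $\newone{V}\embed\newone{H}\embed\newone{V}'$ gives the strong continuity required for $C([0,T];\rU_0')$ and $C([0,T];\newone{V}')$. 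Strong continuity implies weak continuity, so the paths also lie in $C([0,T];\StokesH_{,w})$ and $C([0,T];\newone{H}_{,w})$. Finally, continuous paths on the compact interval $[0,T]$ are bounded, so they belong to $L^2(0,T;\StokesV)$, $L^2(0,T;\StokesH)$, $L^\beta(0,T;\newone{V})$ and $L^\beta(0,T;\newone{H})$. This places almost every trajectory in $\bZ_T$. The uniform bounds \eqref{eq-3.23-global} are not needed for this step; they matter only later, for tightness.

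The second part, measurability, is the main obstacle, because $\mathscr{Z}$ is generated by a non-metrizable topology. Our approach is to show that $\mathscr{Z}$ is generated by countably many real-valued continuous functionals, so that measurability reduces to a countable family of scalar checks. The paths lie in the closed subspace $C([0,T];\mathbb{H}_n)$, which is Polish. Since $\bX_n$ is an adapted continuous process, $\omega\mapsto\bX_n(\cdot,\omega)\in C([0,T];\mathbb{H}_n)$ is Borel measurable, because the Borel $\sigma$-field of this space is generated by the evaluations $\bX\mapsto\bX(t)$. The inclusion map $j:C([0,T];\mathbb{H}_n)\to\bZ_T$ is continuous for each of the eight component topologies: uniform convergence in $\mathbb{H}_n$ implies convergence in every sup-norm and $L^p$ topology used, and also in the weak ones. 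The supremum topology is the initial topology of the eight identity maps, so $j$ is continuous into $(\bZ_T,\mathscr{Z}^1\times\mathscr{Z}^2)$. Continuous maps are Borel measurable. Therefore $\bX_n=j\circ\bX_n$ is $\mathscr{F}/\mathscr{Z}$-measurable, and on the null set we may redefine $\bX_n$ to be $0$. This argument avoids any Jakubowski-type countable separating family.

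With measurability established, $\mathbb{P}_n:=\mathbb{P}\circ\bX_n^{-1}$ is a well-defined probability measure on $(\bZ_T,\mathscr{Z})$, which is the asserted definition. The only subtle point is the definition of $\mathscr{Z}$. If it is meant as the Borel $\sigma$-field of the supremum topology, the factorisation through $j$ gives the result directly. If instead it is the $\sigma$-field generated by the individual component topologies, that $\sigma$-field is contained in the Borel one, so measurability still follows.
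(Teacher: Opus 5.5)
Your proof is correct. It is also more detailed than the paper's treatment: the paper gives no separate argument and only says that the corollary follows from Theorem \ref{thm-main-Galerkin}, i.e.\ global existence of the Galerkin solution together with the uniform bounds \eqref{eq-3.23-global}. Membership of the paths in $\bZ_T$ and measurability are both left implicit there.

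For membership, you correctly observe that finite dimensionality of $\mathbb{H}_n$ suffices, using $H_{0,n}\subset \rU_0$ and $H_{1,n}\subset \newone{V}$. So the bounds \eqref{eq-3.23-global} are needed only later, for tightness. The one point you take for granted is continuity of the paths up to $t=T$ itself. That is exactly the global-existence part of Theorem \ref{thm-main-Galerkin}, or it can be secured by working on a horizon larger than $T$.

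For measurability, you factor $\omega\mapsto \bX_n(\cdot,\omega)$ through the Polish space $C([0,T];\mathbb{H}_n)$ and then use the continuous inclusion $j$ into the non-metrizable space $\bZ_T$. This gives measurability with respect to the full Borel $\sigma$-field of $\bZ_T$. Hence it also gives measurability for any smaller $\sigma$-field one might mean by $\mathscr{Z}$, for instance one generated by a countable separating family of continuous functions as in Jakubowski's framework, without ever having to exhibit such a family.

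The paper engages with the Borel structure only later, in the proof of Theorem \ref{thm-5.3}(ii), and in the converse direction. There the Kuratowski theorem and Proposition \ref{Propo-Borel-subset} show that $C([0,T];H_{0,n})\cap\bZ_{T,1}$ is a Borel subset of $\bZ_{T,1}$. That is what is needed to transfer pathwise properties of $\bX_n$ to the Skorokhod copies, not to prove the present corollary.
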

The following auxiliary result will be useful for the relative compactness result.
\begin{corollary}\label{2nd-corollary}
Assume that $k_0$ satisfies condition \eqref{eqn-k_0}, i.e.  $k_0> \frac d2 +1$. Then there  exists a constant $C$ independent of $n$ such that $\d t \otimes \d \mathbb{P}$-a.e.,
\begin{equation}\label{eq-4.1}
\begin{aligned}
\Vert \bB_{0,n}(\bu_n,\bu_n) \Vert_{\rU_0^\prime} &\leq C \lvert \bu_n \rvert_{\StokesH}^2,
\\
\newK  \Vert \bar{R}_{0,n}(\phi_n,\phi_n) \Vert_{\rU_0^\prime}
   &\leq C \lvert \cp_n \rvert_{\newone{H}} \lvert \phi_n \rvert_{\newone{H}},
\\
\Vert B_{1,n}(\bu_n,\phi_n) \Vert_{\newonep{V}} &\leq C \lvert \bu_n \rvert_{\StokesH}^{1-\frac d4} \lvert \nabla \bu_n \rvert_{\mathbb{L}^2}^{\frac d4} \lvert \phi_n \rvert_{\newone{H}},
\\
\Vert \newG_{0,n}(t,\bX_n) \Vert_{\ell^2(\nU^\prime)}^2 &\leq C (\lvert \bu_n \rvert_{\StokesH}^2 + \lvert \tilde{h}(t) \rvert^2).
\end{aligned}
\end{equation}
\end{corollary}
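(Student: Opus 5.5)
The plan is to prove the four inequalities in \eqref{eq-4.1} separately. In each case we use the contraction properties of the projections $\pi_{0,n}$ and $\pi_{1,n}$ on the dual spaces, given by Properties \ref{eqn-properties-pi_{0,n}}(ii) and \ref{eqn-properties-pi_{1,n}}(iii), to remove the projection. We then apply the bounds on the bilinear operators already established. The constant will be independent of $n$ because only the operator norms $\le 1$ of the projections and the constants from Section \ref{Sect-Functional analysis} appear.

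\textbf{First bound.} We write $\Vert \bB_{0,n}(\bu_n,\bu_n)\Vert_{\rU_0^\prime} = \Vert \pi_{0,n}\bB_0(\bu_n,\bu_n)\Vert_{\rU_0^\prime} \le \Vert \bB_0(\bu_n,\bu_n)\Vert_{\rU_0^\prime}$. We then invoke \eqref{eq-B_0-k_0}, whose validity rests on $k_0>\frac d2+1$: by \eqref{eq-H^s-1-to-C_b} this gives $\nabla \bw \in \mathbb{L}^\infty$ for $\bw \in \rU_0$, and \eqref{eq-B_0-k_0} then follows after integrating by parts in $\bb_0$.

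\textbf{Second bound.} We test against $\bv \in \rU_0$. By Properties \ref{eqn-properties-pi_{0,n}}(o),
\[
\langle \bar R_{0,n}(\phi_n,\phi_n),\bv\rangle = \bigl(\cp_n\nabla\phi_n,\tilde\pi_{0,n}\bv\bigr),
\]
where we use that $\pi_{1,n}$ applied to $\Atwo\phi_n$ returns $\Atwo\phi_n$ itself, since $\phi_n\in H_{1,n}$. The aim is to avoid the quantity $|\cp_n|_{L^2}$ and produce $|\cp_n|_{\newone{H}}$ directly. Since $\cp_n$ has zero mean, the Poincar\'e inequality gives $|\cp_n|_{L^2}\le C|\cp_n|_{\newone{H}}$. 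Hölder's inequality then yields
\[
\bigl|(\cp_n\nabla\phi_n,\bw)\bigr| \le |\cp_n|_{L^2}\,|\nabla\phi_n|_{L^2}\,\Vert \bw\Vert_{L^\infty} \le C|\cp_n|_{\newone{H}}\,|\phi_n|_{\newone{H}}\,\Vert\bw\Vert_{\rU_0},
\]
with $\bw=\tilde\pi_{0,n}\bv$. Here we use \eqref{eq-H^s-1-to-C_b} and $\Vert\tilde\pi_{0,n}\bv\Vert_{\rU_0}\le\Vert\bv\Vert_{\rU_0}$ from Properties \ref{eqn-properties-pi_{0,n}}(i).

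\textbf{Third bound.} We use $\Vert\pi_{1,n}\Vert_{\mathcal L(\newonep V)}\le1$ together with the identification of $B_1$ with $\tilde B_1$ (Proposition \ref{prop-B_1-b_1}), which gives $\Vert B_{1,n}(\bu_n,\phi_n)\Vert_{\newonep V}\le \Vert\tilde B_1(\bu_n,\phi_n)\Vert_{\newonep V}$. Next we apply \eqref{eqn-b_1-trilinear-form-3}, namely $|b_1(\bu,\phi,\psi)|\le \Vert\bu\Vert_{\mathbb L^4}|\phi|_{\newone H}\Vert\psi\Vert_{\newone V}$. Here we need the norm $\Vert\Delta\psi\Vert_{L^4}$ to be controlled by $|\nabla\Delta\psi|_{L^2}$, which follows from Sobolev embedding, with Poincar\'e applicable since $\Delta\psi$ is mean-free by the Neumann condition. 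Finally, the Gagliardo--Nirenberg inequality \eqref{Gagliardo-Nirenberg-inequality} with $p=4$, applied componentwise and using the Poincar\'e inequality on $\StokesV$ to replace $\Vert\bu\Vert_{H^1}$ by $|\nabla\bu|_{\mathbb{L}^2}$, gives $\Vert\bu_n\Vert_{\mathbb L^4}\le C|\bu_n|^{1-d/4}_{\StokesH}|\nabla\bu_n|^{d/4}_{\mathbb L^2}$.

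\textbf{Fourth bound.} This is the step most likely to need care. The difficulty is that the natural estimate of the transport part $\bSi_0^n\bu_n$ costs $\Vert \bu_n\Vert_{\StokesV}$, whereas the claim allows only $|\bu_n|_{\StokesH}$. The remedy is to measure the coefficient in the weak space $\ell^2(\nU^\prime)$. For each $k$ we use Properties \ref{eqn-properties-pi_{0,n}}(ii) and the continuous embedding $\StokesVp\hookrightarrow\rU_0^\prime$ to get
\[
\Vert\pi_{0,n}\bSi_{0,k}\bu_n\Vert_{\rU_0^\prime}\le \Vert\bSi_{0,k}\bu_n\Vert_{\rU_0^\prime}\le C\Vert\bSi_{0,k}\bu_n\Vert_{\StokesVp}.
\]
Summing over $k$ and applying \eqref{eq-3.20aa}, which holds for the concrete $\bSi$, bounds this part by $C|\bu_n|^2_{\StokesH}$. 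In the abstract setting one needs the analogous hypothesis on $\bSi_0$, so I would invoke \eqref{eq-3.20aa} for $\bSi_0=\bSi$. For the $\bG_{0,n}$ part, the bound $\Vert\cdot\Vert_{\rU_0^\prime}\le C|\cdot|_{\StokesH}$ combined with the uniform linear growth from Assumption (v)-a) gives $C(|\tilde h(t)|^2+|\bu_n|^2_{\StokesH})$. Combining the two parts by the triangle inequality, and noting that the $\phi$-component of $\newG_{0,n}$ is zero, completes the proof.
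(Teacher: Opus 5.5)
Your proposal is correct. For the first, third and fourth bounds you follow essentially the paper's route. That route combines the contractivity of $\pi_{0,n}$ and $\pi_{1,n}$ on the dual spaces with \eqref{eq-B_0-k_0}, the $b_1$ estimate plus Gagliardo--Nirenberg, and the linear growth of $\bG_{0,n}$; the paper cites \eqref{eq-B1} where you cite \eqref{eqn-b_1-trilinear-form-3}, which makes no real difference.

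The genuine difference is in the $\bar R_{0,n}$ bound. The paper integrates by parts, $(\cp_n\nabla\phi_n,\pi_{0,n}\bv)=-(\phi_n\nabla\cp_n,\pi_{0,n}\bv)$, and applies H\"older in $L^2\times L^4\times L^4$. This gives the stronger estimate in $\StokesVp$, which is then embedded into $\rU_0^\prime$. That step tacitly uses a bound $\Vert\pi_{0,n}\bv\Vert_{\mathbb{L}^4}\le C\Vert\bv\Vert_{\StokesV}$ uniform in $n$, which is not among the proved properties of $\pi_{0,n}$.

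You instead leave the derivative on $\phi_n$ and control the test function in $L^\infty$ through $\rU_0\hookrightarrow\mathbb{L}^\infty$ (only $k_0>d/2$ is needed here). You then use Poincar\'e--Wirtinger on the mean-zero $\cp_n$. This yields only the $\rU_0^\prime$ bound, but that is all the statement asks for. It relies on nothing beyond the $\rU_0$-contractivity of $\tilde\pi_{0,n}$ and involves no boundary terms.

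For the noise term you also make explicit that \eqref{eqn-linear growth} alone does not suffice. The transport part $\bSi_0^n\bu_n$ can be bounded by $\lvert\bu_n\rvert_{\StokesH}$ only through \eqref{eq-3.20aa}, combined with $\StokesVp\hookrightarrow\rU_0^\prime$ and $\Vert\pi_{0,n}\Vert_{\mathcal{L}(\rU_0^\prime)}\le 1$. The paper's one-line justification omits this, although it uses the same bound for the abstract $\bSi_0$ later, e.g.\ in the proof of Proposition \ref{Prop-equicontinuity}. Your remark that this is an additional hypothesis in the abstract framework, beyond \eqref{Eqn-coercivity-3}, is accurate.
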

\begin{proof}
The estimates in \eqref{eq-4.1} are a direct consequence of \eqref{eq-B1}, \eqref{eq-B_0-k_0}, and \eqref{eqn-linear growth}. \newline
By integration by parts and Sobolev embedding $H^1 \embed L^4$, we have for all $\bv \in \StokesV$ and every $n \in \mathbb{N}$,
\begin{align*}
&\lvert (\bar{R}_{0,n}(\phi_n,\phi_n),\bv) \rvert 
= \lvert (\cp_n \nabla \phi_n, \pi_{0,n} \bv) \rvert 
= \lvert - ( \phi_n \nabla \cp_n, \pi_{0,n} \bv) \rvert \\
&\leq \lvert \nabla \cp_n \rvert_{\mathbb{L}^2} \Vert \phi_n \Vert_{L^4}  \Vert \pi_{0,n} \bv \Vert_{\mathbb{L}^4} 
\leq C \lvert \cp_n \rvert_{\newone{H}} \lvert \phi_n \rvert_{\newone{H}}  \Vert \bv \Vert_{\StokesV},
\end{align*}
and then $  \Vert \bar{R}_{0,n}(\phi_n,\phi_n) \Vert_{\StokesVp}
            \leq C \lvert \cp_n \rvert_{\newone{H}} \lvert \phi_n \rvert_{\newone{H}}$.
From this latter inequality and the embedding $\StokesVp \embed \rU_0^\prime$, we infer that for every $n \in \mathbb{N}$,
\begin{equation*}
  \Vert \bar{R}_{0,n}(\phi_n,\phi_n) \Vert_{\rU_0^\prime}
   \leq C \lvert \cp_n \rvert_{\newone{H}} \lvert \phi_n \rvert_{\newone{H}}.
\end{equation*}
\end{proof}

\section{Tightness of the laws of the approximating sequences}\label{Sec-Tightness}
\begin{definition}\label{def-P_n}
Assume that $n \in \mathbb{N}$. 
By $\mathbb{P}^1_n$ we denote the Borel probability measure on the space $\bZ_{T,1}$, which is the law of the process $\bu_n$.
\\
By $\mathbb{P}_n^2$, we denote the Borel probability measure on the space $\bZ_{T,2}$ which is 
the law of the process $\phi_n$.
 \\
Finally, we denote by $\mathbb{P}_n$ the Borel probability measure on the space $\bZ_{T}$, which is 
the law of the process $\bX_n=(\bu_n,\phi_n)$, i.e.  $\mathbb{P}_n=\mathbb{P}^1_{n} \otimes \mathbb{P}^2_{n}$.
\end{definition}
We will prove that the set of measures $\{\mathbb{P}_n,\; n\geq 1\}$ is tight on $(\bZ_T,\mathscr{Z})$.
Before stating our relative weak compactness result for the set of measures $\{\mathbb{P}_n,\; n\geq 1\}$, we will need the following important compactness criterion adapted to our situation as in \cite{Brz+Motyl_2013,Mik+Roz_2005}. 
Let us emphasize that in our proof we  use the compactness of the  embedding $ \newone{V} \embed \newone{H}$.
\begin{definition}
Assume that $E$ is a normed vector space, e.g. $E=\nU^\prime$,  and 
 $\bu \in C([0,T];E)$. The modulus of continuity of  $\bu$  on $[0,T]$ is defined by:
  \begin{equation*}
    \mc_{E}(\bu,\delta)= \sup_{s,\,t \in[0,T],\; \lvert t-s \rvert \leq \delta} \Vert \bu(t) - \bu(s) \Vert_{E}, \quad 0< \delta \leq 1.
  \end{equation*}
\end{definition}
\begin{lemma}\label{Lem-first-compactness-result-for-phi}
Let 
\[
\tilde{\bZ}_{T,2} \coloneqq C([0,T];\newonep{V}) \cap L_w^{\beta}(0,T;\newone{V}) \cap L^{\beta}(0,T;\newone{H}).
\]
and let $\tilde{\mathscr{Z}}_2$ be the supremum of the corresponding topologies. Then a set $\mathcal{K} \subset \tilde{\bZ}_{T,2}$ is $\tilde{\mathscr{Z}}_2$-relatively compact if the following two conditions hold:
\begin{trivlist}
\item[(i)] 
$\sup_{\phi \in \mathcal{K}} \left[ \Vert \phi \Vert_{L^\beta(0,T;\newone{V})}^\beta<\infty \right]$, i.e. $\mathcal{K}$ is bounded in $L^\beta(0,T;\newone{V})$,
\item[(ii)] $\lim_{\delta \to 0} \sup_{\phi \in \mathcal{K}} \mc_{\newonep{V}}(\phi,\delta)=0$.
\end{trivlist}
\end{lemma}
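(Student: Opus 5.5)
The plan follows the classical Dubinsky / Aubin–Lions–Simon argument, adapted to the triple $\newone{V} \embed \newone{H} \embed \newonep{V}$. The first embedding is compact, since $\newone{V}=D(\Aone^{3/2})$ and $\Aone^{-1}$ is compact. By the Schauder theorem, the dual embedding $\newone{H}\embed \newonep{V}$ is also compact.

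We take an arbitrary sequence $(\phi_n)\subset\mathcal{K}$ and extract a subsequence converging in each of the three topologies, with the same limit. Since the intersection topology $\tilde{\mathscr{Z}}_2$ is not metrizable, we argue with sequences. This is enough for our purposes, because relative compactness is only used through Prohorov/Jakubowski-type arguments. If needed, we also observe that on $\mathcal{K}$ the weak topology of the bounded set in $L^\beta(0,T;\newone{V})$ is metrizable, since $L^\beta(0,T;\newone{V})$ is reflexive and separable, $\beta\in\{3/2,2\}$. The norm topologies are metric, so sequential compactness suffices.

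\textbf{Step 1: convergence in $C([0,T];\newonep{V})$ via Arzel\`a–Ascoli.} Condition (ii) gives equicontinuity in $\newonep{V}$. For pointwise relative compactness, fix $t$. The bound (i) together with $\phi\in C([0,T];\newonep{V})$ does not directly bound $\phi(t)$ in $\newone{H}$. Instead we average: for small $\delta$ we write
\[
\phi(t)=\frac1\delta\int_t^{t+\delta}\phi(s)\,\d s+\frac1\delta\int_t^{t+\delta}(\phi(t)-\phi(s))\,\d s ,
\]
using $[t-\delta,t]$ near $T$. The first term is bounded in $\newone{V}$ by $\delta^{-1/\beta}\sup_{\mathcal K}\Vert\phi\Vert_{L^\beta(0,T;\newone{V})}$, so it lies in a compact subset of $\newonep{V}$. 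The second term has $\newonep{V}$-norm at most $\sup_{\mathcal K}\mc_{\newonep{V}}(\phi,\delta)$, which is small by (ii). Hence $\{\phi(t):\phi\in\mathcal K\}$ is totally bounded in $\newonep{V}$. Arzel\`a–Ascoli then yields a subsequence converging in $C([0,T];\newonep{V})$ to some $\phi$.

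\textbf{Step 2: weak convergence in $L^\beta(0,T;\newone{V})$.} By (i) and reflexivity, a further subsequence converges weakly in $L^\beta(0,T;\newone{V})$. The weak limit coincides with $\phi$: for test functions in $L^{\beta'}(0,T;\newone{V})\subset(L^\beta(0,T;\newonep{V}))^\prime$, both convergences give the same pairing.

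\textbf{Step 3 (main obstacle): strong convergence in $L^\beta(0,T;\newone{H})$.} We use the Lions interpolation lemma. For every $\eta>0$ there is $C_\eta$ such that
\[
|\varphi|_{\newone{H}}\le \eta\Vert\varphi\Vert_{\newone{V}}+C_\eta\Vert\varphi\Vert_{\newonep{V}} .
\]
This follows from compactness of $\newone{V}\embed\newone{H}$ by the standard contradiction argument. Applying it to $\phi_n-\phi$ and integrating gives
\[
\Vert\phi_n-\phi\Vert_{L^\beta(\newone{H})}\le \eta\,\Vert\phi_n-\phi\Vert_{L^\beta(\newone{V})}+C_\eta T^{1/\beta}\Vert\phi_n-\phi\Vert_{C([0,T];\newonep{V})} .
\]
The first term is at most $2\eta M$, where $M$ is the bound from (i); the weak limit obeys the same bound by lower semicontinuity. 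The second term tends to zero by Step 1. Letting $n\to\infty$ and then $\eta\to0$ proves the strong convergence.

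The delicate point is ensuring that all three limits coincide and that the sequential argument is legitimate for the non-metrizable intersection topology. For the latter, I would invoke metrizability of the weak topology on bounded sets of the separable reflexive space, which makes $\mathcal K$ with the induced topology metrizable, so sequential relative compactness equals relative compactness.
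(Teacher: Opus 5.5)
Your proof is correct. Its skeleton matches the paper's:
\begin{itemize}
\item reduction to sequences via metrizability of the weak topology on bounded subsets of $L^\beta(0,T;\newone{V})$;
\item Banach--Alaoglu for the weak limit;
\item Arzel\`a--Ascoli in $C([0,T];\newonep{V})$;
\item the Lions--Ehrling inequality $\vert\varphi\vert_{\newone{H}}\le\eta\Vert\varphi\Vert_{\newone{V}}+C_\eta\Vert\varphi\Vert_{\newonep{V}}$ for strong convergence in $L^\beta(0,T;\newone{H})$.
\end{itemize}

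The genuine difference is how you get pointwise precompactness for Arzel\`a--Ascoli. The paper shows that $I_\infty=\{t:\lim_n\Vert\phi_n(t)\Vert_{\newone{V}}=\infty\}$ is null and picks a dense set of times outside $I_\infty$. At those times it extracts $\newone{V}$-bounded (hence $\newonep{V}$-convergent) subsequences by a diagonal procedure, and it concludes with equicontinuity. Your averaging decomposition instead shows that $\{\phi(t):\phi\in\mathcal K\}$ is totally bounded in $\newonep{V}$ for every $t$, uniformly over $\mathcal K$. The classical Arzel\`a--Ascoli theorem then applies verbatim.

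What your route buys is the absence of any sequence-dependent exceptional set. This sidesteps two points the paper treats loosely:
\begin{itemize}
\item $\Leb(I_\infty)=0$ needs Fatou's lemma. Pointwise divergence does not give $\int_{I_\infty}\Vert\phi_n\Vert^\beta_{\newone{V}}\,\d t\ge n\Leb(I_\infty)$.
\item In the diagonal step, the condition $t_\ell\notin I_\infty$ refers to the original sequence. It therefore does not by itself give a $\newone{V}$-bounded subsequence of the subsequence already extracted at $t_1,\dots,t_{\ell-1}$; the times would have to be re-chosen stage by stage.
\end{itemize}
The paper's route, for its part, uses only the almost-everywhere information contained in (i) and stays closer to the textbook proof of Arzel\`a--Ascoli. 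In your Step 3, the direct bound $\Vert\cdot\Vert_{L^\beta(0,T;\newonep{V})}\le T^{1/\beta}\Vert\cdot\Vert_{C([0,T];\newonep{V})}$ also replaces the paper's detour through a uniform $\newonep{V}$-bound plus dominated convergence.
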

\begin{proof}[Proof of Lemma \ref{Lem-first-compactness-result-for-phi}]
\textbf{Step 1:} Without loss of generality, we assume that $\mathcal{K}$ is a closed subset of $\tilde{\bZ}_{T,2}$. 
By Assumption $(i)$, the restriction to $\mathcal{K}$ of the weak topology in $L_w^{\beta}(0,T;\newone{V})$ is metrizable. Moreover, the space $ C([0,T];\newonep{V})$ is metrizable. Therefore, the compactness of a subset of 
$\tilde{\bZ}_{T,2}$ is equivalent to its sequential compactness. 
Let $(\phi_n)_{n \in\mathbb{N}}$ be a sequence in $\mathcal{K}$. Thanks to Banach-Alaoglu's Theorem, Assumption $(i)$ implies that 
$\mathcal{K}$ is compact in $L_w^{\beta}(0,T;\newone{V}) $. Thus, we may assume, up to a subsequence, that
$\phi_n \to \phi$ in $L_w^{\beta}(0,T;\newone{V}) $ as $n \to \infty$.
\newline 
\textbf{Step 2:} Now, we will prove that there exists a subsequence $(\phi_{n_k})_k \subset (\phi_n)_n$ convergent in $C([0,T];\newonep{V})$. For the proof, we proceed analogously to the proof of the classical Arzel\`a-Ascoli Theorem. To this end, we introduce the following set
   \begin{equation}\label{Eqn-I-infty-set}
    I_\infty \coloneqq \left\{t \in [0,T]: \lim_{n \to \infty} \Vert \phi_n(t) \Vert_{\newone{V}}=\infty \right\}.
  \end{equation}
The set $I_\infty$ is Lebesgue measurable since
\[
I_\infty
=\left\{t \in [0,T]: \lim_{n \to \infty} \Vert \phi_n(t) \Vert_{\newone{V}}^{\beta}=\infty \right\}
= \bigcap_{n=1}^\infty \bigcup_{k=n}^\infty \bigcap_{i=k}^\infty \left\{ t \in [0,T]: \Vert \phi_i(t) \Vert_{\newone{V}}^{\beta} \geq n \right\}.
\]
Moreover, its measure is equal to zero, i.e. $\mbox{Leb}(I_\infty)=0$. Indeed, let us note that otherwise
\[
\int_0^T \Vert \phi_n(t) \Vert_{\newone{V}}^{\beta} \, \d t \geq \int_{I_\infty}  \Vert \phi_n(t) \Vert_{\newone{V}}^{\beta} \, \d t \geq n \mbox{Leb}(I_\infty) \to \infty \mbox{ as } n \to \infty,
\]
which in turn contradicts the assumption $(i)$. 
\newline
From now on, by \eqref{Eqn-I-infty-set}, for every $t \in [0,T] \, \setminus \, I_\infty$, the sequence $(\phi_n(t))_{n \in \mathbb{N}}$ contains a subsequence bounded in $\newone{V} \subset \newone{H}$ and in $\newone{H}$ also. In addition, since the embedding $\newone{V} \stackrel{c}{\embed} \newone{H}$, this subsequence contains a subsequence convergent in $\newonep{V}$.
\newline
Let $\{t_\ell\}_{\ell \in \mathbb{N}} \subset  [0,T] \, \setminus \, I_\infty$ be a dense subset of $[0,T]$. Using the diagonal method, we can choose a subsequence, still denoted by $(\phi_n)_{n \in \mathbb{N}}$ such that for each $\ell \in \mathbb{N}$, the sequence
   \begin{equation}
    (\phi_n(t_\ell))_{n \in \mathbb{N}}  \mbox{ is convergent in } \newonep{V}.
   \end{equation}
Let us fix $\eps>0$. By the assumption $(ii)$, there exists $\delta_0>0$ such that for every $\delta\in (0,\delta_0]$,
   \begin{equation*}
     \sup_{\varphi \in \mathcal{K}} \sup_{s,\,t \in[0,T],\; \lvert t-s \rvert \leq \delta} \Vert \varphi(t) - \varphi(s) \Vert_{\newonep{V}}= \sup_{\varphi \in \mathcal{K}} \mc_{\newonep{V}}(\varphi,\delta)< \frac{\eps}{3}.
   \end{equation*}
Next, let us fix $t \in [0,T]$. There exists $\ell \in \mathbb{N}$ such that $\lvert t - t_\ell \rvert \leq \delta$,  $\delta\in (0,\delta_0]$. Then for sufficiently large $m,\,n \in \mathbb{N}$, we observe that 
\[
\Vert \phi_n(t) - \phi_m(t) \Vert_{\newonep{V}}
\leq \Vert \phi_n(t) - \phi_n(t_\ell) \Vert_{\newonep{V}} + \Vert \phi_n(t_\ell) - \phi_m(t_\ell) \Vert_{\newonep{V}} + \Vert \phi_m(t_\ell) - \phi_m(t) \Vert_{\newonep{V}}
\leq \epsilon.
\]
Hence, by arbitrariness of $t \in [0,T]$, we infer that
\[
\sup_{t \in [0,T]} \Vert \phi_n(t) - \phi_m(t) \Vert_{\newonep{V}} \leq \epsilon,
\]
then, $(\phi_n)_{n \in \mathbb{N}}$ is a Cauchy sequence in $C([0,T];\newonep{V})$. Therefore, there exists a subsequence  $(\phi_{n_k}) \subset (\phi_n)$ and $\phi \in C([0,T];\newonep{V}) \cap L^{\beta}(0,T;\newone{V})$ such that
\[
 \phi_{n_k} \to \phi \mbox{ in } C([0,T];\newonep{V}) \cap L_w^{\beta}(0,T;\newone{V})  \mbox{ as } k \to \infty.
\]
\textbf{Step 3:} By \cite[Lemma 2]{Brz+Hornung+Manna_2020} the latter implies that
   \begin{equation}\label{eqn-boundeness-phi_{n-k}-in V_1-prime}
     \sup_{k \in \mathbb{N}} \sup_{t \in [0,T]} \Vert \phi_{n_k}(t) \Vert_{\newonep{V}}<\infty. 
   \end{equation}
Hence, by the Lebesgue Dominated convergence Theorem, we infer that
\[
\phi_{n_k} \to \phi \mbox{ in } L^{\beta}(0,T;\newonep{V}) \mbox{ as } k \to \infty.
\]
We now prove that $\phi_{n_k} \to \phi$ strongly in $L^3(0,T;\newone{H})$ as $k \to \infty$. 
Let us choose and fix $\eta \in (0,1)$. Since the embedding $\newone{V} \embed \newone{H}$ is compact, then by \cite[Theorem 16.4]{Lions+Magenes_1972_vol-1}, we infer that there exists a constant $C(\eta)$ such that a.e. in $[0,T]$ and for every $k \in \mathbb{N}$,
\[
\lvert \phi_{n_k}(s) - \phi(s) \rvert_{\newone{H}}^\beta
\leq 2^{\beta -1} \eta \Vert \phi_{n_k}(s) - \phi(s) \Vert_{\newone{V}}^\beta + C(\eta) \Vert \phi_{n_k}(s) - \phi(s) \Vert_{\newonep{V}}^\beta.
\]
Updating the constant $C(\eta)$ accordingly, we infer from the above inequality that 
\begin{align*}
\Vert \phi_{n_k} - \phi \Vert_{L^\beta(0,T;\newone{H})}^\beta
\leq 2^{\beta-1}  \eta \Vert \phi_{n_k} - \phi \Vert_{L^\beta(0,T;\newone{V})}^\beta + C(\eta) \Vert \phi_{n_k} - \phi \Vert_{L^\beta(0,T;\newonep{V})}^\beta.
\end{align*}
Moreover, there exists a generic constant $C>0$ such that for every $k \in \mathbb{N}$,
\begin{align*}
2^{\frac{\beta -1}{\beta}} \Vert \phi_{n_k} - \phi \Vert_{L^\beta(0,T;\newone{V})}
\leq C (\Vert \phi_{n_k} \Vert_{L^\beta(0,T;\newone{V})} + \Vert \phi \Vert_{L^\beta(0,T;\newone{V})})
\leq C \sup_{\varphi \in \mathcal{K}} \Vert \varphi \Vert_{L^\beta(0,T;\newone{V})}\coloneqq C_1, 
\end{align*}
and so passing to the limit as $k \to \infty$, we deduce that
  \begin{equation}
    \lim_{k \to \infty} \Vert \phi_{n_k} - \phi \Vert_{L^\beta(0,T;\newone{H})}^\beta
    \leq 2^{\beta -1} C_1 \eta \Vert \phi_{n_k}(s) - \phi(s) \Vert_{\newone{V}}^\beta. 
 \end{equation}
By the arbitrariness of $\eta$, it follows that
 \begin{equation}\label{eqn-convergence-phi_n-k-exponent-beta}
  \lim_{k \to \infty} \Vert \phi_{n_k} - \phi \Vert_{L^\beta(0,T;\newone{H})}^\beta=0,
 \end{equation}
completing the proof of Lemma \ref{Lem-first-compactness-result-for-phi}.
\end{proof}
Hereafter, let us consider the ball 
\[
\mathbb{B}\coloneqq \{ \phi \in \newone{H}: \lvert \phi \rvert_{\newone{H}} \leq r\}.
\]
We denote by $\mathbb{B}_w$ the ball $\mathbb{B}$ endowed with the weak topology. Furthermore, it is well known that $\mathbb{B}_w$ is metrizable, see \cite{Brezis_1983}. Let 
\[
C([0,T];\mathbb{B}_w)
= \left \{\phi \in C([0,T];\newone{H}_{,w}): \sup_{s \in [0,T]} \lvert \phi(s) \rvert_{\newone{H}} \leq r \right\}.
\]
The space $C([0,T];\mathbb{B}_w)$ endowed with 
\[
\varrho_2(v,v_1)= \sup_{t \in [0,T]} \varrho_1(v(t),v_1(t)) \mbox{ is metrizable}.
\]
Here, $\varrho_1$ denotes a metric that is compatible with the weak topology on $\mathbb{B}$.
By the Banach–Alaoglu Theorem, the set $\mathbb{B}_w$ is compact in the weak topology, and hence $(C([0,T];\mathbb{B}_w),\varrho_2)$ is a complete metric space.
\newline
\begin{lemma}\label{Lem-2nd-compactness-result-for-phi}
Let $\phi_n: [0,T] \to \newone{H}$, $n \in \mathbb{N}$ be functions such that
\begin{trivlist}
\item[(i)] $\sup_{n \in \mathbb{N}} \, \sup_{s \in [0,T]} \lvert \phi_n(s) \rvert_{\newone{H}} \leq r$.
\item[(ii)] $\phi_n \to \phi$ in $C([0,T];\newonep{V})$ as $n \to \infty$.
\end{trivlist}
Then $\phi,\, \phi_n \in C([0,T];\mathbb{B}_w)$ and $\phi_n \to \phi$ in $C([0,T];\mathbb{B}_w)$ as $n \to \infty$.
\end{lemma}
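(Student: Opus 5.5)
The plan is to prove the lemma in two steps: first identify the limit $\phi$ pointwise as an element of $\mathbb{B}$, and then upgrade convergence in $C([0,T];\newonep{V})$ to uniform weak convergence in $\newone{H}$. The tool for the upgrade is a density argument: weak convergence tested against a dense subset of test functions, combined with a uniform bound, gives weak convergence against all test functions.

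\textbf{Step 1: $\phi(t)\in\mathbb{B}$ and continuity.} Fix $t\in[0,T]$. By (i), the sequence $(\phi_n(t))_n$ is bounded by $r$ in $\newone{H}$. By Banach–Alaoglu, every subsequence has a further subsequence converging weakly in $\newone{H}$ to some $\chi$. Since $\newone{H}\embed\newonep{V}$ continuously, that subsequence also converges weakly to $\chi$ in $\newonep{V}$. By (ii) it converges strongly to $\phi(t)$ in $\newonep{V}$, so $\chi=\phi(t)$. Hence $\phi(t)\in\newone{H}$, and weak lower semicontinuity of the norm gives $\lvert\phi(t)\rvert_{\newone{H}}\le r$. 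It follows that the whole sequence satisfies $\phi_n(t)\rightharpoonup\phi(t)$ in $\newone{H}$.

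To show that $\phi$ and each $\phi_n$ lie in $C([0,T];\mathbb{B}_w)$, we need weak continuity in $\newone{H}$. Let $s\to t$. Then $\phi(s)\to\phi(t)$ in $\newonep{V}$ and $\lvert\phi(s)\rvert_{\newone{H}}\le r$, so the same subsequence argument gives $\phi(s)\rightharpoonup\phi(t)$ in $\newone{H}$. The identical argument applies to each $\phi_n$, since $\phi_n$ is continuous into $\newonep{V}$ (it lies in $C([0,T];\newonep{V})$, as (ii) presupposes) and is bounded by $r$. Weak continuity into $\mathbb{B}$ is the same as continuity into the metrizable space $\mathbb{B}_w$.

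\textbf{Step 2: uniform convergence in $\varrho_2$.} Because the topology of $\varrho_1$ on $\mathbb{B}$ coincides with the weak topology, it suffices to show that
\[
\sup_{t\in[0,T]}\lvert(\phi_n(t)-\phi(t),h)_{\newone{H}}\rvert\to0 \quad\text{for every } h\in\newone{H}.
\]
This suffices because we may take $\varrho_1(x,y)=\sum_j 2^{-j}\lvert(x-y,h_j)_{\newone{H}}\rvert$ for a dense sequence $(h_j)$ in the unit ball. The series is uniformly dominated by the bound $2r$, so uniform convergence of each term implies uniform convergence of $\varrho_1$; any other compatible metric induces the same uniform structure on the compact set $\mathbb{B}_w$.

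First take $h\in\newone{V}$. Using the duality from the Gelfand triple \eqref{eqn-Gelfand triple-abstract-2}, we get
\[
\lvert(\phi_n(t)-\phi(t),h)_{\newone{H}}\rvert=\bigl\lvert\duality{\phi_n(t)-\phi(t)}{h}{\newone{V}}{\newonep{V}}\bigr\rvert\le\Vert\phi_n-\phi\Vert_{C([0,T];\newonep{V})}\Vert h\Vert_{\newone{V}}\to0 .
\]
For general $h\in\newone{H}$ and $\varepsilon>0$, choose $h_\varepsilon\in\newone{V}$ with $\lvert h-h_\varepsilon\rvert_{\newone{H}}\le\varepsilon$, which is possible because $\newone{V}$ is dense in $\newone{H}$. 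Then
\[
\sup_t\lvert(\phi_n(t)-\phi(t),h)_{\newone{H}}\rvert\le 2r\varepsilon+\sup_t\lvert(\phi_n(t)-\phi(t),h_\varepsilon)_{\newone{H}}\rvert,
\]
which proves the claim.

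The main obstacle is the passage from convergence in the weak topology to convergence in the metric $\varrho_2$, which requires fixing a concrete compatible metric $\varrho_1$. Using the explicit series metric above, together with the standard fact that all compatible metrics on a compact metrizable space are uniformly equivalent, handles this.
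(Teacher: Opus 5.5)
Your proof is correct and follows the same core argument as the paper. You test against $h\in\newone{V}$ via the Gelfand-triple duality, then approximate a general $h\in\newone{H}$ using the uniform bound $r$.

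The differences are minor. Your Step 1 replaces the paper's appeal to Lemma \ref{Lem C1} and to the completeness of $C([0,T];\mathbb{B}_w)$ with a direct Banach--Alaoglu identification of limits. That identification also gives the bound $\lvert\phi(t)\rvert_{\newone{H}}\le r$, which the paper's density estimate uses without proof. You also justify explicitly why uniform weak convergence against each $h$ is equivalent to convergence in $\varrho_2$, whereas the paper takes this as the definition.
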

\begin{proof}
To carry out the proof, we follow the approach of \cite[Lemma 2.1]{Brz+Motyl_2014}, with suitable modifications to adapt it to our setting. We begin by observing that by Lemma \ref{Lem C1},
   \begin{equation}
      L^\infty(0,T;\newone{H}) \cap C([0,T];\newone{V}_{,w}^\prime) \subset C([0,T];\newone{H}_{,w}),
   \end{equation}
from which we infer $\phi_n \in C([0,T];\newone{H}_{,w})$.
\newline
Next, we prove that
\[
\phi_n \to \phi \mbox{ in } C([0,T];\mathbb{B}_w) \mbox{ as } n \to \infty,
\]
i.e. for every $h \in \newone{H}$, 
   \begin{equation}
    \lim_{n \to \infty} \sup_{s \in [0,T]} \lvert (\phi_n(s) - \phi(s),h)_{\newone{H}} \rvert= 0.
   \end{equation}
Let $h \in \newone{V}$ be arbitrary. Since $\newone{V} \embed \newone{H}$, see \eqref{eqn-Gelfand triple-abstract-2}, we infer that a.e. in $[0,T]$ and for every $n \in \mathbb{N}$,
\begin{align*}
\lvert (\phi_n(s) - \phi(s),h)_{\newone{H}} \rvert
= \lvert \duality{\phi_n(s) - \phi(s)}{h}{\newone{V}}{\newonep{V}} \rvert
\leq \Vert \phi_n(s) - \phi(s) \Vert_{\newonep{V}} \Vert h \Vert_{V_1}.
\end{align*}
Therefore, by Assumption $(ii)$ in Lemma \ref{Lem-2nd-compactness-result-for-phi}, we deduce that
\begin{align}\label{eqnt-5.5}
\sup_{s \in [0,T]} \lvert (\phi_n(s) - \phi(s),h)_{\newone{H}} \rvert
\leq \sup_{s \in [0,T]} \Vert \phi_n(s) - \phi(s) \Vert_{\newonep{V}} \cdot \Vert h \Vert_{\newone{V}} \underset{n \to \infty}{\to}  0.
\end{align}
Let us now choose and fix $h \in \newone{H}$. Let $\epsilon>0$. Since $\newone{V}$ is dense in $\newone{H}$, there exists $h_\epsilon \in \newone{V}$ such that $\lvert h - h_\epsilon \rvert_{\newone{H}} \leq \epsilon$. By assumption $(i)$ in Lemma \ref{Lem-2nd-compactness-result-for-phi}, we infer that for a.e. $s \in [0,T]$,
\begin{align*}
&\lvert (\phi_n(s) - \phi(s),h)_{\newone{H}} \rvert
\leq \lvert (\phi_n(s) - \phi(s),h - h_\epsilon)_{\newone{H}} \rvert + \lvert (\phi_n(s) - \phi(s),h_\epsilon)_{\newone{H}} \rvert
\\
&\leq \lvert \phi_n(s) - \phi(s)\rvert_{\newone{H}} \lvert h - h_\epsilon\rvert_{\newone{H}} + \lvert (\phi_n(s) - \phi(s),h_\epsilon)_{\newone{H}} \rvert
\\
&\leq 2 \epsilon \sup_{n \in \mathbb{N}} \, \sup_{s \in [0,T]} \lvert \phi_n(s) \rvert_{\newone{H}} + \lvert (\phi_n(s) - \phi(s),h_\epsilon)_{\newone{H}} \rvert
\leq 2 \epsilon r + \lvert (\phi_n(s) - \phi(s),h_\epsilon)_{\newone{H}} \rvert.
\end{align*}
Therefore, passing to the upper limit as $n \to \infty$, using \eqref{eqnt-5.5}, we deduce that
\[
\limsup_{n \to \infty} \sup_{s \in [0,T]} \lvert (\phi_n(s) - \phi(s),h)_{\newone{H}} \rvert
\leq 2 r \epsilon.
\]
By the arbitrariness of $\epsilon$, 
\[
\lim_{n \to \infty} \sup_{s \in [0,T]} \lvert (\phi_n(s) - \phi(s),h)_{\newone{H}} \rvert= 0.
\]
Finally, since $C([0,T];\mathbb{B}_w)$ is a complete metric space, we infer $\phi \in C([0,T];\mathbb{B}_w)$. This completes the proof of Lemma \ref{Lem-2nd-compactness-result-for-phi}.
\end{proof}
Equipped with Lemmas \ref{Lem-first-compactness-result-for-phi} and \ref{Lem-2nd-compactness-result-for-phi}, we can argue as in \cite[Lemma 3.3]{Brz+Motyl_2013} to deduce the following key result.
\begin{lemma}[Deterministic compactness criterion for $\phi$]\label{phi-compactness-criterion}
Let us consider the topological space $(\bZ_{T,2},\mathscr{Z}^2)$ defined in \eqref{eqn-Z_T2}. A set $\mathcal{K}_2$ is $\mathscr{Z}^2$-relatively compact in the intersection space $\bZ_{T,2}$ if the following three conditions hold:
\begin{trivlist}
\item[(i)] $\sup_{\phi \in \mathcal{K}_2} \sup_{s \in [0,T]} \lvert \phi(s) \rvert_{\newone{H}}< \infty$,
\item[(ii)] $\sup_{\phi \in \mathcal{K}} \left[\int_0^T \Vert  \phi(s) \Vert_{\newone{V}}^{\beta}\,\d s \right]<\infty$, i.e. $\mathcal{K}_2$ is bounded in $L^\beta(0,T;\newone{V})$,

\item[(iii)] $\lim_{\delta \to 0} \sup_{\phi \in \mathcal{K}_2} \mc_{\newonep{V}}(\phi,\delta)=0$.
\end{trivlist}
\end{lemma}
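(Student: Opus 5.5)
The plan is to reduce the statement to the two lemmas just proved, following the pattern of \cite[Lemma 3.3]{Brz+Motyl_2013}. Let $\mathcal{K}_2\subset\bZ_{T,2}$ satisfy (i)--(iii) and put $r\coloneqq\sup_{\phi\in\mathcal{K}_2}\sup_{s\in[0,T]}\lvert\phi(s)\rvert_{\newone{H}}<\infty$. The topology $\mathscr{Z}^2$ is the supremum of $\mathscr{Z}_4,\mathscr{Z}_3,\mathscr{Z}_5,\mathscr{Z}_7$. On $\mathcal{K}_2$ each of these is metrizable:
\begin{itemize}
\item $\mathscr{Z}_4$ and $\mathscr{Z}_7$ are norm topologies;
\item $\mathscr{Z}_5$ is metrizable on bounded subsets of the reflexive separable space $L^\beta(0,T;\newone{V})$, by (ii);
\item $\mathscr{Z}_3$ coincides on $\mathcal{K}_2$ with the topology of the metric $\varrho_2$ on $C([0,T];\mathbb{B}_w)$, by (i).
\end{itemize}
Hence the restriction of $\mathscr{Z}^2$ to the closure of $\mathcal{K}_2$ is metrizable, and relative compactness reduces to sequential relative compactness. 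So it suffices to show that every sequence $(\phi_n)\subset\mathcal{K}_2$ has a subsequence converging in all four topologies to a common limit $\phi\in\bZ_{T,2}$.

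Conditions (ii) and (iii) are exactly the hypotheses of Lemma \ref{Lem-first-compactness-result-for-phi}. That lemma gives a subsequence, still denoted $(\phi_n)$, and some $\phi$ with
\[
\phi_n\to\phi \mbox{ in } C([0,T];\newonep{V})\cap L_w^\beta(0,T;\newone{V})\cap L^\beta(0,T;\newone{H}).
\]
This is convergence in $\mathscr{Z}_4$, $\mathscr{Z}_5$ and $\mathscr{Z}_7$, with the same limit in all three, since each of these topologies is finer than convergence in $L^\beta(0,T;\newonep{V})$.

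For $\mathscr{Z}_3$, apply Lemma \ref{Lem-2nd-compactness-result-for-phi}: its hypothesis (i) is our (i) with the constant $r$, and its hypothesis (ii) was just obtained. It yields $\phi\in C([0,T];\mathbb{B}_w)\subset C([0,T];\newone{H}_{,w})$ and $\phi_n\to\phi$ in $C([0,T];\mathbb{B}_w)$. This means $\sup_t\lvert(\phi_n(t)-\phi(t),h)_{\newone{H}}\rvert\to0$ for all $h\in\newone{H}$, which is convergence in $\mathscr{Z}_3$. Thus $\phi\in\bZ_{T,2}$ and $\phi_n\to\phi$ in $\mathscr{Z}^2$.

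The main obstacle is the topological bookkeeping rather than any estimate. One must check two things:
\begin{itemize}
\item that sequential compactness genuinely suffices; the key point is metrizability of $\mathscr{Z}_3$ and $\mathscr{Z}_5$ on sets bounded as in (i)--(ii);
\item that the limits from the different topologies coincide.
\end{itemize}
For the first, I would note that the closure of $\mathcal{K}_2$ in $\mathscr{Z}^2$ still satisfies the bounds (i)--(ii), by lower semicontinuity of norms under weak convergence. So we can work inside the metric space $C([0,T];\mathbb{B}_w)\times$(a closed ball of $L^\beta(0,T;\newone{V})$ with its weak metric)$\times C([0,T];\newonep{V})\times L^\beta(0,T;\newone{H})$, embedded diagonally. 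For the second, all the convergences imply convergence in the Hausdorff space $C([0,T];\newonep{V})$, or pointwise weakly in $\newonep{V}$, so the limits are identified.
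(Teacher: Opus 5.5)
Your proof is correct and follows the paper's route. The paper combines Lemma \ref{Lem-first-compactness-result-for-phi} (which uses (ii)--(iii)) with Lemma \ref{Lem-2nd-compactness-result-for-phi} (which uses (i)) as in \cite[Lemma 3.3]{Brz+Motyl_2013}, and your metrizability bookkeeping on the $\mathscr{Z}^2$-closed set cut out by the bounds (i)--(ii) correctly justifies passing from sequential to topological relative compactness. One small slip: $\mathscr{Z}_5$ is not finer than norm convergence in $L^\beta(0,T;\newonep{V})$, nor than convergence in $C([0,T];\newonep{V})$, so you should identify limits through the Hausdorff weak topology of $L^\beta(0,T;\newonep{V})$ instead; in any case Lemma \ref{Lem-first-compactness-result-for-phi} already supplies a single common limit.
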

\begin{corollary}\label{Coro-convergence-phi_n-in-H_2}
Assume $(\phi_n)_{n \in \mathbb{N}} \subset \bZ_{T,2}$ is such that $\phi_n \to \phi$ in $\bZ_{T,2}$ as $n \to \infty$. Then $\phi_n \to \phi$ in $L^3(0,T;\newone{H}) \cap L^2(0,T;\zero{H}{2}(\domO))$.
\end{corollary}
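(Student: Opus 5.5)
The plan is to combine two facts: convergence in $\bZ_{T,2}$ gives uniform-in-$n$ bounds on $\phi_n$, and it gives strong convergence in a weaker norm. Interpolation then upgrades the strong convergence. Write $\beta=2$ if $d=2$ and $\beta=3/2$ if $d=3$, and put $\zeta_n\coloneqq\phi_n-\phi$. By definition of $\mathscr{Z}^2$, we have $\phi_n\to\phi$ in $C([0,T];\newone{H}_{,w})$, weakly in $L^\beta(0,T;\newone{V})$, and strongly in $L^\beta(0,T;\newone{H})$.

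\textbf{Step 1 (uniform bounds).} Weakly convergent sequences are bounded, by the uniform boundedness principle. Hence
\[
\sup_n\Vert\phi_n\Vert_{L^\beta(0,T;\newone{V})}<\infty .
\]
Likewise, convergence in $C([0,T];\newone{H}_{,w})$ gives
\[
\sup_n\sup_{t\in[0,T]}\lvert\phi_n(t)\rvert_{\newone{H}}\eqqcolon r<\infty ,
\]
exactly as in \cite[Lemma 2]{Brz+Hornung+Manna_2020}, already used in Step 3 of Lemma \ref{Lem-first-compactness-result-for-phi}. Indeed, for each $h\in\newone{H}$ the family $\{(\phi_n(t),h)_{\newone{H}}\}_{n,t}$ is bounded, since it converges uniformly in $t$ to a continuous function. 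Banach--Steinhaus then gives the claim. The limit $\phi$ obeys the same bounds, by weak lower semicontinuity of the norms. Consequently $\zeta_n$ is bounded in $L^\infty(0,T;\newone{H})\cap L^\beta(0,T;\newone{V})$ and $\zeta_n\to0$ in $L^\beta(0,T;\newone{H})$.

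\textbf{Step 2 ($L^3(0,T;\newone{H})$).} Since $3\ge\beta$, we can estimate
\[
\int_0^T\lvert\zeta_n\rvert_{\newone{H}}^3\,\d s\le\Bigl(\sup_{s}\lvert\zeta_n(s)\rvert_{\newone{H}}\Bigr)^{3-\beta}\int_0^T\lvert\zeta_n\rvert_{\newone{H}}^{\beta}\,\d s\le(2r)^{3-\beta}\Vert\zeta_n\Vert^\beta_{L^\beta(0,T;\newone{H})}\to0 .
\]

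\textbf{Step 3 ($L^2(0,T;\zero{H}{2})$).} For mean-zero $\zeta$ with Neumann conditions we use the norm equivalence \eqref{eqn-H^2-norm-special}. Integration by parts and Cauchy--Schwarz give
\[
\lvert\Delta\zeta\rvert_{L^2}^2=(\nabla\zeta,\nabla(-\Delta\zeta))\le\lvert\zeta\rvert_{\newone{H}}\Vert\zeta\Vert_{\newone{V}} .
\]
Hence $\Vert\zeta\Vert_{\zero{H}{2}}^2\le C\lvert\zeta\rvert_{\newone{H}}\Vert\zeta\Vert_{\newone{V}}$ for $\zeta\in\newone{V}$. This is the interpolation $[\newone{H},\newone{V}]_{1/2}$.

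For $d=2$, Hölder in time gives
\[
\Vert\zeta_n\Vert^2_{L^2(0,T;\zero{H}{2})}\le C\Vert\zeta_n\Vert_{L^2(0,T;\newone{H})}\Vert\zeta_n\Vert_{L^2(0,T;\newone{V})}\to0 .
\]
For $d=3$, Hölder with exponents $3$ and $3/2$ gives
\[
\Vert\zeta_n\Vert^2_{L^2(0,T;\zero{H}{2})}\le C\Vert\zeta_n\Vert_{L^3(0,T;\newone{H})}\Vert\zeta_n\Vert_{L^{3/2}(0,T;\newone{V})}\to0 .
\]
Here the $L^3(0,T;\newone{H})$ convergence from Step 2 is essential, and the second factor stays bounded by Step 1.

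\textbf{Main obstacle.} The only delicate point is the uniform bound $\sup_{n,t}\lvert\phi_n(t)\rvert_{\newone{H}}<\infty$, which must be extracted from convergence in the non-metrizable topology of $C([0,T];\newone{H}_{,w})$. The Banach--Steinhaus argument above handles it. Everything else is Hölder's inequality combined with the elementary interpolation inequality of Step 3.
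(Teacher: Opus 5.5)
Your proposal is correct and follows essentially the same route as the paper. You extract $\sup_{n,t}\lvert\phi_n(t)\rvert_{\newone{H}}<\infty$ from convergence in $C([0,T];\newone{H}_{,w})$, get $L^3(0,T;\newone{H})$ convergence by splitting off a sup factor against the strong $L^\beta(0,T;\newone{H})$ convergence, and then combine $\Vert\zeta\Vert_{\zero{H}{2}}^2\le C\lvert\zeta\rvert_{\newone{H}}\Vert\zeta\Vert_{\newone{V}}$ with H\"older ($L^3$ against $L^{3/2}$). The differences are only cosmetic: you prove the uniform bound by Banach--Steinhaus rather than citing \cite[Lemma 2]{Brz+Hornung+Manna_2020}, you derive the interpolation inequality by one integration by parts rather than citing $[H^1,H^3]_{1/2}=H^2$, and you treat $d=2$ directly in $L^2$ instead of passing through $L^{3/2}$.
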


\begin{proof} Let us choose and fix a $\bZ_{T,2}$-valued  sequence $(\phi_n)$ such that $\phi_n \to \phi$ in $\bZ_{T,2}$.
By the definition of the topological space $\bZ_{T,2}$, cf. \eqref{eqn-Z_T2}, we infer that
$\phi_n \to \phi$ in $C([0,T];\newone{H}_{,w}) $ and $\phi_n \to \phi$ in $L^{\frac32}(0,T;\newone{H})$. Thus, by \cite[Lemma 2]{Brz+Hornung+Manna_2020}, we deduce that there exists a constant $C$ independent of $n$ such that
   \begin{equation*}
      \sup_{s \in [0,T]} \lvert \phi_n(s)\rvert_{\newone{H}}^{3/2}\leq C.
    \end{equation*}
Observe also that
\begin{align*}
&\Vert \phi_n - \phi \Vert_{L^3(0,T;\newone{H})}^3
\leq \sup_{s \in [0,T]} \lvert \phi_n(s) - \phi(s) \rvert_{\newone{H}}^{3/2}
\Vert \phi_n - \phi \Vert_{L^{3/2}(0,T;\newone{H})}^{3/2}
\\
&\leq \sqrt{2} \left[\sup_{s \in [0,T]} \lvert \phi_n(s)\rvert_{\newone{H}}^{3/2} + \sup_{s \in [0,T]} \lvert \phi(s) \rvert_{\newone{H}}^{3/2} \right]
\Vert \phi_n - \phi \Vert_{L^{3/2}(0,T;\newone{H})}^{3/2}
 \\
&\leq \sqrt{2} \left[C + \sup_{s \in [0,T]} \lvert \phi(s) \rvert_{\newone{H}}^{3/2} \right]
\Vert \phi_n - \phi \Vert_{L^{3/2}(0,T;\newone{H})}^{3/2}.
\end{align*}
This, jointly with the fact that $\phi_n \to \phi$ in $L^{3/2}(0,T;\newone{H})$ by assumption, yields
\[
\lim_{n \to \infty} \Vert \phi_n - \phi \Vert_{L^3(0,T;\newone{H})}^3=0.
\]
This completes the proof of the first part of the corollary.\\
Proof of the second part of Corollary \ref{Coro-convergence-phi_n-in-H_2}:
Since by interpolation, $[H^1(\domO),H^3(\domO)]_{\frac12}=H^2(\domO)$, we deduce that there exists $C=C(\domO)>0$ such that
   \begin{equation}\label{H^2-interpolation}
     \Vert \varphi \Vert_{H^2} \leq C \Vert \varphi \Vert_{H^1}^{1/2} \Vert \varphi \Vert_{H^3}^{1/2}, \; \varphi \in H^3(\domO).
   \end{equation}
In particular, for every $n \in \mathbb{N}$ and for almost $s \in [0,T]$,  
\[
\Vert \phi_n(s) - \phi(s) \Vert_{\zero{H}{2}} \leq C \lvert \phi_n(s) - \phi(s) \rvert_{\newone{H}}^{1/2} \Vert \phi_n(s) - \phi(s)  \Vert_{\newone{V}}^{1/2}.
\]
Therefore, by the H\"older inequality, we infer that for every $n \in \mathbb{N}$,
\begin{align*}
&\Vert \phi_n - \phi \Vert_{L^2(0,T;\zero{H}{2})}^2
\leq C \int_0^T \lvert \phi_n(s) - \phi(s) \rvert_{\newone{H}} \Vert \phi_n(s) - \phi(s) \Vert_{\newone{V}}\,\d s
\\
& \leq C \Vert \phi_n- \phi \Vert_{L^3(0,T;\newone{H})} \Vert \phi_n - \phi \Vert_{L^{\frac32}(0,T;\newone{V})}
\leq C \Vert \phi_n - \phi \Vert_{L^3(0,T;\newone{H})} [\Vert \phi_n \Vert_{L^{3/2}(0,T;\newone{V})} + \Vert \phi \Vert_{L^{3/2}(0,T;\newone{V})}].
\end{align*}
We recall that $\phi_n \to \phi$ in $L^{3}(0,T;\newone{H})$, cf. the first part of Corollary \ref{Coro-convergence-phi_n-in-H_2}.
\end{proof}
\begin{lemma}[Deterministic compactness criterion for $\bu$]\label{u-compactness-criterion}
Let us consider the topological space $(\bZ_{T,1},\mathscr{Z}^1)$ defined in \eqref{eqn-Z_T1}.
A set $\mathcal{K}_1$ is $\mathscr{Z}^1$-relatively compact in the intersection space $\bZ_{T,1}$ if the following three conditions hold:
\begin{trivlist}
\item[(i)] $\sup_{\bu \in \mathcal{K}_1} \sup_{s \in [0,T]} \lvert \bu(s) \rvert_{\StokesH} < \infty$,
\item[(ii)] $\sup_{\bu \in \mathcal{K}_1} \Vert \bu \Vert_{L^2(0,T;\StokesV)}^2< \infty$,
\item[(iii)] $\lim \limits_{\delta \to 0}  \sup_{\bu \in \mathcal{K}_1} \mc_{\rU_0^\prime}(\bu,\delta)= 0$.
\end{trivlist}
\end{lemma}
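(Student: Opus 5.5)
The strategy mirrors the two-step argument used for $\phi$, namely Lemma \ref{Lem-first-compactness-result-for-phi} followed by Lemma \ref{Lem-2nd-compactness-result-for-phi}, and follows \cite[Lemma 3.3]{Brz+Motyl_2013}. Let $\mathcal{K}_1$ satisfy (i)--(iii) and set $r:=\sup_{\bu\in\mathcal{K}_1}\sup_{s}\lvert\bu(s)\rvert_{\StokesH}$. By (i) and (ii), the restrictions to $\mathcal{K}_1$ of the weak topologies of $C([0,T];\StokesH_{,w})$ and $L^2_w(0,T;\StokesV)$ are metrizable. The first is metrizable because $\mathcal{K}_1\subset C([0,T];\mathbb{B}^0_w)$, where $\mathbb{B}^0_w$ is the weakly metrizable ball of radius $r$ in $\StokesH$. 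The other two topologies, $\mathscr{Z}_1$ and $\mathscr{Z}_6$, are metric. Hence relative compactness reduces to sequential compactness. So I fix a sequence $(\bu_n)\subset\mathcal{K}_1$ and extract a subsequence converging in each of the four topologies, always to the same limit $\bu$.

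The first step is weak convergence in $L^2(0,T;\StokesV)$. By (ii) and the Banach--Alaoglu theorem, a subsequence converges in $L^2_w(0,T;\StokesV)$.

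The second step is uniform convergence in $\rU_0'$, via an Arzel\`a--Ascoli argument. By (i), for every $t$ the set $\{\bu_n(t)\}$ is bounded in $\StokesH$. Since $\rU_0\embed\StokesV\embed\StokesH$ with $\rU_0\embed\StokesH$ compact (Assumption \ref{assumption-abstract-1}), the dual embedding $\StokesH\embed\rU_0'$ is compact, so $\{\bu_n(t)\}$ is relatively compact in $\rU_0'$. This pointwise relative compactness, together with the equicontinuity in (iii), gives a subsequence converging in $C([0,T];\rU_0')$. The argument is the diagonal one over a dense set $\{t_\ell\}$ from Step 2 of Lemma \ref{Lem-first-compactness-result-for-phi}, which here is simpler because no exceptional set $I_\infty$ is needed.

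The third step is convergence in $C([0,T];\StokesH_{,w})$. Applying the analogue of Lemma \ref{Lem-2nd-compactness-result-for-phi}, with $\newone{V}'$ replaced by $\rU_0'$ and $\newone{H}$ by $\StokesH$, to the sequence from the second step gives $\bu_n\to\bu$ in $C([0,T];\mathbb{B}^0_w)$. The proof is identical. First test against $h\in\rU_0$, using $\lvert(\bu_n(s)-\bu(s),h)\rvert\le\Vert\bu_n(s)-\bu(s)\Vert_{\rU_0'}\Vert h\Vert_{\rU_0}$. Then pass to general $h\in\StokesH$ by density of $\rU_0$ in $\StokesH$ and the uniform bound $r$. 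The limits from the first three steps coincide, since all three topologies are finer than convergence in $\mathscr{D}'((0,T);\rU_0')$.

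The fourth step, strong convergence in $L^2(0,T;\StokesH)$, is the main obstacle. I would use the Lions--Magenes type interpolation inequality for the compact embedding $\StokesV\embed\StokesH\embed\rU_0'$: for every $\eta>0$ there is $C_\eta$ with $\lvert v\rvert_{\StokesH}^2\le\eta\Vert v\Vert_{\StokesV}^2+C_\eta\Vert v\Vert_{\rU_0'}^2$. Applying it to $v=\bu_n(s)-\bu(s)$ and integrating in time yields
\[
\Vert\bu_n-\bu\Vert_{L^2(0,T;\StokesH)}^2\le 2\eta\Bigl(\sup_m\Vert\bu_m\Vert^2_{L^2(0,T;\StokesV)}+\Vert\bu\Vert^2_{L^2(0,T;\StokesV)}\Bigr)+C_\eta T\sup_{s}\Vert\bu_n(s)-\bu(s)\Vert^2_{\rU_0'} .
\]
The second term tends to zero by the second step. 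The first term is uniformly small by (ii) and the weak lower semicontinuity of the norm, which gives $\bu\in L^2(0,T;\StokesV)$ with a bound. Letting $n\to\infty$ and then $\eta\to0$ gives the convergence, exactly as in Step 3 of Lemma \ref{Lem-first-compactness-result-for-phi}. The delicate point is that the interpolation inequality needs compactness of $\StokesV\embed\StokesH$, not only of $\rU_0\embed\StokesH$. This holds for the concrete spaces by Rellich's theorem on the bounded domain $\domO$, and I would state it explicitly as the ingredient being used.
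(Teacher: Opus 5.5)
Your proposal is correct and follows the route the paper intends. The paper gives no separate proof of this lemma and relies on the $\phi$-analogues (Lemmas \ref{Lem-first-compactness-result-for-phi} and \ref{Lem-2nd-compactness-result-for-phi}) and on \cite[Lemma 3.3]{Brz+Motyl_2013}, i.e.\ Banach--Alaoglu in $L^2(0,T;\StokesV)$, Arzel\`a--Ascoli in $C([0,T];\rU_0^\prime)$, the weak-ball argument for $C([0,T];\StokesH_{,w})$, and a Lions--Magenes (Ehrling) inequality for $L^2(0,T;\StokesH)$. Two points are worth noting. Your remark that the last step needs compactness of $\StokesV\embed\StokesH$ (Rellich on the bounded $\domO$, which is not part of Assumption \ref{assumption-abstract-1}) is correct and genuinely necessary. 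To make the reduction to sequences airtight, observe that your metrizability argument applies verbatim to the closed set of all $\bu\in\bZ_{T,1}$ satisfying the bounds (i) and (ii) with the same constants; this set contains $\overline{\mathcal{K}_1}$, so metrizability there, not only on $\mathcal{K}_1$, is what you need.
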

In view of Lemma \ref{u-compactness-criterion}, to show the law of $\bu_n$ is tight, in what follows, we formulate a result related to Corollary 3.9 in \cite{Brz+Motyl_2013}, i.e. the corollary \ref{cor-Corollary 3.9 in BM_2013} below.
We begin with the following definition. 
\begin{definition}\label{lem-Aldous-Rebolledo} Assume that $\mO$ is a separable Banach space and 
that $(\Omega, \mathscr{F}, \mathbb{F},\mathbb{P})$ is a filtered complete probability space satisfying the usual condition \ref{ass-usual}, and    
 $(\bu_n)_{n \in \mathbb{N}}$is a sequence of $\mathbb{F}$-adapted, $\mO$-valued continuous processes. We say that this sequence satisfies the \textbf{Aldous-Rebolledo} condition in the space $\mO$  if and only if 
for every  $\eta>0$, for  every sequence $(\tau_n)_{n=1}^\infty$ of $[0,T]$-valued  stopping times, for every $[0,1]$-valued  sequence $\vartheta_n$ such that  $\vartheta_n \to 0$ as $n \to \infty$, 
\begin{equation}\label{eqn-Aldous-Rebolledo}
    \lim_{n \to \infty} \mathbb{P} (\Vert \bu_n(\tau_n + \vartheta_n) - \bu_n(\tau_n) \Vert_{\mO} > \eta)= 0.
  \end{equation}
Equivalently, for every sequence $(\tau_n)_{n=1}^\infty$ of $[0,T]$-valued  stopping times, for every $[0,1]$-valued  sequence $\vartheta_n$ such that  $\vartheta_n \to 0$ as $n \to \infty$, for every  $\eta>0$, condition \eqref{eqn-Aldous-Rebolledo} holds. \\
 Equivalently, 
 for  every sequence $(\tau_n)_{n=1}^\infty$ of $[0,T]$-valued  stopping times, for every $[0,1]$-valued  sequence $\vartheta_n$ such that  $\vartheta_n \to 0$ as $n \to \infty$, 
    \begin{equation}\label{eqn-Aldous-Rebolledo-1}
    \lim_{n \to \infty} \Vert \bu_n(\tau_n + \vartheta_n) - \bu_n(\tau_n) \Vert_{\mO} = 0 \mbox{ in } \mathbb{P}. 
  \end{equation}
\end{definition}
\begin{remark}\label{rem-lem-Aldous-Rebolledo} 
As in the Aldous paper \cite{Aldous_1978}, it is convenient to regard each process $\bu_n$ to be defined on the time interval $[0,T+1]$, by putting $\bu_n(t)=\bu_n(T)$, if $t \in [T,T+1]$. 
\end{remark}
Hypothesis \eqref{eqn-Aldous-Rebolledo} is equivalent to the following assertion.

\begin{proposition}\label{prop-Aldous-Rebolledo}
 Assume that $(\Omega, \mathscr{F}, \mathbb{F},\mathbb{P})$ is a probability space satisfying the usual conditions and that    
$(\bu_n)_{n \in \mathbb{N}}$ is a sequence of $\mathbb{F}$-adapted, $\mO$-valued continuous processes.
This sequence  satisfies the  \textbf{Aldous-Rebolledo} condition in the space $\mO$ if and only if the following condition is satisfied. \\
\item[\textbf{[A]}]
For every $\eps>0$, $\eta>0$, there exists a $\zeta \in (0,1]$  and $n_0 \in \mathbb{N}$ such that for  every sequence $(\tau_n)_{n=1}^\infty$ of $[0,T]$-valued  stopping times,
          \begin{equation}\label{eqn-Aldous}
             \sup_{n \geq n_0} \sup_{ \vartheta \in [0,\zeta]} \mathbb{P} \left( \Vert \bu_n(\tau_n+\vartheta)-\bu_n(\tau_n) \Vert_{\mO} \geq \eta\right) \leq \eps.
         \end{equation} 
\end{proposition}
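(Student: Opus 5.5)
The plan is to prove the two implications separately. Both rest on elementary facts about convergence in probability. Throughout, each $\bu_n$ is extended to $[0,T+1]$ as in Remark \ref{rem-lem-Aldous-Rebolledo}, so that $\bu_n(\tau_n+\vartheta)$ is well defined for every $\vartheta\in[0,1]$.

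\emph{[A] implies Aldous--Rebolledo.} Fix $\eta>0$, a sequence of stopping times $(\tau_n)$, and $\vartheta_n\to0$. Given $\eps>0$, condition [A] supplies $\zeta\in(0,1]$ and $n_0$. Choose $n_1\ge n_0$ such that $\vartheta_n\le\zeta$ for all $n\ge n_1$. Since $\vartheta_n$ is then an admissible value in the inner supremum of \eqref{eqn-Aldous}, we get $\mathbb{P}(\Vert\bu_n(\tau_n+\vartheta_n)-\bu_n(\tau_n)\Vert_{\mO}\ge\eta)\le\eps$ for all $n\ge n_1$. Because $\eps>0$ is arbitrary, and the event with ``$>\eta$'' is contained in the event with ``$\ge\eta$'', this gives \eqref{eqn-Aldous-Rebolledo}.

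\emph{Aldous--Rebolledo implies [A].} I would argue by contradiction. Suppose [A] fails for some $\eps,\eta>0$. Then for every $\zeta=1/k$ and every $n_0$ there are stopping times and an index $n\ge n_0$ violating \eqref{eqn-Aldous}. The delicate point is quantifier order: in [A] the sequence $(\tau_n)$ is chosen after $\zeta$ and $n_0$, so the negation produces, for each $k$, a sequence $(\tau^{(k)}_n)_n$ together with indices $n\ge k$ and $\vartheta\in[0,1/k]$ such that
\[
\mathbb{P}\bigl(\Vert\bu_n(\tau^{(k)}_n+\vartheta)-\bu_n(\tau^{(k)}_n)\Vert_{\mO}\ge\eta\bigr)>\eps .
\]
Inductively I would choose strictly increasing indices $n_k\ge k$, a stopping time $\sigma_{n_k}:=\tau^{(k)}_{n_k}$ and a number $\vartheta_{n_k}\in[0,1/k]$ realizing this inequality. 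For the remaining indices $n\notin\{n_k\}$, set $\sigma_n=0$ and $\vartheta_n=1/n$. The resulting sequences are admissible: each $\sigma_n$ is a $[0,T]$-valued stopping time and $\vartheta_n\to0$. Applying the Aldous--Rebolledo condition with the level $\eta/2$ gives $\mathbb{P}(\Vert\cdot\Vert_{\mO}>\eta/2)\to0$ along the whole sequence. Along the subsequence $n_k$, however, these probabilities are bounded below by $\mathbb{P}(\Vert\cdot\Vert_{\mO}\ge\eta)>\eps$. This contradiction proves [A].

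The main obstacle is only the bookkeeping in the second step. One has to make sure the negation of [A] is formed with the correct quantifier order, and that the indices $n_k$ can be taken strictly increasing, so that a single admissible sequence of stopping times and times $\vartheta_n$ can be glued together. The switch between ``$\ge\eta$'' and ``$>\eta$'' is handled by passing to the level $\eta/2$, as done above.
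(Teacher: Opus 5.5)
Your proof is correct. The implication [A] $\Rightarrow$ Aldous--Rebolledo is immediate once $n$ is large enough that $\vartheta_n\le\zeta$. For the converse, you form the negation of [A] with the right quantifier order and take $n_0=\max\{k,n_{k-1}+1\}$ at step $k$ so that the $n_k$ are strictly increasing. You then glue the witnesses into a single admissible pair $(\sigma_n)$, $(\vartheta_n)$ and test at level $\eta/2$ to pass from ``$\ge\eta$'' to ``$>\eta$''; this is exactly what is needed.

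The paper states Proposition \ref{prop-Aldous-Rebolledo} without proof: it only asserts the equivalence before quoting \cite[Theorem 3.2]{Metivier_1988}. So there is no argument in the paper to compare with, and your proof supplies this missing step in a self-contained way.
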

The following result is proved in \cite[Theorem 3.2]{Metivier_1988}.
\begin{lemma}\label{lem-Aldous-Rebolledo-1}
Assume the framework of Definition \ref{lem-Aldous-Rebolledo}. 
 Let us denote by $\mathbb{P}^1_n$ the law of the process $\bu_n$ on the space 
 $C([0,T];E)$. If the Aldous-Rebolledo condition is satisfied, then   
\item[\textbf{[T]}] for every $\eta >0$,
         \begin{equation}\label{eqn-Aldous-Rebolledo-2}
            \lim_{\zeta \to 0} \varlimsup_{n \to \infty} \mathbb{P}^1_n \{\omega \in C([0,T];\mO): \mc_{\mO}(\omega,\zeta)> \eta\}= 0,
         \end{equation}
or, equivalently, 
       \begin{equation}\label{eqn-Aldous-Rebolledo-2b}
         \lim_{\zeta \to 0} \varlimsup_{n \to \infty} \mathbb{P} \{\omega \in \Omega: \mc_{\mO}(\bu_n,\zeta)> \eta\}= 0,
       \end{equation}
and vice versa.
\end{lemma}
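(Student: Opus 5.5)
The implication \textbf{[T]} $\Rightarrow$ Aldous--Rebolledo is immediate, and I would dispose of it first. Fix stopping times $\tau_n$, numbers $\vartheta_n\to 0$ and $\eta>0$. Extend the processes to $[0,T+1]$ as in Remark \ref{rem-lem-Aldous-Rebolledo}. For any $\zeta\in(0,1]$ we have $\vartheta_n\le \zeta$ for all large $n$, and then pathwise
\[
\Vert \bu_n(\tau_n+\vartheta_n)-\bu_n(\tau_n)\Vert_{\mO}\le \mc_{\mO}(\bu_n,\zeta).
\]
Hence $\varlimsup_n \mathbb{P}(\Vert \bu_n(\tau_n+\vartheta_n)-\bu_n(\tau_n)\Vert_{\mO}>\eta)\le \varlimsup_n \mathbb{P}(\mc_{\mO}(\bu_n,\zeta)>\eta)$. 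Letting $\zeta\to0$ and using \eqref{eqn-Aldous-Rebolledo-2b} gives \eqref{eqn-Aldous-Rebolledo}. The equivalence of \eqref{eqn-Aldous-Rebolledo-2} and \eqref{eqn-Aldous-Rebolledo-2b} is just the definition of the law $\mathbb{P}^1_n$, together with measurability of $\omega\mapsto \mc_{\mO}(\omega,\zeta)$. That measurability holds because, by continuity, the supremum may be taken over rational times.

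For the converse I would follow Aldous's original argument, working with the uniform version \textbf{[A]} from Proposition \ref{prop-Aldous-Rebolledo}. Fix $\eta,\eps>0$. For each $n$ define the successive $\eta$-exit times
\[
\sigma^n_0=0,\qquad \sigma^n_{k+1}=\inf\{t>\sigma^n_k:\ \Vert \bu_n(t)-\bu_n(\sigma^n_k)\Vert_{\mO}\ge \eta\}\wedge(T+1).
\]
These are stopping times, since the processes are continuous and adapted and the filtration is right-continuous. The deterministic core is the following claim: if every gap $\sigma^n_{k+1}-\sigma^n_k$ with $\sigma^n_k\le T$ is at least $\zeta$, then $\mc_{\mO}(\bu_n,\zeta)\le 4\eta$. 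To see this, note that an interval of length $\zeta$ meets at most two consecutive blocks $[\sigma_k,\sigma_{k+1}]$, and on each block the oscillation is at most $2\eta$. So it suffices to show that, for $\zeta$ small and $n\ge n_0$,
\[
\mathbb{P}\big(\exists k:\ \sigma^n_k\le T,\ \sigma^n_{k+1}-\sigma^n_k<\zeta\big)
\]
is small.

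The key probabilistic step, which I expect to be the main obstacle, is a single-gap estimate. The claim is that for any stopping time $\tau$ with exit time $\sigma$ as above,
\[
\mathbb{P}(\sigma-\tau<\zeta)\le C\,\sup_{\vartheta\le 2\zeta}\sup_{\rho}\mathbb{P}\big(\Vert \bu_n(\rho+\vartheta)-\bu_n(\rho)\Vert_{\mO}\ge \eta/2\big),
\]
where $\rho$ ranges over stopping times. The difficulty is that \textbf{[A]} only controls deterministic increments $\vartheta$, whereas $\sigma-\tau$ is random. Following Aldous, on $\{\sigma-\tau<\zeta\}$ every $\vartheta\in[0,2\zeta]$ satisfies
\[
\text{either}\quad \Vert \bu_n(\tau+\vartheta)-\bu_n(\tau)\Vert_{\mO}\ge\eta/2 \quad\text{or}\quad \Vert \bu_n(\tau+\vartheta)-\bu_n(\sigma)\Vert_{\mO}\ge\eta/2 .
\]
Integrate over $\vartheta$ uniformly on $[0,2\zeta]$ and use Fubini. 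The first alternative is controlled by \textbf{[A]} applied at $\tau$. The second is controlled by \textbf{[A]} applied at $\sigma$ with increment $\vartheta-(\sigma-\tau)$: after the change of variable, the random shift still ranges over a set of measure at least $\zeta$ inside $[0,2\zeta]$. This yields the bound with $C=4$ or so.

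Finally I would control the number of blocks. The single-gap estimate gives $\mathbb{P}(\sigma^n_{k+1}-\sigma^n_k<\zeta)\le\eps$ uniformly in $k$ and $n\ge n_0$. The gaps of the blocks starting before $T$ sum to at most $T+1$, and each gap is at least $\zeta$ with high probability, so a Chebyshev-type bound on $\mathbb{E}\sum_k \min(\sigma^n_{k+1}-\sigma^n_k,\zeta)$ shows that $K_n=\#\{k:\sigma^n_k\le T\}$ exceeds a fixed $K_0=K_0(\eps)$ only with probability at most $\eps$. Here $K_0$ depends on $\eps$ and on $\zeta_1$, the gap scale fixed at this stage. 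Once $K_0$ is fixed, choose $\zeta\le\zeta_1$ so small that the single-gap bound is at most $\eps/K_0$. A union bound over $k\le K_0$ then gives
\[
\mathbb{P}(\mc_{\mO}(\bu_n,\zeta)>4\eta)\le 2\eps\qquad\text{for all } n\ge n_0 .
\]
This proves \eqref{eqn-Aldous-Rebolledo-2b}.
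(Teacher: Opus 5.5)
Your proof is correct. The paper gives no argument for this lemma; it only cites \cite[Theorem 3.2]{Metivier_1988}, i.e.\ Aldous's stopping-time criterion. Your write-up is therefore a self-contained reconstruction of what stands behind that citation rather than a competing method.

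The easy direction and the equivalence of \eqref{eqn-Aldous-Rebolledo-2} and \eqref{eqn-Aldous-Rebolledo-2b} are fine; in fact $\omega\mapsto\mc_{\mO}(\omega,\zeta)$ is $2$-Lipschitz on $C([0,T];\mO)$. For the converse, your ingredients are exactly those of Aldous's proof: the successive $\eta$-exit times; the pathwise bound, which actually gives $\mc_{\mO}(\bu_n,\zeta)\le 3\eta$ whenever every gap starting at some $\sigma^n_k\le T$ is at least $\zeta$; the counting estimate $\mathbb{P}(\sigma^n_K\le T)\le T/(K\zeta_1)+\eps$; and the final union bound with $\zeta$ chosen after $K_0$.

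One step should be stated more carefully.
\begin{itemize}
\item If you average $\vartheta$ over all of $[0,2\zeta]$, then for $\vartheta<\sigma-\tau$ the second alternative is the backward increment $\bu_n(\sigma-(\sigma-\tau-\vartheta))-\bu_n(\sigma)$. Condition \textbf{[A]} does not control this.
\item Average over $\vartheta\in[\zeta,2\zeta]$ instead. On $\{\tau\le T,\ \sigma-\tau<\zeta\}$ the shift $\vartheta-(\sigma-\tau)$ then lies in $(0,2\zeta]$, and you obtain $\mathbb{P}(\tau\le T,\ \sigma-\tau<\zeta)\le 3\sup_{\vartheta\le 2\zeta}\sup_{\rho}\mathbb{P}(\Vert\bu_n(\rho+\vartheta)-\bu_n(\rho)\Vert_{\mO}\ge\eta/2)$.
\item Alternatively, bound the contribution of $\vartheta\in[0,\sigma-\tau)$ crudely by $(\sigma-\tau)/(2\zeta)<1/2$ and absorb it into the left-hand side.
\end{itemize}

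Two smaller points. Apply \textbf{[A]} to $\tau\wedge T$ and $\sigma\wedge T$; this is harmless because of the constant extension beyond $T$. Also, the $n_0$ from the second use of \textbf{[A]} depends on $\zeta$; this is fine, since \textbf{[T]} only involves $\varlimsup_n$.

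Compared with the bare citation, your route costs a page but shows that the hypotheses used are exactly those available here: a right-continuous filtration, $[0,T]$-valued stopping times with the extension of Remark \ref{rem-lem-Aldous-Rebolledo}, and continuous paths. It also produces directly the uniform modulus $\mc_{\mO}$ needed in Corollary \ref{cor-Corollary 3.9 in BM_2013}.
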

\begin{corollary}\label{cor-Corollary 3.9 in BM_2013} 
Assume that $T>0$ and the sequence $(\bu_n)_{n \in \mathbb{N}}$ of continuous $\mathbb{F}$-adapted, $\rU_0^\prime$-valued processes satisfies the Aldous-Rebolledo condition in $\rU_0^\prime$, as well as the following two conditions:
\begin{align}\label{eqn-a}
    &\sup_{n \in \mathbb{N}} \mathbb{E} \sup_{s \in [0,T]} \lvert \bu_n(s) \rvert_{\StokesH}^2 <\infty,  
      \\
    \label{eqn-b}
    & \sup_{n \in \mathbb{N}} \mathbb{E} \Vert \bu_n \Vert_{L^2(0,T;\StokesV)}^2<\infty.
\end{align}
 Then the family $\{\mathbb{P}^1_n: n \in \mathbb{N}\}$ is tight on $\bZ_{T,1}$.
\end{corollary}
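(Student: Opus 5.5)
The approach is the standard Chebyshev-plus-compact-sets argument, built on the deterministic compactness criterion of Lemma \ref{u-compactness-criterion}. Fix $\eps>0$. We aim to build a set $\mathcal{K}_1\subset\bZ_{T,1}$ that satisfies conditions (i)--(iii) of that lemma and has $\mathbb{P}^1_n(\mathcal{K}_1)\geq 1-\eps$ for all $n$. Its closure is then compact in $(\bZ_{T,1},\mathscr{Z}^1)$, and this is exactly tightness.

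\textbf{Step 1 (bounded parts).} By \eqref{eqn-a}, \eqref{eqn-b} and the Chebyshev inequality, there is $R_1=R_1(\eps)$ such that, for every $n$,
\[
\mathbb{P}\Bigl(\sup_{s\in[0,T]}\lvert \bu_n(s)\rvert_{\StokesH}>R_1\Bigr)\leq \frac{\eps}{4}, \qquad \mathbb{P}\bigl(\Vert \bu_n\Vert_{L^2(0,T;\StokesV)}>R_1\bigr)\leq \frac{\eps}{4}.
\]
Let $B_1$ and $B_2$ be the corresponding balls in $\bZ_{T,1}$.

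\textbf{Step 2 (equicontinuity in $\rU_0^\prime$).} The hypothesis gives the Aldous--Rebolledo condition in $\rU_0^\prime$. By Lemma \ref{lem-Aldous-Rebolledo-1}, condition \textbf{[T]} therefore holds. For each $k\in\mathbb{N}$ we use \eqref{eqn-Aldous-Rebolledo-2b} with $\eta=1/k$ to choose $\zeta_k>0$ and $n_k$ such that
\[
\sup_{n\geq n_k}\mathbb{P}\bigl(\mc_{\rU_0^\prime}(\bu_n,\zeta_k)>1/k\bigr)\leq \eps 2^{-k-2}.
\]
Only finitely many indices $n<n_k$ remain. For these we use that each single continuous $\rU_0^\prime$-valued process satisfies $\mc_{\rU_0^\prime}(\bu_n,\zeta)\to 0$ a.s. as $\zeta\to0$, so by shrinking $\zeta_k$ the same bound holds for all $n$. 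Set
\[
A=\bigcap_{k}\bigl\{\bu:\ \mc_{\rU_0^\prime}(\bu,\zeta_k)\leq 1/k\bigr\}.
\]
Then $\mathbb{P}^1_n(A^c)\leq \eps/4$ uniformly in $n$. Since the modulus is monotone in $\zeta$, every element of $A$ satisfies $\sup_{\bu\in A}\mc_{\rU_0^\prime}(\bu,\delta)\to0$ as $\delta\to0$.

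\textbf{Step 3 (conclusion).} Put $\mathcal{K}_1=B_1\cap B_2\cap A$. It satisfies (i)--(iii) of Lemma \ref{u-compactness-criterion}, so it is $\mathscr{Z}^1$-relatively compact. Moreover $\mathbb{P}^1_n(\mathcal{K}_1)\geq 1-\tfrac{3\eps}{4}$ for all $n$.

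The main obstacle is measurability in the non-metrizable space $\bZ_{T,1}$. The sets $B_1$, $B_2$ and $A$ are defined through lower semicontinuous functionals, namely the sup norm in $\StokesH$ (lower semicontinuous for the weak-in-time topology), the $L^2(0,T;\StokesV)$ norm (weakly lower semicontinuous), and the modulus in the metrizable space $C([0,T];\rU_0^\prime)$. Each of these sets is therefore closed, hence Borel. By Corollary \ref{cor-P_n}, $\mathbb{P}^1_n$ is defined on these sets, so $\mathbb{P}^1_n(\overline{\mathcal{K}_1})=\mathbb{P}^1_n(\mathcal{K}_1)$ makes sense, and in fact $\mathcal{K}_1$ is itself compact.

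A second delicate point is the handling of the finitely many early indices in Step 2. I would resolve it as described there: each single continuous process has a modulus of continuity that tends to $0$ almost surely.
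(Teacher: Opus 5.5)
Your proposal is correct and follows essentially the same route as the paper: Chebyshev bounds from \eqref{eqn-a}--\eqref{eqn-b}, condition \textbf{[T]} via Lemma \ref{lem-Aldous-Rebolledo-1} with the finitely many early indices handled separately to build $\bigcap_k\{\mc_{\rU_0^\prime}(\cdot,\zeta_k)\leq 1/k\}$, and then the deterministic criterion of Lemma \ref{u-compactness-criterion}. Your observation that the three sets are already closed in $\bZ_{T,1}$, so that no closure is needed and measurability is immediate, is a small tidying of the paper's $K_\eps=\overline{\mathbb{B}_1\cap\mathbb{B}_2\cap\mathbb{A}_\eps}$ step.
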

\begin{proof}[Proof of Corollary \ref{cor-Corollary 3.9 in BM_2013}]
 Our objective is to prove that the family $\{\mathbb{P}^1_n: n \in \mathbb{N}\}$ is tight. \newline
Let us first observe that  according to  Lemma \ref{lem-Aldous-Rebolledo-1}, the following  assertion  follows from the \textbf{Aldous-Rebolledo} condition. 
For each $\eta>0$, the sequence $(\bu_n)_{n=1}^\infty$ satisfies
   \begin{equation}\label{Eqt-Aldous-condition}
      \lim_{\zeta \to 0} \varlimsup_{n \to \infty} \mathbb{P} \{\omega \in \Omega: \mc_{\rU_0^\prime}(\bu_n(\omega),\zeta)> \eta\}= 0.
  \end{equation}
In other words, Hypothesis \textbf{[T]} is satisfied. \newline 
Condition \eqref{Eqt-Aldous-condition} means that for every $\eta>0$, for every $\eps>0$, there exists $\zeta_0>0$ and there exists $n_0 \in \mathbb{N}$ such that for every $\zeta\in (0,\zeta_0]$ and every $n\geq n_0+1$,
\[
\mathbb{P} \{\omega \in \Omega: \mc_{\rU_0^\prime}(\bu_n(\omega),\zeta)> \eta\} \leq \eps.
\]
The law of each $\bu_n$ is tight on $C([0,T];\rU_0^\prime)$, so for every $n \in \{1,\cdots,n_0\}$, there exists $\zeta_n>0$ such that 
for every $\zeta\in (0,\zeta_n]$,
\[
\mathbb{P} \{\omega \in \Omega: \mc_{\rU_0^\prime}(\bu_n(\omega),\zeta)> \eta\} \leq \eps.
\]
Put $\zeta^\ast \coloneqq \min\{\zeta_k: k=0,\cdots n_0\}$. We infer that if $\zeta \in (0,\zeta^\ast]$, 
then $\zeta \leq \zeta_k$ for every $k=0,\cdots n_0$ and so for every $n=1,\cdots,n_0$, 
\[
\mathbb{P} \{\omega \in \Omega: \mc_{\rU_0^\prime}(\bu_n(\omega),\zeta)> \eta\} \leq \eps.
\]
Hence, we proved that for every $\eta>0$, for every $\eps>0$ there exists $\zeta_0>0$  such that for every $\zeta\in (0,\zeta_0]$ and every $n\in \mathbb{N}$, 
\[
\mathbb{P} \{\omega \in \Omega: \mc_{\rU_0^\prime}(\bu_n(\omega),\zeta)> \eta\} \leq \eps.
\]
In other words, we proved that 
for each $\eta>0$, the sequence $(\bu_n)_{n=1}^\infty$ satisfies
       \begin{equation}\label{Eqt-Aldous-condition-2}
           \lim_{\zeta \to 0} \sup_{n} \mathbb{P} \{\omega \in \Omega: \mc_{\rU_0^\prime}(\bu_n(\omega),\zeta)> \eta\}= 0.
       \end{equation}
Subsequently, let us choose and fix $\eps>0$. We will find a compact set $K_\eps \subset \bZ_{T,1}$ such that
\begin{equation}\label{eqn-K_eps}
\mathbb{P}^1_n(K_\eps) \geq 1 - \eps \mbox{ for every } n \in \mathbb{N}.
\end{equation}
\textbf{Step 1:} We will find a Borel set $\mathbb{A}_\eps \subset C([0,T];\rU_0^\prime)$ such that
     \begin{equation}\label{Eqn-measure-of-A-epsilon}
     \begin{aligned}
       \mathbb{P}^1_n(\mathbb{A}_\eps) &\geq 1- \eps/3,  \mbox{  for each $n$, and }
       \\
       &\lim_{\zeta \to 0} \sup_{\bu \in \mathbb{A}_\eps} \mc_{\rU_0^\prime}(\bu,\zeta)=0.
       \end{aligned}
     \end{equation}
First, we infer from \eqref{Eqt-Aldous-condition-2} that for each $k \in \mathbb{N}$, there exists $\zeta_k$ such that
   \begin{equation}\label{Eqtn-5.9-1}
     \sup_{n} \mathbb{P} \left\{\omega \in \Omega: \mc_{\rU_0^\prime}(\bu_n(\omega),\zeta_k)> \frac1k \right\} \leq \frac{\eps}{3 \cdot 2^{k+1}}. 
  \end{equation}
From now on, for each $k$, let
     \begin{align*}
       B_k \coloneqq \left\{\bu \in C([0,T];\rU_0^\prime): \mc_{\rU_0^\prime}(\bu,\zeta_k) \leq \frac1k \right\}   \mbox{ and }  \mathbb{A}_\eps \coloneqq \bigcap_{k=1}^\infty B_k.
     \end{align*}
Next, thanks to \eqref{Eqtn-5.9-1}, we have 
\begin{align*}
&\mathbb{P}^1_n(C([0,T];\rU_0^\prime) \setminus \mathbb{A}_\eps)
= \mathbb{P}^1_n\left(\bigcup_{k=1}^\infty (C([0,T];\rU_0^\prime)  \setminus  B_k)\right)
    \\
&\leq \sum_{k=1}^\infty \mathbb{P}^1_n\left(C([0,T];\rU_0^\prime)  \setminus  B_k\right)
=\sum_{k=1}^\infty \mathbb{P}^1_n\left(\bu \in C([0,T];\rU_0^\prime): \mc_{\rU_0^\prime}(\bu,\zeta_k)> \frac1k \right)
      \\
&= \sum_{k=1}^\infty \mathbb{P} \left(\omega \in \Omega: \mc_{\rU_0^\prime}(\bu_n(\omega),\zeta_k)> \frac1k \right)
\leq \sum_{k=1}^\infty \frac{\eps}{3 \cdot 2^{k+1}}
= \frac{\eps}{3}.
\end{align*}
Consequently, $\mathbb{P}^1_n(\mathbb{A}_\eps) \geq 1- \frac{\eps}{3}$. This proves the first part of \eqref{Eqn-measure-of-A-epsilon}.

\noindent
Let us now move to the proof of the second part of \eqref{Eqn-measure-of-A-epsilon}. 
To this end, we choose and fix $\bar{\eps}>0$. Obviously, from the definition of $\mathbb{A}_\eps$, we get $\sup_{\bu \in \mathbb{A}_\eps} \mc_{\rU_0^\prime}(\bu,\zeta_k) \leq \frac1k$, $k \in \mathbb{N}$. 
Next, let us choose and fix $\bar{k}_0 \in \mathbb{N}$ such that $1/\bar{k}_0 \leq \bar{\eps}$. Since  $\mathbb{A}_\eps \subset B_{\bar{k}_0}$,  we infer that  for every $\zeta \in (0,\zeta_{\bar{k}_0}]$, we have 
\[
\mc_{\rU_0^\prime}(\bu,\zeta) \leq \mc_{\rU_0^\prime}(\bu,\zeta_{\bar{k}_0}) \leq \bar{\eps}, \; \; \bu \in \mathbb{A}_\eps.
\]
This completes the proof of the second part of \eqref{Eqn-measure-of-A-epsilon}.

\noindent
\textbf{Step 2:} Let us set
\[
R \coloneqq \frac{3}{\eps} \sup_{n \in \mathbb{N}} \mathbb{E} \left[ \sup_{s \in [0,T]} \lvert \bu_n(s)\rvert_{\StokesH}^2 + \Vert \bu_n \Vert^2_{L^2(0,T;\StokesV)}  \right] <\infty.
\]
Then, the Chebyshev inequality, \eqref{eqn-a}, and \eqref{eqn-b} lead to
\begin{align*}
\mathbb{P} \left(\sup_{s \in [0,T]} \lvert \bu_n(s) \rvert_{\StokesH}^2> R \right)
&\leq \frac1R \mathbb{E} \sup_{s \in [0,T]} \lvert \bu_n(s) \rvert_{\StokesH}^2
\leq \frac{\eps}{3}, 
  \\
\mathbb{P} \left(\Vert \bu_n \Vert^2_{L^2(0,T;\StokesV)} > R \right)
&\leq \frac{1}{R} \mathbb{E} \Vert \bu_n \Vert^2_{L^2(0,T;\StokesV)}
\leq \frac{\eps}{3}.
\end{align*}
We now introduce the following sets:
      \begin{align}
          \mathbb{B}_1&\coloneqq \{\bv \in \bZ_{T,1}: \sup_{s \in [0,T]} \lvert \bv(s)\rvert_{\StokesH}^2 \leq R\} \mbox{ and }
            \\
          \mathbb{B}_2&\coloneqq \{\bv \in \bZ_{T,1}: \Vert \bv \Vert^2_{L^2(0,T;\StokesV)}\leq  R\},
      \end{align}
and we infer that $\mathbb{P}^1_n(\mathbb{B}_1) \geq 1 - \eps/3$ and $\mathbb{P}^1_n(\mathbb{B}_2)\geq 1 - \eps/3$. \newline
It then follows from the previous two \textbf{steps} that
\[
\mathbb{P}^1_n(\overline{\mathbb{B}_1 \cap \mathbb{B}_2 \cap \mathbb{A}_\eps})
\geq 1 - \eps/3 - \eps/3 - \eps/3= 1 - \eps.
\]
Let us put $K_\eps\coloneqq \overline{\mathbb{B}_1 \cap \mathbb{B}_2 \cap \mathbb{A}_\eps}$. The above implies that property 
\eqref{eqn-K_eps} holds. It remains to show that $K_\eps$ is a  compact subset of $\bZ_{T,1}$. But this is a consequence of Lemma \ref{u-compactness-criterion}. This completes the proof of Corollary \ref{cor-Corollary 3.9 in BM_2013}.
\end{proof}
In view of Lemma \ref{phi-compactness-criterion}, to show that the family of law of $\phi_n$ is tight, we need the following result.
\begin{corollary}\label{cor-Corollary 3.9 in BM_2013-a}
Assume that $T>0$ and the sequence $(\phi_n)_{n \in \mathbb{N}}$ of continuous $\mathbb{F}$-adapted, $\newonep{V}$-valued processes satisfies the Aldous-Rebolledo condition in $\newonep{V}$, as well as the following two conditions:
\begin{trivlist}
\item[(i)] $\sup_{n \in \mathbb{N}} \mathbb{E} \sup_{s \in [0,T]} \lvert \phi_n(s) \rvert_{\newone{H}}^2< \infty$,
\item[(ii)] $\sup_{n \in \mathbb{N}} \mathbb{E} \int_0^T \Vert \phi_n(s) \Vert_{\newone{V}}^\beta< \infty$.
\end{trivlist}
Then the laws $\{\mathbb{P}_n^2: n \in \mathbb{N}\}$ of $\phi_n$ are tight on 
$\bZ_{T,2}$. 
\end{corollary}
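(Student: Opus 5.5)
The proof mirrors that of Corollary \ref{cor-Corollary 3.9 in BM_2013}, with Lemma \ref{phi-compactness-criterion} replacing Lemma \ref{u-compactness-criterion} and the space $\newonep{V}$ replacing $\rU_0^\prime$. Fix $\eps>0$. The goal is a set $K_\eps\subset\bZ_{T,2}$, relatively compact for $\mathscr{Z}^2$, such that $\mathbb{P}^2_n(K_\eps)\geq 1-\eps$ for every $n\in\mathbb{N}$. We assemble it from three pieces, one for each hypothesis (i)--(iii) of Lemma \ref{phi-compactness-criterion}, and give each piece probability at least $1-\eps/3$ uniformly in $n$.

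\textbf{Step 1 (modulus of continuity).} By Lemma \ref{lem-Aldous-Rebolledo-1} applied with $E=\newonep{V}$, the Aldous-Rebolledo condition gives, for every $\eta>0$,
\[
\lim_{\zeta\to0}\varlimsup_{n\to\infty}\mathbb{P}\{\mc_{\newonep{V}}(\phi_n,\zeta)>\eta\}=0 .
\]
We upgrade $\varlimsup_n$ to $\sup_n$ exactly as in the first part of the proof of Corollary \ref{cor-Corollary 3.9 in BM_2013}. Each of the finitely many $n\leq n_0$ has a continuous path in $C([0,T];\newonep{V})$, so its law is tight there, and for that $n$ the probability $\mathbb{P}\{\mc_{\newonep{V}}(\phi_n,\zeta)>\eta\}$ tends to $0$ as $\zeta\to0$. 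Then for each $k$ we pick $\zeta_k$ with
\[
\sup_n\mathbb{P}\{\mc_{\newonep{V}}(\phi_n,\zeta_k)>1/k\}\leq \eps/(3\cdot 2^{k+1}).
\]
Set $\mathbb{A}_\eps=\bigcap_k\{\phi:\mc_{\newonep{V}}(\phi,\zeta_k)\leq 1/k\}$. This set is Borel in $C([0,T];\newonep{V})$, satisfies $\mathbb{P}^2_n(\mathbb{A}_\eps)\geq1-\eps/3$ for all $n$, and has $\lim_{\zeta\to0}\sup_{\phi\in\mathbb{A}_\eps}\mc_{\newonep{V}}(\phi,\zeta)=0$.

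\textbf{Step 2 (bounded sets).} Let
\[
R=\frac{3}{\eps}\sup_n\mathbb{E}\Bigl[\sup_{s\in[0,T]}|\phi_n(s)|^2_{\newone{H}}+\int_0^T\|\phi_n(s)\|^\beta_{\newone{V}}\,\d s\Bigr],
\]
which is finite by (i) and (ii). Define $\mathbb{B}_1=\{\sup_s|\phi(s)|^2_{\newone{H}}\leq R\}$ and $\mathbb{B}_2=\{\|\phi\|^\beta_{L^\beta(0,T;\newone{V})}\leq R\}$. Chebyshev's inequality gives $\mathbb{P}^2_n(\mathbb{B}_i)\geq1-\eps/3$ for $i=1,2$. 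Put $K_\eps=\overline{\mathbb{B}_1\cap\mathbb{B}_2\cap\mathbb{A}_\eps}$, the closure taken in $\bZ_{T,2}$. Then $\mathbb{P}^2_n(K_\eps)\geq1-\eps$ for every $n$. Moreover $\mathbb{B}_1\cap\mathbb{B}_2\cap\mathbb{A}_\eps$ satisfies conditions (i)--(iii) of Lemma \ref{phi-compactness-criterion}, so it is relatively compact and $K_\eps$ is compact.

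\textbf{Main obstacle: measurability.} The space $\bZ_{T,2}$ carries non-metrizable weak topologies, so some care is needed.
\begin{itemize}
\item The sets $\mathbb{B}_1$ and $\mathbb{B}_2$ are closed in $\bZ_{T,2}$. $\mathbb{B}_1$ is closed because the $\newone{H}$-norm is lower semicontinuous for the weak topology, uniformly in $t$ in $C([0,T];\newone{H}_{,w})$. $\mathbb{B}_2$ is closed because a norm ball is weakly closed in $L^\beta_w(0,T;\newone{V})$.
\item $\mathbb{A}_\eps$ is closed in $C([0,T];\newonep{V})$, since $\mc_{\newonep{V}}(\cdot,\zeta)$ is continuous there.
\item Hence all three sets are Borel in $\bZ_{T,2}$, and their probabilities under the law $\mathbb{P}^2_n$ (which is well defined by Corollary \ref{cor-P_n}) are evaluated directly on the trajectories of $\phi_n$.
\end{itemize}
The one delicate point is that the relative compactness in Lemma \ref{phi-compactness-criterion} must also hold after taking the closure. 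This is handled by noting that conditions (i)--(iii) are preserved under closure: each is a closed condition in the respective topology.
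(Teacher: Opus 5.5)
Your proposal is correct and follows the paper's route. The paper proves this corollary by repeating the argument of Corollary \ref{cor-Corollary 3.9 in BM_2013}, with Lemma \ref{phi-compactness-criterion} in place of Lemma \ref{u-compactness-criterion}, which is exactly what you do, with more care about measurability. One small point: $\mathbb{B}_1$, $\mathbb{B}_2$ and $\mathbb{A}_\eps$ are each closed in $\bZ_{T,2}$, so their intersection is already closed and the closure step costs nothing, since \emph{relatively compact} means by definition that the closure is compact.
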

\begin{proof}[Proof of Corollary \ref{cor-Corollary 3.9 in BM_2013-a}]
Similar to the proof of Corollary \ref{cor-Corollary 3.9 in BM_2013}.
\end{proof}
The following result is relative to the tightness of laws of $(\bX_n)_{n=1}^\infty$, $\bX_n=(\bu_n,\phi_n)$.
\begin{lemma}\label{Lem-compactness-measure}
The set of measures $\{\mathbb{P}_n,\; n\geq 1\}$ is tight  on $(\bZ_T,\mathscr{Z})$.
\end{lemma}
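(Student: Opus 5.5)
The plan is to reduce the tightness of $\{\mathbb{P}_n\}$ on $\bZ_T=\bZ_{T,1}\times\bZ_{T,2}$ to the tightness of the marginals, using Corollaries \ref{cor-Corollary 3.9 in BM_2013} and \ref{cor-Corollary 3.9 in BM_2013-a}. If $K^1_\eps\subset\bZ_{T,1}$ and $K^2_\eps\subset\bZ_{T,2}$ are compact with $\mathbb{P}^1_n(K^1_\eps)\ge 1-\eps/2$ and $\mathbb{P}^2_n(K^2_\eps)\ge1-\eps/2$ for all $n$, then $K^1_\eps\times K^2_\eps$ is compact in the product topology. The bound
\[
\mathbb{P}_n(K^1_\eps\times K^2_\eps)\ge 1-\mathbb{P}(\bu_n\notin K^1_\eps)-\mathbb{P}(\phi_n\notin K^2_\eps)\ge1-\eps
\]
holds regardless of the product structure of the law. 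So it suffices to verify the hypotheses of the two corollaries.

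\textbf{Moment hypotheses.} Conditions \eqref{eqn-a} and \eqref{eqn-b} for $\bu_n$, and (i) for $\phi_n$, follow directly from \eqref{eq-3.23} in Proposition \ref{prop-First-propo}. Condition (ii) for $\phi_n$ is the uniform bound on $\mathbb{E}\int_0^T\Vert\phi_n\Vert^\beta_{\newone{V}}$. It comes from \eqref{H3-phi-n-estimate} when $d=3$ and from \eqref{H3-phi-n-estimate-1} when $d=2$, since $\Vert\cdot\Vert_{\newone{V}}$ is controlled by the $H^3$ norm. This bound is recorded in Theorem \ref{thm-main-Galerkin}.

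\textbf{Aldous--Rebolledo for $\bu_n$ in $\rU_0^\prime$.} Write
\[
\bu_n(\tau_n+\vartheta_n)-\bu_n(\tau_n)=J_1+J_2+J_3+J_4+J_5,
\]
where $J_1,J_2,J_3$ are the integrals over $[\tau_n,\tau_n+\vartheta_n]$ of $-\nu\Stokes_{,n}\bu_n$, $-\bB_{0,n}(\bu_n,\bu_n)$ and $\newK\bar R_{0,n}(\phi_n,\phi_n)$, and $J_4,J_5$ are the stochastic integrals of $\bG_{0,n}$ and $\bSi_0^n\bu_n$. The plan is to show $\mathbb{E}\Vert J_i\Vert_{\rU_0^\prime}\le C\vartheta_n^{\gamma}$ for some $\gamma>0$ and then conclude by Chebyshev.
\begin{itemize}
\item $J_1$: use $\Vert\Stokes\bu\Vert_{\rU_0^\prime}\le C\Vert\bu\Vert_{\StokesV}$, together with the fact that $\pi_{0,n}$ is a contraction on $\rU_0^\prime$ (Properties \ref{eqn-properties-pi_{0,n}}). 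Cauchy--Schwarz then gives a bound $\vartheta_n^{1/2}\big(\mathbb{E}\Vert\bu_n\Vert^2_{L^2(0,T;\StokesV)}\big)^{1/2}$.
\item $J_2$: Corollary \ref{2nd-corollary} gives $\le C\vartheta_n\,\mathbb{E}\sup_s|\bu_n|^2_{\StokesH}$.
\item $J_3$: Corollary \ref{2nd-corollary} gives $\le C\,\mathbb{E}\big[\sup_s|\phi_n|_{\newone{H}}\,\vartheta_n^{1/2}\Vert\cp_n\Vert_{L^2(0,T;\newone{H})}\big]$, which is $\le C\vartheta_n^{1/2}$ by Cauchy--Schwarz in $\omega$ and \eqref{eq-3.23}.
\item $J_4,J_5$: use the It\^o isometry in $\StokesH$ (which embeds into $\rU_0^\prime$). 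This gives $\mathbb{E}\Vert J_4\Vert^2\le C\,\mathbb{E}\int_{\tau_n}^{\tau_n+\vartheta_n}(|\bu_n|^2+|\tilde h|^2)$, which tends to $0$ by $\sup_s$ bounds and uniform integrability of $|\tilde h|^2$. For $J_5$, use \eqref{eq-3.20aa} in $\StokesVp\embed\rU_0^\prime$ to get $\le C\vartheta_n\,\mathbb{E}\sup_s|\bu_n|^2_{\StokesH}$.
\end{itemize}

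\textbf{Aldous--Rebolledo for $\phi_n$ in $\newonep{V}$.} Here we treat the two drift terms, neither of which involves a stochastic integral. For the convective term, \eqref{eq-4.1} gives
\[
\Vert B_{1,n}(\bu_n,\phi_n)\Vert_{\newonep{V}}\le C\Vert\bu_n\Vert_{\StokesV}|\phi_n|_{\newone{H}}.
\]
By Cauchy--Schwarz in time and then in $\omega$ this yields a $\vartheta_n^{1/2}$ bound, using $\mathbb{E}\sup|\phi_n|^2_{\newone{H}}$ and $\mathbb{E}\Vert\bu_n\Vert^2_{L^2\StokesV}$. One subtlety is that a product of two square-integrable quantities only controls $\mathbb{E}$ of the product; this is still enough for convergence in probability. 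For the chemical potential term, $\Athree:\newone{H}\to\newonep{V}$ is bounded and $\pi_{1,n}$ is a contraction on $\newonep{V}$, so this term is bounded by $\vartheta_n^{1/2}\Vert\cp_n\Vert_{L^2(0,T;\newone{H})}$ and hence $\lesssim\vartheta_n^{1/2}$ in $L^1(\Omega)$.

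The main obstacle is the $\bR_0$ term and the $B_{1,n}$ term, because they are quadratic and couple the two components, so only first moments of products are available. I would handle them by bounding in probability rather than in $L^2(\Omega)$, using Hölder in $\omega$ as indicated above. The Chebyshev argument needs only $L^1(\Omega)$ control, so no higher moments are required.
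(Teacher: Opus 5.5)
Your proposal is correct and follows essentially the same route as the paper. Both verify the moment hypotheses of Corollaries \ref{cor-Corollary 3.9 in BM_2013} and \ref{cor-Corollary 3.9 in BM_2013-a} from the a priori estimates, and both check the Aldous--Rebolledo condition term by term, using Cauchy--Schwarz in time and in $\omega$ for the drifts and the It\^o isometry for the noise, with the same $\vartheta_n^{1/2}$ and $\vartheta_n$ rates.

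You also make two steps explicit that the paper leaves implicit. First, you pass from tightness of the marginals to tightness of the joint law via $K^1_\eps\times K^2_\eps$; this does not need the product structure $\mathbb{P}_n=\mathbb{P}^1_n\otimes\mathbb{P}^2_n$ asserted in Definition \ref{def-P_n}, which is false in general since $\bu_n$ and $\phi_n$ are coupled. Second, you treat the transport-noise integral separately via \eqref{eq-3.20aa}, a bound the paper uses only implicitly, through Corollary \ref{2nd-corollary}.
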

\begin{proof}[Proof of Lemma \ref{Lem-compactness-measure}]
For the proof, we will use Corollaries \ref{cor-Corollary 3.9 in BM_2013} and \ref{cor-Corollary 3.9 in BM_2013-a}. 
Let $\bX_n$, $n\in \mathbb{N}$, be the solution to \eqref{eqn-Compact-Galerkin-Modified-stochastic-CHNSEs-n}. 
Before proceeding with the proof of Lemma \ref{Lem-compactness-measure}, we point out that, throughout the proof, $C$ denotes a generic positive constant independent of $n$, but possibly depending on $\domO,\,\delta_0,\,\lambda_1,\,\newK,\,C_3$, and $T$. 
\newline
We begin with the first sequence. From the estimate \eqref{eq-3.23-global}, the sequence $(\bu_n)_n$ satisfies both conditions of Corollary \ref{cor-Corollary 3.9 in BM_2013}, i.e. \eqref{eqn-a} and \eqref{eqn-b}. 
It therefore remains to verify the Aldous-Rebolledo condition for the process $\bu_n$, $n \in \mathbb{N}$.
To this end, we fix sequences of stopping times $\tau_n= \tau_n(\bX_n)$ and $\vartheta_n$ as in Definition \ref{lem-Aldous-Rebolledo}.
From \eqref{eqn-Compact-Galerkin-Modified-stochastic-CHNSEs-n}, we have
\begin{align*}
\bu_n(t)
&= \bu_{0,n} - \nu \int_0^t \Stokes_{,n} \bu_n(s) \, \d s - \int_0^t \bB_{0,n}(\bu_n(s),\bu_n(s)) \,\d s + \newK \int_0^t \bar{R}_{0,n}(\phi_n(s),\phi_n(s)) \,\d s \\
& + \int_0^t [\bG_{0,n}(s,\bu_n(s)) + \bSi_0^n(s)\bu_n(s)]\, \d W(s) 
\coloneqq I_1^n(t) + I_2^n(t) + I_3^n(t) + I_4^n(t) + I_5^n(t),\; t \in [0,T].
\end{align*}
We now estimate the terms on the RHS individually.
Recall that the number $k_0$ has been chosen and fixed so that it satisfies the condition \eqref{eqn-k_0}. 
\newline
Since 
$\Vert \Stokes_{,n} \bu_n \Vert_{\StokesVp}
= \Vert \Stokes \bu_n \Vert_{\StokesVp} 
\leq \Vert \bu_n \Vert_{\StokesV}$ and the embedding $\StokesVp \embed \rU_0^\prime$ is continuous, 
we infer from \eqref{eq-3.23-global} and the H\"older inequality that
\begin{align*}
&\mathbb{E} \Vert I_{2}^n(\tau_n + \vartheta_n) - I_{2}^n(\tau_n) \Vert_{\rU_0^\prime}
= \mathbb{E} \left\Vert \int_{\tau_n}^{\tau_n + \vartheta_n} \nu \Stokes_{,n} \bu_n(s)\, \d s \right\Vert_{\rU_0^\prime} 
\leq \nu \mathbb{E} \int_{\tau_n}^{\tau_n + \vartheta_n} \Vert \Stokes_{,n} \bu_n(s) \Vert_{\rU_0^\prime}\, \d s 
    \\
&= \nu \mathbb{E} \int_{\tau_n}^{\tau_n + \vartheta_n} \Vert \Stokes \bu_n(s) \Vert_{\rU_0^\prime}\, \d s 
\leq C \mathbb{E} \int_{\tau_n}^{\tau_n + \vartheta_n} \Vert \Stokes \bu_n(s) \Vert_{\StokesVp}\, \d s 
\\
&\leq C \mathbb{E} \int_{\tau_n}^{\tau_n + \vartheta_n} \Vert \bu_n(s) \Vert_{\StokesV}\, \d s
\leq C \cdot \left[\left(\mathbb{E} \int_0^T \Vert \bu_n(s) \Vert_{\StokesV}^2\, \d s \right)^{\frac12} \right] \cdot \vartheta_n^{1/2}
\leq C \vartheta_n^{1/2}.
\end{align*}
The latter implies
  \begin{equation}\label{eq-4.3}
    \lim_{n \to \infty} \mathbb{E} \Vert I_{2}^n(\tau_n + \vartheta_n) - I_{2}^n(\tau_n) \Vert_{\rU_0^\prime}= 0.
  \end{equation}
Analogously, by Proposition \ref{prop-First-propo} and Corollary \ref{2nd-corollary}, we deduce that for every $n \in \mathbb{N}$,
\begin{align*}
&\mathbb{E} \Vert I_{3}^n(\tau_n + \vartheta_n) - I_{3}^n(\tau_n) \Vert_{\rU_0^\prime} 
= \mathbb{E} \left \Vert \int_{\tau_n}^{\tau_n + \vartheta_n} \bB_{0,n}(\bu_n(s),\bu_n(s))\, \d s \right \Vert_{\rU_0^\prime} 
\\
&\leq \mathbb{E} \int_{\tau_n}^{\tau_n + \vartheta_n} \Vert \bB_{0,n}(\bu_n(s),\bu_n(s)) \Vert_{\rU_0^\prime}\, \d s 
\leq C \mathbb{E} \int_{\tau_n}^{\tau_n + \vartheta_n} \lvert \bu_n(s) \rvert_{\StokesH}^2\, \d s 
  \\
&\leq  C \,\vartheta_n \,\mathbb{E} \sup_{s \in [0,T]} \lvert \bu_n(s) \rvert_{\StokesH}^2
\leq  C \,\vartheta_n.
\end{align*}
Similarly, we have for every $n \in \mathbb{N}$, 
\begin{align*}
&\mathbb{E} \Vert I_{4}^n(\tau_n + \vartheta_n) - I_{4}^n(\tau_n) \Vert_{\rU_0^\prime} 
= \mathbb{E} \left \Vert \int_{\tau_n}^{\tau_n + \vartheta_n} \newK \bar{R}_{0,n}(\phi_n(s),\phi_n(s))\, \d s \right \Vert_{\rU_0^\prime}
\\
&\leq  \mathbb{E} \int_{\tau_n}^{\tau_n + \vartheta_n} \Vert \newK \bar{R}_{0,n}(\phi_n(s),\phi_n(s)) \Vert_{\rU_0^\prime}\, \d s 
\leq C \mathbb{E} \int_{\tau_n}^{\tau_n + \vartheta_n} \lvert \cp_n(s) \rvert_{\newone{H}} \lvert \phi_n(s) \rvert_{\newone{H}} \, \d s \\
&\leq  C \,\mathbb{E} \left[ \int_{\tau_n}^{\tau_n + \vartheta_n} \lvert \cp_n(s) \rvert_{\newone{H}}^2\, \d s\right]^\frac{1}{2} \left[\int_{\tau_n}^{\tau_n + \vartheta_n} \lvert \phi_n(s) \rvert_{\newone{H}}^2\, \d s\right]^\frac{1}{2}
  \\
&\leq  C \,\vartheta_n^{1/2} \left[ \mathbb{E} \sup_{s \in [0,T]} \lvert \phi_n(s) \rvert_{\newone{H}}^2 \right]^{\frac{1}{2}} \left[ \mathbb{E} \int_{\tau_n}^{\tau_n + \vartheta_n} \lvert \cp_n(s) \rvert_{\newone{H}}^2\, \d s\right]^\frac{1}{2}
\\
&\leq  C \,\vartheta_n^{1/2} \left[ \mathbb{E} \sup_{s \in [0,T]} \lvert \phi_n(s) \rvert_{\newone{H}}^2 \right]^{\frac{1}{2}} \left[ \mathbb{E} \int_0^T \lvert \cp_n(s) \rvert_{\newone{H}}^2\, \d s\right]^\frac{1}{2} 
\leq  C \,\vartheta_n^{1/2},
\end{align*}
and since the constant $C$ is independent of $n$, we infer that
\begin{equation}\label{eq-4.3-a}
\begin{aligned}
\lim_{n \to \infty} \mathbb{E} \Vert I_{3}^n(\tau_n + \vartheta_n) - I_{3}^n(\tau_n) \Vert_{\rU_0^\prime}= 0, 
  \\
\lim_{n \to \infty} \mathbb{E} \Vert I_{4}^n(\tau_n + \vartheta_n) - I_{4}^n(\tau_n) \Vert_{\rU_0^\prime}= 0.
\end{aligned}
\end{equation}
Let us consider the term $I_{5}^n$. Notice that
\begin{align*}
&\mathbb{E} \Vert I_{5}^n(\tau_n + \vartheta_n) - I_{5}^n(\tau_n) \Vert_{\rU_0^\prime}^2
= \mathbb{E}
\left \Vert \int_{\tau_n}^{\tau_n + \vartheta_n} [\bG_{0,n}(s,\bu_n(s)) + \bSi_0^n(s)\bu_n(s)]\, \d W(s) \right \Vert_{\rU_0^\prime}^2 \\
&= \mathbb{E} \int_{\tau_n}^{\tau_n + \vartheta_n} \Vert \bG_{0,n}(s,\bu_n(s)) + \bSi_0^n(s)\bu_n(s) \Vert_{\mathscr{T}_2(\ell^2,\rU_0^\prime)}^2 \,\d s.
\end{align*}
By Example \ref{example:HS}, the continuity of the injection $\StokesVp \embed \rU_0^\prime$, Proposition \ref{prop-First-propo}, and Corollary \ref{2nd-corollary}, we infer that for every $n \in \mathbb{N}$,
\begin{align*}
&\mathbb{E} \Vert I_{5}^n(\tau_n + \vartheta_n) - I_{5}^n(\tau_n) \Vert_{\rU_0^\prime}^2
\leq C \mathbb{E} \int_{\tau_n}^{\tau_n + \vartheta_n} \Vert \bG_{0,n}(s,\bu_n(s)) + \bSi_0^n(s)\bu_n(s) \Vert_{\ell^2(\StokesVp)}^2\d s
\\
&\leq  C \mathbb{E} \int_{\tau_n}^{\tau_n + \vartheta_n} \left(\lvert \bu_n(s) \rvert_{\StokesH}^2 + \lvert \tilde{h}(s) \rvert^2 \right)\d s 
\leq C \left[\vartheta_n \, \mathbb{E} \sup_{s \in [0,T]} \lvert \bu_n(s) \rvert_{\StokesH}^2 + \mathbb{E} \int_{\tau_n}^{\tau_n + \vartheta_n} \lvert \tilde{h}(s) \rvert^2 \, \d s \right]
\\
&\leq C \left[\vartheta_n + \mathbb{E} \int_{\tau_n}^{\tau_n + \vartheta_n} \lvert \tilde{h}(s) \rvert^2\,\d s \right].
\end{align*}
Thus,
      \begin{equation}\label{eq-4.3-b}
        \lim_{ n \to \infty} \mathbb{E} \Vert I_{5}^n(\tau_n + \vartheta_n) - I_{5}^n(\tau_n) \Vert_{\rU_0^\prime}^2= 0.
     \end{equation}
The convergences \eqref{eq-4.3}, \eqref{eq-4.3-a} and \eqref{eq-4.3-b} lead to
  \begin{equation}\label{convergence-4.5}
   \lim_{n \to \infty} \mathbb{E} \Vert \bu_n(\tau_n + \vartheta_n) - \bu_n(\tau_n) \Vert_{\rU_0^\prime}= 0,
 \end{equation}
which, together with the Chebyshev inequality, yields that the sequence $(\bu_n)_{n \in \mathbb{N}}$ satisfies the Aldous-Rebolledo condition \eqref{eqn-Aldous-Rebolledo} in the space $\rU_0^\prime$. 
Hence, we can apply Corollary \ref{cor-Corollary 3.9 in BM_2013} to deduce that the sequence of laws of $\bu_n$ in $\bZ_{T,1}$ is tight.

\noindent
By \eqref{eq-3.23-global}, the sequence $(\phi_n)_{n\in \mathbb{N}}$ satisfies both conditions of Corollary \ref{cor-Corollary 3.9 in BM_2013-a}. We now show that the sequence $(\phi_n)_{n\in \mathbb{N}}$ also satisfies the Aldous-Rebolledo condition in the space  $\newonep{V}$.
Indeed, for every $t \in [0,T]$, it follows from \eqref{eqn-Compact-Galerkin-Modified-stochastic-CHNSEs-n} that
\begin{align*}
\phi_n(t)
= \phi_{0,n} - \int_0^t  B_{1,n}(\bu_n(s),\phi_n(s))\,\d s - \int_0^t  \Athree \cp_n(s)\,\d s
=:  J_1^n(t) + J_2^n(t) + J_3^n(t).
\end{align*}
By the estimates in Corollary \ref{2nd-corollary} and Theorem \ref{thm-main-Galerkin}, we deduce that for every $n \in \mathbb{N}$,
\begin{align*}
&\mathbb{E} \Vert J_{2}^n(\tau_n + \vartheta_n) - J_{2}^n(\tau_n) \Vert_{\newonep{V}} 
\leq C \mathbb{E} \int_{\tau_n}^{\tau_n + \vartheta_n} \Vert \bu_n(s) \Vert_{\StokesV} \lvert \phi_n(s) \rvert_{\newone{H}}   \, \d s
\\
&\leq C \mathbb{E} \left[\sup_{s \in [0,T]} \vert \phi_n(s) \rvert_{\newone{H}} \left(\int_{\tau_n}^{\tau_n + \vartheta_n} \Vert \bu_n(s) \Vert_{\StokesV}^2\, \d s\right)^{\frac12} \right] \cdot \vartheta_n^{1/2}
\\
&\leq C \left[\mathbb{E }\sup_{s \in [0,T]} \lvert \phi_n(s) \rvert_{\newone{H}}^2 \right]^{\frac12} \left[\mathbb{E }\int_0^T \Vert \bu_n(s) \Vert_{\StokesV}^2\, \d s\right]^{\frac12} \cdot \vartheta_n^{1/2}
\leq C \vartheta_n^{1/2},
\end{align*}
which, in turn, yields that
\begin{equation*}
    \lim_{n \to \infty} \mathbb{E} \Vert J_{2}^n(\tau_n + \vartheta_n) - J_{2}^n(\tau_n) \Vert_{\newonep{V}}= 0.
\end{equation*}
Note that $\Vert \Athree \cp_n \Vert_{\newonep{V}} \leq C \lvert \cp_n \rvert_{\newone{H}}$ for some constant $C>0$ which is independent of $n$. Then, it follows from the estimate \eqref{eq-3.23-global} that for every $n \in \mathbb{N}$,
\begin{align*}
&\mathbb{E} \Vert J_{3}^n(\tau_n + \vartheta_n) - J_{3}^n(\tau_n) \Vert_{\newonep{V}}^2
\leq  \vartheta_n \cdot \mathbb{E} \int_{\tau_n}^{\tau_n + \vartheta_n} \Vert \Athree \cp_n(s) \Vert_{\newonep{V}}^2\,\d s 
\\
&\leq C \vartheta_n  \mathbb{E} \int_{\tau_n}^{\tau_n + \vartheta_n} \lvert \cp_n(s) \rvert_{\newone{H}}^2\,\d s 
\leq C \vartheta_n  \mathbb{E} \int_0^T \lvert \cp_n(s) \rvert_{\newone{H}}^2\,\d s 
\leq C \vartheta_n.
\end{align*}
Thus,
  \begin{equation*}
    \lim_{n \to \infty} \mathbb{E} \Vert J_{3}^n(\tau_n + \vartheta_n) - J_{3}^n(\tau_n) \Vert_{\newonep{V}}= 0.
  \end{equation*}
The convergence results obtained above imply the following.
\begin{equation*}
   \lim_{n \to \infty} \mathbb{E} \Vert \phi_n(\tau_n + \vartheta_n) - \phi_n(\tau_n) \Vert_{\newonep{V}}= 0,
 \end{equation*}
i.e. the sequence $(\phi_n)_{n}$  satisfies the Aldous-Rebolledo condition in the space $\newonep{V}$. Hence, we can apply Corollary \ref{cor-Corollary 3.9 in BM_2013-a} and deduce that the laws of $\phi_n$ in $\bZ_{T,2}$ are tight.  
This finishes the proof of Lemma \ref{Lem-compactness-measure}.
\end{proof}
For two Banach spaces $X_1$ and $X_2$, we understand that $(X_1 \times X_2)^\prime \coloneqq X_1^\prime \times X_2^\prime$.
Define
    \begin{align*}
      \duality{z^\ast}{z}{\nU}{\nU^\prime}= \duality{z_1^\ast}{z_1}{\rU_0}{\rU_0^\prime} + \duality{z_2^\ast}{z_2}{\newone{V}}{\newonep{V}},\;\; z\in \nU=\rU_0 \times \newone{V}, \;\; z^\ast \in \nU^\prime,
    \end{align*}
and the operator norm on $\nU^\prime$ is given by
   \begin{equation*}
      \Vert z^\ast \Vert_{\nU^\prime}\coloneqq \sup\left\{\lvert \duality{z^\ast}{z}{\nU}{\nU^\prime} \rvert: \; z \in \nU  \mbox{ with } \Vert z \Vert_{\nU}=1 \right\}, \; \; z^\ast \in \nU^\prime.
   \end{equation*}
Notice that for all $s \in [0,T]$ and $\bX=(\bu,\phi) \in \bZ_T$,
\begin{align*}
&\duality{\bF_{0,n}(\bX(s))}{z}{\nU}{\nU^\prime}
= \duality{- \nu \Stokes_{,n} \bu(s)}{z_1}{\rU_0}{\rU_0^\prime} 
 + \duality{-B_{1,n}(\bu(s),\phi(s))}{z_2}{\newone{V}}{\newonep{V}} 
 + (\pi_{1,n} \cp(s), -\Delta z_2)_{\newone{H}}
  \\
&= \duality{- \nu \Stokes \bu(s)}{\tilde{\pi}_{0,n} z_1}{\rU_0}{\rU_0^\prime} 
  + \duality{- B_1(\bu(s),\phi(s))}{\tilde{\pi}_{1,n} z_2}{\newone{V}}{\newonep{V}} 
  + (\cp(s), \tilde{\pi}_{1,n}(-\Delta z_2))_{\newone{H}},
\end{align*}
where $\cp(t)= \Atwo \phi(t) + \psi^\prime(\phi(t)) - \avg{\psi^\prime(\phi(t))}, \;\; t \in [0,T]$; 
and
\begin{align*}
&\duality{\bF_{2,n}(\bX(s))}{z}{\nU}{\nU^\prime}  
= \duality{- \bB_{0,n}(\bu(s),\bu(s)) + \newK \bar{R}_{0,n}(\phi(s),\phi(s))}{z_1}{\rU_0}{\rU_0^\prime}
   \\
&= \duality{ - \bB_0(\bu(s),\bu(s))  + \newK (\pi_{1,n} (\cp(s)) \nabla \phi(s)}{\tilde{\pi}_{0,n} z_1}{\rU_0}{\rU_0^\prime}.
\end{align*}

\section{Identification of a limit measure}\label{Limit-meaures}
Here, we establish several convergences results which will allow us to conclude that the limiting objects that we found are in fact a weak martingale solution to problem \eqref{eqn-compact-modified-stochastic-CHNSEs-3}.

\noindent
Firstly, from the tightness  result, cf. Lemma \ref{Lem-compactness-measure}, and the discussion about the Prohorov Theorem from the paper \cite[p. 174]{Jakubowski_1998} by Jakubowski, we deduce the following result. 
\begin{proposition}\label{prop-Prohorov-general}
There exists  a Borel probability measure $\mathbb{P}_T$ on $(\bZ_T,\mathscr{Z})$ 
and a subsequence of     the sequence $(\mathbb{P}_n)_{n=1}^\infty$, which we denote by the same symbol,  which converges weakly to $\mathbb{P}_T$. Moreover, for every bounded and measurable function $f:\bZ_T \to \mathbb{R}$, which is sequentially continuous $\mathbb{P}_T$-a.s., 
   \begin{equation}\label{eqn-P-measure}
     \int_{\bZ_T} f\,\d\mathbb{P}_n \to  \int_{\bZ_T} f\,\d\mathbb{P}_T.
   \end{equation}
\end{proposition}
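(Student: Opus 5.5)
The plan is to reproduce, on the non-metrizable space $(\bZ_T,\mathscr{Z})$, the argument sketched by Jakubowski \cite[p.~174]{Jakubowski_1998}. It rests on the fact that $\bZ_T$ admits a countable family of real-valued continuous functions separating points.

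First, I will construct such a family $\{f_m\}_{m\in\mathbb{N}}$, each $f_m:\bZ_T\to[-1,1]$ being $\mathscr{Z}$-continuous. For the $\bu$-component, take functions of the form $\bu\mapsto \arctan\big(\sup_t|(\bu(t),\bv_j)|\big)$ and $\bu\mapsto\arctan\int_0^T\langle \bw_j(s),\bu(s)\rangle\,\d s$. Here $\{\bv_j\}$ is dense in $\StokesH$ and $\{\bw_j\}$ is dense in $L^2(0,T;\StokesVp)$. To these I add the metrics of the separable metric spaces $C([0,T];\rU_0')$ and $L^2(0,T;\StokesH)$, composed with coordinate evaluations at points of a countable dense set. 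The $\phi$-component is handled analogously, using $\newone{H}$, $\newonep{V}$ and $L^{\beta}$. The map
\[
\iota:\bZ_T\ni z\mapsto (f_m(z))_{m\in\mathbb{N}}\in[-1,1]^{\mathbb{N}}
\]
is then continuous and injective.

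By Lemma~\ref{Lem-compactness-measure}, for each $\eps$ there is a compact set $K_\eps\subset\bZ_T$ with $\mathbb{P}_n(K_\eps)\ge1-\eps$ for all $n$. On each compact $K_\eps$, $\iota$ is a homeomorphism onto its image, since a continuous bijection from a compact space onto a Hausdorff space is a homeomorphism.

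Second, I will push forward the measures, setting $\mu_n\coloneqq\mathbb{P}_n\circ\iota^{-1}$ on the compact metric space $[-1,1]^{\mathbb{N}}$. By the classical Prohorov theorem, a subsequence $\mu_n$ converges weakly to some $\mu$. Since the sets $\iota(K_\eps)$ are compact, hence closed, the portmanteau theorem gives $\mu(\iota(K_\eps))\ge1-\eps$. Taking $K_{1/k}$ increasing without loss of generality, $\mu$ is concentrated on the $\sigma$-compact set $\bigcup_k\iota(K_{1/k})$. I then define $\mathbb{P}_T\coloneqq\mu\circ\iota$ on Borel sets of $\bigcup_k K_{1/k}$, extended by zero. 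The main obstacle is checking that this is a Borel probability measure on $(\bZ_T,\mathscr{Z})$, i.e.\ that Borel subsets of $K_{1/k}$ map to Borel subsets of $[-1,1]^{\mathbb{N}}$. This follows from the homeomorphism property on each $K_{1/k}$, but one must make sure that the $\mathscr{Z}$-Borel $\sigma$-field, rather than just the $\sigma$-field generated by $\iota$, is handled correctly. Here I would invoke Jakubowski's observation that on $\sigma$-compact subsets the two $\sigma$-fields coincide.

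Third, for the convergence \eqref{eqn-P-measure}, let $f$ be bounded, measurable and $\mathbb{P}_T$-a.s.\ sequentially continuous. Restricted to $K_{1/k}$, sequential continuity becomes continuity, since $K_{1/k}$ is metrizable as a homeomorphic copy of a subset of $[-1,1]^{\mathbb{N}}$. Hence $f\circ\iota^{-1}$ is $\mu$-a.s.\ continuous on $\iota(K_{1/k})$. I would apply the continuous mapping theorem in $[-1,1]^{\mathbb{N}}$ to the restricted measures $\mu_n|_{\iota(K_{1/k})}$, after passing, via Skorokhod or tightness, to a further subsequence so that these restrictions converge. Bounding the remainders by $2\Vert f\Vert_\infty/k$ then gives $\limsup_n\big|\int f\,\d\mathbb{P}_n-\int f\,\d\mathbb{P}_T\big|\le 4\Vert f\Vert_\infty/k$, and letting $k\to\infty$ concludes. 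If the restriction step proves delicate, I would instead use the Skorokhod representation on $[-1,1]^{\mathbb{N}}$. There one gets a.s.\ convergence $\iota(Z_n)\to\iota(Z)$, where $Z_n$ and $Z$ denote the random elements of $\bZ_T$ produced by the representation. Since these lie a.s.\ in some $K_{1/k}$ along the sequence, this gives $Z_n\to Z$ in $\bZ_T$, and dominated convergence applied to $f(Z_n)$ yields \eqref{eqn-P-measure} without ever using the new probability space as the solution space.
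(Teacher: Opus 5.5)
Your route differs from the paper's. The paper rebuilds nothing: it invokes Jakubowski's Theorem~2 directly. That theorem yields a subsequence, the limit $\mathbb{P}_T$, and random elements $\xi_k,\xi_0$ on $([0,1],\mathscr{B}([0,1]),\Leb)$ with laws $\mathbb{P}_{n_k}$ and $\mathbb{P}_T$, such that $\xi_k\to\xi_0$ pointwise \emph{in $\bZ_T$}. Then \eqref{eqn-P-measure} is just change of variables plus dominated convergence, since $f\circ\xi_k\to f\circ\xi_0$ on $\xi_0^{-1}(B)$.

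You instead reprove the existence part via the embedding $\iota$ into $[-1,1]^{\mathbb{N}}$ and the classical Prohorov theorem. That part is fine, including the Borel measurability of $\mathbb{P}_T$ obtained from the homeomorphisms $\iota|_{K_{1/k}}$. You then try to get \eqref{eqn-P-measure} by localizing to the metrizable compacts $K_{1/k}$. This is more elementary and never leaves the canonical space, but Step~3 needs two repairs.

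\emph{Main route.} The weak limit $\nu_k$ of $\mu_n(\cdot\cap\iota(K_{1/k}))$ is in general \emph{not} $\mu(\cdot\cap\iota(K_{1/k}))$, because mass of $\mu_n$ lying outside $\iota(K_{1/k})$ can accumulate on $\iota(K_{1/k})$. Your bookkeeping of the remainders implicitly assumes equality. What is true, and suffices, is the following:
\begin{itemize}
\item $\nu_k\le\mu$, which you get by testing against nonnegative continuous functions;
\item $\nu_k([-1,1]^{\mathbb{N}})=\lim_n\mu_n(\iota(K_{1/k}))\ge 1-1/k$, so $\mu-\nu_k$ is a nonnegative measure of mass at most $1/k$;
\item $\nu_k\le\mu$ also makes $f\circ\iota^{-1}$ continuous $\nu_k$-a.e.\ on $\iota(K_{1/k})$, which is what the continuous mapping theorem requires.
\end{itemize}
With these, the $\limsup$ bound of order $\Vert f\Vert_\infty/k$ follows along the further subsequence. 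Since that subsequence depends on $k$, you then conclude along the whole Step~2 subsequence by the sub-subsequence principle.

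\emph{Fallback.} This is wrong as stated. A Skorokhod representation with $\iota(Z_n)\to\iota(Z)$ a.s.\ in $[-1,1]^{\mathbb{N}}$ gives, for each $n$, $Z_n\in K_{1/k}$ with probability at least $1-1/k$. It does not give that, for a.e.\ $\omega$, the tail $(Z_n(\omega))_{n\ge n_0}$ stays in a \emph{single} $K_{1/k}$. Moreover, $\iota^{-1}$ is continuous on each $\iota(K_{1/k})$ but not on their union: convergence of the coordinates $\int_0^T\langle\bw_j,\bu_n\rangle\,\d s$ gives convergence in $L^2_w(0,T;\StokesV)$ only under a uniform norm bound, and the same holds for $L^\beta_w(0,T;\newone{V})$. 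Producing a representation with this ``eventually in one compact'' property is exactly the nontrivial content of Jakubowski's Theorem~2. So either cite that theorem, as the paper does, or drop the fallback.
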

Let us point out that if $\bZ_T$ was  a metric space and $f$ was the indicator function of a set $B \in \mathscr{Z}$, the above result would follow from  \cite[Theorem 11.1.1 (d)]{Dudley_2004}.
\begin{proof}[Proof of Proposition  \ref{prop-Prohorov-general}]
    Let us choose and fix a set $B \in \mathscr{Z}$ such that $\mathbb{P}_T(B^c)=0$ and a measurable and bounded   function $f:\bZ_T \to \mathbb{R}$ which is sequentially continuous at every element of the set $B$. By  \cite[Theorem 2]{Jakubowski_1998} there exists a sequence $(\xi_k)_{k=0}^\infty $ 
    which is the "Skorokhod representation" of a certain  subsequence $(\mathbb{P}_{n_k})_{k=1}^\infty$ and measure $\mathbb{P}_T$ defined on the standard probability measure space 
    $([0,1], \mathscr{B}([0,1]), \Leb)$. In particular, $\xi_k \to \xi_0$ pointwise on $[0,1]$ as $k \to \infty$. Hence, by the Change of Measure and the Lebesgue DC Theorems  we have
\begin{align}
\int_{\bZ_T} f\,\d\mathbb{P}_{n_k}=\int_{[0,1]} f \circ \xi_k\,  \d \Leb \to  \int_{[0,1]} f \circ \xi_0\,\d \Leb= \int_{\bZ_T} f\,\d\mathbb{P}_T,
\end{align}
    because $\Leb([0,1] \setminus \xi_0^{-1}(B))=0$ and  $f \circ \xi_k \to f \circ \xi_0 $ pointwise on $\xi_0^{-1}(B)$. 
    The proof is complete. 
\end{proof}

Recall that $\nU= \rU_0 \times \newone{V}$ and let $\bX$ be the canonical process on $C([0,T];\nU^\prime)$ defined by
\[
\bX: [0,T] \times  C([0,T];\nU^\prime) \ni (t,w) \mapsto \bX_t(w)= w(t) \in \nU^\prime.  
\]
We also define the following objects 
\begin{equation}\label{eqn-D_t-D_t+}
\begin{aligned}
\mathscr{D}_t&= \sigma(\bX_s,\,s \leq t), \\
\mathscr{D}_{t +}&= \bigcap_{s >t} \sigma \left( \mathscr{D}_s \cup \mathscr{N}\right),
\end{aligned}
\end{equation}
where $\mathscr{N}$ is the family of all $\mathbb{P}_T$-null sets. 
We finally define the filtration $\mathbb{D}$ by 
\begin{equation}\label{eqn-mb-D}
\mathbb{D}= \left(\mathscr{D}_{t +}:  t \in [0,T) \right).
\end{equation}
Let the process $\bX_n$ be the solution to Problem \eqref{eqn-Compact-Galerkin-Modified-stochastic-CHNSEs-n}, and recall that $\mathbb{P}_n$ is the law of the process $\bX_n$ on the path space $\bZ_T$.  
We denote by $\mathbb{E}^{\mathbb{P}_n}$ the expectation with respect to the measure $\mathbb{P}_n$.
The first part of the following result follows from \eqref{eqn-Compact-Galerkin-Modified-stochastic-CHNSEs-n}.
\begin{proposition}\label{prop-martingale-1}
The process  
\[
 \bX_n(t)-\bX_n(0)-\int_0^t \bF_n(\bX_n(s))\, \d s =\int_0^t \newG_{0,n}(s,\bX_n(s))\,\d W(s),\;\; t \in [0,T],
\]
is a square integrable $\mathbb{H}_n$-valued martingale on the original probability space. 
\newline
If $f: \mathbb{R} \to \mathbb{C}$ is a $C^2_b$-class function and   $z \in \nU$, then the  following process 
    \begin{equation}\label{eqn-h(X_n)}
      h(\bX_n(t)) - h(\bX_n(0))-\int_0^t \mathscr{L}_{n,s}(h)(\bX_n(s))\,\d s,\;\; t \in [0,T],
   \end{equation}
where   the function $h:\nU^\prime \to \mathbb{C}$ has the following form,
\begin{equation}\label{eqn-h}
    h(\bX)=f(\duality{\bX}{z}{}{}),\;\; \bX \in \nU^\prime
    \end{equation} 
 and $\mathscr{L}_{n,s}$ is given by
     \begin{equation}\label{eqn-L_n}
         \mathscr{L}_{n,s} (h)(\bX) \coloneqq D h(\bX) \left[\bF_n(\bX)\right] 
         + \frac12 \sum_{k=1}^\infty D^2 h(\bX) \Bigl[[\newG_{0,n}(s,\bX)](\tilde{e}_k), [\newG_{0,n}(s,\bX)](\tilde{e}_k)\Bigr],\;\; \bX \in \nU^\prime,  
     \end{equation}
is a martingale. \newdela{NOTE: as written the sentence reads ``the function $h$ \dots\ and $\mathscr{L}_{n,s}$ \dots\ is a martingale'', but the martingale is the \emph{process} \eqref{eqn-h(X_n)}; $h$ and $\mathscr{L}_{n,s}$ are just its ingredients. Recommend rephrasing: ``\dots\ the process \eqref{eqn-h(X_n)}, where $h$ is given by \eqref{eqn-h} and $\mathscr{L}_{n,s}$ by \eqref{eqn-L_n}, is a martingale.''}
\end{proposition}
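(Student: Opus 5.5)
The plan is to work entirely on the finite-dimensional Galerkin level, where $\bX_n$ is a classical strong solution of the SDE \eqref{eqn-Compact-Galerkin-Modified-stochastic-CHNSEs-n} in $\mathbb{H}_n$ with locally Lipschitz drift and diffusion. The first assertion follows directly from the integral form of \eqref{eqn-Compact-Galerkin-Modified-stochastic-CHNSEs-n}: the left-hand side equals the It\^o integral $M_n(t)\coloneqq\int_0^t \newG_{0,n}(s,\bX_n(s))\,\d W(s)$. This is a square integrable $\mathbb{H}_n$-valued martingale as soon as $\mathbb{E}\int_0^T \Vert \newG_{0,n}(s,\bX_n(s))\Vert^2_{\ell^2(\mathbb{H}_n)}\,\d s<\infty$. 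To check this, use \eqref{Eqn-coercivity-3} to bound $\Vert \bSi_0^n(s)\bu_n\Vert^2_{\ell^2(H_{0,n})}\le \tilde C_1\Vert \bu_n\Vert_{\StokesV}^2$, and use the linear growth \eqref{eqn-linear growth} for $\bG_{0,n}$. The resulting bound $C\,\mathbb{E}\int_0^T(\Vert\bu_n\Vert_{\StokesV}^2+|\tilde h|^2)\,\d s$ is finite by Proposition \ref{prop-First-propo}. The It\^o isometry then gives square integrability.

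For the second assertion, fix $z=(z_1,z_2)\in\nU$ and consider the real process $Y_n(t)\coloneqq\duality{\bX_n(t)}{z}{}{}$. Since $\bX_n(t)\in\mathbb{H}_n\subset\mathbb{H}\embed\nU^\prime$, this pairing equals $(\bX_n(t),z)$ computed in the $\StokesH\times\newone{H}$ duality. The map $\mathbb{H}_n\ni x\mapsto \duality{x}{z}{}{}$ is a bounded linear functional, so $Y_n$ is a real continuous semimartingale:
\[
\d Y_n=\duality{\bF_n(\bX_n)}{z}{}{}\,\d t+\sum_{k}\duality{[\newG_{0,n}(t,\bX_n)](\tilde e_k)}{z}{}{}\,\d w_k,
\]
with quadratic variation $\sum_k \duality{[\newG_{0,n}](\tilde e_k)}{z}{}{}^2\,\d t$. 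Apply the one-dimensional It\^o formula to $f(Y_n)$, treating real and imaginary parts of $f\in C^2_b$ separately. Since $Dh(\bX)[v]=f'(\duality{\bX}{z}{}{})\duality{v}{z}{}{}$ and $D^2h(\bX)[v,v]=f''(\duality{\bX}{z}{}{})\duality{v}{z}{}{}^2$, the $\d t$ terms reproduce exactly $\mathscr{L}_{n,s}(h)(\bX_n(s))$ from \eqref{eqn-L_n}. Hence the process \eqref{eqn-h(X_n)} equals $\int_0^t f'(Y_n(s))\sum_k\duality{[\newG_{0,n}(s,\bX_n(s))](\tilde e_k)}{z}{}{}\,\d w_k(s)$.

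It remains to show that this stochastic integral is a true martingale and not merely a local one. Its integrand is bounded by
\[
\Vert f'\Vert_\infty\,\Vert z\Vert_{\nU}\,\Vert \newG_{0,n}(s,\bX_n(s))\Vert_{\ell^2(\nU^\prime)},
\]
whose square is time-integrable in expectation by the same estimate used in the first step. Alternatively one can invoke the fourth inequality of Corollary \ref{2nd-corollary} together with \eqref{eq-3.23-global}. The It\^o isometry then yields a square integrable martingale.

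The only point requiring some care, and the main obstacle, is the integrability in that last step. The coefficient $\bSi_0^n$ is controlled in $\ell^2(\StokesH)$ only through $\Vert\bu_n\Vert_{\StokesV}$, unless one uses the weaker $\ell^2(\StokesVp)$ bound \eqref{eq-3.20aa}. Either way, the needed bound reduces to the uniform a priori estimate of Proposition \ref{prop-First-propo}. Everything else is a routine application of It\^o's formula in finite dimensions.
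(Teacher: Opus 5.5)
Your proposal is correct and is essentially the paper's argument: the paper notes only that the first part follows from the integral form of the Galerkin equation \eqref{eqn-Compact-Galerkin-Modified-stochastic-CHNSEs-n}, and obtains the second from It\^o's formula applied to $f(\langle \bX_n(t), z\rangle)$ (cf.\ Remark~\ref{rem-prop-martingale-1}). You supply the details the paper leaves implicit, namely the square-integrability of the integrands via \eqref{Eqn-coercivity-3}, \eqref{eqn-linear growth} and Proposition~\ref{prop-First-propo}, which makes both stochastic integrals genuine square integrable martingales rather than merely local ones.
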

\begin{remark}\label{rem-prop-martingale-1}
If $h$ is of the form \eqref{eqn-h}, then 
\begin{equation}\label{eqn-L_n-2}
\begin{aligned}
\mathscr{L}_{n,s}(h)(\bX)
&= 
f^\prime(\duality{\bX}{z}{}{}) \duality{\bF_n(\bX)}{z}{}{} + \frac12 f^{\prime\prime} (\duality{\bX}{z}{}{}) \sum_{k=1}^\infty \dualitybig{[\newG_{0,n}(s,\bX)](\tilde{e}_k)}{z}{}{}{}{}{^2}
\\
&=: \widetilde{\mathscr{L}}_{n,s}(f)(\bX),\;\; z \in \nU,\; \; \bX \in \nU^\prime.
\end{aligned}
\end{equation}
\end{remark}
Note that $\widetilde{\mathscr{L}}_{n,s}$ is a linear operator that transforms functions of $C^2_b$-class on $\mathbb{R}$ into $C_b$-functions on $\nU^\prime$. In particular, if  $\theta \in \mathbb{R}$, $\tilde{e}_\theta:\mathbb{R} \ni x \mapsto e^{i \theta x} \in \mathbb{C}$,
and $h$ of the form \eqref{eqn-h} with $f$ replaced by $\tilde{e}_\theta$, then we have 
\begin{equation}
\label{eqn-L_n-3}
\begin{aligned}
\mathscr{L}_{n,s}(h)(\bX)
&= i\theta \tilde{e}_\theta (\duality{\bX}{z}{}{}) \duality{\bF_n(\bX)}{z}{}{} 
  - \frac{\theta^2}{2}  \tilde{e}_\theta (\duality{\bX}{z}{}{}) \sum_{k=1}^\infty 
  \dualitybig{[\newG_{0,n}(s,\bX)](\tilde{e}_k)}{z}{}{}{}{}{^2}
\\&
=: \widetilde{\mathscr{L}}_{n,s}(\tilde{e}_\theta)(\bX),\;\; z \in \nU,\; \; \bX \in \nU^\prime.
\end{aligned}
\end{equation}
\begin{corollary}\label{cor-martingale-1}
If $f: \mathbb{R} \to \mathbb{C}$ is a $C^2_b$-class function, then for every $z \in \nU$, 
the processes
   \begin{equation}\label{eqn-f(X_n)}
      f(\duality{\bX_n(t)}{z}{}{})  - \int_0^t \widetilde{\mathscr{L}}_{n,s}(f)(\bX_n(s))\,\d s,\;\; t \in [0,T],
   \end{equation}
and 
   \begin{equation}\label{eqn-e_theta(X_n)}
      \tilde{e}_\theta(\duality{\bX_n(t)}{z}{}{})-\int_0^t \widetilde{\mathscr{L}}_{n,s}(\tilde{e}_\theta)(\bX_n(s))\,\d s,\;\; t \in [0,T],
   \end{equation}
are martingales on the original probability space.
\end{corollary}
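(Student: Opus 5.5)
The plan is to derive both assertions from Proposition \ref{prop-martingale-1} with the finite-dimensional It\^o formula. On the original probability space the Galerkin solution $\bX_n$ is a continuous $\mathbb{H}_n$-valued semimartingale, with drift $\bF_n(\bX_n)$ and diffusion $\newG_{0,n}(\cdot,\bX_n)$. Fix $z=(z_1,z_2)\in\nU$. First, the real-valued process $Y_n(t)\coloneqq \duality{\bX_n(t)}{z}{}{}$ coincides with $(\bu_n(t),z_1)_{\StokesH}+(\phi_n(t),z_2)_{\newone{H}}$, because $\bX_n$ takes values in $\mathbb{H}_n\subset\nHB$ and we have the Gelfand triple \eqref{eqn-Gelfand-full}. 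Pairing the first assertion of Proposition \ref{prop-martingale-1} with $z$ then gives
\[
Y_n(t)=Y_n(0)+\int_0^t\duality{\bF_n(\bX_n(s))}{z}{}{}\,\d s+\int_0^t\langle ([\newG_{0,n}(s,\bX_n(s))](\cdot),z),\d W(s)\rangle.
\]
This is a real continuous semimartingale. Its martingale part has quadratic variation $\int_0^t\sum_k \duality{[\newG_{0,n}(s,\bX_n(s))](\tilde e_k)}{z}{}{}^2\,\d s$.

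Second, we apply the one-dimensional It\^o formula to $f(Y_n)$, treating the real and imaginary parts of $f$ separately. This yields
\[
f(Y_n(t))=f(Y_n(0))+\int_0^t\widetilde{\mathscr{L}}_{n,s}(f)(\bX_n(s))\,\d s+M_n(t),
\]
where $M_n(t)=\int_0^t f'(Y_n(s))\langle(\newG_{0,n}(s,\bX_n(s))(\cdot),z),\d W(s)\rangle$ is a local martingale and $\widetilde{\mathscr{L}}_{n,s}(f)$ is exactly the expression in \eqref{eqn-L_n-2}. The process \eqref{eqn-f(X_n)} therefore equals $f(Y_n(0))+M_n(t)$. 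Here $f(Y_n(0))$ is deterministic, since $\bX_n(0)=(\bu_{0,n},\phi_{0,n})$, so it suffices to show that $M_n$ is a true martingale.

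The only genuine point is this upgrade from local martingale to martingale, and I would do it with the integrability bounds already available. Since $f'$ is bounded,
\[
\mathbb{E}\int_0^T|f'(Y_n)|^2\sum_k\duality{[\newG_{0,n}](\tilde e_k)}{z}{}{}^2\,\d s\le C\Vert z\Vert_{\nU}^2\,\mathbb{E}\int_0^T\Vert\newG_{0,n}(s,\bX_n(s))\Vert^2_{\ell^2(\nU')}\,\d s.
\]
By the last inequality of Corollary \ref{2nd-corollary}, the right-hand side is at most $C\,\mathbb{E}\int_0^T(|\bu_n|_{\StokesH}^2+|\tilde h|^2)\,\d s$, which is finite by \eqref{eq-3.23-global} and $\tilde h\in L^2(0,T)$. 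Hence $M_n$ is a square-integrable martingale. We also need integrability of the drift term $\int_0^t\widetilde{\mathscr{L}}_{n,s}(f)(\bX_n)\,\d s$, so that \eqref{eqn-f(X_n)} is integrable. The $f''$ part is controlled by the same bound. For the $f'$ part, $|\duality{\bF_n(\bX_n)}{z}{}{}|$ is handled by the estimates in the Aldous-condition proof of Lemma \ref{Lem-compactness-measure}: the Stokes term is bounded by $\Vert \bu_n\Vert_{\StokesV}$, $\bB_{0,n}$ by $|\bu_n|_{\StokesH}^2$, $\bar R_{0,n}$ by $|\cp_n|_{\newone{H}}|\phi_n|_{\newone{H}}$, $B_{1,n}$ by $\Vert\bu_n\Vert_{\StokesV}|\phi_n|_{\newone{H}}$, and $\Athree\cp_n$ by $|\cp_n|_{\newone{H}}$. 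All of these are $L^1(\Omega\times[0,T])$ by \eqref{eq-3.23-global}.

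Finally, the second process \eqref{eqn-e_theta(X_n)} is the special case $f=\tilde e_\theta\in C_b^2(\mathbb{R};\mathbb{C})$, for which $f'=i\theta\tilde e_\theta$ and $f''=-\theta^2\tilde e_\theta$. These give exactly \eqref{eqn-L_n-3}, so this case needs no additional argument.
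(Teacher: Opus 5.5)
Your argument is correct and follows the paper's route. The corollary is the second assertion of Proposition~\ref{prop-martingale-1}, i.e.\ It\^o's formula for $f(\langle \bX_n(t),z\rangle)$, where the constant $f(\langle \bX_n(0),z\rangle)$ is deterministic and the generator is identified as $\widetilde{\mathscr{L}}_{n,s}(f)$ via Remark~\ref{rem-prop-martingale-1}; the choice $f=\tilde e_\theta$ then gives \eqref{eqn-L_n-3}. You make explicit the It\^o computation and the square-integrability bound that upgrades the local martingale to a true martingale, both of which the paper leaves implicit. Your separate $L^1$ check on the drift is harmless but unnecessary, since $f$ is bounded and $M_n$ is already a true martingale.
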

Observe that 
\begin{align}\label{eqn-17001}
\widetilde{\mathscr{L}}_{n,s}(\tilde{e}_\theta)(\bX))
&= e^{i \theta \langle\bX,z \rangle} \left[i \theta\; \duality{\bF_n(\bX)}{z}{}{} - \frac{\theta^2}{2} \Vert \duality{\newG_{0,n}(s,\bX)}{z}{}{} \Vert_{l^2}^2\right].
\end{align}
\begin{proposition}\label{prop-measurable} Assume that $z \in \nU$. If $0 \leq s \leq t \leq T$, we define the following maps
\begin{align}
\label{eqn-I_st-n}
I_{s,t}^{n,z} &: \bZ_T \ni \bX \mapsto \int_{s}^{t} e^{i \langle \bX(r), z \rangle} \varphi_n^{z}(r,\bX(r)) \, \d r \in \mathbb{C},
\\
\label{eqn-I_st}
I_{s,t}^{z} &: \bZ_T \ni \bX \mapsto \int_{s}^{t} e^{i \langle \bX(r), z \rangle} \varphi^{z}(r,\bX(r)) \, \d r \in \mathbb{C},
\end{align}
where
\begin{align}
\label{eqn-varphi-n}
\varphi_n^{z}(r,\bX(r))&= i \; \duality{\bF_n(\bX(r))}{z}{}{} 
- \frac{1}{2} \Vert \duality{\newG_{0,n}(r,\bX(r))}{z}{}{} \Vert_{\ell^2}^2,
\\
\label{eqn-varphi}
\varphi^{z}(r,\bX(r))&= i \; \duality{\bF(\bX(r))}{z}{}{} 
- \frac{1}{2} \Vert \duality{\newG_0(r,\bX(r))}{z}{}{} \Vert_{\ell^2}^2,
\end{align}
with
   \begin{align}
     \newG_0: [0,T] \times \mathbb{V} \ni (t,\bX=(\bu,\phi))\mapsto (\bSi_0(t)\bu + \bG_0(t,\bu), 0)_{k=1}^\infty\coloneqq (\newG_{0}^k(t,\bu))_{k=1}^\infty \in 
     \ell^2(\nHB).
  \end{align}
Then the maps $I_{s,t}^{n,z}$  and $I_{s,t}^{z}$ are measurable.  
\end{proposition}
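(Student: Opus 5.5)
Our plan is to establish measurability of $I^{n,z}_{s,t}$ and $I^{z}_{s,t}$ with respect to the Borel $\sigma$-field $\mathscr{Z}$ of $\bZ_T$ in two stages. First we show that the integrand is jointly measurable in $(r,\bX)$. Then we use Fubini-type arguments and truncation to handle the integral.

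\textbf{Step 1: evaluation maps.} The evaluation maps $\bZ_T \ni \bX \mapsto \bX(r) \in \nU^\prime$ are continuous, because $\bZ_T$ carries a topology finer than that of $C([0,T];\nU^\prime)$. The map $[0,T]\times\bZ_T \ni (r,\bX)\mapsto \bX(r)$ is therefore continuous, and in particular $\mathscr{B}([0,T])\otimes\mathscr{Z}$-measurable. Consequently $(r,\bX) \mapsto e^{i\langle \bX(r),z\rangle}$ is measurable.

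\textbf{Step 2: the Galerkin integrand.} Fix $n$. On $\bZ_T$ we have $\pi_{0,n}\bu(r)\in H_{0,n}$ and $\pi_{1,n}\phi(r)\in H_{1,n}$, and these depend continuously on $\bX(r)\in\nU^\prime$. By Properties \ref{eqn-properties-pi_{0,n}} and \ref{eqn-properties-pi_{1,n}}, and the formulas at the end of Section \ref{Sec-Tightness}, $\duality{\bF_n(\bX(r))}{z}{}{}$ is a continuous function of finitely many coordinates $\duality{\bX(r)}{\be_j}{}{}$ and $\duality{\bX(r)}{\bar e_j}{}{}$. The function $(r,\bX)\mapsto\Vert\duality{\newG_{0,n}(r,\bX(r))}{z}{}{}\Vert^2_{\ell^2}$ is Carath\'eodory by Assumption (v)-a) and (vii), hence jointly measurable. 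Thus $\varphi^z_n$ is jointly measurable and finite everywhere.

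\textbf{Step 3: the limit integrand.} For $\varphi^z$ we write $\duality{\bF(\bX(r))}{z}{}{}$ term by term: the Stokes part, $\bb_0(\bu,\bu,z_1)$, $\bR_0(\phi,\phi)$ paired with $z_1$ (bounded on $\newone{H}$ by \eqref{eqn-R_0-k_0}), $b_1(\bu,\phi,z_2)$, and $(\cp,-\Delta z_2)_{\newone{H}}$. Each term is a limit of measurable expressions obtained from the Galerkin truncations $\pi_{0,m}\bX(r)$, $\pi_{1,m}\bX(r)$ as $m\to\infty$, on the set of $(r,\bX)$ where the needed regularity holds ($\bu(r)\in\StokesV$, $\phi(r)\in\newone{V}$). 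That set is itself measurable, since norms are suprema of countably many measurable functionals. Off this set we set $\varphi^z:=0$. The $\bG_0$ and $\bSi_0$ terms are handled in the same way, using the Carath\'eodory property and the measurability of $\bSi_0$ in Assumption (vi).

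\textbf{Step 4: the integral.} The main obstacle is that the integrals may diverge, since $\varphi^z$ is not bounded. To deal with this we truncate, setting $\varphi_N=\varphi\,\1_{\{|\varphi|\le N\}}$. By Fubini's theorem for jointly measurable bounded functions, $\bX\mapsto\int_s^t e^{i\langle\bX(r),z\rangle}\varphi_N(r,\bX(r))\,\d r$ is $\mathscr{Z}$-measurable. Letting $N\to\infty$, this converges pointwise by dominated convergence on the measurable set where $\int_s^t|\varphi|\,\d r<\infty$; that set is measurable by Tonelli's theorem. On its complement we define the integral to be $0$. Since the a priori bounds hold $\mathbb{P}_n$- and $\mathbb{P}_T$-a.s., this convention is harmless, and measurability of $I^{n,z}_{s,t}$ and $I^z_{s,t}$ follows.
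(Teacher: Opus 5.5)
The paper gives no argument of its own here, only a pointer to the analogous measurability claim for (5.22) in \cite{BKMR_2025}. Your scheme is the natural way to supply one: show the integrand is $\mathscr{B}([0,T])\otimes\mathscr{Z}$-measurable, then apply Fubini with truncation. Steps~3 and~4 are essentially sound, but Step~2 rests on a misreading and does not prove measurability of $I^{n,z}_{s,t}$.

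You claim that $\duality{\bF_n(\bX(r))}{z}{}{}$ is a continuous function of finitely many coordinates $\duality{\bX(r)}{\be_j}{}{}$, $\duality{\bX(r)}{\bar{e}_j}{}{}$. That would hold if $\bF_n$ were extended from $\mathbb{H}_n$ to $\bZ_T$ by $\bX\mapsto\bF_n(\pi_{0,n}\bu,\pi_{1,n}\phi)$. The formulas at the end of Section~\ref{Sec-Tightness} that you cite do something else: they project the \emph{test function}. That yields finitely many coordinates of the nonlinear expressions, not of $\bX(r)$. For instance,
$\duality{\bF_{2,n}(\bX(s))}{z}{}{}=\duality{-\bB_0(\bu(s),\bu(s))+\newK(\pi_{1,n}\cp(s))\nabla\phi(s)}{\tilde{\pi}_{0,n}z_1}{}{}$.
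Likewise $\duality{\bF_{0,n}(\bX(s))}{z}{}{}$ contains $b_1(\bu(s),\phi(s),\tilde{\pi}_{1,n}z_2)$ and $(\cp(s),\tilde{\pi}_{1,n}(-\Delta z_2))_{\newone{H}}$, where $\cp(s)=\Atwo\phi(s)+\psi^\prime(\phi(s))-\avg{\psi^\prime(\phi(s))}$.

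The same convention runs through Proposition~\ref{Prop-equicontinuity} and Lemmas~\ref{B_on-Convergence}--\ref{B_1n-Convergence}:
$\bB_{0,n}(\bu,\bu)=\pi_{0,n}\bB_0(\bu,\bu)$ and $\bSi_0^n(s)\bu=\pi_{0,n}\bSi_0(s)\bu$ for unprojected $\bu$, and $\bG_{0,n}$ is evaluated at $\bu(s)\notin H_{0,n}$. So $\varphi^z_n(r,\bX(r))$ depends nonlinearly on all of $\bX(r)$. It is also defined only for a.e.\ $r$, since the $\bSi_0$ term needs $\bu(r)\in\StokesV$. Neither finite-dimensional continuity nor the Carath\'eodory property in (v)-a), (vii) gives its measurability, because those concern arguments in $H_{0,n}$.

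The repair is to treat $\varphi^z_n$ exactly as you treat $\varphi^z$.
The set $\{(r,\bX):\bX(r)\in\mathbb{V}\}$ is measurable, because $\mathbb{V}$ is a Borel subset of $\nU^\prime$ by the Lusin--Souslin theorem. On it, $(r,\bX)\mapsto\bX(r)\in\mathbb{V}$ is measurable. Compose this map with maps of three kinds:
\begin{itemize}
\item maps continuous on $\mathbb{V}$, which covers all drift terms, Galerkin or not;
\item maps measurable on $[0,T]\times\StokesV$, which covers $\bSi_0$;
\item maps Carath\'eodory on $[0,T]\times\StokesH$, which covers $\bG_0$, and $\bG_{0,n}$ in the extended sense the paper uses in (v)-b).
\end{itemize}
This is also cleaner than the Galerkin-truncation limits in your Step~3. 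There you would have to check term by term that only $\StokesH$-convergence of $\pi_{0,m}\bu$ is used, since Properties~\ref{eqn-properties-pi_{0,n}} say nothing about convergence in $\StokesV$.

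Two smaller points.
\begin{itemize}
\item In Step~1, continuity of $(r,\bX)\mapsto\bX(r)$ on the non-metrizable product $[0,T]\times\bZ_T$ does not by itself give $\mathscr{B}([0,T])\otimes\mathscr{Z}$-measurability, which is what Fubini needs. Use instead that the map is continuous in $r$ and measurable in $\bX$, with values in the separable metric space $\nU^\prime$. Alternatively, factor it through the Polish space $C([0,T];\nU^\prime)$.
\item In Step~4, setting the integral to $0$ where it diverges is unnecessary. Every $\bX\in\bZ_T$ has $\bu\in L^\infty(0,T;\StokesH)\cap L^2(0,T;\StokesV)$ and $\phi\in L^\infty(0,T;\newone{H})\cap L^\beta(0,T;\newone{V})$. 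The estimates in the proof of Proposition~\ref{Prop-equicontinuity} then put $\varphi^z(\cdot,\bX(\cdot))$ and $\varphi^z_n(\cdot,\bX(\cdot))$ in $L^1(0,T)$. So the maps in the statement are defined on all of $\bZ_T$ as written, with no modification.
\end{itemize}
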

\begin{proof}[Proof of Proposition \ref{prop-measurable}]
The proof can be done in a similar way to the measurability assertion about (5.22) in \cite{BKMR_2025}.
\end{proof}
\begin{definition}\label{def-M^nz_t}
Assume that  $z \in \nU$.
If $0 \leq  t \leq T$, we define the following maps
    \begin{align}
        \label{eqn-M^nz_t}
         M_{t}^{n,z} &: \bZ_T \ni \bX \mapsto e^{i \langle \bX(t), z \rangle} - I_{0,t}^{n,z}(\bX) \in \mathbb{C},
         \\
         M_{t}^{z} &: \bZ_T \ni \bX \mapsto e^{i \langle \bX(t), z \rangle} - I_{0,t}^{z}(\bX) \in \mathbb{C}.
                \label{eqn-M^z_t}
    \end{align}
\end{definition}
\begin{remark}\label{rem-I_st}
It follows from \eqref{eqn-17001} that almost surely on the original probability space, 
    \begin{align}\label{eqn-17002}
       M_t^{n,z}(\bX_n)=\tilde{e}_\theta(\duality{\bX_n(t)}{z}{}{}) -\int_0^t \widetilde{\mathscr{L}}_{n,s}(\tilde{e}_\theta)(\bX_n(s))\,\d s
    \end{align}
with $\theta=1$. Note that the LHS of \eqref{eqn-17002} makes sense  as we can view the strong solution $\bX_n$ as a measurable map from $\Omega$ to $\bZ_T$.
\newline
Moreover, it follows from identity \eqref{eqn-17002} and Proposition \ref{prop-martingale-1} that the process 
$M_t^{n,z}(\bX_n)$, defined by the LHS of equality \eqref{eqn-17002}, is a martingale on the original probability space.
\end{remark}
The following lemma concerns the solution of the martingale problem $(\bX_{0,n},\bF_{n}(\bX),\newG_{0,n}(\bX))$. We refer the reader to Appendix \ref{app-Solution-Martingale-problem} for further details on the solution of a martingale problem.
Let $T>0$ be fixed. We must bear in mind the Definition \ref{def-M^nz_t}  of the process $M_t^{n,z}$  defined on the path space $\bZ_T$.
\begin{lemma}\label{Lem-Pn-martingale-solution}
Let us define the following space 
\begin{equation}\label{eqn-space-V-system}
\mathscr{V}= \mathcal{V} \times \mathcal{C}_0^\infty(\domO,\mathbb{R}),    
\end{equation}
where $\mathcal{V}$ was defined in \eqref{eqn-spaces-NSEs}.
For each $n \in \mathbb{N}$, $\mathbb{P}_n$ is a Borel probability measure on $\bZ_T$ such that for each test function $z\in \nU$, the process $(M_t^{n,z}: t\in [0,T])$  
is a  continuous local martingale on the probability space $(\bZ_T,\mathbb{D},\mathbb{P}_n)$. 
\end{lemma}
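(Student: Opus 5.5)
The plan is to transfer the martingale property of $M_t^{n,z}(\bX_n)$ from the original probability space to the path space $(\bZ_T,\mathbb{P}_n)$ via the law, and then reconcile filtrations. Fix $n$ and $z\in\nU$. By Remark \ref{rem-I_st}, the process $M_t^{n,z}(\bX_n)$ coincides almost surely with the process \eqref{eqn-e_theta(X_n)} with $\theta=1$. By Corollary \ref{cor-martingale-1} it is a martingale with respect to $\mathbb{F}$.

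In finite dimensions this needs some care. $\bX_n$ lives in $\mathbb{H}_n$, so $\bF_n(\bX_n)$ is Lipschitz on balls and $\newG_{0,n}$ is bounded by Assumption (v)-a). The stochastic integral $\int_0^t \langle\newG_{0,n}(s,\bX_n(s))\,\d W(s),z\rangle$ is therefore square integrable by Corollary \ref{2nd-corollary} and Theorem \ref{thm-main-Galerkin}. The drift integral $I^{n,z}_{0,t}(\bX_n)$ may be unbounded in $\omega$ because of the cubic term in $\bF_n$. Rather than prove integrability directly, I would localise with the stopping times $\sigma_R=\inf\{t:\Vert \bX_n(t)\Vert_{\mathbb{H}}\ge R\}\wedge T$, which are hitting times of the continuous path. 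The stopped processes $M^{n,z}_{t\wedge\sigma_R}(\bX_n)$ are then bounded martingales (Itô's formula for $e^{i\langle\cdot,z\rangle}$), and $\sigma_R\nearrow T$ a.s. by Proposition \ref{prop-First-propo}. This is why only a local martingale is claimed.

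Transfer to path space: I would define on $\bZ_T$ the analogous $\mathscr{D}_t$-stopping times $\hat\sigma_R(\bX)=\inf\{t:\Vert \bX(t)\Vert_{\mathbb{H}}\ge R\}\wedge T$. These are $\mathscr{D}_{t+}$-stopping times, since $\Vert\cdot\Vert_{\mathbb{H}}$ is lower semicontinuous on $\nU^\prime$, and they satisfy $\hat\sigma_R(\bX_n)=\sigma_R$. The martingale property of a bounded process is characterised by
\[
\mathbb{E}\big[(M^{n,z}_{t\wedge\sigma_R}-M^{n,z}_{s\wedge\sigma_R})\,g(\bX_n(s_1),\dots,\bX_n(s_m))\big]=0
\]
for all $s_1,\dots,s_m\le s$ and all bounded continuous $g$ on $(\nU^\prime)^m$. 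This identity holds on the original space because $g(\bX_n(s_j))$ is $\mathscr{F}_s$-measurable. The map $\bX\mapsto M^{n,z}_{t\wedge\hat\sigma_R}(\bX)$ is measurable on $\bZ_T$ by Proposition \ref{prop-measurable} and stopping. The change of variables formula then turns the identity into the same identity with $\mathbb{E}^{\mathbb{P}_n}$ and the canonical process. A monotone class argument gives the martingale property with respect to $(\mathscr{D}_t)$.

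The main obstacle is passing from $\mathscr{D}_t$ to the augmented right-continuous filtration $\mathscr{D}_{t+}$ of \eqref{eqn-D_t-D_t+}. Note that the null sets there are the $\mathbb{P}_T$-null sets, not the $\mathbb{P}_n$-null ones. I would first use path continuity of $t\mapsto M^{n,z}_{t\wedge\hat\sigma_R}$, which holds because $\bX\in C([0,T];\nU^\prime)$ and the integral term is continuous. Together with boundedness and dominated convergence, letting $s'\downarrow s$ gives the martingale property for $\bigcap_{s'>s}\mathscr{D}_{s'}$. For the null sets, I expect that the $\mathbb{P}_n$ are supported in the finite-dimensional $\mathbb{H}_n$-valued paths. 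If a $\mathbb{P}_T$-null set is not $\mathbb{P}_n$-null, I would instead work with the $\mathbb{P}_n$-completion, interpreting $\mathbb{D}$ as augmented with $\mathbb{P}_n$-null sets for this lemma; adding null sets does not affect conditional expectations.
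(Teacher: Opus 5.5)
Your proposal is correct, and its core is the paper's own argument. The paper likewise takes the $\mathbb{F}$-martingale property of $M^{n,z}_t(\bX_n)$ from Remark \ref{rem-I_st} and Corollary \ref{cor-martingale-1} and transfers it to $(\bZ_T,\mathbb{P}_n)$ through the law of $\bX_n$, by testing against functionals of the path up to time $s$; the details are deferred to the literature. The differences are in the bookkeeping.

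\textbf{Localisation.} The paper asserts a true martingale for the raw filtration $\mathscr{D}_s=\sigma(\bX_r,\,r\le s)$, and your localisation is not needed for that. By It\^o's formula, $M^{n,z}_t(\bX_n)=e^{i\langle \bX_n(0),z\rangle}+i\int_0^t e^{i\langle\bX_n(s),z\rangle}\langle \newG_{0,n}(s,\bX_n(s))\,\d W(s),z\rangle$. The integrand is square integrable by Assumption (v)-a), \eqref{Eqn-coercivity-3} and Proposition \ref{prop-First-propo}, so the drift integral is integrable a posteriori. If you keep the localisation, note that lower semicontinuity of $\Vert\cdot\Vert_{\mathbb{H}}$ on $\nU^\prime$ makes the hitting time of the open set $\{\Vert\cdot\Vert_{\mathbb{H}}>R\}$ a $\mathscr{D}_{t+}$-stopping time. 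It does not justify this for $\{\Vert\cdot\Vert_{\mathbb{H}}\ge R\}$, so use the strict inequality.

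\textbf{Filtration and null sets.} Your passage to $\mathscr{D}_{s+}$ and your remark on null sets go beyond the paper, and the null-set issue is real rather than hypothetical. The laws of $\bX(0)$ under $\mathbb{P}_n$ are Dirac masses at $\bX_n(0)\to\bX_0$, so $\bX(0)=\bX_0$ holds $\mathbb{P}_T$-a.s., whereas $\mathbb{P}_n(\bX(0)\in\mathbb{H}_n)=1$. Hence, whenever $\bX_0\notin\mathbb{H}_n$, every set $C\cap\{\bX(0)\in\mathbb{H}_n\}$ with $C$ Borel is $\mathbb{P}_T$-null. It follows that $\mathscr{D}_{0+}$ as defined in \eqref{eqn-D_t-D_t+} contains every Borel set up to a $\mathbb{P}_n$-null set. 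Any continuous $\mathbb{D}$-local martingale under $\mathbb{P}_n$ would then be a.s. constant, which $M^{n,z}$ is not when the noise tested against $z$ is nonzero. So augmenting with $\mathbb{P}_n$-null sets is necessary, not optional; alternatively, stay with the raw filtration, which is what the paper's proof actually establishes. With that reading your argument is complete.
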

\begin{proof}[Proof of Lemma \ref{Lem-Pn-martingale-solution}]
Let $s,t \in [0,T]$ be fixed such that $s \leq t$. We aim to show that
     \begin{align}\label{eqn-mart-13001}
       \mathbb{E}^{\mathbb{P}_n} \left(M_t^{n,z}-M_s^{n,z}\vert \mathscr{D}_s\right)=0.
     \end{align}
What we have proved is that 
\begin{align}\label{eqn-mart-13003}
    \mathbb{E} \left(e^{i \langle \bX_n(t), z \rangle}-e^{i \langle \bX_n(s), z \rangle} - I_{s,t}^{n,z}(\bX_n) \vert \mathscr{F}_s\right)=0,
\end{align}
where $I_{s,t}^{n,z}: \bZ_T \to \mathbb{C}$. \\ 
Recall that the law of the random object $\bX_n : \Omega \to \bZ_T$ is equal to $\mathbb{P}_n$.
Hence, equality \eqref{eqn-mart-13001} follows from \eqref{eqn-mart-13003} in the same way as the assertion between $(5.16)$ and $(5.17)$ in \cite{Brz+Motyl_2013}; 
see also an unproved assertion at the beginning of the proof of \cite[Corollary 2.8]{Mik+Roz_2005}. 
For a more detailed argument, we refer the reader to the proof of part $(i)$ in \cite[Lemma 5.8]{BKMR_2025}.
\end{proof}
The following auxiliary result is essential for the proof of Theorem \ref{thm-5.3}.
\begin{proposition}\label{Prop-equicontinuity}
Suppose the assumptions stated in Section \ref{Ass-Abstract formulation} hold and that $\bX_n\coloneqq(\bu_n,\phi_n) \to (\bu,\phi)\coloneqq \bX$ in $\bZ_T$. Then, the sequence $M_\cdot^{n,z}(\bX_n)$ is equicontinuous with respect to $t$. Moreover, for all $t \in[0,T]$ and $z \in \mathscr{V}$, it holds that
 \begin{equation}\label{eq-convergence-result-5.12}
     \begin{aligned}
       \lim_{n \to \infty} M_t^{n,z}(\bX_n)= M_t^{z}(\bX).
     \end{aligned}
   \end{equation}
\end{proposition}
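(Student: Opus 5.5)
Fix $z=(z_1,z_2)\in\mathscr{V}$ and $t\in[0,T]$. Since $M_t^{n,z}(\bX_n)=e^{i\langle \bX_n(t),z\rangle}-\int_0^t e^{i\langle\bX_n(r),z\rangle}\varphi_n^z(r,\bX_n(r))\,\d r$, the plan is to treat the exponential factor and the integral separately, and to reduce everything to strong $L^1(0,T)$ convergence of $r\mapsto\varphi_n^z(r,\bX_n(r))$ towards $r\mapsto\varphi^z(r,\bX(r))$.

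\textbf{Step 1: the exponential factor.} Convergence in $\bZ_T$ gives $\bu_n\to\bu$ in $C([0,T];\rU_0^\prime)$ and $\phi_n\to\phi$ in $C([0,T];\newonep{V})$. Hence $\langle\bX_n(r),z\rangle\to\langle\bX(r),z\rangle$ uniformly in $r$, and the same holds for the exponentials. By \cite[Lemma 2]{Brz+Hornung+Manna_2020}, the sequence is bounded in $L^\infty(0,T;\nHB)\cap L^2(0,T;\StokesV)\times L^\beta(0,T;\newone{V})$. Using $\bZ_{T,2}$ together with the interpolation in Corollary \ref{Coro-convergence-phi_n-in-H_2}, it also satisfies $\phi_n\to\phi$ in $L^3(0,T;\newone{H})\cap L^2(0,T;\zero{H}{2})$ and $\bu_n\to\bu$ in $L^2(0,T;\StokesH)$.

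\textbf{Step 2: $L^1$ convergence of the drift term.} Write $\langle\bF_n(\bX_n),z\rangle$ term by term using the identities at the end of Section \ref{Sec-Tightness}, which move $\pi_{0,n},\pi_{1,n}$ onto the test functions. By Properties \ref{eqn-properties-pi_{0,n}}(iii) and \ref{eqn-properties-pi_{1,n}}(iv), $\tilde\pi_{0,n}z_1\to z_1$ in $\rU_0$ and $\tilde\pi_{1,n}z_2\to z_2$ in $\newone{V}$ (and $\tilde\pi_{1,n}\Delta z_2\to\Delta z_2$). The terms are then handled as follows.
\begin{itemize}
\item \emph{Linear Stokes term.} Converges by weak convergence in $L^2(0,T;\StokesV)$, tested against $\mathds{1}_{[0,t]}z_1$.
\item \emph{$\bB_0$ term.} Use \eqref{eq-B_0-k_0} and the strong convergence in $L^2(0,T;\StokesH)$.
\item \emph{$B_1$ term.} Use \eqref{eq-B1} with $\bu_n\to\bu$ in $L^2(\mathbb{L}^2)$, interpolated with boundedness in $L^2(\StokesV)$, which gives $L^{8/d}(\mathbb{L}^4)$-type control, together with $\phi_n\to\phi$ in $L^3(L^4)$.
\item \emph{Chemical potential terms.} These are $(\cp_n,\cdot)_{\newone{H}}$ and $\newK(\pi_{1,n}\cp_n)\nabla\phi_n$. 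The main obstacle is passing to the limit in the cubic nonlinearity $\psi'(\phi_n)$ and in the product $\cp_n\nabla\phi_n$. For the first, I would not use the $\newone{H}$ norm of $\cp_n$. Instead I would integrate by parts once more, writing $(\cp_n,\tilde\pi_{1,n}\Delta z_2)_{\newone{H}}=(\cp_n,-\Delta\tilde\pi_{1,n}\Delta z_2)_{L^2}$, so that only $L^2$-convergence of $\Atwo\phi_n$ is needed, available from $L^2(0,T;\zero{H}{2})$. For $\psi'(\phi_n)\to\psi'(\phi)$ in $L^1(0,T;L^2)$, apply \eqref{eqn-Psi'-L^6} to the difference $\phi_n^3-\phi^3$, using the $H^1\hookrightarrow L^6$ bound uniform in time and $\phi_n\to\phi$ in $L^3(0,T;\newone{H})$. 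For $\cp_n\nabla\phi_n\cdot z_1$, write it as $(\Atwo\phi_n+f(\phi_n))\nabla\phi_n\cdot z_1$ with $z_1$ smooth and bounded, and again use the $L^2(\zero{H}{2})$ and $L^3(\newone{H})$ convergences. The error caused by the projection $\pi_{1,n}$ is controlled by $\|(I-\tilde\pi_{1,n})(\nabla\phi_n\cdot z_1)\|$, which needs a uniform bound on $\nabla\phi_n\cdot z_1$ in $\newone{V}$-type norms. If that fails, I would instead use Proposition \ref{prop-R_0} to rewrite the term as $\bR_0(\phi_n,\phi_n)$ modulo gradients, which disappear against divergence-free $z_1$, and then pass to the limit in $r_0(\phi_n,\phi_n,z_1)$ using \eqref{eqn-R_0-k_0} and $\phi_n\to\phi$ in $L^2(\newone{H})$.
\end{itemize}

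\textbf{Step 3: the noise term.} For $\tfrac12\|\langle\newG_{0,n}(r,\bX_n),z\rangle\|_{\ell^2}^2$, the $\bG_{0,n}$ part converges in $L^1$ by Lemma \ref{Lem-approximation-c}. For the $\bSi$ part, use \eqref{eq-3.20aa}: the map $\bu\mapsto(\bSi_0\bu,z_1)$ is bounded on $\StokesH$ uniformly in time, and $\pi_{0,n}$ moves onto $z_1$. Strong convergence of $\bu_n$ in $L^2(\StokesH)$ then gives convergence of the squares in $L^1$.

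\textbf{Conclusion.} Combining Steps 1--3 with dominated convergence yields \eqref{eq-convergence-result-5.12}. For equicontinuity in $t$, the same bounds give $\sup_n\|\varphi_n^z(\cdot,\bX_n)\|_{L^{q}(0,T)}<\infty$ for some $q>1$, or at least uniform integrability from the $L^1$ convergence. Together with the uniform continuity of $\langle\bX_n(\cdot),z\rangle$, which follows from convergence in $C([0,T];\nU^\prime)$, this gives equicontinuity of $t\mapsto M_t^{n,z}(\bX_n)$.
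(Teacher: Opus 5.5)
Your argument is correct in outline and follows the paper's skeleton. The exponential factor is controlled by uniform convergence in $C([0,T];\nU^\prime)$, and the projections $\tilde\pi_{0,n},\tilde\pi_{1,n}$ are moved onto the test functions. The $\bB_0$, $B_1$ and noise terms are handled essentially as in Lemmas \ref{B_on-Convergence}, \ref{B_1n-Convergence}, \ref{Lem-approximation-c} and \eqref{eq-3.20aa}. Equicontinuity then follows from uniform integrability of $\varphi_n^z(\cdot,\bX_n)$ via Proposition \ref{propo-equicontinuity-append}.

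Where you genuinely differ is the Cahn--Hilliard part. The paper proves $\cp_n\rightharpoonup\cp$ weakly in $L^{4/3}(0,T;\newone{H})$ (Claims \ref{eqn-first-claim-for-bar-mu_n}--\ref{eqn-4-claim-for-bar-mu_n}) and bounds $\varphi_n^z$ in $L^{4/3}(0,T)$ through \eqref{Eq-nabla-mu}, i.e.\ through $\nabla\cp_n$. You instead integrate by parts once more. Then $\cp_n$ is only paired in $L^2$ with $\Aone\tilde\pi_{1,n}\Delta z_2=\pi_{1,n}\Aone\Delta z_2\to\Aone\Delta z_2$, and you treat $\Atwo\phi_n\nabla\phi_n\cdot\tilde\pi_{0,n}z_1$ directly. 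All that is needed is $\phi_n\to\phi$ in $L^2(0,T;\zero{H}{2})$ and $\psi^\prime(\phi_n)\to\psi^\prime(\phi)$ in $L^1(0,T;L^2)$. The latter comes from the H\"older bound $|\phi_n^3-\phi^3|_{L^2}\le C\Vert\phi_n-\phi\Vert_{L^6}(\Vert\phi_n\Vert_{L^6}^2+\Vert\phi\Vert_{L^6}^2)$, not from \eqref{eqn-Psi'-L^6} itself.

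Your route gives strong $L^1$ convergence with $L^2(0,T)$ bounds for every term except the $|\tilde h|^2$ contribution, so uniform integrability is automatic. It uses the $L^\beta(0,T;\newone{V})$ information only through Corollary \ref{Coro-convergence-phi_n-in-H_2}. It also never invokes $\pi_{1,n}\cp_n=\cp_n$, which holds only along Galerkin trajectories. The price is that $z_2$ must be regular enough that $\Aone\Delta z_2\in L^2$, whereas the paper's weak-convergence argument only tests against $-\Delta z_2\in\newone{H}$; this is harmless for $z\in\mathscr{V}$. For the Stokes term you can likewise get strong convergence by putting $\Stokes$ on $z_1$, as the paper does, which keeps your ``strong $L^1$'' framing consistent.

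One step, as you wrote it, does not close: the projection error in $\big((\pi_{1,n}f(\phi_n))\nabla\phi_n,\tilde\pi_{0,n}z_1\big)$, with $f(\phi)=\psi^\prime(\phi)-\avg{\psi^\prime(\phi)}$.
\begin{itemize}
\item A $\newone{V}$-type bound on $\nabla\phi_n\cdot z_1$ would require fourth derivatives of $\phi_n$ and is not available.
\item Your fallback does not help. Proposition \ref{prop-R_0} disposes of $f(\phi_n)\nabla\phi_n$ because it is a gradient, but $(\pi_{1,n}f(\phi_n))\nabla\phi_n$ is not. After discarding the gradient, the term $\big(((\pi_{1,n}-I)f(\phi_n))\nabla\phi_n,\tilde\pi_{0,n}z_1\big)$ remains.
\end{itemize}
The repair is short. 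The eigenfunctions $\hat e_k$ are orthogonal in both $L^2$ and $\newone{H}$, so $\pi_{1,n}$ is also the $L^2$-orthogonal spectral projection of $\Aone$. Hence $|(I-\pi_{1,n})g|_{L^2}\le\alpha_{n+1}^{-1/2}|\nabla g|_{\mathbb{L}^2}$ for mean-zero $g\in H^1(\domO)$, and an $H^1$ bound suffices. Indeed, $\nabla\phi_n\cdot\tilde\pi_{0,n}z_1$ (minus its mean) is bounded in $L^2(0,T;H^1(\domO))$, by the $L^2(0,T;\zero{H}{2})$ bound and $\rU_0\embed\mathbb{W}^{1,\infty}(\domO)$. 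Meanwhile $f(\phi_n)$ is bounded in $L^\infty(0,T;L^2)$.

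Alternatively, as in the paper's $I_{3,n}$, write $(I-\pi_{1,n})f(\phi_n)=(I-\pi_{1,n})(f(\phi_n)-f(\phi))+(I-\pi_{1,n})f(\phi)$. The first piece tends to $0$ in $L^1(0,T;L^2)$ by your cubic estimate and contractivity of $I-\pi_{1,n}$. The second tends to $0$ by dominated convergence, since $f(\phi)$ is fixed. The factor $|\nabla\phi_n|_{\mathbb{L}^2}\Vert\tilde\pi_{0,n}z_1\Vert_{\mathbb{L}^\infty}$ stays bounded.
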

\begin{proof}[Proof of Proposition \ref{Prop-equicontinuity}]
Throughout this proof, $C$ denotes a generic positive constant that does not depend on $n$, but may depend on $\domO,\, \newK,\, C_3,\, C_4,\, \Vert z_1 \Vert_{\rU_0}$, $\Vert z_2 \Vert_{\newone{V}}$. 
In some cases, we use the same symbol to denote different constants within the same argument to simplify the notation.
\newline
Let us choose and fix $\bX_n \to \bX$ in $\bZ_T$ and $z \in L^{\beta^\prime}(0,T;\mathbb{V}^\prime)$ with $\beta^\prime$ being the dual of $\beta$. Definition \eqref{eqn-Z_T} implies that
\begin{equation}\label{eqn-lim=0}
    \begin{aligned}
     \lim_{n \to \infty} \sup_{t \in [0,T]} \Vert \bu_n(t)-\bu(t) \Vert_{\rU_0^\prime}=0,
       \\
     \lim_{n \to \infty} \sup_{t \in [0,T]} \Vert \phi_n(t)-\phi(t) \Vert_{\newonep{V}}=0.
    \end{aligned}
\end{equation}
The above imply that the sequence $(\bX_n)_{n \in \mathbb{N}}$ is equicontinuous in $C([0,T],\nU^\prime)$.
Moreover, 
\begin{equation}\label{eq-5.9}
\begin{aligned}
\sup_{n\geq 1} \left( \int_0^T (\lvert \phi_n(s) \rvert_{\newone{H}}^4 +  \Vert  \phi_n(s) \Vert_{\newone{V}}^{\beta} + \lvert \phi(s) \rvert_{\newone{H}}^4 +  \Vert  \phi(s) \Vert_{\newone{V}}^{\beta}) \, \d s \right)< \infty,
\\
\sup_{n\geq 1} \left(\sup_{s \in [0,T]} \Vert \bX_n(s) \Vert_{\nHB} + \sup_{s \in [0,T]} \Vert \bX(s) \Vert_{\nHB} + \int_0^T (\Vert \bX_n(s) \Vert_{\StokesV \times \zero{H}{2}}^2 + \Vert \bX(s) \Vert_{\StokesV \times \zero{H}{2}}^2) \, \d s \right)< \infty.
\end{aligned}
\end{equation}
Furthermore, by the definition of the space $\bZ_T$ and Corollary \ref{Coro-convergence-phi_n-in-H_2}, we infer that
\begin{align}
\label{eq-5.10}
\Vert \bX_n - \bX \Vert_{L^2(0,T;\nHB)}^2 \underset{n \to \infty}{\to}  0,
\\
\label{eq-5.11}
\int_0^T \duality{\bX_n(s) - \bX(s)}{z(s)}{\mathbb{V}}{\mathbb{V}^\prime} \, \d s \underset{n \to \infty}{\to}  0,
\\
\label{eq-5.12}
\Vert \phi_n - \phi \Vert_{L^2(0,T;\zero{H}{2}(\domO))} \underset{n \to \infty}{\to}  0.
\end{align}
Next, fix $z=(z_1,z_2) \in \mathscr{V}$. 
From the equicontinuity properties established above, the sequence of complex-valued functions 
$e^{i \langle \bX_n, z \rangle}$ is equicontinuous.
The remainder of the proof of Proposition \ref{Prop-equicontinuity} is divided into two parts.
\subsection*{Part 1}
We show that the sequence $(M_{\cdot}^{n,z}(\bX_n))_{n \in \mathbb{N}}$ is equicontinuous. 
First, notice that for almost $s \in [0,T]$,
\begin{align*}
&\lvert \varphi_n^{z}(s,\bX_n(s)) \rvert
\leq \lvert \duality{\bF_n(\bX_n(s))}{z}{}{} \rvert 
+ \frac{1}{2} \Vert \duality{\newG_{0,n}(s,\bX_n(s))}{z}{}{} \Vert_{\ell^2}^2
\\
&= \lvert \varphi_n^{z_1}(s,\bX_n(s)) + \varphi_n^{z_2}(s,\bX_n(s)) \rvert
+ \frac{1}{2} \Vert \duality{\newG_{0,n}(s,\bX_n(s))}{z}{}{} \Vert_{\ell^2}^2,
\end{align*}
where
    \begin{equation*}
     \varphi_n^{z_1}(s,\bX_n(s))
       = \nu \duality{\Stokes_{,n} \bu_n(s)}{z_1}{\rU_0}{\rU_0^\prime} + \duality{\bB_{0,n}(\bu_n(s),\bu_n(s))}{z_1}{\rU_0}{\rU_0^\prime}
        + \newK  \duality{\bar{R}_{0,n}(\phi_n(s),\phi_n(s))}{z_1}{\rU_0}{\rU_0^\prime},
   \end{equation*}
and
\begin{align*}
\varphi_n^{z_2}(s,\bX_n(s))= \duality{- B_{1,n}(\bu_n(s),\phi_n(s))}{z_2}{\newone{V}}{\newonep{V}} + (\pi_{1,n} \cp_n(s), -\Delta z_2)_{\newone{H}},
\end{align*}
where $\cp_n(t)=  \Atwo \phi_n(t) + \pi_{1,n} (\psi^\prime(\phi_n(t)) - \avg{\psi^\prime(\phi_n(t))}),\;\; t \in[0,T]$.
We now turn to the estimates for $\varphi_n^{z_1}(s,\bX_n(s))$ and $\varphi_n^{z_2}(s,\bX_n(s))$.
Since $\Vert \Stokes \bu_n(s) \Vert_{\StokesVp} \leq C\Vert \bu_n(s) \Vert_{\StokesV}$, for every $n \in \mathbb{N}$, 
and $\StokesVp \embed \rU_0^\prime$, we deduce that for all $s \in [0,T]$,
\begin{align*}
\nu \lvert \duality{ \Stokes_{,n} \bu_n(s)}{z_1}{\rU_0}{\rU_0^\prime}\rvert
\leq \nu \Vert \pi_{0,n} \Vert_{\mathcal{L}(\rU_0^\prime)} \Vert \Stokes \bu_n(s) \Vert_{\rU_0^\prime} \Vert z_1 \Vert_{\rU_0}
\leq C \Vert \Stokes \bu_n(s) \Vert_{\StokesVp} \Vert z_1 \Vert_{\rU_0}
\leq C \Vert \bu_n(s) \Vert_{\StokesV}. 
\end{align*}
By the estimate \eqref{eq-4.1} in Corollary \ref{2nd-corollary}, we infer that for all $s \in [0,T]$,
\begin{align*}
&\lvert \duality{\bB_{0,n}(\bu_n(s),\bu_n(s))}{z_1}{\rU_0}{\rU_0^\prime} \rvert + \newK \lvert \duality{\bar{R}_{0,n}(\phi_n(s),\phi_n(s))}{z_1}{\rU_0}{\rU_0^\prime} \rvert
\\
&\leq (\Vert \bB_{0,n}(\bu_n(s),\bu_n(s)) \Vert_{\rU_0^\prime} + \newK \Vert \bar{R}_{0,n}(\phi_n(s),\phi_n(s)) \Vert_{\rU_0^\prime}) \Vert z_1 \Vert_{\rU_0} 
  \\
&\leq C(\lvert \bu_n(s) \rvert_{\StokesH}^2 + \lvert \cp_n(s) \rvert_{\newone{H}} \lvert \phi_n(s) \rvert_{\newone{H}}).
\end{align*}
It then follows from the two latter estimates that for every $s \in [0,T]$,
\[
\lvert \varphi_n^{z_1}(s,\bX_n(s)) \rvert
\leq C(\Vert \bu_n(s) \Vert_{\StokesV} + \lvert \bu_n(s) \rvert_{\StokesH}^2 + \lvert \cp_n(s) \rvert_{\newone{H}} \lvert \phi_n(s) \rvert_{\newone{H}}).
\]
Regarding the term $\varphi_n^{z_2}(s,\bX_n(s))$, we have for all $s \in [0,T]$,
\begin{align*}
&\lvert \varphi_n^{z_2}(s,\bX_n(s)) \rvert
= \lvert \duality{- B_{1,n}(\bu_n(s),\phi_n(s))}{z_2}{\newone{V}}{\newonep{V}} + (\pi_{1,n} \cp_n(s), -\Delta z_2)_{\newone{H}} \rvert 
\\
&\leq \Vert B_{1,n}(\bu_n(s),\phi_n(s)) \Vert_{\newonep{V}} \Vert z_2 \Vert_{\newone{V}} + \lvert \pi_{1,n}\cp_n(s) \rvert_{\newone{H}} \lvert -\Delta z_2 \rvert_{\newone{H}}
  \\
&\leq \Vert B_{1,n}(\bu_n(s),\phi_n(s)) \Vert_{\newonep{V}} \Vert z_2 \Vert_{\newone{V}} + \Vert \pi_{1,n} \Vert_{\mathcal{L}(\newone{H})} \lvert \cp_n(s)\rvert_{\newone{H}} \lvert -\Delta z_2 \rvert_{\newone{H}}
    \\
&\leq C \Vert \bu_n(s) \Vert_{\StokesV} \lvert \phi_n(s) \rvert_{\newone{H}} \Vert z_2 \Vert_{\newone{V}} + C \lvert \cp_n(s) \rvert_{\newone{H}} \Vert z_2 \Vert_{\newone{V}}.
\end{align*}
Consequently for all $s \in [0,T]$,
\begin{equation*}      
\lvert \varphi_n^{z_2}(s,\bX_n(s)) \rvert
\leq C(\lvert \cp_n(s) \rvert_{\newone{H}} + \Vert \bu_n(s) \Vert_{\StokesV} \lvert \phi_n(s) \rvert_{\newone{H}}).
\end{equation*}
Next, observe that
\begin{align*}
\Vert \duality{\newG_{0,n}(s,\bX_n(s))}{z}{}{} \Vert_{\ell^2}^2
=  \sum_{k=1}^\infty \dualitybig{[\newG_{0,n}(s,\bX_n(s))](\tilde{e}_k)}{z}{}{}{}{}{^2}
\leq \Vert \newG_{0,n}(s,\bX_n(s)) \Vert_{\ell^2(\nU^\prime)}^2 \Vert z \Vert_{\nU}^2, 
\end{align*}
and by \eqref{eq-4.1} we infer that for all $s \in [0,T]$,
\begin{equation}\label{eq-5.14b}
\Vert \newG_{0,n}(s,\bX_n(s)) \Vert_{\ell^2(\nU^\prime)}^2
\leq C (\lvert \bu_n(s) \rvert_{\StokesH}^2 + \lvert \tilde{h}(s) \rvert^2 ).
\end{equation}
Therefore, for all $s \in [0,T]$ and $n \in \mathbb{N}$, we have
\begin{equation}\label{eq-5.14a}
\begin{aligned}
&\lvert \varphi_n^{z}(s,\bX_n(s)) \rvert
\leq \lvert \varphi_n^{z_1}(s,\bX_n(s)) \rvert + \lvert \varphi_n^{z_2}(s,\bX_n(s)) \rvert
+ \frac{1}{2} \Vert \duality{\newG_{0,n}(s,\bX_n(s))}{z}{}{} \Vert_{\ell^2}^2
\\
&\leq C (\Vert \bu_n(s) \Vert_{\StokesV} + \lvert \bu_n(s) \rvert_{\StokesH}^2 + \lvert \cp_n(s) \rvert_{\newone{H}} \lvert \phi_n(s) \rvert_{\newone{H}} + \lvert \tilde{h}(s) \rvert^2
  \\
&\hspace{1cm} + \lvert \cp_n(s) \rvert_{\newone{H}} + \Vert \bu_n(s) \Vert_{\StokesV} \lvert \phi_n(s) \rvert_{\newone{H}}).
\end{aligned}
\end{equation}
We now examine the terms on the RHS of \eqref{eq-5.14a}. First, in view of \eqref{eq-5.9}, we see that
   \begin{equation*}
     \sup_{n} \int_0^T \Vert \bu_n(s) \Vert_{\StokesV}^2\, \d s< \infty.
   \end{equation*}
From \eqref{eq-5.9}, we deduce that 
   \begin{equation*}
     \int_0^T \lvert \bu_n(s) \rvert_{\StokesH}^4 \,\d s 
     \leq C \left[\sup_{s \in [0,T]} \lvert \bu_n(s) \rvert_{\StokesH}^2 \int_0^T \Vert \bu_n(s) \Vert_{\StokesV}^2 \,\d s \right]< \infty.
   \end{equation*}
Next, by virtue of estimates \eqref{Eq-nabla-mu} and \eqref{eq-5.9}, we have  
\begin{equation*}
\sup_n \int_0^T [\lvert \cp_n(s) \rvert_{\newone{H}} \lvert \phi_n(s) \rvert_{\newone{H}}]^{4/3}\, \d s
\leq \sup_n \left[ \sup_{s \in [0,T]} \lvert \phi_n(s) \rvert^{4/3}_{\newone{H}}  \int_0^T \lvert \cp_n(s) \rvert_{\newone{H}}^{4/3}\, \d s \right]
< \infty.
\end{equation*}
On the other hand, thanks to \eqref{Eq-nabla-mu} and \eqref{eq-5.9}, we deduce that 
        \begin{equation*}
          \sup_{n} \int_0^T \lvert \cp_n(s) \rvert_{\newone{H}}^{4/3}\,\d s< \infty
        \end{equation*}
and 
     \begin{equation*}
        \sup_{n} \int_{0}^{T} \Vert \bu_n(s) \Vert_{\StokesV}^2 \lvert \phi_n(s) \rvert_{\newone{H}}^2\,\d s 
        \leq  \sup_{n} \left[\sup_{s \in[0,T]} \lvert \phi_n(s) \rvert_{\newone{H}}^2  \int_{0}^{T} \Vert \bu_n(s) \Vert_{\StokesV}^2\,\d s \right]< \infty.
     \end{equation*}
We therefore deduce that the sequences of functions on the RHS of \eqref{eq-5.14a} are uniformly bounded in $L^{4/3}(0,T)$.
Furthermore, since the function $\tilde{h} \in L^2(0,T)$, the map
   \begin{align*}
     f: [0,T] \ni s \mapsto f(s)= \lvert \tilde{h}(s) \rvert^2
   \end{align*}
is an element of space $L^1(0,T)$. Thus, $f$ is absolutely continuous wrt Leb-measure. 
Consequently, by Corollary \ref{cor-uniformly integrable linear combination}, the sequence $(M_{\cdot}^{n,z}(\bX_n))_{n \in \mathbb{N}}$ is equicontinuous.
\subsection*{Part 2}
Now, let us turn to the proof of \eqref{eq-convergence-result-5.12}. We have
\begin{align*}
M_t^{n,z}(\bX_n) - M_t^{z}(\bX)
&= e^{i \langle \bX_n(t), z \rangle} - e^{i \langle \bX(t), z \rangle}  
- \int_0^t \left[e^{i \langle \bX_n(s), z \rangle} -e^{i \langle \bX(s), z \rangle}\right]\varphi_n^{z}(s,\bX_n(s))\, \d s \\
&\qquad - \int_0^t e^{i \langle \bX(s), z \rangle} \left[\varphi_n^{z}(s,\bX_n(s)) - \varphi^{z}(s,\bX(s))\right] \d s. 
\end{align*}
Subsequently, we have
\begin{align*}
\left|e^{i \langle \bX_n(t), z \rangle} - e^{i \langle \bX(t), z \rangle} \right|
&= \left \lvert 2 i \sin \left(\frac{\langle \bX_n(t) - \bX(t), z \rangle}{2} \right) e^{i \frac{1}{2} \langle \bX_n(t) + \bX(t), z \rangle} \right \rvert
\leq \lvert \langle \bX_n(t) - \bX(t), z \rangle \rvert,
\end{align*}
and then thanks to the assumption \eqref{eqn-lim=0}, we deduce that
\begin{align}\label{eq-5.21}
\sup_{t \in [0,T]} \left \lvert e^{i \langle \bX_n(t), z \rangle} - e^{i \langle \bX(t), z \rangle} \right \rvert \underset{n \to \infty}{\to}  0.
\end{align}
By \eqref{eq-5.21} and since the sequence $(\varphi_n^{z}(\cdot,\bX_n))_n$ is uniformly bounded, we infer that
   \begin{equation}\label{eq-5.25}
     \lim_{n \to \infty} \int_0^t \left \lvert \left[e^{i \langle \bX_n(s), z \rangle} -e^{i \langle \bX(s), z \rangle}\right] \varphi_n^{z}(s,\bX_n(s)) \right \rvert \,\d s= 0.
   \end{equation}
To complete the proof of Part 2, it remains to show that
    \begin{equation}\label{Eqn-convergence-varphi_n^{z}}
        \lim_{n \to \infty} \int_0^t e^{i \langle \bX(s), z \rangle} \left[\varphi_n^{z}(s,\bX_n(s)) - \varphi^{z}(s,\bX(s))\right]\d s= 0. 
    \end{equation}
Given the process $\phi= (\phi(t): t \in [0,T])$ from Proposition \ref{Prop-equicontinuity}, we introduce the auxiliary process $\cp= (\cp(t): t \in [0,T])$:
\[
\cp(t)\coloneqq \Atwo \phi(t) + \psi^\prime(\phi(t)) - \avg{\psi^\prime(\phi(t))}, \; \; t \in [0,T],
\]
and we note that for all $s \in [0,T]$,

\begin{align*}
&\varphi_n^{z}(s,\bX_n(s)) - \varphi^{z}(s,\bX(s))
\\
&= i \; \duality{\bF_n(\bX_n(s)) - \bF(\bX(s))}{z}{\nU}{\nU^\prime} 
- \frac12\left[\Vert \duality{\newG_{0,n}(s,\bX_n(s))}{z}{}{} \Vert_{\ell^2}^2 - \Vert \duality{\newG_0(s,\bX(s))}{z}{}{} \Vert_{\ell^2}^2 \right]
   \\
&= - i \nu \duality{\Stokes_{,n} \bu_n(s) - \Stokes \bu(s)}{z_1}{\rU_0}{\rU_0^\prime}  
   -  i \duality{\bB_{0,n}(\bu_n(s),\bu_n(s)) - \bB_0(\bu(s),\bu(s))}{z_1}{\rU_0}{\rU_0^\prime} 
      \\
& + i \duality{ \newK \bar{R}_{0,n}(\phi_n(s),\phi_n(s)) - \newK \bR_0(\phi(s),\phi(s))}{z_1}{\rU_0}{\rU_0^\prime}
       - i \duality{B_{1,n}(\bu_n(s),\phi_n(s)) - B_{1}(\bu(s),\phi(s))}{z_2}{\newone{V}}{\newonep{V}}
\\
& + i (\pi_{1,n} \cp_n(s) - \cp(s), -\Delta z_2)_{\newone{H}} - \frac12 \left[ \Vert \duality{\newG_{0,n}(s,\bX_n(s))}{z}{}{} \Vert_{\ell^2}^2 - \Vert \duality{\newG_0(s,\bX(s))}{z}{}{} \Vert_{\ell^2}^2 \right].
\end{align*}
Next, notice that since $\pi_{0,n} \circ \P= \pi_{0,n}$,
\begin{align*}
&\duality{\Stokes_{,n}\bu_n}{z_1}{\rU_0}{\rU_0^\prime}
=\duality{\Stokes(\pi_{0,n} \bu_n)}{z_1}{\rU_0}{\rU_0^\prime}
=\int_{\domO} \nabla (\pi_{0,n} \bu_n) \cdot \nabla z_1\,\d x
= \int_{\domO} \pi_{0,n} \bu_n \cdot \Stokes z_1\,\d x\\
&=(\pi_{0,n} \bu_n, \Stokes z_1)
=(\pi_{0,n}(\bu_n-\bu), \Stokes z_1) + (\pi_{0,n}\bu, \Stokes z_1),
\end{align*}
from which we infer
\begin{align*}
\duality{\Stokes_{,n}\bu_n - \Stokes \bu}{z_1}{\rU_0}{\rU_0^\prime}
= (\pi_{0,n}(\bu_n-\bu), \Stokes z_1) + (\pi_{0,n}\bu - \bu, \Stokes z_1).
\end{align*}
From this last equality, we deduce that for every $n \in \mathbb{N}$,
\begin{align*}
&\lvert \duality{\Stokes_{,n} \bu_n(s) - \Stokes \bu(s)}{z_1}{\rU_0}{\rU_0^\prime} \rvert
\leq \lvert (\pi_{0,n}(\bu_n-\bu), \Stokes z_1) \rvert + \lvert (\pi_{0,n}\bu - \bu, \Stokes z_1) \rvert
\\
&\leq \lvert \pi_{0,n}(\bu_n-\bu) \rvert \lvert \Stokes z_1 \rvert + \lvert \pi_{0,n}\bu - \bu \rvert \lvert \Stokes z_1 \rvert
\leq \Vert \pi_{0,n} \Vert_{\mathcal{L}(\mathbb{L}^2)} \lvert \bu_n-\bu \rvert \lvert \Stokes z_1 \rvert + \lvert \pi_{0,n}\bu - \bu \rvert \lvert \Stokes z_1 \rvert
\\
&\leq C (\lvert \bu_n-\bu \rvert \Vert z_1 \Vert_{\rU_0} + \lvert \pi_{0,n}\bu - \bu \rvert \Vert z_1 \Vert_{\rU_0}).
\end{align*}
Therefore, in view of of the assumption \eqref{eq-5.10}, we infer that
\begin{align*}
\lim_{n \to \infty} \int_0^t \lvert \duality{\Stokes_{,n} \bu_n(s) - \Stokes \bu(s)}{z_1}{\rU_0}{\rU_0^\prime} \rvert\, \d s=0,
\end{align*}
which, in turn, implies that
    \begin{equation*}
       \lim_{n \to \infty} \int_0^t e^{i \duality{\bX(s)}{z}{\nU}{\nU^\prime}} \duality{\Stokes_{,n} \bu_n(s) - \Stokes \bu(s)}{z_1}{\rU_0}{\rU_0^\prime}\,\d s=0.
     \end{equation*} 
By \eqref{eq-5.9}, \eqref{eq-5.10}, and Lemma \ref{B_on-Convergence}, we obtain
\begin{equation}
\lim_{n \to \infty} \int_0^t e^{i \duality{\bX(s)}{z}{\nU}{\nU^\prime}} \duality{\bB_{0,n}(\bu_n(s),\bu_n(s)) - \bB_0(\bu(s),\bu(s))}{z_1}{\rU_0}{\rU_0^\prime}\,\d s=0.
\end{equation}
Combining \eqref{eq-5.9}, \eqref{eq-5.10}, and Lemma \ref{R_on-Convergence}, we infer that
\begin{equation}
\lim_{n \to \infty} \int_0^t e^{i \duality{\bX(s)}{z}{\nU}{\nU^\prime}} \duality{\bar{R}_{0,n}(\phi_n(s),\phi_n(s)) - \bR_0(\phi(s),\phi(s))}{z_1}{\rU_0}{\rU_0^\prime}\, \d s=0. 
\end{equation}
Using \eqref{eq-5.9}, \eqref{eq-5.10}, and Lemma \ref{B_1n-Convergence}, we obtain
   \begin{equation}\label{eq-5.29a}
     \lim_{n \to \infty} \int_0^t e^{i \duality{\bX(s)}{z}{\nU}{\nU^\prime}} \duality{B_{1,n}(\bu_n(s),\phi_n(s))- B_1(\bu(s),\phi(s))}{z_2}{\newone{V}}{\newonep{V}}\,\d s= 0.
   \end{equation}
Our second step is to establish that 
  \begin{equation}\label{eq-A1-mu-n-convergence}
    \lim_{n \to \infty} \int_0^t e^{i \duality{\bX(s)}{z}{\nU}{\nU^\prime}} (\pi_{1,n} \cp_n(s) - \cp(s), -\Delta z_2)_{\newone{H}}\,\d s=0.
  \end{equation}
Regarding the proof of \eqref{eq-A1-mu-n-convergence}, we first state and prove the following auxiliary claims.
\begin{claim}\label{eqn-first-claim-for-bar-mu_n}
Assume that 
\[
r \coloneqq \sup_{n \in \mathbb{N}} \Vert \phi_n \Vert_{L^{3/2}(0,T;\newone{V})} + \Vert \phi \Vert_{L^{3/2}(0,T;\newone{V})}<\infty.
\]
and 
\[
\phi_n \to \phi \in L^2(0,T;\zero{H}{2}(\domO)).
\]
Then the  following convergences hold.
\begin{trivlist}
\item[(i)] $(\pi_{1,n} -I) \Atwo (\phi_n - \phi) \rightharpoonup 0 \; \mbox{ weakly in } L^{4/3}(0,T;\newone{H})$ as $n \to \infty$.
\item[(ii)] $\Atwo (\phi_n - \phi) \rightharpoonup  0 \; \mbox{ weakly in } L^{4/3}(0,T;\newone{H})$ as $n \to \infty$.
\end{trivlist}
\end{claim}
\begin{proof}[Proof of Claim \ref{eqn-first-claim-for-bar-mu_n}]
Let us choose and fix a test function $\varphi \in L^4(0,T;\newone{V})$. Note that for every $n \in \mathbb{N}$,
\begin{align}\label{((pi_{1,n} -I) Atwo (phi_n - phi), varphi)_{newone{H}}}
& \int_0^T \left \lvert ((\pi_{1,n} -I) \Atwo (\phi_n - \phi), \varphi)_{\newone{H}} \right \rvert\,\d s 
=  \int_0^T \left \lvert (\Atwo (\phi_n - \phi), (\tilde{\pi}_{1,n} -I) \varphi)_{\newone{H}} \right \rvert\,\d s \notag 
\\
&\leq \int_0^T \lvert \Atwo (\phi_n - \phi) \rvert_{\newone{H}} \lvert (\tilde{\pi}_{1,n} -I) \varphi \rvert_{\newone{H}}\,\d s
\leq C \int_0^T \Vert \phi_n - \phi \Vert_{\newone{V}} \lvert (\tilde{\pi}_{1,n} -I) \varphi \rvert_{\newone{H}}\,\d s
\\
&\leq C \Vert \phi_n - \phi \Vert_{L^{4/3}(0,T;\newone{V})} \Vert (\tilde{\pi}_{1,n} -I) \varphi \Vert_{L^4(0,T;\newone{H})}
\leq C \Vert \phi_n - \phi \Vert_{L^{4/3}(0,T;\newone{V})} \Vert (\tilde{\pi}_{1,n} -I) \varphi \Vert_{L^4(0,T;\newone{V})}. \notag
\end{align}
Moreover, by the item $(iv)$ in Properties \ref{eqn-properties-pi_{1,n}}, we infer that 
\[
\lim_{n \to \infty} \Vert (\tilde{\pi}_{1,n} -I) \varphi(t) \Vert_{\newone{V}}=0, \mbox{  for almost every } t \in [0,T].
\]
Furthermore, since $\Vert \tilde{\pi}_{1,n} \Vert_{\mathcal{L}(\newone{V})}\leq 1$, we deduce that the sequence $(\tilde{\pi}_{1,n}\varphi)_{n \in \mathbb{N}}$ is uniformly bounded in $L^4(0,T;\newone{V})$. Hence, by DCT, we infer that
\[
\lim_{n \to \infty} \Vert (\tilde{\pi}_{1,n} -I) \varphi \Vert_{L^4(0,T;\newone{V})}=0.
\]
In addition, by assumption, the sequence $(\phi_n)_{n \in \mathbb{N}}$ is uniformly bounded in the topological space $L^{3/2}(0,T;\newone{V}) \subset L^{4/3}(0,T;\newone{V})$ and 
its limit object $\phi \in L^{3/2}(0,T;\newone{V})$. Therefore, we can pass to the limit in \eqref{((pi_{1,n} -I) Atwo (phi_n - phi), varphi)_{newone{H}}} and deduce that
\[
\lim_{n \to \infty}  \int_0^T \left\lvert ((\pi_{1,n} -I) \Atwo (\phi_n(s) - \phi(s)), \varphi(s))_{\newone{H}}\right \rvert\,\d s=0.
\]
Let us choose and fix a test function $\varphi \in L^4(0,T;\newone{H})$ and  $\epsilon>0$. 
Since the space $L^4(0,T;\newone{V})$ is dense in $L^4(0,T;\newone{H})$, there exists $\varphi_\eps  \in L^4(0,T;\newone{V})$ s.t. $\Vert \varphi - \varphi_\eps  \Vert_{L^4(0,T;\newone{H})} \leq \eps $. 
We have
\begin{equation}\label{((pi_{1,n} -I) Atwo (phi_n - phi), varphi)_{newone{H}}-1}
\begin{aligned}
& \lvert ((\pi_{1,n} -I) \Atwo (\phi_n - \phi), \varphi)_{\newone{H}} \rvert
\\
&=  \lvert ((\pi_{1,n} -I) \Atwo (\phi_n - \phi), \varphi - \varphi_\eps )_{\newone{H}} +  ((\pi_{1,n} -I) \Atwo (\phi_n - \phi), \varphi_\eps )_{\newone{H}} \rvert
\\
&=  \lvert ((\pi_{1,n} -I) \Atwo (\phi_n - \phi), \varphi - \varphi_\eps )_{\newone{H}} +  (\Atwo (\phi_n - \phi), (\tilde{\pi}_{1,n} -I) \varphi_{\varepsilon})_{\newone{H}} \rvert
\\
&\leq  \lvert (\pi_{1,n} -I) \Atwo (\phi_n - \phi) \rvert_{\newone{H}} \lvert \varphi - \varphi_\eps  \rvert_{\newone{H}} +  \lvert (\Atwo (\phi_n - \phi), (\tilde{\pi}_{1,n} -I) \varphi_{\varepsilon})_{\newone{H}} \rvert 
\\
&\leq \vert \pi_{1,n} -I \vert_{\mathcal{L}(\newone{H})}  \lvert \Atwo (\phi_n - \phi) \rvert_{\newone{H}} \lvert \varphi - \varphi_\eps  \rvert_{\newone{H}} +  \lvert (\Atwo (\phi_n - \phi), (\tilde{\pi}_{1,n} -I) \varphi_{\varepsilon})_{\newone{H}} \rvert
\\
&\leq 2  \lvert \Atwo (\phi_n - \phi) \rvert_{\newone{H}} \lvert \varphi - \varphi_\eps  \rvert_{\newone{H}} +  \lvert (\Atwo (\phi_n - \phi), (\tilde{\pi}_{1,n} -I) \varphi_{\varepsilon})_{\newone{H}} \rvert.
\end{aligned}
\end{equation}
Thanks to the H\"older inequality, we infer that for every $n \in \mathbb{N}$,
\begin{align*}
&\int_0^T \lvert ((\pi_{1,n} -I) \Atwo (\phi_n - \phi), \varphi)_{\newone{H}} \rvert\,\d s
\\
& \leq C \Vert \phi_n - \phi \Vert_{L^{3/2}(0,T;\newone{V})} \Vert \varphi - \varphi_\eps  \Vert_{L^4(0,T;\newone{H})} + \int_0^T \lvert (\Atwo (\phi_n - \phi), (\tilde{\pi}_{1,n} -I) \varphi_{\varepsilon})_{\newone{H}} \rvert\,\d s
\\
&\leq C r \eps  + \int_0^T \lvert (\Atwo (\phi_n - \phi), (\tilde{\pi}_{1,n} -I) \varphi_{\varepsilon})_{\newone{H}} \rvert\,\d s.
\end{align*}
Therefore, passing to the upper limit as $n \to \infty$, we infer that
\[
\limsup_{n \to \infty} \int_0^T \lvert ((\pi_{1,n} -I) \Atwo (\phi_n(s) - \phi(s)), \varphi(s))_{\newone{H}} \rvert\,\d s
\leq C r \eps .
\]
So, by the arbitrariness of $\eps $, 
\[
\lim_{n \to \infty} \int_0^T \lvert ((\pi_{1,n} -I) \Atwo (\phi_n(s) - \phi(s)), \varphi(s))_{\newone{H}} \rvert\,\d s=0.
\]
Consequently,
\[
\lim_{n \to \infty} \int_0^T ((\pi_{1,n} -I) \Atwo (\phi_n(s) - \phi(s)), \varphi(s))_{\newone{H}}\,\d s=0.
\]
This completes the proof of the first part of Claim \ref{eqn-first-claim-for-bar-mu_n}.

\noindent
In order to prove the second  part of Claim \ref{eqn-first-claim-for-bar-mu_n} we choose and fix a test function $\varphi \in L^4(0,T;\newone{H})$ and  $\epsilon>0$. 
Using and integration by parts, we deduce that there exists a constant $C$ such that for all $n \in \mathbb{N}$, 
\begin{align*}
&\lvert (\Atwo (\phi_n - \phi),\varphi_{\varepsilon})_{\newone{H}} \rvert
= \lvert (\Atwo (\phi_n - \phi), \Atwo\varphi_{\varepsilon})_{L^2} \rvert
\\
&\leq \lvert \Atwo (\phi_n - \phi) \rvert_{L^2} \lvert \Atwo\varphi_{\varepsilon} \rvert_{L^2}
\leq C \Vert \phi_n - \phi \Vert_{\zero{H}{2}} \Vert \varphi_{\varepsilon} \Vert_{\newone{V}}.
\end{align*}
 Since by assumption $\phi_n \to \phi$ in $L^2(0,T;\zero{H}{2}(\domO))$ as $n \to \infty$, we infer that
\[
\lim_{n \to \infty} \int_0^T \lvert (\Atwo (\phi_n(s) - \phi(s)),\varphi_{\varepsilon}(s))_{\newone{H}} \rvert\,\d s=0.
\]
Observe also that for every $n \in \mathbb{N}$,
\begin{align*}
&\lvert (\Atwo (\phi_n - \phi),\varphi)_{\newone{H}} \rvert
= \lvert (\Atwo (\phi_n - \phi),\varphi - \varphi_\eps )_{\newone{H}} + (\Atwo (\phi_n - \phi),\varphi_\eps )_{\newone{H}} \rvert
\\
&\leq \lvert (\Atwo (\phi_n - \phi),\varphi - \varphi_\eps )_{\newone{H}} \rvert + \lvert (\Atwo (\phi_n - \phi),\varphi_\eps )_{\newone{H}} \rvert
\\
&\leq \lvert \Atwo (\phi_n - \phi) \rvert_{\newone{H}} \lvert \varphi - \varphi_\eps  \rvert_{\newone{H}} + \lvert (\Atwo (\phi_n - \phi),\varphi_\eps )_{\newone{H}} \rvert
\\
&\leq \Vert \phi_n - \phi \Vert_{\newone{V}} \lvert \varphi - \varphi_\eps  \rvert_{\newone{H}} + \lvert (\Atwo (\phi_n - \phi),\varphi_\eps )_{\newone{H}} \rvert,
\end{align*}
from which we deduce that
\begin{align*}
\int_0^T \lvert (\Atwo (\phi_n(s) - \phi(s)),\varphi(s))_{\newone{H}} \rvert\,\d s 
\leq C r \eps  + \int_0^T \lvert (\Atwo (\phi_n(s) - \phi(s)),\varphi_\eps )_{\newone{H}} \rvert\,\d s.
\end{align*}
Finally, by arguing similarly as in the proof of first part of Claim \ref{eqn-first-claim-for-bar-mu_n}, we infer that
\[
\lim_{n \to \infty} \int_0^T (\Atwo (\phi_n(s) - \phi(s)),\varphi(s))_{\newone{H}}\,\d s=0. 
\]
Thus the proof of the second part of Claim \ref{eqn-first-claim-for-bar-mu_n} is also complete.
\end{proof}
\begin{claim}\label{eqn-2nd-claim-for-bar-mu_n}
 $(\pi_{1,n} - I) \Atwo \phi \to 0$ strongly in $L^{3/2}(0,T;\newone{H})$ as $n \to \infty$.
\end{claim}
\begin{proof}[Proof of Claim \ref{eqn-2nd-claim-for-bar-mu_n}]
Observe that $\Atwo \phi \in L^{3/2}(0,T;\newone{H})$, because $\phi \in L^{3/2}(0,T;\newone{V})$ by assumption. The proof of the claim then follows by an application of the Lebesgue DCT.
\end{proof}
\begin{claim}\label{eqn-third-claim-for-bar-mu_n}
 $\pi_{1,n} [\psi^\prime(\phi_n) - \psi^\prime(\phi) + \avg{\psi^\prime(\phi)} - \avg{\psi^\prime(\phi_n)}]\to 0$ strongly in $L^{4/3}(0,T;\newone{H})$ as $n \to \infty$.
\end{claim}
\begin{proof}[Proof of Claim \ref{eqn-third-claim-for-bar-mu_n}]
Thanks to the equality \eqref{eqn-Psi'} and the H\"older inequality, 
\begin{align*}
&\lvert \pi_{1,n} [\psi^\prime(\phi_n) - \psi^\prime(\phi) + \avg{\psi^\prime(\phi)} - \avg{\psi^\prime(\phi_n)}] \rvert_{\newone{H}}
\leq \Vert \pi_{1,n} \Vert_{\mathcal{L}(\newone{H})} \lvert \psi^\prime(\phi_n) - \psi^\prime(\phi) + \avg{\psi^\prime(\phi)} - \avg{\psi^\prime(\phi_n)} \rvert_{\newone{H}}
\\
&\leq \lvert \psi^\prime(\phi_n) - \psi^\prime(\phi) + \avg{\psi^\prime(\phi)} - \avg{\psi^\prime(\phi_n)} \rvert_{\newone{H}}
= \lvert \nabla \psi^\prime(\phi_n) - \nabla \psi^\prime(\phi) \rvert_{\mathbb{L}^2}
\\
& = \lvert \psi^{\bis}(\phi_n) \nabla \phi_n - \psi^{\bis}(\phi) \nabla \phi\rvert_{\mathbb{L}^2}
=\lvert 3 \phi_n^2 \nabla(\phi_n - \phi) - \nabla(\phi_n - \phi) + 3 (\phi_n - \phi) (\phi_n + \phi) \nabla \phi  \rvert_{\mathbb{L}^2}
\\
&\leq \lvert 3 \phi_n^2 \nabla(\phi_n - \phi) - \nabla(\phi_n - \phi) \rvert_{\mathbb{L}^2} + 3 \lvert (\phi_n - \phi) (\phi_n + \phi) \nabla \phi  \rvert_{\mathbb{L}^2}.
\end{align*}
Arguing as in \eqref{eq-Psi-second-gradient-phi-n-1}, using also the Agmon inequality \eqref{eq-Agmon's-inequalities}, we infer that for every $n \in \mathbb{N}$,
\begin{equation}\label{Eqn-8.41}
\begin{aligned}
&\lvert 3 \phi_n^2 \nabla(\phi_n - \phi) - \nabla(\phi_n - \phi) \rvert_{\mathbb{L}^2}
\leq \lvert \phi_n - \phi \rvert_{\newone{H}} + C \Vert \phi_n \Vert_{L^\infty}^2 \lvert \phi_n - \phi \rvert_{\newone{H}}
\\
&\leq \lvert \phi_n - \phi \rvert_{\newone{H}} + C \lvert \phi_n \rvert_{\newone{H}} \Vert \phi_n \Vert_{\zero{H}{2}}  \lvert \phi_n - \phi \rvert_{\newone{H}}
\end{aligned}
\end{equation}
and
\begin{equation}\label{Eqn-8.42}
\begin{aligned}
&\lvert (\phi_n - \phi) (\phi_n + \phi) \nabla \phi  \rvert_{\mathbb{L}^2}
\leq \Vert \phi_n - \phi \Vert_{L^\infty} \Vert \phi_n + \phi \Vert_{L^\infty} \lvert \phi \rvert_{\newone{H}}
\\
&\leq C \lvert \phi_n - \phi \rvert_{\newone{H}}^{1/2} \Vert \phi_n - \phi \Vert_{\zero{H}{2}}^{1/2}  
\lvert \phi_n + \phi \rvert_{\newone{H}}^{1/2} \Vert \phi_n + \phi \Vert_{\zero{H}{2}}^{1/2}  
\lvert \phi \rvert_{\newone{H}}.
\end{aligned}
\end{equation}
Using the estimate \eqref{Eqn-8.41}, we infer that for every $n \in \mathbb{N}$,
\begin{align*}
&\int_0^T \lvert 3 \phi_n^2(s) \nabla(\phi_n(s) - \phi(s)) - \nabla(\phi_n(s) - \phi(s)) \rvert_{\mathbb{L}^2}^{\frac43}\,\d s
\\
&\leq C\Vert \phi_n - \phi \Vert_{L^{\frac43}(0,T;\newone{H})}^{\frac43} + C \int_0^T \lvert \phi_n(s) \rvert_{\newone{H}}^{\frac43} \Vert \phi_n(s) \Vert_{\zero{H}{2}}^{\frac43} \lvert \phi_n(s) - \phi(s) \rvert_{\newone{H}}^{\frac43}\,\d s
\\
&\leq C\Vert \phi_n - \phi \Vert_{L^{\frac43}(0,T;\newone{H})}^{\frac43} + C \sup_{s \in [0,T]} \lvert \phi_n(s) \rvert_{\newone{H}}^{\frac43} 
 \sup_{s \in [0,T]} \lvert \phi_n(s) - \phi(s) \rvert_{\newone{H}}^{\frac56}
\Vert \phi_n \Vert_{L^2(0,T;\zero{H}{2})}^{\frac43} \Vert \phi_n - \phi \Vert_{L^{\frac32}(0,T;\newone{H})}^{\frac12}
\\
&\leq  C \sup_{n \in \mathbb{N}} \sup_{s \in [0,T]} \lvert \phi_n(s) \rvert_{\newone{H}}^{\frac43} 
 \left[ \sup_{n \in \mathbb{N}} \sup_{s \in [0,T]} \lvert \phi_n(s)\rvert_{\newone{H}}^{\frac56} + \sup_{s \in [0,T]} \lvert \phi(s) \rvert_{\newone{H}}^{\frac56} \right]
\sup_{n \in \mathbb{N}} \Vert \phi_n \Vert_{L^2(0,T;\zero{H}{2})}^{\frac43} \Vert \phi_n - \phi \Vert_{L^{\frac32}(0,T;\newone{H})}^{\frac12}
\\
&\quad + C \Vert \phi_n - \phi \Vert_{L^{\frac43}(0,T;\newone{H})}^{\frac43}.
\end{align*}
This, jointly with the estimates in \eqref{eq-5.9} and the fact that $\phi_n \to \phi$ in $L^2(0,T;\zero{H}{2}(\domO))$, yields
\[
\lim_{n \to \infty} \int_0^T \lvert 3 \phi_n^2(s) \nabla(\phi_n(s) - \phi(s)) - \nabla(\phi_n(s) - \phi(s)) \rvert_{\mathbb{L}^2}^{\frac43}\,\d s=0.
\]
From the estimate \eqref{Eqn-8.42}, we infer that for every $n \in \mathbb{N}$,
\begin{align*}
&\int_0^T \lvert (\phi_n - \phi) (\phi_n + \phi) \nabla \phi  \rvert_{\mathbb{L}^2}^{\frac43}\,\d s
\leq C \int_0^T  \lvert \phi \rvert_{\newone{H}}^{\frac43} \lvert \phi_n - \phi \rvert_{\newone{H}}^{\frac23} \Vert \phi_n - \phi \Vert_{\zero{H}{2}}^{\frac23}  
\lvert \phi_n + \phi \rvert_{\newone{H}}^{\frac23} \Vert \phi_n + \phi \Vert_{\zero{H}{2}}^{\frac23}\,\d s  
\\
&\leq C \sup_{s \in [0,T]} \lvert \phi(s) \rvert_{\newone{H}}^{\frac43} \sup_{s \in [0,T]} \lvert \phi_n(s) + \phi(s) \rvert_{\newone{H}}^{\frac23} 
\Vert \phi_n - \phi \Vert_{L^2(0,T;\newone{H})}^{\frac13} 
\Vert \phi_n - \phi \Vert_{L^2(0,T;\zero{H}{2})}^{\frac13}  
 \Vert \phi_n + \phi \Vert_{L^2(0,T;\zero{H}{2})}^{\frac13}. 
\end{align*}
This, jointly with the estimates in \eqref{eq-5.9} and the convergence \eqref{eq-5.12}  yields
\[
\lim_{n \to \infty} \int_0^T  \lvert (\phi_n(s) - \phi(s)) (\phi_n(s) + \phi(s)) \nabla \phi(s)) \rvert_{\mathbb{L}^2}^{\frac43}\,\d s=0.
\]
As a direct consequence of the above two convergences results,
\[
\lim_{n \to \infty} \int_0^T \lvert \pi_{1,n} [\psi^\prime(\phi_n(s)) - \psi^\prime(\phi(s)) + \avg{\psi^\prime(\phi(s))} - \avg{\psi^\prime(\phi_n(s))}] \rvert_{\newone{H}}^{\frac43}\,\d s=0.
\]
The proof of Claim \ref{eqn-third-claim-for-bar-mu_n} is now complete.
\end{proof}
\begin{claim}\label{eqn-4-claim-for-bar-mu_n}
 $(\pi_{1,n} - I) [\psi^\prime(\phi) - \avg{\psi^\prime(\phi)}]\to 0$ strongly in $L^{4/3}(0,T;\newone{H})$ as $n \to \infty$.
\end{claim}
\begin{proof}[Proof of Claim \ref{eqn-4-claim-for-bar-mu_n}]
Arguing as in the proof of the inequality \eqref{eq-Psi-second-gradient-phi-n}, we infer that for every $n \in \mathbb{N}$,
\begin{equation}
\begin{aligned}
&\lvert \psi^\prime(\phi) - \avg{\psi^\prime(\phi)} \rvert_{\newone{H}}
=\lvert \nabla \psi^\prime(\phi) \rvert_{\mathbb{L}^2}
= \lvert \psi^{\bis}(\phi) \nabla \phi \rvert_{\mathbb{L}^2}
\\
&\leq \lvert \phi \rvert_{\newone{H}}  +  C \lvert \phi \rvert_{L^2}^{7/8} \lvert \phi \rvert_{\newone{H}}^{1/4} \Vert \phi \Vert_{\zero{H}{2}}^{15/8} 
\leq \lvert \phi \rvert_{\newone{H}}  +  C \lvert \phi \rvert_{L^2}^{7/8} \lvert \phi \rvert_{\newone{H}}^{19/16} \Vert \phi \Vert_{\newone{V}}^{15/16}. 
\end{aligned}
\end{equation}  
Combining the above inequalities together with the properties of $\phi$, cf. \eqref{eq-5.9}, we deduce that for every $n \in \mathbb{N}$,
\begin{align*}
&\int_0^T \lvert \psi^\prime(\phi(s)) - \avg{\psi^\prime(\phi(s))} \rvert_{\newone{H}}^{\frac43}\,\d s 
\leq C \int_0^T \lvert \phi(s) \rvert_{\newone{H}}^{\frac43}  +  C \int_0^T \lvert \phi(s) \rvert_{L^2}^{7/6} \lvert \phi(s) \rvert_{\newone{H}}^{19/12} \Vert \phi(s) \Vert_{\newone{V}}^{5/4}\,\d s
\\
&\leq C \Vert \phi \Vert_{L^{4/3}(0,T;\newone{H})}^{4/3} + C \sup_{s \in [0,T]} \lvert \phi(s) \rvert_{L^2}^{7/6} \sup_{s \in [0,T]} \lvert \phi(s) \rvert_{\newone{H}}^{19/16} \Vert \phi \Vert_{L^{3/2}(0,T;\newone{V})}^{9/5}<\infty.
\end{align*}  
Therefore, by the DCT, we infer that
\[
\lim_{n \to \infty} \int_0^T \Vert (\pi_{1,n} - I) [\psi^\prime(\phi(s)) - \avg{\psi^\prime(\phi(s))}] \Vert_{\newone{H}}^{4/3}\,\d s=0.
\]
The claim \ref{eqn-4-claim-for-bar-mu_n} is now verified.
\end{proof}
Recall that $\cp_n(t)=  \pi_{1,n}\Atwo \phi_n(t) + \pi_{1,n} (\psi^\prime(\phi_n(t)) - \avg{\psi^\prime(\phi_n(t))})$  
and  $\cp(t)=  \Atwo \phi(t) + \psi^\prime(\phi(t)) - \avg{\psi^\prime(\phi(t))},\; \; t \in [0,T]$. 
From the estimates \eqref{Eq-nabla-mu} and the assumption \eqref{eq-5.9}, we infer that
        \begin{equation*}
         \sup_{n} \int_0^T \lvert \cp_n(s) \rvert_{\newone{H}}^{4/3}\, \d s < \infty.
        \end{equation*}
Furthermore, thanks to the estimate \eqref{Eq-Psi-second-gradient-phi-b} and the assumption \eqref{eq-5.9}, we deduce that 
$\cp \in L^{4/3}(0,T;\newone{H})$.
\begin{corollary}\label{eqn-cor-weak-convergence-bar-mu_n-to-bar-mu}
$\cp_n \rightharpoonup \cp$ weakly in $L^{4/3}(0,T;\newone{H})$ as $n \to \infty$.    
\end{corollary}
\begin{proof}
Let $n \in \mathbb{N}$ be fixed. Notice that
\begin{align*}
\cp_n - \cp
=& (\pi_{1,n} -I) \Atwo (\phi_n - \phi) + \Atwo (\phi_n - \phi) + (\pi_{1,n} - I) \Atwo \phi\\
& + \pi_{1,n} [\psi^\prime(\phi_n) - \psi^\prime(\phi) + \avg{\psi^\prime(\phi)} - \avg{\psi^\prime(\phi_n)}]
+ (\pi_{1,n} - I) [\psi^\prime(\phi) - \avg{\psi^\prime(\phi)}].
\end{align*}
Since the strong convergence in a reflexive Banach space implies weak convergence and 
the limit  of a sum  of weakly sequences is equal to the sum of weak limits,  Corollary \ref{eqn-cor-weak-convergence-bar-mu_n-to-bar-mu} is  a direct consequence of 
the Claims \ref{eqn-first-claim-for-bar-mu_n}-\ref{eqn-4-claim-for-bar-mu_n}.
\end{proof}
Let us now move to the proof of the assertion \eqref{eq-A1-mu-n-convergence}. We observe that by \eqref{eqn-Galerkin-Modified-stochastic-CHNSEs-n}, 
 $\pi_{1,n}\cp_n =\cp_n$, and therefore 
\begin{align*}
(\pi_{1,n} \cp_n, -\Delta z_2)_{\newone{H}} - (\cp, -\Delta z_2)_{\newone{H}}
= (\pi_{1,n} \cp_n - \cp, -\Delta z_2)_{\newone{H}} 
=(\cp_n - \cp, -\Delta z_2)_{\newone{H}}.
\end{align*}
Therefore, thanks to the Corollary \ref{eqn-cor-weak-convergence-bar-mu_n-to-bar-mu}, we deduce that
\begin{align*}
&\lim_{n \to \infty} \int_0^t e^{i \duality{\bX(s)}{z}{\nU}{\nU^\prime}} (\pi_{1,n} \cp_n(s) - \cp(s), -\Delta z_2)_{\newone{H}}\,\d s
\\
&= \lim_{n \to \infty} \int_0^t e^{i \duality{\bX(s)}{z}{\nU}{\nU^\prime}} (\cp_n(s) - \cp(s), -\Delta z_2)_{\newone{H}}\,\d s
=0,
\end{align*}
which, in turn, implies \eqref{eq-A1-mu-n-convergence}. \newline
Now, let us prove that
   \begin{equation}\label{eq-5.29}
     \lim_{n \to \infty} \int_0^t \Vert \duality{\newG_{0,n}(s,\bX_n(s))}{z}{}{} \Vert_{\ell^2}^2\,\d s
     = \int_0^t \Vert \duality{\newG_0(s,\bX(s))}{z}{}{} \Vert_{\ell^2}^2\,\d s.
   \end{equation}
Notice that
\begin{align*}
& \int_0^t \left[\Vert \duality{\newG_{0,n}(s,\bX_n(s))}{z}{}{} \Vert_{\ell^2}^2 - \Vert \duality{\newG_0(s,\bX(s))}{z}{}{} \Vert_{\ell^2}^2 \right]\d s  
\\
&\leq \int_0^t [\Vert \duality{\newG_{0,n}(s,\bX_n(s)) - \newG_0(s,\bX(s))}{z}{}{} \Vert_{\ell^2} 
      \Vert \duality{\newG_{0,n}(s,\bX_n(s)) + \newG_0(s,\bX(s))}{z}{}{} \Vert_{\ell^2}]\,\d s
  \\
&\leq \left(\int_0^T \Vert \duality{\newG_{0,n}(s,\bX_n(s)) - \newG_0(s,\bX(s))}{z}{}{} \Vert_{\ell^2}^2 \right)^\frac{1}{2} \cdot \\
&\qquad \left(\int_0^T \Vert \duality{\newG_{0,n}(s,\bX_n(s)) + \newG_0(s,\bX(s))}{z}{}{} \Vert_{\ell^2}^2  \right)^\frac{1}{2} \coloneqq I_{4,1}^n I_{4,2}^n.
\end{align*}
We now turn to the estimate of $I_{4,2}^n$. Similarly to \eqref{eq-5.14b}, we infer that
\begin{align*}
I_{4,2}^n
&\leq C \left[\int_0^T \left(\Vert \duality{\newG_{0,n}(s,\bX_n(s)) }{z}{}{} \Vert_{\ell^2}^2 + \Vert \duality{\newG_0(s,\bX(s))}{z}{}{} \Vert_{\ell^2}^2 \right)\d s \right]^{1/2} 
 \\
&\leq C \left[\int_0^T \left(\lvert \bu_n(s) \rvert_{\StokesH}^2 + \lvert \bu(s) \rvert_{\StokesH}^2 + \lvert \tilde{h}(s) \rvert^2\right)\, \d s \cdot \Vert z_1 \Vert_{\rU_0}^2\right]^{1/2}.
\end{align*}
The latter inequality and \eqref{eq-5.9} imply that $I_{4,2}^n$ is uniformly bounded w.r.t $n$. 
\newline
Regarding the term $I_{4,1}^n$, we have
\begin{align*}
& \int_0^t \Vert \duality{\newG_{0,n}(s,\bX_n(s)) - \newG_0(s,\bX(s))}{z}{}{} \Vert_{\ell^2}^2\d s 
=\int_0^t \sum_{k=1}^\infty \left[\dualitybig{[\newG_{0,n}(s,\bX_n(s)) - \newG_0(s,\bX(s))](\tilde{e}_k)}{z}{}{}{}{}{^2}\right]\d s
\\
&= \int_0^t \sum_{k=1}^\infty \dualitybig{[\bG_{0,n}(s,\bu_n(s)) -  \bG_0(s,\bu(s)) + \bSi_0^n(s)\bu_n(s) - \bSi_0(s)\bu(s)](\tilde{e}_k)}{z_1}{}{}{}{}{^2}\d s
\\
&\leq 2 \int_0^T \Vert \duality{\bG_{0,n}(s,\bu_n(s)) -  \bG_0(s,\bu(s))}{z_1}{}{}\Vert_{\ell^2}^2\,\d s
      + 2 \int_0^T \Vert \duality{\bSi_0^n(s)\bu_n(s) - \bSi_0(s)\bu(s)}{z_1}{}{} \Vert_{\ell^2}^2\,\d s,
\end{align*}
and
\begin{align*}
&\bSi_0^n(s)\bu_n(s) - \bSi_0(s)\bu(s)
= \bSi_0^n(s)(\bu_n(s) - \bu(s)) + \bSi_0^n(s)\bu(s) - \bSi_0(s)\bu(s)
\\
&= \pi_{0,n} \circ  \bSi_0(s)(\bu_n(s) - \bu(s)) + (\pi_{0,n} - I) \circ  \bSi_0(s)\bu(s).
\end{align*}
We therefore deduce that
\begin{align*}
&\int_0^T \Vert \duality{\bSi_0^n(s)\bu_n(s) - \bSi_0(s)\bu(s)}{z_1}{}{} \Vert_{\ell^2}^2\,\d s
\\
&= \int_0^T \Vert \duality{\pi_{0,n} \circ  \bSi_0(s)(\bu_n(s) - \bu(s)) + (\pi_{0,n} - I) \circ  \bSi_0(s)\bu(s)}{z_1}{}{} \Vert_{\ell^2}^2\,\d s
 \\
&=\int_0^T \sum_{k=1}^\infty \duality{\pi_{0,n} \circ  \bSi_{0}^k(s)(\bu_n(s) - \bu(s)) + (\pi_{0,n} - I) \circ  \bSi_{0}^k(s)\bu(s)}{z_1}{}{}{}{^2}\,\d s
\\
&\leq 2 \int_0^T \sum_{k=1}^\infty \duality{\pi_{0,n} \circ  \bSi_{0}^k(s)(\bu_n(s) - \bu(s))}{z_1}{}{}{}{^2}\,\d s + 2 \int_0^T \sum_{k=1}^\infty \duality{(\pi_{0,n} - I) \circ  \bSi_{0}^k(s)\bu(s)}{z_1}{}{}{}{^2}\,\d s
\\
&\leq  2 \int_0^T \sum_{k=1}^\infty \duality{\bSi_{0}^k(s)(\bu_n(s) - \bu(s))}{\tilde{\pi}_{0,n} z_1}{}{}{}{^2}\,\d s + 2 \int_0^T \sum_{k=1}^\infty \duality{\bSi_{0}^k(s)\bu(s)}{(\tilde{\pi}_{0,n} - I) z_1}{}{}{}{^2}\,\d s.
\end{align*}
For the second term on the RHS of the above inequality, we infer from \eqref{Eqn-coercivity-3} that
\begin{align*}
&\sum_{k=1}^\infty \lvert \duality{\bSi_{0}^k(s)\bu(s)}{\tilde{\pi}_{0,n} z_1 - z_1}{}{} \rvert^2  
\leq \sum_{k=1}^\infty \lvert \bSi_{0}^k(s)\bu(s) \rvert_{\mathbb{L}^2}^2 \lvert \tilde{\pi}_{0,n} z_1 - z_1 \rvert_{\mathbb{L}^2}^2
\\
&= \Vert \bSi_0(s)\bu(s) \Vert_{\ell^2(\StokesH)}^2 \lvert \tilde{\pi}_{0,n} z_1 - z_1 \rvert_{\mathbb{L}^2}^2 
\leq \tilde{C}_1 \Vert \bu(s) \Vert_{\StokesV}^2 \cdot \lvert \tilde{\pi}_{0,n} z_1 - z_1 \rvert_{\mathbb{L}^2}^2,
\end{align*}
and then by \eqref{eq-5.9} and since $\lvert \tilde{\pi}_{0,n} z_1 - z_1 \rvert_{\mathbb{L}^2} \to 0$, we thus obtain
\begin{equation*}
\lim_{n \to \infty} \int_0^T \sum_{k=1}^\infty \lvert \duality{\bSi_{0}^k(s)\bu(s)}{\tilde{\pi}_{0,n} z_1 - z_1}{}{}\rvert^2\,\d s= 0.
\end{equation*}
In view of \eqref{eqn-duality-identity} and \eqref{eq-3.20aa} we infer that for every $n \in \mathbb{N}$,
\begin{align*}
&\int_0^T \sum_{k=1}^\infty \lvert \duality{\bSi_{0}^k(s)(\bu_n(s) - \bu(s))}{\tilde{\pi}_{0,n} z_1}{\rU_0}{\rU_0^\prime}\rvert^2\,\d s
\\
&=\int_0^T \sum_{k=1}^\infty \lvert \duality{\bSi_{0}^k(s)(\bu_n(s) - \bu(s))}{\tilde{\pi}_{0,n} z_1}{\StokesV}{\StokesVp} \rvert^2\,\d s
\\
&\leq \int_0^T \Vert \bSi_0(s)(\bu_n(s) - \bu(s)) \Vert_{\ell^2(\StokesVp)}^2\,\d s \cdot \Vert \tilde{\pi}_{0,n} z_1 \Vert_{\StokesV}^2
\\
&
\leq C_4 \Vert \bu_n - \bu \Vert_{L^2(0,T;\StokesH)}^2 \Vert \tilde{\pi}_{0,n} z_1 \Vert_{\StokesV}^2
\leq C\Vert \bu_n - \bu \Vert_{L^2(0,T;\StokesH)}^2 \Vert z_1 \Vert_{\StokesV}^2.
\end{align*}
From this latter estimate and since $\bu_n \to \bu$ in the topology of $L^2(0,T;\StokesH)$, we obtain
   \begin{equation*}
     \lim_{n \to \infty} \int_0^T \sum_{k=1}^\infty \lvert \duality{\bSi_{0}^k(s)(\bu_n(s) - \bu(s))}{\tilde{\pi}_{0,n} z_1}{\rU_0}{\rU_0^\prime} \rvert^2\, \d s=0.
   \end{equation*}
Hence, 
$\int_0^T \Vert \duality{\bSi_0^n(s)\bu_n(s) - \bSi_0(s)\bu(s)}{z_1}{}{}\Vert_{\ell^2}^2\,\d s \underset{n \to \infty}{\to}  0$.
Next, we have
\begin{align*}
&\Vert \duality{\bG_{0,n}(s,\bu_n(s)) - \bG_0(s,\bu(s))}{z_1}{}{} \Vert_{\ell^2}^2
= \sum_{k=1}^\infty \lvert \duality{[\bG_{0,n}(s,\bu_n(s))](\tilde{e}_k) - \bG_{0}^k(s,\bu(s))}{z_1}{}{}\rvert^2
  \\
&\leq \lvert z_1 \rvert_{\mathbb{L}^2}^2 \Vert \bG_{0,n}(s,\bu_n(s)) - \bG_0(s,\bu(s)) \Vert_{\ell^2(\StokesH)}^2.
\end{align*}
Now, thanks to the assumption \eqref{eq-5.10} and the Lemma \ref{Lem-approximation-c}, we infer that
   \begin{align*}
    &\int_0^T \Vert \duality{\bG_{0,n}(s,\bu_n(s)) - \bG_0(s,\bu(s))}{z_1}{}{} \Vert_{\ell^2}^2\,\d s
      \\
    &\leq \lvert z_1 \rvert_{\mathbb{L}^2}^2 \int_0^T \Vert \bG_{0,n}(s,\bu_n(s)) - \bG_0(s,\bu(s)) \Vert_{\ell^2(\StokesH)}^2\,\d s \underset{n \to \infty}{\to}  0.
\end{align*}
Therefore, we deduce $[I_{4,1}^n]^2 \to   0$ as $n \to \infty$,  and the convergence \eqref{eq-5.29} holds.
\newline
Combining the previous convergences, we obtain \eqref{Eqn-convergence-varphi_n^{z}}.
This, together with \eqref{eq-5.21} and \eqref{eq-5.25}, implies that
     \begin{equation*}
        \lim_{n \to \infty }\lvert M_t^{n,z}(\bX_n) - M_t^{z}(\bX) \rvert= 0.
     \end{equation*}
This completes the proof of Part 2.
The proof of Proposition \ref{Prop-equicontinuity} is now complete.
\end{proof}

\begin{definition}\label{def-martingale-solution}
A probability measure $\mathbb{P}_T$ on space $\bZ_T$ is said to be a solution to the martingale problem $(\bX_0,\bF,\newG_0)$, where  $\bX_0\coloneqq  (\bu_{0},\phi_{0}) \in\mathbb{H}$,  i.e. the problem \eqref{eqn-compact-modified-stochastic-CHNSEs-3}, if and only if 
\begin{trivlist}
\item[(i)] for almost every $\bX$ belonging to the support of the measure $\mathbb{P}_T$, the function $\cp$ defined by
              \begin{equation}\label{Eq-identition-mu}
                \cp(t)=  \Atwo \phi(t) +  \psi^\prime(\phi(t)) - \avg{\psi^\prime(\phi(t))}, \;\; t\in [0,T]    
              \end{equation}
belongs to $L^{2}(0,T;\newone{H})$ and 
\item[(ii)] 
for every $z \in \mathscr{V}$, the process $M_t^{z}$ defined on the canonical sample space $\bZ_T$ in \eqref{eqn-M^z_t} 
is a continuous $\mathbb{D}$ local martingale. 
\end{trivlist}
\end{definition}
\begin{remark}\label{rem-def-martingale-solution} 
The condition $(ii)$ in Definition \ref{def-martingale-solution} is equivalent to the following one.
\begin{trivlist}
\item[(ii')] for every $z \in \mathscr{V}$ and every $\theta \in \mathbb{R}$, the process $M_t^{\theta z}$ defined on the canonical sample space $\bZ_T$ in \eqref{eqn-M^z_t} is a continuous local martingale, i.e., its satisfies
         \begin{align*}
            M_t^{\theta z} \in \mathcal{M}_{\loc}^c(\mathbb{D},\mathbb{P}_T).
         \end{align*}
\end{trivlist}
It follows from the proof of Theorem \ref{thm-5.3} that the localizing sequence of stopping times depends only on $z$ but not on $\theta \in \mathbb{R}$.
\end{remark}
With the above result, i.e., Proposition \ref{Prop-equicontinuity}, in hand, and the above definition in mind, we are now ready to state and prove the following result--the central step in the proof of Theorem \ref{First-main-result}. Here, $\mathbb{E}^{\mathbb{P}_T} \fb$  denotes the integral of a measurable function $\fb$ with respect to the measure $\mathbb{P}_T$.
\begin{theorem}\label{thm-5.3}
Assume that the assumptions in Section \ref{Ass-Abstract formulation} hold and that $\bX_0 \in\mathbb{H}$. Then the following assertions hold. 
\begin{trivlist}
\item[(i)]   
There exists a measure $\mathbb{P}_T$ on $\bZ_T$ solving the  martingale problem $(\bX_0,\bF,\newG_0)$ in the sense of Definition \ref{def-martingale-solution}.
\item[(ii)]
The above measure $\mathbb{P}_T$ satisfies
     \begin{equation}\label{X-estimate}
       \mathbb{E}^{\mathbb{P}_T} \sup_{s \in [0,T]} \Vert \bX(s) \Vert_{\nHB}^2  + \mathbb{E}^{\mathbb{P}_T} \Vert \bX \Vert_{L^2(0,T;\StokesV \times \zero{H}{2})}^2< \infty,
     \end{equation}
and
\begin{align}
\label{phi-H_1-power-4-estimate}
\mathbb{E}^{\mathbb{P}_T} \Vert \phi \Vert_{L^4(0,T;\newone{H})}^4<\infty,
\\
\label{phi-H3-estimate}
\mathbb{E}^{\mathbb{P}_T} \Vert \phi \Vert_{L^\beta(0,T;\newone{V})}^\beta<\infty.
\end{align}
\item[(iii)]
If $d=2$, we have $\mathbb{P}_T$-a.s.
     \begin{equation*}
       \Vert \bF(\bX) \Vert_{L^2(0,T;\mathbb{V}^\prime)}^2< \infty.
     \end{equation*}
 \end{trivlist}
\end{theorem}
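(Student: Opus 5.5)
We take $\mathbb{P}_T$ to be the weak limit from Proposition \ref{prop-Prohorov-general} of the subsequence of laws $\mathbb{P}_n$. This subsequence exists because the family $\{\mathbb{P}_n\}$ is tight on $(\bZ_T,\mathscr{Z})$ by Lemma \ref{Lem-compactness-measure}. We prove (ii) first, since the moment bounds are needed to justify the limit passage in (i).

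\textbf{Part (ii).} Each functional in (ii) is lower semicontinuous on $\bZ_T$. Examples are $\bX\mapsto\sup_s\Vert\bX(s)\Vert_{\nHB}^2$, which is a supremum of weakly continuous evaluations; $\Vert\bu\Vert^2_{L^2(0,T;\StokesV)}$ and $\Vert\phi\Vert^\beta_{L^\beta(0,T;\newone{V})}$, which are weakly lsc norms; and $\Vert\phi\Vert^4_{L^4(0,T;\newone{H})}$, which is lsc with respect to the strong $L^\beta(0,T;\newone{H})$ convergence by Fatou's lemma. For each such functional $f$, we truncate to $f\wedge K$, which is bounded and lsc. The Portmanteau-type argument via the Skorokhod representation used in the proof of Proposition \ref{prop-Prohorov-general} gives
\[
\mathbb{E}^{\mathbb{P}_T}(f\wedge K)\le\liminf_n\mathbb{E}^{\mathbb{P}_n}(f\wedge K)\le\sup_n\mathbb{E}\,f(\bX_n).
\]
We then bound the right-hand side by the uniform estimates \eqref{eq-3.23-global} and Proposition \ref{prop-2nd proposition}, and let $K\to\infty$. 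The $L^2(0,T;\zero{H}{2})$ bound follows by interpolation \eqref{H^2-interpolation}, or directly from the Corollary \ref{cor-Propositions 5.1 and 5.3} estimate with $p=2$ combined with lower semicontinuity.

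\textbf{Part (i).} Condition (i) of Definition \ref{def-martingale-solution} reduces to showing that $\cp\in L^2(0,T;\newone{H})$ for $\mathbb{P}_T$-a.e.\ path. The plan is as follows.
\begin{itemize}
\item On $\bZ_T$, consider the functional $\bX\mapsto\Vert\cp\Vert^2_{L^2(0,T;\newone{H})}$, set to $+\infty$ where undefined.
\item Along any convergent sequence, Corollary \ref{eqn-cor-weak-convergence-bar-mu_n-to-bar-mu} gives $\cp_n\rightharpoonup\cp$ in $L^{4/3}(0,T;\newone{H})$.
\item Hence $\Vert\cp\Vert_{L^2}\le\liminf\Vert\cp_n\Vert_{L^2}$ whenever the right-hand side is finite.
\item Arguing as in (ii), this yields $\mathbb{E}^{\mathbb{P}_T}\Vert\cp\Vert^2_{L^2(0,T;\newone{H})}\le C$.
\end{itemize}

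For condition (ii) of Definition \ref{def-martingale-solution}, fix $z\in\mathscr{V}$ and define stopping times
\[
\tau_R=\inf\Big\{t:\int_0^t\big(\Vert\bu\Vert_{\StokesV}^2+|\cp|^{2}_{\newone{H}}+|\tilde h|^2\big)\,\d s+\sup_{s\le t}\Vert\bX(s)\Vert^2_{\nHB}\ge R\Big\}.
\]
Below $\tau_R$, the quantity $M^{z}_{t\wedge\tau_R}$ is bounded. To show that the stopped process is a $\mathbb{D}$-martingale, we need
\[
\mathbb{E}^{\mathbb{P}_T}\big[(M^z_{t\wedge\tau_R}-M^z_{s\wedge\tau_R})\,g(\bX|_{[0,s]})\big]=0
\]
for bounded continuous $g$ depending on finitely many times. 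The steps are:
\begin{itemize}
\item For the approximations, the identity holds with $\mathbb{P}_n$ and $M^{n,z}$, by Lemma \ref{Lem-pn-martingale-solution} combined with the uniform $L^{4/3}$ integrability of $\varphi^z_n$ established in Part 1 of Proposition \ref{Prop-equicontinuity}.
\item We pass to the limit using Proposition \ref{Prop-equicontinuity}. It gives $M^{n,z}_t(\bX_n)\to M^z_t(\bX)$ whenever $\bX_n\to\bX$ in $\bZ_T$, together with equicontinuity in $t$, so the map is sequentially continuous.
\item Uniform integrability comes from $|e^{i\cdot}|=1$ and the uniform moment bounds on $\int_0^T|\varphi^z_n|\,\d s$.
\item Proposition \ref{prop-Prohorov-general} then transfers the identity to $\mathbb{P}_T$. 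We add a Skorokhod/Vitali step to handle the unbounded integrand.
\end{itemize}
Continuity of $M^z$ in $t$ is immediate from $\bX\in C([0,T];\nU^\prime)$ and $\varphi^z(\cdot,\bX)\in L^1$. Right-continuity of the filtration is obtained by a standard limit $s\downarrow$ argument.

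\textbf{Main obstacle.} The main difficulty is the stopping time in the previous step. The map $\bX\mapsto\tau_R(\bX)$ is not continuous on $\bZ_T$, so we cannot simply stop $M^{n,z}$ before taking limits. We would first try to avoid stopping entirely, obtaining a true martingale by showing that $\sup_n\mathbb{E}\big(\int_0^T|\varphi^z_n|\,\d s\big)^{1+\epsilon}<\infty$. The required ingredients are:
\begin{itemize}
\item Hölder's inequality with the $\sup_t|\phi_n|_{\newone{H}}$ moments and the $\Vert\bu_n\Vert_{L^2\StokesV}$ bound.
\item The pathwise bounds of Proposition \ref{prop-phi-n-estimates}, which handle the $\cp_n\phi_n$ and $\bu_n\phi_n$ products.
\end{itemize}
If the $\bu_n$ factors only give first moments, we would instead use, as in \cite{Mik+Roz_2005}, stopping times at levels $R$ that are continuity points of the law of $\tau_R$ under $\mathbb{P}_T$. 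Such $R$ are all but countably many, so they can be chosen along a sequence $R\to\infty$.

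\textbf{Part (iii).} For $d=2$ we bound each term of $\Vert\bF(\bX)\Vert_{\mathbb{V}^\prime}$:
\begin{itemize}
\item $\Vert\Stokes\bu\Vert_{\StokesVp}\le\Vert\bu\Vert_{\StokesV}$.
\item $\Vert\bB_0(\bu,\bu)\Vert_{\StokesVp}\le C|\bu|\,\Vert\bu\Vert_{\StokesV}$ by \eqref{eqn-b_0-trilinear-estimate}.
\item $\Vert\bR_0(\phi,\phi)\Vert_{\StokesVp}\le C|\phi|_{\newone{H}}\Vert\phi\Vert_{H^2}$ by Lemma \ref{lem-r form}.
\item $\Vert B_1(\bu,\phi)\Vert_{\newonep{V}}\le C\Vert\bu\Vert_{\mathbb{L}^4}\Vert\phi\Vert_{L^4}$.
\item $\Vert\Athree\cp\Vert_{\newonep{V}}\le C|\cp|_{\newone{H}}$.
\end{itemize}
Each right-hand side lies in $L^2(0,T)$ almost surely, using (ii) and part (i).
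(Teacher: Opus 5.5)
Your parts (ii) and (iii) are sound. So is your proof of condition (i) of Definition \ref{def-martingale-solution}, which uses the weak convergence $\cp_n\rightharpoonup\cp$ of Corollary \ref{eqn-cor-weak-convergence-bar-mu_n-to-bar-mu} and Fatou on the Skorokhod representation; it is a reasonable shortcut compared with the paper's weak-compactness-and-identification argument on $(\tilde\Omega,\tilde{\mathbb{P}})$. The gap is in the core step, the local martingale property of $M^z$ under $\mathbb{P}_T$.

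\textbf{The no-stopping route.} It needs $\sup_n\mathbb{E}\bigl(\int_0^T|\varphi^z_n(s,\bX_n(s))|\,\d s\bigr)^{1+\epsilon}<\infty$. The contributions of $\bB_{0,n}(\bu_n,\bu_n)$ and of $\frac12\Vert\langle\newG_{0,n}(s,\bX_n(s)),z\rangle\Vert^2_{\ell^2}$ to $\varphi^z_n$ are quadratic in $\bu_n$, e.g.\ $C(|\bu_n|^2_{\StokesH}+|\tilde h|^2)$. With only second moments of $\sup_t|\bu_n(t)|_{\StokesH}$ and $\Vert\bu_n\Vert_{L^2(0,T;\StokesV)}$, such expressions lie only in $L^1(\Omega)$. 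You would therefore need something like $\sup_n\mathbb{E}\sup_t|\bu_n(t)|^{2+2\epsilon}_{\StokesH}<\infty$, and none of your ingredients gives it. Proposition \ref{prop-First-propo} controls only second moments of the velocity. The pathwise bounds of Proposition \ref{prop-phi-n-estimates} concern $\phi_n$; they do handle the $\cp_n\phi_n$ and $\bu_n\phi_n$ products, as you say. The paper explicitly avoids relying on higher velocity moments, so this route rests on an a priori estimate you have not proved.

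\textbf{The stopping fallback.} It does not close the gap either. Your $\tau_R$ is the hitting time of $t\mapsto\int_0^t(\Vert\bu\Vert^2_{\StokesV}+|\cp|^2_{\newone{H}}+|\tilde h|^2)\,\d s+\sup_{s\le t}\Vert\bX(s)\Vert^2_{\nHB}$. On $\bZ_T$, with the weak topology of $L^2(0,T;\StokesV)$ and the spaces $C([0,T];\StokesH_{,w})$, $C([0,T];\newone{H}_{,w})$, this process is only lower semicontinuous in $\bX$. Energy can be lost in the limit, so $\tau_R(\bX_n)$ may stay strictly below $\tau_R(\bX)$ even though $\bX_n\to\bX$. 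Discarding countably many levels $R$ (Lemma \ref{Lemma 11.1.2-SV2006}, Corollary \ref{cor-tau_R is continuous}) gives a.s.\ continuity only for hitting times of a $C([0,T])$-valued process that depends sequentially continuously on $\bX$, so it cannot repair this. Your $\tau_R$ would make the stopped approximants bounded, but that is useless without convergence of the stopped values.

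\textbf{The missing idea.} The paper stops with the martingale itself, $\tau_R=\inf\{t:|M^z_t(\bX)|\ge R\}$. This is admissible because $\bX\mapsto M^z_\cdot(\bX)$ is sequentially continuous into $C([0,T])$ by Corollary \ref{cor-eq-5.40}. Since this $\tau_R$ bounds $M^z$ but not $M^{n,z}$, the paper also stops at $\tau^n=\inf\{t:|M^{n,z}_t-M^z_t|>1\}$, so that $|M^{n,z}_{\cdot\wedge\tau_R\wedge\tau^n}|\le R+1$ under $\mathbb{P}_n$. It then uses Corollary \ref{cor-MR-below 2.31} (uniform convergence on compacts plus tightness) to get $\mathbb{P}_n(\tau^n<t)\to0$ and to remove the error terms. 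Everything is then bounded, and Proposition \ref{prop-Prohorov-general} applies to the bounded, $\mathbb{P}_T$-a.s.\ continuous integrand $f\,(M^z_{t\wedge\tau_R}-M^z_{s\wedge\tau_R})$ with no uniform integrability needed.
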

Before we embark  on  the proof of Theorem \ref{thm-5.3}, 
let us formulate three important corollaries of the previous results, i.e. of Proposition \ref{Prop-equicontinuity}. 
We recall that the notation $M_t^{z}$ has been introduced in \eqref{eqn-M^z_t}. 
\begin{corollary}\label{cor-eq-5.37}
Assume that $\bX_n \to \bX $ in $\bZ_T$. Suppose $z \in \mathscr{V}$. Then       
   \begin{equation}\label{eq-5.37}
      \sup_{s \in [0,T]} \lvert  M_s^{n,z}(\bX_n) - M_s^z(\bX) \rvert \underset{n \to \infty}{\to}  0.
    \end{equation}
\end{corollary}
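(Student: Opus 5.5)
The plan is to upgrade the pointwise convergence \eqref{eq-convergence-result-5.12} of Proposition \ref{Prop-equicontinuity} to uniform convergence on $[0,T]$. The tool is the standard Arzel\`a--Ascoli-type argument: equicontinuity combined with pointwise convergence on a dense set, or on all of $[0,T]$, gives uniform convergence on a compact interval.

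Fix $z\in\mathscr{V}$ and a sequence $\bX_n\to\bX$ in $\bZ_T$. By Part~1 of Proposition \ref{Prop-equicontinuity}, the family $\{M^{n,z}_\cdot(\bX_n)\}_n$ is equicontinuous on $[0,T]$. More precisely, for every $\eps>0$ there is $\delta>0$ such that
\[
\sup_n \sup_{|t-s|\le\delta}\lvert M^{n,z}_t(\bX_n)-M^{n,z}_s(\bX_n)\rvert\le\eps .
\]
This comes from the uniform convergence of $e^{i\langle\bX_n,z\rangle}$ in \eqref{eq-5.21} together with the uniform $L^{4/3}$ bound on $\varphi^z_n(\cdot,\bX_n)$, which makes the integrals $I^{n,z}_{0,t}$ uniformly absolutely continuous.

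Next I check that the limit $t\mapsto M^z_t(\bX)$ is continuous. Continuity of $t\mapsto e^{i\langle\bX(t),z\rangle}$ follows from $\bX\in C([0,T];\nU^\prime)$. Continuity of $t\mapsto I^z_{0,t}(\bX)$ follows from $\varphi^z(\cdot,\bX)\in L^1(0,T)$. Alternatively, continuity of the limit is inherited from the equicontinuous family via the pointwise limit, because the modulus bound passes to the limit.

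Now fix $\eps>0$, choose $\delta$ as above (and small enough that the same modulus works for the limit), and pick a finite $\delta$-net $0=t_0<t_1<\dots<t_N=T$. By \eqref{eq-convergence-result-5.12} there is $n_0$ such that $\lvert M^{n,z}_{t_j}(\bX_n)-M^z_{t_j}(\bX)\rvert\le\eps$ for all $j$ and all $n\ge n_0$. For arbitrary $t$, choose $t_j$ with $|t-t_j|\le\delta$ and apply the triangle inequality:
\[
\lvert M^{n,z}_t(\bX_n)-M^z_t(\bX)\rvert\le \lvert M^{n,z}_t(\bX_n)-M^{n,z}_{t_j}(\bX_n)\rvert+\lvert M^{n,z}_{t_j}(\bX_n)-M^z_{t_j}(\bX)\rvert+\lvert M^z_{t_j}(\bX)-M^z_t(\bX)\rvert\le 3\eps .
\]
This holds for $n\ge n_0$, which gives \eqref{eq-5.37}.

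The only delicate point is that Part~1 actually yields genuine uniform equicontinuity, i.e.\ a common modulus, rather than equicontinuity merely at each $t$. For this I rely on the fact that a bounded subset of $L^{4/3}(0,T)$ together with the single $L^1$ function $|\tilde h|^2$ is uniformly integrable, which is the content of Corollary \ref{cor-uniformly integrable linear combination}. Hence $\sup_n\int_s^t|\varphi^z_n(r,\bX_n(r))|\,\d r\to0$ uniformly as $|t-s|\to0$.
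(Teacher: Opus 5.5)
Your argument is correct and is essentially the paper's proof. The paper deduces the corollary in one line from Proposition~\ref{Prop-equicontinuity}, which gives equicontinuity of $M^{n,z}_\cdot(\bX_n)$ and the pointwise convergence \eqref{eq-convergence-result-5.12}, combined with the Arzel\`a--Ascoli-type Proposition~\ref{prop-AZ-Theorem}; your finite-net $3\varepsilon$ argument is just the standard proof of that proposition written out. Your remark that the common modulus comes from uniform integrability (the $L^{4/3}$-bounded terms plus the single $L^1$ function $|\tilde h|^2$, via Corollary~\ref{cor-uniformly integrable linear combination}) matches how Part~1 of Proposition~\ref{Prop-equicontinuity} is proved.
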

\begin{proof} 
This Corollary is a direct consequence of Propositions \ref{Prop-equicontinuity} and \ref{prop-AZ-Theorem}.
\end{proof}
\begin{corollary}\label{cor-eq-5.40}
Assume that $\bX_n \to \bX $ in $\bZ_T$ and $z \in \mathscr{V}$. Then       \begin{equation}\label{eq-5.40}
     \sup_{s \in [0,T]} \lvert  M_s^{z}(\bX_n) - M_s^z(\bX) \rvert \underset{n \to \infty}{\to}  0.
   \end{equation}
\end{corollary}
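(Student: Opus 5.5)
The natural route is to compare $M^z_s(\bX_n)$ with $M^{n,z}_s(\bX_n)$ and use the previous corollary. By the triangle inequality,
\[
\sup_{s \in [0,T]} \lvert M_s^{z}(\bX_n) - M_s^z(\bX) \rvert \leq \sup_{s \in [0,T]} \lvert M_s^{z}(\bX_n) - M_s^{n,z}(\bX_n) \rvert + \sup_{s \in [0,T]} \lvert M_s^{n,z}(\bX_n) - M_s^z(\bX) \rvert .
\]
Corollary \ref{cor-eq-5.37} already gives that the second term tends to zero. It therefore suffices to show that the first term vanishes.

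By Definition \ref{def-M^nz_t}, the exponential parts cancel in the first term, so it equals $\sup_s \lvert I^{z}_{0,s}(\bX_n) - I^{n,z}_{0,s}(\bX_n)\rvert$. This is bounded by
\[
\int_0^T \lvert \varphi^{z}(r,\bX_n(r)) - \varphi_n^{z}(r,\bX_n(r))\rvert\,\d r .
\]
The claim thus reduces to showing that this integral tends to zero.

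An alternative, which reuses Proposition \ref{Prop-equicontinuity} more directly, avoids the $\varphi$-difference. One observes that the proof of that proposition goes through verbatim when the Galerkin coefficients $\bF_n,\newG_{0,n}$ are replaced by the limiting $\bF,\newG_0$, with $n$ only entering through the path $\bX_n$. Part 1 of that proof gives equicontinuity of $s \mapsto M^z_s(\bX_n)$, using the bounds \eqref{eq-5.9}, which hold uniformly. Part 2 gives pointwise convergence $M^z_t(\bX_n)\to M^z_t(\bX)$. Proposition \ref{prop-AZ-Theorem} (Arzel\`a–Ascoli type) then upgrades this to uniform convergence, exactly as in Corollary \ref{cor-eq-5.37}. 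I would follow this route, checking the convergence term by term: the Stokes, $\bB_0$, $\bR_0$ and $B_1$ terms, and the $\mu$ term, where now $\cp(\phi_n)$ is formed without $\pi_{1,n}$.

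The main obstacle is the chemical-potential term in Part 2. We need $\cp(\phi_n)\rightharpoonup\cp(\phi)$ weakly in $L^{4/3}(0,T;\newone{H})$. Here the decomposition is simpler than in Corollary \ref{eqn-cor-weak-convergence-bar-mu_n-to-bar-mu}, since there are no projections. We have $\Atwo(\phi_n-\phi)\rightharpoonup0$ by Claim \ref{eqn-first-claim-for-bar-mu_n}(ii). The nonlinear part converges strongly, by the argument of Claim \ref{eqn-third-claim-for-bar-mu_n} with $\pi_{1,n}$ removed, which only helps since $\Vert\pi_{1,n}\Vert\le1$ was all that was used. The nonlinear term $r_0(\phi_n,\phi_n,z_1)$ converges because $\nabla\phi_n\to\nabla\phi$ in $L^2(0,T;\mathbb{L}^2)$ and $\nabla z_1\in \mathbb{L}^\infty$. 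The noise term converges by Lemma \ref{Lem-approximation-c} applied with $\bG_{0}$ in place of $\bG_{0,n}$; equivalently, by continuity from assumption (v) and \eqref{eq-3.20aa} for $\bSi_0$.
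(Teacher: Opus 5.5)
Your argument is correct, but the route you actually carry out differs from the paper's. The paper uses the first decomposition you wrote down and then set aside. It quotes Corollary \ref{cor-eq-5.37} and asserts, ``proceeding as in the proof of \eqref{eq-5.37}'' with \eqref{eq-5.9} and the properties of $\pi_{0,n},\pi_{1,n}$, that $\sup_{s\in[0,T]}\lvert M^{n,z}_s(\bX_n)-M^z_s(\bX_n)\rvert\to 0$. In other words, it controls the Galerkin-versus-limit discrepancy $\varphi^z_n(\cdot,\bX_n)-\varphi^z(\cdot,\bX_n)$ along the moving sequence. You instead rerun Proposition \ref{Prop-equicontinuity} with $\bF,\newG_0$ in place of $\bF_n,\newG_{0,n}$ and close with Proposition \ref{prop-AZ-Theorem}. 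This proves directly that $\bX\mapsto M^z_\cdot(\bX)$ is sequentially continuous from $\bZ_T$ into $C([0,T])$, which is precisely the property of $\Phi_T$ used in the proof of Theorem \ref{thm-5.3}, and it never involves Galerkin objects. As a result, each term is simpler than in the paper. The $\bR_0$ contribution is just $r_0(\phi_n,\phi_n,z_1)-r_0(\phi,\phi,z_1)$, controlled by $\nabla z_1\in\mathbb{L}^\infty$ and the strong convergence of $\phi_n$ in $L^2(0,T;\newone{H})$ from Corollary \ref{Coro-convergence-phi_n-in-H_2}. The chemical potential splits into the weakly null $\Atwo(\phi_n-\phi)$ plus a strongly convergent nonlinear part. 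The noise term needs only continuity of $\bG_0(s,\cdot)$ with linear growth, together with \eqref{eq-3.20aa}; note that this continuity is the Carath\'eodory property in assumption (iv), not (v) as you wrote. The paper's route saves the second equicontinuity argument by recycling Corollary \ref{cor-eq-5.37}, but its comparison step is not genuinely cheaper. With a moving argument, terms like $(\pi_{1,n}-I)\Atwo\phi_n$ and $(\pi_{1,n}-I)(\psi^\prime(\phi_n)-\avg{\psi^\prime(\phi_n)})$ must be handled by transferring $\pi_{1,n}$ onto the test function and splitting off $\psi^\prime(\phi)$, as in Claims \ref{eqn-first-claim-for-bar-mu_n}--\ref{eqn-4-claim-for-bar-mu_n}, and the paper leaves exactly these details to the reader.
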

\begin{proof}
By proceeding as in the proof of \eqref{eq-5.37} and invoking \eqref{eq-5.9} along with the properties of the projection operators $\pi_{0,n}$ and $\pi_{1,n}$, we can establish that
   \begin{equation}\label{eq-5.39}
      \sup_{s \in [0,T]} \lvert  M_s^{n,z}(\bX_n) - M_s^z(\bX_n) \rvert \underset{n \to \infty}{\to}  0.
    \end{equation}
Combining \eqref{eq-5.37} and \eqref{eq-5.39} yields \eqref{eq-5.40}.
\end{proof}
The next result strengthens, in some sense, the previous one.
\begin{corollary}\label{cor-MR-below 2.31}
For every compact set $K \subseteq \bZ_T$, 
 \begin{equation}\label{Eqt-convergence-on-compact-set}
    \lim_{n \to \infty} \sup_{s \in [0,T],\, \bX \in K} \lvert  M_s^{n,z}(\bX) - M_s^z(\bX) \rvert = 0.
  \end{equation}
Moreover, for every $\eta>0$, we have
   \begin{equation}\label{eq-probability-convergence}
      \lim_{n \to \infty} \mathbb{P}_n \left( \left\{ \bX \in \bZ_T: 
      \sup_{s \in [0,T]} \lvert  M_s^{n,z}(\bX) - M_s^z(\bX) \rvert> \eta  \right\}\right)= 0.
     \end{equation}
\end{corollary}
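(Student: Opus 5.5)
The plan is to derive the first assertion \eqref{Eqt-convergence-on-compact-set} from Corollary \ref{cor-eq-5.40}/Proposition \ref{Prop-equicontinuity} by a compactness argument. The second assertion \eqref{eq-probability-convergence} then follows from the first and the tightness in Lemma \ref{Lem-compactness-measure}.

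\textbf{First assertion.} I argue by contradiction. Suppose \eqref{Eqt-convergence-on-compact-set} fails for some compact $K\subseteq\bZ_T$. Then there are $\eta_0>0$, a subsequence $n_k$, points $\bX^{k}\in K$ and times $s_k\in[0,T]$ such that $\lvert M_{s_k}^{n_k,z}(\bX^k)-M_{s_k}^z(\bX^k)\rvert>\eta_0$.

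The delicate point is to extract a convergent subsequence of $(\bX^k)$, since $\bZ_T$ is not metrizable. I would use that on $K$ all the component topologies are controlled:
\begin{itemize}
\item $C([0,T];\rU_0^\prime)$, $C([0,T];\newonep{V})$, $L^2(0,T;\StokesH)$ and $L^\beta(0,T;\newone{H})$ are metrizable;
\item the weak topologies are metrizable on bounded sets, and $K$ is bounded in $L^2(0,T;\StokesV)$, in $L^\beta(0,T;\newone{V})$ and in $C([0,T];\StokesH_{,w})$, $C([0,T];\newone{H}_{,w})$. Boundedness holds because continuous linear functionals are bounded on the compact set $K$, combined with the uniform boundedness principle, as in \cite[Lemma 2]{Brz+Hornung+Manna_2020}.
\end{itemize}
Hence $K$ with the induced topology is a compact metrizable space, so it is sequentially compact. (Alternatively, one could invoke Jakubowski's result that compact subsets of such spaces are metrizable.) Passing to a further subsequence, we get $\bX^k\to\bX\in K$ in $\bZ_T$ and $s_k\to s$.

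Now I split
\[
M_{s_k}^{n_k,z}(\bX^k)-M_{s_k}^z(\bX^k)=\bigl[M_{s_k}^{n_k,z}(\bX^k)-M_{s_k}^z(\bX)\bigr]+\bigl[M_{s_k}^{z}(\bX)-M_{s_k}^z(\bX^k)\bigr].
\]
Both brackets tend to zero uniformly in $s$:
\begin{itemize}
\item the first by Corollary \ref{cor-eq-5.37}, applied to the sequence $\bX^k\to\bX$ along the indices $n_k$ (the proof of Proposition \ref{Prop-equicontinuity} only uses $\bX_n\to\bX$ in $\bZ_T$, so it works along subsequences);
\item the second by Corollary \ref{cor-eq-5.40}.
\end{itemize}
This contradicts the choice of $\eta_0$.

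\textbf{Second assertion.} Fix $\eta,\eps>0$. By Lemma \ref{Lem-compactness-measure} there is a compact $K_\eps$ with $\mathbb{P}_n(K_\eps)\ge1-\eps$ for all $n$. Then
\[
\mathbb{P}_n\Bigl(\sup_{s}\lvert M_s^{n,z}-M_s^z\rvert>\eta\Bigr)\le \eps+\mathbb{P}_n\Bigl(K_\eps\cap\bigl\{\sup_s\lvert M_s^{n,z}-M_s^z\rvert>\eta\bigr\}\Bigr).
\]
By the first assertion the last set is empty for $n$ large, so the limsup is at most $\eps$. Since $\eps$ is arbitrary, the limit is zero. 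The measurability of the sets involved follows from Proposition \ref{prop-measurable} together with the continuity in $s$, which lets the supremum be taken over rational $s$.

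\textbf{Main obstacle.} I expect the main obstacle to be the sequential-compactness step in the non-metrizable space $\bZ_T$, i.e. justifying that the compact set $K$ is metrizable.
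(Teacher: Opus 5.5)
Your proposal is correct and follows essentially the same route as the paper. You prove \eqref{Eqt-convergence-on-compact-set} by contradiction on the compact set $K$ and then deduce \eqref{eq-probability-convergence} from the tightness in Lemma \ref{Lem-compactness-measure}. You also make explicit two points the paper leaves implicit: the sequential compactness of $K$ via its metrizability, and the need for Corollary \ref{cor-eq-5.40}, alongside Corollary \ref{cor-eq-5.37}, to control $M^z_s(\bX^k)-M^z_s(\bX)$.
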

 
\begin{proof}[Proof of Corollary \ref{cor-MR-below 2.31}]
Taking into account compactness of $K$, the first result follows by a simple contradiction argument from Corollary \ref{cor-eq-5.37}.

Now let us consider the proof of \eqref{eq-probability-convergence}. 
Let us choose and fix $z \in \mathscr{V}$ and let $K$ be a compact subset of $\bZ_T$.
Let us choose and fix $\eta>0$ and  $\eps>0$.
Since by Lemma \ref{Lem-compactness-measure}, the set of probability measures $\{\mathbb{P}_n,\; n\geq 1\}$ is tight, there exists a compact subset $\mathbb{K}_\eps \subset \bZ_T$ such that 
  \begin{equation*}
    \mathbb{P}_n(\bZ_T \backslash \mathbb{K}_\eps) < \eps, \;\; n \in \mathbb{N}.
  \end{equation*}
Moreover, by \eqref{Eqt-convergence-on-compact-set}, there exists $n_0 \in \mathbb{N}$ such that for all  $n \geq n_0$,
    \begin{equation*}
      \mbox{ if }  \bX \in \mathbb{K}_\eps, \mbox{ then }\sup_{s \in [0,T]} \lvert  M_s^{n,z}(\bX) - M_s^z(\bX) \rvert \leq \eta.
   \end{equation*}
Therefore, if $n \geq n_0$, then 
\begin{equation*}
 \mathbb{P}_n \left(\left\{ \bX \in \bZ_T:\sup_{s \in [0,T]} \lvert  M_s^{n,z}(\bX) - M_s^z(\bX) \rvert> \eta  \right\}\right)  \leq \mathbb{P}_n(\bZ_T \backslash \mathbb{K}_\eps) < \eps.
\end{equation*}
This proves \eqref{eq-probability-convergence}, which completes the proof of Corollary \ref{cor-MR-below 2.31}.
\end{proof}
Once more, before we embark on the proof of Theorem \ref{thm-5.3}, we recall the following important result from \cite[Lemma 11.1.2]{Stroock+Varadhan_2006}.
\begin{lemma}\label{Lemma 11.1.2-SV2006}
Let us consider, for $R>0$, a function $\widetilde{\tau}_R$ defined by 
\begin{align}\label{eqn-tilde-tau_R}
       \widetilde{\tau}_R: C([0,T];\mathbb{R}^2) \ni \omega \mapsto \inf\{ t\in [0,T]: \lvert \omega(t)\rvert \geq R\}\in [0,T].
     \end{align}
     Then, the following assertions hold. 
\begin{trivlist}
\item[(i)] For every $R>0$, the function $\widetilde{\tau}_R$ is lower-semicontinuous.
\item[(ii)] If additionally $\mathbb{P}$ is a Borel probability measure on $C([0,T];\mathbb{R}^2)$, then  there exists a set $\mathscr{R} \subset (0,\infty)$, whose complement is  at most countable and a full $\mathbb{P}$-measure set $\Omega^\prime \subset C([0,T];\mathbb{R}^2)$ such that  for every $R \in \mathscr{R}$ 
the function $\widetilde{\tau}_R$ is continuous  at every  $\omega \in \Omega^\prime$. 
\end{trivlist}
\end{lemma}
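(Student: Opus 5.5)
The statement to prove is Lemma \ref{Lemma 11.1.2-SV2006}: lower semicontinuity of the exit time $\widetilde{\tau}_R$, and its continuity for all but countably many $R$ on a set of full $\mathbb{P}$-measure. I would prove it directly, by elementary arguments on $C([0,T];\mathbb{R}^2)$ with the sup norm. The convention is $\inf\emptyset = T$.

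\emph{Part (i).} I first show that for each $t\in[0,T)$ the set $\{\omega: \widetilde{\tau}_R(\omega) > t\}$ is open. Indeed, $\widetilde{\tau}_R(\omega)>t$ holds if and only if $\sup_{s\in[0,t]}\lvert\omega(s)\rvert<R$. The direction $\Leftarrow$ uses continuity of $\omega$: the supremum is attained on the compact interval $[0,t]$, so it is strictly below $R$, and hence $\lvert\omega\rvert<R$ on a neighbourhood $[0,t+\epsilon)$. The map $\omega\mapsto \sup_{[0,t]}\lvert\omega\rvert$ is $1$-Lipschitz for the sup norm, so the set is open. For $t\ge T$ the set is empty, and for $t<0$ it is the whole space. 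This gives lower semicontinuity.

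\emph{Part (ii), first step.} I introduce the monotone family $\sigma_R(\omega)\coloneqq\inf\{t: \lvert\omega(t)\rvert>R\}\wedge T$, so that $\widetilde{\tau}_R\le\sigma_R$. The same Lipschitz argument shows that $\sigma_R$ is upper semicontinuous: $\{\sigma_R<t\}=\{\sup_{[0,t)}\lvert\omega\rvert>R\}$ is open. Now take a point $\omega$ with $\widetilde{\tau}_R(\omega)=\sigma_R(\omega)$, and let $\omega_n\to\omega$. Then
\[
\limsup_n \widetilde{\tau}_R(\omega_n)\le\limsup_n\sigma_R(\omega_n)\le\sigma_R(\omega)=\widetilde{\tau}_R(\omega)\le\liminf_n\widetilde{\tau}_R(\omega_n),
\]
so $\widetilde{\tau}_R$ is continuous at $\omega$. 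Continuity also holds at points where $\widetilde{\tau}_R(\omega)=T$, trivially.

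\emph{Part (ii), second step.} It remains to show that $\{\omega:\widetilde{\tau}_R(\omega)<\sigma_R(\omega)\}$ is $\mathbb{P}$-null for all but countably many $R$. The key observation is that, for fixed $\omega$, the map $R\mapsto\widetilde{\tau}_R(\omega)$ is nondecreasing, and $\sigma_R(\omega)=\lim_{R'\downarrow R}\widetilde{\tau}_{R'}(\omega)$. To see the limit identity, I need that for every $R'>R$, first hitting of level $R'$ happens after strict exceedance of $R$ and before hitting $R'$, together with continuity of the paths. So $\widetilde{\tau}_R(\omega)<\sigma_R(\omega)$ exactly when $R$ is a jump point of the monotone function $R\mapsto\widetilde{\tau}_R(\omega)$.

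Define $g(R)\coloneqq\mathbb{E}^{\mathbb{P}}\widetilde{\tau}_R$. This is a bounded nondecreasing function of $R$, so its set $D$ of discontinuity points is at most countable. By monotone convergence, $g(R+)=\mathbb{E}\,\sigma_R$. Hence for $R\notin D$ we have $\mathbb{E}(\sigma_R-\widetilde{\tau}_R)=0$, and since $\sigma_R-\widetilde{\tau}_R\ge0$, this forces $\sigma_R=\widetilde{\tau}_R$ $\mathbb{P}$-a.s. Measurability of these maps follows from semicontinuity.

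\emph{Producing a single full-measure set.} I set $\mathscr{R}\coloneqq(0,\infty)\setminus D$ and let $\Omega'$ be the intersection of the full-measure sets $\{\sigma_R=\widetilde{\tau}_R\}$ over rational $R\in\mathscr{R}$. This countable intersection step is the main obstacle: a single $\Omega'$ must work for uncountably many $R$. The statement can be read with $\Omega'$ allowed to depend on $R$, as used in Stroock–Varadhan. If the uniform version is required, I would prove the pointwise $\mathbb{P}$-a.s.\ equality for each $R\in\mathscr{R}$. I would then note that for a fixed $\omega$ there are only countably many jump points $R$, and invoke Fubini on $(\omega,R)$ with Lebesgue measure in $R$. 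This yields the claim for Lebesgue-a.e.\ $R$, and that suffices for the later localization argument, since one only needs a sequence $R_k\to\infty$ in $\mathscr{R}$.
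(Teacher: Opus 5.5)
Your proofs of (i) and of the $R$-dependent form of (ii) are correct. The paper gives no proof of its own; it cites Stroock--Varadhan, and your argument is the standard proof behind that result:
\begin{itemize}
\item $\{\widetilde{\tau}_R>t\}=\{\sup_{[0,t]}\lvert\omega\rvert<R\}$ is open, which gives (i).
\item $\sigma_R$ is upper semicontinuous with $\widetilde{\tau}_R\le\sigma_R$, so $\widetilde{\tau}_R$ is continuous wherever $\widetilde{\tau}_R=\sigma_R$.
\item Since $\widetilde{\tau}_{R'}\downarrow\sigma_R$ as $R'\downarrow R$, you get $g(R+)=\mathbb{E}\,\sigma_R$. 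Hence $\mathbb{P}(\widetilde{\tau}_R=\sigma_R)=1$ for every $R$ outside the countable set of discontinuities of $g$.
\end{itemize}
Your sentence justifying $\sigma_R=\lim_{R'\downarrow R}\widetilde{\tau}_{R'}$ is garbled. The clean reason has two halves. For $R'>R$ we have $\{\lvert\omega\rvert\ge R'\}\subset\{\lvert\omega\rvert>R\}$, which gives $\widetilde{\tau}_{R'}\ge\sigma_R$. Conversely, if $\lvert\omega(s)\rvert>R$ with $s<\sigma_R(\omega)+\epsilon$, then $\widetilde{\tau}_{R'}(\omega)\le s$ for all $R'\in(R,\lvert\omega(s)\rvert]$.

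What you have proved is (ii) with a full-measure set $\Omega'_R\coloneqq\{\widetilde{\tau}_R=\sigma_R\}$ that depends on $R$. Your last paragraph does not produce a single $\Omega'$. The intersection over rational $R\in\mathscr{R}$ controls only countably many $R$. The Fubini argument only shows that, for Lebesgue-a.e.\ $R$, the set $\{\widetilde{\tau}_R<\sigma_R\}$ is $\mathbb{P}$-null; that is again $R$-dependent and weaker than what you already have.

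No argument can do better, because (ii) as literally stated, with one $\Omega'$ for all $R\in\mathscr{R}$, is false. Here is a counterexample.
\begin{itemize}
\item Take $T=1$ and $e=(1,0)$. Let $\phi$ be piecewise linear through $(0,0)$, $(1/3,1)$, $(2/3,1/2)$, $(1,2)$.
\item Put $\omega_a\coloneqq a\phi e$, and let $\mathbb{P}$ be the law of $\omega_U$ with $U$ uniform on $[1,2]$. This is Borel, since $a\mapsto\omega_a$ is continuous and injective.
\item For each $a$, $\widetilde{\tau}_a(\omega_a)=1/3$.
\item By contrast, $\widetilde{\tau}_a\bigl((1-\tfrac1n)\omega_a\bigr)$ is the first $t\ge 2/3$ with $\phi(t)=n/(n-1)$, and this tends to $7/9$. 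So $\widetilde{\tau}_a$ is discontinuous at $\omega_a$.
\item Any admissible $\Omega'$ could therefore contain only paths $\omega_a$ with $a$ in the countable set $(0,\infty)\setminus\mathscr{R}$. This forces $\mathbb{P}(\Omega')\le\mathbb{P}\bigl(U\in(0,\infty)\setminus\mathscr{R}\bigr)=0$.
\end{itemize}

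So drop the attempt at a uniform $\Omega'$. State (ii) in the form you actually obtained: for every $R\in\mathscr{R}$ there is a full-measure set $\Omega'_R$ on which $\widetilde{\tau}_R$ is continuous. That is the true statement, and it is all the paper uses. In the proof of Theorem~\ref{thm-5.3} a single fixed $R\in\mathscr{R}$ is used. Later only a sequence $(R_k)\subset\mathscr{R}$ is needed, and for it $\bigcap_k\Omega'_{R_k}$ is a single full-measure set.
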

We have the following immediate corollary. 
\begin{corollary}\label{cor-tau_R is continuous}
Let $\bZ_T$ be the topological space defined in \eqref{eqn-Z_T} and $\mathbb{P}_T$ be the Borel probability measure on $\bZ_T$ introduced in Proposition \ref{prop-Prohorov-general}. 
Suppose that a map $
      \Phi_T:\bZ_T \to  C([0,T],\mathbb{R}^2)$
 is  sequentially continuous. 
Then,   there exists a set $\mathscr{R} \subset (0,\infty)$, whose complement is  at most countable and a full $\mathbb{P}_T$-measure set $\bZ_T^\prime \subset \bZ_T$ such that  for every $R \in \mathscr{R}$ the function  $\widetilde{\tau}_R \circ \Phi_T$  is sequentially continuous  at every  $\bX \in \bZ_T^\prime$. 
\end{corollary}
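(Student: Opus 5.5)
The plan is to reduce the statement to Lemma \ref{Lemma 11.1.2-SV2006}, pushing the measure forward along $\Phi_T$. Define the Borel probability measure $\mathbb{Q} \coloneqq \mathbb{P}_T \circ \Phi_T^{-1}$ on the metric space $C([0,T];\mathbb{R}^2)$. This definition already needs $\Phi_T$ to be measurable from $(\bZ_T,\mathscr{Z})$ to $\mathscr{B}(C([0,T];\mathbb{R}^2))$. I will get this from sequential continuity, in the same way the paper handles measurability on $\bZ_T$ elsewhere. The relevant $\sigma$-field on $\bZ_T$ is generated by the countable family of continuous functionals coming from the components of the intersection topology. Alternatively, one can restrict to the $\sigma$-compact set charged by $\mathbb{P}_T$, which exists by tightness (Lemma \ref{Lem-compactness-measure} and Proposition \ref{prop-Prohorov-general}). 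On each compact subset of $\bZ_T$ the topology is metrizable, so sequential continuity of $\Phi_T$ upgrades to continuity there. Since $C([0,T];\mathbb{R}^2)$ is Polish, its Borel sets are generated by evaluations at rational times, and each evaluation composed with $\Phi_T$ is measurable on each such compact.

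Next I apply Lemma \ref{Lemma 11.1.2-SV2006}(ii) to $\mathbb{Q}$. It gives a set $\mathscr{R}\subset(0,\infty)$ with at most countable complement and a full $\mathbb{Q}$-measure set $\Omega^\prime\subset C([0,T];\mathbb{R}^2)$ such that $\widetilde{\tau}_R$ is continuous at every $\omega\in\Omega^\prime$ for each $R\in\mathscr{R}$. I then set $\bZ_T^\prime \coloneqq \Phi_T^{-1}(\Omega^\prime)$. This set is measurable by the first step, and $\mathbb{P}_T(\bZ_T^\prime)=\mathbb{Q}(\Omega^\prime)=1$. If $\Omega^\prime$ is not Borel but only $\mathbb{Q}$-completion measurable, I would first shrink it to a Borel subset of full measure, which only strengthens the claim.

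Finally, fix $R\in\mathscr{R}$, a point $\bX\in\bZ_T^\prime$, and a sequence $\bX_n\to\bX$ in $\bZ_T$. Sequential continuity of $\Phi_T$ gives $\Phi_T(\bX_n)\to\Phi_T(\bX)$ in the sup-norm metric of $C([0,T];\mathbb{R}^2)$. Since $\Phi_T(\bX)\in\Omega^\prime$ and $\widetilde{\tau}_R$ is continuous there, it follows that $\widetilde{\tau}_R(\Phi_T(\bX_n))\to\widetilde{\tau}_R(\Phi_T(\bX))$. This is exactly the sequential continuity of $\widetilde{\tau}_R\circ\Phi_T$ at $\bX$.

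The main obstacle is the measurability issue in the first step, because $\bZ_T$ is non-metrizable and sequential continuity does not by itself imply Borel measurability. If the paper's $\sigma$-field $\mathscr{Z}$ is the one generated by the defining functionals, I would verify measurability directly through the countable-generation property used in Jakubowski's framework. The remaining steps are immediate.
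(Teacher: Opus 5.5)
Your route is the paper's own one-line proof: push $\mathbb{P}_T$ forward to $\mathbb{Q}=\mathbb{P}_T\circ\Phi_T^{-1}$, apply Lemma~\ref{Lemma 11.1.2-SV2006}(ii), and pull the full-measure set back along $\Phi_T$. Your measurability discussion fills in a point the paper passes over. Compact subsets of $\bZ_T$ are metrizable, because $\bZ_T$ injects continuously into the metric space $C([0,T];\rU_0^\prime)\times C([0,T];\newonep{V})$. $\mathbb{P}_T$ lives on a $\sigma$-compact set. For this to work, take $\bZ_T^\prime$ to be $\Phi_T^{-1}(\Omega^\prime)$ intersected with that $\sigma$-compact set.

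The step that fails, in your proof and equally in the paper's, is the use of Lemma~\ref{Lemma 11.1.2-SV2006}(ii) with a single full-measure set $\Omega^\prime$ serving every $R\in\mathscr{R}$ at once. What \cite{Stroock+Varadhan_2006} provides is weaker: for each $R$ outside a countable set, $\widetilde{\tau}_R$ is continuous on a full-measure set $\Omega^\prime_R$ that depends on $R$. The uniform version is false. Take $\mathbb{P}$ to be the law of $(B_t,0)$ with $B$ a standard Brownian motion, fix $t_0\in(0,T)$, and let $R(\omega)=\max_{s\le t_0}|\omega(s)|$.
\begin{itemize}
\item Almost surely $|\omega(t_0)|<R(\omega)$, so $\widetilde{\tau}_{R(\omega)}(\omega)<t_0$.
\item Since $|\omega|\le R(\omega)$ on $[0,t_0]$, we get $\widetilde{\tau}_{R(\omega)}((1-\delta)\omega)\ge t_0$ for every $\delta\in(0,1)$.
\item As $(1-\delta)\omega\to\omega$ uniformly, $\widetilde{\tau}_{R(\omega)}$ is discontinuous at $\omega$.
\end{itemize}
$R(\omega)$ has an atomless law, so it lies in any co-countable $\mathscr{R}$ almost surely, and no such $\Omega^\prime$ exists. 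What your argument actually proves is the per-$R$ statement: for each $R\in\mathscr{R}$ there is a full-measure set $\bZ_{T,R}^\prime$ on which $\widetilde{\tau}_R\circ\Phi_T$ is sequentially continuous. That is all the proof of Theorem~\ref{thm-5.3} needs, since there $R\in\mathscr{R}$ is fixed before $\bZ_T^\prime$ is used. The fix is to let $\bZ_T^\prime$ depend on $R$ in the statement.
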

\begin{proof} Corollary \ref{cor-tau_R is continuous} follows from Lemma 
\ref{Lemma 11.1.2-SV2006} applied to the Borel probability measure $\mathbb{P} \coloneqq  \Phi_T(\mathbb{P}_T)$ on $C([0,T],\mathbb{R}^2)$.
\end{proof}
After all these preparations, we now proceed to the promised proof of Theorem \ref{thm-5.3}.
\begin{proof}[Proof of part (i) of Theorem \ref{thm-5.3}]
Let us recall that we assume Assumptions \ref{ass-G+Sigma}-\ref{ass-bg+sigma} and we choose and fix $T>0$, $\bX_0 \in\mathbb{H}$ and $z \in \mathscr{V}$. We will show that the constructed earlier in Section \ref{Limit-meaures}  the probability  measure $\mathbb{P}_T$ on $\bZ_T$ is 
a solution to the   martingale problem $(\bX_0,\bF,\newG_0)$ in the sense of Definition \ref{def-martingale-solution}.\\
Define the following map    \begin{equation}\label{eqn-Phi_T}
      \Phi_T:\bZ_T \ni \bX \mapsto M_{\cdot}^{z}(\bX) \in C([0,T],\mathbb{R}^2),
  \end{equation}
where $M_{t}^{z}(\bX)$, $t \in [0,T]$,  has been defined in formula \eqref{eqn-M^z_t}. \\
Let us observe that by Corollary \ref{cor-eq-5.40}, the map $\Phi_T$ is sequentially continuous. Therefore, we can apply Corollary \ref{cor-tau_R is continuous}.\\
Let $\mathscr{R} \subset (0,\infty)$ and  $\bZ_T^\prime \subset \bZ_T$ 
be as in the assertion of that Corollary. Suppose first that  $R \in \mathscr{R}$.  We define the following stopping time $\tau_R$ by 
\[
\tau_R: \bZ_T \ni \bX \mapsto  \inf\{ t>0: \lvert M_t^z(\bX) \rvert \geq  R \} \in [0,T],
\]
and observe that 
\begin{equation}\label{eqn-def-stopping-times}
    \tau_R= \widetilde{\tau}_R \circ \Phi_T.
\end{equation}
Hence, by Corollary \ref{cor-tau_R is continuous}, we infer that 
the map $\tau_R$ is  sequentially continuous at every $\bX \in \bZ_T^\prime$.\\
Consider now a function $f : \bZ_T \to \mathbb{R}$  which is  continuous, bounded and $\mathscr{D}_s$-measurable. \\
The following equality will be proved in  three steps.
\begin{equation}\label{eq-5.46-ZB}
\mathbb{E}^{\mathbb{P}_T} \left[ f \times\left( M_{t \wedge \tau_R}^z - M_{s \wedge \tau_R}^z \right) \right]= 0.
\end{equation} 

\begin{proof}[\textbf{Step 1}]
Let us recall that we defined process $M_t^{n,z}$, $t \in [0,T]$,  in formula \eqref{eqn-M^nz_t}. Next, we put 
\begin{align*}
 \tau^n &= \inf\{t>0: \lvert  M_t^{n,z} - M_t^z \rvert> 1\},
 \\
 \tau_R^n&= \tau_R \wedge \tau^n.
\end{align*}
From \eqref{eq-probability-convergence}, we infer that $\lim_{n \to \infty}\mathbb{P}_n \left(\sup_{s \in [0,T]} \lvert  M_s^{n,z} - M_s^z \rvert> 1 \right)=0$. 
Moreover,  
\[\mathbb{P}_n(\tau^n< t)= 
\mathbb{P}_n \left(\sup_{s \in [0,t]} \lvert  M_s^{n,z} - M_s^z \rvert> 1 \right) 
\leq \mathbb{P}_n \left(\sup_{s \in [0,T]} \lvert  M_s^{n,z} - M_s^z \rvert> 1 \right) 
\] 
and hence we deduce that 
    \begin{equation}\label{eq-tau-n-convergence}
       \lim_{n \to \infty} \mathbb{P}_n(\tau^n< t) =0.
     \end{equation}
In addition, it is rather easy to see that the process $M_{s}^{n,z}$ stopped at $\tau_R^n$ satisfies
   \begin{equation}\label{eq-5.41}
     \sup_{s \in [0,T],\, n \in \mathbb{N}} \lvert M_{s \wedge \tau_R^n}^{n,z} \rvert \leq R + 1.
   \end{equation}
   \end{proof}
\begin{proof}[\textbf{Step 2}]
By the choice of $R$ we infer that  $\mathbb{P}_T(\tau_R= \tau_{R-})= 1$, where $\tau_{R-}= \lim_{ \delta \downarrow 0} \tau_{R-\delta}$. Now we prove that if $\bX \in \bZ_T^\prime $ and $(\bX_n)$ is $ \bX$-valued sequence such that 
$\bX_n \to \bX$ in $\bZ_T$ and  $\tau_R(\bX)= \tau_{R-}(\bX)$, then 
       \begin{equation}\label{eq-5.44}
         \sup_{s \in [0,T]} \lvert M^z_{s \wedge \tau_R(\bX_n)}(\bX_n) - M^z_{s \wedge \tau_R(\bX)}(\bX) \rvert \underset{n \to \infty}{\to}  0.
      \end{equation}
Considering the proof of \eqref{eq-5.44}, we first observe that for every  $n \in \mathbb{N}$, 
\begin{align*}
\sup_{s \in [0,T]} \lvert M^z_{s \wedge \tau_R(\bX_n)}(\bX_n) - M^z_{s \wedge \tau_R(\bX)}(\bX) \rvert 
\leq& 2 \sup_{s \in [0,T]} \lvert M^z_{s \wedge \tau_R(\bX_n)}(\bX_n) - M^z_{s \wedge \tau_R(\bX_n)}(\bX) \rvert \\
& + 2 \sup_{s \in [0,T]} \lvert M^z_{s \wedge \tau_R(\bX_n)}(\bX) - M^z_{s \wedge \tau_R(\bX)}(\bX) \rvert. 
\end{align*}
Next, thanks to \eqref{eq-5.40}, we deduce that 
    \begin{equation*}
      \sup_{s \in [0,T]} \lvert M^z_{s \wedge \tau_R(\bX_n)}(\bX_n) - M^z_{s \wedge \tau_R(\bX_n)}(\bX) \rvert  \underset{n \to \infty}{\to}  0.
    \end{equation*}
We claim that the map $\varphi_s$ defined by 
    \begin{equation*}
       \varphi_s: [0,T] \ni a \mapsto s \wedge a= \varphi_s(a) \in C([0,T])
    \end{equation*}
is Lipschitz continuous, and thus uniformly continuous. 
\newline
Indeed, let $a,\, b \in [0,T]$ and assume without loss of generality that $a<b$. Obviously, we have
\begin{equation*}
\lvert \varphi_s(a) - \varphi_s(b) \rvert \leq 
\begin{cases}
0 &\mbox{ if } s \in [0,a], 
\\
b-a &\mbox{ if } s \in [a,b],
\\
b-a &\mbox{ if } s \in [b,T],
\end{cases}
\end{equation*}
and so
     \begin{equation*}
       \sup_{s \in [0,T]} \lvert \varphi_s(a) - \varphi_s(b) \rvert \leq \lvert a - b \rvert, \quad a,\, b \in [0,T]. 
    \end{equation*}
Since $\tau_R: \bZ_T \to [0,T]$ is continuous by Corollary \ref{cor-tau_R is continuous} and $\bX_n \to \bX$ in $\bZ_T$, we infer that 
    \begin{equation*}
       \sup_{s \in [0,T]} \lvert s \wedge \tau_R(\bX_n) - s \wedge \tau_R(\bX)  \rvert= \sup_{s \in [0,T]} \lvert \varphi_s(\tau_R(\bX_n)) - \varphi_s(\tau_R(\bX)) \rvert \underset{n \to \infty}{\to}  0,
    \end{equation*}
from which we hence deduce
\begin{equation*}
\sup_{s \in [0,T]} \lvert M^z_{s \wedge \tau_R(\bX_n)}(\bX) - M^z_{s \wedge \tau_R(\bX)}(\bX) \rvert \underset{n \to \infty}{\to}  0.
\end{equation*}
This yields \eqref{eq-5.44}.  
\end{proof}
\begin{proof}[\textbf{Step 3}]
Notice that
\begin{align}
\mathbb{E}^{\mathbb{P}_n} \left[f \times\left(M_{t \wedge \tau_R^n}^{n,z} - M_{s \wedge \tau_R^n}^{n,z} \right) \right] 
&= \mathbb{E}^{\mathbb{P}_n} \left[f \times\left(M_{t \wedge \tau_R^n}^{n,z} - M_{t \wedge \tau_R^n}^z \right)\right] -  \mathbb{E}^{\mathbb{P}_n} \left[f \times\left(M_{s \wedge \tau_R^n}^{n,z} - M_{s \wedge \tau_R^n}^z \right) \right] 
\nonumber \\
&\quad + \mathbb{E}^{\mathbb{P}_n} \left[f \times\left(M_{t \wedge \tau_R^n}^z - M_{t \wedge \tau_R}^z \right)\right] - \mathbb{E}^{\mathbb{P}_n} \left[f \times\left( M_{s \wedge \tau_R^n}^z - M_{s \wedge \tau_R}^z\right)\right] 
\nonumber    \\
&\quad + \mathbb{E}^{\mathbb{P}_n} \left[f \times\left( M_{t \wedge \tau_R}^z - M_{s \wedge \tau_R}^z \right)\right].
\label{eq-5.45}
\end{align}
We will prove that the first  term on the RHS of \eqref{eq-5.45} converge to zero as $n \to \infty$. For this aim we put 
\[
a_n= \mathbb{E}^{\mathbb{P}_n} \left[f \times\left(M_{t \wedge \tau_R^n}^{n,z} - M_{t \wedge \tau_R^n}^z \right)\right].
\]
Let us choose and fix $\eps>0$. Since by assumptions $f$ is a bounded function, i.e.  $\lvert f \rvert \leq C$, by using inequality  \eqref{eq-5.41}, we deduce 
\begin{equation}
\begin{aligned}
\lvert a_n \rvert
&\leq \mathbb{E}^{\mathbb{P}_n} \left[
\lvert f \rvert\, \lvert M_{t \wedge \tau_R^n}^{n,z} - M_{t \wedge \tau_R^n}^z \rvert\,\1_{\lvert M_{t \wedge \tau_R^n}^{n,z} - M_{t \wedge \tau_R^n}^{z} \rvert> \eps}\right]
\\
&\quad
+ \mathbb{E}^{\mathbb{P}_n} \left[\lvert f \rvert\, \lvert M_{t \wedge \tau_R^n}^{n,z} - M_{t \wedge \tau_R^n}^z \rvert \1_{\lvert M_{t \wedge \tau_R^n}^{n,z} - M_{t \wedge \tau_R^n}^z \rvert \leq \eps} \right]
\\
&\leq  2C (R + 1)\,\mathbb{P}_n \left(\lvert M_{t \wedge \tau_R^n}^{n,z} - M_{t \wedge \tau_R^n}^z \rvert> \eps \right) + C \eps.
\end{aligned}
\end{equation} 
Hence, by applying  the convergence \eqref{eq-probability-convergence} in Corollary \ref{cor-MR-below 2.31}, we infer that
\begin{equation}
\lim \sup_n \lvert a_n \rvert
\leq C \eps.
\end{equation}
By the arbitrariness of $\eps$, we deduce that
\begin{equation}
\lim  \lvert a_n \rvert=0.
\end{equation}
The same proof implies that the second  term on the RHS of \eqref{eq-5.45} converges to zero as $n \to \infty$.\newline
We now prove that the third  term converges to zero as $n \to \infty$. We put now
\begin{align*}
    a_n \coloneqq \mathbb{E}^{\mathbb{P}_n} \left[f \times \left(M_{t \wedge \tau_R^n}^z - M_{t \wedge \tau_R}^z \right)\right]. 
\end{align*}
Then we have 
\begin{equation}\label{eqn-a_n-2}
    \begin{aligned}
   \lvert a_n \rvert
&\leq \mathbb{E}^{\mathbb{P}_n} \left[\lvert f \rvert  \lvert M_{t \wedge \tau_R^n}^z - M_{t \wedge \tau_R}^z \rvert  
\mathds{1}_{\{ \tau^n<t\}} \right] + \mathbb{E}^{\mathbb{P}_n} \left[\lvert f \rvert  \lvert M_{t \wedge \tau_R^n}^z - M_{t \wedge \tau_R}^z \rvert  
\mathds{1}_{ \{\tau^n \geq t \}} \right]
\\
&\leq  2C  R \, \mathbb{P}_n \left( \{ \tau^n<t\}\right)  +  0,
\end{aligned}
\end{equation}
because, in view of the definitions of $\tau^n$ and $\tau_R^n$, we have,  almost surely 
\[
 \lvert M_{t \wedge \tau_R^n}^z - M_{t \wedge \tau_R}^z \rvert 
\mathds{1}_{ \{\tau^n \geq t \}}  =0.
\]
Inequality \eqref{eqn-a_n-2} combined with the convergence \eqref{eq-tau-n-convergence} implies that $\lim_{n \to \infty} a_n=0$ as claimed.\newline
The  proof that the fourth  term on the RHS of \eqref{eq-5.45} converges to zero as $n \to \infty$ follows  the proof above about the third term. 
\noindent
Finally, let us deal with the fifth term on the RHS of \eqref{eq-5.45}. Observe that in light of \eqref{eq-5.41}, \eqref{eq-5.44}, and the properties of the function $f:\bZ_T \to \mathbb{R} $, the  map 
$f \times\left(M_{t \wedge \tau_R}^z - M_{s \wedge \tau_R}^z \right): \bZ_T \to \mathbb{R} $ is bounded, and as above,  continuous function on $\bZ_T^\prime$ with $\mathbb{P}_T(\bZ_T^\prime)=1$,  by Proposition \ref{prop-Prohorov-general}, we infer that 
\begin{equation}
\mathbb{E}^{\mathbb{P}_n} \left[f \times\left( M_{t \wedge \tau_R}^z - M_{s \wedge \tau_R}^z \right)\right] \to \mathbb{E}^{\mathbb{P}_T} \left[f \times\left( M_{t \wedge \tau_R}^z - M_{s \wedge \tau_R}^z \right)\right].
\end{equation}
Consequently, since by Lemma \ref{Lem-Pn-martingale-solution},
\[
\mathbb{E}^{\mathbb{P}_n} \left[f \times\left(M_{t \wedge \tau_R^n}^{n,z} - M_{s \wedge \tau_R^n}^{n,z} \right) \right]= 0, 
\]
we infer that condition \eqref{eq-5.46-ZB} holds true. 
\end{proof} 
This, in view of  Definition \ref{def-martingale-solution},  concludes Step 1 of the proof of Theorem \ref{thm-5.3}, i.e. Part $(i)$. The latter part of this section is deferred to the proof of assertion \eqref{Eq-identition-mu} below.
\end{proof}
Arguing as at the end of the proof of \cite[Lemma 11.1.3]{Stroock+Varadhan_2006}, we infer that Identity \eqref{eq-5.46-ZB}  holds true for every $R>0$. We deduce that 
the process $M_{t}^z$ is a local martingale, see Definition II.1.7 in \cite{Ikeda+Watanabe_1989}. Indeed, it is   adapted and that there exists a localizing sequence of stopping times since every stopped process $ M_{t \wedge \tau_R}^z$ is a martingale.

Now we move to our main objective of this Section, i.e. 
\begin{proof}[Proof of part (ii) Theorem \ref{thm-5.3}]
Let, for  $n\in \mathbb{N}$,  $\bX_n=(\bu_n,\phi_n)$, be the solution to \eqref{eqn-Compact-Galerkin-Modified-stochastic-CHNSEs-n}. 
According to Lemma \ref{Lem-compactness-measure}, the set of measures $\{\mathbb{P}_n,\; n\geq 1\}$ is tight on $(\bZ_T,\mathscr{Z})$, and the law  $\mathbb{P}_n$ of the process $\bX_n$ converges weakly (and even in the stronger sense) to a measure $\mathbb{P}_T$ on $(\bZ_T,\mathscr{Z})$, cf. \eqref{eqn-P-measure}. 
\newline
By the Skorokhod-Jakubowski Theorem in the framework of non-metric spaces, see \cite[Theorem 2]{Jakubowski_1998}, there exists a subsequence $(n_k)_{k \in \mathbb{N}}$, a new probability space $(\tilde{\Omega},\tilde{\mathscr{F}},\tilde{\mathbb{P}})$, and on this space, $\bZ_T$-valued random variables $\tilde{\bX}$, $\tilde{\bX}_k$, $k \in \mathbb{N}$ such that
\begin{equation}
\label{eqn-6.60} 
\begin{aligned}
&\mathscr{L}(\tilde{\bX}_k)= \mathscr{L}(\bX_{n_k}) \mbox{ for all } k \in \mathbb{N},
\\
&\lim_{k \to \infty} \tilde{\bX}_k= \tilde{\bX} \mbox{ in }\bZ_T, \;\; \tilde{\mathbb{P}}\mbox{-a.s.,}
\\
&\mbox{ and the law of the r.v.  } \tilde{\bX}\coloneqq (\tilde{\bu},\tilde{\phi}) \mbox{ is equal to  }\mathbb{P}_T.
\end{aligned}
\end{equation}
For the sake of brevity, we continue to denote these sequences by $(\tilde{\bX}_n)_{n \in \mathbb{N}}$ and $(\bX_n)_{n \in \mathbb{N}}$. \newline
Let us point out that as $C([0,T];\rU_0^\prime) \cap L^2(0,T;\StokesH)$ is a Polish space, by the Kuratowski Theorem, see \cite[Theorem I.3.9]{Parthasarathy_1967}, $C([0,T];H_{0,n})$ is a Borel subset of $C([0,T];\rU_0^\prime) \cap L^2(0,T;\StokesH)$. Therefore, by Proposition \ref{Propo-Borel-subset}, $C([0,T];H_{0,n}) \cap \bZ_{T,1}$ is a Borel subset of $C([0,T];\rU_0^\prime) \cap L^2(0,T;\StokesH) \cap \bZ_{T,1}= \bZ_{T,1}$.
\newline
Similarly, we deduce that $ C([0,T];\newonep{V}) \cap \bZ_{T,2}$ is a Borel subset of $\bZ_{T,2}$. \newline
Since the laws of $\bX_n$ and $\tilde{\bX}_n$ coincide, and given that $C([0,T];H_{0,n}) \cap \bZ_{T,1}$ and $ C([0,T];\newonep{V}) \cap \bZ_{T,2}$ are Borel subsets of  $\bZ_{T,1}$ and  $\bZ_{T,2}$, resp., the sequence $(\tilde{\bX}_n)_n$ inherits the same regularity properties as $\bX_n$ proved in Theorem \ref{thm-main-Galerkin}, Corollary \ref{cor-Propositions 5.1 and 5.3}, and Proposition \ref{prop-phi-n-estimates}. In particular, we have 
\begin{equation}\label{Eqn-subsequence-X_n}
\begin{aligned}
&\sup_{n \in \mathbb{N}} \tilde{\mathbb{E}} \sup_{s \in [0,T]} \Vert \tilde{\bX}_n(s) \Vert_{\nHB}^2< \infty, 
\\
&\sup_{n \in \mathbb{N}} \tilde{\mathbb{E}} \Vert \tilde{\bX}_n \Vert_{L^2(0,T;\StokesV \times \zero{H}{2})}^2< \infty, 
  \\
&\sup_{n \in \mathbb{N}} \tilde{\mathbb{E}} \Vert \tilde{\phi}_n \Vert_{L^\beta(0,T;\newone{V})}^\beta< \infty, 
\\
&\sup_{n \in \mathbb{N}} \tilde{\mathbb{E}} \Vert \tilde{\phi}_n \Vert_{L^4(0,T;\newone{H})}^4< \infty, 
\end{aligned}
\end{equation}
and for $\tilde{\mathbb{P}}$-a.s. $\tilde{\omega}$,
\begin{align}\label{Eqn-subsequence-X_n-1}
&\sup_{n \in \mathbb{N}} \lvert \tilde{\phi}_n(t,\tilde{\omega}) \rvert_{L^2}^2
\leq C_T \lvert \phi_0 \rvert_{L^2}^2, \;\; t\in [0,T],
\\
\label{eq-estimate-for-phi-d-a-1}
&\sup_{n \in \mathbb{N}} \Vert \tilde{\phi}_n(\cdot,\tilde{\omega}) \Vert_{L^4(0,T;\newone{H})}^4\leq C_T \lvert \phi_0 \rvert_{L^2}^4,
\\
\label{Eqn-subsequence-X_n-2}
&\sup_{n \in \mathbb{N}} \Vert \tilde{\phi}_n(\cdot,\tilde{\omega}) \Vert_{L^2(0,T;\zero{H}{2})}^2\leq C_T \lvert \phi_0 \rvert_{L^2}^2,
\end{align}
with the constant $C_T$ being independent of $\tilde{\omega}$. \newline
Now, we observe that the map
\begin{equation}\label{eqn-measurability-of the map-in bu}
  \bZ_T \ni \bX \mapsto \sup_{s \in [0,T]} \Vert \bX(s) \Vert_{\nHB}^2   
\end{equation}
is measurable. Indeed, as in \cite[Appendix E]{Brz+Ferr+Zan_2024}, it suffices to show that the map defined in \eqref{eqn-measurability-of the map-in bu} is lower-semicontinuous. This is indeed the case, since for every $R>0$, the set
\[
\left\{\bX\in C([0,T];\mathbb{H}_w): \sup_{s \in [0,T]} \Vert \bX(s) \Vert_{\mathbb{H}} \leq R\right\},
\]
is closed in $C([0,T];\mathbb{H}_w)$, cf. Proposition 1.6 (v) in \cite{Jakubowski_1986}. Recall that $\tilde{\bX}_n \to \tilde{\bX}$ in $C([0,T];\nHB_{w})$, $\tilde{\mathbb{P}}$-a.s. by definition of the space $\bZ_T$, cf. \eqref{eqn-Z_T}. Therefore, using  the Corollary E.3 and Proposition E.1 from \cite{Brz+Ferr+Zan_2024} in conjunction with the Fatou Lemma, we infer that 
\begin{equation}
\begin{aligned}
&\mathbb{E}^{\tilde{\mathbb{P}}} 
\sup_{s \in [0,T]} \Vert \tilde{\bX}(s) \Vert_{\nHB}^2
=\int_{\tilde{\Omega}} \sup_{s \in [0,T]} \Vert \tilde{\bX}(s,\tilde{\omega}) \Vert_{\nHB}^2 \; \d\tilde{\mathbb{P}}(\tilde{\omega})
\\
&\leq  \int_{\tilde{\Omega}} \liminf_{n \to \infty} \sup_{s \in [0,T]} \Vert \tilde{\bX}_n(s,\tilde{\omega}) \Vert_{\nHB}^2 \, \d \tilde{\mathbb{P}}(\tilde{\omega}) 
\leq \liminf_{n \to \infty} \int_{\tilde{\Omega}}  \sup_{s \in [0,T]} \Vert \tilde{\bX}_n(s,\tilde{\omega}) \Vert_{\nHB}^2 \, \d \tilde{\mathbb{P}}(\tilde{\omega}).
\end{aligned}
\end{equation}
This, together with the first part of \eqref{Eqn-subsequence-X_n},  proves the left part of condition \eqref{X-estimate}. 
\newline
Let us move to the proof of the other part. First, notice that the map
\begin{equation}
\bZ_T \ni \bX \mapsto \int_0^T \Vert \bX(s) \Vert_{\mathbb{V}}^2\,\d s
\end{equation}
is also Borel measurable since it is $\mathscr{B}\left(L_w^2(0,T;\mathbb{V}) \right)/\mathscr{B}(\mathbb{R})$-measurable.
Indeed, this latter map is lower-semicontinuous owing to Corollaries to Theorem 3.12 in \cite{Rudin_1991}. \newline
Next, from the convergence \eqref{eqn-6.60}, the definition of the space $\bZ_{T,2}$, and the Corollary \ref{Coro-convergence-phi_n-in-H_2},
we infer that $\tilde{\phi}_n \to \tilde{\phi}$ in $L^2(0,T;\zero{H}{2}(\domO))$, $\tilde{\mathbb{P}}$-a.s.
Therefore, by the second estimate in \eqref{Eqn-subsequence-X_n-2} and the fact that $H^2$ is an algebra, we get for $\tilde{\mathbb{P}}$-a.s. $\tilde{\omega}$,
\begin{equation}\label{eqn-H-2-estimate-tilde-phi}
\Vert \tilde{\phi}(\cdot,\tilde{\omega}) \Vert_{L^2(0,T;\zero{H}{2})}^2\leq C_T \lvert \phi_0 \rvert_{L^2}^2.
\end{equation}
Furthermore, by the weak lower-semicontinuity, i.e. the lower-semicontinuity of 
the $L^2(0,T;\StokesV \times \zero{H}{2})$-norm on the space  $L^2_w(0,T;\StokesV \times \zero{H}{2})$ and the Fatou Lemma, we deduce
\begin{equation}
\begin{aligned}
&\mathbb{E}^{\tilde{\mathbb{P}}} \Vert \tilde{\bX} \Vert_{L^2(0,T;\StokesV \times \zero{H}{2})}^2
= \int_{\tilde{\Omega}}  \Vert \tilde{\bX}(\tilde{\omega}) \Vert_{L^2(0,T;\StokesV \times \zero{H}{2})}^2\, \d \tilde{\mathbb{P}}(\tilde{\omega}) 
\\
&\leq  \int_{\tilde{\Omega}} \liminf_{n \to \infty} \Vert \tilde{\bX}_n(\tilde{\omega}) \Vert_{L^2(0,T;\StokesV \times \zero{H}{2})}^2\, \d \tilde{\mathbb{P}}(\tilde{\omega}) 
\leq \liminf_{n \to \infty} \int_{\tilde{\Omega}}  \Vert \tilde{\bX}_n(\tilde{\omega}) \Vert_{L^2(0,T;\StokesV \times \zero{H}{2})}^2\, \d \tilde{\mathbb{P}}(\tilde{\omega}).
\end{aligned}
\end{equation}
This, together with the second  part of \eqref{Eqn-subsequence-X_n},  proves the right  part of condition \eqref{X-estimate}. \newline
Similarly, we can prove that conditions \eqref{phi-H_1-power-4-estimate} and \eqref{phi-H3-estimate} are satisfied. \newline
We conclude the proof of the second part of Theorem \ref{thm-5.3} by applying assertion \eqref{eqn-6.60}
that the law on $\bZ_T$ of the process $\tilde{\bX}$ is equal to the measure $\mathbb{P}_T$.
\end{proof}
\begin{proof}[Proof of assertion \eqref{Eq-identition-mu}]
The sequence of process $\left(\tilde{\cp}_n\right)$, defined by 
\begin{equation}\label{eqn-tilde mu_n}
\tilde{\cp}_n(t)\coloneqq \pi_{1,n} (\Atwo \tilde{\phi}_n(t) + \psi^\prime(\tilde{\phi}_n(t)) - \avg{\psi^\prime(\tilde{\phi}_n(t))}), \;\; t \in [0,T],     
\end{equation}
shares the  properties of the sequence of process $(\cp_n)_{n \in \mathbb{N}}$. 
In particular, it is bounded in the Hilbert space $L^2(\tilde{\Omega};L^2(0,T;\newone{H}))$, see the estimate \eqref{eqn-improved-estimates} in Proposition \ref{prop-2nd proposition}. Furthermore, since the mean of $\tilde{\cp}_n$ is zero, we also deduce that the sequence $(\tilde{\cp}_n)_{n \in \mathbb{N}}$ is uniformly bounded in $L^2(\tilde{\Omega};L^2(0,T;\zero{L}{2}(\domO)))$.
Hence, by the Banach Alaoglu Theorem, one has, up to a subsequence,
   \begin{equation}\label{Eqn-tilde-mu-convergence}
       \tilde{\cp}_n \to \tilde{\cp} \; \mbox{ weakly in } 
        L^2(\tilde{\Omega};L^2(0,T;\zero{L}{2}(\domO))).
    \end{equation}
By the estimates \eqref{Eqn-subsequence-X_n} and \eqref{eq-estimate-for-phi-d-a-1} and the fact that $\newone{H} \embed L^6(\domO)$, we get for every $n \in \mathbb{N}$,
\begin{align*}
&\tilde{\mathbb{E}} \Vert \tilde{\phi}_n \Vert_{L^5(0,T;L^6(\domO))}^5
\leq C\, \tilde{\mathbb{E}} \Vert \tilde{\phi}_n \Vert_{L^5(0,T;\newone{H})}^5
\leq C \tilde{\mathbb{E}} \left[\sup_{s \in [0,T]} \lvert \tilde{\phi}_n(s) \rvert_{\newone{H}} \Vert \tilde{\phi}_n \Vert_{L^4(0,T;\newone{H})}^4
\right]
\\
&\leq C(T) \lvert \phi_0 \rvert_{L^2}^4 \tilde{\mathbb{E}} \left[\sup_{s \in [0,T]} \lvert \tilde{\phi}_n(s) \rvert_{\newone{H}}^2\right]^{1/2}<\infty.
\end{align*}
Combining this with \eqref{eqn-Psi'-L^6}, we ensure that the sequence $(\psi^\prime(\tilde{\phi}_n))_{n \in \mathbb{N}}$ is  bounded in the topological space $L^{5/3}(\tilde{\Omega}; L^{5/3}(0,T;L^{2}(\domO)))$, i.e. 
   \begin{equation}\label{L2-bound-for-comp-maps-Psi-prime-and-titlde-phi_n}
    \sup_n \Vert \psi^\prime(\tilde{\phi}_n) \Vert_{L^{5/3}(\tilde{\Omega}; L^{5/3}(0,T;L^{2}(\domO)))}< \infty.
   \end{equation}
In light of the pathwise estimate  \eqref{Eqn-subsequence-X_n}, we infer that for all $p \geq 2$,
\[
\sup_n \left[ \tilde{\mathbb{E}} \sup_{s \in [0,T]} \lvert \tilde{\phi}_n(s) \rvert_{L^2}^p \right]< \infty.
\]
Moreover, from \eqref{eqn-6.60}, we infer that $\tilde{\phi}_n \to \tilde{\phi}$ in $L^2([0,T] \times \domO)$, $\tilde{\mathbb{P}}$-a.s. Therefore, the application of the Vitali Theorem entails 
   \begin{equation}
     \tilde{\phi}_n \to \tilde{\phi} \mbox{ in } L^2([0,T] \times \domO \times \tilde{\Omega}).
   \end{equation}
Thus, we deduce that, up to a subsequence, 
  \begin{equation}\label{Pointwise-convergence-titlde-phi_n}
    \tilde{\phi}_n \to \tilde{\phi} \quad \d t \otimes \d x \otimes \tilde{\mathbb{P}}\mbox{-a.e.}
  \end{equation}
From \eqref{Pointwise-convergence-titlde-phi_n} and the fact that $\psi^\prime: \mathbb{R} \to \mathbb{R}$ is continuous, we infer that
\begin{align}\label{eqn-a.s}
\psi^\prime(\tilde{\phi}_n) \to \psi^\prime(\tilde{\phi}) \quad \d t \otimes \d x \otimes \tilde{\mathbb{P}}\mbox{-a.e.}
\end{align}
Combining \eqref{eqn-a.s} and \eqref{L2-bound-for-comp-maps-Psi-prime-and-titlde-phi_n}, we deduce the following weak convergence, at least sub-sequential,
   \begin{equation}\label{weak-convergence-for-Psi-titlde-phi-n}
     \psi^\prime(\tilde{\phi}_n) \rightharpoonup \psi^\prime(\tilde{\phi})  \mbox{ weakly in } L^{5/3}(\tilde{\Omega}; L^{5/3}(0,T;L^{2}(\domO))).
   \end{equation}
Now from \eqref{eqn-tilde mu_n} for all $v \in \bigcup_{n=1}^\infty \newone{H}_{,n} \subseteq \newone{H}$, $\varphi \in L^\infty([0,T] \times \tilde{\Omega})$ we obtain
\begin{equation}\label{Eqn-tilde-mu_n}
\begin{aligned}
&\int_{\tilde{\Omega}} \int_0^T (\tilde{\cp}_n(t,\tilde{\omega}), \varphi(t,\tilde{\omega}) v)_{L^2}\,\d t \, \d \tilde{\mathbb{P}}(\tilde{\omega})
\\
&= \int_{\tilde{\Omega}} \int_0^T (\pi_{1,n} \Atwo \tilde{\phi}_n(t,\tilde{\omega}) + \pi_{1,n} [\psi^\prime(\tilde{\phi}_n(t,\tilde{\omega})) -\avg{\psi^\prime(\tilde{\phi}_n(t,\tilde{\omega}))}],\varphi(t,\tilde{\omega}) v)_{L^2} \,\d t \, \d \tilde{\mathbb{P}}(\tilde{\omega})
  \\
&= \int_{\tilde{\Omega}} \int_0^T (\Atwo \tilde{\phi}_n(t,\tilde{\omega}) + \psi^\prime(\tilde{\phi}_n(t,\tilde{\omega})) - \avg{\psi^\prime(\tilde{\phi}_n(t,\tilde{\omega}))}, \varphi(t,\tilde{\omega}) \pi_{1,n} v)_{L^2}  \,\d t \, \d \tilde{\mathbb{P}}(\tilde{\omega}) 
\\
&= \int_{\tilde{\Omega}} \int_0^T (\Atwo \tilde{\phi}_n(t,\tilde{\omega}) + \psi^\prime(\tilde{\phi}_n(t,\tilde{\omega})) - \avg{\psi^\prime(\tilde{\phi}_n(t,\tilde{\omega}))}, \varphi(t,\tilde{\omega}) v)_{L^2}  \,\d t \, \d \tilde{\mathbb{P}}(\tilde{\omega})
\\
&= \int_{\tilde{\Omega}} \int_0^T \left[(\Atwo \tilde{\phi}_n(t,\tilde{\omega}), \varphi(t,\tilde{\omega}) v)_{L^2} + \left(\psi^\prime(\tilde{\phi}_n(t,\tilde{\omega})), \varphi(t,\tilde{\omega}) v\right)_{L^2} \right]\d t\,\d \tilde{\mathbb{P}}(\tilde{\omega}),
\end{aligned}
\end{equation}
where we used the fact that the mean of $v$ is zero.
From the weak convergence \eqref{Eqn-tilde-mu-convergence}, we infer as $n \to \infty$,
\[
\int_{\tilde{\Omega}} \int_0^T (\tilde{\cp}_n(t,\tilde{\omega}), \varphi(t,\tilde{\omega}) v)_{L^2} \,\d t \, \d \tilde{\mathbb{P}}(\tilde{\omega}) \to \int_{\tilde{\Omega}} \int_0^T (\tilde{\cp}(t,\tilde{\omega}), \varphi(t,\tilde{\omega}) v)_{L^2} \, \d t \, \d \tilde{\mathbb{P}}(\tilde{\omega}).
\]
Using the weak convergence \eqref{weak-convergence-for-Psi-titlde-phi-n}, we deduce that as $n \to \infty$,
\[
\int_{\tilde{\Omega}} \int_0^T  \left(\psi^\prime(\tilde{\phi}_n(t,\tilde{\omega})), \varphi(t,\tilde{\omega}) v \right)_{L^2} \d t \, \d \tilde{\mathbb{P}}(\tilde{\omega}) \to \int_{\tilde{\Omega}} \int_0^T \left(\psi^\prime(\tilde{\phi}(t,\tilde{\omega})), \varphi(t,\tilde{\omega}) v \right)_{L^2} \d t \, \d \tilde{\mathbb{P}}(\tilde{\omega}).
\]
Thanks to \eqref{Eqn-subsequence-X_n} and $\tilde{\phi}_n \to \tilde{\phi}$ in $L^2(0,T;\zero{H}{2}(\domO))$, $\tilde{\mathbb{P}}\mbox{-a.s.}$, 
we infer up to a subsequence that
\[
 \Atwo \tilde{\phi}_n \rightharpoonup \Atwo \tilde{\phi}  \mbox{ weakly in } L^2(\tilde{\Omega}; L^2(0,T;L^{2}(\domO))).
\]
Therefore, we obtain
\[
\int_{\tilde{\Omega}} \int_0^T  (\Atwo \tilde{\phi}_n(t,\tilde{\omega}), \varphi(t,\tilde{\omega}) v)_{L^2} \,\d t \, \d \tilde{\mathbb{P}}(\tilde{\omega}) \to \int_{\tilde{\Omega}} \int_0^T  (\Atwo \tilde{\phi}(t,\tilde{\omega}), \varphi(t,\tilde{\omega}) v)_{L^2} \,\d t \, \d \tilde{\mathbb{P}}(\tilde{\omega}).
\]
Collecting now the previous convergence results and passing to the limit as $n \to \infty$ in \eqref{Eqn-tilde-mu_n}, we deduce that for every $v \in \bigcup_{n=1}^\infty H_{1,n} \subseteq H_1$, $\varphi \in L^\infty([0,T] \times \tilde{\Omega})$,
\begin{align*}
&\int_{\tilde{\Omega}} \int_0^T (\tilde{\cp}(t,\tilde{\omega}), \varphi(t,\tilde{\omega}) v) \, \d t \, \d \tilde{\mathbb{P}}(\tilde{\omega})
=\int_{\tilde{\Omega}} \int_0^T  (\Atwo \tilde{\phi}(t,\tilde{\omega}) + \psi^\prime(\tilde{\phi}(t,\tilde{\omega})), \varphi(t,\tilde{\omega}) v) \,\d t \, \d \tilde{\mathbb{P}}(\tilde{\omega})
\\
&= \int_{\tilde{\Omega}} \int_0^T  (\Atwo \tilde{\phi}(t,\tilde{\omega}) + \psi^\prime(\tilde{\phi}(t,\tilde{\omega})) - \avg{\psi^\prime(\tilde{\phi}(t,\tilde{\omega}))}, \varphi(t,\tilde{\omega}) v)\,\d t \, \d \tilde{\mathbb{P}}(\tilde{\omega}).
\end{align*}
It follows that
\[
\tilde{\cp}= \Atwo \tilde{\phi} + \psi^\prime(\tilde{\phi}) - \avg{\psi^\prime(\tilde{\phi})}\; \; \d t \otimes \d x \otimes \tilde{\mathbb{P}}\mbox{-a.e.}
\]
Since the law on $\bZ_T$ of the process $\tilde{\bX}$ is equal to the measure $\mathbb{P}_T$, see \eqref{eqn-6.60}, we easily complete the proof of the assertion \eqref{Eq-identition-mu}.
\end{proof}
\begin{proof}[Proof of the paths continuity part of Theorem \ref{First-main-result-uniqueness} and of part (iii) of Theorem \ref{thm-5.3}]
Arguing as in \eqref{Ladyzhenskaya inequality-a}, we find that for $d=2$,
    \begin{align*}
       \Vert \nabla \phi \Vert_{\mathbb{L}^4}
       \leq C \lvert \phi \rvert_{\newone{H}}^{1/2} \Vert \phi \Vert _{\zero{H}{2}}^{1/2}, \; \; \phi \in \zero{H}{2}(\domO).
    \end{align*}
Thanks to the estimate \eqref{X-estimate}, we infer that $\mathbb{P}_T$-a.s.,
\begin{equation}
\begin{aligned}
&\int_0^T \Vert \bR_0(\phi(s),\phi(s)) \Vert_{\StokesVp}^2 \, \d s
\leq \int_0^T \Vert \nabla \phi(s) \Vert_{\mathbb{L}^4}^4\, \d s \\
&\leq C \sup_{s \in [0,T]}  \lvert \phi(s) \rvert_{\newone{H}}^2 \int_0^T \Vert \phi(s) \Vert_{\zero{H}{2}}^2\, \d s < \infty.
\end{aligned}
\end{equation}
Combining \eqref{eqn-b_0-trilinear-estimate}, \eqref{properties-B_0}, and \eqref{X-estimate}, we deduce that $\mathbb{P}_T$-a.s.,
\begin{align*}
&\int_0^T \Vert \bB_0(\bu(s),\bu(s)) \Vert_{\StokesVp}^2\, \d s
\leq C \int_0^T \lvert \bu(s) \rvert_{\StokesH}^2 \Vert \bu(s) \Vert_{\StokesV}^2\, \d s \\
&\leq C \sup_{s \in[0,T]} \lvert \bu(s) \rvert_{\StokesH}^2 \int_0^T \Vert \bu(s) \Vert_{\StokesV}^2\, \d s< \infty.
\end{align*}
By \eqref{eq-B1} and \eqref{X-estimate} together with the Poincar\'e inequality, we infer that $\mathbb{P}_T$-a.s.,
\begin{align*}
&\int_0^T \Vert B_1(\bu(s), \phi(s)) \Vert_{\newonep{V}}^2 \, \d s
\leq C \int_0^T \lvert \bu(s) \rvert_{\StokesH}  \Vert \bu(s) \Vert_{\StokesV} \lvert  \phi(s) \rvert_{\newone{H}}^2\, \d s \\
&\leq C \int_0^T \lvert \nabla \bu(s) \rvert_{\mathbb{L}^2}^2 \lvert  \phi(s) \rvert_{\newone{H}}^2\, \d s 
\leq C \sup_{s \in [0,T]} \lvert  \phi(s) \rvert_{\newone{H}}^2 \int_0^T \Vert \bu(s) \Vert_{\StokesV}^2\, \d s< \infty.
\end{align*}
Consequently,
    \begin{equation*}
      \Vert \bF(\bX) \Vert_{L^2(0,T;\mathbb{V}^\prime)}^2< \infty,\; \mathbb{P}_T\text{-a.s.,}
    \end{equation*}
and in turn, by the Krylov-Rozovskii Theorem, see \cite[Theorem 2.3.3]{Prevot+Rockner_2007}, the process $\bX=(\bX(t): t \in [0,T])$ has an $\nHB$-valued (strongly) continuous modification, denoted by the same symbol, and the It\^o formula holds for the square of its $\nHB$-norm $\mathbb{P}_T$-a.s, and thus the proof of Part 3 of Theorem \ref{thm-5.3} is complete. In particular, the proof of the paths continuity part of Theorem \ref{First-main-result-uniqueness} is complete as well.
\end{proof}
Before proving Theorem \ref{First-main-result}, let us state the following modified version of the Stone-Weierstrass theorem,
which will be useful in the subsequent section.
\begin{lemma}\label{eqn-Thm-Stone-Weierstrass} Let us choose and fix  $R>0$ and let 
\[
\mathbb{S}= \left\{[-R,R] \ni x \mapsto \sum_{j=1}^n c_j e^{i \theta_j x}, \; \; n \in \mathbb{N}, \;\; c_j \in \mathbb{C},\, \theta_j \in \mathbb{R} \right\}.
\]
 Then $\mathbb{S}$ is dense in $C^2([-R,R],\mathbb{C})$. The latter space is endowed with the following classical norm.
     \begin{equation}\label{eqn-norm-C^2}
        \vert u \vert_{C^2}\coloneqq \sum_{j=0}^2 \sup_{x \in [-R,R]} \vert D^j u(x) \vert.
    \end{equation}
 \end{lemma}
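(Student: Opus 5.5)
The plan is to reduce the statement to the classical Stone--Weierstrass theorem applied to second derivatives, and then integrate twice. Fix $u \in C^2([-R,R],\mathbb{C})$ and $\eps>0$. The derivative $u^{\prime\prime}$ is continuous on $[-R,R]$, so it suffices to approximate $u^{\prime\prime}$ uniformly by trigonometric-type sums and then recover $u$ by integration. Note that differentiation preserves $\mathbb{S}$ and that $\mathbb{S}$ contains the constants (take $\theta=0$).

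\emph{Step 1: uniform density.} The set $\mathbb{S}$ is a complex subalgebra of $C([-R,R],\mathbb{C})$ and is closed under complex conjugation, since $\overline{e^{i\theta x}}=e^{-i\theta x}$. It is also closed under products, because $e^{i\theta x}e^{i\theta' x}=e^{i(\theta+\theta')x}$. It contains the constants, and it separates points: $e^{i\theta x}$ with $0<\theta<\pi/R$ is injective on $[-R,R]$. By the complex Stone--Weierstrass theorem, $\mathbb{S}$ is therefore dense in $C([-R,R],\mathbb{C})$ in the sup norm.

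\emph{Step 2: integrating twice.} The difficulty is that antiderivatives of elements of $\mathbb{S}$ need not lie in $\mathbb{S}$, since $\int_0^x 1\,\d y=x\notin\mathbb{S}$. I will handle this by approximating $u^{\prime\prime}$ by an element $g\in\mathbb{S}$ that has no $\theta=0$ component.

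\emph{Step 2a: removing the zero mode.} First choose $g_0\in\mathbb{S}$ with $\sup|u^{\prime\prime}-g_0|<\eps$. Write $g_0=c_0+\sum_{\theta_j\neq0}c_je^{i\theta_jx}$. Replace the constant $c_0$ by $c_0\cos(\delta x)=\tfrac{c_0}{2}(e^{i\delta x}+e^{-i\delta x})$, where $\delta>0$ is small enough that $|c_0|\,|1-\cos(\delta x)|<\eps$ on $[-R,R]$. This is the key trick. The resulting $g\in\mathbb{S}$ has only nonzero frequencies and satisfies $\sup|u^{\prime\prime}-g|<2\eps$.

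\emph{Step 2b: the primitives stay in $\mathbb{S}$.} For $g$ as above, the function $G_2(x)=\sum_j c_j(i\theta_j)^{-2}e^{i\theta_jx}\in\mathbb{S}$ satisfies $G_2^{\prime\prime}=g$.

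\emph{Step 3: correcting the affine part.} Set $v=u-G_2$, so that $\sup|v^{\prime\prime}|<2\eps$. By Taylor's formula at $0$,
\[
v(x)=v(0)+v^\prime(0)x+r(x),\qquad |r|\le 2\eps R^2,\quad |r^\prime|\le 2\eps R,\quad |r^{\prime\prime}|<2\eps .
\]
The constant $v(0)$ lies in $\mathbb{S}$. The linear term $x$ does not, but it is the $C^2$-limit of elements of $\mathbb{S}$: $\frac{\sin(\delta x)}{\delta}=\frac{e^{i\delta x}-e^{-i\delta x}}{2i\delta}$ converges to $x$ in the norm \eqref{eqn-norm-C^2} as $\delta\to0$, by Taylor estimates on the compact interval. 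Choose such an element $h\in\mathbb{S}$ with $|v^\prime(0)x-h|_{C^2}<\eps$.

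\emph{Conclusion.} Then $\phi:=G_2+v(0)+h\in\mathbb{S}$ and
\[
|u-\phi|_{C^2}\le |r|_{C^2}+\eps\le C(R)\,\eps .
\]
Since $\eps$ was arbitrary, $\mathbb{S}$ is dense in $C^2([-R,R],\mathbb{C})$.

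The main obstacle is the zero-frequency issue in Step 2, where integration leaves $\mathbb{S}$. The two small-$\delta$ substitutions handle it: $c_0\cos(\delta x)$ replaces the constant mode of $g_0$, and $\sin(\delta x)/\delta$ replaces the linear term $x$.
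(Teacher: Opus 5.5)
Your proof is correct, and it takes a different and more complete route than the paper. The paper's argument is one sentence: $\mathbb{S}$ separates points of $[-R,R]$, so the Stone--Weierstrass theorem applies. That yields exactly your Step~1, namely density of $\mathbb{S}$ in $C([-R,R],\mathbb{C})$ for the sup norm. It also leaves unstated the other hypotheses you check: $\mathbb{S}$ is a unital subalgebra closed under conjugation.

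Sup-norm density alone gives no control of derivatives, so the paper's argument does not actually reach density in the norm \eqref{eqn-norm-C^2}. That $C^2$ density is what the lemma asserts, and it is what the paper uses later, since the generator $\widetilde{\mathscr{L}}_s$ involves $f'$ and $f''$.

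Your argument closes this gap. You apply Stone--Weierstrass to $u''$ rather than to $u$, and then recover $u$ by integrating twice. The one obstruction is that primitives of the zero mode leave $\mathbb{S}$. You remove it with the substitutions $c_0\mapsto c_0\cos(\delta x)$ and $x\mapsto \sin(\delta x)/\delta$, and the Taylor remainder controls the affine correction simultaneously in all three seminorms.

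In short, the paper's version is brief but establishes only $C^0$-density. Yours proves the lemma as stated.
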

\begin{proof}[Proof of Lemma \ref{eqn-Thm-Stone-Weierstrass}]
Fix $R>0$.
The result follows from the Stone-Weierstrass theorem because the set $\mathbb{S}$ is a separating subset of $C([-R,R],\mathbb{C})$.
\end{proof}

The lemma \ref{eqn-Thm-Stone-Weierstrass} implies the following.
\begin{corollary}\label{eqn-Cor-uniform-convergence}
For every $g\in C_0^\infty(\mathbb{R},\mathbb{C})$ there exists an $\mathbb{S}$-valued sequence $(f_n)$ which converges to $g$
 locally uniformly.
\end{corollary}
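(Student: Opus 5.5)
The plan is a diagonal argument over an exhausting sequence of intervals, using Lemma \ref{eqn-Thm-Stone-Weierstrass} once on each interval. The elements of $\mathbb{S}$ are given by the formula $x \mapsto \sum_{j=1}^n c_j e^{i\theta_j x}$, which makes sense for every $x \in \mathbb{R}$. So I regard each such trigonometric sum as a function on the whole real line. Lemma \ref{eqn-Thm-Stone-Weierstrass} only concerns its restriction to $[-R,R]$. The point is that the same formula serves for all $R$ simultaneously.

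Fix $g \in C_0^\infty(\mathbb{R},\mathbb{C})$. For each $k \in \mathbb{N}$, apply Lemma \ref{eqn-Thm-Stone-Weierstrass} with $R=k$ to the restriction $g|_{[-k,k]} \in C^2([-k,k],\mathbb{C})$. This produces some $n_k \in \mathbb{N}$, coefficients $c_j^{(k)} \in \mathbb{C}$ and frequencies $\theta_j^{(k)} \in \mathbb{R}$. The resulting sum $f_k(x) = \sum_{j=1}^{n_k} c_j^{(k)} e^{i\theta_j^{(k)} x}$ satisfies $\vert f_k - g \vert_{C^2} < 1/k$, where the norm \eqref{eqn-norm-C^2} is taken on $[-k,k]$. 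In particular $\sup_{x \in [-k,k]} \vert f_k(x) - g(x) \vert < 1/k$.

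Now fix a compact set $K \subset \mathbb{R}$ and choose $k_0$ with $K \subset [-k_0,k_0]$. For every $k \ge k_0$ we have
\[
\sup_{x\in K}\vert f_k(x)-g(x)\vert \le \sup_{x\in[-k,k]}\vert f_k(x)-g(x)\vert < 1/k ,
\]
which tends to $0$. Hence $f_k \to g$ locally uniformly on $\mathbb{R}$. The same estimate even gives local uniform convergence of the first two derivatives, which may be useful when the corollary is applied to $C^2_b$ test functions in the martingale problem.

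The only delicate point is the interpretation issue: $\mathbb{S}$ is defined relative to a fixed $R$, so one must note that a single trigonometric sum belongs to the corresponding class for every $R$. If one prefers not to rely on the $C^2$-density in Lemma \ref{eqn-Thm-Stone-Weierstrass}, there is a self-contained fallback. For $k$ large the support of $g$ lies in $(-k,k)$. Then the $2k$-periodic extension of $g|_{[-k,k]}$ is smooth, and its Fourier partial sums, which are elements of $\mathbb{S}$ with $\theta_j = \pi j/k$, converge uniformly on $[-k,k]$ together with all derivatives. This again yields the required $f_k$.
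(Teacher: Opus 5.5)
Your proof is correct and is essentially the paper's own argument: apply Lemma~\ref{eqn-Thm-Stone-Weierstrass} on an exhausting sequence of compact intervals with tolerances tending to zero, then note that any compact set lies in all sufficiently large intervals. The only difference is cosmetic: the paper uses closed balls $K_n$ and tolerance $2^{-n}$, while you use $[-k,k]$ and $1/k$.

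Your Fourier-series fallback is a useful supplement. Stone--Weierstrass alone gives only sup-norm density of $\mathbb{S}$, but Part~1 of the proof of Theorem~\ref{First-main-result} uses this corollary in the form of local $C^2$ convergence. The periodic-extension argument supplies that $C^2$ convergence directly.
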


\begin{proof}[Proof of Corollary \ref{eqn-Cor-uniform-convergence}]
Let us choose and fix  $g \in C_0^\infty(\mathbb{R},\mathbb{C})$.  Then there exists a closed ball $K_1$ of $\mathbb{R}$ such that
$\supp  g \subset K_1$. Thanks to Lemma \ref{eqn-Thm-Stone-Weierstrass}, we infer that there exists $f_1 \in \mathbb{S}$ such that
\[
\Vert g - f_1 \Vert_{C^2(K_1,\mathbb{C})}
<\frac12.
\]
Now, let $(K_n)_n$ be a sequence of closed balls of $\mathbb{R}$ such that
$K_1 \subset K_2 \subset \ldots $ and $\bigcup_j \stackrel{\circ}{K_j}= \mathbb{R}$.
Once more, by Lemma \ref{eqn-Thm-Stone-Weierstrass}, there exists $f_2 \in \mathbb{S}$ such that
\[
\Vert g - f_2 \Vert_{C^2(K_2,\mathbb{C})}
< \frac{1}{4}.
\]
Similarly, for each $K_n$, $n \in \mathbb{N}$, there exists $f_n \in \mathbb{S}$ such that
\[
\Vert g - f_n \Vert_{C^2(K_n,\mathbb{C})}
< \frac{1}{2^n}.
\]
Next, fix a compact subset $K$ of $\mathbb{R}$ and $\eps>0$. There exists $n \in \mathbb{N}$ such that $K \subset K_n$ and
\[
\Vert g - f_n \Vert_{C^2(K,\mathbb{C})}
\leq \Vert g - f_n \Vert_{C^2(K_n,\mathbb{C})}
< \frac{1}{2^n},
\]
and then for every $K \subset K_m$ with $m \geq n$, we see that
\[
\Vert g - f_m \Vert_{C^2(K,\mathbb{C})}
\leq \Vert g - f_m \Vert_{C^2(K_m,\mathbb{C})}
< \frac{1}{2^m}
\leq \frac{1}{2^n}
< \eps.
\]
Therefore, we constructed an $\mathbb{S}$-valued  sequence  $(f_m) $ s.t. for every compact $K\subset \mathbb{R}$,
\[
\Vert g - f_m \Vert_{C^2(K,\mathbb{C})}< \eps.
\]
\end{proof}
Let us now move to the proof of Theorem \ref{First-main-result}, our main result.

\section{Proof of Theorem \ref{First-main-result}}\label{sec-proof-main-result}
The proof of Theorem \ref{First-main-result} is divided into four parts. \\
\textbf{Part 1} extends the local martingale property of the process $M^{\theta z}=(M_t^{\theta z}: t \in [0,T])$, $\theta \in \mathbb{R}$, from the specific maps $\mathbb{R} \ni x \mapsto e^{i \theta x} \in \mathbb{C}$ to all functions of $C^\infty$-class with compact support on $\mathbb{R}$. 
Furthermore, we show that for every $z \in \mathscr{V}$, the process $\langle \bX(t), z \rangle$, $t \in [0,T]$, is a continuous semimartingale. 

\noindent
In \textbf{Part 2}, we prove that there exists an $(\bZ_T,\mathbb{D},\mathbb{P}_T)$ local martingale $L_t^z$, $t \in [0,T]$, with
    \begin{equation}\label{Eqn-quadratic-variation-of-L_t^z}
      \langle L^z, L^z \rangle_t
       = \int_0^t \sum_{k=1}^\infty \dualitybig{\newG^{k}_0(s,\bX(s))}{z}{\nU}{\nU^\prime}{2}\, \d s,\;\; t \in [0,T].
    \end{equation}
Let us point out that a key step in the proof of \eqref{Eqn-quadratic-variation-of-L_t^z} relies on the integration by parts formula and the fact that the process $\langle \bX(t), z \rangle$, $t \in [0,T]$,  is a continuous semimartingale.

\noindent
In \textbf{Part 3} we will  prove that there exists an
$\nHB$-valued continuous local square integrable $\mathbb{D}$-martingale $M=(M_t: t \in [0,T])$, on $(\bZ_T,\mathbb{P}_T)$ such that  
     \begin{equation*}
       M_t= \sum_{j=1}^\infty L_t^{\tilde{\be}_j} \tilde{\be}_j, \;\; t \in [0,T],
    \end{equation*}
where $(\tilde{\be}_j)_j$ is an ONB basis of $\nHB$. 

\noindent
Finally, \textbf{Part 4} is devoted to the application of Lemma \ref{Lem-B.4}.
\subsection{Proof of Theorem \ref{First-main-result}, Part 1}
Fix $T>0$. Let us choose and fix $z \in \mathscr{V}= \mathcal{V} \times \mathcal{C}_0^\infty(\domO,\mathbb{R})$. 
In Theorem \ref{thm-5.3}, we proved that there exists a probability measure $\mathbb{P}_T$ on $\bZ_T$ such that \eqref{X-estimate}-\eqref{phi-H3-estimate} hold. Moreover, we proved, see also Remark \ref{rem-def-martingale-solution},  that the processes $M^{\theta z}=(M_t^{\theta z}: t \in [0,T])$, $\theta \in \mathbb{R}$, defined,  on the canonical sample space 
$\bZ_{T}$, by 
    \begin{equation}\label{eqn-Def-M-t-z-1}
       M_t^{\theta z}\coloneqq M_t^{\theta z}(\bX)
       = e^{i \theta\langle \bX(t), z \rangle}  - I_{0,t}^{ \theta z}(\bX),\;\; t\in [0,T],
    \end{equation}
are  continuous local martingales with the same localizing sequence of stopping times.
Furthermore, since $I_{0,t}^{ \theta z}(\bX)$ is a continuous process with finite variation, it follows from definition of a semimartingale, \cite[Definition 4.21]{Jacod+Shiryaev_2002}, that the process $e^{i \theta \langle \bX(t), z \rangle}$ is a complex semimartingale.
\newline
Next, following \cite{Jacod+Shiryaev_2002}, we will show  the following result. 
\begin{lemma}\label{lem-semimartingale}
The process $\langle \bX(t), z \rangle$,  $t \in [0,T]$,  is a continuous $\mathscr{D}_t$-semimartingale.     
\end{lemma}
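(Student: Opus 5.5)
We plan to deduce the semimartingale property of $Y_t\coloneqq\langle \bX(t),z\rangle$ from the fact, established in Theorem \ref{thm-5.3} and Remark \ref{rem-def-martingale-solution}, that for every $\theta\in\mathbb{R}$ the process $e^{i\theta Y_t}=M^{\theta z}_t+I^{\theta z}_{0,t}$ is a continuous complex semimartingale. Here $M^{\theta z}$ is a continuous local martingale, and the localizing sequence depends only on $z$, not on $\theta$. The term $I^{\theta z}_{0,\cdot}$ is continuous, adapted and of finite variation, because $\varphi^{\theta z}(\cdot,\bX(\cdot))\in L^1(0,T)$ $\mathbb{P}_T$-a.s. by \eqref{X-estimate}--\eqref{phi-H3-estimate}. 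Continuity and $\mathbb{D}$-adaptedness of $Y$ are immediate, since $\bX\in C([0,T];\nU^\prime)$ on $\bZ_T$ and $z\in\mathscr{V}\subset\nU$.

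The key step is to recover $Y$ from the exponentials via a smooth local inverse, following the argument in \cite{Jacod+Shiryaev_2002}. We fix $\theta>0$ small. Since $Y$ is continuous on $[0,T]$, we localize with the stopping times $\sigma_R\coloneqq\inf\{t: |Y_t|\ge R\}\wedge T$, which increase to $T$ as $R\to\infty$. On $[0,\sigma_R]$ we have $|\theta Y|\le\theta R$. If $\theta R<\pi/2$, then $\sin(\theta Y_t)=\operatorname{Im} e^{i\theta Y_t}$ takes values in $[-\sin\theta R,\sin\theta R]\subset(-1,1)$. We then choose $g_R\in C^2(\mathbb{R})$ that coincides with $\theta^{-1}\arcsin$ on this interval, so that $Y_{t\wedge\sigma_R}=g_R\big(\operatorname{Im} e^{i\theta Y_{t\wedge\sigma_R}}\big)$. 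The imaginary part of a complex semimartingale is a real semimartingale, and a stopped semimartingale is again one. Itô's formula for $C^2$ functions then shows that the stopped process $Y^{\sigma_R}$ is a semimartingale. To avoid letting $\theta$ depend on $R$, we may alternatively use both $\sin$ and $\cos$ with a fixed $\theta$ and a smooth partition of unity over arcs, combined with continuity of $Y$. The simpler route is nonetheless to take $\theta=\theta_R$ depending on $R$, which is allowed since every $\theta$ gives a semimartingale.

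Finally, a process that is a semimartingale when stopped along a localizing sequence $\sigma_R\uparrow T$ (stationary in $R$ for each path) is itself a semimartingale, by the stability of semimartingales under localization (\cite[I.4]{Jacod+Shiryaev_2002}). Hence $Y$ is a continuous $\mathscr{D}_t$-semimartingale.

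We expect the main obstacle to be the measure-theoretic bookkeeping on the canonical filtration $\mathbb{D}$. We must check that $\sigma_R$ are $\mathbb{D}$-stopping times, which holds because $\mathbb{D}$ is right-continuous and completed and $Y$ is continuous and adapted. We must also check that the finite-variation part $I^{\theta z}$ is adapted and has a.s. finite variation uniformly on $[0,T]$; this uses the integrability bounds of Theorem \ref{thm-5.3}(ii) together with the bounds on $\varphi^z$ analogous to \eqref{eq-5.14a}.
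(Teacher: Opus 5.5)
Your proposal is correct and follows essentially the same route as the paper. The paper also uses that $\sin(\lambda\langle\bX(t),z\rangle)$ is a semimartingale with $\lambda=1/k$, stops at $T_k=\inf\{t:|\langle\bX(t),z\rangle|>k/2\}$, writes the stopped process as $k\,h(\sin(\langle\bX(t),z\rangle/k))$ for a fixed $C^2$ local inverse $h$ of $\sin$ on $[-\tfrac12,\tfrac12]$, and concludes with the localization result (Proposition I.4.25 of Jacod--Shiryaev); your choice of $\theta_R$ with $\theta_R R<\pi/2$ and $g_R=\theta_R^{-1}\arcsin$ on the relevant interval is only a rescaled version of this.
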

\begin{proof}[Proof of Lemma \ref{lem-semimartingale}] 
Note that for every  $\lambda \in \mathbb{R}$, $\sin(\lambda \langle \bX(t), z \rangle)$, $t \in [0,T]$, is a semimartingale.
\newline
Let us observe that there exists $h \in \mathcal{C}^2(\mathbb{R})$ such that $h(\sin y)= y$ whenever $\lvert y \rvert \leq \frac12$. 
\newline
Therefore, on the stochastic interval $[0,T_k)$, $k \in \mathbb{N}$, where
\[
T_k= \inf\{t: \lvert \langle \bX(t), z \rangle \rvert> k/2\},
\]
with $T_k \to T$ a.s., the process $\langle \bX(t), z \rangle$, $t \in [0,T]$, coincides with the semimartingale $k h(\sin(\langle \bX(t), z \rangle/k))$.
Therefore, by \cite[Proposition 4.25]{Jacod+Shiryaev_2002}, $\langle \bX(t), z \rangle$ is a semimartingale.
\end{proof}
Next, let us introduce the following $C^\infty$-class  function
\[\tilde{e}_\theta: \mathbb{R} \ni x \mapsto e^{ i \theta x  } \in \mathbb{C},
\]
with $\theta \in \mathbb{R}$, and we note that
   \begin{equation*}
       D \tilde{e}_\theta= i  \theta \tilde{e}_\theta \quad \mbox{ and } \quad D^2 \tilde{e}_\theta= - \, \theta^2 \tilde{e}_\theta.
    \end{equation*}
Analogously to \eqref{eqn-L_n-2} or \eqref{eqn-L_n-3}, we introduce the following linear operator, which transforms functions of $C^2_b$-class on $\mathbb{R}$ into $C_b$-functions on $\nU^\prime$, where $\nU=\rU_0 \times \Vtwo$:
    \begin{equation}\label{eqn-L_n-4}
      \widetilde{\mathscr{L}}_{s}(f)(y)
      = f^\prime(\duality{y}{z}{}{}) \duality{\bF(y)}{z}{}{} 
         + \frac12 f^{\prime\prime} (\duality{y}{z}{}{}) \sum_{k=1}^\infty \duality{\newG_{0}^k(s,y)}{z}{}{}{}{^2},\;\; y \in \nU^\prime.
    \end{equation}
In particular, one has
\begin{equation*}
\widetilde{\mathscr{L}}_s(\tilde{e}_\theta)(y)
= i\theta \tilde{e}_\theta (\duality{y}{z}{}{}) \duality{\bF(y)}{z}{}{} 
- \frac12 \theta^2  \tilde{e}_\theta (\duality{y}{z}{}{}) \sum_{k=1}^\infty 
    \duality{\newG_{0}^k(s,y)}{z}{}{}{}{^2},\;\; y \in \nU^\prime,
\end{equation*}
and therefore, we have the following representation of the local continuous martingale $M_t^{\theta z}$: 
\begin{equation}\label{eqn-Def-M-t-z-a}
\begin{aligned}
M_t^{\theta z}
=  \tilde{e}_\theta(\duality{\bX(t)}{z}{}{})  - \int_0^t \widetilde{\mathscr{L}}_s(\tilde{e}_\theta)(\bX(s))\,\d s,\;\; t \in [0,T].
\end{aligned}
\end{equation}
Now, we take an $\mathscr{R}$-valued sequence  $(R_k)_{k=1}^\infty $, see Lemma \ref{Lemma 11.1.2-SV2006}, and  the corresponding stopping times $\tau_{R_k}$. 
Let us choose and fix $s,\,t \in [0,T]$ such that $t \geq s$. The local martingale property implies that 
   \begin{equation}\label{eqn-Def-M-t-z-b}
     \mathbb{E}^{\mathbb{P}_T} \left(M_{t \wedge\tau_{R_k}}^{\theta z} - M_{s \wedge\tau_{R_k}}^{\theta z} \vert \mathscr{D}_s \right)=0,
   \end{equation}
or, equivalently, 
    \begin{equation*}
     \int_{A} \left[\tilde{e}_\theta(\duality{\bX(t \wedge\tau_{R_k})}{z}{}{}) - \tilde{e}_\theta(\duality{\bX(s \wedge\tau_{R_k})}{z}{}{}) - \int_{s \wedge \tau_{R_k}}^{t \wedge \tau_{R_k}} \widetilde{\mathscr{L}}_s(\tilde{e}_\theta)(\bX(s))\,\d s\right]\d \mathbb{P}_T
     = 0,\; \forall A \in \mathscr{D}_s.
  \end{equation*}
It is important to observe that the same sequence of stopping times works for every $\theta \in \mathbb{R}$.
Therefore,  replacing $z$ by $\theta_j z$ in \eqref{eqn-Def-M-t-z-b}, multiplying the resulting equation by $c_j$ and then taking the summation over $j$, we obtain that for every $h \in \mathbb{S}$,
    \begin{equation}\label{eqn-Def-M-t-z-c}
         \mathbb{E}^{\mathbb{P}_T} \left(M_{t \wedge \tau_{R_k}}^{z,h} - M_{s \wedge \tau_{R_k}}^{z,h} \vert \mathscr{D}_s \right)=0,
    \end{equation}  
where
    \begin{equation*}
      M_t^{z,h}= h(\duality{\bX(t)}{z}{}{}) - \int_0^{t} \widetilde{\mathscr{L}}_s(h)(\bX(s))\,\d s.
    \end{equation*}
Next, let us choose and fix $g\in C_0^\infty(\mathbb{R},\mathbb{C})$ and an increasing  
 sequence $(K_m)$ of compact subsets of $\mathbb{R}$ such that $\bigcup_m \stackrel{\circ}{K_m}= \mathbb{R}$. 
By Corollary \ref{eqn-Cor-uniform-convergence}, for every $m \in\mathbb{N}$, we can  pick $h_m \in \mathbb{S}$ such that 
\[
\Vert g - h_m \Vert_{C^2(K_m,\mathbb{C})}
< \frac{1}{2^m}.
\]
Subsequently, since for each $m$,  $h_m \in \mathbb{S}$, by \eqref{eqn-Def-M-t-z-c}, we obtain
     \begin{equation}\label{eqn-Def-M-t-z-d}
         \mathbb{E}^{\mathbb{P}_T} \left(M_{t \wedge \tau_{R_k}}^{z,h_m} - M_{s \wedge \tau_{R_k}}^{z,h_m} \vert \mathscr{D}_s \right)=0.
    \end{equation}
Therefore, by means of the Lebesgue DCT for conditional expectation, see Section 9.7(g) in \cite{Williams_1991},   we can pass to the limit by letting $m \to \infty$ and deduce that 
    \begin{equation}
      \mathbb{E}^{\mathbb{P}_T} \left(M_{t \wedge \tau_{R_k}}^{z,g} - M_{s \wedge \tau_{R_k}}^{z,g} \vert \mathscr{D}_s \right)=0, 
    \end{equation}
where
     \begin{align}\label{Eqn-def-M_t^{z,g}}
        M_t^{z,g}=  g(\duality{\bX(t)}{z}{}{}) - \int_0^t \widetilde{\mathscr{L}}_s(g)(\bX(s))\,\d s.
     \end{align}
Hence, we infer that  the process $M_t^{z,g}$ is an $(\bZ_T,\mathbb{D},\mathbb{P}_T)$ continuous local martingale. This completes the proof in Part 1 of Theorem \ref{First-main-result}.
\subsection{Proof of Theorem \ref{First-main-result}, Part 2}
Hereafter, we choose and fix $s,\,t \in [0,T]$ such that $t \geq s$. Assume $z \in \mathscr{V}= \mathcal{V} \times \mathcal{C}_0^\infty(\domO,\mathbb{R})$. Next, we will prove the following result. 
\begin{lemma}\label{lem-L_t^z}
The  $\mathbb{R}$-valued process $L^z=(L_t^{z}: t \in [0,T])$ defined by the formula
   \begin{equation}\label{eq-5.56-1}
     L_t^{z}= \duality{\bX(t)}{z}{}{} - \duality{\bX(0)}{z}{}{} - \int_0^t \duality{\bF(\bX(s))}{z}{}{}\,\d s
   \end{equation}
is a continuous  local martingale on  $(\bZ_T,\mathbb{D},\mathbb{P}_T)$ and 
   \begin{equation}\label{eqn-L_t^z-square}
       \lvert L_t^{z} \rvert^2 - \int_0^t \sum_{k=1}^\infty \dualitybig{\newG^{k}_0(s,\bX(s))}{z}{\nU}{\nU^\prime}{2}\, \d s \in \mathcal{M}_{\loc}^{\rc}(\mathbb{D},\mathbb{P}_T).
   \end{equation}
   \end{lemma}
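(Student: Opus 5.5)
Write $x_t:=\duality{\bX(t)}{z}{}{}$, $b_s:=\duality{\bF(\bX(s))}{z}{}{}$ and $a_s:=\sum_{k}\dualitybig{\newG^{k}_0(s,\bX(s))}{z}{\nU}{\nU^\prime}{2}$. By Part 1, for every $g\in C_0^\infty(\mathbb{R},\mathbb{C})$ the process $M^{z,g}_t=g(x_t)-\int_0^t[g'(x_s)b_s+\tfrac12 g''(x_s)a_s]\,\d s$ is a continuous local martingale. By Lemma \ref{lem-semimartingale}, $x$ is a continuous semimartingale.

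The plan is to localize and then use polynomial test functions.

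\textbf{Localization.} Put
\[
\sigma_m:=\inf\Big\{t:\ |x_t|\ge m \ \text{ or } \ \int_0^t(|b_s|+a_s)\,\d s\ge m\Big\}\wedge\tau_{R_m}.
\]
The integrals are finite $\mathbb{P}_T$-a.s. by the bounds \eqref{X-estimate}--\eqref{phi-H3-estimate}, exactly as in Part 1 of the proof of Proposition \ref{Prop-equicontinuity}. Hence $\sigma_m\uparrow T$.

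\textbf{Step 1: $L^z$ is a local martingale.} Choose $g_m\in C_0^\infty$ with $g_m(x)=x$ on $[-m-1,m+1]$. On $[0,\sigma_m]$ we then have $M^{z,g_m}_t=x_t-\int_0^tb_s\,\d s=L^z_t+x_0$, so the stopped process $(L^z)^{\sigma_m}$ is a local martingale. Since $|x|\le m$ and $\int|b|\le m$ on this interval, it is bounded, hence a true martingale. Continuity of $L^z$ follows from continuity of $x$ and of the integral.

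\textbf{Step 2: the quadratic variation.} Take $g_m$ equal to $x^2$ on the same interval. This gives that
\[
x_t^2-\int_0^t\big(2x_sb_s+a_s\big)\,\d s
\]
is a local martingale. Now write $x_t=x_0+\int_0^tb_s\,\d s+L_t^z$ and apply Itô's formula (integration by parts for continuous semimartingales) to $x^2$:
\[
x_t^2=x_0^2+2\int_0^tx_s\,\d x_s+\langle L^z\rangle_t=x_0^2+2\int_0^t x_sb_s\,\d s+2\int_0^tx_s\,\d L^z_s+\langle L^z\rangle_t .
\]
Subtracting the two expressions shows that $\langle L^z\rangle_t-\int_0^ta_s\,\d s$ is a continuous local martingale of finite variation starting from $0$. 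It therefore vanishes, which gives \eqref{Eqn-quadratic-variation-of-L_t^z}. Since $|L^z|^2-\langle L^z\rangle\in\mathcal{M}^{\rc}_{\loc}$, this yields \eqref{eqn-L_t^z-square}.

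\textbf{Main obstacle.} The delicate point is the localization. We must check that $\int_0^t|b_s|\,\d s<\infty$ and $\int_0^t a_s\,\d s<\infty$ hold $\mathbb{P}_T$-a.s., so that the $\sigma_m$ are genuine $\mathbb{D}$-stopping times tending to $T$. The $b$-integral needs the $L^{4/3}$ bound on $\cp$ from \eqref{Eq-nabla-mu}, and the adaptedness of the integral needs the measurability from Proposition \ref{prop-measurable}.

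We must also justify that the test function $g_m$ may replace $x$ (resp. $x^2$) inside the local martingale $M^{z,g_m}$ up to $\sigma_m$. This holds because the stopped processes coincide path by path, and stopping a local martingale preserves the local martingale property.
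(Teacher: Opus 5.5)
Your argument is correct, but it takes a different route from the paper at the key technical step.

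\textbf{The paper's route.} The paper keeps the localizing times $\tau_{R_k}$ from Part 1 and approximates $x$ and $x^2$ by $g_{n,p}(x)=x^p\psi(x/n)$. It then lets $n\to\infty$ inside the stopped conditional expectations by dominated convergence. For \eqref{eqn-L_t^z-square} it uses the product rule for $x_tH^z_t$, with $H^z_t=\int_0^t b_s\,\d s$; this is where Lemma \ref{lem-semimartingale} enters. That rewrites $|L^z_t|^2-\int_0^t a_s\,\d s$ as $-2x_0L^z_t+N^z_t-2\int_0^t H^z_s\,\d L^z_s$, where $N^z$ is the local martingale obtained from $x^2$.

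\textbf{Your route.} You stop at $\sigma_m$, which caps $|x|$ and $\int(|b|+a)$. Then a single compactly supported $g_m$ agrees with $x$ (resp.\ $x^2$) on every value the stopped path takes, so no limit is needed. You then identify $\langle L^z\rangle$ through It\^o's formula for $x^2$ and the vanishing of continuous finite-variation local martingales.

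\textbf{Comparison.} The second steps are the same computation packaged differently. Yours gives \eqref{Eqn-quadratic-variation-of-L_t^z} directly, which is what Part 3 uses. Since your Step 1 already yields $x=x_0+H^z+L^z$, you do not actually need Lemma \ref{lem-semimartingale}.

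The first step is where your route genuinely gains. The uniform bounds \eqref{eqn-properties-g_n-bounded} asserted in the paper do not hold: already $g_{n,1}(n/2)=\tfrac n2\psi(\tfrac12)\approx 0.36\,n$. So dominated convergence would have to use $|x_s|$, $|x_s|\,|b_s|$ and $a_s$ as dominating functions. The times $\tau_{R_k}$ only bound $|M^z|$ and give no integrable control of these. For $p=2$ the natural domination needs something like $\mathbb{E}\int_0^T|x_s|\,|b_s|\,\d s<\infty$, which involves third moments of the velocity that \eqref{X-estimate} does not provide. Your $\sigma_m$ supplies exactly the localization that the paper's limiting argument implicitly requires.
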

   \begin{proof}[Proof of Lemma \ref{lem-L_t^z}]
Let us introduce the following standard $C^\infty$-class function $\psi: \mathbb{R} \to [0,1]$ defined by 
\begin{equation*}
\psi(x)=
\begin{cases}
e^{x^2/(x^2-1)} &\mbox{ if } \lvert x \rvert<1,
\\
0 &\mbox{ if } \lvert x \rvert \geq 1,
\end{cases}
\end{equation*}
and we  choose and fix $p \in \mathbb{N}$. Let $g_p:\mathbb{R}\ni x \mapsto  x^p \in  \mathbb{R}$ and  
for any $n \in \mathbb{N}$, let 
\[
g_{n,p}:\mathbb{R}\ni x \mapsto  g_p(x) \psi(x/n) \in \mathbb{R}
\]
so that $g_{n,p} \in C_0^\infty(\mathbb{R})$. Then the following properties hold:
   \begin{align}\label{eqn-properties-g_n-limits}
    \lim_{n \to \infty} g_{n,p}(x)= g_p(x), \; \; \lim_{n \to \infty} g_{n,p}^\prime(x)= g^\prime_p(x), \mbox{ and } \; \; \lim_{n \to \infty} g_{n,p}^{\prime \prime}(x)= g_p^{\prime \prime}(x),
   \end{align}
locally uniformly w.r.t.  $x \in \mathbb{R}$. Moreover, there exists $C>0$  such that for all $n \in \mathbb{N},\, x \in \mathbb{R}$, 
  \begin{align}\label{eqn-properties-g_n-bounded}
   \lvert g_{n,p}(x) \rvert \leq e, \;\;\;  \lvert g_{n,p}^\prime(x) \rvert \leq 4e \mbox{ and } \lvert g_{n,p}^{\prime \prime} (x) \rvert \leq C.
   \end{align}
Now, it follows from  Part 1  that for every $n \in \mathbb{N}$, the process $M_t^{z,g_{n,1}}$ is a local continuous  martingale, which, in turn, yields that the process $L_t^{z,n}\coloneqq  M_t^{z,g_{n,1}} - g_{n,1}(\duality{\bX(0)}{z}{}{}) \in \mathcal{M}_{\loc}^{\rc}(\mathbb{D},\mathbb{P}_T)$,
and, in view of identity   \eqref{Eqn-def-M_t^{z,g}}, 
\begin{equation*}
 g_{n,1}(\duality{\bX(t)}{z}{}{}) -  g_{n,1}(\duality{\bX(0)}{z}{}{})
- \int_0^t \widetilde{\mathscr{L}}_s(g_{n,1})(\bX(s))\,\d s
= L_t^{z,n},
\end{equation*}
i.e. 
\begin{equation}\label{eqn-Def-M-t-z-f}
\begin{aligned}
0&= \mathbb{E}^{\mathbb{P}_T} \left(g_{n,1}(\duality{\bX(t \wedge\tau_{R_k})}{z}{}{}) - g_{n,1}(\duality{\bX(s \wedge\tau_{R_k})}{z}{}{}) - \int_{s \wedge\tau_{R_k}}^{t \wedge \tau_{R_k}} \widetilde{\mathscr{L}}_s(g_{n,1})(\bX(s))\,\d s \vert \mathscr{D}_s \right).
\end{aligned}
\end{equation}
Therefore, by \eqref{eqn-L_n-4}, the properties of the sequence $(g_{n,1})_{n \in \mathbb{N}}$ expressed in \eqref{eqn-properties-g_n-limits} (with $p=1$) and \eqref{eqn-properties-g_n-bounded}, and the Lebesgue DCT, it follows from
\eqref{eqn-Def-M-t-z-f} that
\begin{equation}\label{eqn-Def-M-t-z-e}
\begin{aligned}
0&= \mathbb{E}^{\mathbb{P}_T} \left(g_1(\duality{\bX(t \wedge\tau_{R_k})}{z}{}{}) - g_1(\duality{\bX(s \wedge\tau_{R_k})}{z}{}{}) - \int_{s \wedge\tau_{R_k}}^{t \wedge \tau_{R_k}} \widetilde{\mathscr{L}}_s(g_1)(\bX(s))\,\d s\vert \mathscr{D}_s \right)
\\
&=\mathbb{E}^{\mathbb{P}_T} \left(\duality{\bX(t \wedge\tau_{R_k})}{z}{}{} -\duality{\bX(s \wedge\tau_{R_k})}{z}{}{} - \int_{s \wedge\tau_{R_k}}^{t \wedge \tau_{R_k}} \duality{\bF(\bX(s))}{z}{}{}\,\d s
\vert \mathscr{D}_s \right)
\\
&\coloneqq
\mathbb{E}^{\mathbb{P}_T} \left(L_{t \wedge\tau_{R_k}}^{z} - L_{s \wedge\tau_{R_k}}^{z} \vert \mathscr{D}_s \right).
\end{aligned}
\end{equation}
Consequently, the process $L_t^z= \duality{\bX(t)}{z}{}{} - \duality{\bX(0)}{z}{}{} - \int_0^t \duality{\bF(\bX(s))}{z}{}{}\,\d s$ is a  continuous local martingale on $(\bZ_T,\mathbb{D},\mathbb{P}_T)$.

\noindent
We now move to the proof of the second part of our claim, i.e. \eqref{eqn-L_t^z-square}.
\newline
Recall that we have already shown that the process $M^{z,g_{n,2}} \in \mathcal{M}_{\loc}^{\rc}(\mathbb{D},\mathbb{P}_T)$.
Hence, the process $N_t^{z,n} \coloneqq M_t^{z,g_{n,2}} - g_{n,2}(\duality{\bX(0)}{z}{}{})$ also belongs to $\mathcal{M}_{\loc}^{\rc}(\mathbb{D},\mathbb{P}_T)$. 
Thus, by arguing as in the steps leading to \eqref{eqn-Def-M-t-z-e}, and applying \eqref{eqn-properties-g_n-limits} with $p=2$, we infer that the process
\begin{align*}
&N_t^z\coloneqq g_2(\duality{\bX(t)}{z}{}{}) - g_2(\duality{\bX(0)}{z}{}{}) - \int_0^t \widetilde{\mathscr{L}}_s(g_2)(\bX(s))\,\d s
\\
&= \duality{\bX(t)}{z}{}{}{}{^2} - \duality{\bX(0)}{z}{}{}{}{^2}  - 2 \int_0^t \duality{\bX(s)}{z}{}{} \duality{\bF(\bX(s))}{z}{}{}\,\d s - \int_0^t \sum_{k=1}^\infty \duality{\newG^{k}_0(s,\bX(s))}{z}{}{}{}{^2}\, \d s
\end{align*}
is a continuous $\mathbb{D}$ local martingale. 
\newline
For the sake of brevity, we put
\[
H_t^z= \int_0^t \duality{\bF(\bX(s))}{z}{}{}\,\d s \mbox{ and } G_t^z= \int_0^t \sum_{k=1}^\infty \duality{\newG^{k}_0(s,\bX(s))}{z}{}{}{}{^2}\, \d s,
\]
and we observe that
\begin{equation}\label{eqn-(L_t)^2-G_t^z-equality}
\begin{aligned}
&\lvert L_t^z \rvert^2 - G_t^z=\lvert L_t^z +\duality{\bX(0)}{z}{}{} - \duality{\bX(0)}{z}{}{} \rvert^2 - (G_t^z+N_t^z)+ N_t^z
\\
&= - 2 \duality{\bX(0)}{z}{}{} L_t^z + N_t^z + \lvert L_t^z + \duality{\bX(0)}{z}{}{} \rvert^2 - \duality{\bX(0)}{z}{}{}{}{^2} -G_t^z - N_t^z
 \\
&= - 2 \duality{\bX(0)}{z}{}{} L_t^z + N_t^z + \lvert \duality{\bX(t)}{z}{}{} - H_t^z \rvert^2 - \duality{\bX(t)}{z}{}{}{}{^2} + 2 \int_0^t \duality{\bX(s)}{z}{}{}\d H_s^z
\\
&= - 2 \duality{\bX(0)}{z}{}{} L_t^z + N_t^z - 2 \duality{\bX(t)}{z}{}{} H_t^z + \lvert H_t^z \rvert^2 + 2 \int_0^t \duality{\bX(s)}{z}{}{}\d H_s^z.
\end{aligned}
\end{equation}
From \eqref{eq-5.56-1}, one can see that $\duality{\bX(t)}{z}{}{}= \duality{\bX(0)}{z}{}{} + H_t^z + L_t^z$. Since by Lemma \ref{lem-semimartingale} the process $\langle \bX(t), z \rangle$,  $t \in [0,T]$,  is a continuous $\mathscr{D}_t$-semimartingale, an application of \cite[Proposition 3.1]{Revuz+Yor_1999} entails that
\begin{align*}
&\duality{\bX(t)}{z}{}{} H_t^z
= \int_0^t \duality{\bX(s)}{z}{}{}\d H_s^z + \int_0^t H_s^z \d \duality{\bX(s)}{z}{}{}
\\
&= \int_0^t \duality{\bX(s)}{z}{}{}\d H_s^z + \int_0^t H_s^z \d H_s^z + \int_0^t H_s^z \d L_s^z
= \int_0^t \duality{\bX(s)}{z}{}{}\d H_s^z + \frac12 (H_t^z)^2 + \int_0^t H_s^z \d L_s^z.
\end{align*}
Plugging this into the RHS of \eqref{eqn-(L_t)^2-G_t^z-equality}, we obtain
   \begin{equation}\label{eqn-(L_t)^2-G_t^z-equality-1}
     \lvert L_t^z \rvert^2 - G_t^z
     = - 2 \duality{\bX(0)}{z}{}{} L_t^z + N_t^z - 2 \int_0^t H_s^z \d L_s^z.
  \end{equation}
Since the terms on the RHS of \eqref{eqn-(L_t)^2-G_t^z-equality-1} are local martingales, we obtain \eqref{eqn-L_t^z-square}. 
Thus the proof  of Lemma \ref{lem-L_t^z} is complete.
\end{proof}
Obviously Lemma  \ref{lem-L_t^z}  allows us to  conclude the proof of Part 2 of Theorem \ref{First-main-result}.
\subsection{Proof of Theorem \ref{First-main-result}, Part 3}
Assume that   a $\mathbb{V}$-valued  sequence  $\left( \tilde{\be}_k: k \in \mathbb{N}\right)$  is an ONB of $\nHB$, see Remark 
\ref{rem-def-compact-modified-stochastic-CHNSEs-3},  that is  orthogonal in $\nU$.  
\begin{claim}\label{claim-0}
Let us  define the stopping time $\tau_{R_0}$ for any $R_0>0$:
\begin{equation}\label{eqn-tau_R_0}
\tau_{R_0} \coloneqq \inf\left\{t \in [0,T]: \int_0^t  \Vert \newG_0(s,\bX(s)) \Vert_{\ell^2(\nHB)}^2\,\d s\geq R_0^2 \right\} \wedge T.
\end{equation}
Then 
\begin{equation}\label{eqn-tau_R_0-b}
\tau_{R_0}(\bX)\nearrow T \mbox{ as } R_0 \to \infty, \;\; \mathbb{P}_T\mbox{-a.e. }\bX \in \bZ_T.
\end{equation}
\end{claim}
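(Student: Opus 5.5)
The approach is to show that, for $\mathbb{P}_T$-almost every $\bX\in\bZ_T$, the integral
\[
A_T(\bX)\coloneqq\int_0^T \Vert \newG_0(s,\bX(s)) \Vert_{\ell^2(\nHB)}^2\,\d s
\]
is finite. Once this holds, the rest is elementary. The map $t\mapsto A_t(\bX)\coloneqq\int_0^t\Vert \newG_0(s,\bX(s))\Vert_{\ell^2(\nHB)}^2\,\d s$ is continuous and nondecreasing, and $R_0\mapsto\tau_{R_0}(\bX)$ is nondecreasing. Moreover, $\tau_{R_0}(\bX)<T$ forces $A_T(\bX)\ge A_{\tau_{R_0}}(\bX)=R_0^2$. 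Hence, whenever $A_T(\bX)<\infty$, we get $\tau_{R_0}(\bX)=T$ for every $R_0>\sqrt{A_T(\bX)}$, which gives \eqref{eqn-tau_R_0-b} (indeed the sequence becomes eventually stationary).

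To prove the finiteness, I would bound the integrand using the structure of $\newG_0=(\bSi_0(t)\bu+\bG_0(t,\bu),0)$. By \eqref{Eqn-coercivity-3} and the linear growth condition \eqref{eqn-linear growth},
\[
\Vert \newG_0(s,\bX(s))\Vert_{\ell^2(\nHB)}^2\le 2\tilde C_1\Vert \bu(s)\Vert_{\StokesV}^2+2\tilde C_3|\tilde h(s)|^2+2C_3|\bu(s)|_{\StokesH}^2 .
\]
Integrating in time gives
\[
A_T(\bX)\le C\bigl(\Vert\bu\Vert_{L^2(0,T;\StokesV)}^2+\Vert\tilde h\Vert_{L^2(0,T)}^2+T\sup_{s}|\bu(s)|_{\StokesH}^2\bigr).
\]
I then take $\mathbb{E}^{\mathbb{P}_T}$ and apply estimate \eqref{X-estimate} from Theorem \ref{thm-5.3}(ii). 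This yields $\mathbb{E}^{\mathbb{P}_T}A_T<\infty$, and in particular $A_T<\infty$ for $\mathbb{P}_T$-a.e.\ $\bX$.

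I expect the main obstacle to be measurability rather than the estimate itself. First, one needs $\bX(s)\in\mathbb{V}$ for a.e.\ $s$ so that $\newG_0(s,\bX(s))$ is defined; this holds $\mathbb{P}_T$-a.s.\ because $\bu\in L^2(0,T;\StokesV)$ by \eqref{X-estimate}, and one can set $\newG_0$ to $0$ off this set. Second, $A_t$ must be a measurable function on $\bZ_T$, so that $\tau_{R_0}$ is a genuine $\mathbb{D}$-stopping time. For this I would argue as in Proposition \ref{prop-measurable}. The Carath\'eodory property of $\bG_0$ combined with linearity and boundedness of $\bSi_0(t)$ on $\StokesV$ gives joint measurability of $(s,\bX)\mapsto\Vert\newG_0(s,\bX(s))\Vert^2$ on $[0,T]\times\bZ_T$, since the evaluation $\bX\mapsto\bX(s)$ is measurable from the $L^2$/weak-continuous components. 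Fubini then gives measurability of $A_t$, and adaptedness to $\mathscr{D}_{t}$. Continuity of $t\mapsto A_t$ makes $\tau_{R_0}$ a hitting time of a closed set by a continuous adapted process, hence a stopping time for the right-continuous completed filtration $\mathbb{D}$.
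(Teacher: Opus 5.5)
Your proposal is correct and follows the paper's proof. Both bound $\Vert \newG_0(s,\bX(s))\Vert_{\ell^2(\nHB)}^2$ by $C[\Vert \bu(s)\Vert_{\StokesV}^2+|\tilde h(s)|^2+|\bu(s)|_{\StokesH}^2]$ using \eqref{Eqn-coercivity-3} and \eqref{eqn-linear growth}, then invoke \eqref{X-estimate} to get finiteness of the time integral $\mathbb{P}_T$-a.s.; your explicit argument that $\tau_{R_0}=T$ for all large $R_0$ and your measurability/stopping-time remarks are details the paper leaves implicit, not a different route.
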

\begin{proof}[Proof of Claim \ref{claim-0}]
Let us choose and fix  $R_0>0$ and let  $\tau_{R_0}$ be  the stopping time  defined by  \eqref{eqn-tau_R_0}. 
By Assumptions $(iv)$ and $(vi)$ in Section \ref{Ass-Abstract formulation}, we observe that for all $s \in [0,T]$,
\begin{equation}\label{Eqn-Hilbert-norm-Sigma+G-1}
\begin{aligned}
&\Vert \newG_0(s,\bX(s)) \Vert_{\ell^2(\nHB)}^2
=\Vert \bSi_0(s)\bu(s) + \bG_0(s,\bu(s))\Vert_{\ell^2(\StokesH)}^2
\\
&\leq 2 \Vert \bSi_0(s)\bu(s) \Vert_{\ell^2(\StokesH)}^2 + 2 \Vert \bG_0(s,\bu(s))\Vert_{\ell^2(\StokesH)}^2
\leq C [\Vert \bu(s) \Vert_{\StokesV}^2 + \vert \tilde{h}(s)\vert^2 + \vert \bu(s) \vert_{\StokesH}^2].
\end{aligned}
\end{equation}
Then it follows from the estimate \eqref{X-estimate} in Theorem \ref{thm-5.3} that assertion \eqref{eqn-tau_R_0-b} is satisfied. 
\end{proof}
Now, let us choose and fix $R_0>0$. We will  prove the following assertion.
\begin{claim}\label{claim-2}
The sequence of processes $M^{R_0,n}= (M_t^{R_0,n}: \, t \in [0,T])$ defined by
    \begin{equation}\label{eqn-serie_M_t-0}
      M_t^{R_0,n} \coloneqq \sum_{j=1}^n L_{t \wedge \tau_{R_0}}^{\tilde{\be}_j} \tilde{\be}_j, \;\; t \in [0,T],
    \end{equation}
is   Cauchy in   $L^2(\bZ_T;\mathbb{P}_T, C([0,T];\nHB))$.
\end{claim}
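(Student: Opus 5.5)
The plan is to show that, for $m<n$, the difference $M^{R_0,n}-M^{R_0,m}=\sum_{j=m+1}^n L^{\tilde{\be}_j}_{\cdot\wedge\tau_{R_0}}\tilde{\be}_j$ has a second moment of its $\nHB$-valued supremum that is controlled by a tail of a convergent series. By orthonormality of $(\tilde{\be}_j)$ in $\nHB$ we have
\[
\sup_{t\in[0,T]}\Big\Vert \sum_{j=m+1}^n L^{\tilde{\be}_j}_{t\wedge\tau_{R_0}}\tilde{\be}_j\Big\Vert_{\nHB}^2=\sup_{t\in[0,T]}\sum_{j=m+1}^n \big\vert L^{\tilde{\be}_j}_{t\wedge\tau_{R_0}}\big\vert^2 .
\]
This is the supremum of a finite sum of squares of real continuous local martingales. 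Equivalently, it is the squared norm of the $\mathbb{R}^{n-m}$-valued continuous local martingale $(L^{\tilde{\be}_j}_{\cdot\wedge\tau_{R_0}})_{j=m+1}^n$.

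\textbf{Step 1: a Doob/BDG bound.} By Lemma~\ref{lem-L_t^z} (Part~2), each $L^{\tilde{\be}_j}$ is a continuous $\mathbb{D}$-local martingale with quadratic variation $\int_0^t\sum_k \langle \newG_0^k(s,\bX(s)),\tilde{\be}_j\rangle^2\,\d s$. Since $\tilde{\be}_j\in\mathbb{V}\subset\nHB$ and $\newG_0$ takes values in $\ell^2(\nHB)$, the pairing is the $\nHB$ inner product. Stopping at $\tau_{R_0}$ makes each bracket bounded by $R_0^2$. Hence each $L^{\tilde{\be}_j}_{\cdot\wedge\tau_{R_0}}$ is a true square integrable martingale, and Doob's $L^2$ inequality applied to the vector martingale gives
\[
\mathbb{E}^{\mathbb{P}_T}\sup_{t\le T}\sum_{j=m+1}^n |L^{\tilde{\be}_j}_{t\wedge\tau_{R_0}}|^2\le 4\,\mathbb{E}^{\mathbb{P}_T}\int_0^{\tau_{R_0}}\sum_{k=1}^\infty\sum_{j=m+1}^n (\newG_0^k(s,\bX(s)),\tilde{\be}_j)_{\nHB}^2\,\d s .
\]
Summing the brackets over $j$ is legitimate because, for $j\ne j'$, the cross-variation is obtained by polarization of \eqref{eqn-L_t^z-square}, using linearity of $z\mapsto L^z$. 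Alternatively, one can apply Doob to each component separately, at the cost of bounding $\sup\sum$ by $\sum\sup$, which still suffices.

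\textbf{Step 2: tail convergence.} By Parseval, for a.e. $s$,
\[
\sum_{j=1}^\infty\sum_k (\newG_0^k(s,\bX(s)),\tilde{\be}_j)^2_{\nHB}=\Vert \newG_0(s,\bX(s))\Vert^2_{\ell^2(\nHB)} ,
\]
and the integral of this quantity over $[0,\tau_{R_0}]$ is at most $R_0^2$ by definition of $\tau_{R_0}$. The integrand $\sum_k\sum_{j>m}(\cdot)^2$ therefore decreases to $0$ pointwise as $m\to\infty$ and is dominated by $\Vert \newG_0(s,\bX(s))\Vert^2_{\ell^2(\nHB)}\1_{[0,\tau_{R_0}]}(s)$, which is integrable with respect to $\d s\otimes\d\mathbb{P}_T$. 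The dominated convergence theorem then shows that the right-hand side of Step~1 tends to $0$ uniformly in $n>m$, which is exactly the Cauchy property. Continuity of the paths of $M^{R_0,n}$ in $\nHB$ is clear, since each is a finite sum of continuous real processes times fixed vectors.

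\textbf{Main obstacle.} The delicate point is Step~1: justifying that the stopped local martingales are genuine $L^2$ martingales, and identifying the joint bracket through \eqref{eqn-L_t^z-square} (including cross terms via polarization $z=\tilde{\be}_j\pm\tilde{\be}_{j'}$). Lemma~\ref{lem-L_t^z} states the square identity only for each single $z\in\mathscr{V}$, whereas the $\tilde{\be}_j$ lie in $\mathbb{V}$. I would first extend that lemma to $z\in\nU$ by density, using the bounds \eqref{X-estimate} and the continuity of $z\mapsto L^z_t$. If the cross-variation identification causes trouble, I would fall back on the cruder componentwise estimate, where only the diagonal brackets are needed.
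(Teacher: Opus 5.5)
Your argument is correct and is essentially the paper's proof. A Doob/Burkholder maximal inequality reduces $\mathbb{E}^{\mathbb{P}_T}\sup_{t}\Vert M_t^{R_0,m}-M_t^{R_0,n}\Vert_{\nHB}^2$ to the expected brackets $\sum_{j=n+1}^{m}\mathbb{E}^{\mathbb{P}_T}\int_0^{T\wedge\tau_{R_0}}\sum_k(\newG_0^k(s,\bX(s)),\tilde{\be}_j)_{\nHB}^2\,\d s$ from Lemma~\ref{lem-L_t^z}. These are tails of a series which, by Parseval, sums to $\mathbb{E}^{\mathbb{P}_T}\int_0^{T\wedge\tau_{R_0}}\Vert\newG_0(s,\bX(s))\Vert_{\ell^2(\nHB)}^2\,\d s\le R_0^2$.

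Two small remarks. First, no cross-variations are needed: by orthonormality, only the diagonal brackets enter $\mathbb{E}^{\mathbb{P}_T}\Vert M_T^{R_0,m}-M_T^{R_0,n}\Vert_{\nHB}^2$. Second, the use of Lemma~\ref{lem-L_t^z} at $z=\tilde{\be}_j\in\nU$, which the paper makes without comment, is better justified by checking that Proposition~\ref{Prop-equicontinuity}, and hence everything leading to Lemma~\ref{lem-L_t^z}, goes through verbatim for every $z\in\nU$. Density is not a viable route, because the mean-zero elements of $\mathcal{C}_0^\infty(\domO,\mathbb{R})$ are not dense in $\newone{V}$; their closure lies in $H^3_0(\domO)$.
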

\begin{proof}[Proof of Claim \ref{claim-2}]
Let us choose and fix $n,\,m \in \mathbb{N}$.
Then from \eqref{eqn-L_t^z-square}, by applying \cite[Exercise 5.19]{Karatzas+Shreve_1991}, we deduce that
\begin{equation}
\begin{aligned}
&\mathbb{E}^{\mathbb{P}_T} \Vert M_t^{R_0,n} \Vert_{\nHB}^2
=  \mathbb{E}^{\mathbb{P}_T} \left[\sum_{j=1}^n \vert  L_{t \wedge \tau_{R_0}}^{\tilde{\be}_j} \vert^2 \right]
\leq  \mathbb{E}^{\mathbb{P}_T} \left[\sum_{j=1}^n \int_0^{t \wedge \tau_{R_0}}  \sum_{k=1}^\infty \duality{\newG^{k}_0(s,\bX(s))}{\tilde{\be}_j}{}{}{}{^2}\,\d s\right]
\\
&\leq \mathbb{E}^{\mathbb{P}_T} \left[\int_0^{t \wedge \tau_{R_0}}  \sum_{k=1}^\infty \Vert \newG^{k}_0(s,\bX(s)) \Vert_{\nHB}^2\,\d s \right]
= \mathbb{E}^{\mathbb{P}_T} \left[\int_0^{t \wedge \tau_{R_0}} \Vert \newG_0(s,\bX(s)) \Vert_{\ell^2(\nHB)}^2 \,\d s \right].
\end{aligned}
\end{equation}
Combining the above inequality with the definition of $\tau_{R_0}$, we infer that there exists a constant $C>0$ such that,
for every $t \in [0,T]$ and every $n \in \mathbb{N}$,
\begin{align*}
\mathbb{E}^{\mathbb{P}_T} \Vert M_t^{R_0,n} \Vert_{\mathbb{H}}^2
\leq C.
\end{align*}
Thus, the process $M^{R_0,n}$ is an $\nHB$-valued continuous square integrable martingale. \newline
Moreover, by the polarization identity
\[
\langle L^{\tilde{\be}_l}, L^{\tilde{\be}_j} \rangle
=\frac14 \left(\langle L^{\tilde{\be}_l} + L^{\tilde{\be}_j} , L^{\tilde{\be}_l} + L^{\tilde{\be}_j} \rangle - \langle L^{\tilde{\be}_l} -L^{\tilde{\be}_j} , L^{\tilde{\be}_l} - L^{\tilde{\be}_j}\rangle \right),
\]
and the fact that the quadratic variation of the process $L^z=(L_t^z: t \in [0,T])$ in $\mathbb{R}$ is $\langle L^z, L^z\rangle_t= G_t^z$, 
it then follows that the quadratic variation of $M^{R_0,n}$ in $\nHB$ is given by
\begin{equation}\label{eqn-serie_<M^{R_0,n}, M^{R_0,n}>_{t}}
\begin{aligned}
&\langle \langle M^{R_0,n}, M^{R_0,n} \rangle \rangle_{t}
= \sum_{l,j=1}^{n} \langle L^{\tilde{\be}_l}, L^{\tilde{\be}_j}\rangle_{t \wedge \tau_{R_0}}\, \tilde{\be}_l \otimes \tilde{\be}_j
\\
&= \sum_{l,j=1}^{n} \int_0^{t \wedge \tau_{R_0}} \sum_{k=1}^\infty \duality{\newG^{k}_0(s,\bX(s))}{\tilde{\be}_l}{}{}\duality{\newG^{k}_0(s,\bX(s))}{\tilde{\be}_j}{}{}\,\tilde{\be}_l \otimes \tilde{\be}_j\,\d s.
\end{aligned}
\end{equation}
Furthermore, we have
\begin{equation*}
\begin{aligned}
&\mathbb{E}^{\mathbb{P}_T} \left[\sum_{k,j=1}^{+ \infty} \int_{0}^{T \wedge \tau_{R_0}} \duality{\newG^{k}_0(s,\bX(s))}{\tilde{\be}_j}{}{}{}{^2}\,\d s \right]
= \mathbb{E}^{\mathbb{P}_T} \left[\int_0^{T \wedge \tau_{R_0}} \sum_{k=1}^\infty \Vert \newG^{k}_0(s,\bX(s)) \Vert_{\nHB}^2\,\d s \right]
\\
&= \mathbb{E}^{\mathbb{P}_T} \left[\int_0^{T \wedge \tau_{R_0}} \Vert \newG_0(s,\bX(s)) \Vert_{\ell^2(\nHB)}^2 \,\d s \right]<\infty.
\end{aligned}
\end{equation*}
This, jointly with the Burkholder inequality, yields that
\begin{align*}
&\mathbb{E}^{\mathbb{P}_T} \sup_{t \in [0,T]} \Vert M_t^{R_0,m} - M_t^{R_0,n} \Vert_{\nHB}^2
\leq C \mathbb{E}^{\mathbb{P}_T} \Vert M_T^{R_0,m} - M_T^{R_0,n} \Vert_{\nHB}^2
\\
&\leq C \mathbb{E}^{\mathbb{P}_T} \left[\sum_{j= n + 1}^m \int_0^{T \wedge \tau_{R_0}} \sum_{k=1}^\infty \duality{\newG^{k}_0(s,\bX(s))}{\tilde{\be}_j}{}{}{}{^2}\,\d s\right]
\underset{n,m \to \infty}{\to}  0.
\end{align*}
Therefore, Claim \ref{claim-2} follows.
\end{proof}

\begin{claim}\label{claim-1}
The series 
    \begin{equation}\label{eqn-serie_M_t}
       M_t \coloneqq  \sum_{j=1}^\infty L_t^{\tilde{\be}_j} \tilde{\be}_j, \;\; t \in [0,T].
\end{equation}
is convergent in  $L^2(\bZ_T,\mathbb{P}_T;C([0,T];\nHB))$ and the process $M$ defined as the sum of that series is an
$\nHB$-valued continuous local 
$\mathbb{D}$-martingale, see \eqref{eqn-mb-D},   on $(\bZ_T,\mathbb{P}_T)$.
\end{claim}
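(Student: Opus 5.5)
The plan is to obtain $M$ by localization from Claim \ref{claim-2}, letting $R_0\to\infty$ along an integer sequence and gluing the stopped limits together using Claim \ref{claim-0}.

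\textbf{Step 1: the stopped limits.} Fix $R_0\in\mathbb{N}$. By Claim \ref{claim-2}, $(M^{R_0,n})_n$ is Cauchy in $L^2(\bZ_T,\mathbb{P}_T;C([0,T];\nHB))$. This space is complete, so it has a limit $M^{R_0}$. Each $M^{R_0,n}$ is an $\nHB$-valued continuous square integrable $\mathbb{D}$-martingale. Conditional expectation is continuous in $L^2$, so $M^{R_0}$ is again a continuous square integrable $\mathbb{D}$-martingale. Passing to a subsequence in $n$, we get $\mathbb{P}_T$-a.s. uniform convergence in $t$. Pairing with $\tilde{\be}_j$ shows $(M^{R_0}_t,\tilde{\be}_j)_{\nHB}=L^{\tilde{\be}_j}_{t\wedge\tau_{R_0}}$. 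Hence $M^{R_0}_t=\sum_j L^{\tilde{\be}_j}_{t\wedge\tau_{R_0}}\tilde{\be}_j$, i.e. $M^{R_0}$ is the series \eqref{eqn-serie_M_t} stopped at $\tau_{R_0}$.

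\textbf{Step 2: consistency and gluing.} The stopping times $\tau_{R_0}$ in \eqref{eqn-tau_R_0} increase with $R_0$. So for $R_0<R_1$ the coordinates agree: $(M^{R_1}_{t\wedge\tau_{R_0}},\tilde{\be}_j)=L^{\tilde{\be}_j}_{t\wedge\tau_{R_0}}=(M^{R_0}_t,\tilde{\be}_j)$. Therefore $M^{R_1}_{\cdot\wedge\tau_{R_0}}=M^{R_0}$ a.s. By Claim \ref{claim-0}, $\tau_{R_0}\nearrow T$ a.s. Since $\tau_{R_0}=T$ exactly for large $R_0$ on a full-measure set, we can define $M_t:=M^{R_0}_t$ on $\{t\le\tau_{R_0}\}$ (or $t<\tau_{R_0}$). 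This gives a well-defined process with continuous $\nHB$-valued paths. It coincides with the series \eqref{eqn-serie_M_t}, which therefore converges a.s. uniformly on $[0,T]$ in $\nHB$. Moreover $M_{\cdot\wedge\tau_{R_0}}=M^{R_0}$ is a martingale for every $R_0$, so $(\tau_{R_0})$ is a localizing sequence and $M$ is a continuous local $\mathbb{D}$-martingale.

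\textbf{Step 3: the $L^2(\bZ_T,\mathbb{P}_T;C([0,T];\nHB))$ convergence.} This is the delicate point. Without localization one only has, by Burkholder and \eqref{eqn-L_t^z-square},
\[
\mathbb{E}^{\mathbb{P}_T}\sup_t\Big\|\sum_{j=n+1}^m L^{\tilde{\be}_j}_t\tilde{\be}_j\Big\|^2_{\nHB}\le C\,\mathbb{E}^{\mathbb{P}_T}\int_0^T\sum_{j=n+1}^m\sum_k\langle\newG_0^k(s,\bX(s)),\tilde{\be}_j\rangle^2\,\d s .
\]
I would justify this by taking expectations of the stopped version and letting $R_0\to\infty$ by monotone convergence. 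The right-hand side tends to zero by dominated convergence, because of the integrable bound \eqref{Eqn-Hilbert-norm-Sigma+G-1} combined with \eqref{X-estimate} and $\tilde h\in L^2(0,T)$. This yields the Cauchy property in $L^2(\bZ_T,\mathbb{P}_T;C([0,T];\nHB))$ directly, and in fact shows $M$ is a true square integrable martingale. The main obstacle I expect is rigorously applying the BDG inequality to the unstopped local martingale. I would handle it via Fatou's lemma on the stopped estimates from Claim \ref{claim-2} with $R_0\to\infty$, using that the bound does not depend on $R_0$.
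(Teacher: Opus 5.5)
Your proposal is correct. Steps 1--2 are essentially the paper's own argument: take the limit $M^{R_0}$ supplied by Claim \ref{claim-2}, identify it coordinatewise with $\sum_j L^{\tilde{\be}_j}_{t\wedge\tau_{R_0}}\tilde{\be}_j$, and let $R_0\to\infty$ via Claim \ref{claim-0}.

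You are more careful than the paper in two places. First, the paper gets almost sure convergence of the whole sequence $M^{R_0,n}$ from $L^2$ convergence by citing the Vitali theorem, whereas extracting a subsequence is the correct justification. The whole sequence then converges uniformly as well, because $\sup_{t}\sum_{j>n}\vert L^{\tilde{\be}_j}_{t\wedge\tau_{R_0}}\vert^2$ is nonincreasing in $n$; you should add this remark, since you need it for the ``a.s.\ uniformly'' in Step 2. Second, your consistency relation $M^{R_1}_{\cdot\wedge\tau_{R_0}}=M^{R_0}$ makes explicit the gluing and the localizing sequence, which the paper leaves implicit.

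The genuine difference is Step 3. The paper's proof only yields almost sure convergence and the local martingale property. It never proves the convergence of the unstopped series in $L^2(\bZ_T,\mathbb{P}_T;C([0,T];\nHB))$ that the claim asserts. Your route supplies it from three ingredients:
\begin{itemize}
\item the stopped Doob--Burkholder bound from Claim \ref{claim-2};
\item monotone convergence in $R_0$ --- the stopped suprema increase to the unstopped one and equal it once $\tau_{R_0}=T$, so the obstacle you anticipate is not a real one;
\item the integrability $\mathbb{E}^{\mathbb{P}_T}\int_0^T\Vert\newG_0(s,\bX(s))\Vert^2_{\ell^2(\nHB)}\,\d s<\infty$, which follows from \eqref{Eqn-Hilbert-norm-Sigma+G-1} and \eqref{X-estimate}.
\end{itemize}
This buys more than the claim states. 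The same integrability gives $\mathbb{E}^{\mathbb{P}_T}\langle L^{\tilde{\be}_j}\rangle_T<\infty$, so each unstopped partial sum is already a square integrable martingale. Hence the $L^2$-limit $M$ is a true continuous square integrable $\mathbb{D}$-martingale, identified with the series by a.s.\ uniform convergence along a subsequence. With Step 3 in hand, the gluing in Step 2 is no longer needed.
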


\begin{proof}[Proof of Claim \ref{claim-1}] In view of Lemma E.2 from \cite{BKMR_2025}, it is sufficient to prove the martingale property with respect to the original filtration 
$\mathscr{D}_t= \sigma(\bX_s,\,s \leq t), t\in [0,T]$. \\
By  Claim \ref{claim-2}, we deduce that the sequence $(M^{R_0,n}: n\in \mathbb{N})$ has a limit  in   $L^2(\bZ_T,\mathbb{P}_T;C([0,T];\nHB))$, which we denote by 
$M^{R_0}$ and 
\begin{align*}
&\lim_{n\to \infty} \mathbb{E}^{\mathbb{P}_T} \sup_{t \in [0,T]} \Vert M_t^{R_0,n} - M_t^{R_0} \Vert_{\nHB}^2=0 .
\end{align*}
Hence, since $1+\delta>1$, $\delta \in (0,1)$, by the Vitali Theorem, see \cite[Theorem C.4]{Oksendal_2003}, we infer that 
 $\mathbb{P}_T$-a.s.
\[
\lim_{n\to \infty} \sup_{t \in [0,T]} \Vert M_t^{R_0,n} - M_t^{R_0} \Vert_{\nHB}^{1+\delta}=0. 
\]
This implies that $\mathbb{P}_T$-a.s., for every $t \in [0,T]$, the sequence $ M_t^{R_0,n}$ is convergent in $\nHB$ as $n \to \infty $, i.e. 
\[
\sum_{j=1}^{n} L_{t \wedge \tau_{R_0}}^{\tilde{\be}_j} \tilde{\be}_j \mbox{ is convergent in }\nHB\mbox{ as } n \to \infty.
\]
This means that $\mathbb{P}_T$-a.s., for every $t \in [0,T]$, the series $\sum_{j=1}^\infty L_{t \wedge \tau_{R_0}}^{\tilde{\be}_j} \tilde{\be}_j$ is convergent in $\nHB$ and 
by \eqref{eqn-serie_M_t-0} that 
      \begin{equation}\label{eqn-serie_M_t-a-2}
        M_t^{R_0} = \sum_{j=1}^\infty L_{t \wedge \tau_{R_0}}^{\tilde{\be}_j} \tilde{\be}_j.
      \end{equation}
Observe that $\mathbb{P}_T$-a.s.,  $\tau_{R_0} \nearrow T$ as $R_0 \to \infty$. Therefore, again   $\mathbb{P}_T$-a.s., if $t<T$, then 
\[
\sum_{j=1}^\infty L_{t \wedge \tau_{R_0}}^{\tilde{\be}_j} \tilde{\be}_j=
\sum_{j=1}^\infty L_{t}^{\tilde{\be}_j} \tilde{\be}_j
= M_t.
\]
Therefore, Claim \ref{claim-1} follows.
\end{proof}
This completes the proof of Part 3 of Theorem \ref{First-main-result}. \newline
We also have the following two auxiliary results.
\begin{claim}\label{claim-4}
The stopped process $M^{R_0}= (M_{t \wedge \tau_{R_0}}: \, t \in [0,T])$ is $\nHB$-valued continuous square integrable martingale with quadratic variation,  see \cite[Section 21.5]{Metivier_1982},  \cite[Section 10.2]{Metivier+Pellaumail_1980} and \cite[Section 2]{Metivier_1988},
\begin{align*}
&\langle \langle M^{R_0}, M^{R_0} \rangle \rangle_{t}
= \sum_{l,j=1}^{+\infty} \langle L^{\tilde{\be}_l}, L^{\tilde{\be}_j}\rangle_{t \wedge \tau_{R_0}}\, \tilde{\be}_l \otimes \tilde{\be}_j
\\
&= \sum_{l,j=1}^{+\infty} \int_0^{t \wedge \tau_{R_0}} \sum_{k=1}^\infty \duality{\newG^{k}_0(s,\bX(s))}{\tilde{\be}_l}{}{}  \duality{\newG^{k}_0(s,\bX(s))}{\tilde{\be}_j}{}{}\,\tilde{\be}_l \otimes \tilde{\be}_j\,\d s,\;\; t \in [0,T].
\end{align*}
\end{claim}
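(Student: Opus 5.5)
The plan is to obtain $M^{R_0}$ as the $L^2(\bZ_T,\mathbb{P}_T;C([0,T];\nHB))$-limit of the finite-dimensional martingales $M^{R_0,n}$ from Claim \ref{claim-2}, and to pass to the limit $n\to\infty$ both in the martingale property and in the tensor quadratic variation \eqref{eqn-serie_<M^{R_0,n}, M^{R_0,n}>_{t}}.

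\textbf{Step 1 (martingale and square integrability).} For each $n$, $M^{R_0,n}$ is a finite sum of the stopped real processes $L^{\tilde{\be}_j}_{\cdot\wedge\tau_{R_0}}$. By Lemma \ref{lem-L_t^z} and the definition \eqref{eqn-tau_R_0} of $\tau_{R_0}$, these have bounded expected quadratic variation, so they are true square integrable $\mathbb{D}$-martingales. The bound $\mathbb{E}^{\mathbb{P}_T}\Vert M^{R_0,n}_t\Vert_{\nHB}^2\le \mathbb{E}^{\mathbb{P}_T}\int_0^{T\wedge\tau_{R_0}}\Vert\newG_0(s,\bX(s))\Vert^2_{\ell^2(\nHB)}\,\d s\le R_0^2$ is uniform in $n$. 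Claim \ref{claim-2} gives convergence of $M^{R_0,n}$ in $L^2(\Omega;C([0,T];\nHB))$ to the limit $M^{R_0}$ identified in \eqref{eqn-serie_M_t-a-2}. Since $L^2$-limits of martingales remain martingales (conditional expectation is $L^2$-continuous), $M^{R_0}$ is a continuous square integrable $\nHB$-valued martingale. It coincides with $M_{\cdot\wedge\tau_{R_0}}$ by Claim \ref{claim-1}.

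\textbf{Step 2 (quadratic variation).} Let $Q_t$ denote the trace-class-operator-valued process given by the claimed double series. I need to show that $M^{R_0}_t\otimes M^{R_0}_t-Q_t$ is a martingale in the space of trace class (or Hilbert--Schmidt) operators on $\nHB$. Testing with $h,k\in\nHB$, this reduces to showing that $(M^{R_0}_t,h)(M^{R_0}_t,k)-(Q_th,k)$ is a real martingale.

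\begin{itemize}
\item For $h=\tilde{\be}_l$ and $k=\tilde{\be}_j$, this is $L^{\tilde{\be}_l}_{t\wedge\tau_{R_0}}L^{\tilde{\be}_j}_{t\wedge\tau_{R_0}}-\langle L^{\tilde{\be}_l},L^{\tilde{\be}_j}\rangle_{t\wedge\tau_{R_0}}$. It is a martingale by \eqref{eqn-L_t^z-square} and polarization. The bracket is identified by applying \eqref{eqn-L_t^z-square} to $z=\tilde{\be}_l\pm\tilde{\be}_j$, which is linear in $z$, so the covariance formula in the claim follows.
\item For general $h,k$, I approximate by finite combinations of basis vectors. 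The increments are controlled in $L^1$ by $\mathbb{E}\sup_t\Vert M^{R_0}_t-M^{R_0,n}_t\Vert^2$ (Claim \ref{claim-2}).
\item The convergence of $Q$ in trace norm follows from $\sum_{j,k}\int_0^{T\wedge\tau_{R_0}}\langle\newG^k_0,\tilde{\be}_j\rangle^2\,\d s\le R_0^2$. This shows $Q_t=\int_0^{t\wedge\tau_{R_0}}\sum_k\newG^k_0\otimes\newG^k_0\,\d s$, a nonnegative trace class operator. By uniqueness of the Doob--Meyer decomposition in the sense of M\'etivier, this is $\langle\langle M^{R_0},M^{R_0}\rangle\rangle$.
\end{itemize}

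The main obstacle is justifying the exchange of the infinite sums with both the martingale property and the bracket, that is, obtaining $L^1$ convergence of $\sum_{l,j\le n}L^{\tilde{\be}_l}L^{\tilde{\be}_j}\tilde{\be}_l\otimes\tilde{\be}_j$ in trace norm. I would handle it through the identity $\Vert a\otimes a-b\otimes b\Vert_{1}\le(\Vert a\Vert+\Vert b\Vert)\Vert a-b\Vert$, combined with Cauchy--Schwarz and Claim \ref{claim-2}.
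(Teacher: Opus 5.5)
Your proposal is correct and follows the paper's route: the paper deduces the claim by letting $n\to\infty$ in the partial sums $M^{R_0,n}$ and in their finite-dimensional bracket formula. You are more careful than the paper's one-line argument, which invokes only pointwise convergence; you instead use the convergence in $L^2(\bZ_T,\mathbb{P}_T;C([0,T];\nHB))$ from Claim~\ref{claim-2} together with $\Vert a\otimes a-b\otimes b\Vert_1\le(\Vert a\Vert+\Vert b\Vert)\Vert a-b\Vert$, and this gives the $L^1$ trace-norm convergence actually needed to carry the martingale property of $M^{R_0,n}\otimes M^{R_0,n}-\langle\langle M^{R_0,n},M^{R_0,n}\rangle\rangle$ to the limit.
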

\begin{proof}[Proof of Claim \ref{claim-4}]
We recall that the real process $L^z=(L_t^{z}: t \in [0,T])$, $z \in \mathscr{V}$ is a continuous  local martingale on  $(\bZ_T,\mathbb{D},\mathbb{P}_T)$, cf. Lemma \ref{lem-L_t^z}.
Now, by the above pointwise convergence of the partial sum $M^{R_0,n}$ and the equality \eqref{eqn-serie_<M^{R_0,n}, M^{R_0,n}>_{t}}, we deduce the Claim \ref{claim-4}.
\end{proof}
\begin{corollary}\label{cor-L=M} We have, for every $t \in [0,T]$,  
\begin{equation}\label{eqn-M_tv=L_t^v}
\langle M_t, \bv \rangle=L_t^{\bv}, \;\; \forall \bv \in \nU.
  \end{equation}
\end{corollary}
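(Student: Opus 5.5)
The plan is to check \eqref{eqn-M_tv=L_t^v} first on the basis vectors $\tilde{\be}_k$ and then extend it to a general $\bv\in\nU$ by linearity and a limit argument.

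\emph{Basis vectors.} Fix $t\in[0,T]$. By Claim \ref{claim-1}, $\mathbb{P}_T$-a.s. the series $M_t=\sum_{j}L_t^{\tilde{\be}_j}\tilde{\be}_j$ converges in $\nHB$. Taking the $\nHB$-inner product with $\tilde{\be}_k$ and using that $(\tilde{\be}_j)$ is orthonormal in $\nHB$ gives $(M_t,\tilde{\be}_k)_{\nHB}=L_t^{\tilde{\be}_k}$. In the Gelfand triple \eqref{eqn-Gelfand-full}, the pairing $\langle M_t,\bv\rangle$ coincides with $(M_t,\bv)_{\nHB}$ because $M_t\in\nHB$. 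Hence \eqref{eqn-M_tv=L_t^v} holds for every $\bv=\tilde{\be}_k$.

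\emph{Finite linear combinations.} Both sides of \eqref{eqn-M_tv=L_t^v} are linear in $\bv$. For the right-hand side this follows from the definition \eqref{eq-5.56-1}: $\bv\mapsto\langle\bX(t),\bv\rangle-\langle\bX(0),\bv\rangle-\int_0^t\langle\bF(\bX(s)),\bv\rangle\,\d s$ is linear. So the identity holds on $\linspan\{\tilde{\be}_k\}$.

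\emph{General $\bv\in\nU$.} Here I use that $(\tilde{\be}_k)$ is also orthogonal in $\nU$, as assumed at the start of Part 3. The argument behind Properties \ref{eqn-properties-pi_{0,n}}(iii) then gives $P_N\bv\coloneqq\sum_{k\le N}(\bv,\tilde{\be}_k)_{\nHB}\tilde{\be}_k\to\bv$ in $\nU$.
\begin{itemize}
\item The left-hand side converges, since $\langle M_t,P_N\bv\rangle=(M_t,P_N\bv)_{\nHB}\to(M_t,\bv)_{\nHB}$ by continuity of the $\nHB$-inner product, because $M_t\in\nHB$ and $\nU\embed\nHB$.
\item For the right-hand side, $\bX(t),\bX(0)\in\nHB\subset\nU'$, so the boundary terms converge.
\item For the integral term, \eqref{eqn-condition on F(X(t))}-type integrability $\int_0^T\Vert\bF(\bX(s))\Vert_{\nU'}\,\d s<\infty$ ($\mathbb{P}_T$-a.s.) gives $\bigl|\int_0^t\langle\bF(\bX(s)),P_N\bv-\bv\rangle\,\d s\bigr|\le\Vert P_N\bv-\bv\Vert_{\nU}\int_0^T\Vert\bF(\bX(s))\Vert_{\nU'}\,\d s\to0$.
\end{itemize}
Together these yield $L_t^{P_N\bv}\to L_t^{\bv}$ a.s., and the identity passes to the limit.

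\emph{Main obstacle.} The delicate point is the integrability of $\bF(\bX)$ in $\nU'$. I would obtain it from the bounds of Theorem \ref{thm-5.3}(ii), combined with the estimates already used in the proof of Proposition \ref{Prop-equicontinuity}: Corollary \ref{2nd-corollary}, \eqref{eq-B_0-k_0}, \eqref{eqn-R_0-k_0} and \eqref{eq-B1}. A further subtlety is that $L^{\bv}$ was only shown to be a local martingale for $\bv\in\mathscr{V}$, whereas here we need it for $\bv\in\nU$. Definition \eqref{eq-5.56-1} still makes sense for $\bv\in\nU$, so the identity above can serve as the definition of $L^{\bv}$ in that case; it then inherits the martingale property from $M$.
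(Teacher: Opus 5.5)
Your proposal follows the paper's argument. The paper also obtains $\langle M_t,\tilde{\be}_k\rangle=L_t^{\tilde{\be}_k}$ from the a.s.\ convergence of $\sum_j L_t^{\tilde{\be}_j}\tilde{\be}_j$ in $C([0,T];\nHB)$ (Claim~\ref{claim-1}), and then concludes ``by linearity''. Linearity alone only gives the identity on $\linspan\{\tilde{\be}_k\}$. Your extra step supplies what the paper leaves implicit: you approximate $\bv\in\nU$ in the $\nU$-norm and control the drift term via $\int_0^T\Vert\bF(\bX(s))\Vert_{\nU^\prime}\,\d s<\infty$. That integrability is also what makes $L_t^{\bv}$ meaningful for $\bv\in\nU$ in the first place.

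One justification needs tightening. Orthogonality of $(\tilde{\be}_k)$ in $\nU$ does not by itself imply $P_N\bv\to\bv$ in $\nU$. For example, the functions $\sqrt{2}\sin(k\pi x)$ form an ONB of $L^2(0,1)$ that is orthogonal in $H^1(0,1)$. But their $H^1$-closed span is only $H^1_0(0,1)$, so the partial sine sums of the constant $1$ do not converge in $H^1$.

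The proof of Properties~\ref{eqn-properties-pi_{0,n}}(i),(iii) works for a stronger reason: the $\rU_0$-inner product is diagonal in that basis, $(\bv,\be_j)_{\rU_0}=\lambda_j^{k_0}(\bv,\be_j)_{\StokesH}$. This is what gives completeness in $\rU_0$ and identifies $P_N$ with the $\rU_0$-orthogonal projection.

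So in Part~3 you should take a basis with this property, e.g.\ $\{(\be_j,0)\}_j\cup\{(0,\bar{e}_j)\}_j$. For the second family, integrating by parts with $\partial_{\bn}\phi=\partial_{\bn}\bar{e}_j=0$ gives $(\phi,\bar{e}_j)_{\newone{V}}=(1+\alpha_j^2)(\phi,\bar{e}_j)_{\newone{H}}$, so it is complete in $\newone{V}$ as well. With that choice your argument goes through.
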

\begin{proof}[Proof of Corollary \ref{cor-L=M}]
We prove that the series $M_t=\sum_j L_t^{\tilde{\be}_j}\tilde{\be}_j$ is convergent a.s. in $C([0,T];\nHB)$, see Claim \ref{claim-1}. Having done this, it will  follow that  a.s.  for every $k \in \mathbb{N}$,
\[
\langle M_t, \tilde{\be}_k\rangle=L_t^{\tilde{\be}_k}, \;\; \mbox{ for every } t\in [0,T],
\]
and, by linearity, we deduce  \eqref{eqn-M_tv=L_t^v}.
\end{proof}
\subsection{Proof of Theorem \ref{First-main-result}, Part 4}
In this section, our aim is to show that Lemma \ref{Lem-B.4} is applicable.  Let us  define the following operator valued function 
\[\widetilde{Q}: [0,T] \times \bZ_T \to \mathscr{L}^+(\mathbb{H},\mathbb{H}),\]
by, for $s\in [0,T]$, $\bX=(\bu,\phi)  \in \bZ_T$   and  $\bv, \tilde{\bv} \in \nU$,
\begin{equation}
\begin{aligned}
\ilsc{\widetilde{Q}(s,\bX)\bv}{\tilde{\bv}}{\mathbb{H}}
& \coloneqq \sum_{k=1}^{\infty} (\newG^{k}_0(s,\bX(s)),\bv)_{\mathbb{H}} \cdot (\newG^{k}_0(s,\bX(s)), \tilde{\bv})_{\mathbb{H}}. 
\end{aligned}
\end{equation}
In view of assumptions \eqref{eqn-linear growth} and \eqref{Eqn-coercivity-3}  and definition of the space $\ell^2(\StokesH)$,  the series on the RHS above is convergent and moreover there exists a constant $C>0$ such that for all $\bv, \tilde{\bv} \in \nU$,
\begin{equation}
\vert \ilsc{\widetilde{Q}(s,\bX)\bv}{\tilde{\bv}}{\mathbb{H}} \vert \leq C \vert \bv\vert_{\mathbb{H}}    \vert \tilde{\bv}\vert_{\mathbb{H}}.
\end{equation}
This implies that the map $\widetilde{Q}(s,\bX)$ extends uniquely to a bounded linear map, also denoted by $\widetilde{Q}(s,\bX)$, from $\mathbb{H}$ to $\mathbb{H}$, which moreover is obviously positive.
Next, using the notation introduced in Example \ref{example:HS}, we define  the following function 
\begin{equation}\label{space-E}
    \begin{aligned}
        \widetilde{\sigma}: &[0,T] \times \bZ_T \times E \ni (s,\bX,j) \mapsto \widetilde{\sigma}_s(j;\bX)\coloneqq  \newG^{j}_0(s,\bX(s)) \in  \mathbb{H}.
    \end{aligned}
\end{equation}
Recall that  $L^2(E,\mathcal{E},\tilde{\kappa})=\ell^2 $ and observe that for all $s \in [0,T]$, $\bX=(\bu,\phi) \in \bZ_T$ and $\bv \in \nU$,
\begin{equation}\label{eqn-sigma on E}
\ilsc{\widetilde{Q}(s,\bX)\bv}{\bv}{\mathbb{H}}
= \sum_{k=1}^{\infty} ( \newG^{k}_0(s,\bX) , \bv)_{\mathbb{H}}^2
=\int_{E} (\newG^{j}_0(s,\bX(s)) , \bv)_{\mathbb{H}}^2 \d \tilde{\kappa}(j)
=\int_{E} (\widetilde{\sigma}_s(j;\bX), \bv)_{\mathbb{H}}^2 \d \tilde{\kappa}(j).
\end{equation}
Since by  \eqref{eqn-Gelfand-full}, $ \nU \embed \nHB\cong \mathbb{H}^\prime \embed \nU^\prime$
is a Gelfand triple, we can  naturally define functions 
\begin{equation}\label{eqn-def-Q}
  Q: [0,T] \times \bZ_T \to \mathscr{L}^+(\nU,\nU^\prime)
\end{equation}
and 
\begin{equation}\label{eqn-sigma}
    \begin{aligned}
        \sigma: &[0,T] \times \bZ_T \times E \ni (s,\bX,j) \mapsto \sigma_s(j;\bX)\coloneqq  \newG^{j}_0(s,\bX(s)) \in  \nU^\prime,
    \end{aligned}
\end{equation}
such that the following equality holds 
\begin{equation}\label{eqn-Q=sigma}
\duality{Q(s,\bX)\bv}{\bv}{\nU}{\nU^\prime} 
=\int_{E} \dualitybig{\sigma_s(j;\bX)}{\bv}{\nU}{\nU^\prime}{2}    \d \tilde{\kappa}(j).
\end{equation}
Thus, by Corollary \ref{cor-L=M}, equality \eqref{eqn-Q=sigma} and Lemma \ref{lem-L_t^z}, we see that for all $\bv \in \nU$,
\begin{equation}\label{eqn-1007}
\begin{aligned}
& \dualitybig{M_t}{\bv}{\nU}{\nU^\prime}{2} - \int_0^t  \int_E \dualitybig{\sigma_s(x)}{\bv}{\nU}{\nU^\prime}{2} \, \tilde{\kappa} (\d x) \, \d s 
\\ 
&=\dualitybig{M_t}{\bv}{\nU}{\nU^\prime}{2} - \int_0^t  \duality{ Q_s \bv}{\bv}{\nU}{\nU^\prime} \, \d s
     \\
&= \vert L_t^{\bv}\vert^{2} - \int_0^t  \sum_{k=1}^\infty \dualitybig{\newG^{k}_0(s,\bX(s))}{\bv}{\nU}{\nU^\prime}{2}\,  \, \d s \in \mathcal{M}_{\loc}^c(\mathbb{D},\mathbb{P}).
\end{aligned}   
 \end{equation}
Here is the moment when we need to work with spaces $\nU$ and $\nU^\prime$ because we need to use Lemma \ref{Lem-B.4}.
Therefore, it follows from Lemma \ref{Lem-B.4}   that there exists   an $\ell^2$-cylindrical Wiener process $W$ defined on 
 the probability space $(\bZ_T, \mathbb{D}, \mathbb{P}_T)$ such that  for every $t\in [0,T]$, $\mathbb{P}_T$-almost surely, 
         \begin{equation}\label{eqn-M-1}
            M_t=\int_0^t  \sigma_s \, \d W(s) \mbox{ in } \nU^\prime \mbox{ for all } t \in [0,T].
        \end{equation}
Using the above definition of the function $\sigma$, we deduce that 
       \begin{equation}\label{eqn-M-2}
            M_t=\int_0^t [ \newG_0(s,\bX(s))]\,\d W(s) 
            \mbox{ in } \nU^\prime \mbox{ for all } t \in [0,T].
       \end{equation}
Hence,  in view of Corollary \ref{cor-L=M} and the definition \eqref{eq-5.56-1} of the processes $L_t^z$, we infer that      
\begin{equation}\label{eqn-main-2}
\bX(t)=\bX_0+ \int_0^t  \bF(\bX(s))\,\d s + \int_0^t [ \newG_0(s,\bX(s))]\,\d W(s) \mbox{ in } \nU^\prime, \mbox{ for every } t \in [0,T],
\end{equation}            
This completes the proof of Part 4 of Theorem \ref{First-main-result}.
\medskip

\begin{remark}\label{Rmk-general-result}
The proof of Theorem \ref{First-main-result} can be generalized or adapted to the case where the $C^2$-class potential $\psi:\mathbb{R} \to \mathbb{R}$ satisfies 
the conditions listed in Remark \ref{rem-potential general}.
\end{remark}


\section{Applications of the abstract results from section \ref{Ass-Abstract formulation} to the stochastic NSCHEs}
\label{Sect-approximation of g} 
In this section, we verify that the maps $\bG_{0,n}$, defined in Assumption \ref{assumption-abstract-3} of the primary assumptions in Section \ref{Ass-Abstract formulation}, satisfy the specific requirements listed therein. \newline
We introduce a $C^\infty$-class function 
$\varphi: \mathbb{R}^d \to [0,\infty)$ with support in the unit ball and such that $\int_{\mathbb{R}^d} \varphi(x)\, \d x=1$.
For any $\delta>0$, let 
    \begin{equation}
       \varphi_\delta(x)= \delta^{-d} \varphi(\delta^{-1} x), \; x \in \mathbb{R}^d.
    \end{equation}
Let $\chi: \mathbb{R}^d \to [0,1]$ be a smooth function with compact support on $\mathbb{R}^d$ such that
\begin{align*}
   \begin{cases}
      \chi(x)= 1 \mbox{ if } \lvert x \rvert \leq 1, \\
      \chi(x)= 0 \mbox{ if }  \lvert x \rvert > 2,
    \end{cases}
   \end{align*}
and for $n \in \mathbb{N}$ set $\chi_n(\cdot)= \chi(\frac{\cdot}{n})$ and define 
   \begin{equation*}
     \widetilde{\bG}^{\delta,n}(t,x,z)= \mathbb{1}_{\{\lvert \tilde{H}(t,x) \rvert \leq n\}} [\bg(t,x,\cdot) \ast \varphi_{\delta}](z) \mbox{ and } \bar{\bg}^{\delta_n}_n(t,x,z)= \chi_n(z) \widetilde{\bG}^{\delta_n,n}(t,x,z).
   \end{equation*}
Here $(\delta_n)_{n\in \mathbb{N}}\subset (0,\infty)$. We put for $n \in \mathbb{N}$ (the approximation index)
       \begin{align*}
        \bg_{n}(t,x,z) \coloneqq \bar{\bg}^{\delta_n}_n(t,x,z).
       \end{align*}  
Note that 
  \begin{align*}
     \bg_{n}: [0,T]\times \domO \times \mathbb{R}^d \ni  (t,x,z) 
     \mapsto \bg_{n}(t,x,z)= (\bg_{n,k}(t,x,z))_{k=1}^\infty \in \mathrm{Y}.
 \end{align*}
Moreover, for every $n \in \mathbb{N}$, the function $\bg_{n}$ satisfies Assumption \ref{ass-A3} and is bounded, i.e. 
there exists $M_n>0$ such that 
    \begin{align}\label{eqn-boundedness-bg_n}
       \Vert \bg_{n}(t,x,z) \Vert_\mathrm{Y} \leq M_n, \mbox{ for all } (t,x,z) \in [0,T]\times \domO \times \mathbb{R}^d.
     \end{align}
In addition, for every $n \in \mathbb{N}$, the functions $\bg_{n}(t,x,\cdot):\mathbb{R}^d \to \mathrm{Y}$,  are uniformly  globally Lipschitz, i.e.  there exists $L_n>0$ such that 
    \begin{align}\label{eqn-globally Lipschitz-bg_n}
      \Vert \bg_{n}(t,x,z_2) - \bg_{n}(t,x,z_1) \Vert_\mathrm{Y} 
       \leq L_n \vert z_2-z_1 \vert,  \mbox{ for all } (t,x,z_1,z_2) \in [0,T]\times \domO \times \mathbb{R}^d \times \mathbb{R}^d.
    \end{align}
Let us state and prove the following two important lemmas.
\begin{lemma}\label{Lem-approximation-a}
Let $(\delta_n)_{n\in \mathbb{N}}\subset (0,1)$ such that $\delta_n \to 0$ as $n \to \infty$. Assume Assumption \ref{ass-A3} holds. Suppose that $\bu_n \to \bu$ in $L^2([0,T] \times \domO)$. Then, we have
\begin{equation}
\lim_{n \to \infty} \int_0^T \int_{\domO} \Vert \bg_{n}(t,x;\bu_n(t,x)) -  \widetilde{\bG}^{\delta_n,n}(t,x;\bu_n(t,x)) \Vert_\mathrm{Y}^2\, \d x\,\d t
=0.
\end{equation}
\end{lemma}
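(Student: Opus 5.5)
The difference $\bg_n - \widetilde{\bG}^{\delta_n,n} = (\chi_n(z)-1)\widetilde{\bG}^{\delta_n,n}(t,x,z)$ vanishes whenever $|z|\le n$. So the plan is to show that the integrand lives only on the sets $\{|\bu_n(t,x)|>n\}$ and that these sets carry asymptotically negligible mass.

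\textbf{Step 1: a pointwise bound.} Since $0\le\chi_n\le 1$,
\[
\Vert \bg_n(t,x;\bu_n) - \widetilde{\bG}^{\delta_n,n}(t,x;\bu_n)\Vert_{\mathrm{Y}}^2 \le \mathbb{1}_{\{|\bu_n(t,x)|>n\}}\,\Vert \widetilde{\bG}^{\delta_n,n}(t,x;\bu_n(t,x))\Vert_{\mathrm{Y}}^2 .
\]
Next I control the mollified function using \eqref{eqn-H-1}. Because $\varphi_{\delta}$ is a probability density supported in the ball of radius $\delta\le 1$, Minkowski's inequality for the Bochner integral in $\mathrm{Y}$ gives
\[
\Vert [\bg(t,x,\cdot)\ast\varphi_\delta](z)\Vert_{\mathrm{Y}} \le \int \big(C_g|z-y|+\tilde H(t,x)\big)\varphi_\delta(y)\,\d y \le C_g(|z|+1)+\tilde H(t,x).
\]
Consequently the integrand is bounded by
\[
3\,\mathbb{1}_{\{|\bu_n|>n\}}\big(C_g^2|\bu_n|^2 + C_g^2 + \tilde H^2\big).
\]

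\textbf{Step 2: the $|\bu_n|^2$ term.} This is the main point, since $\bu_n$ changes with $n$ and no single dominating function is available. I would use the convergence of $\bu_n$ directly. Because $|\bu_n|\le|\bu_n-\bu|+|\bu|$,
\[
\int\!\!\int \mathbb{1}_{\{|\bu_n|>n\}}|\bu_n|^2 \le 2\Vert\bu_n-\bu\Vert_{L^2}^2 + 2\int\!\!\int \mathbb{1}_{\{|\bu_n|>n\}}|\bu|^2 .
\]
The first term tends to $0$ by hypothesis. For the second, observe that $\mathrm{Leb}(\{|\bu_n|>n\})\le n^{-2}\sup_m\Vert\bu_m\Vert_{L^2}^2\to 0$. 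The single function $|\bu|^2$ is integrable, hence uniformly integrable, so its integral over sets of vanishing measure tends to $0$. Alternatively one can argue by Vitali: $\{|\bu_n|^2\}$ is uniformly integrable because $\bu_n\to\bu$ in $L^2$.

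\textbf{Step 3: the remaining terms.} The same reasoning handles what is left. Since $\tilde H\in L^2([0,T]\times\domO)$ by \eqref{eqn-H-2}, and constants are integrable on the bounded set $[0,T]\times\domO$, the absolute continuity of the integral gives
\[
\int\!\!\int\mathbb{1}_{\{|\bu_n|>n\}}\big(C_g^2+\tilde H^2\big)\to 0 .
\]
Combining Steps 1–3 yields the claim.

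The only genuinely delicate point is Step 2, where the uniform integrability of $|\bu_n|^2$ has to be extracted from the $L^2$ convergence. It is also worth noting that the indicator $\mathbb{1}_{\{|\tilde H|\le n\}}$ plays no role in this argument beyond being bounded by $1$.
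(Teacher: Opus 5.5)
Your argument is correct, but it controls the $n$-dependence of $\bu_n$ differently from the paper.

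The paper uses Jensen to bound the integrand by $|\chi_n(\bu_n)-1|^2\int_{B(0,1)}\Vert \bg(\omega,\bu_n-\delta_n\bv)\Vert_{\mathrm{Y}}^2\varphi(\bv)\,\d\bv$. It then passes to a subsequence along which $\bu_n\to\bu$ almost everywhere and $|\bu_n|\le h$ for a single $h\in L^2$ (the partial converse of dominated convergence). This gives $\chi_n(\bu_n)\to 1$ a.e.\ and the fixed integrable majorant $4C_g^2h^2+4C_g^2\int_{B(0,1)}|\bv|^2\varphi(\bv)\,\d\bv+2\tilde H^2$, and the dominated convergence theorem finishes.

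You avoid pointwise convergence entirely. You localize the integrand to $\{|\bu_n|>n\}$, whose measure is $O(n^{-2})$ by Chebyshev. You then absorb the moving term via $|\bu_n|^2\le 2|\bu_n-\bu|^2+2|\bu|^2$ and the absolute continuity of the integrals of the fixed functions $|\bu|^2$ and $\tilde H^2$.

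What your route buys:
\begin{itemize}
\item It gives the limit for the whole sequence directly. The paper's proof as written only establishes it along a subsequence; a subsequence-of-subsequence remark is needed to conclude, which is routine here since every subsequence of $(\bu_n)$ still converges in $L^2$.
\item It gives an explicit bound in terms of $\Vert\bu_n-\bu\Vert_{L^2}$ and the mass of $\{|\bu_n|>n\}$.
\end{itemize}

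What the paper's route buys: its ``a.e.\ convergence plus domination'' template is the same one used for Lemma~\ref{Lem-approximation-b}. There the continuity of $\bg$ in its last variable genuinely needs pointwise (or in-measure) convergence of $\bu_n$, and no small-set localization is available.
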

\begin{proof}[Proof of Lemma \ref{Lem-approximation-a}]
We set $\domO_T\coloneqq [0,T] \times \domO$. Let us write, by abuse of notation, $\omega$ instead of $(t,x)$ below.
Using the Jensen inequality, see \cite[Proposition II.2.20]{Boyer+Fabrie_2012} together with the fact that $\Vert \varphi \Vert_{L^1(B(0,1))}= \Vert \varphi \Vert_{L^1(\mathbb{R}^d)}= 1$, we infer that
\begin{align*}
&\Vert \bg_{n}(\omega,\bu_n(\omega)) - \widetilde{\bG}^{\delta_n,n}(\omega,\bu_n(\omega)) \Vert_\mathrm{Y}^2
= \Vert (\chi_n(\bu_n(\omega)) - 1) \mathbb{1}_{\{\lvert \tilde{H}(\omega) \rvert \leq n\}} [\bg \ast \varphi_{\delta_n}](\bu_n(\omega)) \Vert_\mathrm{Y}^2
\\
&= \lvert (\chi_n(\bu_n(\omega)) - 1) \rvert^2 \mathbb{1}_{\{\lvert \tilde{H}(\omega) \rvert \leq n\}} \left\Vert  \int_{B(0,1)} \bg(\omega,\bu_n(\omega) - \delta_n \bv) \varphi(\bv)\, \d \bv \right\Vert_\mathrm{Y}^2
\\
&\leq \lvert (\chi_n(\bu_n(\omega)) - 1) \rvert^2 \int_{B(0,1)} \Vert \bg(\omega,\bu_n(\omega) - \delta_n \bv)\Vert_\mathrm{Y}^2 \varphi(\bv)\, \d \bv. 
\end{align*}
Since by assumption $\bu_n \to \bu$ in $L^2(\domO_T)$, then by \cite[Theorem 4.9]{Brezis_2011}, there exists a subsequence still denoted by $(\bu_n)_n$ and a function $h \in L^2(\domO_T)$ such that
\begin{itemize}
\item[(i)] $\bu_n(\omega) \to \bu(\omega)$, a.e. $\omega \in \domO_T$,
\item[(ii)] $\lvert \bu_n(\omega) \rvert \leq h(\omega)$, for every $n$, a.e. $\omega \in \domO_T$.
\end{itemize}
Hence, $\frac{\bu_n}{n} \to 0$ a.e. on $\domO_T$, and so $\chi_n(\bu_n) \to \chi(0)=1$ a.e. on $\domO_T$. \newline
Consequently, $\Vert \bg_{n}(\omega,\bu_n(\omega)) - \widetilde{\bG}^{\delta_n,n}(\omega,\bu_n(\omega)) \Vert_\mathrm{Y}^2 \to 0$ for a.e. $ \omega \in \domO_T$ as $n \to \infty$.
\newline
Moreover, thanks to the assumption \ref{ass-A3} and item $(ii)$, we infer that for every $n \in \mathbb{N}$ and for a.e. $\omega \in \domO_T$,
\begin{align*}
&\int_{B(0,1)} \Vert \bg(\omega,\bu_n(\omega) - \delta_n \bv)\Vert_\mathrm{Y}^2 \varphi(\bv)\, \d \bv
\leq 2 C_g^2 \int_{B(0,1)} \lvert \bu_n(\omega) - \delta_n \bv \rvert^2 \varphi(\bv)\, \d \bv + 2 \lvert \tilde{H}(\omega) \rvert^2
\\
&\leq 4 C_g^2 \lvert \bu_n(\omega) \rvert^2 + 4 \delta_n^2 C_g^2 \int_{B(0,1)} \lvert \bv \rvert^2 \varphi(\bv)\, \d \bv + 2 \lvert \tilde{H}(\omega) \rvert^2
 \\
&\leq 4 C_g^2 h^2(\omega) + 4 \delta_n^2 C_g^2 \int_{B(0,1)} \lvert \bv \rvert^2 \varphi(\bv)\, \d \bv + 2 \lvert \tilde{H}(\omega) \rvert^2,
\end{align*}
which, in turn, implies that for every $n \in \mathbb{N}$ and for a.e. $\omega \in \domO_T$,
\begin{align*}
&\Vert \bg_{n}(\omega,\bu_n(\omega)) - \widetilde{\bG}^{\delta_n,n}(\omega,\bu_n(\omega)) \Vert_\mathrm{Y}^2
\leq \lvert (\chi_n(\bu_n(\omega)) - 1) \rvert^2 \int_{B(0,1)} \Vert \bg(\omega,\bu_n(\omega) - \delta_n \bv)\Vert_\mathrm{Y}^2 \varphi(\bv)\, \d \bv
\\
&\leq \int_{B(0,1)} \Vert \bg(\omega,\bu_n(\omega) - \delta_n \bv)\Vert_\mathrm{Y}^2 \varphi(\bv)\, \d \bv
\leq 4 C_g^2 h^2(\omega) + 4 C_g^2 \int_{B(0,1)} \lvert \bv \rvert^2 \varphi(\bv)\, \d \bv + 2 \lvert \tilde{H}(\omega) \rvert^2.
\end{align*}
Furthermore, because the map $\domO_T \ni \omega \mapsto 4 C_g^2 h^2(\omega) + 4 C_g^2 \int_{B(0,1)} \lvert \bv \rvert^2 \varphi(\bv)\, \d \bv + 2 \lvert \tilde{H}(\omega) \rvert^2$
belongs to $L^1(\domO_T)$, we can apply the Lebesgue Dominated Convergence Theorem (LDCT), see \cite[Theorem 4.2]{Brezis_2011} and deduce that
\[
\lim_{n \to \infty} \int_{\domO_T}  \Vert \bg_{n}(\omega,\bu_n(\omega)) - \widetilde{\bG}^{\delta_n,n}(\omega,\bu_n(\omega)) \Vert_\mathrm{Y}^2 \,\d\omega=0.
\]
\end{proof}
\begin{lemma}\label{Lem-approximation-b}
Let $(\delta_n)_{n\in \mathbb{N}}\subset (0,1)$ such that $\delta_n \to 0$ as $n \to \infty$. Assume that the assumption \ref{ass-A3} holds. Suppose that $\bu_n \to \bu$ in $L^2([0,T] \times \domO)$. Then, we have
\begin{equation}
\lim_{n \to \infty} \Vert \widetilde{\bG}^{\delta_n,n}(\cdot,\cdot;\bu_n) - \bg(\cdot,\cdot;\bu) \Vert_{L^2([0,T] \times \domO;\mathrm{Y})}^2
=0.
\end{equation}
\end{lemma}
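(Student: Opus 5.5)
We split the difference with the intermediate quantity $\mathbb{1}_{\{|\tilde H|\le n\}}\,\bg(t,x;\bu_n)$ and estimate
\[
\widetilde{\bG}^{\delta_n,n}(\omega,\bu_n)-\bg(\omega,\bu)
=\mathbb{1}_{\{|\tilde H(\omega)|\le n\}}\bigl([\bg(\omega,\cdot)\ast\varphi_{\delta_n}](\bu_n)-\bg(\omega,\bu)\bigr)
-\mathbb{1}_{\{|\tilde H(\omega)|> n\}}\bg(\omega,\bu),
\]
writing again $\omega=(t,x)\in\domO_T$. Since $\Vert\bg(\omega,\bu(\omega))\Vert_\mathrm{Y}\le C_g|\bu(\omega)|+\tilde H(\omega)\in L^2(\domO_T)$ and $\mathbb{1}_{\{|\tilde H|>n\}}\to0$ a.e. ($\tilde H$ being finite a.e. by \eqref{eqn-H-2}), the second term tends to zero in $L^2(\domO_T;\mathrm{Y})$ by the Lebesgue DCT. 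So everything reduces to the first term.

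For the first term, as in the proof of Lemma \ref{Lem-approximation-a}, it suffices to argue along subsequences: given any subsequence, extract a further one with $\bu_n\to\bu$ a.e. and $|\bu_n|\le h$ a.e., $h\in L^2(\domO_T)$, by \cite[Theorem 4.9]{Brezis_2011}; uniqueness of the limit then gives convergence of the whole sequence. Writing the mollification as
\[
[\bg(\omega,\cdot)\ast\varphi_{\delta_n}](\bu_n(\omega))-\bg(\omega,\bu(\omega))=\int_{B(0,1)}\bigl[\bg(\omega,\bu_n(\omega)-\delta_n\bv)-\bg(\omega,\bu(\omega))\bigr]\varphi(\bv)\,\d\bv,
\]
we use the continuity of $z\mapsto\bg(\omega,z)$ in $\mathrm{Y}$ (Assumption \ref{ass-A3}). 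For a.e. $\omega$ and every $\bv\in B(0,1)$ we have $\bu_n(\omega)-\delta_n\bv\to\bu(\omega)$, hence the integrand tends to $0$ in $\mathrm{Y}$. It is moreover bounded by $2C_g(h(\omega)+1+|\bu(\omega)|)+2\tilde H(\omega)$, uniformly in $\bv$ and $n$ (using $\delta_n<1$). The DCT in $\bv$ (Bochner integral w.r.t. $\varphi\,\d\bv$) gives pointwise convergence to $0$ for a.e. $\omega$. By Jensen as in Lemma \ref{Lem-approximation-a}, the squared $\mathrm{Y}$-norm of the first term is dominated by
\[
C\bigl(h^2+1+|\bu|^2+\tilde H^2\bigr)\in L^1(\domO_T),
\]
so a second application of the DCT on $\domO_T$ yields convergence to $0$ in $L^2(\domO_T;\mathrm{Y})$.

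The only delicate point is the pointwise step: continuity of $\bg(\omega,\cdot)$ is only assumed for each fixed $\omega$, and the mollification is applied before evaluating at the moving point $\bu_n(\omega)$. We handle this by putting the shift $\delta_n\bv$ inside the argument as above, so that only continuity at the single point $\bu(\omega)$ is needed. Measurability of $\omega\mapsto\bg(\omega,\bu_n(\omega)-\delta_n\bv)$ jointly in $(\omega,\bv)$ follows from the Carathéodory property of $\bg$. Combining the two terms with $(a+b)^2\le2a^2+2b^2$ finishes the proof. Together with Lemma \ref{Lem-approximation-a}, this gives $\bg_n(\cdot,\bu_n)\to\bg(\cdot,\bu)$ in $L^2$, which is what is needed for Assumption (v)-b).
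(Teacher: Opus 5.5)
Your proof is correct and follows essentially the paper's argument. Both use Jensen's inequality for the probability measure $\varphi(\bv)\,\d\bv$, a.e.\ convergence along a subsequence with an $L^2$ dominating function $h$, continuity of $\bg(\omega,\cdot)$, and the dominated convergence theorem on $\domO_T$.

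The differences are only bookkeeping. You peel off the cutoff term on $\{\tilde H>n\}$ and replace the paper's two pieces, $\widetilde{\bG}^{\delta_n,n}(\bu_n)-\widetilde{\bG}^{\delta_n,n}(\bu)$ and $\widetilde{\bG}^{\delta_n,n}(\bu)-\bg(\bu)$, by a single comparison of $\bg(\omega,\bu_n-\delta_n\bv)$ with $\bg(\omega,\bu)$. You also make the sub-subsequence step explicit, which the paper does not.

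One small slip: the intermediate quantity your identity actually inserts is $\mathbb{1}_{\{|\tilde H|\le n\}}\bg(\cdot,\cdot;\bu)$, not $\mathbb{1}_{\{|\tilde H|\le n\}}\bg(\cdot,\cdot;\bu_n)$ as your first sentence states.
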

\begin{proof}[Proof of Lemma \ref{Lem-approximation-b}]
We set $\domO_T\coloneqq [0,T] \times \domO$. Let us write, by abuse of notation, $\omega$ instead of $(t,x)$ below. Notice that
\begin{align*}
\Vert \widetilde{\bG}^{\delta_n,n}(\cdot,\cdot;\bu_n) - \bg(\cdot,\cdot;\bu) \Vert_{L^2(\domO_T;\mathrm{Y})}^2
&\leq 2 \Vert \widetilde{\bG}^{\delta_n,n}(\cdot,\cdot;\bu_n) - \widetilde{\bG}^{\delta_n,n}(\cdot,\cdot;\bu) \Vert_{L^2(\domO_T;\mathrm{Y})}^2 \\
&\quad + 2 \Vert \widetilde{\bG}^{\delta_n,n}(\cdot,\cdot;\bu) - \bg(\cdot,\cdot;\bu) \Vert_{L^2(\domO_T;\mathrm{Y})}^2.
\end{align*}
Let us consider the first term on the RHS of the above inequality. By the Jensen inequality, we obtain for all $\omega \in \domO_T$,
\begin{align*}
&\Vert \widetilde{\bG}^{\delta_n,n}(\omega;\bu_n(\omega)) - \widetilde{\bG}^{\delta_n,n}(\omega;\bu(\omega)) \Vert_{\mathrm{Y}}^2 
\leq \int_{B(0,1)} \Vert \bg(\omega,\bu_n(\omega) - \delta_n \bv) - \bg(\omega,\bu(\omega) - \delta_n \bv)\Vert_\mathrm{Y}^2 \varphi(\bv)\, \d \bv
\\
&\leq \sup_{\bv \in B(0,1)} \Vert \bg(\omega,\bu_n(\omega) - \delta_n \bv) - \bg(\omega,\bu(\omega) - \delta_n \bv)\Vert_\mathrm{Y}^2
\\
&\leq \sup_{\bv \in B(0,1)} \Vert \bg(\omega,\bu_n(\omega) - \delta_n \bv) - \bg(\omega,\bu(\omega))\Vert_\mathrm{Y}^2 + \sup_{\bv \in B(0,1)} \Vert \bg(\omega,\bu(\omega) - \delta_n \bv) - \bg(\omega,\bu(\omega))\Vert_\mathrm{Y}^2.
\end{align*}
Since $\bu_n \to \bu$ in $L^2(\domO_T)$ by assumption, we infer, up to a subsequence, that
$\Vert \widetilde{\bG}^{\delta_n,n}(\omega;\bu_n(\omega)) - \widetilde{\bG}^{\delta_n,n}(\omega;\bu(\omega)) \Vert_{\mathrm{Y}}^2 \to 0$ for a.e. $\omega \in \domO_T$.
Moreover, arguing as in the proof of Lemma \ref{Lem-approximation-a}, we infer that for every $n \in \mathbb{N}$ and a.e. $\omega \in \domO_T$,
\begin{align*}
&\Vert \widetilde{\bG}^{\delta_n,n}(\omega;\bu_n(\omega)) - \widetilde{\bG}^{\delta_n,n}(\omega;\bu(\omega)) \Vert_{\mathrm{Y}}^2 
\\
&\leq 2 \sup_{\bv \in B(0,1)} \Vert \bg(\omega,\bu_n(\omega) - \delta_n \bv) \Vert_\mathrm{Y}^2 + 2 \sup_{\bv \in B(0,1)} \Vert \bg(\omega,\bu(\omega) - \delta_n \bv)\Vert_\mathrm{Y}^2
\\
&\leq 8 C_g^2 [h^2(\omega) + \lvert \bu(\omega) \rvert^2] + 16 \delta_n^2 C_g^2 + 8 \lvert \tilde{H}(\omega) \rvert^2
\leq (8 C_g^2 [h^2(\omega) + \lvert \bu(\omega) \rvert^2] + 16 C_g^2 + 8 \lvert \tilde{H}(\omega)\rvert^2).
\end{align*}
Hence, by the LDCT,
\[
\lim_{n \to \infty} \int_{\domO_T} \Vert \widetilde{\bG}^{\delta_n,n}(\omega;\bu_n(\omega)) - \widetilde{\bG}^{\delta_n,n}(\omega;\bu(\omega)) \Vert_{\mathrm{Y}}^2\,\d \omega
=0. 
\]
We now consider the term $\Vert \widetilde{\bG}^{\delta_n,n}(\cdot,\cdot;\bu) - \bg(\cdot,\cdot;\bu) \Vert_{L^2(\domO_T;\mathrm{Y})}^2$. 
Since $\bg(\omega,\cdot): \mathbb{R}^d \to \mathrm{Y}$ is continuous, cf. Assumption \ref{ass-A3}, then $\bg(\omega,\cdot) \ast \varphi_{\delta_n} \to \bg(\omega,\cdot)$ uniformly on compact sets of $\mathbb{R}^d$ as $n \to \infty$. Furthermore,
\[\Vert \widetilde{\bG}^{\delta_n,n}(\omega;z) - \bg(\omega;z) \Vert_{\mathrm{Y}}^2 \to 0, \;\; z \in \mathbb{R}^d.\]
Indeed, 
\begin{align*}
&\widetilde{\bG}^{\delta_n,n}(\omega,z) - \bg(\omega,z)
=
\mathbb{1}_{\{\lvert \tilde{H}(t,x) \rvert \leq n\}} [\bg(t,x,\cdot) \ast \varphi_{\delta_n}](z) -\bg(\omega,z)
\\
&=
(\mathbb{1}_{\{\lvert \tilde{H}(t,x) \rvert \leq n\}} - 1) [\bg(t,x,\cdot) \ast \varphi_{\delta_n}](z) + [\bg(t,x,\cdot) \ast \varphi_{\delta_n}](z) - \bg(\omega,z)
\\
&=  [\bg(t,x,\cdot) \ast \varphi_{\delta_n}](z)-\bg(\omega,z), \mbox{ for $n\geq n_0(t,x)$.}
\end{align*}
This implies that for a.e. $\omega \in \domO_T$,
\[
\lim_{n \to \infty} \Vert \widetilde{\bG}^{\delta_n,n}(\omega,\bu(\omega)) - \bg(\omega,\bu(\omega)) \Vert_{\mathrm{Y}}^2=0.
\]
On the other hand,
\begin{align*}
&\Vert \widetilde{\bG}^{\delta_n,n}(\omega;\bu(\omega)) - \bg(\omega;\bu(\omega)) \Vert_{\mathrm{Y}}^2 
\leq 2 \sup_{\bv \in B(0,1)} \Vert \bg(\omega,\bu(\omega) - \delta_n \bv) \Vert_\mathrm{Y}^2 + 2 \sup_{\bv \in B(0,1)} \Vert \bg(\omega,\bu(\omega))\Vert_\mathrm{Y}^2
\\
&\leq 12 C_g^2 \lvert \bu(\omega) \rvert^2 + 8 \delta_n^2 C_g^2 + 8 \lvert \tilde{H}(\omega) \rvert^2
\leq [12 C_g^2 \lvert \bu(\omega) \rvert^2 + 8 C_g^2 + 8 \lvert \tilde{H}(\omega)\rvert^2] \in L^1(\domO_T).
\end{align*}
Hence, by LDCT we infer that
\[
\lim_{n \to \infty} \int_{\domO_T} \Vert \widetilde{\bG}^{\delta_n,n}(\omega,\bu(\omega)) - \bg(\omega,\bu(\omega)) \Vert_{\mathrm{Y}}^2\,\d \omega=0,
\]
i.e. $\lim_{n \to \infty} \Vert \widetilde{\bG}^{\delta_n,n}(\cdot,\cdot;\bu) - \bg(\cdot,\cdot;\bu) \Vert_{L^2(\domO_T;\mathrm{Y})}^2= 0$. Combining the two limits, we conclude that
\begin{align*}
\lim_{n \to \infty} \Vert \widetilde{\bG}^{\delta_n,n}(\cdot,\cdot;\bu_n) - \bg(\cdot,\cdot;\bu) \Vert_{L^2(\domO_T;\mathrm{Y})}^2
=0.
\end{align*}
This completes the proof of Lemma \ref{Lem-approximation-b}.
\end{proof}
\begin{corollary}\label{cor-approximation-a}
Let $(\delta_n)_{n\in \mathbb{N}}\subset (0,1)$ such that $\delta_n \to 0$ as $n \to \infty$. Assume Assumption \ref{ass-A3} holds. Suppose that $\bu_n \to \bu$ in $L^2([0,T] \times \domO)$. Then, we have
\begin{equation}
\lim_{n \to \infty} \int_0^T \int_{\domO} \Vert \bg_{n}(t,x;\bu_n(t,x)) -  \bg(t,x;\bu(t,x)) \Vert_\mathrm{Y}^2\, \d x\,\d t
=0.
\end{equation}
\end{corollary}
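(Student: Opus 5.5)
The plan is to split the difference through the intermediate function $\widetilde{\bG}^{\delta_n,n}$ and apply the two preceding lemmas.

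For a.e. $(t,x)\in\domO_T$ we use the elementary inequality $\Vert a+b\Vert_\mathrm{Y}^2\le 2\Vert a\Vert_\mathrm{Y}^2+2\Vert b\Vert_\mathrm{Y}^2$ with $a=\bg_{n}(t,x;\bu_n)-\widetilde{\bG}^{\delta_n,n}(t,x;\bu_n)$ and $b=\widetilde{\bG}^{\delta_n,n}(t,x;\bu_n)-\bg(t,x;\bu)$. Integrating over $\domO_T$ gives
\begin{align*}
\int_{\domO_T}\Vert \bg_{n}(\cdot;\bu_n)-\bg(\cdot;\bu)\Vert_\mathrm{Y}^2
&\le 2\int_{\domO_T}\Vert \bg_{n}(\cdot;\bu_n)-\widetilde{\bG}^{\delta_n,n}(\cdot;\bu_n)\Vert_\mathrm{Y}^2
\\
&\quad+2\Vert \widetilde{\bG}^{\delta_n,n}(\cdot;\bu_n)-\bg(\cdot;\bu)\Vert_{L^2(\domO_T;\mathrm{Y})}^2.
\end{align*}

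The first term tends to zero by Lemma \ref{Lem-approximation-a}, and the second by Lemma \ref{Lem-approximation-b}. Both lemmas are stated under exactly the present hypotheses: $\delta_n\to0$, Assumption \ref{ass-A3}, and $\bu_n\to\bu$ in $L^2(\domO_T)$.

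The only delicate point is that the proofs of both lemmas pass to a subsequence, since they need a.e. convergence and the dominating function $h$. Strictly, they therefore yield convergence along a subsequence of any given subsequence. We handle this with the standard subsequence principle. Given an arbitrary subsequence of $(\bu_n)$, it still converges to $\bu$ in $L^2$. Applying the argument above to it produces a further subsequence along which the real sequence $\int_{\domO_T}\Vert\bg_{n}(\cdot;\bu_n)-\bg(\cdot;\bu)\Vert_\mathrm{Y}^2$ tends to $0$. Since every subsequence has a sub-subsequence converging to $0$, the whole real sequence converges to $0$, which proves the corollary.
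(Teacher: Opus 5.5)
Your proposal is correct and matches the paper's proof, which just says the corollary follows from Lemmas \ref{Lem-approximation-a} and \ref{Lem-approximation-b}: that is exactly your splitting through $\widetilde{\bG}^{\delta_n,n}(\cdot;\bu_n)$ using $\Vert a+b\Vert_\mathrm{Y}^2\le 2\Vert a\Vert_\mathrm{Y}^2+2\Vert b\Vert_\mathrm{Y}^2$. Your subsequence-principle remark is a genuine tightening, since both lemmas' proofs only give convergence along a subsequence; note that for $(\bu_{n_k})$ you must rerun the lemmas' arguments along the indices $n_k$ (so $\chi_{n_k}$, the truncation level $n_k$ and $\delta_{n_k}\to 0$ move with $k$), which those arguments allow, rather than apply the lemma statements verbatim to the relabelled sequence.
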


\begin{proof}[Proof of Corollary \ref{cor-approximation-a}]
The proof follows from Lemmas \ref{Lem-approximation-a} and \ref{Lem-approximation-b}.
\end{proof}
Now, taking $\bG_{0,n}$ built from $\bg_n$ as above, Corollary \ref{cor-approximation-a} establishes the convergence property (v)-b), and the boundedness/Lipschitz properties stated in \eqref{eqn-boundedness-bg_n} and \eqref{eqn-globally Lipschitz-bg_n} give (v)-a); hence Assumption \ref{assumption-abstract-3} in Section \ref{Ass-Abstract formulation} holds for the map $\bG$ of \eqref{eqn-G}.

\section{Pathwise uniqueness of weak solutions in dimension 2}\label{eqn-uniqueness-weak-solution-2-dim}
In this section, we will prove Theorem \ref{First-main-result-uniqueness}. In other words, we will prove the pathwise uniqueness of the solution to
\eqref{eqn-compact-modified-stochastic-CHNSEs-3} in $2D$ case. Given an $\nHB$-valued non random initial variable $\boldsymbol{\upxi}=(\upxi,\chi)$, we denote by $(\bu(\cdot,\upxi),\phi(\cdot,\chi))$ the solution to \eqref{eqn-compact-modified-stochastic-CHNSEs-3} with initial data $\boldsymbol{\upxi}$.
\begin{theorem}\label{Thm-uniqueness-solution}
Assume the assumptions stated in Section \ref{Ass-Abstract formulation}. 
Suppose that for all $s \in [0,T]$ and $\bv,\,\bw \in \StokesH$, there exists $\tilde{c}_g>0$ such that
   \begin{equation}\label{g-Lipschitz-condition}
       \Vert \bG_0(s,\bv) - \bG_0(s,\bw)\Vert_{\ell^2(\StokesH)}^2
      \leq \tilde{c}_g \lvert \bv - \bw \rvert_{\StokesH}^2.
    \end{equation}
Let $(\upxi_1,\chi_1)$ and $(\upxi_2,\chi_2)$ be two $\mathbb{H}$-valued initial variables. Assume that on some probability space $(\Omega,\mathscr{F},\mathbb{P})$, with a right-continuous filtration of $\sigma$-fields $\mathbb{F}=(\mathscr{F}_t)_{t \in [0,T]}$ and $\mathrm{Y}$-valued cylindrical Wiener process $W$, we have two solutions 
$$(\bu_1(\cdot,\upxi_1),\phi_1(\cdot,\chi_1)) \mbox{ and } (\bu_2(\cdot,\upxi_2),\phi_2(\cdot,\chi_2))$$ 
to the problem \eqref{eqn-compact-modified-stochastic-CHNSEs-3} corresponding to the initial conditions $(\upxi_1,\chi_1)$ and $(\upxi_2,\chi_2)$, respectively, and such that $\mathbb{P}$-a.s.
\begin{align}
\sup_{s \in [0,T]} \Vert (\bu_i(s,\upxi_i),\phi_i(s,\chi_i)) \Vert_{\nHB}^2 + \int_0^T \Vert (\bu_i(s,\upxi_i),\phi_i(s,\chi_i)) \Vert_{\mathbb{V}}^2\,\d s< \infty,  \quad i=1,2.
\end{align}
Then, for every $t \in [0,T]$, there exists a constant $C>0$ such that
   \begin{equation}\label{eqn-process-y_1}
     \mathbb{E }\left[e^{-\int_0^t \mathcal{Y}_2(s)\, \d s} \Vert (\bu_1(t,\upxi_1),\phi_1(t,\upxi_1)) - (\bu_2(t,\upxi_2),\phi_2(t,\upxi_2)) \Vert_{\nHB}^2\right]
     \leq C \Vert (\upxi_1,\chi_1) - (\upxi_2,\chi_2) \Vert_{\nHB}^2,
   \end{equation}
where
\begin{equation}\label{eqn-process-y_2}
\begin{aligned}
\mathcal{Y}_2(t)
\coloneqq & C_0 \bigl(1 + \Vert \bu_1(t) \Vert_{\StokesV}^2 + \Vert \phi_1(t) \Vert_{\zero{H}{2}}^2 + \Vert \phi_2(t) \Vert_{\zero{H}{2}}^2 + \lvert \phi_1(t) \rvert_{\newone{H}}^2  \Vert \phi_1(t) \Vert_{\zero{H}{2}}^2 
 \\
&\qquad + \lvert \phi_2(t) \rvert_{\newone{H}} ^2 \Vert \phi_2(t) \Vert_{\zero{H}{2}}^2  + \lvert \bu_2(t) \rvert_{\StokesH}^2 \Vert \bu_2(t)\Vert_{\StokesV}^2  
   \\
&\qquad + \lvert \phi_2(t) \rvert_{\newone{H}}  \Vert \phi_2(t) \Vert_{\zero{H}{2}} \lvert \phi_1(t) + \phi_2(t) \rvert_{L^2} \Vert \phi_1(t) + \phi_2(t) \Vert_{\zero{H}{2}}\Bigr).
\end{aligned}
\end{equation}
Here $C_0$ is a positive constant depending on $\domO$ and the parameters of the problem, but is independent of time. \newline
Moreover, if $(\upxi_1,\chi_1)= (\upxi_2,\chi_2)$, then for every $t \in [0,T]$ we have $\mathbb{P}$-a.s.
\begin{equation}\label{eqn-u_=u_2}
(\bu_1(t,\upxi_1),\phi_1(t,\chi_1))= (\bu_2(t,\upxi_2),\phi_2(t,\chi_2)).
\end{equation}
\end{theorem}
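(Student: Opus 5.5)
Set $\bu=\bu_1-\bu_2$, $\phi=\phi_1-\phi_2$ and $\cp=\cp_1-\cp_2$. The plan is to derive an energy inequality for the difference. I will use the $\StokesH$-norm for $\bu$. For $\phi$ I will use the weaker $L^2$-type (i.e.\ $\Aone^{-1/2}$) norm rather than the $\newone{H}$-norm. This is the standard trick for Cahn--Hilliard uniqueness: testing the $\phi$-equation with $\Aone^{-1}\phi$ makes the bilaplacian produce $|\Atwo\phi|^2$-type dissipation. After that I multiply by the exponential weight $e^{-\int_0^t\mathcal{Y}_2}$, apply the It\^o product rule, and take expectations. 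Since the statement measures $\phi$ in $\newone{H}$, I will in fact test the $\phi$-equation with $\phi$ itself in $L^2$. This yields $\frac12\d|\phi|_{L^2}^2+|\nabla\Delta\phi|^2$-type terms, after interpolation. I will then control $|\phi|_{\newone H}$ by $|\phi|_{L^2}^{1/2}|\Atwo\phi|^{1/2}$ where needed. If the $\newone{H}$-norm is really required, I will test with $\Atwo\phi$ instead and use the dissipation $|\nabla\cp|$. I commit to testing with $\Atwo\phi$ in $L^2$, i.e.\ computing $\d|\phi|_{\newone H}^2$. In 2D the required regularity is available because $\bX$ has a continuous $\nHB$-modification and the It\^o formula for $\|\bX\|_{\nHB}^2$ holds (Theorem~\ref{thm-5.3}(iii), Krylov--Rozovskii). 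The same argument applies to the difference, since $\bF(\bX_i)\in L^2(0,T;\mathbb{V}')$ a.s.

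In the $\bu$-equation, the It\^o formula gives the drift $-2\nu\langle\Stokes\bu,\bu\rangle$ and the quadratic variation term $\|\bSi_0\bu+\bG_0(\cdot,\bu_1)-\bG_0(\cdot,\bu_2)\|^2_{\ell^2(\StokesH)}$. Coercivity \eqref{Eqn-coercivity-2} with the $\eta$-splitting used in Proposition~\ref{prop-First-propo}, together with the Lipschitz bound \eqref{g-Lipschitz-condition}, leaves $\delta_0\|\bu\|_{\StokesV}^2$ on the left and $C|\bu|^2_{\StokesH}$ on the right. The convection term is $\bb_0(\bu,\bu_2,\bu)$, because $\bb_0(\bu_1,\bu,\bu)=0$. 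By \eqref{eqn-b_0-trilinear-estimate} with $d=2$ and Young's inequality, it is at most $\eps\|\bu\|^2_{\StokesV}+C|\bu_2|^2_{\StokesH}\|\bu_2\|^2_{\StokesV}|\bu|^2_{\StokesH}$. The capillary term splits as $r_0(\phi,\phi_1,\bu)+r_0(\phi_2,\phi,\bu)$, and Lemma~\ref{lem-r form} (2D, Ladyzhenskaya) bounds each piece. In the $\phi$-equation, the transport terms become $b_1(\bu,\phi_1,\cdot)+b_1(\bu_2,\phi,\cdot)$ and are estimated with \eqref{eq-B1} and Agmon. The nonlinear potential difference $\psi'(\phi_1)-\psi'(\phi_2)=\phi\,(\phi_1^2+\phi_1\phi_2+\phi_2^2-1)$ is handled with Agmon's inequality \eqref{eq-Agmon's-inequalities}. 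This is the origin of the products $|\phi_i|_{\newone H}\|\phi_i\|_{H^2}$ and $|\phi_1+\phi_2|_{L^2}\|\phi_1+\phi_2\|_{H^2}$ in $\mathcal{Y}_2$. Each such term is absorbed into $\eps(\|\bu\|^2_{\StokesV}+|\nabla\Atwo\phi|^2)$ plus $\mathcal{Y}_2(t)\,\|(\bu,\phi)\|^2_{\nHB}$.

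The main obstacle is the coupling. The $r_0$ term in the $\bu$-equation and the $b_1$ term in the $\phi$-equation only partially cancel, since Corollary~\ref{cor-b_1-r_0} gives exact cancellation only on the diagonal. The plan is to estimate each term separately. For example, $|r_0(\phi,\phi_1,\bu)|\le C|\phi|_{\newone H}^{1/2}\|\phi\|_{H^2}^{1/2}|\phi_1|_{\newone H}^{1/2}\|\phi_1\|_{H^2}^{1/2}\|\bu\|_{\StokesV}$, and then I use $\|\phi\|_{H^2}\le|\phi|_{\newone H}^{1/2}\|\phi\|_{\newone V}^{1/2}$ and Young's inequality, checking that every resulting weight is integrable and appears in $\mathcal{Y}_2$. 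Once the pathwise differential inequality
$$\d\|\bX(t)\|^2_{\nHB}\le \mathcal{Y}_2(t)\|\bX(t)\|^2_{\nHB}\,\d t+\d\mathcal{M}_t$$
is in hand, with $\mathcal M$ a local martingale, the It\^o product rule with $e^{-\int_0^t\mathcal{Y}_2}$ removes the drift. Note that $\int_0^T\mathcal{Y}_2<\infty$ a.s. by the assumed regularity. I then localize with stopping times $\tau_N$ at which $\|\bX_i\|$ reaches $N$, take expectations to kill the martingale, and pass $N\to\infty$ by Fatou's lemma; this gives \eqref{eqn-process-y_1}. With equal initial data the weighted expectation vanishes, and since the weight is positive a.s., this gives \eqref{eqn-u_=u_2} for each $t$.
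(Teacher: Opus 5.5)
Your committed route is essentially the paper's proof. You apply the It\^o/Lions--Magenes formula to $|\bu|^2_{H_0}+|\phi|^2_{H_1}$ (i.e.\ test the $\phi$-equation with $A_2\phi$), bound the couplings $r_0(\phi,\phi_1,\bu)$, $r_0(\phi_2,\phi,\bu)$, $b_1(\bu,\phi_1,\phi)$, $b_1(\bu_2,\phi,\phi)$ and the potential difference separately by 2D Ladyzhenskaya/Agmon, and then use the Schmalfuss weight $e^{-\int_0^t\mathcal{Y}_2}$ with localization, expectation and Fatou, checking along the way that all weights are dominated by $\mathcal{Y}_2$. The paper differs only cosmetically: it writes the $\phi$-dissipation as $|\cp|^2_{H_1}+\kappa_0|A_2\phi|^2_{L^2}$ and closes with a Gronwall step instead of absorbing constants into $\mathcal{Y}_2$. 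Your discarded asides are inaccurate (testing with $A_1^{-1}\phi$ gives $|\nabla\phi|^2$, and testing with $\phi$ in $L^2$ gives $|\Delta\phi|^2$, not the dissipations you state), but they play no role in the argument you actually adopt.
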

\begin{proof}[Proof of the uniqueness part of Theorem \ref{First-main-result-uniqueness}]
This is precisely the uniqueness part of  Theorem \ref{Thm-uniqueness-solution}.
\end{proof}

\begin{proof}[Proof of Theorem \ref{Thm-uniqueness-solution}]
 Let us point out that the uniqueness part, i.e.  \eqref{eqn-u_=u_2} follows directly from the continuous dependence on data part \eqref{eqn-process-y_1}. 
 In what follows, we will occupy ourselves with the proof of the latter. For this purpose, let us choose and fix  two solutions  $(\bu_1,\phi_1,\mu_1)$ and $(\bu_2,\phi_2,\mu_2)$  of Problem \eqref{eqn-compact-modified-stochastic-CHNSEs-3} defined on the same stochastic basis $(\Omega,\mathscr{F},\mathbb{P})$ 
and corresponding not necessarily to the same initial data $(\upxi_1,\chi_1)$ and $(\upxi_2,\chi_2)$.
Let us put  $(\bu,\phi,\mu)= (\bu_1,\phi_1,\mu_1) - (\bu_2,\phi_2,\mu_2)$, $\cp= \mu - \bar\mu$. Hence, 
    \begin{equation*}
      \mu(t)= \mu_1(t) - \mu_2(t)= \Atwo \phi(t) + \psi^\prime(\phi_1(t)) -  \psi^\prime(\phi_2(t)), \; t\in [0,T]
    \end{equation*}
and  $\langle \cp(t) \rangle= 0,\; t\in [0,T]$. 
For every $R>0$, let us define the following $\mathbb{F}$-stopping time
\[
\tau_R \coloneqq \inf\{t \in [0,T]: \lvert \bX_1(t) \rvert_{\mathbb{H}} + \lvert \bX_2(t) \rvert_{\mathbb{H}} \geq R\}\wedge T.
\]
Indeed, since the process $\bX_i=((\bu_i(t),\phi_i(t)): t \in [0,T])$, $i=1,2$, is $\mathbb{F}$-adapted and continuous, see Theorems \ref{First-main-result} and \ref{First-main-result-uniqueness}, hence so right-continuous, and the filtration $\mathbb{F}$ is right-continuous, then by \cite[Problem 2.1]{Karatzas+Shreve_1991}, we infer that $\tau_R$ is an $\mathbb{F}$-stopping time.\newline
Since the trajectories $[0,T] \ni t \mapsto \bX(t)$ are bounded, $\mathbb{P}$-a.s., we deduce that  $\tau_R \toup  T$, $\mathbb{P}$-a.s., as $ R \to \infty$.
\newline
Observe that the process $(\bX,\cp)= (((\bu(t),\phi(t)),\cp(t)):\,t \in [0,T])$ satisfies
\begin{equation}\label{difference-of-solutions}
\begin{cases}
\d \bu + \nu \Stokes \bu  = - [  \bB_0(\bu,\bu_1) + \bB_0(\bu_2,\bu) + \newK \bR_0(\phi,\phi_1) + \newK \bR_0(\phi_2,\phi)] \,\d t \\
\hspace{1.5cm} +  [\bG_0(t,\bu_1) - \bG_0(t,\bu_2) + \bSi_0(t)\bu] \, \d W,
\\
\d \phi= - [B_1(\bu,\phi_1) + B_1(\bu_2,\phi)] \, \d t - \Athree \cp \, \d t,
   \\
\cp(t)= \Atwo \phi(t) + \psi^\prime(\phi_1(t)) - \psi^\prime(\phi_2(t)) - \bar\mu,\;\; t \in [0,T].
\end{cases}
\end{equation}
Now, in order to apply  \cite[Theorem 3.2]{Pardoux_1979}, let us put
\begin{align*}
v(t)&\coloneqq - [  \bB_0(\bu(t),\bu_1(t)) + \bB_0(\bu_2(t),\bu(t)) + \newK \bR_0(\phi(t),\phi_1(t)) + \newK \bR_0(\phi_2(t),\phi(t))],\;\; t \in [0,T],
\\
M_t&\coloneqq \int_0^t [\bG_0(s,\bu_1(s)) - \bG_0(s,\bu_2(s)) + \bSi_0(s)\bu(s)]\, \d W(s),\;\; t \in [0,T].
\end{align*}

From the estimate \eqref{eqn-b_0-trilinear-estimate} and the equality \eqref{properties-B_0}, we infer that
\begin{align*}
\Vert \bB_0(\bu,\bu_1) \Vert_{\StokesVp} + \Vert \bB_0(\bu_2,\bu) \Vert_{\StokesVp}
\leq C [\lvert \bu_1 \rvert_{\StokesH}^{\frac12} \lvert \nabla \bu_1 \rvert_{\mathbb{L}^2}^{\frac12} + \lvert \bu_2 \rvert_{\StokesH}^{\frac12} \lvert \nabla \bu_2 \rvert_{\mathbb{L}^2}^{\frac12}] \lvert \bu \rvert_{\StokesH}^{\frac12} \lvert \nabla \bu \rvert_{\mathbb{L}^2}^{\frac12},
\end{align*}
which, in turn, yields that for all $t$,
\begin{align*}
&\int_0^{t \wedge \tau_R} (\Vert \bB_0(\bu(s),\bu_1(s)) \Vert_{\StokesVp}^2 + \Vert \bB_0(\bu_2(s),\bu(s)) \Vert_{\StokesVp}^2)\,\d s
\\
&\leq C \int_0^{t \wedge \tau_R} [\lvert \bu_1(s) \rvert_{\StokesH} \lvert \nabla \bu_1(s) \rvert_{\mathbb{L}^2} + \lvert \bu_2(s) \rvert_{\StokesH} \lvert \nabla \bu_2(s) \rvert_{\mathbb{L}^2}] \lvert \bu(s) \rvert_{\StokesH} \lvert \nabla \bu(s) \rvert_{\mathbb{L}^2}\,\d s
\\
&\leq C(R) \int_0^{t \wedge \tau_R} [\lvert \nabla \bu_1(s) \rvert_{\mathbb{L}^2} + \lvert \nabla \bu_2(s) \rvert_{\mathbb{L}^2}] \lvert \nabla \bu(s) \rvert_{\mathbb{L}^2}\,\d s.
\end{align*}
Therefore, by the H\"older inequality and the first estimate in Theorem \ref{First-main-result}, we infer that
\[
\mathbb{E} \int_0^{t \wedge \tau_R} (\Vert \bB_0(\bu(s),\bu_1(s)) \Vert_{\StokesVp}^2 + \Vert \bB_0(\bu_2(s),\bu(s)) \Vert_{\StokesVp}^2)\,\d s<\infty.
\]
By the estimate \eqref{r-estimate} and the equality \eqref{eq-2.12}, we deduce that
\begin{align*}
\Vert \bR_0(\phi,\phi_1) \Vert_{\StokesVp} + \Vert \bR_0(\phi_2,\phi) \Vert_{\StokesVp}
\leq C [\lvert  \phi_1 \rvert_{\newone{H}}^{\frac12} \Vert \phi_1 \Vert_{\zero{H}{2}}^{\frac12} + \lvert  \phi_2 \rvert_{\newone{H}}^{\frac12} \Vert \phi_2 \Vert_{\zero{H}{2}}^{\frac12}] \lvert \phi \rvert_{\newone{H}}^{\frac12} \Vert \phi \Vert_{\zero{H}{2}}^{\frac12},
\end{align*}
which implies that for all $t$,
\begin{align*}
&\int_0^{t \wedge \tau_R} (\Vert \bR_0(\phi(s),\phi_1(s)) \Vert_{\StokesVp}^2 + \Vert \bR_0(\phi_2(s),\phi(s)) \Vert_{\StokesVp}^2)\,\d s
\\
&\leq C(R) \int_0^{t \wedge \tau_R} [\Vert \phi_1(s) \Vert_{\zero{H}{2}} + \Vert \phi_2(s) \Vert_{\zero{H}{2}}]  \Vert \phi(s) \Vert_{\zero{H}{2}}\,\d s.
\end{align*}
Now using the first estimate in Theorem \ref{First-main-result}, we infer that
\begin{equation*}
\mathbb{E} \int_0^{t \wedge \tau_R} (\Vert \bR_0(\phi(s),\phi_1(s)) \Vert_{\StokesVp}^2 + \Vert \bR_0(\phi_2(s),\phi(s)) \Vert_{\StokesVp}^2)\,\d s<\infty.    
\end{equation*}
Hence we deduce that 
\[
\mathbb{E} \int_0^{t \wedge \tau_R} \Vert v(s) \Vert_{\StokesVp}^2\,\d s<\infty.
\]
Observe also that $\mathbb{E} \int_0^{t \wedge \tau_R} \Vert \Stokes \bu(s) \Vert_{\StokesVp}^2\,\d s
\leq \mathbb{E} \int_0^{t \wedge \tau_R} \lvert \nabla \bu(s) \rvert_{\mathbb{L}^2}^2\,\d s<\infty$.
By the assumptions \eqref{eqn-linear growth} and \eqref{Eqn-coercivity-3} in Abstract formulation \ref{Ass-Abstract formulation} and Example \ref{example:HS}, and the first estimate in Theorem \ref{First-main-result}, we infer that for all $t$, 
\begin{align*}
&\mathbb{E} \int_0^{t \wedge \tau_R} \Vert \bG_0(s,\bu_1(s)) - \bG_0(s,\bu_2(s)) + \bSi_0(s)\bu(s) \Vert_{\ell^2(\StokesH)}^2\,\d s
\\
&\leq 2 \mathbb{E} \int_0^{t \wedge \tau_R} [\Vert \bG_0(s,\bu_1(s)) - \bG_0(s,\bu_2(s))\Vert_{\ell^2(\StokesH)}^2 + \Vert \bSi_0(s)\bu(s) \Vert_{\ell^2(\StokesH)}^2]\,\d s
\\
&\leq C \mathbb{E} \int_0^{t \wedge \tau_R} (\vert \tilde{h}(s)\vert^2 + \vert \bu(s) \vert_{\StokesH}^2 + \Vert \bu(s) \Vert_{\StokesV}^2)\,\d s<\infty. 
\end{align*}
Therefore, by applying the "stopped" It\^o formula, see \cite[Theorem 3.2 or 1.4]{Pardoux_1979}, to the function
\[
\psi_1(x)\coloneqq \lvert x \rvert_{\StokesH}^2,\;\; x \in \StokesH,
\]
we obtain for all $t \in [0,T]$,
\begin{equation}\label{u-difference-1}
\begin{aligned}
&\frac12 \lvert \bu(t \wedge \tau_R) \rvert_{\StokesH}^2 
+ \nu \int_0^{t \wedge \tau_R} \Vert \bu(s) \Vert_{\StokesV}^2 \, \d s + \int_0^{t \wedge \tau_R}  \bb_0(\bu(s),\bu_1(s),\bu(s))\,\d s  
\\
&= \frac{1}{2} \lvert \upxi_1 - \upxi_2 \rvert_{\StokesH}^2  - \newK \int_0^{t \wedge \tau_R} [r_0(\phi(s),\phi_1(s),\bu(s))+ r_0(\phi_2(s),\phi(s),\bu(s))]
\,\d s 
\\
&\qquad + \frac12 \int_0^{t \wedge \tau_R} \Vert \bG_0(s,\bu_1(s)) - \bG_0(s,\bu_2(s)) + \bSi_0(s)\bu(s) \Vert_{\ell^2(\StokesH)}^2\,\d s
\\ 
&\qquad + \int_0^{t \wedge \tau_R} (\bu(s),[\bG_0(s,\bu_1(s)) - \bG_0(s,\bu_2(s)) + \bSi_0(s)\bu(s)]\d W(s)).
\end{aligned}
\end{equation}
In order to apply Lemma 1.3 from \cite[Chapter 3]{Temam_2001}, we rewrite the equation for $\phi$ in the following form: 
   \begin{equation}\label{difference-of-solutions-phi}
      \d \phi= - [B_1(\bu,\phi_1) + B_1(\bu_2,\phi_1) - B_1(\bu_2,\phi_2)] \, \d t  - \Athree [\Atwo \phi + \psi^\prime(\phi_1) - \psi^\prime(\phi_2) - \bar\mu]  \, \d t.
   \end{equation}
This equation is like equation (4.84)-(4.85) on page 253 in \cite{Lions+Magenes_1972_vol-1}. Since $\Athree\langle\mu\rangle=0$, the external force $f$ is 
\begin{align*}
&f(t)\coloneqq -\Athree [ \psi^\prime(\phi_1(t)) - \psi^\prime(\phi_2(t)) - \bar\mu] - [B_1(\bu(t),\phi_1(t)) + B_1(\bu_2(t),\phi_1(t)) - B_1(\bu_2(t),\phi_2(t))]
\\
&= -\Athree [ \psi^\prime(\phi_1(t)) - \psi^\prime(\phi_2(t))] - [B_1(\bu(t),\phi_1(t)) + B_1(\bu_2(t),\phi_1(t)) - B_1(\bu_2(t),\phi_2(t))],\; t \in [0,T].
\end{align*}
Equation \eqref{difference-of-solutions-phi} can be written in the variational form, i.e., the Gelfand triple form (4.77) therein.\\
We claim 
\begin{equation}\label{eqn-Vert f Vert_{V_1-prime}}
\mathbb{E} \int_0^{t \wedge \tau_R} \Vert f(s) \Vert_{\newonep{V}}^2\,\d s<\infty.
\end{equation}
Indeed, from the estimate \eqref{eq-B1} and the Gagliardo-Nirenberg inequality \eqref{Gagliardo-Nirenberg-inequality}, we deduce that
\begin{align*}
\Vert B_1(\bu,\phi_1)\Vert_{\newonep{V}}^2 
\leq C \vert \bu \vert_{L^4}^2 \vert \phi_1 \vert_{L^4}^2
\leq C \vert \bu \vert_{L^2} \Vert \bu \Vert_{\StokesV} \vert \phi_1 \vert_{L^2} \vert \phi_1 \rvert_{\newone{H}}.
\end{align*}
Now, we have
   \begin{equation*}
      \mathbb{E} \int_0^{t \wedge\tau_R} \Vert B_1(\bu(s),\phi_1(s))\Vert_{\newonep{V}}^2\,\d s
     \leq C(R) \left(\mathbb{E} \int_0^T \Vert \bu(s) \Vert_{\StokesV}^2\,ds \right)^{\frac12} \left(\mathbb{E} \int_0^T \Vert \phi_1(s) \Vert_{H^1}^2\, ds\right)^{\frac12}<\infty.
  \end{equation*}
Here, we used the properties of the processes $\bu=(\bu(t): t \in[0,T])$ and $\phi_1=(\phi_1(t): t \in[0,T])$, see Theorem \ref{First-main-result}.\\
In a similar way, we have
\begin{align*}
\mathbb{E} \int_0^{t \wedge\tau_R} (\Vert B_1(\bu_2(s),\phi_1(s)) \Vert_{\newonep{V}}^2 + \Vert B_1(\bu_2(s),\phi_2(s))\Vert_{\newonep{V}}^2)\,\d s<\infty.
\end{align*}
Let us choose and fix $i=1,2$. Notice that since $\phi_i \in H^3(\domO)$ and $\frac{\partial \phi_i}{\partial \bn}=0$ on $\partial \domO$, we have $\Delta (\psi^\prime(\phi_i) - \avg{\psi^\prime(\phi_i)}) \in \newone{H}$. Thus, by the Gelfand triple \eqref{eqn-Gelfand triple-abstract-2}, we deduce that for every $v \in \newone{V}$,
\begin{align}\label{duality-Delta psi-prime(phi_i)}
\duality{\Athree (\psi^\prime(\phi_i) - \avg{\psi^\prime(\phi_i)})}{v}{\newone{V}}{\newonep{V}}
=(\Athree(\psi^\prime(\phi_i) - \avg{\psi^\prime(\phi_i)}),v)_{\newone{H}}
= -(\nabla \psi^\prime(\phi_i), \nabla (-\Delta v)),
\end{align}
which implies that
\[
\Vert \Athree(\psi^\prime(\phi_i) - \avg{\psi^\prime(\phi_i)}) \Vert_{\newonep{V}}
\leq \lvert \nabla \psi^\prime(\phi_i) \rvert_{\mathbb{L}^2}.
\]
By the equality \eqref{eqn-Psi'} in Lemma \ref{eqn-Lemma-Psi'}, the H\"older and Agmon inequality, we infer that
\begin{align*}
&\lvert \nabla \psi^\prime(\phi_i) \rvert_{\mathbb{L}^2}^2
= \lvert \psi^{\prime \prime}(\phi_i) \nabla \phi_i \rvert_{\mathbb{L}^2}^2
= \lvert 3 \phi_i^2 \nabla \phi_i  - \nabla \phi_i \rvert_{\mathbb{L}^2}^2 
\\
&\leq 6 \lvert \phi_i^2 \nabla \phi_i \rvert_{\mathbb{L}^2}^2 + 2 \lvert \phi_i \rvert_{\newone{H}} ^2 
\leq 6 \lvert \phi_i \rvert_{L^\infty}^4 \lvert \phi_i \rvert_{\newone{H}} ^2 + 2\lvert \phi_i \rvert_{\newone{H}} ^2 
\\
&\leq C \lvert \phi_i \rvert_{L^2}^2 \Vert \phi_i \Vert_{\zero{H}{2}}^2 \lvert \phi_i \rvert_{\newone{H}} ^2 + 2 \lvert \phi_i \rvert_{\newone{H}} ^2.
\end{align*}    
This, jointly with the estimates in Theorem \ref{First-main-result} implies that for all $t \in [0,T]$,
\begin{align*}
&\mathbb{E} \int_0^{t \wedge\tau_R}
\Vert \Athree(\psi^\prime(\phi_i(s)) - \avg{\psi^\prime(\phi_i(s))}) \Vert_{\newonep{V}}^2\,\d s
\leq \mathbb{E} \int_0^{t \wedge\tau_R} \lvert \nabla \psi^\prime(\phi_i(s)) \rvert_{\mathbb{L}^2}^2\,\d s
\\
&\leq C(R) \mathbb{E} \int_0^{t \wedge\tau_R} \Vert \phi_i(s) \Vert_{\zero{H}{2}}^2\,\d s  + 2 \mathbb{E} \int_0^{T}\lvert \phi_i(s) \rvert_{\newone{H}} ^2\,\d s<\infty.
\end{align*}
From the above estimates, we deduce that the claim \eqref{eqn-Vert f Vert_{V_1-prime}} holds true. \newline
Consequently, according to Propositions 2.1 \& 4.5 and Theorem 3.1 in \cite{Lions+Magenes_1972_vol-1}, and Lemma 1.3 in \cite{Temam_2001}, we infer that a.e. in $[0,T]$ and $\mathbb{P}$-a.s.,
     \begin{equation}\label{eqn-bar phi-H_1-norm}
         \frac12 \d \lvert \phi(t) \rvert_{\newone{H}}^2 + \newzero{a}(\phi(t),\phi(t))\,\d t
           =\duality{f(t)}{\phi(t)}{\newone{V}}{\newonep{V}}\,\d t.
      \end{equation}
In the sequel, we omit the explicit dependence on $(t)$ for simplicity.\\
From the third equality in \eqref{difference-of-solutions} and the definition of $f$, we have
\begin{align*}
&\newzero{a}(\phi,\phi)= (\nabla (-\Delta \phi), \nabla(-\Delta \phi))
= (\nabla (\cp - \psi^\prime(\phi_1) + \psi^\prime(\phi_2)), \nabla(-\Delta \phi))
\\
&= (\nabla \cp, \nabla(-\Delta \phi)) + (\nabla (- \psi^\prime(\phi_1) + \psi^\prime(\phi_2)), \nabla(-\Delta \phi)),
\end{align*}
and
\begin{align*}
&\duality{f}{\phi}{\newone{V}}{\newonep{V}}
= \left[- \duality{B_1(\bu,\phi_1)}{\phi}{\newone{V}}{\newonep{V}} - \duality{B_1(\bu_2,\phi_1)}{\phi}{\newone{V}}{\newonep{V}} \right.
\\
&\hspace{2.5cm} \left. + \duality{B_1(\bu_2,\phi_2)}{\phi}{\newone{V}}{\newonep{V}} - \duality{\Athree[\psi^\prime(\phi_1) - \avg{\psi^\prime(\phi_1)} - \psi^\prime(\phi_2) + \avg{\psi^\prime(\phi_2)}]}{\phi}{\newone{V}}{\newonep{V}} \right]
\\
&= - b_1(\bu,\phi_1,\phi) - b_1(\bu_2,\phi_1,\phi) + b_1(\bu_2,\phi_2,\phi) - \duality{\Athree[\psi^\prime(\phi_1) - \avg{\psi^\prime(\phi_1)} - \psi^\prime(\phi_2) + \avg{\psi^\prime(\phi_2)}]}{\phi}{\newone{V}}{\newonep{V}}
\\
&= - b_1(\bu,\phi_1,\phi) - b_1(\bu_2,\phi,\phi) -(\nabla (\psi^\prime(\phi_1) - \psi^\prime(\phi_2)), \nabla (-\Delta \phi)),
\end{align*}
where we have also used the fact that the map $b_1$ is trilinear together with, see \eqref{duality-Delta psi-prime(phi_i)},
\[
\duality{\Athree [\psi^\prime(\phi_1) - \avg{\psi^\prime(\phi_1)} - \psi^\prime(\phi_2) + \avg{\psi^\prime(\phi_2)}]}{\phi}{\newone{V}}{\newonep{V}}
= (\nabla (\psi^\prime(\phi_1) - \psi^\prime(\phi_2)), \nabla (-\Delta \phi)).
\]
It then follows from the above observations and \eqref{eqn-bar phi-H_1-norm} that for all $t \in [0,T]$,
\begin{equation}\label{eqn-bar phi-H_1-norm-1}
\begin{aligned}
&\frac{\newK}{2} \lvert \phi(t \wedge\tau_R) \rvert_{\newone{H}}^2 + \newK \int_0^{t \wedge\tau_R} (\nabla \cp(s), \nabla(-\Delta \phi(s)))\,\d s
 \\
 &= \frac{\newK}{2} \lvert \chi_1 - \chi_2 \rvert_{\newone{H}}^2 - \newK \int_0^{t \wedge\tau_R} [b_1(\bu(s),\phi_1(s),\phi(s)) + b_1(\bu_2(s),\phi(s),\phi(s))]\,\d s.
\end{aligned}
\end{equation}
Recall that $\cp= \Atwo \phi + \psi^\prime(\phi_1) - \psi^\prime(\phi_2) - \bar\mu$. Thus,
\begin{align*}
\nabla \cp= \nabla \Atwo \phi + \nabla (\psi^\prime(\phi_1) - \psi^\prime(\phi_2)),
\end{align*}
from which we deduce a.e. in $[0,T]$,
\begin{equation}\label{eqn-lvert nabla bar mu rvert-L^2-norm}
\lvert \cp \rvert_{\newone{H}}^2
= (\nabla \cp, \nabla \Atwo \phi) + (\nabla \cp,\nabla (\psi^\prime(\phi_1) - \psi^\prime(\phi_2))).
\end{equation}
Furthermore, a.e. in $[0,T]$, we have
\begin{equation}\label{eqn-(-Delta barphi,barmu)}
\begin{aligned}
&(\Atwo \phi,\cp)
= (\Atwo \phi,\Atwo \phi + \psi^\prime(\phi_1) - \psi^\prime(\phi_2) - \bar\mu)
\\
&= \lvert \Atwo \phi \rvert_{L^2}^2 + (\psi^\prime(\phi_1) - \psi^\prime(\phi_2), \Atwo \phi).
\end{aligned}
\end{equation}
Now, multiplying the equality \eqref{eqn-lvert nabla bar mu rvert-L^2-norm} by $\newK$ and the equality \eqref{eqn-(-Delta barphi,barmu)} by $- \newK \kappa_0$, where $\kappa_0>0$ is a sufficiently small constant to be selected in the sequel,
and then adding the corresponding equalities, we deduce that for all $t \in [0,T]$,
\begin{equation}\label{eqn-(lvert nabla bar mu rvert-L^2-norm + kappa_0 lvert Delta barphi rvert_L^2-norm)}
\begin{aligned}
&\newK \int_0^{t \wedge\tau_R} [\lvert \cp(s) \rvert_{\newone{H}}^2 + \kappa_0 \lvert \Atwo \phi(s) \rvert_{L^2}^2]\,\d s 
= \newK \int_0^{t \wedge\tau_R} [(\nabla \cp(s), \nabla \Atwo \phi(s)) + \kappa_0 (\Atwo \phi(s),\cp(s))]\,\d s 
\\
& + \newK \int_0^{t \wedge\tau_R} [(\nabla \cp(s),\nabla (\psi^\prime(\phi_1(s)) - \psi^\prime(\phi_2(s))))
- \kappa_0 (\psi^\prime(\phi_1(s)) - \psi^\prime(\phi_2(s)), \Atwo \phi(s))]\,\d s.
\end{aligned}
\end{equation}
Adding up the equalities \eqref{eqn-bar phi-H_1-norm-1} and \eqref{eqn-(lvert nabla bar mu rvert-L^2-norm + kappa_0 lvert Delta barphi rvert_L^2-norm)} side by side, we obtain for all $t \in [0,T]$,
\begin{align}\label{eq-difference-for-phi-1}
&\frac{\newK}{2} \lvert \phi(t \wedge\tau_R) \rvert_{\newone{H}}^2 
+ \newK \int_0^{t \wedge\tau_R} [\lvert \cp(s) \rvert_{\newone{H}}^2 + \kappa_0 \lvert \Atwo \phi(s) \rvert_{L^2}^2]\,\d s 
\\
\nonumber
&= \frac{\newK}{2} \lvert \chi_1 - \chi_2 \rvert_{\newone{H}}^2 - \newK \int_0^{t \wedge\tau_R} [b_1(\bu(s),\phi_1(s),\phi(s)) + b_1(\bu_2(s),\phi(s),\phi(s))
   - \kappa_0 (\Atwo \phi(s),\cp(s))]\,\d s 
\\
\nonumber
&\quad + \newK \int_0^{t \wedge\tau_R} [(\nabla \cp(s),\nabla (\psi^\prime(\phi_1(s)) - \psi^\prime(\phi_2(s)))) - \kappa_0 (\psi^\prime(\phi_1(s)) - \psi^\prime(\phi_2(s)), \Atwo \phi(s))]\,\d s.
\end{align}

Next, put
      \begin{equation*}
        U(t) \coloneqq \frac{1}{2} \lvert \bu(t) \rvert_{\StokesH}^2 + \frac{\newK}{2} \lvert \phi(t) \rvert_{\newone{H}}^2,\;\; t \in [0,T].
     \end{equation*}
Then, by adding up \eqref{u-difference-1} and \eqref{eq-difference-for-phi-1}, we deduce that $\mathbb{P}$-a.s. and for all $t \in [0,T]$,
\begin{equation}\label{U-equality}
\begin{aligned}
& U(t \wedge \tau_R)
+ \nu \int_0^{t \wedge \tau_R} \Vert \bu(s) \Vert_{\StokesV}^2 \, \d s + \newK \int_0^{t \wedge\tau_R} (\lvert \cp(s) \rvert_{\newone{H}}^2 + \kappa_0 \lvert \Atwo \phi(s) \rvert_{L^2}^2)\,\d s 
\\
&= \frac{1}{2} \lvert \upxi_1 - \upxi_2 \rvert_{\StokesH}^2 + \frac{\newK}{2} \lvert \chi_1 - \chi_2 \rvert_{\newone{H}}^2
  - \int_0^{t \wedge \tau_R}  \bb_0(\bu(s),\bu_1(s),\bu(s))\,\d s   
\\
&  - \newK \int_0^{t \wedge \tau_R} r_0(\phi(s),\phi_1(s),\bu(s))\,\d s  
   - \newK \int_0^{t \wedge \tau_R} r_0(\phi_2(s),\phi(s),\bu(s))\,\d s
\\
& - \newK \int_0^{t \wedge\tau_R} [b_1(\bu(s),\phi_1(s),\phi(s)) + b_1(\bu_2(s),\phi(s),\phi(s))]\,\d s
\\
& + \newK \kappa_0 \int_0^{t \wedge\tau_R} (\Atwo \phi(s),\cp(s))\,\d s 
  + \newK \int_0^{t \wedge\tau_R} (\nabla \cp(s),\nabla (\psi^\prime(\phi_1(s)) - \psi^\prime(\phi_2(s))))\,\d s 
\\
& - \newK \kappa_0 \int_0^{t \wedge\tau_R} (\psi^\prime(\phi_1(s)) - \psi^\prime(\phi_2(s)), \Atwo \phi(s))\,\d s 
  \\
& + \frac12 \int_0^{t \wedge \tau_R} \Vert \bG_0(s,\bu_1(s)) - \bG_0(s,\bu_2(s)) + \bSi_0(s)\bu(s) \Vert_{\ell^2(\StokesH)}^2\,\d s
   \\ 
& + \int_0^{t \wedge \tau_R} (\bu(s),[\bG_0(s,\bu_1(s)) - \bG_0(s,\bu_2(s)) + \bSi_0(s)\bu(s)]\d W(s)).
\end{aligned}
\end{equation}
Let us proceed with estimating all the terms on the RHS of \eqref{U-equality}.\\
Hereafter, $C$ denotes a generic positive constant that may depend on $\domO,\,\delta_0,\,\newK,\,\tilde{c}_g$, and $\kappa_0$.\\
From the properties \eqref{eqn-b_0-trilinear-estimate} and \eqref{properties-B_0}, and the Young inequality, we infer that
   \begin{align*}
      \lvert\bb_0(\bu,\bu_1,\bu)   \rvert
       \leq C \lvert \bu \rvert_{\StokesH}^{\frac12} \Vert \bu \Vert_{\StokesV}^{\frac12} \Vert \bu_1 \Vert_{\StokesV} \lvert \bu \rvert_{\StokesH}^{\frac12} \Vert \bu \Vert_{\StokesV}^{\frac12} 
      \leq \frac{\delta_0}{16} \Vert \bu \Vert_{\StokesV}^2 + C(\delta_0) \Vert \bu_1 \Vert_{\StokesV}^2 \lvert \bu \rvert_{\StokesH}^2.
   \end{align*}
By integration by parts, using also the summation convention on repeated indices, we obtain
\begin{align*}
r_0(\phi,\phi_1,\bu) 
&= \int_{\domO} \partial_i \phi \,\partial_j \phi_1 \, \partial_j \bu^i\, \d x
= -\int_{\domO} (\bu^i \, \partial_i \phi \,\partial_j^2 \phi_1 + \bu^i \, \partial_j \phi_1 \, \partial_i \partial_j \phi)\, \d x,
\end{align*}
and
\begin{align*}
r_0(\phi_2,\phi,\bu)  
=  -\int_{\domO} (\bu^i \, \partial_i \phi_2 \,\partial_j^2 \phi +  \bu^i \, \partial_j \phi \, \partial_i \partial_j \phi_2)\, \d x.
\end{align*}
By the H\"older inequality, the Gagliardo-Nirenberg inequality \eqref{Gagliardo-Nirenberg-inequality}, 
the inequality \eqref{eqn-phi_n-H-2-norm}, \eqref{eqn-H^1-norm-special}, and \eqref{eqn-H^2-norm-special}, we deduce that
\begin{align*}
&\lvert \newK r_0(\phi,\phi_1,\bu)\rvert
\leq C \lvert \bu \rvert_{\StokesH}^{1/2} \Vert \bu \vert_{\StokesV}^{1/2} \left[\lvert \phi \rvert_{\newone{H}}^{1/2} \Vert \phi \Vert_{\zero{H}{2}}^{1/2} \Vert \phi_1 \Vert_{\zero{H}{2}} +  \vert \phi_1 \rvert_{\newone{H}}^{1/2} \Vert \phi_1 \Vert_{\zero{H}{2}}^{1/2} \Vert \phi \Vert_{\zero{H}{2}} \right] 
\\
&\leq C \lvert \bu \rvert_{\StokesH}^{1/2} \Vert \bu \Vert_{\StokesV}^{1/2} \left[\lvert \phi \rvert_{\newone{H}}^{1/2} \lvert \Atwo \phi \rvert_{L^2}^{1/2} \Vert \phi_1 \Vert_{\zero{H}{2}} +  \vert \phi_1 \rvert_{\newone{H}}^{1/2} \Vert \phi_1 \Vert_{\zero{H}{2}}^{1/2} \lvert \Atwo \phi \rvert_{L^2} \right].
\end{align*}
Hence, by the Young inequalities, we further deduce that,
\begin{align*}
&\lvert \newK r_0(\phi,\phi_1,\bu)\rvert   
\leq \frac{\newK \kappa_0}{10} \lvert \Atwo \phi \rvert_{L^2}^2 + C \lvert \bu \vert_{\StokesH}^{2/3} \Vert \bu \Vert_{\StokesV}^{2/3} \lvert \phi \rvert_{\newone{H}}^{2/3}  \Vert \phi_1 \Vert_{\zero{H}{2}}^{4/3} + C \lvert \bu \rvert_{\StokesH} \Vert \bu \Vert_{\StokesV} \vert \phi_1 \rvert_{\newone{H}} \Vert \phi_1 \Vert_{\zero{H}{2}} 
    \\
&\leq \frac{\newK \kappa_0}{10} \lvert \Atwo \phi \rvert_{L^2}^2 + \frac{\delta_0}{16} \Vert \bu \Vert_{\StokesV}^2 + C \lvert \bu \rvert_{\StokesH}  \lvert \phi \rvert_{\newone{H}}  \Vert \phi_1 \Vert_{\zero{H}{2}}^2 + C \lvert \bu \rvert_{\StokesH}^2 \vert \phi_1 \rvert_{\newone{H}}^2 \Vert \phi_1 \Vert_{\zero{H}{2}}^2 
      \\
&\leq \frac{\newK \kappa_0}{10} \lvert \Atwo \phi \rvert_{L^2}^2 + \frac{\delta_0}{16} \Vert \bu \Vert_{\StokesV}^2 + C \Vert \phi_1 \Vert_{\zero{H}{2}}^2 (\lvert \bu \rvert_{\StokesH}^2 + \newK \lvert \phi \rvert_{\newone{H}}^2)  + C \vert \phi_1 \rvert_{\newone{H}}^2 \Vert \phi_1 \Vert_{\zero{H}{2}}^2 \lvert \bu \rvert_{\StokesH}^2.
\end{align*}
Similarly, we can show that
\begin{align*}
&\lvert \newK r_0(\phi_2,\phi,\bu) \rvert
\leq C \lvert \bu \rvert_{\StokesH}^{1/2} \Vert \bu \Vert_{\StokesV}^{1/2} \left[\vert \phi_2 \rvert_{\newone{H}}^{1/2} \Vert \phi_2 \Vert_{\zero{H}{2}}^{1/2} \Vert \phi \Vert_{\zero{H}{2}} + \lvert \phi \rvert_{\newone{H}}^{1/2} \Vert \phi \Vert_{\zero{H}{2}}^{1/2} \Vert \phi_2 \Vert_{\zero{H}{2}} \right] 
 \\
&\leq C \lvert \bu \rvert_{\StokesH}^{1/2} \Vert \bu \Vert_{\StokesV}^{1/2} \left[\vert \phi_2 \rvert_{\newone{H}}^{1/2} \Vert \phi_2 \Vert_{\zero{H}{2}}^{1/2} \lvert \Atwo \phi \rvert_{L^2} + \lvert \phi \rvert_{\newone{H}}^{1/2} \lvert \Atwo \phi \rvert_{L^2}^{1/2} \Vert \phi_2 \Vert_{\zero{H}{2}} \right] 
   \\
&\leq \frac{\newK \kappa_0}{10} \lvert \Atwo \phi \rvert_{L^2}^2 + \frac{\delta_0}{16} \Vert \bu \Vert_{\StokesV}^2 + C \Vert \phi_2 \Vert_{\zero{H}{2}}^2 (\lvert \bu \rvert_{\StokesH}^2 +  \newK \lvert \phi \rvert_{\newone{H}}^2) + C \vert \phi_2 \rvert_{\newone{H}}^2 \Vert \phi_2 \Vert_{\zero{H}{2}}^2 \lvert \bu \rvert_{\StokesH}^2.
\end{align*}
By the definition of the operator $b_1$, see \eqref{eqn-b_1-trilinear-form-2}, the H\"older inequality, and the estimate \eqref{Ladyzhenskaya inequality-a}, we infer that
\begin{align*}
&\lvert  \newK b_1(\bu,\phi_1,\phi)\rvert
\leq  C \lvert \bu \rvert_{\StokesH}^{1/2} \Vert \bu \Vert_{\StokesV}^{1/2} \vert \phi_1 \rvert_{\newone{H}}^{1/2} \Vert \phi_1 \Vert_{\zero{H}{2}}^{1/2} \lvert \Atwo \phi \rvert_{L^2} 
\\
&\leq \frac{\newK \kappa_0}{10} \lvert \Atwo \phi \rvert_{L^2}^2 + C \lvert \bu \rvert_{\StokesH} \Vert \bu \Vert_{\StokesV} \vert \phi_1 \rvert_{\newone{H}} \Vert \phi_1 \Vert_{\zero{H}{2}} 
 \\
&\leq \frac{\newK \kappa_0}{10} \lvert \Atwo \phi \rvert_{L^2}^2 + \frac{\delta_0}{16} \Vert \bu \Vert_{\StokesV}^2 + C \vert \phi_1 \rvert_{\newone{H}}^2 \Vert \phi_1 \Vert_{\zero{H}{2}}^2 \lvert \bu \rvert_{\StokesH}^2.
\end{align*}
Arguing as above, using also \eqref{eqn-H^1-norm-special}, and \eqref{eqn-H^2-norm-special}, we deduce that
\begin{align*}
&\lvert \newK b_1(\bu_2,\phi,\phi)\rvert
\leq C \lvert \bu_2 \rvert_{\StokesH}^{1/2} \Vert \bu_2 \Vert_{\StokesV}^{1/2} \lvert \phi \rvert_{\newone{H}}^{1/2} \Vert \phi \Vert_{\zero{H}{2}}^{1/2} \lvert \Atwo \phi \rvert_{L^2}
\\
&\leq  C \lvert \bu_2 \rvert_{\StokesH}^{1/2} \Vert \bu_2 \Vert_{\StokesV}^{1/2} \lvert  \phi \rvert_{\newone{H}}^{1/2} \lvert \Atwo \phi \rvert_{L^2}^{3/2} 
\leq \frac{\newK \kappa_0}{10} \lvert \Atwo \phi \rvert_{L^2}^2 + C \lvert \bu_2 \rvert_{\StokesH}^2 \Vert \bu_2 \Vert_{\StokesV}^2 \lvert \phi \rvert_{\newone{H}}^2.
\end{align*}
By the H\"older, Young and Poincar\'e-Wirtinger inequalities, we deduce that
\begin{align*}
\newK \kappa_0 \lvert (\Atwo \phi,\cp) \rvert
\leq \newK \kappa_0 \lvert \Atwo \phi \rvert_{L^2} \lvert \cp \rvert_{L^2}
\leq \newK \kappa_0 C_{\domO} \lvert \Atwo \phi \rvert_{L^2} \lvert \cp \rvert_{\newone{H}} 
\leq \frac{\newK \kappa_0}{10} \lvert \Atwo \phi \rvert_{L^2}^2 + \frac{5 \newK \kappa_0 C_{\domO}^2 }{2} \lvert \cp \rvert_{\newone{H}} ^2.
\end{align*}
Here, $C_{\domO}$ is the Poincar\'e-Wirtinger constant which depends on $\domO$.\\  
Next, by the H\"older and Young inequalities, we infer that
\begin{align*}
&\newK \lvert (\nabla (\psi^\prime(\phi_1) - \psi^\prime(\phi_2)), \nabla \cp) \rvert 
\leq \newK \lvert \nabla (\psi^\prime(\phi_1) - \psi^\prime(\phi_2)) \rvert_{\mathbb{L}^2} \lvert \nabla \cp \rvert_{\mathbb{L}^2} 
  \\
&= \newK \lvert \psi^{\bis}(\phi_1) \nabla \phi_1 - \psi^{\bis}(\phi_2) \nabla \phi_2 \rvert_{\mathbb{L}^2} \lvert \cp \rvert_{\newone{H}}  
= \newK \lvert 3 \phi_1^2 \nabla \phi + 3 \phi (\phi_1 + \phi_2) \nabla \phi_2 - \nabla \phi \rvert_{\mathbb{L}^2} \lvert \cp \rvert_{\newone{H}} 
\\
&\leq \frac{\newK}{2} \lvert \cp \rvert_{\newone{H}} ^2 + \frac{\newK}{2} \lvert 3 \phi_1^2 \nabla \phi + 3 \phi (\phi_1 + \phi_2) \nabla \phi_2 - \nabla \phi \rvert_{\mathbb{L}^2}^2 
\\
&\leq \frac{\newK}{2} \lvert \cp \rvert_{\newone{H}} ^2 + C\lvert \phi_1^2 \nabla \phi \rvert_{\mathbb{L}^2}^2 + C \lvert \phi (\phi_1 + \phi_2) \nabla \phi_2 \rvert_{\mathbb{L}^2}^2 + C \lvert \phi \rvert_{\newone{H}}^2.
\end{align*}
On the other hand, using the Agmon inequality \eqref{eq-Agmon's-inequalities}, we have
   \begin{align*}
     \lvert \phi_1^2 \nabla \phi \rvert_{\mathbb{L}^2}^2
      \leq \Vert \phi_1 \Vert_{L^\infty}^4 \lvert \phi \rvert_{\newone{H}}^2
     \leq C \lvert \phi_1 \rvert_{L^2}^2 \Vert \phi_1 \Vert_{\zero{H}{2}}^2  \lvert \phi \rvert_{\newone{H}}^2.
  \end{align*}
Using the H\"older and Agmon inequality together with the estimate \eqref{Ladyzhenskaya inequality-a}, we deduce that
\begin{align*}
&\lvert \phi (\phi_1 + \phi_2) \nabla \phi_2 \rvert_{\mathbb{L}^2}^2
\leq \Vert \phi \Vert_{L^4}^2 \Vert \nabla \phi_2 \Vert_{\mathbb{L}^4}^2 \Vert \phi_1 + \phi_2 \Vert_{L^\infty}^2
\\
&\leq C \vert \phi_2 \rvert_{\newone{H}} \Vert \phi_2 \Vert_{\zero{H}{2}} \lvert \phi_1 + \phi_2 \rvert_{L^2} \Vert \phi_1 + \phi_2 \Vert_{\zero{H}{2}} \lvert \phi \rvert_{\newone{H}}^2.
\end{align*}
From the above estimates, we infer that
\begin{align*}
&\newK \lvert (\nabla (\psi^\prime(\phi_1) - \psi^\prime(\phi_2)), \nabla \cp) \rvert
\\
&\leq \frac{\newK}{2} \lvert \cp \rvert_{\newone{H}} ^2 + C (1 + \lvert \phi_1 \rvert_{L^2}^2 \Vert \phi_1 \Vert_{\zero{H}{2}}^2 + \vert \phi_2 \rvert_{\newone{H}} \Vert \phi_2 \Vert_{\zero{H}{2}} \lvert \phi_1 + \phi_2 \rvert_{L^2} \Vert \phi_1 + \phi_2 \Vert_{\zero{H}{2}}) \lvert \phi \rvert_{\newone{H}}^2.
\end{align*}
Analogously, we prove that 
\begin{align*}
&\newK \kappa_0 \lvert (\psi^\prime(\phi_1) - \psi^\prime(\phi_2), \Atwo \phi) \rvert
= \newK \kappa_0 \lvert (\nabla(\psi^\prime(\phi_1) - \psi^\prime(\phi_2)), \nabla \phi) \rvert
\\
&\leq C (1 + \lvert \phi_1 \rvert_{L^2}^2 \Vert \phi_1 \Vert_{\zero{H}{2}}^2 + \vert \phi_2 \rvert_{\newone{H}} \Vert \phi_2 \Vert_{\zero{H}{2}} \lvert \phi_1 + \phi_2 \rvert_{L^2} \Vert \phi_1 + \phi_2 \Vert_{\zero{H}{2}}) \lvert \phi \rvert_{\newone{H}}^2.
\end{align*}
Next, we use the assumptions \eqref{Eqn-coercivity-3} and \eqref{g-Lipschitz-condition}, and we choose and fix $\eta$ small enough so that 
$\tilde{C}_1 \eta \leq \frac{\delta_0}{2}$. Therefore, one has for every $s \in [0,T]$,
\begin{align*}
&\frac12 \Vert \bSi_0(s)\bu(s) + \bG_0(s,\bu_1(s)) - \bG_0(s,\bu_2(s))\Vert_{\ell^2(\StokesH)}^2
\\
& \leq \frac12 \left(1 + \eta \right) \Vert \bSi_0(s)\bu(s) \Vert_{\ell^2(\StokesH)}^2 + C(\eta) \Vert \bG_0(s,\bu_1(s)) - \bG_0(s,\bu_2(s)) \Vert_{\ell^2(\StokesH)}^2
  \\
& \leq  \frac12 \Vert \bSi_0(s)\bu(s) \Vert_{\ell^2(\StokesH)}^2  + \tilde{C}_1 \eta \Vert \bu(s) \Vert_{\StokesV}^2  + C(\eta) \tilde{c}_g^2\lvert \bu(s) \rvert_{\StokesH}^2
   \\
& \leq  \frac12 \Vert \bSi_0(s)\bu(s) \Vert_{\ell^2(\StokesH)}^2  + \frac{\delta_0}{2} \Vert \bu(s) \Vert_{\StokesV}^2  + C(\eta) \tilde{c}_g^2 \lvert \bu(s) \rvert_{\StokesH}^2.
\end{align*}
From the above inequality and the  assumption \eqref{Eqn-coercivity-2} in Lemma \ref{Lem1}, we deduce that 
\begin{align*}
&\nu \Vert \bu(s) \Vert_{\StokesV}^2 - \frac12 \Vert \bSi_0(s)\bu(s) + \bG_0(s,\bu_1(s)) - \bG_0(s,\bu_2(s))\Vert_{\ell^2(\StokesH)}^2
\\
&\geq \nu \Vert \bu(s) \Vert_{\StokesV}^2 - \frac12 \Vert \bSi_0(s)\bu(s) \Vert_{\ell^2(\StokesH)}^2 - \frac{\delta_0}{2} \Vert \bu(s) \Vert_{\StokesV}^2 - C(\eta,\tilde{c}_g) \lvert \bu(s) \rvert_{\StokesH}^2
  \\
&\geq \nu \Vert \bu(s) \Vert_{\StokesV}^2 - (\nu - \delta_0) \Vert \bu(s) \Vert_{\StokesV}^2 - \frac{\delta_0}{2} \Vert \bu(s) \Vert_{\StokesV}^2 - C(\eta,\tilde{c}_g) \lvert \bu(s) \rvert_{\StokesH}^2
\\
&= \frac{\delta_0}{2} \Vert \bu(s) \Vert_{\StokesV}^2 - C(\eta,\tilde{c}_g) \lvert \bu(s) \rvert_{\StokesH}^2.
\end{align*}
Hereafter, we choose and fix for the remainder of the proof $\kappa_0$ small enough so that
    \begin{equation*}
      1 - 5 \kappa_0 C_{\domO}^2>0 .
   \end{equation*}
We finish the proof by using the so called Schmalfuss trick. For this purpose,  let us introduce the following auxilliary  real-valued process 
$$
\mathcal{Y}_1(t)= e^{- \int_0^{t} \mathcal{Y}_2(s)\, \d s}, \; t \in [0,T],
$$
with the process $\mathcal{Y}_2= (\mathcal{Y}_2(t), \; t \in [0,T])$ being defined in \eqref{eqn-process-y_2}. Note that
   \begin{equation*}
     \int_0^T \mathcal{Y}_2(s)\, \d s< \infty, \quad \mathbb{P}\mbox{-a.s. }
   \end{equation*}
Then, applying the It\^o formula to the process $\mathcal{Y}_1 U$, using the above estimates, we arrive at
\begin{equation*}
\begin{aligned}
& \mathcal{Y}_1(t \wedge \tau_R) U(t \wedge \tau_R) + \frac{\delta_0}{4} \int_0^{t \wedge \tau_R} \mathcal{Y}_1(s) \Vert \bu(s) \Vert_{\StokesV}^2 \, \d s + \frac{(1 - 5 \kappa_0 C_{\domO}^2) \newK}{2}  \int_0^{t \wedge \tau_R} \mathcal{Y}_1(s) \lvert \cp(s) \rvert_{\newone{H}} ^2\, \d s \\
& + \frac{\newK \kappa_0}{2} \int_0^{t \wedge \tau_R} \mathcal{Y}_1(s) \lvert \Atwo \phi(s) \rvert_{L^2}^2\, \d s
\leq \frac{1}{2} \lvert \upxi_1 - \upxi_2 \rvert_{\StokesH}^2 + \frac{\newK}{2} \lvert \chi_1 - \chi_2 \rvert_{\newone{H}}^2 + C(\eta,\tilde{c}_g) \int_0^{t \wedge \tau_R} \mathcal{Y}_1(s) U(s)\, \d s \\
&\hspace{5cm} + \int_0^{t \wedge \tau_R} \mathcal{Y}_1(s) (\bu(s),[\bG_0(s,\bu_1(s)) - \bG_0(s,\bu_2(s)) + \bSi_0(s)\bu(s)]\d W(s)).
\end{aligned}
\end{equation*}
Since $0< \mathcal{Y}_1(t) \leq 1$, taking the expectation to both sides of the above inequality yields
\begin{align*}
& \mathbb{E} [\mathcal{Y}_1(t \wedge \tau_R) U(t \wedge \tau_R)] 
+ \frac{\delta_0}{4} \mathbb{E} \int_0^{t \wedge \tau_R} \mathcal{Y}_1(s) \Vert \bu(s) \Vert_{\StokesV}^2 \, \d s 
+ \frac{(1 - 5 \kappa_0 C_{\domO}^2) \newK}{2} \mathbb{E} \int_0^{t \wedge \tau_R} \mathcal{Y}_1(s) \lvert \cp(s) \rvert_{\newone{H}} ^2\, \d s 
\\
& + \frac{\newK \kappa_0}{2} \mathbb{E} \int_0^{t \wedge \tau_R} \mathcal{Y}_1(s) \lvert \Atwo \phi(s) \rvert_{L^2}^2\, \d s
\leq \frac{1}{2} \lvert \upxi_1 - \upxi_2 \rvert_{\StokesH}^2 + \frac{\newK}{2} \lvert \chi_1 - \chi_2 \rvert_{\newone{H}}^2 + C(\eta) \tilde{c}_g^2 \mathbb{E} \int_0^{t \wedge \tau_R} \mathcal{Y}_1(s) U(s)\, \d s.
\end{align*}
Therefore, by the Gronwall inequality applied to $t \mapsto \mathbb{E} [\mathcal{Y}_1(t \wedge \tau_R) U(t \wedge \tau_R)]$, we deduce that there exists a constant $C>0$ independent of $R$, such that for all $t \in [0,T]$,
  \begin{equation}
    \mathbb{E} \left[\mathcal{Y}_1(t \wedge \tau_R) U(t \wedge \tau_R) \right] \leq C \Vert (\upxi_1,\chi_1) - (\upxi_2,\chi_2) \Vert_{\nHB}^2.
  \end{equation}
We recall that $\tau_R \toup  T$, $\mathbb{P}$-a.s., as $ R \to \infty$, and then by continuity of the process 
$\mathcal{Y}_1 U= (\mathcal{Y}_1(t) U(t), \; t \in [0,T])$, we have 
$\mathcal{Y}_1(t \wedge \tau_R) U(t \wedge \tau_R) \to \mathcal{Y}_1(t) U(t)$  $\mathbb{P}$-a.s., as $ R \to \infty$. Furthermore, since $\mathcal{Y}_1(t \wedge \tau_R) U(t \wedge \tau_R)\geq 0$, by the Fatou Lemma, we infer that
   \begin{equation}
    \mathbb{E} \left[\mathcal{Y}_1(t) U(t) \right] \leq C \Vert (\upxi_1,\chi_1) - (\upxi_2,\chi_2) \Vert_{\nHB}^2,\;\; t\in [0,T].
   \end{equation}
Hence, the proof  of the theorem then follows from the preceding inequality. 
\end{proof}
\begin{remark}\label{rem-uniquness}
In principle, our proof of the Theorem is similar to the proof of \cite[Lemma 7.3]{Brz+Motyl_2013}, but it is technically and computationally more involved.
\end{remark}

\section{Concluding Remarks}\label{sec-remarks}
We conclude this paper with a brief comparison between the method employed here, i.e. an  extension and elaboration of the method used by Mikulevicius and  Rozovskii in \cite{Mik+Roz_2005} and the method based on the Jakubowski-Skorokhod theorem, which was used in the context of stochastic fluid dynamics by the first author and E. Motyl in \cite{Brz+Motyl_2013}. The first advantage of the Mikulevicius and  Rozovskii approach is that it does not require a change of probability space, provided it is rich enough. 
The second advantage of our present approach over that one from \cite{Brz+Motyl_2013},  is that here we can assume that the external force $f$ satisfies the following assumption. 
\[
\int_0^T \Vert f(t) \Vert_{\StokesVp}^2\,\d t< \infty,
\]
while in the other papers, see also Theorem 4.8 in \cite{Brz+Mot+Ondr_2017}, we assume that  
\[
\int_0^T \Vert f(t)\Vert_{\StokesVp}^p\,\d t< \infty, \mbox{ for at least for one $p>2$}.
\]
Let us point out that the methods developed in the present paper allow one to study similar problems in the following cases.
\begin{trivlist}
    \item[(i)] The domain $\domO$ is unbounded, 
    \item[(ii)] The Cahn-Hilliard Equation depends on the noise. 
    \item[(iii)] The potential is singular, for example, the Flory-Huggins potential.
\end{trivlist}
\section{Acknowledgments}
The second author acknowledges financial support from the European Union's Horizon Europe Marie Sk{\l}odowska-Curie Action Postdoctoral Fellowship  No. 101151937-SNSCHEs (via the UKRI guarantee framework).
Both authors would like to thank the Institute of Mathematics, Jagiellonian University (Krak{\'o}w Poland) for hospitality in October 2025. We would like also to thank Boris Rozovskii and  Remigijus Mikulevicius for an important discussion about the martingale property of the stopped process on the first part of Theorem \ref{thm-5.3}; and to Tachim Medjo and Paul Razafimandimby for an important discussion about Section \ref{Sect-approximation of g}.

    \section*{Declarations:} 
	
	\noindent 	\textbf{Ethical Approval:}   Not applicable 
	
	
	\noindent  \textbf{Conflict of interest: } On behalf of all authors, the corresponding author states that there is no conflict of interest.
	
	\noindent 	\textbf{Authors' contributions:} All authors have contributed equally.

	\noindent 	\textbf{Availability of data and materials:} Not applicable.

\appendix

\section{Limit of nonlinear operators}
\begin{lemma}\label{B_on-Convergence}
Assume that $\bu \in L^2(0,T;\StokesH)$ and $(\bu_n)_{n \in \mathbb{N}}$ is a bounded sequence in $L^2(0,T;\StokesH)$ such that
$\bu_n \to \bu$ in $L^2(0,T;\StokesH)$. Then, for every $z_1 \in \rU_0$,
  \begin{equation}\label{Convergence-for-Bo-n}
    \lim_{n \to \infty} \int_0^T \lvert \duality{\bB_{0,n}(\bu_n(s),\bu_n(s)) - \bB_0(\bu(s),\bu(s))}{z_1}{\rU_0}{\rU_0^\prime} \rvert\,\d s=0.
  \end{equation}
\end{lemma}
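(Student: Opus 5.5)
We split the integrand by inserting the intermediate term $\bB_0(\bu_n,\bu_n)$. Since $\bB_{0,n}(\bu_n,\bu_n)=\pi_{0,n}\bB_0(\bu_n,\bu_n)$, Properties \ref{eqn-properties-pi_{0,n}}(o) gives
\[
\duality{\bB_{0,n}(\bu_n,\bu_n)}{z_1}{\rU_0}{\rU_0^\prime}=\duality{\bB_0(\bu_n,\bu_n)}{\tilde{\pi}_{0,n}z_1}{\rU_0}{\rU_0^\prime}.
\]
Hence the integrand is bounded by
\[
\lvert \duality{\bB_0(\bu_n,\bu_n)}{\tilde{\pi}_{0,n}z_1-z_1}{\rU_0}{\rU_0^\prime}\rvert+\lvert \duality{\bB_0(\bu_n,\bu_n)-\bB_0(\bu,\bu)}{z_1}{\rU_0}{\rU_0^\prime}\rvert.
\]

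For the first term, the $\rU_0^\prime$ bound \eqref{eq-B_0-k_0} gives the pointwise estimate $C\lvert\bu_n(s)\rvert_{\StokesH}^2\Vert\tilde{\pi}_{0,n}z_1-z_1\Vert_{\rU_0}$. Integrating in time yields
\[
C\Vert\bu_n\Vert_{L^2(0,T;\StokesH)}^2\,\Vert\tilde{\pi}_{0,n}z_1-z_1\Vert_{\rU_0}.
\]
The first factor is uniformly bounded by hypothesis, and the second tends to zero by Properties \ref{eqn-properties-pi_{0,n}}(iii).

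For the second term, bilinearity gives
\[
\bB_0(\bu_n,\bu_n)-\bB_0(\bu,\bu)=\bB_0(\bu_n-\bu,\bu_n)+\bB_0(\bu,\bu_n-\bu).
\]
By \eqref{eq-B_0-k_0} again, the integrand is bounded by $C\Vert z_1\Vert_{\rU_0}\lvert\bu_n-\bu\rvert_{\StokesH}(\lvert\bu_n\rvert_{\StokesH}+\lvert\bu\rvert_{\StokesH})$. Cauchy--Schwarz in time then bounds its integral by
\[
C\Vert z_1\Vert_{\rU_0}\Vert\bu_n-\bu\Vert_{L^2(0,T;\StokesH)}\bigl(\Vert\bu_n\Vert_{L^2(0,T;\StokesH)}+\Vert\bu\Vert_{L^2(0,T;\StokesH)}\bigr)\to0.
\]

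The only point needing care is the first term. The projection acts on the test function rather than on $\bB_0$, so we must use the adjoint identity (o) together with the strong convergence $\tilde{\pi}_{0,n}z_1\to z_1$ in $\rU_0$. A bound of the form $\Vert\pi_{0,n}\Vert\le1$ alone would not suffice. Since the estimate \eqref{eq-B_0-k_0} is stated on $\StokesH\times\StokesH$, no $\StokesV$-regularity is needed, which is why $L^2(0,T;\StokesH)$ convergence is enough.
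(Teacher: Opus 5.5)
Your proof is correct and follows essentially the paper's argument: you use the adjoint identity in Properties \ref{eqn-properties-pi_{0,n}}(o) to move $\pi_{0,n}$ onto the test function, then bilinearity together with the $\rU_0^\prime$-bound \eqref{eq-B_0-k_0} on $\StokesH\times\StokesH$, and finally $\tilde{\pi}_{0,n}z_1\to z_1$ in $\rU_0$. The only difference is cosmetic. You attach the projection error to $\bB_0(\bu_n,\bu_n)$, using the uniform $L^2(0,T;\StokesH)$ bound, whereas the paper attaches it to $\bB_0(\bu,\bu)$ and handles the difference terms with $\Vert\tilde{\pi}_{0,n}\Vert_{\mathcal{L}(\rU_0)}\le 1$.
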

\begin{proof}[Proof of Lemma \ref{B_on-Convergence}]
In the sequel, the duality pairing $\duality{\cdot}{\cdot}{\rU_0}{\rU_0^\prime}$ will be denoted by $\duality{\cdot}{\cdot}{}{}$.
Let us choose and fix $z_1 \in \rU_0$. We recall that, see \eqref{eq-B_0-k_0},
   \begin{equation}\label{eq-B_0-k_0-1}
     \lvert \duality{\bB_0(\bv_1,\bv_2)}{z_1}{}{} \rvert
     \leq C \lvert \bv_1 \rvert_{\StokesH} \lvert \bv_2 \rvert_{\StokesH} \Vert z_1 \Vert_{\rU_0},\;\;\forall \bv_1,\,\bv_2 \in \rU_0.  
   \end{equation}
Observe also that for every $n \in \mathbb{N}$,
\begin{align*}
\duality{\bB_{0,n}(\bu_n,\bu_n) - \bB_0(\bu,\bu)}{z_1}{}{} 
&= \duality{\bB_0(\bu_n - \bu,\bu_n)}{\tilde{\pi}_{0,n} z_1}{}{} + \duality{\bB_0(\bu,\bu_n - \bu)}{\tilde{\pi}_{0,n} z_1}{}{}
\\
&\qquad + \duality{\bB_0(\bu,\bu)}{\tilde{\pi}_{0,n} z_1  - z_1 }{}{}.
\end{align*}
Using the estimate \eqref{eq-B_0-k_0-1} and the H\"older inequality, we infer that there exists a generic constant $C>0$ such that
for every $n \in \mathbb{N}$,
\begin{align*}
&\int_0^T (\lvert \duality{\bB_0(\bu_n(s) - \bu(s),\bu_n(s))}{\tilde{\pi}_{0,n} z_1}{}{} \rvert + \lvert \duality{\bB_0(\bu(s),\bu_n(s) - \bu(s))}{\tilde{\pi}_{0,n} z_1}{}{} \rvert)\,\d s
\\
&\leq C \int_0^T (\lvert \bu_n(s) \rvert_{\StokesH} + \lvert \bu(s) \rvert_{\StokesH}) \lvert \bu_n(s) - \bu(s) \rvert_{\StokesH} \,\d s \cdot  \Vert \tilde{\pi}_{0,n} z_1 \Vert_{\rU_0}
\\
&\leq C \int_0^T (\lvert \bu_n(s) \rvert_{\StokesH} + \lvert \bu(s) \rvert_{\StokesH}) \lvert \bu_n(s) - \bu(s) \rvert_{\StokesH}\,\d s \cdot
\Vert \tilde{\pi}_{0,n} \Vert_{\mathcal{L}(\rU_0)} \Vert z_1 \Vert_{\rU_0}
\\
&\leq C (\vert \bu_n \Vert_{L^2(0,T;\StokesH)} + \Vert \bu \Vert_{L^2(0,T;\StokesH)}) \Vert \bu_n - \bu \Vert_{L^2(0,T;\StokesH)} \Vert z_1 \Vert_{\rU_0}.
\end{align*}
Now, since $\bu_n \to \bu$ in $L^2(0,T;\StokesH)$, we deduce that
\[
\lim_{n \to \infty} \int_0^T (\lvert \duality{\bB_0(\bu_n(s) - \bu(s),\bu_n(s))}{\tilde{\pi}_{0,n} z_1}{}{} \rvert + \lvert \duality{\bB_0(\bu(s),\bu_n(s) - \bu(s))}{\tilde{\pi}_{0,n} z_1}{}{} \rvert)\,\d s=0.
\]
Using the estimate \eqref{eq-B_0-k_0-1}, we infer that there exists $C>0$ such that
for every $n \in \mathbb{N}$,
   \begin{equation*}
     \int_0^T \lvert \duality{\bB_0(\bu(s),\bu(s))}{\tilde{\pi}_{0,n} z_1  - z_1}{}{} \rvert\,\d s
      \leq \Vert \bu \Vert_{L^2(0,T;\StokesH)}^2 \Vert \tilde{\pi}_{0,n} z_1  - z_1\Vert_{\rU_0}. 
   \end{equation*}
Since $\Vert \tilde{\pi}_{0,n} z_1  - z_1\Vert_{\rU_0} \to 0$, see the item $(iii)$ in Properties \ref{eqn-properties-pi_{0,n}}, we infer that
   \begin{equation*}
     \lim_{n \to \infty} \int_0^T \lvert \duality{\bB_0(\bu(s),\bu(s))}{\tilde{\pi}_{0,n} z_1  - z_1}{}{} \rvert\,\d s=0.
   \end{equation*}
From the above convergence results, we can easily complete the proof of Lemma \ref{B_on-Convergence}.
\end{proof}
\begin{lemma}\label{R_on-Convergence}
Assume that $\phi \in L^\infty(0,T;\newone{H}) \cap L^2(0,T;\zero{H}{2}(\domO))$ and $(\phi_n)_{n \in \mathbb{N}}$ is a bounded sequence in $L^\infty(0,T;\newone{H}) \cap L^2(0,T;\zero{H}{2}(\domO))$ i.e. 
\[
\sup_{n \in \mathbb{N}} \left\{\int_0^T \Vert \phi_n(s) \Vert_{\zero{H}{2}}^{2}\, \d s + \esssup_{s \in [0,T]} \lvert \phi_n(s) \rvert_{\newone{H}} 
\right\}< \infty,
\]
such that $\phi_n \to \phi$ in $L^2(0,T;\newone{H})$. Then, for every $z_1 \in \rU_0$,
\begin{equation}\label{Convergence-for-bar{R}_{0,n}}
   \lim_{n \to \infty} \int_0^T \duality{\bar{R}_{0,n}(\phi_n(s),\phi_n(s)) - \bR_0(\phi(s),\phi(s))}{ z_1}{\rU_0}{\rU_0^\prime}\,\d s= 0.
\end{equation}
\end{lemma}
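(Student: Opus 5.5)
The argument mirrors the proof of Lemma \ref{B_on-Convergence}. We write out $\bar{R}_{0,n}$ explicitly, move the projection onto the test function, and split the difference into pieces that are either controlled by $\Vert\phi_n-\phi\Vert$ in a norm where we have strong convergence, or by $\Vert \tilde{\pi}_{0,n}z_1 - z_1\Vert_{\rU_0}\to 0$. Let $f(\varphi)=\psi'(\varphi)-\avg{\psi'(\varphi)}$ and $\cp_n=\Atwo\phi_n+\pi_{1,n}f(\phi_n)$. By Properties \ref{eqn-properties-pi_{0,n}}(o),
\[
\duality{\bar{R}_{0,n}(\phi_n,\phi_n)}{z_1}{}{}=(\cp_n\nabla\phi_n,\tilde{\pi}_{0,n}z_1).
\]
By Proposition \ref{prop-R_0} (with $\eps=\alpha=1$), $\duality{\bR_0(\phi,\phi)}{z_1}{}{}=-(\cp\nabla\phi,z_1)$ whenever $\phi\in\zero{H}{2}$, where $\cp=\Atwo\phi+f(\phi)$. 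The sign conventions should be checked against the way $\bar R_{0,n}$ enters with $+\newK$ in \eqref{eqn-Galerkin-Modified-stochastic-CHNSEs-n}, since the limit must be the matching expression. Integrating by parts as in Corollary \ref{2nd-corollary}, and using $\diver z_1=0$ and $z_1|_{\partial\domO}=0$, it is convenient to rewrite both pairings in the form $-(\phi_n\nabla\cp_n,\tilde\pi_{0,n}z_1)$, or better as $(\Atwo\phi_n\nabla\phi_n,\cdot)$ plus $(\pi_{1,n}f(\phi_n)\nabla\phi_n,\cdot)$. We then treat the two pieces separately.

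\textbf{Laplacian part.} Here $\Atwo\phi\,\nabla\phi$ has the form of $\diver(\nabla\phi\otimes\nabla\phi)$ modulo gradients, and gradients vanish against the divergence-free field $\tilde\pi_{0,n}z_1\in H_{0,n}$. So this term equals $r_0(\phi_n,\phi_n,\tilde\pi_{0,n}z_1)$, up to sign. Since $\nabla \tilde\pi_{0,n}z_1$ is bounded in $\mathbb{L}^\infty$ uniformly in $n$ (by \eqref{eq-H^s-1-to-C_b} and $\Vert\tilde\pi_{0,n}\Vert_{\mathcal L(\rU_0)}\le1$), we get
\[
|r_0(\phi_n,\phi_n,w)-r_0(\phi,\phi,w)|\le C(|\phi_n|_{\newone{H}}+|\phi|_{\newone{H}})|\phi_n-\phi|_{\newone{H}}\Vert w\Vert_{\rU_0}.
\]
This tends to $0$ in $L^1(0,T)$ by Cauchy--Schwarz and $\phi_n\to\phi$ in $L^2(0,T;\newone{H})$. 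The remaining piece $r_0(\phi,\phi,\tilde\pi_{0,n}z_1-z_1)$ is bounded by $C|\phi|_{\newone{H}}^2\Vert\tilde\pi_{0,n}z_1-z_1\Vert_{\rU_0}\to0$, using Properties \ref{eqn-properties-pi_{0,n}}(iii) and \eqref{eqn-R_0-k_0}.

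\textbf{Potential part.} Write
\[
\pi_{1,n}f(\phi_n)\nabla\phi_n-f(\phi)\nabla\phi=\pi_{1,n}[f(\phi_n)-f(\phi)]\nabla\phi_n+(\pi_{1,n}-I)f(\phi)\nabla\phi_n+f(\phi)\nabla(\phi_n-\phi).
\]
Test each term against $w_n=\tilde\pi_{0,n}z_1$, which is bounded in $\mathbb L^\infty$, and add the term $f(\phi)\nabla\phi\cdot(w_n-z_1)$. For the first term, the Agmon inequality in $d=2$ (the Gagliardo--Nirenberg inequality \eqref{Gagliardo-Nirenberg-inequality} in $d=3$) and the cubic growth of $\psi'$ give
\[
|f(\phi_n)-f(\phi)|_{L^2}\le C(1+\Vert\phi_n\Vert_{L^\infty}^2+\Vert\phi\Vert_{L^\infty}^2)|\phi_n-\phi|_{\newone{H}}.
\]
With $\Vert\phi_n\Vert_{L^\infty}^2\le C|\phi_n|_{\newone{H}}\Vert\phi_n\Vert_{\zero{H}{2}}$, which is bounded in $L^2(0,T)$ uniformly in $n$, this product is integrable and tends to $0$ by Hölder. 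The terms $f(\phi)\nabla(\phi_n-\phi)$ and $f(\phi)\nabla\phi\cdot(w_n-z_1)$ are handled the same way.

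\textbf{Main obstacle.} The hardest term is $(\pi_{1,n}-I)f(\phi)\nabla\phi_n$. Here $\pi_{1,n}$ is the $\newone{H}$-orthogonal projection, so it is not contractive in $L^2$, and it is applied to $f(\phi)$ rather than to a quantity paired in $\newone{H}$. My first attempt is to move $\pi_{1,n}$ across the pairing: the projection is self-adjoint in $\newone{H}$, so one writes $(g,h)_{L^2}=(g,\Aone^{-1}h)_{\newone{H}}$ for mean-zero $h$. This gives $((\pi_{1,n}-I)f(\phi),\Aone^{-1}P_0(\nabla\phi_n\cdot w_n))_{\newone{H}}$, where $P_0$ removes the mean. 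Since $f(\phi)\in L^{2}(0,T;\newone{H})$, which follows from the bound used in Claim \ref{eqn-4-claim-for-bar-mu_n} with $\zero{H}{2}$ control, $(\pi_{1,n}-I)f(\phi)\to0$ in $L^{4/3}(0,T;\newone{H})$ by dominated convergence exactly as in that claim. The second factor is bounded in $L^\infty(0,T;\newone{H})$, because $\Aone^{-1}$ maps $L^2$ into $\newone{H}$. This yields the convergence. Summing all the pieces gives \eqref{Convergence-for-bar{R}_{0,n}}.
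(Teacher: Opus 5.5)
Your Laplacian part is the paper's argument (the bound \eqref{eqn-R_0-k_0} together with $\tilde\pi_{0,n}z_1\to z_1$ in $\rU_0$). However, you leave the decisive sign open. The identity you quote from Proposition~\ref{prop-R_0}, $\langle\bR_0(\phi,\phi),z_1\rangle=-(\cp\,\nabla\phi,z_1)$, would make the statement false. Your estimates show $\int_0^T\langle\bar R_{0,n}(\phi_n,\phi_n),z_1\rangle\,\d s\to\int_0^T(\cp\,\nabla\phi,z_1)\,\d s$, and with that identity the right-hand side equals $-\int_0^T\langle\bR_0(\phi,\phi),z_1\rangle\,\d s$. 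The lemma concerns $\bR_0$ as defined by \eqref{eq-2.12}, whereas Remark~\ref{rem-R_0} and Proposition~\ref{prop-R_0} carry the opposite sign to that definition. You also should not fix the sign by matching against the Galerkin equation: the drifts $+\newK\bar R_{0,n}$ and $-\newK\bR_0$ would point you to the wrong one. Instead, integrate by parts in $x_j$, using $\diver z=0$ and $z=0$ on $\partial\domO$:
\[
r_0(\phi,\phi,z)=-\int_{\domO}\bigl(\Delta\phi\,\nabla\phi+\tfrac12\nabla\lvert\nabla\phi\rvert^2\bigr)\cdot z\,\d x=(\Atwo\phi\,\nabla\phi,z)=(\cp\,\nabla\phi,z).
\]
The last step holds because $(\psi'(\phi)-\avg{\psi'(\phi)})\nabla\phi=\nabla\bigl(\psi(\phi)-\avg{\psi'(\phi)}\,\phi\bigr)$ is a gradient. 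This plus sign is exactly what the paper uses when it writes $\P(\Atwo\phi_n\nabla\phi_n)=\bR_0(\phi_n,\phi_n)$. You need to state and prove it, not argue ``up to sign''.

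Second, your ``main obstacle'' rests on a false premise, and your first potential term silently uses the opposite of that premise. Since $\bar e_j=\hat e_j/\sqrt{\alpha_j}$ and $\hat e_j$ satisfies the Neumann condition, $(\phi,\bar e_j)_{\newone{H}}=\sqrt{\alpha_j}\,(\phi,\hat e_j)$. Hence $\pi_{1,n}\phi=\sum_{j\le n}(\phi,\hat e_j)\hat e_j$, so $\pi_{1,n}$ is the spectral projection of $\Aone$, i.e.\ also the $L^2$-orthogonal projection onto $H_{1,n}$, and in particular a contraction in $L^2$. That is exactly what you need to replace $\lvert\pi_{1,n}[f(\phi_n)-f(\phi)]\rvert_{L^2}$ by $\lvert f(\phi_n)-f(\phi)\rvert_{L^2}$ in your first term. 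If $\pi_{1,n}$ were not uniformly $L^2$-bounded, as you claim, that step would be unjustified, and you would be forced into the $\newone{H}$-level estimate of $\nabla\psi'(\phi_n)-\nabla\psi'(\phi)$ carried out in \eqref{Eqn-8.41}--\eqref{Eqn-8.42}. Once contractivity is stated, three simplifications follow. Your duality detour through $\Aone^{-1}$ is correct but unnecessary, since dominated convergence in $L^2$ handles $(\pi_{1,n}-I)f(\phi)$ directly. The added term $f(\phi)\nabla\phi\cdot(w_n-z_1)$ vanishes identically, because $f(\phi)\nabla\phi$ is a gradient. Finally, in $d=3$ the $L^\infty$ bound comes from \eqref{eq-Agmon's-inequalities}, not from \eqref{Gagliardo-Nirenberg-inequality}.

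With these repairs your proof is valid, and it differs from the paper only in the potential part. The paper bounds each $L^2$ norm of a $\pi_{1,n}$-projected quantity by its $\newone{H}$ norm (Poincar\'e plus $\newone{H}$-contractivity), which costs the gradient estimates and the $L^2(0,T;\zero{H}{2})$ bound. Your $L^2$-level estimates via the cubic growth of $\psi'$ are lighter. Using $H^1\subset L^6$ instead of Agmon, they do not even need the $H^2$ bound.
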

\begin{proof}
In the sequel, the duality pairing $\duality{\cdot}{\cdot}{\rU_0}{\rU_0^\prime}$ will be denoted by $\duality{\cdot}{\cdot}{}{}$.
Let us choose and fix $z_1 \in \rU_0$ and notice that 
\begin{align*}
&\duality{\bar{R}_{0,n}(\phi_n,\phi_n)}{z_1}{}{}
=\duality{\pi_{0,n} \left(\Atwo \phi_n  \nabla \phi_n \right)}{z_1}{}{} + \duality{\pi_{0,n} ([\pi_{1,n} (\psi^\prime(\phi_n) - \avg{\psi^\prime(\phi_n)})] \nabla \phi_n)}{z_1}{}{}
\\
&=\duality{\pi_{0,n} \left(\Atwo \phi_n  \nabla \phi_n \right)}{z_1}{}{} + \duality{[\pi_{0,n}(\pi_{1,n} (\psi^\prime(\phi_n) - \avg{\psi^\prime(\phi_n)})] \nabla \phi_n)}{z_1}{}{}
\\
&=\duality{\P \left(\Atwo \phi_n  \nabla \phi_n \right)}{\tilde{\pi}_{0,n} z_1}{}{} + \duality{\pi_{0,n}(\pi_{1,n} (\psi^\prime(\phi_n) - \avg{\psi^\prime(\phi_n)}) \nabla \phi_n)}{z_1}{}{}
\\
&= \duality{\bR_0(\phi_n,\phi_n)}{\tilde{\pi}_{0,n} z_1}{}{} + \duality{\pi_{0,n}(\pi_{1,n} (\psi^\prime(\phi_n) - \avg{\psi^\prime(\phi_n)}) \nabla \phi_n)}{z_1}{}{}.
\end{align*}
This, jointly with the fact that
$\pi_{0,n}([(\psi^\prime(\phi) - \avg{\psi^\prime(\phi)})] \nabla \phi) =\pi_{0,n} \circ \P ([(\psi^\prime(\phi) - \avg{\psi^\prime(\phi)})] \nabla \phi)=0$ yields
\begin{align*}
&\duality{\bar{R}_{0,n}(\phi_n,\phi_n)}{z_1}{}{} - \duality{\bR_0(\phi,\phi)}{z_1}{}{}
\\
&=  \duality{\bR_0(\phi_n,\phi_n) - \bR_0(\phi,\phi)}{\tilde{\pi}_{0,n} z_1}{}{} + \duality{\bR_0(\phi,\phi)}{\tilde{\pi}_{0,n} z_1 - z_1}{}{} 
\\
&\quad + \duality{\pi_{0,n}(\pi_{1,n} (\psi^\prime(\phi_n) - \avg{\psi^\prime(\phi_n)}) \nabla \phi_n) - \pi_{0,n}([(\psi^\prime(\phi) - \avg{\psi^\prime(\phi)})] \nabla \phi)}{z_1}{}{}.
\end{align*}
Moreover, since $\bR_0(\phi_n,\phi_n) - \bR_0(\phi,\phi)= \bR_0(\phi_n - \phi,\phi_n) + \bR_0(\phi,\phi_n - \phi)$, we find that
\begin{align*}
&\duality{\bar{R}_{0,n}(\phi_n,\phi_n)}{z_1}{}{} - \duality{\bR_0(\phi,\phi)}{z_1}{}{}
\\
&= \duality{\bR_0(\phi_n - \phi,\phi_n)}{\tilde{\pi}_{0,n} z_1}{}{}  
+ \duality{\bR_0(\phi,\phi_n - \phi)}{\tilde{\pi}_{0,n} z_1}{}{} 
+ \duality{\bR_0(\phi,\phi)}{\tilde{\pi}_{0,n} z_1 - z_1}{}{} 
\\
&\quad + \duality{\pi_{0,n}(\pi_{1,n} (\psi^\prime(\phi_n) - \avg{\psi^\prime(\phi_n)}) \nabla \phi_n) - \pi_{0,n}([(\psi^\prime(\phi) - \avg{\psi^\prime(\phi)})] \nabla \phi)}{z_1}{}{}.
\end{align*}
Let us deal with the first two terms on the RHS of the above equalities.
Using the H\"older inequality and since $\Vert \tilde{\pi}_{0,n} \Vert_{\mathcal{L}(\rU_0)}\leq 1$, we infer that there exists a generic constant $C>0$ such that for every $n \in \mathbb{N}$,
\begin{align*}
&\int_0^T  \lvert \duality{\bR_0(\phi_n(s) - \phi(s),\phi_n(s)) + \bR_0(\phi(s),\phi_n(s) - \phi(s))}{\tilde{\pi}_{0,n} z_1 }{}{} \rvert\, \d s  
\\
&\leq C \int_0^T (\lvert \phi_n(s) \rvert_{\newone{H}} + \lvert \phi(s) \rvert_{\newone{H}}) \lvert \phi_n(s) - \phi(s) \rvert_{\newone{H}}\, \d s \cdot
\Vert \tilde{\pi}_{0,n} z_1 \Vert_{\rU_0}
\\
&\leq C \int_0^T (\lvert \phi_n(s) \rvert_{\newone{H}} + \lvert \phi(s) \rvert_{\newone{H}}) \lvert \phi_n(s) - \phi(s) \rvert_{\newone{H}}\, \d s \cdot \Vert \tilde{\pi}_{0,n} \Vert_{\mathcal{L}(\rU_0)}\Vert z_1 \Vert_{\rU_0}
\\
&\leq C (\Vert \phi_n \Vert_{L^2(0,T;\newone{H})} + \Vert \phi \Vert_{L^2(0,T;\newone{H})}) \Vert \phi_n - \phi \Vert_{L^2(0,T;\newone{H})} \Vert z_1 \Vert_{\rU_0}.
\end{align*}
Recall that by assumption $\Vert \phi_n - \phi \Vert_{L^2(0,T;\newone{H})} \to 0$ as $n \to \infty$. Thus,
\[
\lim_{n \to \infty} \int_0^T \lvert \duality{\bR_0(\phi_n(s) - \phi(s),\phi_n(s)) + \bR_0(\phi(s),\phi_n(s) - \phi(s))}{\tilde{\pi}_{0,n} z_1}{}{} \rvert \d s=0.
\]
Similarly, we have for every $n \in \mathbb{N}$,
\begin{equation*}
\int_0^T \lvert \duality{\bR_0(\phi(s),\phi(s))}{\tilde{\pi}_{0,n} z_1 - z_1}{}{} \rvert\, \d s
\leq C(\domO) \Vert \phi \Vert_{L^2(0,T;\newone{H})}^2 \Vert \tilde{\pi}_{0,n} z_1 - z_1 \Vert_{\rU_0}.
\end{equation*}
Since $\Vert \tilde{\pi}_{0,n} z_1  - z_1\Vert_{\rU_0} \to 0$ as $n \to \infty$, we deduce that
\[
\lim_{n \to \infty} \int_0^T \lvert \duality{\bR_0(\phi(s),\phi(s))}{\tilde{\pi}_{0,n} z_1 - z_1}{}{} \rvert\, \d s=0.
\]
Let us now consider the term
\[
I_n(t) \coloneqq \duality{\pi_{0,n}(\pi_{1,n} (\psi^\prime(\phi_n(t)) - \avg{\psi^\prime(\phi_n(t))}) \nabla \phi_n(t)) - \pi_{0,n}([(\psi^\prime(\phi(t)) - \avg{\psi^\prime(\phi(t))})] \nabla \phi(t))}{z_1}{}{}
\]
with $t \in[0,T],\;\; n \in \mathbb{N}$.
We first split the term $I_n$ as follows:
\begin{align*}
I_n=& \duality{\pi_{0,n} (\pi_{1,n} (\psi^\prime(\phi_n) - \avg{\psi^\prime(\phi_n)}) \nabla(\phi_n - \phi))}{z_1}{}{}
\\
& + \duality{\pi_{0,n}(\pi_{1,n} (\psi^\prime(\phi_n) - \avg{\psi^\prime(\phi_n)} - [\psi^\prime(\phi) - \avg{\psi^\prime(\phi)}]) \nabla \phi)}{z_1}{}{}
\\
& + \duality{\pi_{0,n}[(\pi_{1,n} [\psi^\prime(\phi) - \avg{\psi^\prime(\phi)}] - [\psi^\prime(\phi) - \avg{\psi^\prime(\phi)}])\nabla\phi]}{z_1}{}{} \coloneqq I_{1,n} + I_{2,n} + I_{3,n},
\end{align*}
where we omitted $(t)$. \newline
For the term $I_{1,n}$, we use the H\"older inequality and the fact that $\Vert \pi_{0,n} \Vert_{\mathcal{L}(\rU_0^\prime)}\leq 1$. Hence, there exists a generic constant $C>0$ such that for every $n \in \mathbb{N}$,
\begin{align*}
&\int_0^T \lvert I_{1,n}(s)\rvert \d s
=\int_0^T \lvert \duality{\pi_{0,n} (\pi_{1,n} (\psi^\prime(\phi_n(s)) - \avg{\psi^\prime(\phi_n(s))}) \nabla(\phi_n(s) - \phi(s)))}{z_1}{}{}\rvert\,\d s
\\
&\leq \int_0^T \Vert \pi_{0,n} (\pi_{1,n} (\psi^\prime(\phi_n(s)) - \avg{\psi^\prime(\phi_n(s))}) \nabla(\phi_n(s) - \phi(s)))\Vert_{\rU_0^\prime}\,\d s \cdot \Vert z_1 \Vert_{\rU_0}
  \\
&= \int_0^T \Vert \pi_{0,n} \circ \P (\pi_{1,n} (\psi^\prime(\phi_n(s)) - \avg{\psi^\prime(\phi_n(s))}) \nabla(\phi_n(s) - \phi(s)))\Vert_{\rU_0^\prime}\,\d s \cdot \Vert z_1 \Vert_{\rU_0}
\\
&\leq \int_0^T \Vert \pi_{0,n} \Vert_{\mathcal{L}(\rU_0^\prime)} \Vert \P(\pi_{1,n} (\psi^\prime(\phi_n(s)) - \avg{\psi^\prime(\phi_n(s))}) \nabla(\phi_n(s) - \phi(s)))\Vert_{\rU_0^\prime}\,\d s \cdot \Vert z_1 \Vert_{\rU_0}
  \\
&\leq \int_0^T \Vert \P(\pi_{1,n} (\psi^\prime(\phi_n(s)) - \avg{\psi^\prime(\phi_n(s))}) \nabla(\phi_n(s) - \phi(s)))\Vert_{\rU_0^\prime}\,\d s \cdot \Vert z_1 \Vert_{\rU_0}.
\end{align*}
Next, let $z \in \rU_0$ be arbitrarily but fixed. By the Sobolev embedding $\rU_0 \embed \mathbb{L}^\infty(\domO)$ and the fact that $\Vert \pi_{1,n} \Vert_{\mathscr{L}(\newone{H})} \leq 1$, one has for every $n \in \mathbb{N}$,
\begin{align*}
&\lvert \duality{\P(\pi_{1,n} (\psi^\prime(\phi_n) - \avg{\psi^\prime(\phi_n)}) \nabla(\phi_n - \phi))}{z}{}{} \rvert
= \lvert (\P(\pi_{1,n} (\psi^\prime(\phi_n) - \avg{\psi^\prime(\phi_n)}) \nabla(\phi_n - \phi)),z) \rvert
\\
&= \lvert (\pi_{1,n} (\psi^\prime(\phi_n) - \avg{\psi^\prime(\phi_n)}) \nabla(\phi_n - \phi),\P z) \rvert
= \lvert (\pi_{1,n} (\psi^\prime(\phi_n) - \avg{\psi^\prime(\phi_n)}) \nabla(\phi_n - \phi),z) \rvert
\\
&\leq \lvert \pi_{1,n} (\psi^\prime(\phi_n) - \avg{\psi^\prime(\phi_n)}) \rvert_{L^2} \lvert \phi_n - \phi \rvert_{\newone{H}} \Vert z \Vert_{\mathbb{L}^\infty}
\leq C \lvert \pi_{1,n} (\psi^\prime(\phi_n) - \avg{\psi^\prime(\phi_n)}) \rvert_{\newone{H}} \lvert \phi_n - \phi \rvert_{\newone{H}} \Vert z \Vert_{\rU_0}
\\
&\leq C \Vert \pi_{1,n} \Vert_{\mathcal{L}(\newone{H})} \lvert (\psi^\prime(\phi_n) - \avg{\psi^\prime(\phi_n)}) \rvert_{\newone{H}} \lvert \phi_n - \phi \rvert_{\newone{H}} \Vert z \Vert_{\rU_0}
\leq C \lvert (\psi^\prime(\phi_n) - \avg{\psi^\prime(\phi_n)}) \rvert_{\newone{H}} \lvert \phi_n - \phi \rvert_{\newone{H}} \Vert z \Vert_{\rU_0},
\end{align*}
which implies 
$\Vert \P(\pi_{1,n} (\psi^\prime(\phi_n) - \avg{\psi^\prime(\phi_n)}) \nabla(\phi_n - \phi))\Vert_{\rU_0^\prime}
\leq C \lvert (\psi^\prime(\phi_n) - \avg{\psi^\prime(\phi_n)}) \rvert_{\newone{H}} \lvert \phi_n - \phi \rvert_{\newone{H}}.
$
Consequently,
\begin{align*}
&\int_0^T \lvert I_{1,n}(s)\rvert \d s
\leq C \int_0^T \lvert (\psi^\prime(\phi_n(s)) - \avg{\psi^\prime(\phi_n(s))}) \rvert_{\newone{H}} \lvert \phi_n(s) - \phi(s) \rvert_{\newone{H}}\,\d s \cdot \Vert z_1 \Vert_{\rU_0}
\\
&\leq C \Vert \phi_n - \phi \Vert_{L^2(0,T;\newone{H})} \Vert \psi^\prime(\phi_n) - \avg{\psi^\prime(\phi_n)} \Vert_{L^2(0,T;\newone{H})} \Vert  z_1 \Vert_{\rU_0}.
\end{align*}
Moreover, arguing as in \eqref{eq-Psi-second-gradient-phi-n-1} and using the Agmon inequality \eqref{eq-Agmon's-inequalities}, we infer that for every $n \in \mathbb{N}$,
\begin{equation}
\lvert \psi^{\prime\prime}(\phi_n) \nabla \phi_n \rvert_{\mathbb{L}^2}
\leq \lvert \phi_n \rvert_{\newone{H}}  +  C \lvert \phi_n \rvert_{\newone{H}}^2  \Vert \phi_n \Vert_{\zero{H}{2}},
\end{equation}
from which we obtain
\begin{equation}\label{eqn- Psi^prime(phi_n) - avg{Psi^prime(phi_n)}}
\Vert \psi^\prime(\phi_n) - \avg{\psi^\prime(\phi_n)} \Vert_{L^2(0,T;\newone{H})}
\leq C (\Vert \phi_n \Vert_{L^\infty(0,T;\newone{H})} + \Vert \phi_n \Vert_{L^\infty(0,T;\newone{H})}^2 \Vert \phi_n \Vert_{L^2(0,T;\zero{H}{2}(\domO))}).
\end{equation}
Hence, there exists $C>0$ independent of $n$ such that
\begin{align*}
\int_0^T \lvert I_{1,n} (s) \rvert \d s
\leq C \Vert \phi_n - \phi \Vert_{L^2(0,T;\newone{H})} (\Vert \phi_n \Vert_{L^\infty(0,T;\newone{H})} + \Vert \phi_n \Vert_{L^\infty(0,T;\newone{H})}^2 \Vert \phi_n \Vert_{L^2(0,T;\zero{H}{2}(\domO))}) \Vert  z_1 \Vert_{\rU_0}.
\end{align*}
By the assumption of Lemma \ref{R_on-Convergence}, we deduce that
\[
\lim_{n \to \infty} \int_0^T \lvert I_{1,n}(s) \rvert\,\d s=0.
\]
Next, arguing as in the proof of the term $I_{1,n}$, also using the identity \eqref{eqn-Psi'}, we infer that there exists a generic constant $C>0$ independent of $n$ such that
\begin{align*}
&\lvert I_{2,n} \rvert
\leq C \lvert \pi_{1,n} (\psi^\prime(\phi_n) - \avg{\psi^\prime(\phi_n)} - [\psi^\prime(\phi) - \avg{\psi^\prime(\phi)}]) \rvert_{L^2}
\lvert \phi \rvert_{\newone{H}} \Vert z_1 \Vert_{\rU_0}
\\
&\leq C \lvert \psi^\prime(\phi_n) - \avg{\psi^\prime(\phi_n)} - [\psi^\prime(\phi) - \avg{\psi^\prime(\phi)}] \rvert_{\newone{H}}
\lvert \phi \rvert_{\newone{H}} \Vert z_1 \Vert_{\rU_0}
\leq C \lvert \nabla \psi^\prime(\phi_n)  - \nabla \psi^\prime(\phi) \rvert_{\mathbb{L}^2}
\lvert \phi \rvert_{\newone{H}} \Vert z_1 \Vert_{\rU_0} \\
&\leq C (\lvert 3 \phi_n^2 \nabla(\phi_n - \phi) - \nabla(\phi_n - \phi) \rvert_{\mathbb{L}^2} + 3 \lvert (\phi_n - \phi) (\phi_n + \phi) \nabla \phi  \rvert_{\mathbb{L}^2}) \lvert \phi \rvert_{\newone{H}} \Vert z_1 \Vert_{\rU_0}.
\end{align*}
So, arguing as in \eqref{Eqn-8.41} and \eqref{Eqn-8.42}, we deduce there exists $C>0$ independent of $n$ such that
\begin{align*}
\lvert I_{2,n} \rvert
\leq& C(\lvert \phi_n - \phi \rvert_{\newone{H}} + \lvert \phi_n \rvert_{\newone{H}} \Vert \phi_n \Vert_{\zero{H}{2}}  \lvert \phi_n - \phi \rvert_{\newone{H}})\lvert \phi \rvert_{\newone{H}} \Vert z_1 \Vert_{\rU_0}
\\
&\quad + C (\lvert \phi_n - \phi \rvert_{\newone{H}}^{1/2} \Vert \phi_n - \phi \Vert_{\zero{H}{2}}^{1/2}  
\lvert \phi_n + \phi \rvert_{\newone{H}}^{1/2} \Vert \phi_n + \phi \Vert_{\zero{H}{2}}^{1/2}  
\lvert \phi \rvert_{\newone{H}})\lvert \phi \rvert_{\newone{H}} \Vert z_1 \Vert_{\rU_0},
\end{align*}
which, in turn, implies that
\begin{align*}
&\int_0^T \lvert I_{2,n}(s) \rvert\,\d s
\leq C \Bigl[ \Vert \phi_n - \phi \Vert_{L^2(0,T;\newone{H})} + \Vert \phi_n \Vert_{L^\infty(0,T;\newone{H})} \Vert \phi \Vert_{L^\infty(0,T;\newone{H})}
\Vert \phi_n \Vert_{L^2(0,T;\zero{H}{2}(\domO))} \Vert \phi_n - \phi \Vert_{L^2(0,T;\newone{H})}\\
& + \Vert \phi_n - \phi \Vert_{L^2(0,T;\newone{H})}^{\frac12} \Vert \phi_n - \phi \Vert_{L^2(0,T;\zero{H}{2}(\domO))}^{\frac12} \Vert \phi_n + \phi \Vert_{L^2(0,T;\zero{H}{2}(\domO))}^{\frac12} \Vert \phi_n + \phi \Vert_{L^\infty(0,T;\newone{H})}^{\frac12} \Vert \phi \Vert_{L^\infty(0,T;\newone{H})}^2\Bigr]\Vert z_1 \Vert_{\rU_0}.
\end{align*}
By the uniform boundedness of the sequence $(\phi_n)_{n \in \mathbb{N}}$ in the spaces $L^\infty(0,T;\newone{H})$ and  $L^2(0,T;\zero{H}{2}(\domO))$,  
cf. Lemma \ref{R_on-Convergence}, together with the convergence $\phi_n \to \phi$ in $L^2(0,T;\newone{H})$, we deduce that
\[
\lim_{n \to \infty} \int_0^T \lvert I_{2,n}(s) \rvert\,\d s=0.
\]
Let us now consider the term $I_{3,n}$. Firstly, by arguing as in the proof of $I_{1,n}$, we infer that there exists a generic constant $C>0$ such that for every $n \in \mathbb{N}$,
\begin{align}\label{Eqn-S_n Psi-prime}
&\int_0^T \lvert I_{3,n}(s) \rvert\, \d s
\leq C \int_0^T \lvert \pi_{1,n} \psi^\prime(\phi(s)) - \psi^\prime(\phi(s)) \rvert_{L^2} \lvert \phi(s) \rvert_{\newone{H}} \, \d s \cdot \Vert \pi_{0,n} z_1 \Vert_{\rU_0} \notag
\\
&\leq C \int_0^T \lvert \pi_{1,n} [\psi^\prime(\phi(s)) - \avg{\psi^\prime(\phi(s))}] - [\psi^\prime(\phi(s)) - \avg{\psi^\prime(\phi(s))}] \rvert_{L^2} \lvert \phi(s) \rvert_{\newone{H}} \, \d s \cdot \Vert z_1 \Vert_{\rU_0} \notag
\\
&\leq C \int_0^T \lvert \pi_{1,n} [\psi^\prime(\phi(s)) - \avg{\psi^\prime(\phi(s))}] - [\psi^\prime(\phi(s)) - \avg{\psi^\prime(\phi(s))}] \rvert_{\newone{H}} \lvert \phi(s) \rvert_{\newone{H}}\, \d s \cdot \Vert z_1 \Vert_{\rU_0} 
\\
&\leq C \left(\int_0^T \lvert \pi_{1,n} [\psi^\prime(\phi(s)) - \avg{\psi^\prime(\phi(s))}] - [\psi^\prime(\phi(s)) - \avg{\psi^\prime(\phi(s))}] \rvert_{\newone{H}}^2\,\d s \right)^{1/2} \Vert \phi \Vert_{L^2(0,T;\newone{H})} \cdot \Vert z_1 \Vert_{\rU_0}. \notag
\end{align}
Next, arguing as in the proof of \eqref{eqn- Psi^prime(phi_n) - avg{Psi^prime(phi_n)}}, we infer that
\begin{equation*}
\Vert \psi^\prime(\phi) - \avg{\psi^\prime(\phi)} \Vert_{L^2(0,T;\newone{H})}
\leq C (\Vert \phi \Vert_{L^\infty(0,T;\newone{H})} + \Vert \phi \Vert_{L^\infty(0,T;\newone{H})}^2 \Vert \phi \Vert_{L^2(0,T;\zero{H}{2}(\domO))})<\infty.
\end{equation*}
Hence, by the DCT, we deduce that
   \begin{equation*}
     \lim_{n \to \infty} \int_0^T \lvert I_{3,n}(s) \rvert\,\d s=0.
   \end{equation*}
Consequently,
   \begin{equation*}
     \lim_{n \to \infty} \int_0^T \lvert I_n(s) \rvert\,\d s=0.
   \end{equation*}
This completes the proof of Lemma \ref{R_on-Convergence}.
\end{proof}
\begin{lemma}\label{B_1n-Convergence}
Suppose $\phi \in L^2(0,T;\zero{H}{2}(\domO)) \cap L^{\frac32}(0,T;\newone{V})$ and $\bu \in L^2(0,T;\StokesV)$. Assume also that 
\begin{trivlist} 
\item[(i)] the sequence $(\bu_n)_{n \in \mathbb{N}}$ is bounded in $L^2(0,T;\StokesV)$, i.e. 
\[
\sup_{n \in \mathbb{N}} \int_0^T \Vert \bu_n(s) \Vert_{\StokesV}^{2}\, \d s< \infty,
\]
\item[(ii)] the sequence $(\phi_n)_{n \in \mathbb{N}}$ is bounded in $L^2(0,T;\zero{H}{2}(\domO)) \cap L^{\frac32}(0,T;\newone{V})$, i.e.  
\[
\sup_{n \in \mathbb{N}} \left\{\int_0^T \Vert \phi_n(s) \Vert_{\zero{H}{2}}^{2}\, \d s + \int_0^T \Vert \phi_n(s) \Vert_{\newone{V}}^{3/2}\, \d s
\right\}< \infty
\]
and
\item[(iii)]  $\bu_n \to \bu$ in $L^2(0,T;\StokesH)$ and $\phi_n \to \phi$ in $L^2(0,T;\newone{H})$. 
\end{trivlist}
Then, for every $z_2 \in \newone{V}$,
    \begin{equation}
        \lim_{ n \to \infty} \int_0^T \lvert \duality{B_{1,n}(\bu_n(s),\phi_n(s)) - B_1(\bu(s),\phi(s))}{z_2}{\newone{V}}{\newonep{V}} \rvert\,\d s= 0.
     \end{equation}
\end{lemma}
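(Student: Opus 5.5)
The plan is to mimic the proofs of Lemmas \ref{B_on-Convergence} and \ref{R_on-Convergence}, moving the Galerkin projection onto the test function and splitting the difference into terms that are each controlled by a strong convergence times a uniform bound. Fix $z_2 \in \newone{V}$ and write $\langle\cdot,\cdot\rangle$ for the $\newone{V}$--$\newonep{V}$ pairing. By Properties \ref{eqn-properties-pi_{1,n}} (i), $\langle B_{1,n}(\bu_n,\phi_n), z_2\rangle = \langle B_1(\bu_n,\phi_n), \tilde{\pi}_{1,n} z_2\rangle$. Bilinearity of $B_1$ then gives
\[
\langle B_{1,n}(\bu_n,\phi_n) - B_1(\bu,\phi), z_2\rangle = \langle B_1(\bu_n-\bu,\phi_n), \tilde\pi_{1,n}z_2\rangle + \langle B_1(\bu,\phi_n-\phi), \tilde\pi_{1,n}z_2\rangle + \langle B_1(\bu,\phi), \tilde\pi_{1,n}z_2 - z_2\rangle .
\]
Each term can be identified with $b_1$ through Proposition \ref{prop-B_1-b_1} or the extensions, and we will use $\Vert \tilde\pi_{1,n} z_2\Vert_{\newone{V}} \le \Vert z_2\Vert_{\newone{V}}$.

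\emph{Third term.} Estimate \eqref{eq-B1} gives the bound $C\Vert \bu\Vert_{\mathbb{L}^4}\Vert\phi\Vert_{L^4}\Vert \tilde\pi_{1,n}z_2 - z_2\Vert_{\newone{V}}$. The time integral of $\Vert \bu\Vert_{\mathbb{L}^4}\Vert\phi\Vert_{L^4}$ is finite, since $\bu \in L^2(0,T;\StokesV)$, $\phi\in L^2(0,T;\newone{H})$ and $\newone{H}\embed L^4$. Property (iv) then makes this term vanish.

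\emph{Second term.} Again by \eqref{eq-B1} it is at most $C\Vert\bu\Vert_{\StokesV}\,\lvert\phi_n-\phi\rvert_{\newone{H}}\Vert z_2\Vert_{\newone{V}}$. Cauchy--Schwarz in time and $\phi_n\to\phi$ in $L^2(0,T;\newone{H})$ give convergence to zero.

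\emph{First term (the main obstacle).} Here we only know $\bu_n\to\bu$ in $L^2(0,T;\StokesH)$, not in $\mathbb{L}^4$. I would use the representation \eqref{eqn-b_1-trilinear-form-2} for $\phi_n\in\zero{H}{2}$ and the test function $w=\tilde\pi_{1,n}z_2 \in \newone{V}\subset H^3$ to write
\[
b_1(\bu_n-\bu,\phi_n,w) = \int_\domO (\bu_n-\bu)\cdot\nabla\phi_n\,(-\Delta w)\,\d x .
\]
This is bounded by $\lvert\bu_n-\bu\rvert_{\mathbb{L}^2}\,\Vert\nabla\phi_n\Vert_{\mathbb{L}^4}\,\Vert\Delta w\Vert_{L^4}$, and the Sobolev embedding $H^1\embed L^4$ gives $\Vert \Delta w\Vert_{L^4}\le C\Vert z_2\Vert_{\newone{V}}$. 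Moreover $\Vert\nabla\phi_n\Vert_{\mathbb{L}^4}\le C\Vert\phi_n\Vert_{\zero{H}{2}}$, which is uniformly bounded in $L^2(0,T)$ by (ii). Hölder in time then yields
\[
\int_0^T \lvert\cdots\rvert\,\d s \le C\,\Vert \bu_n-\bu\Vert_{L^2(0,T;\StokesH)}\,\sup_n\Vert\phi_n\Vert_{L^2(0,T;\zero{H}{2})}\,\Vert z_2\Vert_{\newone{V}} \to 0 .
\]
If the equality of $B_1$ with $b_1$ on these arguments needs justification, I would invoke Proposition \ref{prop-B_1-b_1} together with density of $\zero{H}{2}$, noting that both sides are continuous in $\phi_n$ in the $\zero{H}{2}$ norm. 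The $L^{3/2}(0,T;\newone{V})$ bound in (ii) appears not to be needed, except possibly as a backup route via \eqref{eqn-b_1-trilinear-form-4}. Summing the three terms completes the argument.
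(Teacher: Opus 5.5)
Your proof is correct and follows the paper's route. It uses the same three-term splitting after moving $\pi_{1,n}$ onto the test function, with each term bounded by a strongly vanishing factor times a uniform bound. The only difference is in the H\"older/Sobolev bookkeeping. The paper bounds the first term via $\mathbb{L}^3\times\mathbb{L}^6\times L^2$, which needs the uniform $L^2(0,T;\StokesV)$ bound (i) on $\bu_n$, and the other two via $\mathbb{L}^6\times\mathbb{L}^3\times L^2$ with Gagliardo--Nirenberg. You instead place $\Delta\tilde\pi_{1,n}z_2$ in $L^4$ for the first term and use \eqref{eq-B1} for the last two. As a result your proof never uses hypothesis (i), and uses (ii) only through the $L^2(0,T;\zero{H}{2}(\domO))$ bound.
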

\begin{proof}[Proof of Lemma \ref{B_1n-Convergence}]
In the sequel, the duality pairing $\duality{\cdot}{\cdot}{\newone{V}}{\newonep{V}}$ will be denoted by $\duality{\cdot}{\cdot}{}{}$.
Let us choose and fix $z_2 \in \newone{V}$. Notice that
\begin{align*}
&\duality{B_{1,n}(\bu_n,\phi_n)}{z_2}{}{}
=(B_{1,n}(\bu_n,\phi_n), z_2)_{\newone{H}}
=(B_1(\bu_n,\phi_n), \tilde{\pi}_{1,n} z_2)_{\newone{H}}
\\
&=b_1(\bu_n,\phi_n, \tilde{\pi}_{1,n} z_2)
= b_1(\bu_n - \bu,\phi_n, \tilde{\pi}_{1,n} z_2) + b_1(\bu,\phi_n - \phi, \tilde{\pi}_{1,n} z_2) + b_1(\bu,\phi, \tilde{\pi}_{1,n} z_2),
\end{align*}
and $\duality{B_1(\bu,\phi)}{z_2}{}{}= b_1(\bu,\phi,z_2)$. Hence,
\begin{align*}
&\duality{B_{1,n}(\bu_n,\phi_n)}{z_2}{}{} - \duality{B_1(\bu,\phi)}{z_2}{}{}
\\
&= b_1(\bu_n - \bu,\phi_n, \tilde{\pi}_{1,n} z_2) + b_1(\bu,\phi_n - \phi, \tilde{\pi}_{1,n} z_2) + b_1(\bu,\phi, \tilde{\pi}_{1,n} z_2 - z_2).
\end{align*}
Using the H\"older inequality, the fact that $\mathbb{H}^1(\domO) \embed \mathbb{L}^6(\domO)$ continuously and $\Vert \tilde{\pi}_{1,n} \Vert_{\mathcal{L}(\newone{V})}\leq 1$,
we infer that for every $n \in \mathbb{N}$,
\begin{align*}
&\lvert b_1(\bu_n - \bu,\phi_n, \tilde{\pi}_{1,n} z_2) \rvert
= \left \lvert \int_{\domO} [(\bu_n - \bu) \cdot \nabla \phi_n] \tilde{\pi}_{1,n} (-\Delta z_2)\,\d x \right\rvert
\\
&\leq \Vert \bu_n - \bu \Vert_{\mathbb{L}^3} \Vert \nabla \phi_n \Vert_{\mathbb{L}^6} \lvert \tilde{\pi}_{1,n} (-\Delta z_2) \rvert_{L^2}
\leq \Vert \bu_n - \bu \Vert_{\mathbb{L}^3} \Vert \nabla \phi_n \Vert_{\mathbb{L}^6} \lvert \tilde{\pi}_{1,n} \Delta z_2 \rvert_{\newone{H}}
\\
&\leq C \lvert \bu_n - \bu \rvert_{L^2}^{1/2} \Vert \bu_n - \bu \Vert_{\StokesV}^{1/2} \Vert \phi_n \Vert_{\zero{H}{2}} \Vert \tilde{\pi}_{1,n} z_2 \Vert_{\newone{V}}
\leq C \lvert \bu_n - \bu \rvert_{L^2}^{1/2} \Vert \bu_n - \bu \Vert_{\StokesV}^{1/2} \Vert \phi_n \Vert_{\zero{H}{2}} \Vert z_2 \Vert_{\newone{V}}.
\end{align*}
From the above inequalities, we deduce that for every $n \in \mathbb{N}$,
\begin{align*}
&\int_0^T \lvert b_1(\bu_n(s) - \bu(s),\phi_n(s), \tilde{\pi}_{1,n} z_2) \rvert\,\d s
\leq C \Vert \bu_n - \bu \Vert_{L^2(0,T;\StokesH)}^{1/2} \Vert \bu_n - \bu \Vert_{L^2(0,T;\StokesV)}^{1/2} \Vert \phi_n \Vert_{L^2(0,T;\zero{H}{2})} \Vert z_2 \Vert_{\newone{V}}
\\
&\leq C \Vert \bu_n - \bu \Vert_{L^2(0,T;\StokesH)}^{1/2} (\Vert \bu_n \Vert_{L^2(0,T;\StokesV)}^{1/2} + \Vert \bu \Vert_{L^2(0,T;\StokesV)}^{1/2} ) \Vert \phi_n \Vert_{L^2(0,T;\zero{H}{2})} \Vert z_2 \Vert_{\newone{V}}.
\end{align*}
Since $\bu_n \to \bu$ in $L^2(0,T;\StokesH)$ and the sequences $(\bu_n)_{n \in \mathbb{N}}$ and $(\phi_n)_{n \in \mathbb{N}}$ are uniformly bounded in $L^2(0,T;\StokesV)$ and $L^2(0,T;\zero{H}{2}(\domO))$, respectively, we infer that
\[
\lim_{n \to \infty} \int_0^T \lvert b_1(\bu_n(s) - \bu(s),\phi_n(s), \tilde{\pi}_{1,n} z_2) \rvert\,\d s=0.
\]
Let us now consider the term $b_1(\bu,\phi_n - \phi, \tilde{\pi}_{1,n} z_2)$. 
Using the H\"older and the Gagliardo-Nirenberg interpolation inequality, and since $\StokesV \subset \mathbb{L}^6$ by the Sobolev embedding Theorem,
we infer that there exists $C>0$ independent of $n$ such that, if $d=2$,
\begin{align*}
&\lvert b_1(\bu,\phi_n - \phi, \tilde{\pi}_{1,n} z_2) \rvert
\leq \Vert \bu \Vert_{\mathbb{L}^6} \Vert \nabla (\phi_n - \phi) \Vert_{\mathbb{L}^3} \lvert \tilde{\pi}_{1,n} (-\Delta z_2) \rvert_{L^2}
\\
&\leq \Vert \bu \Vert_{\mathbb{L}^6} \Vert \nabla (\phi_n - \phi) \Vert_{\mathbb{L}^3} \lvert \tilde{\pi}_{1,n} \Delta z_2 \rvert_{\newone{H}}
 \leq C \Vert \bu \Vert_{\StokesV} \lvert \phi_n - \phi \rvert_{\newone{H}}^{1/3} \Vert \phi_n - \phi \Vert_{\zero{H}{2}}^{2/3} \Vert z_2 \Vert_{\newone{V}}
 \\
&\leq C \Vert \bu \Vert_{\StokesV} \lvert \phi_n - \phi \rvert_{\newone{H}}^{1/3} (\Vert \phi_n \Vert_{\zero{H}{2}}^{2/3} + \Vert \phi \Vert_{\zero{H}{2}}^{2/3}) \Vert z_2 \Vert_{\newone{V}}. 
\end{align*}
Thus, in the case $d=2$, we deduce that for every $n \in \mathbb{N}$,
\begin{align*}
&\int_0^T \lvert b_1(\bu(s),\phi_n(s) - \phi(s),\tilde{\pi}_{1,n} z_2) \rvert\,\d s
\\
&\leq C \Vert \bu \Vert_{L^2(0,T;\StokesV)} \Vert \phi_n - \phi \Vert_{L^2(0,T;\newone{H})}^{1/3} (\Vert \phi_n \Vert_{L^2(0,T;\zero{H}{2})}^{2/3} + \Vert \phi \Vert_{L^2(0,T;\zero{H}{2})}^{2/3}) \Vert z_2 \Vert_{\newone{V}}.
\end{align*}
Arguing as previously, we obtain in the case $d=3$ that for every $n \in \mathbb{N}$,
\begin{align*}
&\int_0^T \lvert b_1(\bu(s),\phi_n(s) - \phi(s),\tilde{\pi}_{1,n} z_2) \rvert\,\d s\\
&\leq C \int_0^T \Vert \bu(s) \Vert_{\StokesV} \lvert \phi_n(s) - \phi(s) \rvert_{\newone{H}}^{\frac12} (\Vert \phi_n(s) \Vert_{\zero{H}{2}}^{\frac12} + \Vert \phi(s) \Vert_{\zero{H}{2}}^{\frac12}) \,\d s \cdot \Vert z_2 \Vert_{\newone{V}}
\\
&\leq C \Vert \bu \Vert_{L^2(0,T;\StokesV)} \Vert \phi_n - \phi \Vert_{L^2(0,T;\newone{H})}^{1/2} (\Vert \phi_n \Vert_{L^2(0,T;\zero{H}{2})}^{1/2} + \Vert \phi \Vert_{L^2(0,T;\zero{H}{2})}^{1/2}) \Vert z_2 \Vert_{\newone{V}}.
\end{align*}
Since $\phi_n \to \phi$ in $L^2(0,T;\newone{H})$ and $(\phi_n)_{n \in \mathbb{N}} \subset L^2(0,T;\zero{H}{2}(\domO))$ is uniformly bounded, we infer that
\[
\lim_{n \to \infty} \int_0^T \lvert b_1(\bu(s),\phi_n(s) - \phi(s),\tilde{\pi}_{1,n} z_2) \rvert\,\d s=0.
\]
By the H\"older inequality and since $\StokesV \subset \mathbb{L}^6$ and $\mathbb{H}^1(\domO) \subset \mathbb{L}^3(\domO)$ by the Sobolev embedding Theorem, we deduce that there exists $C>0$ independent of $n$ such that
\begin{align*}
&\int_0^T \lvert b_1(\bu(s),\phi(s), \tilde{\pi}_{1,n} z_2 - z_2) \rvert\,\d s
\leq \int_0^T \Vert \bu(s) \Vert_{\mathbb{L}^6} \Vert \nabla \phi(s) \Vert_{\mathbb{L}^3}\,\d s \cdot \lvert \tilde{\pi}_{1,n} (-\Delta z_2) + \Delta z_2\rvert_{L^2}
\\
&\leq C \int_0^T \Vert \bu(s) \Vert_{\StokesV} \Vert \phi(s) \Vert_{\zero{H}{2}}\,\d s \cdot \lvert \tilde{\pi}_{1,n} (-\Delta z_2) + \Delta z_2\rvert_{\newone{H}}
\leq C \int_0^T \Vert \bu(s) \Vert_{\StokesV} \Vert \phi(s) \Vert_{\zero{H}{2}}\,\d s \cdot \Vert \tilde{\pi}_{1,n} z_2 - z_2 \Vert_{\newone{V}}
\\
&\leq C \Vert \bu \Vert_{L^2(0,T;\StokesV)} \Vert \phi \Vert_{L^2(0,T;\zero{H}{2})} \Vert \tilde{\pi}_{1,n} z_2 - z_2 \Vert_{\newone{V}}.
\end{align*}
Since $\Vert \tilde{\pi}_{1,n} z_2 - z_2 \Vert_{\newone{V}} \to 0$ as $n \to \infty$, we can pass to the limit in the above inequalities and therefore deduce that
\[
\lim_{n \to \infty} \int_0^T \lvert b_1(\bu(s),\phi(s), \tilde{\pi}_{1,n} z_2 - z_2) \rvert\,\d s=0.
\]
It then follows that
\begin{equation*}
\lim_{n \to \infty} \int_0^T \lvert \duality{B_{1,n}(\bu_n(s),\phi_n(s)) - B_1(\bu(s),\phi(s))}{z_2}{}{} \rvert\,\d s= 0.
\end{equation*}
This completes the proof of Lemma \ref{B_1n-Convergence}.
\end{proof}


\section{About solution of the martingale problem}\label{app-Solution-Martingale-problem}
\begin{definition}\label{def-Q(E')}
Assume that  $\rU$ is a topological vector space and $\rU^\prime$ its topological dual.
We denote by $\mathscr{L}^+(\rU,\rU^\prime)$ the space of symmetric non-negative definite linear maps $Q:\rU \to \rU^\prime $, i.e. 
for all $ f,\, g \in \rU$  the following two conditions hold, 
\begin{equation*}
\begin{aligned}
    \duality{f}{Q g}{\rU^\prime}{\rU}&= \duality{g}{Q f}{\rU^\prime}{\rU} \mbox{ and } 
    \duality{f}{Q f}{\rU^\prime}{\rU} \geq 0.
\end{aligned}
\end{equation*}
\end{definition}
\begin{definition}\label{def-precictable}
Assume that  $(\Omega,\mathscr{F},\mathbb{P})$ is  a probability space with the right-continuous filtration of $\sigma$-fields $\mathbb{F}=(\mathscr{F}_t)_{t \in [0,T]}$. 
We denote by $\mathscr{P}(\mathbb{F})$ the $\sigma$-field of predictable subsets  of $[0,\infty) \times \Omega$,  with respect to $\mathbb{F}$, i.e. the  $\sigma$-field  generated by the following family of subsets of $[0,\infty) \times \Omega$:
\[
\bigcup_{0\leq s < t < \infty} \left\{ (s,t] \times A: A \in \mathscr{F}_s \right\},
\]
see \cite[Section 1.7]{Metivier+Pellaumail_1980}.\\
By $\mathcal{M}_{\loc}^{\rc}(\mathbb{F},\mathbb{P})$, we denote  the class of $\mathbb{R}$-valued local c\`adl\`ag martingales on the stochastic basis $(\Omega,\mathscr{F},\mathbb{F},\mathbb{P})$. 
A process $u:[0,\infty)\times \Omega \to \rU$ is called predictable, or more precisely $\mathbb{F}$-predictable, if and only if the map 
\[
u: [0,\infty)\times \Omega \to \rU
\]
is $\mathscr{P}(\mathbb{F})/\mathscr{B}(\rU)$ measurable. 
\end{definition}

\begin{definition}\label{def-strongly predictable}
Assume that  $\rU$ is a topological vector space and $\rU^\prime$ its topological dual.

A function
\[
Q: [0,\infty) \times \Omega \to \mathscr{L}^+(\rU,\rU^\prime) 
\]
is said to be  a strongly predictable process if and only if for every $f \in \rU$, the new process 
  \[
[0,\infty) \times \Omega \ni (t,\omega)  \mapsto  Q(t,\omega)[f] \in \rU^\prime 
\]  
is predictable in the sense of the previous Definition \ref{def-precictable}, i.e.  for all $f,\, g \in \rU$ the process  
    \begin{equation*}
      s \mapsto \duality{f}{Q_s g}{\rU^\prime}{\rU} \mbox{ is } \mathscr{P}(\mathbb{F})\mbox{-measurable}.
   \end{equation*}
\end{definition}
Next, let $\rU \subset V \subset H$ be separable Banach spaces with dense injections. Assume that $\rU$ and $H$ are Hilbert spaces.
By identifying $H$ with its dual $H^\prime$, we have the following embedding
\[
\rU \embed  V \embed   H \cong H^\prime \embed  V^\prime \embed  \rU^\prime
\]
with $\rU^\prime$ and $V^\prime$ being the duals of the space $\rU$ and $V$, respectively. \\
From now on, we will use the Gelfand triple $(\rU,H,\rU^\prime)$.

\begin{definition}\label{def-canonical-process}
Put
\begin{equation}\label{eqn-bZ}
\bZ= C([0,\infty),\rU^\prime),
\end{equation}
and let $X$ be the canonical process on $\bZ$ defined by
 \[
 X: [0,\infty) \times  \bZ \ni (t,\omega) \mapsto \omega(t) \in \rU^\prime.
 \]
As in \cite{Mik+Roz_1998}, we introduce the following objects 
 \begin{align}
     \mathscr{D}_t= \sigma(X_s,\,s \leq t),\;\;
     \mathscr{D}_{t +}= \bigcap_{s >t} \mathscr{D}_s,\;\;
     \mathbb{D}= (\mathscr{D}_{t +})_{t \in [0,T]}, \mbox{ and }
     \mathscr{D}= \sigma(X_s,\,s \geq 0),
      \end{align}
and we also refer the reader to the definition \eqref{eqn-D_t-D_t+} and \eqref{eqn-mb-D}.
\end{definition}

\begin{definition}[cf. \cite{Mik+Roz_1998}]\label{def-martingale problem}
Let $x_0 \in H$. Assume that functions 
\begin{align}\label{eqn-Lambda-1}
&\Lambda: [0,\infty) \times V \to \rU^\prime, 
\\
\label{eqn-Q}
&Q: [0,\infty) \times V \to \mathscr{L}^+(\rU,\rU^\prime),
\end{align}
are predictable, resp. strongly predictable, processes.

\noindent
A probability measure $\mathbb{P}$ on $\left(\bZ,\mathscr{D}\right)$ is said to be a solution of the martingale problem $(x_0,\Lambda,Q)$ if and only if the following conditions are satisfied 
\begin{trivlist}
\item[(i)]
$X_s \in V $,  $\Leb \otimes  \mathbb{P}$-almost surely for $(s,\omega) \in \mathbb{R}_+ \times \bZ$.
\item[(ii)] for every  $t>0$, 
$\int_0^t \Vert \Lambda(s,X_s) \Vert_{\rU^\prime}\,\d s< \infty$, $\mathbb{P}$-a.s.,
\item[(iii)]
and for every $u \in \rU$, a process $M_t(u)$ defined by 
      \begin{align*}
             M_t(u)\coloneqq  \duality{X_t}{u}{\rU}{\rU^\prime} - \duality{x_0}{u}{\rU}{\rU^\prime}  - \int_0^t \duality{\Lambda(s,X_s)}{u}{\rU}{\rU^\prime} \, \d s 
             \end{align*}
belongs to $\mathcal{M}_{\loc}^c(\mathbb{D},\mathbb{P})$ and for all  $u,\,v \in \rU$,
\begin{align*}
               M_t(u) M_t(v)- \int_0^t \duality{Q(s,X_s) u}{v}{\rU}{\rU^\prime} \, \d s \in \mathcal{M}_{\loc}^c(\mathbb{D},\mathbb{P}).
      \end{align*}
\end{trivlist}
The set of all solutions to the martingale problem $(x_0, \Lambda, Q)$ is denoted by $\mathbb{S}(x_0, \Lambda, Q)$.
\end{definition}
Now we report the following important results borrowed from \cite[Section Martingale Problems]{Mik+Roz_1998}, especially Lemma 3.2 on page 280. 
\begin{definition}\label{eqn-varphi^u}
    Let $u \in \rU$ be arbitrary but fixed. We define a function  $\varphi^u: [0,\infty) \times V \to \mathbb{C}$ by  
  \begin{equation*}
      \varphi^u(s,X)= i \duality{\Lambda(s,X)}{u}{\rU}{\rU^\prime} - \frac{1}{2} \,\duality{Q(s,X) u}{u}{\rU}{\rU^\prime}, \; \;s\geq 0, \;\; X \in V.
  \end{equation*}
\end{definition}

\begin{lemma}\label{Lema-A.4}
Let $\mathbb{P}$ be a probability measure on $\bZ$ such that conditions (i) and (ii) of Definition \ref{def-martingale problem} are satisfied. 
Then the following three conditions are equivalent. 
\begin{trivlist}
    \item[(i)] $\mathbb{P} \in \mathbb{S}(x_0, \Lambda, Q)$; 
    \item[(ii)] 
    there exists a  dense vector subspace $\rU_0 \subset \rU$ such that 
for each $u \in \rU_0 $ the process $M^u$ defined by the following formula 
\begin{equation}\label{eqn-M^u}
M_t^u= e^{i \langle X_t,u \rangle} - \int_0^t e^{i \langle X_s,u \rangle} \varphi^u(s,X_s) \, \d s , \;\; t \in [0,T]
\end{equation}
belongs to $ \mathcal{M}_{\loc}^c(\mathbb{D},\mathbb{P})$.
\item[(iii)] 
    there exists a   subset $\rU_{00} \subset \rU$ such that the linear span of   $\rU_{00}$ is a dense subset of $\rU$
and for each $u \in \rU_{00} $ the process  $M^u$ defined by formula  \eqref{eqn-M^u} belongs to $ \mathcal{M}_{\loc}^c(\mathbb{D},\mathbb{P})$.
\end{trivlist}
\end{lemma}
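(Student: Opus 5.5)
The plan is to prove the cycle (i)$\Rightarrow$(ii)$\Rightarrow$(iii)$\Rightarrow$(i). The implication (ii)$\Rightarrow$(iii) is trivial: take $\rU_{00}=\rU_0$. For (i)$\Rightarrow$(ii) we take $\rU_0=\rU$. Fix $u\in\rU$. By (i), $\langle X_t,u\rangle = \langle x_0,u\rangle + \int_0^t\langle \Lambda(s,X_s),u\rangle\,\d s + M_t(u)$ is a continuous real semimartingale. Its martingale part has quadratic variation $\int_0^t\langle Q(s,X_s)u,u\rangle\,\d s$, and its drift has finite variation by condition (ii) of Definition \ref{def-martingale problem}. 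Applying It\^o's formula to $x\mapsto e^{ix}$ gives
\[
e^{i\langle X_t,u\rangle}-\int_0^t e^{i\langle X_s,u\rangle}\varphi^u(s,X_s)\,\d s = e^{i\langle x_0,u\rangle}+\int_0^t i e^{i\langle X_s,u\rangle}\,\d M_s(u).
\]
The right-hand side is a continuous local martingale, and the integrand is bounded.

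For the main direction (iii)$\Rightarrow$(i), I would argue in the same way as Part 1 and Part 2 of the proof of Theorem \ref{First-main-result}. The starting observation is that (iii) for $u$ implies (iii) for $\theta u$, uniformly in $\theta\in\mathbb{R}$. This does not follow formally, so I would first replace (iii) by the statement with $\theta u$. In the paper's setting this is what Remark \ref{rem-def-martingale-solution} ensures, and I expect to need to either assume it or derive it. One way to derive it is via the semimartingale argument of Lemma \ref{lem-semimartingale}: $\sin\langle X,u\rangle$ and $\cos\langle X,u\rangle$ are semimartingales, so $\langle X,u\rangle$ is a continuous semimartingale. Identifying the drift and bracket of $e^{i\langle X,u\rangle}$ via It\^o's formula then shows $\langle X,u\rangle=\langle x_0,u\rangle+\int_0^t\langle\Lambda,u\rangle\,\d s+L^u$ with $\langle L^u\rangle=\int_0^t\langle Qu,u\rangle\,\d s$.

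Concretely, write $Y=\langle X,u\rangle=A+L$ with $A$ of finite variation. It\^o's formula gives $e^{iY}=e^{iY_0}+\int ie^{iY}\,\d A+\int ie^{iY}\,\d L-\tfrac12\int e^{iY}\,\d\langle L\rangle$. Comparing with \eqref{eqn-M^u} and using uniqueness of the canonical decomposition of continuous semimartingales, we get
\[
\int ie^{iY}\,\d A-\tfrac12\int e^{iY}\,\d\langle L\rangle=\int e^{iY}\bigl(i\langle\Lambda,u\rangle-\tfrac12\langle Qu,u\rangle\bigr)\,\d s.
\]
Multiplying by $e^{-iY}$ and taking real and imaginary parts gives $\d A=\langle\Lambda,u\rangle\,\d s$ and $\d\langle L\rangle=\langle Qu,u\rangle\,\d s$. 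This shows that $M(u)$ is a continuous local martingale with the correct bracket for $u\in\rU_{00}$.

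By polarization, $\langle M(u),M(v)\rangle=\int\langle Q u,v\rangle\,\d s$ holds for $u,v$ in $\rU_{00}$. Linearity of $u\mapsto M(u)$ and the symmetry and bilinearity of $Q$ extend this to $\operatorname{span}\rU_{00}$.

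The main obstacle is the density extension from $\operatorname{span}\rU_{00}$ to all of $\rU$. For $u_n\to u$ in $\rU$, we have $M_t(u_n)\to M_t(u)$ pathwise uniformly on compacts. This uses $X\in C([0,\infty),\rU')$ and $\int_0^t\|\Lambda\|_{\rU'}\,\d s<\infty$ a.s. We also need $\int_0^t|\langle Q(u_n-u),u_n-u\rangle|\,\d s\to0$. For this I would localize with stopping times bounding $\int_0^t\|Q(s,X_s)\|_{\mathscr{L}(\rU,\rU')}\,\d s$, which I would have to assume finite a.s. as part of the framework, and then argue that the stopped local martingales are Cauchy in $L^2$ by Doob's inequality. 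Limits of continuous local martingales under such localization are continuous local martingales, and the bracket identity passes to the limit as well, which gives (i).
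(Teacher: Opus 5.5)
Your cycle breaks at (iii)$\Rightarrow$(i), at the sentence ``By polarization, $\langle M(u),M(v)\rangle=\int\langle Qu,v\rangle\,\d s$ holds for $u,v$ in $\rU_{00}$.'' Hypothesis (iii) gives you, for each $u\in\rU_{00}$ separately, two things: the drift of $\langle X,u\rangle$, and the \emph{diagonal} bracket $\langle M(u)\rangle_t=\int_0^t\langle Q(s,X_s)u,u\rangle\,\d s$. Polarization, $\langle M(u),M(v)\rangle=\frac14(\langle M(u+v)\rangle-\langle M(u-v)\rangle)$, needs the brackets of $M(u\pm v)$, and $u\pm v$ need not lie in $\rU_{00}$. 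Linearity of $u\mapsto M(u)$ and bilinearity of $Q$ carry the drift identification over to $\mathrm{span}\,\rU_{00}$. They do not help with the cross-variations $\langle M(u),M(v)\rangle$ for $u\neq v$ in $\rU_{00}$, which (iii) simply does not constrain.

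This cannot be patched, because (iii)$\Rightarrow$(i) is false as stated. Take $\rU=V=H=\mathbb{R}^2$, $x_0=0$, $\Lambda\equiv0$, $Q\equiv I$, $\rU_{00}=\{e_1,e_2\}$, and let $\mathbb{P}$ be the law of $(B,B)$ for a one-dimensional Brownian motion $B$. For $j=1,2$ the process $M^{e_j}_t=e^{iB_t}+\frac12\int_0^te^{iB_s}\,\d s=1+\int_0^t ie^{iB_s}\,\d B_s$ is a martingale. Yet $M_t(e_1)M_t(e_2)-\int_0^t\langle Qe_1,e_2\rangle\,\d s=B_t^2$ is not a local martingale.

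What your argument does establish is (i)$\Leftrightarrow$(ii). In (ii) the set $\rU_0$ is a vector space, so $u\pm v\in\rU_0$ and polarization is legitimate there. Also $u/k\in\rU_0$, so the argument of Lemma \ref{lem-semimartingale} applies verbatim to show that $\langle X,u\rangle$ is a semimartingale. With a single $u$ you would instead have to patch $\sin$ and $\cos$ of $\langle X,u\rangle$ along the stopping times at which it has moved by $1$. The paper gives no proof of this lemma; it is quoted from Mikulevicius--Rozovskii. Its own arguments in Parts 1--2 of the proof of Theorem \ref{First-main-result} likewise use test vectors ranging over the vector space $\mathscr{V}$.

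Two smaller points:
\begin{itemize}
\item In (i)$\Rightarrow$(ii) the constant term is $e^{i\langle X_0,u\rangle}$, not $e^{i\langle x_0,u\rangle}$.
\item The extra hypothesis $\int_0^t\|Q(s,X_s)\|\,\d s<\infty$ in your density step is unnecessary. Continuous local martingales are closed under a.s.\ locally uniform convergence: localize with the limit and with $\inf\{t:|M_t(w_n)-M_t(u)|\ge1\}$. Then, for $w_n\in\rU_0$ with $w_n\to u$, apply Fatou pathwise (along a subsequence) to $\int_0^t\langle Q(w_n-w_m),w_n-w_m\rangle\,\d s=\langle M(w_n)-M(w_m)\rangle_t$. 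This identifies $\langle M(u)\rangle_t$ with $\int_0^t\langle Q(s,X_s)u,u\rangle\,\d s$.
\end{itemize}
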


\begin{lemma}\label{Lem-B.4}
Assume that   $(\Omega, \mathscr{F}, \mathbb{F}, \mathbb{P})$ is a filtered probability space with a right continuous filtration   $\mathbb{F}=(\mathscr{F}_t)_{t \in [0,T]}$
such that on this probability space two i.i.d. copies of  $\ell^2$-cylindrical $\mathbb{F}$-adapted  Wiener processes are defined. 
Suppose also that $(\mO,\mathcal{E})$ is a measurable space and $\tilde{\kappa}$ is a non-negative measure on it such that the Hilbert space  $\rK\coloneqq L^2(\mO,\mathcal{E},\tilde{\kappa})$ is separable. 
 Assume that 
\[\sigma: [0,\infty) \times \Omega \times \mO \to \rU^\prime\] is  
 $\mathscr{P}(\mathbb{F}) \times \mathcal{E}/\mathscr{B}(\rU^\prime)$-measurable function.
Assume that 
$M=(M_t: t \in [0,T])$ is  a $\rU^\prime$-valued continuous local martingale  such that for each $u \in \rU$,
  \begin{equation}\label{eqn-<M_tu,u>^2}
    \dualitybig{M_t}{u}{\rU}{\rU^\prime}{2} - \int_0^t \int_\mO \dualitybig{\sigma_s(x)}{u}{\rU}{\rU^\prime}{2} \, \tilde{\kappa} (\d x)\, \d s \in \mathcal{M}_{\loc}^c(\mathbb{D},\mathbb{P}).
 \end{equation}
Then there exists a  $\rK$-cylindrical Wiener process $W$  such that
for every $t \in [0,T]$, 
  \begin{equation}\label{eqn-M_t=int_0^t dW}
    M_t= \int_0^t  \sigma_s \d W_s    \mbox{ in } \rU^\prime \mbox{ $\mathbb{P}$-a.s.}
  \end{equation}
where we  used the notation introduced in \eqref{eqn-M_t=int_0^t xi_s  d W_s}.
\end{lemma}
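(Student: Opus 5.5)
The plan is to reduce Lemma \ref{Lem-B.4} to the classical martingale representation theorem for Hilbert-space-valued continuous local martingales (e.g.\ Da Prato--Zabczyk, Theorem 8.2), enlarging the probability space with the second independent $\ell^2$-cylindrical Wiener process that the hypotheses provide. This enlargement is needed because the covariance may be degenerate.

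\textbf{Step 1: identify the covariance.} Define $\xi_s:=\Lambda(\sigma_s(\cdot))\in\mathscr{T}_2(\rK,\rU^\prime)$ via Corollary \ref{cor:HS}, and put $Q_s:=\xi_s\xi_s^\ast$. Then \eqref{eqn-<M_tu,u>^2} says that for every $u\in\rU$
\[
\langle\langle M\rangle\rangle_t[u,u]=\int_0^t\int_\mO\dualitybig{\sigma_s(x)}{u}{\rU}{\rU^\prime}{2}\,\tilde\kappa(\d x)\,\d s=\int_0^t \Vert \xi_s^\ast u\Vert_{\rK}^2\,\d s .
\]
By polarization the operator-valued quadratic variation of $M$ (in the Hilbert space $\rU^\prime$, after Riesz identification) is $\int_0^t \xi_s\xi_s^\ast\,\d s$. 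Predictability of $\xi$ follows from the joint measurability of $\sigma$ through the series representation \eqref{eq:Phi-series}.

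\textbf{Step 2: build $W$.} Write $\xi_s=|\xi_s|\,V_s$ (polar decomposition, chosen predictably) and let $P_s$ be the orthogonal projection of $\rK$ onto $\overline{\mathrm{Ran}\,\xi_s^\ast}$. Using a localizing sequence, set
\[
W_t:=\int_0^t \xi_s^{+}\,\d M_s+\int_0^t (I-P_s)\,\d \widetilde W_s ,
\]
where $\xi_s^{+}$ is the (predictable, possibly unbounded) pseudo-inverse of $\xi_s$ and $\widetilde W$ is the auxiliary cylindrical Wiener process. The first integral is defined as a limit of the truncations $\xi_s^{+}\mathbb 1_{\{|\xi_s|\ge 1/m\}}$. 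To show that $W$ is a $\rK$-cylindrical Wiener process, compute its quadratic variation: it equals $\int_0^t (P_s+(I-P_s))\,\d s=tI$, the cross term vanishing because $\widetilde W$ is independent of $M$. By Lévy's characterization, applied to each $\langle W_t,e_k\rangle$, $W$ is then cylindrical Wiener with respect to the enlarged filtration.

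\textbf{Step 3: representation.} Compute $N_t:=M_t-\int_0^t\xi_s\,\d W_s$. Its quadratic variation is $\int_0^t(\xi_s\xi_s^\ast-2\xi_sP_s\xi_s^\ast+\xi_sP_s\xi_s^\ast)\,\d s=0$, since $\xi_sP_s=\xi_s$. Hence $N\equiv0$, and this holds for each $u$ paired against $N$. This yields \eqref{eqn-M_t=int_0^t dW}, which in the notation \eqref{eqn-M_t=int_0^t xi_s  d W_s} is $\int_0^t\sigma_s\,\d W_s$.

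\textbf{Main obstacle.} The hard step is the measurable pseudo-inverse together with the truncation limit in Step 2. On the set where $\xi_s$ has small singular values, the truncated integrals must converge, and convergence is obtained from their quadratic variations $\int P^{(m)}_s\,\d s\le tI$ by monotone convergence. A further delicate point is that $\rU^\prime$ is only a dual space. I would first handle this by passing to the Hilbert structure induced by $\rU$, which is a Hilbert space, and otherwise by following the argument of \cite[Lemma 3.2]{Mik+Roz_1998} verbatim.
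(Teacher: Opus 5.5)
The paper gives no proof of its own here: it imports the result from Lemma 3.2 of \cite{Mik+Roz_1998}. Your pseudo-inverse plus auxiliary-noise construction is the standard mechanism behind such statements, but it has a genuine gap.

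\textbf{The independence claim.} In Step 2 you say the cross term vanishes ``because $\widetilde W$ is independent of $M$''. The hypotheses give no such independence. The two i.i.d.\ copies are merely $\mathbb{F}$-adapted cylindrical Wiener processes on the same space as $M$, and $M$ may have non-zero covariation with both of them. Picking one of them therefore involves no enlargement at all. Unless every real martingale $\langle M,u\rangle$ has zero covariation with every component $\tilde w_k$, both your L\'evy computation in Step 2 and the cross term in Step 3 fail.

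\textbf{An explicit failure of your construction.} Take $\mO=\mathbb{N}$ with counting measure, a unit vector $f\in\rU^\prime$, $\sigma_s(1)=f$ and $\sigma_s(j)=0$ for $j\ge 2$. Let $\widetilde W=\sum_k\tilde w_k e_k$ and $M_t=\tilde w_2(t)f$. Then \eqref{eqn-<M_tu,u>^2} holds and $P_s$ is the projection onto $\mathrm{span}\{e_1\}$. Your formula gives
\[
W_t=\tilde w_2(t)e_1+\tilde w_2(t)e_2+\sum_{k\ge 3}\tilde w_k(t)e_k ,
\]
which is not a cylindrical Wiener process.

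\textbf{The fix.} You need a Wiener process independent of $M$, together with a common filtration. This is obtained by passing to a product extension $\Omega\times\Omega^\prime$ with the product filtration. That also removes the mismatch between $\mathbb{D}$, for which $M$ is a local martingale, and $\mathbb{F}$, for which $\sigma$ is predictable. This is exactly how the Da Prato--Zabczyk theorem you cite is formulated. The resulting $W$ lives on the extension, not on $(\Omega,\mathscr{F},\mathbb{P})$.

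\textbf{No construction works on the original space in general.} Read $\mathbb{D}$ as $\mathbb{F}$ and suppose $\mathbb{F}$ is generated by $B=(W^1,W^2)$. Let $M=\int\Phi\,\mathrm{d}B$ with $\Phi$ injective Hilbert--Schmidt, and choose $\sigma$ with $\xi\xi^\ast=\Phi\Phi^\ast$ but $\ker\xi\neq\{0\}$. Then every $\mathbb{F}$-cylindrical $W$ equals $\int G\,\mathrm{d}B$ with $GG^\ast=I$. The identity $M=\int\xi\,\mathrm{d}W$ forces $\xi G=\Phi$. For any $0\neq k\in\ker\xi$ this gives $\Phi G^\ast k=\xi k=0$, hence $G^\ast k=0$, contradicting $\Vert G^\ast k\Vert=\Vert k\Vert$.

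\textbf{A smaller point in Step 1.} Hypothesis \eqref{eqn-<M_tu,u>^2} only yields square integrability of the scalar functions $x\mapsto\langle\sigma_s(x),u\rangle$. So Corollary \ref{cor:HS} cannot yet be applied to define $\xi_s=\Lambda(\sigma_s)$. You first need
\[
\int_0^t\int_\mO\Vert\sigma_s(x)\Vert_{\rU^\prime}^2\,\tilde\kappa(\mathrm{d}x)\,\mathrm{d}s<\infty \quad\text{a.s.}
\]
To get this, apply \eqref{eqn-<M_tu,u>^2} to an orthonormal basis $(u_j)$ of $\rU$ and sum, using Parseval and monotone convergence. Then use that the sum of the quadratic variations of the real local martingales $\langle M,u_j\rangle$ is the increasing process of $\Vert M\Vert_{\rU^\prime}^2$, which is finite a.s. The same bound is what makes the right-hand side of \eqref{eqn-M_t=int_0^t dW} a well-defined local martingale.
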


\begin{remark}\label{rem-Lem-B.4-1}
It follows implicitly from \eqref{eqn-<M_tu,u>^2} that for every $s \in [0,\infty)$, $\mathbb{P}$-a.s.,
 \begin{equation}
     \int_E \dualitybig{\sigma_s(x)}{u}{\rU}{\rU^\prime}{2} \, \tilde{\kappa} (\d x)<\infty, \;\; u \in \rU. 
     \end{equation}
The It\^o integral used above is discussed in subsection \ref{subsec-Ito integral}.
\end{remark}
Note that Lemma \ref{Lem-B.4} holds true when we identify $\rU^\prime$ with $\rU$ 
provided all dualities in the formulation of it are replaced by the inner product in $\rU$. 
\begin{remark}
As a consequence of Lemma \ref{Lem-B.4}, if $\mathbb{P}$ is a solution to the martingale problem, then $X_t$ is a weak solution of an evolution equation in $\rU^\prime$.

\end{remark}

\section{Some results on uniform integrability}\label{app-uniform integrability}
One major reason for the usefulness of uniform integrability is the following result related to the equicontinuity of functions.
\begin{proposition}\label{propo-equicontinuity-append}
Let $T>0$ be arbitrary but fixed. Let $\{f_n\}_{n \in \mathbb{N}}$ be a sequence of complex, Lebesgue measurable functions $f_n$ on $[0,T]$.
Suppose that $\{f_n\}_{n \in \mathbb{N}}$ is uniformly integrable, i.e.
   \begin{equation*}
      \lim_{M \to \infty} \left(\sup_n \left\{ \int_{\{ \lvert f_n \rvert > M \}} \lvert f_n \rvert \, \d x \right\} \right)= 0.
   \end{equation*}
Then the sequence  $\{g_n\}_{n \in \mathbb{N}}$ of complex functions $g_n$ on $[0,T]$ defined by  
   \begin{equation}\label{eqn-g_n from f_n}
g_n: [0,T] \ni t \mapsto \int_0^t f_n(s) \,\d s, \; \; t \in [0,T], 
\end{equation}
is equicontinuous.
\end{proposition}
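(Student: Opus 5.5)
We prove the equicontinuity directly from the definition. Fix $\eps>0$. The goal is to find $\delta>0$ such that for every $n\in\mathbb{N}$ and all $s,t\in[0,T]$ with $\lvert t-s\rvert\leq \delta$ we have $\lvert g_n(t)-g_n(s)\rvert\leq \eps$.

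For $0\leq s\leq t\leq T$ and any $M>0$ we split the integral according to the size of $f_n$:
\[
\lvert g_n(t)-g_n(s)\rvert \leq \int_s^t \lvert f_n(r)\rvert\,\d r
\leq \int_{[s,t]\cap\{\lvert f_n\rvert\leq M\}} \lvert f_n(r)\rvert\,\d r + \int_{\{\lvert f_n\rvert> M\}} \lvert f_n(r)\rvert\,\d r
\leq M\,\lvert t-s\rvert + \sup_{k}\int_{\{\lvert f_k\rvert> M\}} \lvert f_k\rvert\,\d r .
\]

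By the uniform integrability hypothesis, we first choose $M=M(\eps)$ large enough that the last supremum is at most $\eps/2$. Then we set $\delta\coloneqq \eps/(2M)$. With these choices, $\lvert g_n(t)-g_n(s)\rvert\leq\eps$ whenever $\lvert t-s\rvert\leq\delta$, uniformly in $n$. This is exactly equicontinuity; in fact the sequence is uniformly equicontinuous on $[0,T]$.

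Two small points remain to be checked. First, each $f_n$ must be integrable, so that $g_n$ is well defined. This follows because $\int_0^T\lvert f_n\rvert\leq MT+\eps/2<\infty$. Second, the splitting above requires the sets $\{\lvert f_n\rvert>M\}$ to be Lebesgue measurable, which holds since each $f_n$ is measurable. Neither point is a real obstacle. The only substantive step is the order of the choices: $M$ must be fixed before $\delta$, so that $\delta$ does not depend on $n$.
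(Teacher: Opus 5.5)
Your proof is correct and follows essentially the same argument as the paper: split $\int_s^t \lvert f_n\rvert$ over $\{\lvert f_n\rvert \le M\}$ and $\{\lvert f_n\rvert > M\}$, fix $M$ by uniform integrability so that the tail is at most $\varepsilon/2$ for all $n$, and then take $\delta=\varepsilon/(2M)$. Your extra check that each $f_n$ is integrable, so that $g_n$ is well defined, is a small point the paper leaves implicit.
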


\begin{corollary}\label{cor-equicontinuity-append}
Let $T>0$ be arbitrary but fixed. Let $\{f_n\}_{n \in \mathbb{N}}$ be a sequence of complex, Lebesgue measurable functions $f_n$ on $[0,T]$.
Suppose that $\{f_n\}_{n \in \mathbb{N}}$ is bounded in $L^p(0,T)$ for some $p>1$. 
Then the sequence  $\{g_n\}_{n \in \mathbb{N}}$ defined by   \eqref{eqn-g_n from f_n}, 
is equicontinuous.
\end{corollary}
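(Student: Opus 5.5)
The plan is to reduce equicontinuity to a uniform bound on the integral of $\lvert f_n \rvert$ over short intervals. Fix $\eps>0$. For $0\le s\le t\le T$ we have
\[
\lvert g_n(t)-g_n(s)\rvert \le \int_s^t \lvert f_n(r)\rvert\,\d r ,
\]
so it suffices to find $\delta>0$, independent of $n$, such that $\int_A \lvert f_n\rvert\,\d r<\eps$ for every $n$ and every measurable $A\subset[0,T]$ with $\Leb(A)\le\delta$. This is the classical uniform absolute continuity of integrals, which I will derive directly from the hypothesis.

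To do this, split according to a truncation level $M>0$:
\[
\int_A \lvert f_n\rvert\,\d r \le \int_{\{\lvert f_n\rvert>M\}} \lvert f_n\rvert\,\d r + \int_{A\cap\{\lvert f_n\rvert\le M\}} \lvert f_n\rvert\,\d r \le \sup_k \int_{\{\lvert f_k\rvert>M\}}\lvert f_k\rvert\,\d r + M\,\Leb(A).
\]
By uniform integrability, choose $M$ so that the first term is less than $\eps/2$. Then take $\delta=\eps/(2M)$. For $\lvert t-s\rvert\le\delta$ this gives $\sup_n\lvert g_n(t)-g_n(s)\rvert<\eps$, which is uniform equicontinuity on $[0,T]$.

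Two side points need checking. First, each $f_n$ is integrable, so $g_n$ is well defined: from the hypothesis, $\int_0^T\lvert f_n\rvert\,\d r\le \sup_k\int_{\{\lvert f_k\rvert>M\}}\lvert f_k\rvert\,\d r+MT<\infty$. Second, complex values cause no trouble, since only moduli are used. There is no real obstacle here; the only step that needs care is choosing $M$ before $\delta$ so that $\delta$ does not depend on $n$.

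For Corollary \ref{cor-equicontinuity-append}, one could verify uniform integrability using $\int_{\{\lvert f_n\rvert>M\}}\lvert f_n\rvert\le M^{1-p}\sup_k\Vert f_k\Vert_{L^p}^p$. Alternatively, Hölder gives directly $\lvert g_n(t)-g_n(s)\rvert\le \lvert t-s\rvert^{1-1/p}\sup_k\Vert f_k\Vert_{L^p}$.
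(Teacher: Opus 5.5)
Your proof is correct and follows the paper's route. The truncation argument with $\delta=\eps/(2M)$ is exactly the proof of Proposition \ref{propo-equicontinuity-append}, and your bound $\int_{\{\lvert f_n\rvert>M\}}\lvert f_n\rvert\,\d r\le M^{1-p}\sup_k\Vert f_k\Vert_{L^p}^p$ supplies the step the paper leaves implicit, namely that an $L^p$-bounded family with $p>1$ is uniformly integrable. Your alternative H\"older estimate $\lvert g_n(t)-g_n(s)\rvert\le\lvert t-s\rvert^{1-1/p}\sup_k\Vert f_k\Vert_{L^p}$ is shorter still: it gives an explicit uniform H\"older modulus and covers $p=\infty$ directly.
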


\begin{remark}\label{rem-L^1-equicontinuous}
It is possible to find  a sequence $\{f_n\}_{n \in \mathbb{N}}$ that is bounded in $L^1(0,T)$ such that the corresponding  sequence $\{g_n\}_{n \in \mathbb{N}}$  is not  equicontinuous. 
\end{remark}
\begin{lemma}\label{lem-uniformly integrable linear combination}
If two sequences $\{f_n\}_{n \in \mathbb{N}}$ and $\{h_n\}_{n \in \mathbb{N}}$ are uniformly integrable, then their linear combination is also a uniformly integrable sequence. 
If two sequences $\{f_n\}_{n \in \mathbb{N}}$ and $\{h_n\}_{n \in \mathbb{N}}$ are
equicontinuous then their linear combination is also an equicontinuous sequence.
\end{lemma}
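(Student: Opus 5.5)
The plan is to argue directly from the definitions; no earlier result of the paper is needed beyond the definition of uniform integrability used in Proposition \ref{propo-equicontinuity-append}. Fix scalars $a,b\in\mathbb{C}$ and put $k_n\coloneqq af_n+bh_n$. The case $a=b=0$ is trivial, so assume $(a,b)\neq(0,0)$. For the degenerate case $ab=0$ it suffices to check that $\{cf_n\}$ is uniformly integrable for $c\neq 0$. This follows from the substitution $M\mapsto M/\lvert c\rvert$:
\[
\int_{\{\lvert cf_n\rvert>M\}}\lvert cf_n\rvert\,\d x=\lvert c\rvert\int_{\{\lvert f_n\rvert>M/\lvert c\rvert\}}\lvert f_n\rvert\,\d x .
\]
So the real work is the sum of two uniformly integrable sequences.

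For the sum we use a pointwise domination. Set $F_n=\lvert af_n\rvert$ and $H_n=\lvert bh_n\rvert$, so that $\lvert k_n\rvert\le F_n+H_n\le 2\max(F_n,H_n)$. On the set $\{\lvert k_n\rvert>M\}$ we have $\max(F_n,H_n)>M/2$. We then claim the pointwise bound
\[
\lvert k_n\rvert\,\mathbb{1}_{\{\lvert k_n\rvert>M\}}\le 2F_n\,\mathbb{1}_{\{F_n>M/2\}}+2H_n\,\mathbb{1}_{\{H_n>M/2\}} .
\]
To verify it, note that if $F_n\ge H_n$ then $\lvert k_n\rvert\le 2F_n$ and $F_n>M/2$; the case $H_n\ge F_n$ is symmetric. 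Integrating, taking $\sup_n$, and letting $M\to\infty$, the right-hand side tends to $0$ by the scalar-multiple step applied to $\{af_n\}$ and $\{bh_n\}$ separately. Hence $\{k_n\}$ is uniformly integrable.

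For the equicontinuity part, fix $\varepsilon>0$. By equicontinuity of $\{f_n\}$ choose $\delta_1>0$ such that $\lvert f_n(t)-f_n(s)\rvert<\varepsilon/(2(\lvert a\rvert+1))$ for all $n$ whenever $\lvert t-s\rvert<\delta_1$. Choose $\delta_2$ for $\{h_n\}$ in the same way with $\lvert b\rvert+1$ in place of $\lvert a\rvert+1$. With $\delta=\min(\delta_1,\delta_2)$, the triangle inequality gives $\lvert k_n(t)-k_n(s)\rvert<\varepsilon$ uniformly in $n$. If equicontinuity is meant pointwise (at each $t$) rather than uniformly on $[0,T]$, the same argument applies verbatim at each fixed point.

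There is no genuine obstacle here. The only step requiring care is the pointwise domination in the uniform-integrability part, namely making sure the threshold $M$ is split into $M/2$ for each summand. Equivalently, one could pass through the characterization of uniform integrability by boundedness in $L^1$ together with uniform absolute continuity, which holds on the finite measure space $[0,T]$. Under that characterization the linear-combination statement is again immediate from the triangle inequality.
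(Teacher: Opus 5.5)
Your proof is correct and complete. The paper states this lemma without proof; it is used, together with Proposition~\ref{propo-equicontinuity-append}, only to justify Corollary~\ref{cor-uniformly integrable linear combination}. So there is no competing argument to compare with. Your three steps supply exactly the routine details the authors left implicit: the rescaling $\{\lvert cf_n\rvert>M\}=\{\lvert f_n\rvert>M/\lvert c\rvert\}$, the pointwise domination with threshold $M/2$, and the triangle-inequality estimate with $\varepsilon/(2(\lvert a\rvert+1))$ for equicontinuity.

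To cover the paper's actual application (Corollary~\ref{cor-uniformly integrable linear combination}), you could add two one-line facts. A single $g\in L^1(0,T)$ is uniformly integrable by dominated convergence. An $L^p(0,T)$-bounded sequence with $p>1$ is uniformly integrable because $\int_{\{\lvert f_n\rvert>M\}}\lvert f_n\rvert\,\d x\le M^{1-p}\sup_n\Vert f_n\Vert_{L^p}^p$.
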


\begin{proof}[Proof of Proposition \ref{propo-equicontinuity-append}]
\newdela{NOTE: this proof establishes Proposition \ref{propo-equicontinuity-append} (it uses uniform integrability, the Proposition's hypothesis, and concludes ``the proof of our proposition follows''), not Corollary \ref{cor-equicontinuity-append}. The header has been corrected. Corollary \ref{cor-equicontinuity-append} then follows because an $L^p(0,T)$-bounded sequence with $p>1$ is uniformly integrable (de la Vall\'ee-Poussin); consider adding that one-line deduction.}
Let us choose and fix  $\eps>0$. By the uniform integrability assumption, there exists $M_0=M_0(\eps)>0$ such that for all $M \geq M_0$ and all $n \in \mathbb{N}$, 
   \begin{equation}\label{Eqn-uniform-integrability-a}
     \int_{\{ \lvert f_n \rvert > M \} } \lvert f_n \rvert \, \d x \leq \frac{\eps}{2}.
   \end{equation}
Next, we set $\delta= \frac{\eps}{2 M_0}$ and we take $s,\, t \in [0,T]$ such that $0\leq s < t \leq T$ and $\lvert t - s \rvert \leq \delta$. Obviously, we have 
\begin{align*}
&\lvert g_n(t) - g_n(s) \rvert 
\leq \int_s^t \lvert f_n(\tau) \rvert \, \d \tau
= \int_{ [s,t] \cap \{ \lvert f_n \rvert > M_0 \} }  \lvert f_n(\tau) \rvert \, \d \tau + \int_{ [s,t] \cap \{ \lvert f_n \rvert \leq M_0 \} }  \lvert f_n(\tau) \rvert \, \d \tau \\
&\leq \int_{\{ \lvert f_n \rvert > M_0 \} }  \lvert f_n(\tau) \rvert \, \d \tau + M_0 \lvert t - s \rvert
\leq \int_{\{ \lvert f_n \rvert > M_0 \} }  \lvert f_n(\tau) \rvert \, \d \tau + \delta M_0.
\end{align*}
This, jointly with \eqref{Eqn-uniform-integrability-a} yields that 
  \begin{equation*}
    \lvert g_n(t) - g_n(s) \rvert 
     \leq \frac{\eps}{2} +  \delta M_0 
     =  \frac{\eps}{2} +  \frac{\eps}{2 M_0} M_0
     = \frac{\eps}{2} + \frac{\eps}{2}
     = \eps,
   \end{equation*}
and the proof of our proposition follows.
\end{proof}
Since a constant sequence $h_n=f$, where $f\in L^1(0,T)$, is uniformly integrable, from the previous two results we deduce the following result which we use in the paper. 
\begin{corollary} \label{cor-uniformly integrable linear combination}
Assume that $T>0$. If a sequence  $\{f_n\}_{n \in \mathbb{N}}$ is bounded in $L^p(0,T)$ for some $p>1$ and  $g \in L^1(0,T)$,
then the sequence $\{\alpha f_n + \beta g\}_{n \in \mathbb{N}}$ with $\alpha,\beta \in \mathbb{C}$ is uniformly integrable on $(0,T)$.

\end{corollary}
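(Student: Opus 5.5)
The statement to prove is Corollary \ref{cor-uniformly integrable linear combination}: a combination of an $L^p$-bounded sequence and a fixed $L^1$ function is uniformly integrable. The plan reduces it to two elementary facts and then invokes Lemma \ref{lem-uniformly integrable linear combination}.

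\emph{Step 1: $L^p$-boundedness with $p>1$ gives uniform integrability.} Set $K\coloneqq \sup_n \Vert f_n\Vert_{L^p(0,T)}^p<\infty$. On the set $\{\lvert f_n\rvert>M\}$ we have $\lvert f_n\rvert \le M^{1-p}\lvert f_n\rvert^p$. Hence
\[
\sup_n \int_{\{\lvert f_n\rvert>M\}}\lvert f_n\rvert\,\d x \le M^{1-p}K \to 0 \quad \text{as } M\to\infty .
\]
This is a direct Chebyshev-type argument, so de la Vallée-Poussin is not needed.

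\emph{Step 2: a constant sequence is uniformly integrable.} Put $h_n\coloneqq g$ with $g\in L^1(0,T)$. The sets $\{\lvert g\rvert>M\}$ decrease to a null set as $M\to\infty$, and $\lvert g\rvert\mathds{1}_{\{\lvert g\rvert>M\}}\le \lvert g\rvert\in L^1$. By dominated convergence, $\int_{\{\lvert g\rvert>M\}}\lvert g\rvert\to0$, and this bound does not depend on $n$.

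\emph{Step 3: combine.} Apply Lemma \ref{lem-uniformly integrable linear combination} to $\{f_n\}$ and $\{h_n\}$. If one wants this step self-contained, use the equivalent characterization of uniform integrability on the finite measure space $[0,T]$: uniform $L^1$-boundedness plus uniform absolute continuity, i.e.\ for every $\eps>0$ there is $\delta>0$ with $\sup_n\int_A\lvert f_n\rvert<\eps$ whenever $\Leb(A)<\delta$. Both conditions are clearly stable under linear combinations, by the triangle inequality applied to $\lvert\alpha f_n+\beta g\rvert$.

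The only point requiring care is exactly this equivalence between the tail definition used in Proposition \ref{propo-equicontinuity-append} and the $\eps$–$\delta$ form. The step from the $\eps$–$\delta$ form back to the tail form uses $\Leb(\{\lvert\varphi_n\rvert>M\})\le \sup_n\Vert\varphi_n\Vert_{L^1}/M$ via Chebyshev. The converse direction splits $A$ into $A\cap\{\lvert\varphi_n\rvert\le M\}$ and its complement, exactly as in the proof of Proposition \ref{propo-equicontinuity-append}. I expect no genuine obstacle here; it is routine bookkeeping.
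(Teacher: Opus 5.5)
Your proof is correct and follows the paper's route. The paper deduces the corollary from three facts: an $L^p$-bounded sequence with $p>1$ is uniformly integrable, a constant $L^1$ sequence is uniformly integrable, and a linear combination of uniformly integrable sequences is uniformly integrable. Your Chebyshev-type Step~1 and your $\varepsilon$--$\delta$ justification of the linear-combination lemma make explicit what the paper leaves unproved, since the corollary it cites as input concerns equicontinuity rather than uniform integrability.
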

\begin{corollary}
Let $\{f_n\}_{n \in \mathbb{N}}$ be a family of complex, Lebesgue measurable functions $f_n$ on $[0,T]$.
Suppose that $\{f_n\}_{n \in \mathbb{N}}$ is uniformly integrable. Then the family $\{g_n\}_{n \in \mathbb{N}}$ of complex functions $g_n$ on $[0,T]$  defined in \eqref{eqn-g_n from f_n} 
 is uniformly absolutely continuous, that is, for every $\eps>0$, there exists $\delta> 0$ such that for all $n \in \mathbb{N}$ and for all pairwise disjoint segments $\{[x_i,y_i]\}_{1 \leq i \leq N }$: if 
\[
\sum_{i=1}^N \lvert y_i - x_i \rvert \leq \delta,
\]
then 
     \begin{equation*}
        \sum_{i=1}^N \lvert g_n(y_i) -  g_n(x_i) \rvert \leq \eps.
      \end{equation*}
\end{corollary}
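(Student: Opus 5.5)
The statement to prove is the last corollary: if $\{f_n\}$ is uniformly integrable on $[0,T]$, then the primitives $g_n(t)=\int_0^t f_n(s)\,\d s$ are uniformly absolutely continuous. I would reuse the splitting argument from the proof of Proposition \ref{propo-equicontinuity-append}. That argument does not rely on the integration set being a single interval. It only uses that the set has small Lebesgue measure. So the plan is to replace the single segment $[s,t]$ by the union $A=\bigcup_{i=1}^N [x_i,y_i]$ of the given pairwise disjoint segments.

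First, fix $\eps>0$. By uniform integrability, choose $M_0=M_0(\eps)>0$ such that
\[
\sup_n \int_{\{\lvert f_n\rvert>M_0\}}\lvert f_n\rvert\,\d x\le \frac{\eps}{2}.
\]
Then set $\delta=\eps/(2M_0)$. Since $\delta$ depends only on $\eps$ and $M_0$, it is uniform in $n$.

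Next, take disjoint segments with $\sum_i\lvert y_i-x_i\rvert\le\delta$. The triangle inequality and the definition of $g_n$ give
\[
\sum_{i=1}^N\lvert g_n(y_i)-g_n(x_i)\rvert\le\sum_{i=1}^N\int_{x_i}^{y_i}\lvert f_n\rvert\,\d\tau=\int_A\lvert f_n\rvert\,\d\tau .
\]
The equality uses that the segments are pairwise disjoint, up to endpoints, which form a null set. This means each portion of $\lvert f_n\rvert$ is counted at most once, and $\Leb(A)\le\delta$.

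Then split $A$ into $A\cap\{\lvert f_n\rvert>M_0\}$ and $A\cap\{\lvert f_n\rvert\le M_0\}$. The first piece contributes at most $\eps/2$ by the choice of $M_0$. The second piece contributes at most $M_0\Leb(A)\le M_0\delta=\eps/2$. Adding the two bounds gives the claim for every $n$.

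None of the steps poses a real obstacle. The only point needing care is the disjointness, which is what lets the sum of integrals over the segments be bounded by a single integral over a set of measure at most $\delta$. Without disjointness, overlapping segments could count the same region of $\lvert f_n\rvert$ several times, and the bound by $\int_A\lvert f_n\rvert$ would fail.
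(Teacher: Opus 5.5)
Your proposal is correct and is essentially the argument the paper has in mind. The paper states this corollary without a separate proof; it follows from the splitting argument in the proof of Proposition \ref{propo-equicontinuity-append}, exactly as you do, by replacing the single segment $[s,t]$ with the union $\bigcup_{i=1}^N[x_i,y_i]$ of disjoint segments, whose Lebesgue measure is at most $\delta$.
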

The following useful result is a consequence of the Arzel\`a-Ascoli Theorem, see \cite[p. 30]{Reed+Simon_1980_vI}.
\begin{proposition}\label{prop-AZ-Theorem}
If $u_n: [0,T] \to \mathbb{C}$ is a continuous function for every $n \in \mathbb{N}$ such that 
\[
u_n(t) \to u(t) \mbox{ for every } t \in [0,T],
\]
and the sequence $(u_n)$ is equicontinuous, then $u: [0,T] \to \mathbb{C}$ is a continuous function and 
\[
u_n \to u \mbox{ in } C([0,T]).
\]
\end{proposition}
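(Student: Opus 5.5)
The plan is to run the classical $\varepsilon/3$ argument that underlies the Arzel\`a--Ascoli theorem, using only equicontinuity and pointwise convergence; no boundedness hypothesis beyond what pointwise convergence already gives is needed.

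\textbf{Uniform equicontinuity.} If equicontinuity is understood pointwise (for each $t_0\in[0,T]$ and $\varepsilon>0$ there is $\delta(t_0)>0$ with $|u_n(t)-u_n(t_0)|<\varepsilon$ for all $n$ whenever $|t-t_0|<\delta(t_0)$), I first upgrade it to a uniform statement. Cover $[0,T]$ by the open intervals $(t_0-\delta(t_0)/2,\,t_0+\delta(t_0)/2)$, extract a finite subcover by compactness, and let $\delta$ be half the minimal radius. The triangle inequality through a centre $t_0$ then gives $|u_n(t)-u_n(s)|<2\varepsilon$ for all $n$ whenever $|t-s|<\delta$. If the paper's notion is already the uniform one, as in Proposition \ref{propo-equicontinuity-append}, this step is void.

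\textbf{Continuity of $u$.} Fix $\varepsilon$ and the corresponding $\delta$. For $|t-s|<\delta$, let $n\to\infty$ in $|u_n(t)-u_n(s)|\le\varepsilon$ and use pointwise convergence to get $|u(t)-u(s)|\le\varepsilon$. So $u$ is (uniformly) continuous on $[0,T]$.

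\textbf{Uniform convergence.} With $\varepsilon,\delta$ as above, choose a finite $\delta$-net $t_1,\dots,t_N$ of $[0,T]$. Pointwise convergence at these finitely many points gives $n_0$ such that $|u_n(t_j)-u(t_j)|<\varepsilon$ for all $j$ and all $n\ge n_0$. For an arbitrary $t$, pick $t_j$ with $|t-t_j|<\delta$. Then
\[
|u_n(t)-u(t)|\le |u_n(t)-u_n(t_j)|+|u_n(t_j)-u(t_j)|+|u(t_j)-u(t)|\le 3\varepsilon ,
\]
which is uniform in $t$. Hence $\sup_{t\in[0,T]}|u_n(t)-u(t)|\to0$, i.e.\ $u_n\to u$ in $C([0,T])$.

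The only point needing care is the first step: making sure the modulus $\delta$ can be chosen independently of both $n$ and $t$. Compactness of $[0,T]$ handles this, and the rest is routine.
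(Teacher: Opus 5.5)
Your proof is correct, but it does not follow the paper's route. The paper gives no argument: it calls the statement a consequence of the Arzel\`a--Ascoli theorem and cites Reed--Simon.

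Spelled out, that route goes through compactness:
\begin{itemize}
\item pointwise convergence gives pointwise boundedness;
\item together with uniform equicontinuity on the compact interval $[0,T]$, this gives uniform boundedness;
\item Ascoli's theorem then makes $\{u_n\}$ relatively compact in $C([0,T])$;
\item every subsequence has a uniformly convergent sub-subsequence, whose limit must equal the pointwise limit $u$;
\item hence $u$ is continuous, and the whole sequence converges uniformly by the subsequence principle.
\end{itemize}

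Your direct $\varepsilon/3$ argument with a finite $\delta$-net avoids all of this. It needs no boundedness, no subsequence extraction and no identification of limits, and it proves continuity of $u$ and uniform convergence together. It is the core estimate of the Arzel\`a--Ascoli proof, isolated for the case where a pointwise limit is already given. The paper's citation gains only brevity. Yours is self-contained and also works under the weaker, pointwise notion of equicontinuity, thanks to your compactness upgrade.

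You are also right that this upgrade is unnecessary in the paper's own application. In Corollary \ref{cor-eq-5.37}, the equicontinuity comes through Proposition \ref{Prop-equicontinuity} from Proposition \ref{propo-equicontinuity-append}, which already yields the uniform version.
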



\section{Compactness criterion}
We report the following compactness and embedding results, see \cite{Strauss_1966,Vishik+Fursikov_1988}.
\begin{lemma}\label{Lem C1} 
Let $X$ and $Y$ be two Banach spaces such that the embeddings $X \embed  Y$ and $Y^\prime \embed  X^\prime$ are dense and continuous. Then,
\[
L^\infty(0,T;X) \cap C([0,T];Y_w) \embed   C([0,T];X_w).
\]
\end{lemma}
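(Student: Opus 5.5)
The plan is to reduce to the case where $X$ is reflexive, or at least where bounded sets of $X$ are weakly relatively compact. In the paper $X=\newone{H}$ and $Y=\newonep{V}$ are Hilbert spaces, so this is available. A general proof can proceed in the same way using weak$^\ast$ topologies.

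Fix $u\in L^\infty(0,T;X)\cap C([0,T];Y_w)$ and put $r:=\Vert u\Vert_{L^\infty(0,T;X)}$.

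\textbf{Step 1.} Show that $\Vert u(t)\Vert_X\le r$ for every $t\in[0,T]$, not just for almost every $t$.
\begin{itemize}
\item Choose a full-measure set $D\subset[0,T]$ on which $\Vert u(t)\Vert_X\le r$. The set $D$ is dense in $[0,T]$.
\item Given any $t$, pick $t_k\in D$ with $t_k\to t$. Then $u(t_k)\to u(t)$ weakly in $Y$.
\item The sequence $(u(t_k))$ is bounded in $X$, so by reflexivity a subsequence converges weakly in $X$ to some $v$ with $\Vert v\Vert_X\le r$.
\item The embedding $X\embed Y$ is continuous, hence weak-to-weak continuous. So $v$ is also the weak $Y$-limit, which forces $v=u(t)$. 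In particular $u(t)\in X$ and $\Vert u(t)\Vert_X\le r$ for all $t$.
\end{itemize}

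\textbf{Step 2.} Prove weak continuity in $X$: for every $h\in X^\prime$, the map $t\mapsto\langle u(t),h\rangle_{X^\prime,X}$ is continuous.
\begin{itemize}
\item If $h$ lies in the image of $Y^\prime$ in $X^\prime$, then $\langle u(t),h\rangle=\langle u(t),h\rangle_{Y^\prime,Y}$. This is continuous in $t$ because $u\in C([0,T];Y_w)$.
\item For general $h\in X^\prime$ and $\eps>0$, use density of $Y^\prime$ in $X^\prime$ to pick $h_\eps\in Y^\prime$ with $\Vert h-h_\eps\Vert_{X^\prime}\le\eps$.
\item Then $t\mapsto\langle u(t),h_\eps\rangle$ is continuous, and by Step 1
\[
\sup_t|\langle u(t),h\rangle-\langle u(t),h_\eps\rangle|\le r\eps .
\]
\item So $t\mapsto\langle u(t),h\rangle$ is a uniform limit of continuous functions, hence continuous. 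This proves the inclusion.
\end{itemize}

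\textbf{Step 3.} Address the word ``embedding'' in the statement. The inclusion map is continuous, since the topology of $C([0,T];X_w)$ is generated by the seminorms $\sup_t|\langle u(t),h\rangle|$. The same $\eps$-approximation gives
\[
\sup_t|\langle u,h\rangle|\le \sup_t|\langle u,h_\eps\rangle_Y|+\eps\Vert u\Vert_{L^\infty(0,T;X)}.
\]
This shows continuity on $L^\infty$-bounded sets, which is how the result is used later, in Lemma~\ref{Lem-2nd-compactness-result-for-phi}.

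\textbf{Main obstacle.} The delicate step is Step 1, which upgrades the almost-everywhere bound to an everywhere bound and membership in $X$. It needs weak (sequential) compactness of bounded sets in $X$, together with injectivity of the embedding, to identify the limit. In the non-reflexive setting the plan is to replace this with weak$^\ast$ compactness via Banach--Alaoglu, working with $X$ viewed as a dual space. The remainder of the argument is a routine density argument.
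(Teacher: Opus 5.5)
Your Steps 1--2 are the classical Strauss argument behind the references the paper cites (the paper gives no proof of its own), and they are correct when $X$ is reflexive. That covers the only place the lemma is used, namely $X=\newone{H}$, $Y=\newonep{V}$. Step 3 can be sharpened. Once Step 1 gives $\sup_t\Vert u(t)\Vert_X=\Vert u\Vert_{L^\infty(0,T;X)}$, you get $\sup_t|\langle u(t),h\rangle|\le\Vert h\Vert_{X^\prime}\Vert u\Vert_{L^\infty(0,T;X)}$, so the inclusion is continuous for the $L^\infty$-norm alone, not only on bounded sets.

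The gap is your claim that the non-reflexive case ``can proceed in the same way using weak$^\ast$ topologies''. It cannot, because the lemma as printed is false without reflexivity or some substitute for it.

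Here is a counterexample. Take $X=c_0$ and $Y=\{y=(y_k)_{k\geq1}:\sum_k 2^{-k}|y_k|^2<\infty\}$.
\begin{itemize}
\item $X\embed Y$ is continuous and dense.
\item $Y^\prime\cong\{z:\sum_k 2^k|z_k|^2<\infty\}$ embeds continuously into $X^\prime=\ell^1$ and contains all finitely supported sequences, hence is dense.
\end{itemize}
So every hypothesis holds. For $m\geq1$ and $t\in[\frac1{m+1},\frac1m]$ put $u(t)=e_1+\dots+e_m+m(m+1)(\frac1m-t)\,e_{m+1}$, and set $u(0)=(1,1,1,\dots)$.
\begin{itemize}
\item $u$ is norm-continuous from $(0,1]$ into $c_0$, with $\Vert u(t)\Vert_{c_0}\le1$.
\item $\Vert u(t)-u(0)\Vert_Y^2\le 2^{-m}$ on $[\frac1{m+1},\frac1m]$, so $u\in L^\infty(0,1;X)\cap C([0,1];Y)$.
\item But $u(0)\notin c_0$.
\end{itemize}
No modification helps. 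Any element of $C([0,1];X_w)$ agreeing with $u$ a.e.\ is also $Y_w$-continuous, hence equals $u$ everywhere.

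Banach--Alaoglu does not rescue the argument either. First, $c_0$ is not a dual space. Second, suppose $X=Z^\prime$. Identifying a weak$^\ast$ limit in $X$ with the weak limit in $Y$ requires the image of $Y^\prime$ in $X^\prime=Z^{\prime\prime}$ to lie in $Z$. Together with the assumed norm density of $Y^\prime$ in $X^\prime$, this forces $Z=Z^{\prime\prime}$, i.e.\ $X$ reflexive.

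So the statement needs an extra hypothesis. One option is that $X$ is reflexive, in which case density of $Y^\prime$ in $X^\prime$ is automatic. More generally, it suffices that closed balls of $X$ are sequentially closed in $Y_w$, which is exactly what your Step 1 uses. With that added, your proof is complete.
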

Here, $C([0,T];Y_w)$ and $C([0,T];X_w)$ denote the spaces of weakly continuous functions $\bv: [0,T] \to Y$ and $\bv: [0,T] \to X$ endowed with the weak topology, respectively.
\section{Kuratowski Theorem}\label{sec-Kuratowski Theorem}

\begin{proposition}\label{Propo-Borel-subset}
Let $X$ be a Polish space and $(Y,\mathscr{Y})$ be a topological space such that $Y \subset X$ and the embedding $Y \embed X$ is continuous. If a set $A \subset X$ is a Borel subset of $X$, then $A \cap Y$ is a Borel subset of $Y$. 
\end{proposition}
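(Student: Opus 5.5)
The plan is to realise $A\cap Y$ as a preimage under the embedding and then use continuity. Let $i\colon Y\hookrightarrow X$ denote the inclusion map, which is continuous by hypothesis. For every subset $A\subset X$ we have the identity $A\cap Y=i^{-1}(A)$. The statement therefore reduces to showing that $i$ is $\mathscr{B}(Y)/\mathscr{B}(X)$-measurable.

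To prove this measurability, I would use the standard good-sets argument. Define
\[
\mathcal{G}\coloneqq\{A\subset X:\ i^{-1}(A)\in\mathscr{B}(Y)\}.
\]
Since preimages commute with complements and countable unions, $\mathcal{G}$ is a $\sigma$-field on $X$. If $A$ is open in $X$, then $i^{-1}(A)=A\cap Y$ is open in $(Y,\mathscr{Y})$ by continuity of $i$, so $A\in\mathcal{G}$. Hence $\mathcal{G}$ contains the $\sigma$-field generated by the open sets of $X$, namely $\mathscr{B}(X)$. It follows that for every Borel set $A\subset X$ the set $A\cap Y=i^{-1}(A)$ is a Borel subset of $Y$.

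No step here is a real obstacle, and neither the Polish structure of $X$ nor any property of $Y$ beyond continuity of the embedding is needed. Completeness and separability of $X$ only matter for the way the proposition is used earlier in the paper. There, Kuratowski's theorem \cite[Theorem I.3.9]{Parthasarathy_1967} is first applied to show that the image of the Polish space $C([0,T];H_{0,n})$ under an injective continuous map is Borel in $C([0,T];\rU_0^\prime)\cap L^2(0,T;\StokesH)$. The present proposition is then applied with $Y=\bZ_{T,1}$ to transfer that Borel property to $\bZ_{T,1}$. The one point worth checking in that application is that the topology $\mathscr{Z}^1$ of $\bZ_{T,1}$ is finer than the Polish topology of $C([0,T];\rU_0^\prime)\cap L^2(0,T;\StokesH)$. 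This holds because $\mathscr{Z}^1$ is the supremum of topologies that include $\mathscr{Z}_1$ and $\mathscr{Z}_6$, so the embedding is continuous.
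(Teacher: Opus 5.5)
Your proof is correct and is the paper's argument: the family $\{A\subset X: A\cap Y\in\mathscr{B}(Y)\}$ is a $\sigma$-field that contains the open sets of $X$ because the embedding is continuous, so it contains $\mathscr{B}(X)$. As you note, the Polish hypothesis on $X$ is not used here; it matters only in the preceding application of Kuratowski's theorem.
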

\begin{proof}[Proof of Proposition \ref{Propo-Borel-subset}] 
By the continuity of the embedding $Y \embed X$ it follows  that the family  $\mathscr{F}$ defined by 
\[
\mathscr{F}= \{A \subset X: A \cap Y \in \mathcal{B}(Y)\}
\]
is a $\sigma$-field on $X$ and it contains the topology of $X$. Thus, $\mathcal{B}(X) \subset \mathscr{F}$ and the proof of our Proposition follows.
\end{proof}

\section{Approximation of functions of linear growth}

Assume that $g:\mathbb{R}^d \to Y=\mathbb{R}$ is continuous and of linear growth, i.e.
\[
\lvert g(z) \rvert\leq C_1 + C_2 \vert z \vert,\;\; z\in \mathbb{R}^d.
\]
Define 
\[
g \ast \varphi_\delta(z)\coloneqq \int_{\mathbb{R}^d} g(z-y) \varphi_\delta(y)\,\d y, \;\; z \in \mathbb{R}^d,\;\;\delta>0,
\]
where
$\varphi_\delta$ is the standard mollifier of Section \ref{Sect-approximation of g}, i.e. $\varphi_\delta(x)=\delta^{-d}\varphi(\delta^{-1}x)$ with $\varphi\ge 0$, $\supp\varphi\subseteq B(0,1)$ and $\int\varphi=1$ (so $\supp\varphi_\delta\subseteq B(0,\delta)$ and $\int\varphi_\delta=1$).
\begin{claim}
The function $g \ast \varphi_\delta$ is well defined and of linear growth.
\end{claim}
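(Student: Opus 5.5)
The plan is a direct estimate, using only the linear growth bound on $g$ and the two normalisation properties of the mollifier, namely $\supp\varphi_\delta\subseteq B(0,\delta)$ and $\int_{\mathbb{R}^d}\varphi_\delta(y)\,\d y=1$. There are two things to check: the integral defining $g\ast\varphi_\delta(z)$ exists for every $z\in\mathbb{R}^d$, and it satisfies a bound of the form $C_1'+C_2'\lvert z\rvert$.

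First, well-definedness. Fix $z\in\mathbb{R}^d$ and $\delta>0$. The integrand $y\mapsto g(z-y)\varphi_\delta(y)$ is continuous, because $g$ and $\varphi_\delta$ are, so it is Borel measurable. It vanishes outside the compact ball $\overline{B(0,\delta)}$. For $\lvert y\rvert\le\delta$ the growth bound gives
\[
\lvert g(z-y)\rvert\le C_1+C_2\lvert z-y\rvert\le C_1+C_2(\lvert z\rvert+\delta),
\]
so the integrand is bounded by $\bigl(C_1+C_2\delta+C_2\lvert z\rvert\bigr)\varphi_\delta(y)$, which is integrable. Hence the Lebesgue integral converges absolutely. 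Equivalently, a continuous function with compact support is integrable.

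Second, linear growth. Using $\varphi_\delta\ge 0$, the previous pointwise bound and $\int\varphi_\delta=1$,
\[
\lvert g\ast\varphi_\delta(z)\rvert\le\int_{B(0,\delta)}\lvert g(z-y)\rvert\varphi_\delta(y)\,\d y\le C_1+C_2\delta+C_2\lvert z\rvert .
\]
So $g\ast\varphi_\delta$ has linear growth with constants $C_1+C_2\delta$ and $C_2$. If $\delta\in(0,1)$, as for the sequence $(\delta_n)$ of Section \ref{Sect-approximation of g}, the constants $C_1+C_2$ and $C_2$ work uniformly in $\delta$; this uniformity is the point relevant to the later use.

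There is no real obstacle. The only place needing care is keeping track of the $\delta$-dependence of the constant, so that the bound stays uniform in $\delta\in(0,1)$. The same argument carries over verbatim to $Y$-valued $g$, with the absolute value replaced by $\Vert\cdot\Vert_{\mathrm{Y}}$ and the integral understood in the Bochner sense. Measurability there follows from continuity, and integrability follows from the same norm bound.
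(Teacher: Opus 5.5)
Your proposal is correct and follows the paper's argument essentially step for step. Like the paper, you restrict the integral to $B(0,\delta)$ using the support of $\varphi_\delta$, bound $\lvert g(z-y)\rvert\le C_1+C_2(\lvert z\rvert+\delta)$, and integrate against $\varphi_\delta$ using $\int\varphi_\delta=1$, which gives constants uniform in $\delta\in(0,1)$. Your explicit check of measurability and absolute integrability is a small addition that the paper leaves implicit.
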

Indeed, let us choose and fix $\delta \in (0,1)$. Then we have
\begin{align*}
&\int_{\mathbb{R}^d} \vert g(z-y) \vert \varphi_\delta(y)\, \d y 
=\int_{B(0,\delta)} \vert g(z-y) \vert  \varphi_\delta(y)\,\d y
\\
&\leq \int_{B(0,\delta)} (C_1 + C_2 \vert z - y \vert) \varphi_\delta(y)\,\d y
\\
&\leq C_1 + C_2 \int_{B(0,\delta)} (\vert z \vert + \vert y \vert) \varphi_\delta(y)\, \d y
\leq C_1 + C_2 (\delta + \vert z\vert)
\\
&\leq(C_1+C_2 \delta) + C_2\vert z\vert
\leq C_1 + C_2(1 + \vert z\vert), \;\; z \in \mathbb{R}^d.
\end{align*}
Furthermore, by the LDCT, the function $g \ast \varphi_\delta$ is continuous and 
\[
g \ast \varphi_\delta \to g \mbox{ locally uniformly}.
\]
Indeed, 
\begin{align*}
\vert g \ast \varphi_\delta(z) - g(z) \vert    
\leq \int_{\mathbb{R}^d} \vert g(z - y) - g(z) \vert \varphi_\delta(y)\, \d y 
= \int_{B(0,\delta)} \vert g(z - y) - g(z) \vert \varphi_\delta(y)\, \d y.
\end{align*}
Since $g$ is continuous, it is uniformly continuous on compact sets and the result follows.

\end{document}